\newtheorem{theorem}{Theorem}[section]
\newtheorem{lemma}[theorem]{Lemma}
\newtheorem{proposition}[theorem]{Proposition}
\newtheorem{corollary}[theorem]{Corollary}
\theoremstyle{definition}
\newtheorem{definition}[theorem]{Definition}
\renewcommand{\leq}{\leqslant}
\renewcommand{\geq}{\geqslant}
\newcommand\Av{\operatorname{Av}}
\newcommand\Supp{\operatorname{Supp}}
\newcommand\vp{\Lambda_{\Z/p\Z}}
\newcommand\vxp{\chi_{\Z/p\Z}}
\newcommand\vxpbar{\overline{\chi}_{\Z/p\Z}}
\renewcommand\wp{\tau^2_{\Z/p\Z}}
\newcommand\cond{\operatorname{cond}}
\newcommand\denom{\operatorname{denom}}
\newcommand\support{\operatorname{s}}
\newcommand\sigmax{\sigma_0}
\def\R{\mathbf{R}}
\def\C{\mathbf{C}}
\def\Z{\mathbf{Z}}
\def\Q{\mathbf{Q}}
\def\N{\mathbf{N}}
\newcommand{\trunc}{\operatorname{trunc}}
\newcommand\prim{\operatorname{prim}}
\def\eps{\varepsilon}
\newcommand{\md}[1]{\ensuremath{(\operatorname{mod}\, #1)}}
\newcommand{\mdsub}[1]{\ensuremath{(\mbox{\scriptsize mod}\, #1)}}
\newcommand{\mdlem}[1]{\ensuremath{(\mbox{\textup{mod}}\, #1)}}
\newcommand{\mdsublem}[1]{\ensuremath{(\mbox{\scriptsize \textup{mod}}\, #1)}}
\numberwithin{equation}{section}
\renewcommand\subsection{\@startsection{subsection}{2}%
	\z@{.5\linespacing\@plus.7\linespacing}{.5\linespacing}%
	{\normalfont\bfseries}}
\renewcommand\part{\@startsection{part}{2}%
	\z@{0.5\linespacing\@plus2\linespacing}{\linespacing}%
	{\normalfont\large\scshape\bfseries\centering}}
\begin{document}

\title[S\'ark\"ozy for shifted primes]{On S\'ark\"ozy's theorem for shifted primes}



\author{Ben Green}
\address{Mathematical Institute\\
Radcliffe Observatory Quarter\\
Woodstock Road\\
Oxford OX2 6GG\\
England}
\email{ben.green@maths.ox.ac.uk}
\thanks{The author is supported by a Simons Investigator grant and is grateful to the Simons Foundation for their continued support.}


\begin{abstract}
Suppose that $A \subset \{1,\dots, N\}$ has no two elements differing by $p-1$, $p$ prime. Then $|A| \ll N^{1 - c}$.
\end{abstract}

\maketitle

\setcounter{tocdepth}{1}
\tableofcontents

%

\part{Introduction}

\section{Introduction}

Let $\delta(N)$ denote the relative density of the largest set $A \subset \{1,\dots,N\}$ not containing two elements $a, a'$ differing by $p-1$, $p$ a prime (that is to say, $\delta(N) = \frac{|A|}{N}$). A well-known 1978 result of S\'ark\"ozy \cite{sar78} is that $\lim_{N \rightarrow \infty} \delta(N)= 0$, that is to say the set $\{ p - 1 : p \; \mbox{prime}\}$ is a \emph{set of recurrence}. In fact, S\'ark\"ozy's work establishes a bound
\[ \delta(N) \ll  (\log \log N)^{-2 - o(1)}.\]
This was subsequently improved by Lucier \cite{luc08}, who proved that
\[ \delta(N) \ll (\log \log N)^{-\omega(N)}\] for some function $\omega(N) \rightarrow \infty$. 
In 2008 Ruzsa and Sanders \cite{rs07} made a substantial advance on the problem by establishing that
\[ \delta(N) \ll  e^{-c(\log N)^{1/4}}\] for some $c > 0$. The exponent of $\frac{1}{4}$ here was recently improved to $\frac{1}{3}$ by Wang \cite{zoe}. This is the best bound currently in the literature.

Ruzsa and Sanders remark in their paper that, even assuming the Generalised Riemann Hypothesis (GRH), the best bound that seems to be available using their methods is $\delta(N) \ll e^{-c(\log N)^{1/2}}$. They repeat a 1982 question of Ruzsa \cite{ruz82}, asking whether a bound of the shape $\delta(N) \ll N^{- c}$ can be proven under such an assumption. Our main result in this paper is a bound of this strength, without any unproven hypothesis.

\begin{theorem}\label{main-sarkozy}
Let $A \subset \{1,\dots, N\}$ be a set such that $A - A$ contains no number of the form $p - 1$, $p$ a prime. Then $|A| \ll N^{1 - c}$ for some $c > 0$.
\end{theorem}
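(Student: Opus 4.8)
The plan is to run the density-increment/energy-increment machinery of the circle method adapted to the shifted primes $\{p-1\}$, but to drive the argument with a much stronger "pseudorandomness" input than was previously available, so that one can iterate polynomially many times rather than only $(\log N)^{1/3}$-many times. Concretely, write $\delta = |A|/N$ and fix a smooth majorant-minorant decomposition of the indicator of shifted primes in $[1,H]$ for a suitable $H$ (a power of $N$). Using a sieve weight (a Selberg or GPY-type weight) one replaces $\{p-1\}$ by a nonnegative weight $\nu$ supported near the shifted primes with $\mathbb{E}\nu = 1$ and with good control on the Fourier coefficients $\widehat\nu(\theta)$: on the major arcs $\widehat\nu$ is governed by a singular-series/Ramanujan-sum main term, and on the minor arcs one needs $|\widehat\nu(\theta)|$ to be small by a power of $H$. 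Granting such minor-arc control, a standard argument shows that if $A$ has no two elements differing by a value of the weighted shifted-prime pattern, then either $\delta$ is already small, or $A$ correlates with an arithmetic-progression/Bohr-set obstruction of bounded complexity, on which the relative density of $A$ increases by a factor $1 + \Omega(\delta^{O(1)})$.

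The key steps, in order, would be: (1) set up the transference, reducing Theorem~\ref{main-sarkozy} to a statement about the count $\sum_{a,a'\in A}\nu(a-a')$ and a dichotomy between a "randomness" case (this count is close to $\delta^2 N$, forcing a genuine shifted-prime difference when $\delta^2 N$ exceeds an error term) and a "structure" case; (2) prove the minor-arc estimate for $\widehat\nu$ — this is where the bilinear/Type-I–Type-II decomposition of the von Mangoldt-type weight (Vaughan or Heath-Brown identity) is deployed, and it must yield a saving of the form $H^{-c}$ rather than merely a power of $\log$; (3) in the structure case, pass to a sub-progression or Bohr set $B$ (of rank and radius controlled polynomially in $\delta^{-1}$) on which $A$ has increased density, making crucial use of the fact that the difference set we forbid, being essentially all of $\mathbf{Z}$ after accounting for the singular series, still forces a density increment of size polynomial in $\delta$; (4) iterate, tracking that after $k$ steps the ambient structure still has size $\geq N^{1 - o(1)}$ and the density has multiplied by $\prod(1+c\delta_i^{O(1)})$, so that the process must terminate after $\poly(\delta^{-1})$ steps, which forces $\delta \ll N^{-c}$ once $\delta$ is, say, below $(\log N)^{-1}$ — and the weaker bounds of Ruzsa–Sanders or Wang already supply a starting point below any fixed power of $\log$, though in fact one can bootstrap from scratch.

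The main obstacle, and the crux of the whole argument, is Step (2) together with making the density increment in Step (3) \emph{polynomial} in $\delta$: previous work lost a logarithm (or more) at each iteration because the available equidistribution of $p-1$ in Bohr sets or in progressions to large moduli was only of $\log$-power quality, forcing $O(\log\log\log)$-type final bounds, and the Ruzsa–Sanders refinement reached $(\log N)^{1/4}$ by a cleverer but still sub-polynomial iteration. To reach $N^{-c}$ one needs, uniformly over moduli $q$ up to a small power of $N$ and over all reduced residues, a bound of the strength "the shifted primes equidistribute in residue classes mod $q$ with a power-of-$N$ saving off the trivial bound after removing the main term", which is a Bombieri–Vinogradov-type input but needed pointwise in $q$ (not on average) — so the real content is a zero-free-region / large-sieve argument, or a sufficiently strong bilinear estimate, delivering $H^{-c}$ cancellation in $\sum_{n\le H}\Lambda(n) e(n\theta)$ uniformly on a suitable set of $\theta$. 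I expect the bulk of the paper's technical work, and the place where genuinely new ideas beyond the Ruzsa–Sanders/Wang framework are required, to be exactly this minor-arc/equidistribution estimate and the bookkeeping that converts its $H^{-c}$ saving into a per-step density increment that is a fixed power of $\delta$.
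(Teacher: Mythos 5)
Your plan runs the classical density-increment/transference machinery with an upgraded minor-arc input, but the two inputs you lean on are precisely the ones that are not available, and this is a genuine gap rather than a technical omission. First, the estimate you require in Step (2) --- an unconditional saving of a fixed power of $H$ in $\sum_{n\le H}\Lambda(n)e(n\theta)$ (equivalently, power-saving equidistribution of $p-1$ in residue classes to each individual modulus $q$ up to a small power of $N$) --- does not exist in the literature and cannot be proved by Vaughan/Heath-Brown bilinear decompositions: those give savings that are powers of the \emph{denominator} of $\theta$, so for $\theta$ on or near major arcs the behaviour is governed by zeros of Dirichlet $L$-functions, and a possible Siegel zero produces a contribution of main-term size that no bilinear identity removes. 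This is GRH-strength input, and the paper explicitly notes that even \emph{assuming} GRH the density-increment framework of Ruzsa--Sanders seems to cap at $e^{-c(\log N)^{1/2}}$, which points to the second gap: your Step (4) asserts that after $\operatorname{poly}(\delta^{-1})$ increments the ambient structure still has size $N^{1-o(1)}$, but in the shifted-primes problem each increment forces a pass to the progression $1 \pmod q$ and a genuine rescaling, and the compounding of moduli and lengths across iterations is exactly the bookkeeping obstruction that limits all previous work to sub-polynomial savings. So even granting your minor-arc estimate, the iteration as described has not been shown to terminate with $\delta\ll N^{-c}$.

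For comparison, the paper avoids both issues by abandoning the density increment altogether. It proves the stronger van der Corput property directly: one constructs a single weight $\Psi$ supported on $\{p-1\}$ with $\sum_{n\le N}\Psi(n)\ge N^{1-\kappa_2}$ and $\sum_{n\le N}\Psi(n)\cos(2\pi n\theta)\ge -N^{1-\kappa_1}$ uniformly in $\theta$, and a short positivity argument (Proposition \ref{main-construct}) then gives $|A|\ll N^{1-c}$ with no iteration. The possible low-lying zeros are not assumed away but built into the construction: $\Lambda$ is replaced by an approximant $\Lambda_{\sharp,Q,\sigma_0}$ that incorporates all zeros with $\Re\rho\ge 1-\sigma_0$ of conductor and height up to $Q$ (so the Fourier comparison enjoys an unconditional power saving), and the resulting signed contributions of those zeros are rendered harmless by a specially designed damping factor $D$, an exceptional-character factor $E$, and an archimedean weight $w$ modelled on $x^{-1/2}e^{-x}$, with log-free zero-density estimates and exceptional-zero repulsion controlling the total loss. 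If you want to salvage your approach, the honest difficulty you must confront is not the bilinear estimate but the exceptional zero and the iteration bookkeeping, and it is exactly these that the paper's construction is engineered to bypass.
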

We remark that essentially the same arguments would prove the analogous result with $p+1$ replacing $p-1$, and that trivial examples show that no such result is possible for $p + a$ with any fixed $a$ other than $\pm 1$.

We do not give a value for $c$ here, but using their explicit zero-density estimates Thorner and Zaman \cite{thorner-zaman} have shown that $c = 10^{-18}$ is admissible. 

If one assumes GRH, it is possible to establish Theorem \ref{main-sarkozy} with any $c < \frac{1}{12}$. Moreover, the argument simplifies dramatically (it is less than one third the length of the unconditional one). The interested reader may find a writeup of this argument in \cite{green-grh}. 

We will give an outline of our method for proving Theorem \ref{main-sarkozy} in the next section. The key new development is that we eschew the density increment strategy common to all of the works cited above, 
and instead study the so-called \emph{van der Corput} property, which is strictly stronger than that of being a set of recurrence, directly.

Let us recall what the van der Corput property for the shifted primes is. For each $N$, set
\[ \gamma(N) := \inf_{T \in \mathscr{T}_N} a_0, \] where $\mathscr{T}_N$ denotes the collection of all cosine polynomials
\[ T(x) = a_0 + \sum_{p \leq N} a_{p-1} \cos(2 \pi (p-1) x)\] where $a_i \in \R$ and which satisfy $T(0) = 1$, $T(x) \geq 0$ for all $x$. The van der Corput property is that $\gamma(N) \rightarrow 0$ as $N \rightarrow \infty$. We note that the van der Corput property may be formulated in different ways, and the above formulation is due to Kamae and Mend\`es France \cite{kamae-mendes-france} and Ruzsa \cite{ruz-connections}. Montgomery's \emph{Ten Lectures} \cite[Chapter 2]{montgomery} provides a good discussion.

We will show the following result.

\begin{theorem}\label{vdc}
We have $\gamma(N) \ll N^{-c}$ for some absolute constant $c > 0$. That is, there is a cosine polynomial
\[ T(x) = a_0 + \sum_{p \leq N} a_{p-1} \cos(2 \pi (p-1) x)\] where $a_i \in \R$, $T(0) = 1$, $T(x) \geq 0$ for all $x$ and $a_0 \ll N^{-c}$.
\end{theorem}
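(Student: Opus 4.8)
The plan is to construct the cosine polynomial $T$ explicitly as a smoothed, arithmetically weighted version of the indicator of shifted primes, and then verify nonnegativity via a Fourier expansion whose main term comes from the density of primes and whose error terms are controlled by zero-density estimates for Dirichlet $L$-functions. Concretely, I would start from the function $f(n) = \Lambda(n+1) w(n)$ for a suitable smooth cutoff $w$ supported on $[1,N]$, and consider the Fej\'er-type kernel $|\hat{f}|^2$ on the circle, or rather a normalized combination $T(x) = a_0 + \sum_{p \le N} a_{p-1}\cos(2\pi(p-1)x)$ engineered so that $T \ge 0$ and $T(0)=1$. The natural candidate is $T(x) = c \big| \sum_{p \le N} \rho_p\, e(px) \big|^2 + (\text{correction})$, but the honest route is to take $T$ to be a nonnegative trigonometric polynomial supported (in frequency) on $\{0\} \cup \{p-1 : p \le N\}$ whose value at $0$ is forced to be small; the mechanism for making $a_0$ small is that the spectrum $\{p-1\}$ is \emph{equidistributed} in a strong quantitative sense in residue classes and on the circle, so a carefully chosen average of $\cos(2\pi(p-1)x)$ against the primes is close to $-1$ away from a small set, leaving little room for the constant term.

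The key steps, in order, would be: (1) Reduce to producing a nonnegative measure $\mu$ on the circle whose Fourier transform is supported on $\{0\}\cup\{\pm(p-1)\}$ with $\hat{\mu}(0)$ small and $\hat\mu$ not too large elsewhere — this is the standard duality between the van der Corput constant $\gamma(N)$ and the extremal problem, and is essentially the content of the Kamae--Mend\`es France / Ruzsa formulation already cited. (2) Build $\mu$ from a major-arc/minor-arc decomposition: on a union of major arcs around rationals $a/q$ with $q \le Q = N^{\eta}$, use the circle-method asymptotic for $\sum_{p\le N}\Lambda(p)e((p-1)x)$, which after removing the pole contribution and using Siegel--Walfisz-type estimates (upgraded by zero-density bounds to allow $q$ as large as a fixed power of $N$) produces a main term proportional to the singular series times a smooth profile, of size $\asymp$ the expected density. (3) On the minor arcs, bound the exponential sum by $N^{1-c}$ using Vinogradov-type estimates; here the crucial input is that we only need a power saving, not logarithmic savings, so classical bounds suffice. (4) Assemble these into a genuinely nonnegative $T$ — this requires adding a controlled nonnegative "Fej\'er padding" to absorb the error terms, and the padding's mass is exactly what lower-bounds $a_0$; one then checks $a_0 \ll N^{-c}$ by balancing the major-arc error (a power of $N$ coming from the zero-density exponent) against the minor-arc bound.

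The main obstacle, as the introduction essentially advertises, is step (2)--(4): making the nonnegativity \emph{exact} rather than approximate while keeping $a_0$ polynomially small. Approximate nonnegativity (i.e.\ $T \ge -N^{-c}$) is comparatively easy and already known in weaker forms; the difficulty is that the primes are not equidistributed well enough pointwise on the circle to get a clean nonnegative construction, so one must either (a) pass to a cleverly chosen subprogression or weighted subset of the primes where equidistribution is provably good at the required power-of-$N$ uniformity — which is precisely where unconditional zero-density estimates (Huxley, or the forthcoming explicit Thorner--Zaman bounds) enter and why the constant $c$ is tiny — or (b) use a self-correcting construction where the square $|\sum \rho_p e(px)|^2$ manifestly nonnegative piece is corrected by a small multiple of a known nonnegative kernel, at the cost of introducing frequencies that are differences $p - p'$ rather than $p - 1$, which must then be handled by an additional sieve/Fourier argument to project back onto the allowed spectrum. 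I expect the bulk of the paper's work is in choosing the weights $\rho_p$ and the smoothing so that this projection is clean, and in the zero-density bookkeeping that converts "GRH gives $c < 1/12$" into an unconditional but unspecified small $c$. The minor-arc input, by contrast, I expect to be routine.
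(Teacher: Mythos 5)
Your outline has the right external shape (reduce to a function with constrained Fourier support, nonnegative transform up to a small constant, and prove the smallness by circle-method plus zero-density input), but two of its load-bearing steps fail, and they are precisely where the paper's actual work lies. First, your step (4) cannot be executed as described: the spectrum of $T$ is constrained to $\{0\}\cup\{\pm(p-1)\}$, so the only nonnegative ``padding'' available is the constant term $a_0$ itself. That would be fine if the cosine sum of something like $\Lambda(n+1)w(n)$ were nonnegative up to a power-saving error, but it is not: at a major arc $\theta=a/q$ the sum is asymptotically $\Re\bigl[e(-a/q)\mu(q)\bigr]\phi(q)^{-1}$ times the total mass, which for many small $q$ (e.g.\ $q=5$, $a=1$) is negative of size comparable to a constant times $N$. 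Equidistribution of $p$ modulo $q$ does not help you here; it is exactly what produces these signed Ramanujan-sum biases. So padding by a constant forces $a_0\gg 1$, and your claim that ``approximate nonnegativity is comparatively easy and already known'' is the opposite of the truth: approximate nonnegativity with a power saving, for a function supported on shifted primes with mass $N^{1-\kappa_2}$, \emph{is} Theorem \ref{mainthm-3}, i.e.\ the whole paper (what is known unconditionally is only $\gamma(N)\leq(\log N)^{-1+o(1)}$). Your alternative (b), squaring $\sum_p\rho_p e(px)$, leaves the allowed spectrum (frequencies $p-p'$) and there is no ``projection back'' that preserves both nonnegativity and the support condition; the paper never attempts this.

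The paper's mechanism, which your sketch does not contain, is multiplicative rather than additive: one takes $\Psi(n)=\Lambda'(n+1)\Lambda_Q(n-1)H_Q(n)D_{Q'}(n)E(n)w(n)$, where the shifted truncated approximant $\Lambda_Q(n-1)$ and the divisor-type weight $H_Q$ are chosen so that the local factors $\vp^+\vp^-\wp$ have nonnegative Fourier coefficients (Lemma \ref{lem43}); this is what flips the major-arc signs that kill your construction. The irregularities caused by possible zeros near $\Re s=1$ are not handled by ``balancing error terms'' but by an explicit damping factor $D_{Q'}$ built from the functions $\Phi_{r,b}$ via the Postnikov formula and exceptional-zero repulsion, together with an exceptional factor $E$ and a bespoke archimedean weight $w$ modelled on $x^{-1/2}e^{-x}$, needed because the exponential sum of $n^{\rho-1}w(n)$ must be pointwise dominated by that of $w$ — the Fej\'er kernel you propose fails exactly this test, since its transform has zeros. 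Your minor-arc step is indeed the routine part (the paper uses Vinogradov's bound plus a power-saving Fourier approximation $\Lambda\approx\Lambda_{\sharp,Q,\sigma_0}$), but without the positivity mechanism on the major arcs and the damping of zero contributions, the construction you describe cannot yield $a_0\ll N^{-c}$.
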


One may of course define the quantities $\delta(N)$ and $\gamma(N)$ with respect to an arbitrary set, rather than just for the shifted primes. In this general setting, roughly speaking we have $\delta \lessapprox \gamma$. More precisely, it follows from \cite[Chapter 2, Lemma 1]{montgomery} that one has $\delta(N) \leq (1 + \frac{H}{N}) \gamma(H)$ for any $H$, $1 \leq H \leq N$, so in particular $\delta(N) \leq 2 \gamma(N)$. We will give a self-contained proof of this latter fact (for shifted primes, but the argument is general) in Section \ref{sec2}. Bourgain \cite{bourgain-recurrence} gave an example showing that inequalities in the opposite direction do not hold in general, that is to say being a van der Corput set is strictly stronger than being a recurrent set. See also \cite{ruzsa-matolcsi-1} for related material.

We now revert to the specific setting of the shifted primes. Regarding previous results in the direction of Theorem \ref{vdc}, Kamae and Mend\`es France showed that the shifted primes have the van der Corput property, but gave no explicit bounds. Currently, the best-known upper bound for $\gamma(N)$ seems to be that of Slijep\v{c}evi\'c \cite{slijepcevic-primes}, who showed that $\gamma(N) \leq (\log N)^{-1 + o(1)}$.

Finally, we note that the best-known lower bound for $\delta(N)$ is rather small, though not as small as one might na\"{\i}vely guess. This is a result of Ruzsa \cite{ruz84}, who showed that 
\begin{equation}\label{lower-f} \delta(N) \geq \frac{1}{N}e^{(\log 2 + o(1)) \frac{\log N}{\log \log N}}.\end{equation} Note that this is asymptotically smaller than $N^{-1 + \eps}$ for any $\eps > 0$, and one would expect this to be the true behaviour of $\delta(N)$. In fact, \eqref{lower-f} may be close to the true bound. I believe (and see \cite{slijepcevic-primes}) that no better lower bound is known for $\gamma(N)$.

Proving upper bounds of this strength is surely hopeless. Indeed, if one could show that $\delta(N) < N^{-1/2 - c}$ for some $c > 0$ then, taking $A$ to be the multiples of $q$ up to $q^{2 - c'}$, one sees that there is a prime $p \leq q^{2 - c'}$ congruent to $1 \md{q}$. Such a bound is beyond what is currently known, even on GRH.

\emph{Acknowledgements.} The initial impetus for thinking about the van der Corput property came after the author became aware that the Lov\'asz $\theta$-function can be used to show that any set $A \subset \Z/q\Z$ of size $> \sqrt{q}$ contains two elements differing by a square $\md{q}$, where $q$ is any product of distinct primes $1 \md{4}$. This connection was also observed by Naslund \cite{naslund}, who says it was known to Ruzsa (unpublished). I thank Zachary Chase for interesting conversations on this topic and for presenting Lov\'asz's paper to me in an easily digestible form which made the connection to the van der Corput property clear.  I would also like to thank Ofir Gorodetsky, Lasse Grimmelt, Emmanuel Kowalski, Valeriya Kovaleva, James Maynard, Eric Naslund, Joni Ter\"av\"ainen and Jesse Thorner for various helpful comments and correspondence. Finally, I am very grateful to the two anonymous referees for a careful reading of the paper.

\emph{Notation.} In this section we introduce notation which is \emph{global} to the paper, most of which is standard. There will be many further pieces of notation of a more local nature which we will introduce later on.

Our use of asymptotic notation such as $O()$, $o()$, $\ll$ and $\gg$ is standard in analytic number theory. Sometimes we will add subscripts to this notation; thus, for example, $O_B(1)$ means a constant depending only on some parameter $B$. We will use the rougher notation $X \lessapprox Y$ to mean $X \ll N^{o(1)}Y$, where here $N$ is a global parameter throughout the paper (we are interested in subsets of $\{1,\dots,N\}$). We write $X = \tilde{O}(Y)$ to mean the same thing. Often, but not always, the $N^{o(1)}$ term will be just a fixed power of $\log N$. We will remind the reader of this notation when we first use it.

We will use the following standard arithmetic functions: the M\"obius function $\mu$, Euler's totient function $\phi$, the von Mangoldt function $\Lambda$, and its restriction to primes $\Lambda'$ (that is, the proper prime powers $p^i$, $i \geq 2$, are removed from the support). Given positive integers $a,b$ we write $(a,b)$ for their gcd and $[a,b]$ for their lcm. 

A slightly unusual notation we will use is to write $\denom(x)$ for the denominator of the rational number $x$ when written in lowest terms, or in other words the least positive integer $q$ such that $qx \in \Z$. The quantity $\denom(x)$ is well-defined for $x \in \Q/\Z$, which is the context in which we will use it.

Given $\theta \in \R$, we write $e(\theta) = e^{2\pi i \theta}$ and $\Vert \theta \Vert$ for the distance from $\theta$ to the nearest integer. There is no other use for $\Vert \cdot \Vert$ in the paper, so we do not need a more pedantic notation such as $\Vert \cdot \Vert_{\R/\Z}$. 

If $q$ is a positive integer, we write $c_q(n) := \sum_{a \in (\Z/q\Z)^*} e(\frac{an}{q})$, the Ramanujan sum.

If $p$ is a prime and $n$ is a non-zero integer, we write $v_p(n)$ to be the power of $p$ dividing $n$, and extend this to a function $v_p : \Q \setminus \{0\} \rightarrow \Z$ in the usual way, defining $v_p(\frac{a}{b}) := v_p(a) - v_p(b)$. We will, by a slight abuse of notation, define $v_p$ on $(\Z/p^n \Z) \setminus \{0\}$ by taking $v_p(x) = v_p(\overline{x})$ for any $\overline{x} \in \Z$ projecting to $x$ under the natural map.

In several later sections of the paper we make considerable use of the notation $F^+(n) := F(n+1)$, $F^-(n) := F(n - 1)$. Whilst I acknowledge that this is a somewhat unattractive and nonstandard piece of notation, it is essential in order to keep expressions manageable later on.

Finally, in a number of sections we will see the constant $M$, whose definition may be found at \eqref{m-def} and which takes the value $M = 7 \cdot 3^6 \cdot 2^{22993}$. The precise value of this constant is not especially important provided that it is large enough, but we choose to fix a value for definiteness.

\section{Non-negative trigonometric polynomials} \label{sec2}

Rather than work with the van der Corput property as defined in the introduction directly, we instead deduce bounds on $\delta(N)$ and $\gamma(N)$ from the following proposition, which encodes a property easily seen to be equivalent to the van der Corput property.

\begin{proposition}\label{main-construct}
Suppose that there is a function $\Psi : \Z \rightarrow \R$ and $\delta_1, \delta_2 > 0$ with the following properties:
\begin{enumerate}
\item $\Psi(n)$ is supported on the set $\{ p-1 : \mbox{$p$ prime}\}$;
\item $\sum_{n = 1}^N \Psi(n) \cos(2\pi n\theta) \geq -\delta_1 N$, for all $\theta \in \R/\Z$;
\item $\sum_{n = 1}^N \Psi(n) \geq \delta_2 N$.
\end{enumerate}
Then $\delta(N) \leq \frac{2\delta_1}{\delta_1 + \delta_2}$ and $\gamma(N) \leq \frac{\delta_1}{\delta_1 + \delta_2}$. Conversely, there are $\delta_1, \delta_2 > 0$ and a function $\Psi$ satisfying \textup{(1)}, \textup{(2)} and \textup{(3)} above and such that $\gamma(N) = \frac{\delta_1}{\delta_1 + \delta_2}$.
\end{proposition}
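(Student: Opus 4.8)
The plan is to establish the two halves of the proposition separately, since the "forward" direction (that a good $\Psi$ forces $\delta(N)$ and $\gamma(N)$ small) and the "converse" direction (that an optimal $\Psi$ exists realizing $\gamma(N)$ exactly) have quite different flavours. For the forward direction I would proceed as follows. Given $\Psi$ satisfying (1)--(3), normalize: set $c := \sum_{n=1}^N \Psi(n) \ge \delta_2 N > 0$ and consider $T(x) := \frac{1}{\delta_1 N + c}\big(\delta_1 N + \sum_{n=1}^N \Psi(n)\cos(2\pi n x)\big)$. By (2) this is non-negative for all $x$, by construction $T(0) = 1$, and since $\Psi$ is supported on $\{p-1 : p \le N \text{ prime}\}$ by (1), $T$ is a cosine polynomial of exactly the form admitted in $\mathscr{T}_N$, with constant term $a_0 = \frac{\delta_1 N}{\delta_1 N + c} \le \frac{\delta_1 N}{\delta_1 N + \delta_2 N} = \frac{\delta_1}{\delta_1+\delta_2}$. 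This immediately gives $\gamma(N) \le \frac{\delta_1}{\delta_1+\delta_2}$. The bound on $\delta(N)$ then follows from the inequality $\delta(N) \le 2\gamma(N)$ promised in the text (via \cite[Chapter 2, Lemma 1]{montgomery}, to be reproved in this section): $\delta(N) \le 2\gamma(N) \le \frac{2\delta_1}{\delta_1+\delta_2}$. Alternatively one can prove the $\delta$-bound directly: if $A \subset \{1,\dots,N\}$ has $A - A$ disjoint from the support of $\Psi$, expand $\sum_{a,a' \in A} \big(\delta_1 N + \sum_n \Psi(n)\cos(2\pi n(a-a'))\big) \ge 0$ using non-negativity, note the cross terms $\sum_n \Psi(n)\sum_{a,a'}\cos(2\pi n(a-a')) = \sum_n \Psi(n)|\sum_a e(na)|^2$ and separate the diagonal $a = a'$ contributing $c|A|$ from the off-diagonal which vanishes since $a - a' \ne \pm n$ for $n$ in the support; rearranging yields $|A|^2 \delta_1 N + c|A| \ge 0$ — wait, that is automatic, so one instead applies the Fejér-kernel / interval-trick from Montgomery's lemma to get the factor $N$ on the right side. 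The cleanest route is simply to cite the already-granted $\delta \le 2\gamma$ inequality.

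For the converse, the point is that the infimum defining $\gamma(N)$ is attained (the set $\mathscr{T}_N$ is defined by finitely many linear constraints plus non-negativity, and $a_0 \in [0,1]$, so this is a compact convex optimization — a quick compactness argument in the finite-dimensional space of cosine polynomials of the prescribed frequency support suffices). Let $T^*(x) = a_0^* + \sum_{p \le N} a^*_{p-1}\cos(2\pi(p-1)x)$ be an optimal polynomial, so $a_0^* = \gamma(N)$, $T^* \ge 0$, $T^*(0) = 1$. Now I would reverse the construction: define $\Psi(n) := a^*_{n}$ for $n = p-1$ with $p \le N$ prime and $\Psi(n) := 0$ otherwise. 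Property (1) holds by fiat. For (2): $\sum_{n=1}^N \Psi(n)\cos(2\pi n\theta) = T^*(\theta) - a_0^* \ge -a_0^* = -\gamma(N)$, so (2) holds with $\delta_1 = \gamma(N)/N$. For (3): evaluate at $\theta = 0$, giving $\sum_{n=1}^N \Psi(n) = T^*(0) - a_0^* = 1 - \gamma(N)$, so (3) holds with $\delta_2 = (1-\gamma(N))/N$. Then $\frac{\delta_1}{\delta_1 + \delta_2} = \frac{\gamma(N)}{\gamma(N) + (1 - \gamma(N))} = \gamma(N)$, exactly as claimed.

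The step I expect to require the most care is not any single estimate but rather the bookkeeping around the normalization: one must check that the cosine polynomial produced in the forward direction genuinely lies in $\mathscr{T}_N$ (in particular that only frequencies of the form $p - 1$ with $p \le N$ appear, which is exactly guaranteed by (1), and that the normalization by $\delta_1 N + c$ is legitimate, i.e. the denominator is strictly positive, which follows from $\delta_2 > 0$), and, in the converse direction, that the extremal $T^*$ exists — this is where a compactness argument is needed and should be stated explicitly even though it is routine. A minor subtlety worth flagging: the proposition as phrased lets $\delta_1, \delta_2$ be arbitrary positive reals, but in the construction they come out as explicit multiples of $1/N$; since only the ratio $\delta_1/(\delta_1+\delta_2)$ enters the conclusion, this is harmless, and one should remark that the constraints (2), (3) are invariant under simultaneously scaling $\Psi \mapsto \lambda\Psi$, $\delta_1 \mapsto \lambda\delta_1$, $\delta_2 \mapsto \lambda\delta_2$ for $\lambda > 0$, so there is no loss. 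Finally, the deduction $\delta(N) \le 2\gamma(N)$ should be included for self-containedness as the text promises: take $A$ with $A-A$ avoiding the shifted primes, convolve the indicator $1_A$ against a Fejér kernel on $\{1,\dots,H\}$ with $H = N$, apply the non-negative $T$ pointwise and compare Fourier coefficients; the factor $1 + H/N \le 2$ appears from the length of the convolution window relative to $N$.
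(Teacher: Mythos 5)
Your treatment of the bound $\gamma(N) \le \frac{\delta_1}{\delta_1+\delta_2}$ and of the converse is the same as the paper's: normalise $\delta_1 N + \sum_{n}\Psi(n)\cos(2\pi nx)$ to get an admissible $T$, and, in the other direction, read $\Psi$, $\delta_1 = a_0/N$, $\delta_2 = (1-a_0)/N$ off an extremal polynomial. Your remark that one must know the infimum defining $\gamma(N)$ is attained is a fair point which the paper leaves implicit; it follows by compactness as you say, e.g.\ since one may restrict to $a_0 \le 1$ and then $|a_{p-1}| = |2\int_0^1 T(x)\cos(2\pi(p-1)x)\,dx| \le 2\int_0^1 T = 2a_0 \le 2$, so the admissible coefficient vectors form a compact set in a finite-dimensional space.

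The genuine divergence is the bound $\delta(N) \le \frac{2\delta_1}{\delta_1+\delta_2}$. Your direct attempt (summing $\delta_1 N + \sum_n \Psi(n)\cos(2\pi n(a-a'))$ over pairs $a,a' \in A$) does collapse, as you noticed: evaluated only at differences there is no mechanism producing a factor $N$ on the right. The paper's fix is to run the same idea in Fourier form modulo $2N$: weight the frequency $\theta = r/2N$ by $|\sum_{a \in A} e(ra/2N)|^2$ and sum over $r \in \Z/2N\Z$. Hypotheses (2) and (3) make every summand non-negative and the $r=0$ term at least $(\delta_1+\delta_2)N|A|^2$; the $\Psi$-part of the whole sum vanishes because $m-n+x \equiv 0 \pmod{2N}$ with $m,n,x \in \{1,\dots,N\}$ forces $m-n+x=0$, so $x \in A-A$ misses the support of $\Psi$; and Parseval evaluates the remaining part as $\delta_1 N \cdot 2N|A|$. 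Hence $(\delta_1+\delta_2)N|A|^2 \le 2\delta_1 N^2 |A|$, which is the claimed bound, the factor $2$ coming from the doubled modulus. Your fallback route --- deduce the $\delta$-bound from $\gamma(N) \le \frac{\delta_1}{\delta_1+\delta_2}$ together with $\delta(N) \le 2\gamma(N)$, the latter proved by Fej\'er-kernel transference \`a la Montgomery --- is mathematically sound and yields the same constant, but note two things: within the paper's logic $\delta(N) \le 2\gamma(N)$ is presented as a corollary of this very proposition, so simply quoting it runs against the promised self-containedness unless you actually prove the Montgomery lemma; and your sketch of that lemma (``convolve $1_A$ with a Fej\'er kernel and compare Fourier coefficients'') is the one step of your write-up that is not yet a proof. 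The mod-$2N$ computation above does the same job with no kernel and no appeal to an extremal $T$.
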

\emph{Remarks.}  An immediate corollary of this is that $\delta(N) \leq 2\gamma(N)$. 

We remark that no specific properties of the set $\{ p-1 : \mbox{$p$ prime}\}$ are used in this section; the arguments and conclusions apply to recurrence and van der Corput properties of general sets.

\begin{proof}[Proof of Proposition \ref{main-construct}] We begin with the first two statements, the bounds on $\delta(N)$ and $\gamma(N)$. We will use Fourier analysis $\md{2N}$. Suppose that $A \subset \{1,\dots, N\}$ does not contain any elements differing by $p-1$, where $p \leq N$ is a prime. Then it does not contain any elements differing modulo $2N$ by $p-1$, $p \leq N$ a prime.

Now we have
\begin{align} \nonumber (\delta_1 & + \delta_2) N |A|^2 \\ & \leq \sum_{r \in \Z/2N\Z} \bigg|\sum_{n=1}^N 1_A(n) e\big(\frac{rn}{2N}\big)\bigg|^2 \bigg( \delta_1 N + \Re \sum_{x=1}^N \Psi(x) e\big(\frac{rx}{2N}\big)\bigg) ,\label{first-pos}\end{align} since the term with $r = 0$ is $\geq  (\delta_1 + \delta_2)|A|^2 N$ by (3) and all other terms are non-negative by (2). On the other hand, we have
\begin{equation}\label{parseval} \sum_{r \in \Z/2N\Z} \bigg|\sum_{n=1}^N 1_A(n) e\big(\frac{rn}{2N}\big)\bigg|^2  = 2N \!\!\!\!\!\!\sum_{m \equiv n \mdsub{2N}}\!\!\!\!\! 1_A(m) 1_A(n) = 2N |A|,\end{equation}whilst
\begin{align*} \sum_{r \in \Z/2N\Z} & \bigg|\sum_{n=1}^N 1_A(n) e\big(\frac{rn}{2N}\big)\bigg|^2  \sum_{x = 1}^N \Psi(x) e\big(\frac{rx}{2N}\big)  \\ & = 2N \sum_{m=1}^N\sum_{n = 1}^N \sum_{x = 1}^N 1_{m - n + x \equiv 0 \mdsub{2N}} 1_A(m) 1_A(n) \Psi(x) = 0,\end{align*} because $\Psi$ is supported on $p-1$, $p$ prime, and $A$ does not contain two elements differing by such a number $\md{2N}$.

Taking real parts and combining this with \eqref{first-pos}, \eqref{parseval} gives $(\delta_1 + \delta_2)|A|^2 N \leq 2\delta_1 N^2 |A|$, from which we obtain $|A| \leq \frac{2\delta_1}{\delta_1 + \delta_2} N$, as required.

For the van der Corput property, consider
\[ T(x) := \big(\delta_1 N + \sum_{n = 1}^N \Psi(n)\big)^{-1} \big(\delta_1 N + \sum_{n = 1}^N \Psi(n) \cos (2\pi nx)\big).\]
It is clear that $T(0) = 1$, $T(x) \geq 0$ for all $x$ and finally
\[ a_0= \frac{\delta_1 N}{\delta_1 N + \sum_{n = 1}^N \Psi(n)} \leq \frac{\delta_1}{\delta_1 + \delta_2}, \]
which shows that indeed $\gamma(N) \leq \frac{\delta_1}{\delta_1 + \delta_2}$.

Finally we turn to the converse statement. Suppose we have a cosine polynomial $T(x) = a_0 + \sum_{p \leq N} a_{p-1} \cos(2 \pi (p-1) x)$ where $a_i \in \R$, $T(0) = 1$, $T(x) \geq 0$ for all $x$ and $a_0 = \gamma(N)$. Define $\Psi(n)$ to be $a_{p-1}$ if $n = p-1$, $p \leq N$ a prime, and $0$ otherwise. Then property (1) is satisfied. Since $T(x) = a_0 + \sum_{n \leq N} \Psi(n) \cos (2 \pi n x)$, the assumption that $T(x) \geq 0$ implies (2), with $\delta_1 := \frac{a_0}{N}$. The assumption that $T(0) = 1$ implies (3), with $\delta_2  := \frac{1- a_0}{N}$. The claim follows.
\end{proof}

From this, we see that Theorem \ref{main-sarkozy} and Theorem \ref{vdc} are both consequences of the following result (which is in fact equivalent to Theorem \ref{vdc}).

\begin{theorem}\label{mainthm-3}
There are absolute constants $0 < \kappa_2 < \kappa_1$ and a function $\Psi : \Z \rightarrow \R$ with the following properties:
\begin{enumerate}
\item $\Psi(n)$ is supported on the set $\{ p-1 : \mbox{$p$ prime, $p \leq N$} \}$;
\item $\sum_{n = 1}^N \Psi(n) \cos(2\pi \theta n) \geq - N^{1 - \kappa_1}$, for all $\theta \in \R/\Z$;
\item $\sum_{n = 1}^N \Psi(n) \geq N^{1 - \kappa_2}$.
\end{enumerate}
\end{theorem}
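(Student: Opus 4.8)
The plan is to construct $\Psi$ as a smoothed, Fourier-truncated version of the von Mangoldt function restricted to primes, shifted by $1$. Concretely, I would start from the function $n \mapsto \Lambda'(n+1)$ supported on $\{p - 1 : p \leq N\}$, which trivially satisfies a version of (1) and (3) (by the prime number theorem $\sum_{n \leq N} \Lambda'(n+1) \sim N$, comfortably $\geq N^{1 - \kappa_2}$ for any $\kappa_2 > 0$). The obstruction is property (2): the exponential sum $\sum_{n \leq N} \Lambda'(n+1) e(\theta n)$ is \emph{not} everywhere $\geq -N^{1-\kappa_1}$ — near rationals $\theta = a/q$ with small $q$ the major-arc behaviour produces a main term of size roughly $\frac{\mu(q)}{\phi(q)} c_q(\cdot) N$ which can be negative of size $\asymp N/\phi(q)$. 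So the real content is to \emph{subtract off these major-arc contributions} using an auxiliary arithmetic weight, while keeping the support condition (1) intact and not destroying (3).

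The key steps, in order, would be: (i) fix a threshold $Q = N^{\eta}$ for a small $\eta > 0$ and perform a circle-method / Hardy–Littlewood decomposition of $\sum_{n\leq N}\Lambda'(n+1)e(\theta n)$ into major arcs around $a/q$, $q \leq Q$, and minor arcs; (ii) on the minor arcs, invoke Vinogradov-type bounds (in the sharp form available from zero-density estimates, which is exactly where the reference to Thorner–Zaman enters) to get $\ll N^{1 - c}$; (iii) on the major arcs, identify the main term and design a correction $\Psi = \Lambda'(\cdot + 1) - (\text{correction})$, where the correction is itself supported on shifted primes and built to cancel the singular-series/Ramanujan-sum main term $\frac{\mathfrak{S}(q) N}{q} c_q(n)$ for each $q \leq Q$ — this is the analogue, in the van der Corput framework, of passing to a ``$W$-tricked'' or Ramanujan-expanded model; (iv) check that the correction has $\ell^1$ mass $o(N)$ (it should be down by a factor like $\log Q / \log N$ or a small power), so that (3) survives with $\kappa_2$ slightly larger than the minor-arc exponent's companion; (v) assemble the bound, taking $\kappa_1 < c$ from the minor-arc input and $\kappa_2$ chosen between $0$ and $\kappa_1$.

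I expect the main obstacle to be step (iii): arranging a correction term that \emph{simultaneously} (a) is supported exactly on $\{p-1 : p \leq N\}$ — one cannot simply subtract a smooth function or a multiple of $c_q$, since those are supported everywhere — (b) reproduces the major-arc main term of $\Lambda'(\cdot+1)$ to within $N^{1-\kappa_1}$ uniformly over all $q \leq Q$ and all $\theta$ in the corresponding arcs, and (c) has small total mass. The natural device is to replace each ``bad'' residue class mod small moduli not by deletion but by a \emph{reweighting} of primes inside balanced combinations of arithmetic progressions (so that the Ramanujan sum $c_q$ appears with the right sign to cancel), which forces a careful bookkeeping of the interaction between different moduli $q \leq Q$ — essentially an inclusion–exclusion over the squarefree $q$ with Möbius coefficients, and one must control the resulting $\ell^1$ loss, which is the reason the final $c$ (hence $\kappa_1, \kappa_2$) comes out extremely small. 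A secondary technical point is uniformity: the major/minor dichotomy must hold with the same $Q$ across the whole circle, which is what ties the strength of (2) to the currently-available (effective) zero-density estimates rather than to GRH; under GRH one gets $Q$ as large as $N^{1/2-o(1)}$ and the cleaner exponent $\kappa_1 < 1/12$ quoted in the introduction.
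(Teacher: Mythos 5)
There is a genuine gap at your steps (iii)--(iv), and it is fatal to the subtraction strategy. The negativity of $\sum_{n\le N}\Lambda'(n+1)\cos(2\pi \theta n)$ at a rational $\theta=a/q$ with $q$ \emph{fixed and small} is of size a constant times $N$, not $N^{1-\kappa}$: the support omits exactly the class $n\equiv -1\ \mdlem{q}$ and is uniform on the others, so e.g.\ at $\theta=1/5$ the sum is $\approx \frac{N}{4}\bigl(1+\cos\frac{2\pi}{5}+\cos\frac{4\pi}{5}+\cos\frac{6\pi}{5}\bigr)\approx -0.077\,N$. Any correction $g$ you subtract must therefore satisfy $|\sum_n g(n)e(n/5)|\gg N$, hence has $\ell^1$ mass $\gg N$ --- not ``down by a factor $\log Q/\log N$ or a small power'' as asserted in (iv). And once $g$ has mass of order $N$ and is itself supported on shifted primes (signed weights on primes in progressions), its own exponential sums at other rationals reintroduce negative contributions of the same order, so the construction recurses without converging: one cannot cancel the major-arc main terms by subtraction while preserving the support condition. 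This is exactly why the paper abandons the ``take $\Lambda'(n+1)$ and correct it'' philosophy and instead takes a \emph{product}, $\Psi=\Lambda'(n+1)\Lambda_Q(n-1)H_Q(n)D_{Q'}(n)E(n)w(n)$ as in \eqref{psi-prod}: the factors $\Lambda_Q(n-1)$ and $H_Q(n)$ reweight the support multiplicatively so that locally at each prime the weight on the residues $0$, $\pm 1$, and the rest is proportional to $4,0,1$, which is Fourier-positive (Lemma \ref{lem43}); non-negativity of the exponential sum is then inherited from Fourier-positivity of the weight, not won by cancellation.

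A second, independent gap is that you treat the unconditional input as purely a minor-arc matter (Vinogradov bounds sharpened by zero-density estimates). The real unconditional difficulty sits on the \emph{major} arcs: zeros $\rho$ of $L(s,\chi)$ with $\Re\rho$ close to $1$ and conductor up to $N^{\eta}$ contribute terms of size $N^{\Re\rho}$ with uncontrolled phase, so no admissible $\kappa_1$ exists without further mechanisms. The paper must (a) replace $\Lambda'$ by the approximant $\Lambda_{\sharp,Q,\sigmax}$ of Definition \ref{sharp-approx-def}, which carries the zero terms explicitly; (b) construct the damping factor $D_{Q'}$ (Proposition \ref{key-D-prop}) so that each zero's contribution is Fourier-dominated by the main term; (c) insert $E(n)=1_{q_1\mid n}$ in the exceptional case and invoke Page's bound (Lemma \ref{page-theorem}) to salvage property (3); and (d) use the archimedean weight $w$ modelled on $x^{-1/2}e^{-x}$, chosen precisely so that $\sum_n n^{\rho-1}w(n)e(\eta n)$ is pointwise dominated by $\sum_n w(n)\cos(2\pi\eta n)$ (Proposition \ref{prop15.2}, items (4) and (5)) --- a property that Fej\'er-type cutoffs implicit in your sketch do not have. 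None of these appears in your outline, and without them the argument cannot close unconditionally.
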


The proof of this result occupies the rest of the paper.

\section{Sketch of the construction and plan of the paper}\label{section3}

\subsection{Sketch of the construction}\label{subsec3.1}
The main business of the paper is the construction of the function $\Psi$ in Proposition \ref{main-construct}.

The construction is complicated, so we begin by explaining roughly where it comes from. The starting point is to observe that 
\begin{equation}\label{psi-0} \Psi_0(n) := \Lambda'(n+1)\Lambda'(n - 1) \tau(n)^2  \big(1 - \frac{|n|}{N}\big)_+ \end{equation} ought to have something like the desired properties, where $\Lambda'$ is the von Mangoldt function restricted to primes, $\tau$ is the divisor function, and $x_+$ denotes $\max(x, 0)$. To see why, consider how one would expect this function to behave on progressions $n \equiv a \md p$, where $p$ is a prime of moderate size. The $\Lambda'(n+1)$ term will be uniformly distributed across $a \not\equiv -1 \md{p}$, whilst the $\Lambda'(n - 1)$ term will be uniformly distributed where $a \not\equiv 1 \md{p}$. Thus one expects, assuming that $\Lambda'(n+1)$ and $\Lambda'(n-1)$ behave suitably independently, that $\Lambda'(n-1)\Lambda'(n+1)$ is uniformly distributed where $a \not\equiv \pm 1 \md{p}$. The symmetry about 0 here guarantees that the exponential sum of $\Lambda'(n+1)\Lambda'(n-1)$ at points $\frac{r}{p}$ ought to be essentially real-valued. The divisor function $\tau(n)$ has roughly twice the weight on $a \equiv 0 \md{p}$ as it does on each of the other values of $a$ (for each divisor $d$ of $n/p$, we get both $d$ and $pd$ as divisors of $n$). Very roughly, this leads one to expect that $\Psi_0(n)$ has total weight proportional to $\nu(a)$ on $n \equiv a \md{p}$, where $\nu(0) = 4$, $\nu(\pm 1) = 0$ and $\nu(a) = 1$ for all other $a$. This weight has exponential sum at $\frac{r}{p}$ which is both real \emph{and} positive, which is a very helpful step towards proving (2). Moreover, by considering the multiplicative behaviour of the functions concerned, one can hope for a similar statement at points $\frac{b}{q}$, $q$ squarefree. Finally, the tent function $(1 - \frac{|n|}{N})_+$ has non-negative Fourier transform  (the Fej\'er kernel) and so one might expect $\Psi_0$ to satisfy something like property (2).

Unfortunately, we cannot make any aspect of the preceding discussion rigorous. Indeed, the function $\Lambda'(n+1)\Lambda'(n-1)$ may, for all we know, be essentially zero (the negation of this is the twin prime conjecture) and so certainly understanding the asymptotics of its exponential sum is hopeless.

Our actual construction of $\Psi$ is inspired by \eqref{psi-0} but includes various truncated and modified versions of the arithmetic functions involved as well as some extra terms. Roughly, we will construct it as a product
\begin{equation}\label{psi-prod} \Psi(n) := \Lambda'(n+1) \Lambda_Q(n - 1) H_Q(n) D_{Q'}(n)  E(n) w(n).   \end{equation}
Here, $Q , Q'$ are certain parameters to be specified later, with $Q' < Q$. Let us give a very brief overview of what each of these terms are and their purpose.

The $\Lambda_Q$ term is a kind of Fourier-truncation of the von Mangoldt function given by
\begin{equation}\label{lambda-q-def}  \Lambda_Q(n) := \sum_{q \leq Q} \frac{\mu(q)}{\phi(q)} c_q(n),\end{equation} where
$c_q(n) := \sum_{a \in (\Z/q\Z)^*} e(\frac{an}{q})$ is a Ramanujan sum. This function models the expected distribution of $\Lambda$ on progressions but is much easier to analyse than $\Lambda$ itself. It was used by Heath-Brown \cite{heath-brown} to give an alternative proof of Vinogradov's three primes theorem.

The factor $H_Q$ behaves (somewhat) like a Fourier-truncated truncated version of $\tau^2$. It is defined as
\begin{equation}\label{hq-def} H_Q(n) := \sum_{q \leq Q} \eta(q) c_q(n)\quad \mbox{where} \quad \eta(q) := \mu^2(q)\prod_{p | q} \frac{3}{p+3}.\end{equation}  

Thus one should think of $\Lambda'(n+1) \Lambda_Q(n-1) H_Q(n)$ as a kind of Fourier-truncated variant of the function $\Lambda(n+1)\Lambda(n-1) \tau(n)^2$ discussed at the beginning of the section. We discussed, on a heuristic level, the distribution of this latter function in progressions of small modulus (prime, in the discussion, but we need the general case), and a similar heuristic discussion applies to $\Lambda'(n+1)\Lambda_Q(n-1) H_Q(n)$. This still assumes that $\Lambda$ is well-distributed in progression of small modulus, where ``small'' means up to a power of $N$. Unfortunately, nothing like this is known to be true: possible zeros near to $\Re s = 1$ of Dirichlet $L$-functions $L(s,\chi)$ can significantly skew the distribution.

To account for these irregularities we introduce the term $D_{Q'}$, which we call a \emph{damping term}. This term has the form
\[ D_{Q'}(n) = \sum_{r \leq Q'} \sum_{b \in \Z/r^3 \Z} \alpha_{b,r} r^{\frac{5}{6}} 1_{r | n} e\big(\frac{bn}{r^3}\big),\] where the $\alpha_{b,r}$ are non-negative real numbers summing to $1$. (There is some scope for varying the choice of the numbers $\frac{5}{6}$ and $3$, but it is crucial that $\frac{5}{6} < 1$.)  Its role is essentially to ensure that the irregularities in distribution just discussed still lead to non-negative Fourier coefficients. 

The construction of $D_{Q'}(n)$ is rather complicated. It is defined in Section \ref{sec13} as an infinite series \eqref{d-series}, but this definition depends on material developed in several earlier sections. Roughly, the main contributions to the $\alpha_{b,r}$ come from characters $\chi$ which have zeros particularly close to $\Re s = 1$. The need for the twists by $e(\frac{bn}{r^3})$ arises because of the properties of Dirichlet characters to prime power moduli with large exponent, in particular the fact that these can behave like additive characters on large subgroups. 

The function $E(n)$ is either the constant function $1$ or an indicator $1_{q_1 | n}$. The latter case arises, essentially, when there is an exceptional character $\chi_1$ of conductor $q_1$, that is to say a character such that $L(s,\chi)$ has a zero very close to $s = 1$.

Finally, $w(n)$ is an ``archimedean'' factor. The tent function $w(n) = (1 - \frac{|n|}{N})_+$ turns out not to work, essentially because one needs the exponential sum of $n^{\rho - 1} w(n)$ to be pointwise dominated by that of $w(n)$, for $\rho$ a possible zero of some $L(s,\chi)$. This is not the case for the tent function, since the exponential sum of $w(n)$ in that case (the Fej\'er kernel) has zeros. We are instead forced to make a much more elaborate construction using properties of the function $W(x) = x^{-\frac{1}{2}} e^{-x}$, whose Fourier transform has a relatively slow and controlled decay.

\subsection{Plan of the paper}

The main business of the paper is to properly define $\Psi$ as given by \eqref{psi-prod} and to show that it satisfies the conclusions of Theorem \ref{mainthm-3}. This is a lengthy task.

Part \ref{partii} of the paper may be read independently of the rest of the paper and may plausibly be useful elsewhere. In this section we construct another approximant $\Lambda_{\sharp, Q,\sigmax}$ to $\Lambda$. Unlike $\Lambda_Q$, this approximant sees possible nontrivial zeros with $\Re \rho \geq 1 - \sigmax$ up to height $Q$ of all Dirichlet $L$-functions $L(s,\chi)$ with $\chi$ of conductor $\leq Q$. Including this information about zeros makes $\Lambda_{\sharp, Q,\sigmax}$ a more complicated object than $\Lambda_Q$, but it allows us to rigorously show that exponential sums $\sum_{n \leq N} (\Lambda(n) - \Lambda_{\sharp, Q,\sigmax}(n)) e(n \theta)$ enjoy a \emph{power saving} $\lessapprox NQ^{-\frac{1}{6}}$ over the trivial bound of $N$, uniformly in $\theta$. This allows us to reduce matters to the analysis of, instead of $\Psi$, the modification $\Psi'$ in which $\Lambda(n+1)$ is replaced by $\Lambda_{\sharp, Q,\sigmax}(n+1)$, for suitable $Q,\sigmax$.  

Part \ref{partiii} of the paper assembles various preliminaries for the main argument. In Section \ref{sec7}, we describe some zero-density estimates and their consequences. The material here is similar to what one would find in a proof of Linnik's theorem on the least prime in a progression, which is not surprising since Theorem \ref{main-sarkozy} is easily seen to imply that the progression $1 \md{q}$ contains a prime of size $q^{O(1)}$. What we need here is a little stronger than what is required for Linnik's theorem, where the modulus is fixed. Our main reference is Bombieri's book \cite{bombieri}. 

Section \ref{rat-fourier-sec} discusses periodic functions on $\Z$, their rational Fourier expansions, and the notion of positivity. It also introduces some of the key examples and functions in the paper such as $H_Q$ as described above. 

Section \ref{comparison-sec} and the related Section \ref{sec10} are rather long and technical, though crucial to the paper. They are concerned with passing between truncated series $\sum_{q \leq Q} \alpha(q) c_q(n)$, which are good for Fourier analysis, and untruncated series such as $\sum_{q | Q!} \alpha(q) c_q(n)$, which are much easier to understand combinatorially. Section 9 concerns triple products of such series and their shifts, and in Section 10 characters are introduced as well. Despite the technical nature of these sections, the statement of the main result of Section 9, Proposition \ref{ram-truncate}, is self-contained and potentially applicable in other contexts.

Section \ref{sec11} develops material necessary for the construction of the damping term $D_{Q'}$. A crucial input here is the Postnikov character formula which, due to the lack of a reference suitable for our purposes, is developed from first principles in Appendix \ref{appA}.

Section \ref{sec12} constructs the archimedean weight $w(n)$, modelled on $x^{-\frac{1}{2}} e^{-x}$. The main result here, Proposition \ref{prop15.2}, is self-contained and is also potentially of some use elsewhere. 

Finally, we come to Part \ref{p-adic-part}, where the function $\Psi$ is rigorously constructed and shown to have the desired properties. The construction of the damping function $D_{Q'}$ is probably the most conceptually interesting part of this analysis. The fully detailed definition of $\Psi$ is given in Definition \ref{psi-definition}, and finally in the rest of that section it is shown that $\Psi$ satisfies Theorem \ref{mainthm-3}.\vspace*{11pt}

\emph{Some comments for the reader.} I have endeavoured to make key intermediate results self-contained where possible, so that it is possible to understand the basic structure of the argument without reading all the proofs. There are a very large number of hyperlinks in the paper to facilitate recall of key definitions and intermediate results. Consequently, I recommend that anyone interested in reading the paper use a suitable PDF viewer rather than the printed version.

\part{An approximant for the primes}\label{partii}

The objective of this part of the paper is define certain approximants $\Lambda_{\sharp}$ to the von Mangoldt function $\Lambda$ which are very close to it in Fourier space. The definition (which requires a little setting up) may be found in Definition \ref{sharp-approx-def}, and the key result is Proposition \ref{approx-fourier}. Recall that $N$ is a large integer, a global parameter in the paper, and that $X \lessapprox Y$ means that $X \ll N^{o(1)} Y$.

\section{Definition of the approximant}

We adopt the following fairly standard convention in analytic number theory: $\rho$ denotes a nontrivial zero of some $L$-function, and we write $\rho = \beta + i \gamma$; somewhat less standardly, we always write $\sigma := 1 - \beta$, thus $\Re \rho = 1 - \sigma$.

\begin{definition}\label{zero-ht-def}
Let $Q$ be a positive real parameter, and let $\sigmax > 0$. Write $\Xi_Q(\sigmax) \subset \C$ for the multiset consisting of all $\rho = \beta + i \gamma = 1 - \sigma + i \gamma$ with $\sigma \leq \sigmax$ and $|\gamma| \leq Q$ which are zeros of some Dirichlet $L$-function $L(s,\chi)$ with conductor at most $Q$. 
\end{definition}
\emph{Remarks.} Later on we will fix $\sigmax = \frac{1}{48}$, a convenient but not optimal choice which works for all our arguments. 

When we say that $\Xi_Q(\sigmax)$ is a multiset, we mean the following. If $\rho$ occurs as a zero of multiplicity $r_i$ for primitive Dirichlet characters $\chi_i$, $i = 1,\dots, m$, then $\rho$ should appear in $\Xi_Q(\sigmax)$ with multiplicity $r_1 + \dots + r_m$. Conjecturally, every $\rho$ is a simple zero of a unique $L(s,\chi)$ (and so $\Xi_Q(\sigmax)$ is simply a set) but this is not known. In fact $\Xi_Q(\sigmax)$ is conjecturally \emph{empty} if $\sigmax < \frac{1}{2}$. If $\rho \in \Xi_Q(\sigmax)$, we write $\chi_{\rho}$ for the primitive Dirichlet character associated to it. 

Suppose $\psi$ is a Dirichlet character (not necessarily primitive) to modulus $r$. Recall that the \emph{Gauss sum} $\tau(\psi)$ is defined by
\[ \tau(\psi) := \sum_{b \in (\Z/r\Z)^*} \psi(b) e\big(\frac{b}{r}\big).\]

The following three definitions are important ones in the paper.

\begin{definition}\label{c-chi-def}
Let $\chi$ be a primitive Dirichlet character to modulus $q$. Let $r$ be a positive integer and let $b \in (\Z/r\Z)^*$. Then if $q \mid r$ we define
\begin{equation}\label{c-def} c_{\chi}(b,r) := \frac{1}{\phi(r)} \chi (b) \mu\big(\frac{r}{q}\big) \overline{\chi\big(\frac{r}{q}\big)} \overline{\tau(\chi)},\end{equation} and set $c_{\chi}(b,r) = 0$ if $q \nmid r$.
\end{definition}

\begin{definition}\label{def43}
Let $\chi$ be a primitive Dirichlet character to modulus $q$. Then we define
\begin{equation}\label{fourier-trunc} F_{\chi, Q}(n) :=  \sum_{\substack{ q | r \\ \frac{r}{q} \leq Q}} \sum_{b \in (\Z/r\Z)^*} c_{\chi}(b,r) e\big(\frac{bn}{r}\big),\end{equation} where $c_\chi$ is as given in \eqref{c-def}.\end{definition}

Finally, we can define the approximants $\Lambda_{\sharp}$ themselves.
\begin{definition}\label{sharp-approx-def}
If $n$ is a positive integer, define
\begin{equation}\label{lam-sharp-def} \Lambda_{\sharp,Q,\sigmax}(n) :=  \Lambda_Q(n) - \sum_{\rho \in \Xi_Q(\sigmax)} n^{\rho - 1} F_{\chi_{\rho}, Q}(n).\end{equation}
\end{definition}

\emph{Remarks.} As we progress with our analysis we will understand more about why these are relevant definitions. For instance, we will see in Lemma \ref{completed-forms} that $F_{\chi, Q}$ can be thought of as a kind of ``Fourier-truncated'' version of $\overline{\chi}$ restricted to numbers with no prime factors $\leq Q$.

When $\chi = \chi_0$, the (primitive) principal character, we have $c_{\chi}(b,r) = \frac{\mu(r)}{\phi(r)}$ and so 
\begin{equation}\label{hb-link} F_{\chi_0, Q}(n) = \Lambda_Q(n),\end{equation} where $\Lambda_Q$ is the approximant \eqref{lambda-q-def}.
Given this, it is possible and often convenient to write 
\begin{equation}\label{lam-sharp-conv} \Lambda_{\sharp, Q,\sigmax}(n) = \sum_{\rho \in \Xi_Q^*(\sigmax)} \eps_{\rho} n^{\rho - 1} F_{\chi_{\rho}, Q}(n).\end{equation}
Here, we set $\Xi_Q^*(\sigmax) = \Xi_Q(\sigmax) \cup \{1\}$ and we adopt the convention that $\chi_1$ is the principal character (of which $1$ is of course a pole, not a zero), and $\eps_{\rho} = -1$ or $1$ according as $\rho$ is a zero or a pole (i.e. $\rho = 1$).

I am not aware of $\Lambda_{\sharp, Q,\sigmax}$ having appeared previously in the literature. Ter\"av\"ainen \cite[Definition 5.2]{joni} makes a vaguely related definition (but only using the zeros of $\zeta$), and there is some similarity with the work of Pintz \cite{pintz1,pintz2} on the exceptional set for the Goldbach problem.

As a final remark, we note that on the assumption of GRH we have $\Lambda_{\sharp, Q,\sigmax} = \Lambda_Q$ for all $\sigmax < \frac{1}{2}$, since $\Xi_Q(\sigmax)$ is empty.

\begin{lemma}\label{lem45} $\Lambda_{\sharp, Q,\sigmax}$ is real-valued.
\end{lemma}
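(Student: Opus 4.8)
The strategy is to show that the non-real contributions in the defining sum \eqref{lam-sharp-def} cancel in conjugate pairs. The function $\Lambda_Q(n)$ is manifestly real: $c_q(n) = \sum_{a \in (\Z/q\Z)^*} e(\frac{an}{q})$ is real since $a \mapsto -a$ is a bijection of $(\Z/q\Z)^*$, and the coefficients $\frac{\mu(q)}{\phi(q)}$ are real. So it suffices to prove that
\[ S(n) := \sum_{\rho \in \Xi_Q(\sigmax)} n^{\rho - 1} F_{\chi_\rho, Q}(n) \]
is real-valued for every positive integer $n$, i.e. that $\overline{S(n)} = S(n)$.

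The key point is an involution on $\Xi_Q(\sigmax)$. If $\rho = \beta + i\gamma$ is a zero of $L(s,\chi)$ with $\chi$ primitive of conductor $q \leq Q$ and $\sigma = 1 - \beta \leq \sigmax$, then by the functional equation / reflection principle $\overline{\rho} = \beta - i\gamma$ is a zero of $L(s,\overline{\chi})$, and $\overline{\chi}$ is again primitive of the same conductor $q \leq Q$; moreover $|{-\gamma}| = |\gamma| \leq Q$, so $\overline{\rho} \in \Xi_Q(\sigmax)$, with the same multiplicity (since conjugation is a bijection on the relevant set of characters and preserves zero multiplicities), and $\chi_{\overline{\rho}} = \overline{\chi_\rho}$. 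Thus $\rho \mapsto \overline{\rho}$ is a multiset-involution of $\Xi_Q(\sigmax)$. I would then check that the $\rho$-term and the $\overline{\rho}$-term in $S(n)$ are complex conjugates of each other, which gives $\overline{S(n)} = S(n)$. (One subtlety: if $\gamma = 0$, i.e. $\rho$ is a real zero of a real character, the term is its own conjugate, so one must separately verify that each such term is individually real — but this follows from the same computation below with $\chi = \overline{\chi}$.)

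For the term-by-term conjugation, first $\overline{n^{\rho-1}} = n^{\overline{\rho}-1}$ since $n$ is a positive real. It remains to show $\overline{F_{\chi,Q}(n)} = F_{\overline{\chi},Q}(n)$ for a primitive character $\chi$ of conductor $q$. Conjugating \eqref{fourier-trunc}: the sum is over the same set of $r$ (those with $q \mid r$, $r/q \leq Q$) and $b \in (\Z/r\Z)^*$, and $\overline{e(\frac{bn}{r})} = e(\frac{-bn}{r})$; reindexing $b \mapsto -b$ (a bijection of $(\Z/r\Z)^*$) turns this back into $e(\frac{bn}{r})$, so I just need $\overline{c_\chi(-b,r)} = c_{\overline{\chi}}(b,r)$. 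From \eqref{c-def}, $\overline{c_\chi(-b,r)} = \frac{1}{\phi(r)} \overline{\chi(-b)} \,\mu(\frac{r}{q}) \,\chi(\frac{r}{q}) \,\tau(\chi)$. Now $\overline{\chi(-b)} = \overline{\chi}(-b) = \overline{\chi}(b)\overline{\chi}(-1)$, and the standard Gauss sum identities give $\overline{\tau(\chi)} = \chi(-1)\tau(\overline{\chi})$, equivalently $\tau(\chi) = \overline{\chi(-1)}\,\overline{\tau(\overline{\chi})}$; substituting, the two $\chi(-1)$-type factors cancel (using $|\chi(-1)| = 1$, $\overline{\chi(-1)} = \chi(-1)^{-1}$) and one is left with $\frac{1}{\phi(r)}\overline{\chi}(b)\,\mu(\frac{r}{q})\,\overline{\overline{\chi}(\frac{r}{q})}\,\overline{\tau(\overline{\chi})} = c_{\overline{\chi}}(b,r)$, as desired. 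Here one should be slightly careful about whether $q \mid \frac{r}{q}\cdot q$ issues or $\mu(\frac{r}{q}) = 0$ cases, but those are harmless since both sides vanish together.

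\textbf{Main obstacle.} The only real content is the Gauss-sum bookkeeping in the last step — getting the $\chi(-1)$ factors and the $\tau(\chi)$ vs. $\tau(\overline{\chi})$ relation to match up exactly, rather than up to an unwanted sign — together with making sure the multiset structure of $\Xi_Q(\sigmax)$ is genuinely preserved under $\rho \mapsto \overline{\rho}$ (i.e. that multiplicities coming from all the primitive characters, summed as in the definition, are conjugation-invariant). Neither is hard; it is essentially a careful unwinding of the definitions.
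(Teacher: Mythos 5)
Your proposal is correct and follows essentially the same route as the paper: the paper also proves $c_{\overline{\chi}}(b,r) = \overline{c_{\chi}(-b,r)}$ via the Gauss sum identity $\tau(\overline{\chi}) = \chi(-1)\overline{\tau(\chi)}$, deduces $F_{\overline{\chi},Q} = \overline{F_{\chi,Q}}$, and uses $L(s,\chi) = \overline{L(\overline{s},\overline{\chi})}$ to see that $\overline{\rho} \in \Xi_Q(\sigma_0)$ with $\chi_{\overline{\rho}} = \overline{\chi_{\rho}}$, so the summands in \eqref{lam-sharp-def} pair off in conjugate pairs (your extra remarks on the reality of $\Lambda_Q$ and on self-conjugate terms are fine and harmless).
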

\begin{proof}
One may check using $\tau(\overline{\chi}) = \chi(-1) \overline{\tau(\chi)}$ that $c_{\overline{\chi}}(b,r) = \overline{c_{\chi}(-b, r)}$ and thus that 
\begin{equation}\label{conjs} F_{\overline{\chi}, Q} = \overline{F_{\chi, Q}}.\end{equation}
Suppose that $\rho \in \Xi_Q(\sigmax)$, thus $L(\rho, \chi_{\rho}) = 0$. Then $L(\overline{\rho}, \overline{\chi_{\rho}}) = 0$ (note that $L(s,\chi) = \overline{L(\overline{s}, \overline{\chi})}$, since both sides are meromorphic and they agree for $\Re (s) > 1$). That is, $\overline{\rho} \in \Xi_Q(\sigmax)$ and $\chi_{\overline{\rho}} = \overline{\chi_{\rho}}$ .
It follows from this and \eqref{conjs} that $n^{\rho - 1} F_{\chi_{\rho}, Q}(n)  = \overline{n^{\overline{\rho} - 1} F_{\chi_{\overline{\rho}}, Q}(n)}$, thus the summands in \eqref{lam-sharp-def} come in conjugate pairs.
\end{proof}

Finally we state the key Fourier approximation property.

\begin{proposition}\label{approx-fourier}
Suppose that $\sigmax \leq \frac{1}{9}$ and that $Q \leq N^{\sigmax}$. Then we have
\[ \big| \sum_{n \leq N} (\Lambda(n) - \Lambda_{\sharp, Q,\sigmax}(n)) e(n\theta)\big| \lessapprox NQ^{-\frac{1}{6}}\] and
\[ \big| \sum_{n \leq N} (\Lambda'(n) - \Lambda_{\sharp, Q,\sigmax}(n)) e(n\theta)\big| \lessapprox NQ^{-\frac{1}{6}}\]
uniformly for $\theta \in \R/\Z$.
\end{proposition}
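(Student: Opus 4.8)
\textbf{Proof plan for Proposition \ref{approx-fourier}.}

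The plan is to compare the exponential sum of $\Lambda$ with that of $\Lambda_{\sharp,Q,\sigmax}$ via a contour-integration / explicit-formula argument carried out on each arithmetic progression separately, then summed back up. First I would reduce the second estimate to the first: since $\Lambda(n) - \Lambda'(n)$ is supported on proper prime powers, $\sum_{n\le N}(\Lambda(n)-\Lambda'(n))e(n\theta) \ll \sqrt N \log N = \tilde O(N^{1/2})$ trivially, which is absorbed into $NQ^{-1/6}$ since $Q \le N^{\sigmax}$ with $\sigmax \le \tfrac19$ (so $Q^{-1/6} \ge N^{-1/54} \gg N^{-1/2}$). So it suffices to handle $\Lambda$ itself.

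Next I would dyadically decompose $\theta$ by a Dirichlet/Hardy--Littlewood major-arc argument: write $\theta = a/q + \beta$ with $(a,q)=1$, $q\le Q$ (say), $|\beta|\le 1/(qQ)$; for $q$ in the range $Q < q$ the classical minor-arc bound for $\Lambda$ (Vinogradov, or via Vaughan's identity) gives $\tilde O(N/\sqrt Q + N^{4/5} + \sqrt{Nq})$, which is $\tilde O(NQ^{-1/6})$ in that regime, and the same trivial bound applies to $\Lambda_{\sharp,Q,\sigmax}$ on minor arcs once one checks (using the definitions \eqref{lambda-q-def}, \eqref{fourier-trunc}, \eqref{c-def} and the zero-density input of Section \ref{sec7} to control $|\Xi_Q(\sigmax)|$) that its exponential sum is also small there — this is essentially because $\Lambda_Q$ and each $F_{\chi_\rho,Q}$ are short linear combinations of Ramanujan/additive characters $e(bn/r)$ with $r$ not too large. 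On the major arcs one substitutes the rational-Fourier expansions: $\sum_{n\le N}\Lambda_Q(n)e(n\theta)$ becomes $\sum_{q'\le Q}\frac{\mu(q')}{\phi(q')}\sum_b c_{q'}(\cdots)\cdot(\text{geometric-type sum in }\beta)$, and likewise for the $F_{\chi_\rho,Q}$ terms, where the factor $n^{\rho-1}$ is handled by partial summation against the smooth weight in $\beta$.

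The heart of the matter, and the step I expect to be the main obstacle, is the local estimate: for a fixed modulus $q'$ and residue $a'$, one must show that $\sum_{n\le N,\, n\equiv a'\,(q')}\Lambda(n)e(n\beta)$ is accurately reproduced by the corresponding piece of $\Lambda_{\sharp,Q,\sigmax}$, with error saving a power $Q^{-1/6}$. This is done by Mellin inversion / Perron's formula applied to $L'/L(s,\chi)$ for $\chi$ mod $q'$: one moves the contour to $\Re s = 1-\sigmax$, picking up the pole at $s=1$ (giving the $\Lambda_Q$-type main term) and the zeros $\rho$ with $\Re\rho \ge 1-\sigmax$, $|\gamma|\le Q$ (giving exactly the $n^{\rho-1}F_{\chi_\rho,Q}$ correction terms, after reconciling the primitive-character bookkeeping in Definition \ref{c-chi-def}); the remaining contour integral and horizontal segments must be bounded by $\tilde O(N Q^{-1/6})$ after summing over $a',q'$. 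Here one needs (i) a zero-density estimate of the form $\sum_{q\le Q}\sum_\chi N(\alpha, Q, \chi) \ll (Q^2 \cdot Q)^{c(1-\alpha)}$ — the Linnik-type input flagged in Section \ref{sec7}, drawn from Bombieri's book — to control the number and contribution of the intermediate zeros with $1-\sigmax \le \Re\rho$, balanced so that the truncation at height $Q$ costs $N/Q$ and the density bound costs at most $Q^{5/6}$-ish, nett $Q^{-1/6}$; (ii) a zero-free region / Siegel-type bound to handle a possible exceptional zero, although at this stage (before fixing $\sigmax=\tfrac1{48}$ and before introducing $E(n)$) the exceptional zero is simply one of the $\rho\in\Xi_Q(\sigmax)$ and needs no special treatment beyond the density estimate; and (iii) a standard bound $L'/L(s,\chi) \ll \log^2(q(|t|+2))$ away from zeros to control the vertical integral on $\Re s = 1-\sigmax$. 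The constant $\tfrac16$ comes out of optimising the truncation height against the exponent in the zero-density estimate, and the hypothesis $Q\le N^{\sigmax}$, $\sigmax\le\tfrac19$ is exactly what is needed to keep the error from the $n^{\rho-1}$ factors (size $N^{-\sigma}\le N^{-\ ?}$) and from Perron truncation under control; verifying that these numerology constraints are mutually consistent, and that the primitive/imprimitive character reduction matches \eqref{c-def} on the nose, will be the fiddly part.
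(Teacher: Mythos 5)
Your overall route is the paper's: a circle-method dissection, Vinogradov/Vaughan bounds for $\Lambda$ off the major arcs, and on the major arcs an explicit-formula (Perron) comparison in which the zeros with $\Re\rho\ge 1-\sigmax$, $|\gamma|\le Q$ are matched against the terms $n^{\rho-1}F_{\chi_\rho,Q}(n)$ in Definition \ref{sharp-approx-def}, with the log-free density estimate of Section \ref{sec7} controlling the zero sum, and with the second estimate reduced to the first exactly as you say. The reduction of imprimitive characters to the coefficients $c_\chi(b,r)$ of \eqref{c-def} is also carried out in the paper (Lemma \ref{dirich-add}), so that bookkeeping is not the issue.

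The genuine gap is in your treatment of $\Lambda_{\sharp,Q,\sigmax}$ off the major arcs. You assert its exponential sum is small there ``essentially because'' it is a short combination of additive characters $e(bn/r)$ with $r\le qQ$, together with zero-density control of $|\Xi_Q(\sigmax)|$ and partial summation for $n^{\rho-1}$. Those tools (trivial bound, geometric-series/Kuzmin--Landau-type bound of shape $N^{\beta}(1+|\gamma|)N^{-1}\Vert\theta+\tfrac{b}{r}\Vert^{-1}$, and zero-density) do not suffice: consider a single zero $\rho=\beta+i\gamma$ with $\beta$ close to $1$ (unconditionally unavoidable) and $|\gamma|$ of size up to $Q$, and a $\theta$ lying just outside your major arcs but within distance roughly $Q^{O(1)}/N$ of one frequency $b/r$ of $F_{\chi_\rho,Q}$. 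The trivial bound for that single term is $\approx N^{\beta}|c_{\chi_\rho}(b,r)|$, which can exceed $NQ^{-1/6}$, and the $(1+|\gamma|)\Vert\cdot\Vert^{-1}$ bound is worse than trivial once $|\gamma|$ is large; zero-density cannot help because one zero already causes the problem, and you cannot simply widen the major arcs to absorb this $\theta$, since the Perron/partial-summation twisting on a major arc loses a factor $(1+|\eta|N)$ and so only tolerates widths $\approx Q^{1/2}/N$ (your proposed width $1/(qQ)$ is far too generous for the comparison to save a power). The missing ingredient is a genuinely nontrivial estimate for $\sum_{n\le N}n^{i\gamma}e(\alpha n)$ in the $\gamma$-aspect -- the van der Corput second-derivative bound giving the saving $|\gamma|^{-1/2}$ (the third estimate in \eqref{three-ests}, Proposition \ref{prop43}) -- which the paper uses crucially in Lemma \ref{minor-lam-sharp} (to reach \eqref{qj-J-bd}) precisely to kill near-resonant contributions from high zeros. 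Without adding this (or an equivalent stationary-phase input), the minor-arc half of your plan fails for $\Lambda_{\sharp,Q,\sigmax}$ even though it is fine for $\Lambda$ itself.
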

Here, note that the second statement (which is the one we will need later on) follows easily from the first, since $\sum_{n \leq N} |\Lambda(n) - \Lambda'(n)| \lessapprox N^{\frac{1}{2}}$.

\section{Proof of the Fourier approximation property}\label{sec5}

The business of this section is to prove Proposition \ref{approx-fourier}.  The basic idea will be to consider the cases of $\theta$ close to a rational with small denominator (the ``major arcs'') and $\theta$ not close to a rational with small denominator (the ``minor arcs'') separately. In other words, we use the circle method. The heart of the argument begins in Subsection \ref{minor-lam-q} below.

We will need various ingredients in the proof, which we assemble now before moving on to the main argument.

\subsection{Zero-density estimates}
We have the following result, sometimes known as a grand log-free zero-density estimate.

\begin{proposition}\label{jut-est}
We have, uniformly for $\alpha \geq \frac{4}{5}$,
\begin{equation}\label{unexceptional-jutila} \# \{ \rho \in \Xi_Q(\textstyle\frac{1}{5}\displaystyle) : \Re \rho \geq \alpha\} \ll Q^{8(1 - \alpha)}.\end{equation} Here, zeros are counted with multiplicity.\end{proposition}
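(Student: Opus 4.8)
The plan is to obtain Proposition~\ref{jut-est} from the classical \emph{log-free grand density estimates} for the family of Dirichlet $L$-functions of bounded conductor --- these go back to Linnik and were sharpened by Gallagher, Motohashi and Jutila, and are recorded in Bombieri's book~\cite{bombieri}. The input one wants is that, for a height parameter $T \geq 2$,
\[ \sum_{q \leq Q}\; \sideset{}{^*}\sum_{\chi \bmod q} N(\alpha, T, \chi) \ll (Q^2 T)^{c(1 - \alpha)} \]
holds with an absolute constant $c$, uniformly for $\alpha \geq \tfrac{4}{5}$, where $\sideset{}{^*}\sum$ runs over primitive characters modulo $q$ and $N(\alpha, T, \chi)$ counts, with multiplicity, the zeros $\rho$ of $L(s,\chi)$ having $\Re \rho \geq \alpha$ and $|\Im \rho| \leq T$. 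Granting this, only bookkeeping remains. By the multiset convention of Definition~\ref{zero-ht-def}, $\#\{\rho \in \Xi_Q(\tfrac{1}{5}) : \Re \rho \geq \alpha\}$ is exactly the left-hand side above with $T = Q$: a zero with $\Re \rho \geq \alpha > 0$ of an imprimitive $L(s,\chi)$ is, with the same multiplicity, a zero of the primitive $L$-function inducing it (the finitely many extra Euler factors being non-vanishing for $\Re s > 0$), and $\Xi_Q$ counts each primitive character separately. The cutoff $\sigma = 1 - \Re \rho \leq \tfrac{1}{5}$ in $\Xi_Q(\tfrac{1}{5})$ is automatic once $\alpha \geq \tfrac{4}{5}$, so it plays no role. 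Finally, $T = Q$ turns $(Q^2 T)^{c(1-\alpha)}$ into $Q^{3c(1-\alpha)}$, and it remains only to record that $c$ may be taken with $3c \leq 8$ in the range $\alpha \geq \tfrac{4}{5}$ --- a generous margin, since the constant in the available log-free estimates is comfortably smaller in this high range.

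To see where such an estimate comes from, and to check the constant, I would run the standard zero-detection argument, which for $\alpha \geq \tfrac{4}{5}$ is in its simplest form (no reflection or fourth-moment input, as would be needed near $\alpha = \tfrac{1}{2}$). Mollifying $L(s,\chi)$ by $M_Y(s,\chi) = \sum_{m \leq Y} \mu(m)\chi(m) m^{-s}$ and smoothing by $e^{-n/X}$, a contour shift with the convexity bound $L(s,\chi) \ll (q(2+|t|))^{(1-\sigma)/2 + o(1)}$ shows that, for $X$ a suitable fixed power of $QT$, every zero $\rho$ with $\Re \rho \geq \alpha$ makes some dyadic Dirichlet polynomial $|\sum_{N' < n \leq 2N'} a_n \chi(n) n^{-\rho}|$, with $a_n = \sum_{d \mid n,\, d \leq Y}\mu(d)$ and $Y < N' \ll (QT)^{O(1)}$, exceed $(\log QT)^{-1}$ in absolute value. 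One then applies the hybrid large sieve
\[ \sum_r \sum_{q \leq Q}\; \sideset{}{^*}\sum_{\chi \bmod q} \Big| \sum_{N' < n \leq 2N'} a_n \chi(n) n^{-i t_r} \Big|^2 \ll (N' + Q^2 T) \sum_{N' < n \leq 2N'} |a_n|^2 \]
to a $1$-spaced set $\{t_r\}$ of imaginary parts of such zeros; here the sum over moduli costs only $Q^2$, the large sieve seeing all $\asymp Q^2$ characters at once rather than $Q$ per modulus. Using $\sum_{N' < n \leq 2N'} |a_n|^2 \ll N'(\log QT)^{O(1)}/\log Y$ and the weight $(N')^{-2\alpha}$ coming from $|n^{-\rho}| \leq (N')^{-\alpha}$, then optimising $Y$ (hence $X$, hence the range of $N'$) against the two terms $N'$ and $Q^2 T$, and finally setting $T = Q$, yields a bound $Q^{c'(1-\alpha)+o(1)}$ with explicit $c'$; a careful optimisation keeps $c'$ below $8$.

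The main obstacle is the \textbf{log-free} requirement. The argument just sketched loses $\log(QT)$ from the $\asymp \log(QT)$ dyadic blocks, another $\log(QT)$ from passing to a $1$-spaced set of zeros, and $(QT)^{o(1)}$ from convexity, and since $1 - \alpha$ can be arbitrarily small as $\alpha \to 1$ none of these losses can be absorbed into $Q^{8(1-\alpha)}$. Removing them is precisely the content of the log-free refinements: one avoids the dyadic splitting in favour of a single weighted mean value (or a Cauchy--Schwarz over all blocks simultaneously), uses the mollified coefficients to recover the logarithm lost to zero-spacing, and exploits the functional equation near $\Re s = 1$. Reproducing this apparatus is lengthy and tangential here, so the sensible course is to cite the log-free grand density estimate in the clean form displayed above (from~\cite{bombieri}, or from Jutila's original work), to verify that its constant is admissible for $\alpha \geq \tfrac{4}{5}$ --- which it comfortably is --- and then to supply only the short bookkeeping of the first paragraph.
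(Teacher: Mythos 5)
Your proposal is correct and follows essentially the same route as the paper: the paper simply quotes Jutila's log-free grand density estimate (\cite{jutila}, Theorem 1, which gives any exponent larger than $6$ in place of $8$ after setting the height equal to $Q$), with the same bookkeeping remarks about multiplicity and imprimitive characters that you spell out. Your sketch of the zero-detection/hybrid large sieve argument and the discussion of why the log-free feature must be cited rather than rederived is a reasonable elaboration, but the substance — cite the known estimate and check the constant is admissible for $\alpha \geq \tfrac{4}{5}$ — matches the paper's proof.
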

This follows from \cite[Theorem 1]{jutila} (with any constant larger than $6$ in place of $8$). That zeros are counted with multiplicity is not explicitly stated there (or indeed in many results of this type) but the proofs do give this. Any result of this general type (that is, a ``log-free'' zero-density estimate over all moduli) would be suitable for our purposes. Jutila (\emph{ibid.}) describes some previous results of this type and remarks that the first such estimates were due to work of Fogels \cite{fogels} and Gallagher \cite{gallagher}, and can be found explicitly in the latter paper.

\emph{Remark.} In Section \ref{sec7}, we will need more refined estimates coming from the phenomenon of ``exceptional zero repulsion''; see Proposition \ref{prop75}.

A consequence of Proposition \ref{jut-est} is the following convergence result. 

\begin{lemma}\label{crude-zero-density}
Suppose that $Q \leq N^{\frac{1}{9}}$. Then $\sum_{\rho \in \Xi_Q(\frac{1}{5})} N^{\Re \rho - 1} \ll 1$.
\end{lemma}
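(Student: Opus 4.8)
The plan is to bound the sum $\sum_{\rho \in \Xi_Q(\frac{1}{5})} N^{\Re\rho - 1}$ by a dyadic decomposition over the real parts of the zeros, using Proposition \ref{jut-est} as the only arithmetic input. Write $\Re\rho = 1 - \sigma$ with $0 \le \sigma \le \frac{1}{5}$, so the summand is $N^{-\sigma}$. First I would split the range $\sigma \in [0, \frac15]$ at the threshold governed by Proposition \ref{jut-est}, which is valid only for $\Re\rho \ge \frac45$, i.e. $\sigma \le \frac15$ — so in fact the whole multiset $\Xi_Q(\frac15)$ is covered. Cover $(0, \frac15]$ by dyadic intervals $\sigma \in (2^{-k-1}, 2^{-k}]$ for $k \ge k_0$, where $2^{-k_0} \approx \frac15$, together with a tail where $\sigma$ is extremely small. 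On the dyadic block $I_k = (2^{-k-1}, 2^{-k}]$, every zero has $N^{-\sigma} \le N^{-2^{-k-1}}$, and by Proposition \ref{jut-est} (applied with $\alpha = 1 - 2^{-k}$) the number of such zeros is $\ll Q^{8 \cdot 2^{-k}}$. Hence the block contributes $\ll Q^{8 \cdot 2^{-k}} N^{-2^{-k-1}}$.

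Next I would use the hypothesis $Q \le N^{1/9}$ to turn each block's bound into geometric decay. We have $Q^{8 \cdot 2^{-k}} N^{-2^{-k-1}} \le N^{(8/9) 2^{-k}} N^{-2^{-k-1}} = N^{-2^{-k-1}(1 - 16/9)} = N^{(7/9) 2^{-k-1}}$ — wait, that exponent is positive, so the naive bound grows; the point is that the threshold $\alpha \ge \frac45$ combined with $Q \le N^{1/9}$ is not by itself enough, and one must instead note that for $\sigma$ bounded \emph{below} (say $\sigma \ge N^{-1/2}$) the count $Q^{8\sigma} \le N^{8\sigma/9}$ is dwarfed by $N^{-\sigma}$ only when $8/9 < 1$ fails to help... so the correct move is: since there are finitely many zeros with $|\gamma| \le Q$ and conductor $\le Q$ — indeed $\ll Q^{8}$ of them in total by Proposition \ref{jut-est} with $\alpha = \frac45$ — and each summand is $\le 1$, the only danger is a large \emph{number} of zeros very close to the line. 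So I would argue: for $\sigma \le \frac{1}{100\log Q}$ we bound $N^{-\sigma} \le 1$ trivially and count $\ll Q^{8}$ such terms, but $Q^8$ is not $O(1)$ — hence this crude route must be refined.

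The correct argument, which I would carry out, is the standard one: write $S = \sum_\rho N^{-\sigma_\rho}$, let $n(t) = \#\{\rho : \sigma_\rho \le t\} \ll Q^{8t}$ for $t \le \frac15$, and integrate by parts: $S = \int_0^{1/5} N^{-t}\, dn(t) = N^{-1/5} n(\tfrac15) + \log N \int_0^{1/5} N^{-t} n(t)\, dt \ll N^{-1/5} Q^{8/5} + \log N \int_0^{1/5} N^{-t} Q^{8t}\, dt$. Now $N^{-t}Q^{8t} = e^{-t(\log N - 8\log Q)} \le e^{-t(\log N - \frac89\log N)} = e^{-t \log N / 9} = N^{-t/9}$ using $Q \le N^{1/9}$, so the integral is $\le \int_0^\infty N^{-t/9}\,dt = 9/\log N$, giving $S \ll N^{-1/5} Q^{8/5} + 9 \ll 1$ (the first term is $\le N^{-1/5}N^{8/45} \le 1$). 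The main obstacle — and the only subtlety — is getting the quantitative interplay between the exponent $8$ in the zero-density estimate and the exponent $\frac19$ in the hypothesis $Q \le N^{1/9}$ exactly right so that $\log N - 8\log Q$ stays a positive multiple of $\log N$; this is exactly why the hypothesis reads $N^{1/9}$ and not something larger. Everything else is routine Riemann–Stieltjes bookkeeping.
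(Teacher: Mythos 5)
Your final partial-summation argument is correct and is essentially the paper's own proof in continuous form: the paper splits the zeros into blocks $k-1 < \sigma \log Q \leq k$, bounds each block by $\ll e^{8k}$ via Proposition \ref{jut-est}, and sums the resulting geometric series using $\log N \geq 9 \log Q$, which is exactly your trade-off $N^{-t}Q^{8t} \leq N^{-t/9}$ inside the Stieltjes integral. The abandoned dyadic attempt in your opening paragraphs is harmless since you correctly discard it, and the remaining bookkeeping (the boundary term $N^{-1/5}Q^{8/5} \leq N^{-1/45}$ and the integral $\log N\int_0^{1/5} N^{-t/9}\,dt \ll 1$) checks out.
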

\begin{proof}
For $k \geq 1$, write $\Xi^{(k)}_Q$ for the multiset of all $\rho \in \Xi_Q(\frac{1}{5})$ for which $k-1 < (1 - \Re \rho) \log Q \leq k$. By Proposition \ref{jut-est}, $|\Xi_Q^{(k)}| \ll e^{8k}$ for all $k$. Summing over $k$, we obtain
\[ \sum_{\rho \in \Xi_Q} N^{\Re \rho - 1} \ll \sum_{k \geq 1} N^{-\frac{k-1}{\log Q}} e^{8k} \leq \sum_{k \geq 1} e^{-9k} e^{8k} \ll 1.\]
\end{proof}

\subsection{Exponential sum estimates}

We will need the following estimate for exponential sums twisted by $n^{\rho - 1} = n^{\beta} e^{i \gamma \log n}$. This is material of a standard type, but it seems hard to locate a usable reference in the literature.

\begin{proposition}\label{prop43}
Suppose that $\rho = \beta + i \gamma$, where $\frac{1}{2} \leq \beta \leq 1$ and $|\gamma| \leq N^{1/2}$. Then we have 
\begin{equation}\label{three-ests} \sum_{n \leq N} n^{\rho - 1} e(\theta n) \ll  N^{\beta} \min\big( 1, (1 + |\gamma|)N^{-1} \Vert \theta \Vert^{-1},  |\gamma|^{-1/2} \big). \end{equation}
\end{proposition}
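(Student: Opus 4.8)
The plan is to prove the three bounds in \eqref{three-ests} separately and then take the minimum. Throughout, write $f(n) := n^{\rho-1} = n^{\beta-1} e^{i\gamma \log n}$, so that $|f(n)| = n^{\beta-1}$ and the sum in question is $S := \sum_{n \leq N} f(n) e(\theta n)$.

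\emph{The trivial bound.} For the first entry $N^{\beta}$, just estimate $|S| \leq \sum_{n \leq N} n^{\beta-1} \ll N^{\beta}$ (the sum of $n^{\beta-1}$ over $n \leq N$ is $\ll N^{\beta}/\beta \ll N^{\beta}$ since $\beta \geq \tfrac12$; one can compare to $\int_1^N t^{\beta-1}\,dt$). This requires no hypothesis on $\gamma$ or $\theta$.

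\emph{The $\theta$-aspect bound.} For the entry $N^{\beta}(1+|\gamma|)N^{-1}\Vert\theta\Vert^{-1}$, I would use partial summation (Abel summation) against the geometric-type sum $E(t) := \sum_{n \leq t} e(\theta n)$, which satisfies the standard bound $|E(t)| \ll \Vert\theta\Vert^{-1}$ uniformly in $t$. Writing $S = \sum_{n\le N} f(n)(E(n)-E(n-1))$ and summing by parts, $S = f(N)E(N) - \sum_{n \leq N-1} (f(n+1)-f(n))E(n)$. The main point is to control $|f(n+1)-f(n)|$: since $f' (t) = (\rho-1) t^{\rho-2}$, we have $|f(n+1)-f(n)| \leq \sup_{t\in[n,n+1]} |f'(t)| \ll (1+|\gamma|) n^{\beta-2}$. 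Hence $|S| \ll \Vert\theta\Vert^{-1}\big( N^{\beta-1} + (1+|\gamma|)\sum_{n\le N} n^{\beta-2}\big) \ll \Vert\theta\Vert^{-1}(1+|\gamma|) N^{\beta-1}$, using $\beta \leq 1$ so that $\sum_{n \leq N} n^{\beta-2} \ll N^{\beta-1}$ (and absorbing the $N^{\beta-1}$ boundary term since $1+|\gamma| \geq 1$). Rearranging gives exactly $N^{\beta}(1+|\gamma|)N^{-1}\Vert\theta\Vert^{-1}$.

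\emph{The $\gamma$-aspect bound.} The entry $N^{\beta}|\gamma|^{-1/2}$ is the interesting one and is where a stationary-phase / van der Corput argument is needed; this is the main obstacle. The idea is to approximate the sum by the integral $I := \int_1^N t^{\beta-1} e(\theta t + \tfrac{\gamma}{2\pi}\log t)\,dt$ (with error controlled by another partial-summation/Euler--Maclaurin estimate, using that the integrand has derivative $\ll (1+|\gamma|)t^{\beta-2}$, which is harmless when $|\gamma| \leq N^{1/2}$ — in fact this only matters in the regime $|\gamma|$ large, where we will see the bound beats the trivial one). For the integral, the phase is $\varphi(t) = \theta t + \tfrac{\gamma}{2\pi}\log t$, so $\varphi''(t) = -\tfrac{\gamma}{2\pi t^2}$, giving $|\varphi''(t)| \asymp |\gamma|/t^2$ on dyadic ranges $t \asymp T$. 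By the second-derivative test (van der Corput's lemma) applied on each dyadic block $[T,2T] \subseteq [1,N]$, together with the slowly-varying amplitude $t^{\beta-1} \asymp T^{\beta-1}$ (pulled out via partial summation, since it is monotone), the contribution of that block is $\ll T^{\beta-1} \cdot (|\gamma|/T^2)^{-1/2} = T^{\beta} |\gamma|^{-1/2}$. Summing the geometric series over dyadic $T \leq N$ gives $\ll N^{\beta}|\gamma|^{-1/2}$, as desired. Alternatively, and perhaps cleaner, one can invoke a standard exponential-integral lemma (e.g.\ from Titchmarsh or Montgomery's \emph{Ten Lectures}) of the form: if $\varphi'' \asymp \lambda$ on $[a,b]$ and $g$ is monotone with $|g| \leq G$, then $\int_a^b g(t) e(\varphi(t))\,dt \ll G\lambda^{-1/2}$, and apply it dyadically. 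The book-keeping needed is the dyadic decomposition plus verifying the amplitude $t^{\beta-1}$ and its variation are benign; the constraint $|\gamma| \leq N^{1/2}$ is used only to ensure the sum-to-integral passage does not lose more than a constant. Putting the three bounds together yields \eqref{three-ests}.
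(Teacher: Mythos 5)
Your first (trivial) estimate is fine and matches the paper. The other two parts each contain a genuine gap.

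For the second estimate, the step ``$\sum_{n \leq N} n^{\beta-2} \ll N^{\beta-1}$ using $\beta \leq 1$'' is false, and indeed backwards: for $\beta \leq 1$ the left-hand side is at least its first term $1$, while $N^{\beta-1} \leq 1$ (at $\beta = \tfrac12$ the left side is $\asymp 1$ and the right side is $N^{-1/2}$); the inequality you want would require $\beta \geq 1$. With the correct bound $\sum_{n \leq N} n^{\beta-2} \ll \min(\log N, (1-\beta)^{-1})$ your Abel summation yields only $\ll (1+|\gamma|)\Vert\theta\Vert^{-1}$ up to a logarithm, with no factor $N^{\beta-1}$. Moreover this is not a repairable slip: with the factor $N^{\beta-1}$ present, the asserted estimate fails outright for $\beta$ bounded away from $1$ (take $\gamma = 0$, $\beta = \tfrac12$, $\theta = \tfrac12$: the sum $\sum_{n \leq N} (-1)^n n^{-1/2}$ converges to a nonzero constant, while the right-hand side of \eqref{three-ests} is $\asymp N^{-1/2}$), so the form one can actually establish and use is $(1+|\gamma|)\Vert\theta\Vert^{-1}$; note that even in the paper's identity \eqref{partial-sum} only the boundary term $N^{\beta-1}S(\gamma,\theta,N)$ has the displayed size, the integral term being merely $O((1+|\gamma|)\Vert\theta\Vert^{-1})$ when $\beta < 1$. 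The paper's arrangement is also structurally better than yours: it bounds the unimodular sums $S(\gamma,\theta,x) = \sum_{n \leq x} n^{i\gamma} e(\theta n)$ by Kuzmin--Landau (Lemma \ref{kl-bound}), splitting at $n \asymp |\gamma|/\Vert\theta\Vert$, and only afterwards attaches the monotone amplitude $n^{\beta-1}$ by partial summation, avoiding the logarithmic loss you incur by treating all of $n^{\rho-1}$ as the amplitude.

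For the third estimate, the sum-to-integral passage is not justified: the derivative of the integrand $t^{\beta-1} e(\theta t + \tfrac{\gamma}{2\pi}\log t)$ is $\ll (1+|\gamma|)t^{\beta-2} + \Vert\theta\Vert t^{\beta-1}$, not $\ll (1+|\gamma|)t^{\beta-2}$, so the Euler--Maclaurin error you can prove this way is only $O(\Vert\theta\Vert N^{\beta})$, which for $\Vert\theta\Vert \asymp 1$ is as large as the trivial bound; since the estimate must be uniform in $\theta$, this step fails. The paper avoids the integral entirely: it applies the van der Corput second-derivative test for sums (Lemma \ref{vdc-bd}) directly to $\sum_{n \in I} e(\tfrac{\gamma}{2\pi}\log n + \theta n)$ on dyadic blocks --- the linear term $\theta n$ does not change $f''$, which is exactly why uniformity in $\theta$ comes for free --- and then attaches $n^{\beta-1}$ via \eqref{partial-sum}. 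Working with the sum reinstates the length term $\alpha |I| \lambda^{1/2} \asymp |\gamma|^{1/2}$ per block, which your integral formulation silently avoids; after summing the blocks and weighting by the amplitude this contributes $\ll |\gamma|^{1/2} N^{\beta-1}\log N$, and absorbing that into $N^{\beta}|\gamma|^{-1/2}$ is precisely where the hypothesis $|\gamma| \leq N^{1/2}$ is used --- not, as you suggest, in a sum-to-integral comparison.
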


The proof will require the following two well-known estimates for exponential sums.

\begin{lemma}[Kuzmin--Landau] \label{kl-bound} Let $I \subset \R$ be a bounded interval. Let $f : I \rightarrow \R$ be continuously differentiable such that $f'$ is monotonic and $\Vert f'(t) \Vert \geq \lambda$ on $I$. Then 
\[ \sum_{n \in I \cap \Z} e(f(n)) \ll \lambda^{-1}.\]
\end{lemma}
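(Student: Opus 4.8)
The statement to prove is the Kuzmin--Landau inequality (Lemma \ref{kl-bound}), so the plan should be to prove that.

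\medskip

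The plan is to prove the Kuzmin--Landau lemma by the standard route: partial summation against a geometric-like sum, after reducing the fractional part of $f'$ to a genuine bound on $f'$ itself. First I would reduce to the case where $\lambda \le \tfrac12$ (otherwise the conclusion is trivial since the sum has a bounded number of terms relative to $\lambda^{-1}$, or rather one argues the bound is automatic), and where, after subtracting a suitable integer-valued linear function $n \mapsto kn$ from $f$ (which does not change $e(f(n))$), we may assume $\lambda \le f'(t) \le 1 - \lambda$ for all $t \in I$; here monotonicity of $f'$ is what lets us pass from the hypothesis $\Vert f'(t)\Vert \ge \lambda$ on the whole interval to a two-sided bound with no integer in between, by an intermediate value argument on $f'$. (If $f'$ is decreasing through an interval where $\Vert f' \Vert \ge \lambda$, it can only do so while staying in a single interval $(k+\lambda, k+1-\lambda)$.)

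\medskip

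The main step is then the estimate itself. Write the consecutive differences $e(f(n+1)) - e(f(n)) = e(f(n))\big(e(f(n+1)-f(n)) - 1\big)$, and note that by the mean value theorem $f(n+1) - f(n) = f'(\xi_n)$ for some $\xi_n$, so $\lambda \le f(n+1)-f(n) \le 1-\lambda$, hence $|e(f(n+1)-f(n)) - 1| = 2|\sin(\pi(f(n+1)-f(n)))| \ge 2\sin(\pi\lambda) \gg \lambda$. Thus the partial sums $S_m := \sum_{n \le m, n \in I} e(f(n))$ satisfy, via summation by parts against the telescoping factor,
\[
S_M = \sum_{n} e(f(n)) = \frac{1}{e(f(n_0+1)-f(n_0))-1}\big(\cdots\big) + \cdots
\]
— more cleanly, I would use Abel summation writing $e(f(n)) = \dfrac{e(f(n+1)) - e(f(n))}{e(f(n+1)-f(n)) - 1}$ is not quite right; instead group terms as $\sum_n (e(f(n+1)) - e(f(n)))\cdot g(n)$ after an index shift, where $g(n) = 1/(e(f(n+1)-f(n)) - 1)$ has bounded variation. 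The point is that $g(n)$ is monotonic in $n$ (because $f'$, hence $f(n+1)-f(n)$, is monotonic and stays in $[\lambda, 1-\lambda]$, and $t \mapsto 1/(e(t)-1)$ is monotonic there) with $|g(n)| \ll \lambda^{-1}$ and total variation $\ll \lambda^{-1}$; then Abel summation against the bounded partial sums of $e(f(n+1)) - e(f(n))$ (which telescope, so are $O(1)$) gives $\sum_n e(f(n)) \ll \lambda^{-1}$.

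\medskip

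The only real obstacle is bookkeeping: making the reduction to $\lambda \le f'(t) \le 1-\lambda$ fully rigorous using just "$f'$ monotonic and $\Vert f' \Vert \ge \lambda$" (one must check $f'$ cannot straddle an integer, which follows from the intermediate value theorem applied to the continuous function $f'$ together with the hypothesis that $\Vert f'\Vert$ never drops below $\lambda$), and keeping track of the endpoint terms in the Abel summation. None of this is deep; it is a classical argument and I would present it concisely, citing that it is standard (e.g. as in Montgomery's \emph{Ten Lectures} or Graham--Kolesnik) rather than belabouring the constants.
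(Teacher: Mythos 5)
Your argument is correct and is the classical Kuzmin--Landau proof; the paper does not reprove the lemma at all but simply cites Tenenbaum (Part I, Theorem 6.7), whose proof runs along exactly the lines you sketch: reduce via continuity/monotonicity of $f'$ to $\lambda \leq f' \leq 1-\lambda$, write $e(f(n)) = \big(e(f(n+1))-e(f(n))\big)\cdot g(n)$ with $g(n) = 1/\big(e(f(n+1)-f(n))-1\big)$, and sum by parts against the telescoping differences. In a full write-up the only points to tighten are ones you already flag: the identity you hesitated over is in fact exactly right (the denominator is nonzero since $f(n+1)-f(n)\in[\lambda,1-\lambda]$); ``$g$ monotone'' should be interpreted via $g(n) = -\tfrac12 - \tfrac{i}{2}\cot\big(\pi(f(n+1)-f(n))\big)$, whose real part is constant and whose imaginary part is monotone, giving $\sup|g| \ll \lambda^{-1}$ and total variation $\ll \lambda^{-1}$; and the last integer $n$ of $I$ needs separate treatment since the mean value step requires $n+1 \in I$, contributing only $O(1) \ll \lambda^{-1}$.
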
\begin{proof} See \cite[Part I, Theorem 6.7]{tenenbaum} or \cite[Theorem 2.1]{gk}. Recall that here $\Vert \theta \Vert$ is the distance to the nearest integer.\end{proof}

\begin{lemma}[van der Corput] \label{vdc-bd} Let $I \subset \R$ be a bounded interval. Let $f : I \rightarrow \R$ be twice continuously differentiable with $\lambda \leq |f''(x)| \leq \alpha \lambda$. Then 
\[ \sum_{n \in I \cap \Z} e(f(n)) \ll \alpha |I| \lambda^{1/2} + \lambda^{-1/2}.\]
\end{lemma}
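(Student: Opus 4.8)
The plan is to deduce this from the Kuzmin--Landau bound (Lemma \ref{kl-bound}) by the standard partitioning argument. We may and do assume throughout that $\lambda \leq \frac{1}{4}$; this is the only regime relevant to our applications (in the proof of Proposition \ref{prop43} one will have $\lambda$ a small negative power of $N$). Since $f''$ is continuous on $I$ with $|f''| \geq \lambda > 0$, it has constant sign, so $f'$ is strictly monotone on $I$; this is precisely what allows Lemma \ref{kl-bound} to be applied on every subinterval of $I$.

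First I would partition $I$ at the (finitely many) points where $f'$ takes an integer value. Because $|f''| \leq \alpha\lambda$, the range $f'(I)$ is an interval of length at most $\alpha\lambda|I|$, so it contains at most $\alpha\lambda|I| + 1$ integers; hence $I$ is cut into $J \ll 1 + \alpha\lambda|I|$ subintervals $I_1,\dots,I_J$, on each of which $f'$ is monotone with range contained in a single interval $[\nu_j,\nu_j+1]$ with $\nu_j \in \Z$. It therefore suffices to show $\sum_{n \in I_j \cap \Z} e(f(n)) \ll \lambda^{-1/2}$ for each $j$, since summing over $j$ then gives $\ll (1 + \alpha\lambda|I|)\lambda^{-1/2} = \lambda^{-1/2} + \alpha|I|\lambda^{1/2}$, as required.

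To treat a single $I_j$, set $\delta := \lambda^{1/2}$ (so $\delta \leq \frac{1}{2}$) and split $I_j$, according to whether $f'(x)$ lies within $\delta$ of an endpoint of its range, into a central subinterval $I_j'' = \{x \in I_j : \delta \leq f'(x) - \nu_j \leq 1 - \delta\}$ and two terminal subintervals, one at each end of $I_j$. On $I_j''$ the nearest integer to $f'(x)$ is $\nu_j$ or $\nu_j+1$, whence $\Vert f'(x)\Vert \geq \delta$; as $f'$ is also monotone there, Lemma \ref{kl-bound} bounds the contribution of $I_j''$ by $\ll \delta^{-1} = \lambda^{-1/2}$. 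On each terminal subinterval $f'$ varies by at most $\delta$, so (using $|f''| \geq \lambda$) that subinterval has length at most $\delta/\lambda = \lambda^{-1/2}$ and hence contains $\ll 1 + \lambda^{-1/2} \ll \lambda^{-1/2}$ integers; the triangle inequality bounds its contribution by $\ll \lambda^{-1/2}$. Adding the three pieces gives $\sum_{n \in I_j \cap \Z} e(f(n)) \ll \lambda^{-1/2}$, finishing the proof.

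There is no genuine obstacle: this is the classical van der Corput estimate and the argument above is the textbook one. The only points needing a little care are the bookkeeping for the number $J$ of subintervals (here one must use the \emph{upper} bound $|f''| \leq \alpha\lambda$, which is the sole place where $\alpha$ enters), the choice $\delta = \lambda^{1/2}$ which balances the Kuzmin--Landau term $\delta^{-1}$ against the trivial term $\delta/\lambda$, and the observation that $f'$ is truly monotone so that Lemma \ref{kl-bound} applies verbatim on each piece — this being immediate from $f''$ having constant sign.
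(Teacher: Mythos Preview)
Your argument is correct: this is exactly the classical derivation of the van der Corput second-derivative estimate from Kuzmin--Landau by cutting at the points where $f'$ is an integer and then, on each piece, balancing a trivial bound near the endpoints of the range against the Kuzmin--Landau bound in the middle. The paper itself does not prove this lemma but simply refers to Tenenbaum's text (the remarks after Theorem~6.7 there), where precisely this argument appears; so there is no alternative approach to compare against. One small point you could make explicit is the disposal of the case $\lambda > \tfrac{1}{4}$: there the trivial bound $|I|+1$ is already $\ll \alpha |I|\lambda^{1/2} + \lambda^{-1/2}$ since $\alpha \geq 1$, so your standing assumption loses no generality.
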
\begin{proof} See the comments after the proof of Theorem 6.7 in \cite[Part I]{tenenbaum} or \cite[Theorem 2.2]{gk} for the exact version we state here, or \cite[Corollary 8.13]{ik}. \end{proof}

\begin{proof}[Proof of Proposition \ref{prop43}]
Since $e(\theta n)$ is periodic, we may assume $0 \leq |\theta| \leq \frac{1}{2}$ throughout, and hence that $\Vert \theta \Vert = |\theta|$.
Set \[ S(\gamma, \theta, N) := \sum_{n \leq N} n^{i \gamma} e( \theta n) = \sum_{n \leq N} e\big(\frac{\gamma}{2\pi} \log n + \theta n\big).\]
From the identity 
\[ n^{\beta - 1} = N^{\beta - 1} - (\beta - 1) \int^N_n x^{\beta - 2} dx\] we see that 
\begin{equation}\label{partial-sum}  \sum_{n \leq N} n^{\rho - 1} e(\theta n) = N^{\beta - 1} S(\gamma, \theta , N) - (\beta - 1) \int^N_1 x^{\beta - 2} S(\gamma, \theta, x) dx.\end{equation}

Now \eqref{three-ests} consists of three bounds. The first one, that the sum is $\ll N^{\beta}$, is essentially trivial:
\[ \big|\sum_{n \leq N} n^{\rho - 1}\big| \leq \sum_{n \leq N} |n^{\rho - 1}| = \sum_{n \leq N} n^{\beta - 1} \ll N^{\beta}.\] (Note that the implied constant \emph{is} uniform in $\beta$ in the range $\frac{1}{2} \leq \beta \leq 1$.)

For the second estimate in \eqref{three-ests}, we apply the Kuzmin-Landau bound (Lemma \ref{kl-bound}) with $f(t) = \frac{\gamma}{2\pi} \log t + \theta t$, we see that if $M:=  \frac{|\gamma|}{|\theta|}$ then 
\[ \sum_{M \leq n \leq x} n^{i \gamma} e(\theta n) \ll | \theta |^{-1}.\] Estimating the contribution from $M \leq \frac{|\gamma|}{|\theta|}$ trivially using $|n^{i \gamma} e(\theta n)| \leq 1$ gives $S(\gamma, \theta, x)  \ll  (1 + |\gamma|) | \theta|^{-1}$. Substituting into \eqref{partial-sum} gives the second estimate in \eqref{three-ests}.

For the third estimate in \eqref{three-ests}, suppose that $I \subset [2^j, 2^{j+1}]$ is some interval. Apply the van der Corput bound (Lemma \ref{vdc-bd}) on this interval with $f(t) = \frac{\gamma}{2\pi} \log t + \theta t$. Since $f''(t) = - \frac{\gamma}{2\pi t^2}$ we can take $\lambda = 2^{-2j - 2} |\frac{\gamma}{2\pi}|$, $\alpha = 4$ and obtain the estimate
\[ \sum_{n \in I} n^{i \gamma} e(n \theta) \ll |\gamma|^{1/2} + 2^j|\gamma|^{-1/2}.\] Summing over $j$ gives
\[ S(\gamma, \theta, x) \ll |\gamma|^{1/2} \log N + N|\gamma|^{-1/2},\] uniformly for $x \leq N$.
Substituting this into \eqref{partial-sum} gives 
\[ \sum_{n \leq N} n^{\rho - 1} e(\theta n) \ll N^{\beta}|\gamma|^{-1/2} + |\gamma|^{1/2} N^{\beta - 1} \log N,\] which certainly implies the stated bound if $|\gamma| \leq N^{1/2}$ (in fact the weaker condition $|\gamma| \leq \frac{N}{\log N}$ would suffice).\end{proof}

\subsection{Minor arcs estimates}\label{minor-lam-q}

We now come to the heart of the proof of Proposition \ref{approx-fourier}. We look first at the minor arcs, obtaining estimates for those $\theta$ not close to a rational with small denominator. Recall that the notation $X \lessapprox Y$ means $|X| \leq N^{o(1)}Y$; in this section, the $N^{o(1)}$ terms are just fixed powers of $\log N$, but this is not important for our purposes in this paper.

The following is our key minor arc estimate, which we phrase in the contrapositive.

\begin{lemma}\label{minor-lam-sharp}
Suppose that $\sigmax \leq \frac{1}{5}$ and that $\max(Q,\frac{1}{\delta}) \leq N^{\frac{1}{8}}$. Suppose that, for some $\theta \in \R/\Z$, \[ \big|\sum_{n \leq N} \Lambda_{\sharp, Q,\sigmax}(n) e(n \theta)\big| \geq \delta N.\]
Then there is some integer $r$, $0 < |r| \lessapprox \delta^{-3}$, such that $\Vert \theta r \Vert \lessapprox \delta^{-3}N^{-1}$.\end{lemma}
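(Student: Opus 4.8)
The strategy is the classical circle-method dichotomy applied to the approximant $\Lambda_{\sharp, Q, \sigmax}$. By \eqref{lam-sharp-conv} we may write
\[ \sum_{n \leq N} \Lambda_{\sharp, Q, \sigmax}(n) e(n\theta) = \sum_{\rho \in \Xi_Q^*(\sigmax)} \eps_\rho \sum_{n \leq N} n^{\rho-1} F_{\chi_\rho, Q}(n) e(n\theta), \]
and the hypothesis $|\sum_{n \leq N} \Lambda_{\sharp, Q, \sigmax}(n) e(n\theta)| \geq \delta N$ forces one of the summands on the right to be $\gg \delta N / (\#\Xi_Q^*(\sigmax))$. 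Since $\#\Xi_Q(\sigmax) \ll Q^{O(1)}$ by Proposition \ref{jut-est} and $Q \leq N^{1/8}$, this loss is only $N^{o(1)}$ (indeed a fixed power of $N$, but harmless given we are happy with $\lessapprox$), so there is a single $\rho = \beta + i\gamma \in \Xi_Q^*(\sigmax)$ with $\sigma = 1 - \beta \leq \sigmax \leq \frac15$ and $|\gamma| \leq Q$ for which $|\sum_{n \leq N} n^{\rho - 1} F_{\chi_\rho, Q}(n) e(n\theta)| \gg \delta N^{1-o(1)}$. Now expand $F_{\chi_\rho, Q}$ via its rational Fourier expansion \eqref{fourier-trunc}: writing $s = r/q \leq Q$ for the "level", this displays the sum as a combination over moduli $r$ (with $q \mid r$, $r/q \leq Q$) and residues $b \in (\Z/r\Z)^*$ of terms $c_{\chi_\rho}(b,r) \sum_{n \leq N} n^{\rho-1} e((\theta + \tfrac{b}{r})n)$. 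The number of $(b,r)$ pairs is $\lessapprox Q^2$ and $|c_{\chi_\rho}(b,r)| \leq \phi(r)^{-1}|\tau(\chi_\rho)| \ll q^{-1/2} \lessapprox 1$, so pigeonholing again we locate a single pair $(b,r)$ with $q_{\rho} \mid r$, $r \leq Q^2$, for which
\[ \Bigl| \sum_{n \leq N} n^{\rho - 1} e\bigl( (\theta + \tfrac{b}{r}) n \bigr) \Bigr| \gg \delta N^{1 - o(1)}. \]

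Now apply Proposition \ref{prop43} to this inner sum with $\theta$ replaced by $\theta' := \theta + b/r$ (legitimate since $|\gamma| \leq Q \leq N^{1/8} \leq N^{1/2}$ and $\frac12 \leq \beta \leq 1$). The three bounds there give: from $|\gamma|^{-1/2}$ we need $|\gamma| \lessapprox \delta^{-2}$ (in particular the $\rho = 1$, $\gamma = 0$ case and small-$\gamma$ zeros survive, larger ones are excluded); from $N^\beta \leq N$ the bound is consistent; and crucially the middle bound $N^\beta (1+|\gamma|) N^{-1} \Vert \theta' \Vert^{-1}$ being $\gg \delta N^{1-o(1)}$ forces
\[ \Vert \theta' \Vert = \Vert \theta + \tfrac{b}{r} \Vert \lessapprox (1 + |\gamma|) \delta^{-1} N^{-1} \lessapprox \delta^{-3} N^{-1}, \]
using $|\gamma| \lessapprox \delta^{-2}$. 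Since $\Vert \theta + b/r \Vert \lessapprox \delta^{-3} N^{-1}$ means there is an integer $m$ with $|\theta + b/r - m| \lessapprox \delta^{-3}N^{-1}$, multiplying through by $r$ (and absorbing $b - rm$ into the integer part) gives $\Vert \theta r \Vert \leq r \Vert \theta + b/r \Vert \lessapprox Q^2 \delta^{-3} N^{-1}$. Since $Q \leq N^{1/8} \leq \delta^{-1}$ is not available directly — rather $\max(Q, 1/\delta) \leq N^{1/8}$ — we instead note $r \leq Q^2 \leq N^{1/4}$ and bound $\Vert \theta r\Vert \lessapprox N^{1/4} \delta^{-3} N^{-1}$; to land exactly on the claimed $\lessapprox \delta^{-3} N^{-1}$ one should be slightly more careful and keep the factor $r$ combined with the $N^{-1}$, observing that in fact we only needed $\Vert r\theta \Vert$ small for \emph{some} nonzero $r \lessapprox \delta^{-3}$, and $r \leq Q^2 \lessapprox \delta^{-?}$ can be folded in; alternatively, reorganize so that $r$ itself is the output modulus. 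The cleanest bookkeeping is to set the output modulus to be $r$ directly, so $|r| \leq Q^2 \lessapprox 1$ times whatever $\delta$-power the pigeonholing costs; tracking constants gives $|r| \lessapprox \delta^{-3}$ and $\Vert \theta r \Vert \lessapprox \delta^{-3} N^{-1}$ as stated.

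\textbf{Main obstacle.} The conceptual content is light once Propositions \ref{jut-est}, \ref{prop43} and the expansion \eqref{fourier-trunc} are in hand; the genuine work is the \emph{bookkeeping of the $\delta$-powers and of the modulus $r$}. One must check that each pigeonholing step costs only $N^{o(1)}$ (not a power of $N$) — this relies on $Q \leq N^{1/8}$ being small enough that $Q^{O(1)} = N^{o(1)}$ is \emph{false}, so in fact one cannot be cavalier: $Q^2$ is genuinely a power of $N$. The resolution is that the final bound on $r$ is allowed to be as large as $\poly(1/\delta)$, and since $Q \leq N^{1/8} \leq (1/\delta)^{?}$ fails in general too, the real mechanism must be that the number of surviving $\rho$ (those with $|\gamma| \lessapprox \delta^{-2}$) and the number of surviving $(b,r)$ are controlled in terms of $\delta$, not $Q$, via Proposition \ref{jut-est} applied at scale $\delta$ rather than $Q$ — i.e. $\#\{\rho : |\gamma| \leq \delta^{-2}, \Re\rho \geq 1 - \sigmax\} \lessapprox \delta^{-O(1)}$ after a dyadic decomposition in $|\gamma|$. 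So the key step I expect to be delicate is \emph{performing the pigeonholing over zeros and over $(b,r)$ with losses measured in $\delta$ rather than in $Q$}, which requires using the log-free zero-density estimate on the truncated height range $|\gamma| \leq \delta^{-O(1)}$ and arguing that levels $r$ contributing non-negligibly also satisfy $r \lessapprox \delta^{-O(1)}$ (heavier levels contribute too little because $|c_{\chi_\rho}(b,r)| \ll q^{-1/2}$ and there are $\phi(r)$ of them, needing a further summation bound). Getting the exponent down to $3$ specifically is a matter of optimizing these inputs.
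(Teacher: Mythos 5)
Your overall frame (expand via \eqref{lam-sharp-conv} and \eqref{fourier-trunc}, then apply Proposition \ref{prop43}) is the right one, but the three places where you pigeonhole are exactly where the work lies, and as written each of them loses a power of $N$ rather than $N^{o(1)}$ --- a loss you flag yourself in your closing paragraph but do not repair. First, crude pigeonholing over $\rho \in \Xi_Q^*(\sigmax)$ costs a factor $\#\Xi_Q^*(\sigmax)$, which by Proposition \ref{jut-est} is only bounded by a power of $Q$, i.e.\ a genuine power of $N$; your proposed repair (count only zeros with $|\gamma| \lessapprox \delta^{-2}$) does not help, because the density estimates count zeros over \emph{all} conductors $q \leq Q$ and still give a power of $Q$ even in bounded height. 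The paper avoids this with a weighted pigeonhole: by Lemma \ref{crude-zero-density}, $\sum_{\rho} N^{\Re \rho - 1} \ll 1$, so some single $\rho$ satisfies $|\sum_{n\le N} n^{\rho-1}F_{\chi_\rho,Q}(n)e(n\theta)| \gg \delta N^{\beta}$; the target for the fixed $\rho$ is lowered to the scale $N^{\beta}$, which is exactly the scale of all subsequent upper bounds, and no power of $N$ is lost. Second, pigeonholing down to a single pair $(b,r)$ costs $\lessapprox Q^2$, again a power of $N$; the paper instead keeps the whole sum over $(b,r)$ with the weights $q^{1/2}/\phi(r)$, whose total is only $\lessapprox 1$, and extracts the intermediate inequality \eqref{qj-J-bd}, namely $q^{-1/2}(1+|\gamma|)^{-1} \gtrapprox \delta^2$, from which the exponent $3$ ultimately comes.

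Third, and most seriously, your endgame does not deliver the conclusion: your output modulus is the Fourier modulus $r$ of the expansion of $F_{\chi_\rho,Q}$, which can genuinely be as large as $qQ \sim Q^2 \sim N^{1/4}$, and nothing in your argument forces it to be $\lessapprox \delta^{-3}$ (the sentence ``tracking constants gives $|r| \lessapprox \delta^{-3}$'' is asserted, not proved, and is false in general: $\delta$ may be a fixed constant while $Q$ is a power of $N$). In the paper the modulus in the conclusion is not a frequency of $F_{\chi_\rho,Q}$ at all: one applies Dirichlet's theorem to write $\theta = \frac{a}{d} + \eta$ with $d \leq 2Q^2$, shows that in the inequality $\sum_{r \leq Q^2}\frac{1}{\phi(r)}\sum_{b}\min(N, \Vert \theta + \frac{b}{r}\Vert^{-1}) \gtrapprox \delta^3 N$ all fractions $\frac{b}{r} \neq -\frac{a}{d}$ contribute only $\lessapprox Q^4$, which is negligible against $\delta^3 N$, and hence $\frac{1}{\phi(d)}\min(N, \Vert \theta - \frac{a}{d}\Vert^{-1}) \gtrapprox \delta^3 N$; it is the weight $\frac{1}{\phi(d)}$ being forced to be $\gtrapprox \delta^3$ that yields $d \lessapprox \delta^{-3}$, and simultaneously $\Vert \theta d \Vert \lessapprox \delta^{-3}N^{-1}$. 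Without a step of this kind your argument only shows $\Vert \theta r \Vert$ is small for some $r$ possibly as large as $N^{1/4}$, which is strictly weaker than the lemma and not enough for its later use.
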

\begin{proof}
From the definition \eqref{lam-sharp-def} of $\Lambda_{\sharp, Q,\sigmax}$ (or, more precisely, from \eqref{lam-sharp-conv}) it follows that 
\[ \sum_{\rho \in \Xi_Q^*(\sigmax)} \big|\sum_{n \leq N} n^{\rho - 1} F_{\chi_{\rho}, Q}(n) e(n \theta)\big| \geq \delta N.\]
Recall that we set $\Xi_Q^*(\sigmax) = \Xi_Q(\sigmax) \cup \{1\}$ and we adopt the convention that $\chi_1$ is the principal character. By Lemma \ref{crude-zero-density} it follows that there is some $\rho \in \Xi_Q^*(\sigmax)$ such that 
\[ \big|\sum_{n \leq N} n^{\rho-1  } F_{\chi_{\rho}, Q}(n)e(\theta n)\big| \gg \delta N^{\beta},\]
where $\beta = \Re \rho$. Fix this $\rho$ and, for notational brevity, write $\chi = \chi_{\rho}$. As usual, write $\rho = \beta + i \gamma$, and let $q$ be the conductor of $\chi$. Note that $q \leq Q$ (or else $\rho$ would not appear in $\Xi^*_Q(\sigmax)$) and also that $|\gamma| \leq Q$. Substituting in the definition \eqref{fourier-trunc} of $F_{\chi, Q}(n)$ and using the triangle inequality, we obtain
\begin{equation}\label{nugget-2} \sum_{\substack{ q | r \\ r/q \leq Q}}\sum_{b \in (\Z/r\Z)^*} |c_{\chi}(b,r)| \bigg|  \sum_{n \leq N} n^{\rho - 1} e\big((\theta + \frac{b}{r})n\big)\bigg| \gg \delta N^{\beta}.\end{equation}
From the definition \eqref{c-def} of $c_{\chi}(b,r)$ and the basic bound 
\begin{equation}\label{gauss-square} |\tau(\chi)|^2 = q.\end{equation}
for the Gauss sum of a primitive character $\chi$ we have \begin{equation}\label{cj-pointwise} |c_{\chi}(b, r)| \leq \frac{q^{1/2}}{\phi(r)} .\end{equation} Also, $c_{\chi}(b,r)$ is supported where $(q, r/q) = 1$ (and where $r/q$ is squarefree). Therefore from \eqref{nugget-2} we have
\begin{equation}\label{nugget-3} q^{1/2} \sum_{\substack{q | r \\ r/q \leq Q \\ (q, r/q) = 1}} \frac{1}{\phi(r)} \sum_{b \in (\Z/r\Z)^*} \bigg|  \sum_{n \leq N} n^{\rho - 1} e\big((\theta + \frac{b}{r})n\big)\bigg| \gg \delta N^{\beta}.\end{equation}
Now we use Proposition \ref{prop43} to get an upper bound on 
\[ S := \sum_{b \in (\Z/r\Z)^*} \bigg|  \sum_{n \leq N} n^{\rho - 1} e\big((\theta + \frac{b}{r})n\big)\bigg| .\]
For the (at most two) values of $b$ with $\Vert \theta + \frac{b}{r} \Vert \leq \frac{1}{r}$, we use the first and third bounds in \eqref{three-ests}, to get a contribution of $\ll N^{\beta} \min(1,|\gamma|)^{-\frac{1}{2}}$. For all other values of $b$ we instead use the second bound in \eqref{three-ests}, obtaining a contribution of
\[ \ll N^{\beta-1} (1 + |\gamma|) \sum_{\substack{b \in (\Z/r\Z)^* \\ \Vert \theta + \frac{b}{r} \Vert \geq \frac{1}{r}}} \Vert \theta + \frac{b}{r} \Vert^{-1} \ll N^{\beta-1} (1 + |\gamma|) r\log r,\] uniformly in $\theta$. Now since $|\gamma| \leq Q$, $r \leq Q^2$ and $Q \leq N^{\frac{1}{8}}$, this is dominated by the bound $N^{\beta} \min(1, |\gamma|^{-\frac{1}{2}})$, and so 
\begin{equation}\label{S-bd} S \ll N^{\beta} \min(1, |\gamma|^{-\frac{1}{2}}).\end{equation}

Since
\[ q^{1/2}  \sum_{\substack{q | r \\ r/q \leq Q \\ (q, r/q) = 1}} \frac{1}{\phi(r)} \leq \frac{q^{1/2}}{\phi(q)} \sum_{k \leq Q} \frac{1}{\phi(k)} \ll \frac{q^{1/2}}{\phi(q)} \log Q,\]
comparing with \eqref{S-bd} and \eqref{nugget-3} gives
\[  \frac{q^{1/2} \log Q}{\phi(q)} \min(1, |\gamma|^{-\frac{1}{2}}) \gg \delta.\]
Using $\phi(q) \gtrapprox q$ and $q \geq 1$, and considering the cases $|\gamma| \leq 1$ and $|\gamma| \geq 1$ separately, this gives
\begin{equation}\label{qj-J-bd} q^{-\frac{1}{2}} (1 + |\gamma|)^{-1} \gtrapprox \delta^2,\end{equation} an inequality we will use shortly.

Returning to \eqref{nugget-3}, and dropping the conditions $q | r$ and $(q,r/q) = 1$, we have
\[  \sum_{r \leq Q^2} \frac{1}{\phi(r)} \sum_{b \in (\Z/r\Z)^*} \bigg| \sum_{n \leq N} n^{\rho - 1} e\big((\theta + \frac{b}{r}) n\big) \bigg| \gg \delta N^{\beta} q^{-\frac{1}{2}}.\] By the first and second estimates in \eqref{three-ests}, this implies
\begin{align}\nonumber \sum_{r \leq Q^2} \frac{1}{\phi(r)} \sum_{b \in (\Z/r\Z)^*}   \min\big(N,   \Vert \theta + \frac{b}{r} \Vert^{-1}\big) & \gg \delta N q^{-\frac{1}{2}} (1 + |\gamma|)^{-1} \\ & \gtrapprox \delta^3 N,\label{nugget-4}\end{align}
where the second step follows from \eqref{qj-J-bd}.

By Dirichlet's theorem, we may write
\[ \theta = \frac{a}{d} + \eta, \quad \mbox{where} \quad d \leq 2Q^2, \quad |\eta| \leq \frac{1}{2dQ^2},\] with $\frac{a}{d}$ in lowest terms.
If $\frac{a}{d} \neq -\frac{b}{r}$, this means that
\[ \Vert \theta + \frac{b}{r} \Vert \geq \Vert \frac{a}{d} + \frac{b}{r}\Vert - \frac{1}{2dQ^2} \geq \frac{1}{dr}  - \frac{1}{2dQ^2} \geq \frac{1}{2rd}.\]
Hence, if $r \neq d$ then we we have
\[ \sum_{b \in (\Z/r\Z)^*} \Vert \theta + \frac{b}{r}\Vert^{-1} \ll dr + r \log r \ll Q^4,\] and so the contribution to the left-hand side of \eqref{nugget-4} from $r \neq d$ will be bounded above by $Q^4 \sum_{r \leq Q^2} \frac{1}{\phi(r)} \lessapprox Q^4$. By our choice of parameters, this is negligible compared to $\delta^3 N$.
Thus the bulk of the contribution to the left-hand side of \eqref{nugget-4} comes from $r = d$, that is to say
\begin{equation}\label{nugget-5}   \frac{1}{\phi(d)} \sum_{b \in (\Z/d\Z)^*}   \min\big(N, \Vert \theta + \frac{b}{d} \Vert^{-1}\big)  \gtrapprox \delta^3 N.\end{equation}

Now
\[ \sum_{\substack{b \in (\Z/d\Z)^* \\ b \neq -a}} \Vert \theta + \frac{b}{d} \Vert^{-1} \ll d \log d \lessapprox Q^2.\]
Therefore the bulk of the contribution to the left-hand side of \eqref{nugget-5} comes from $b = -a$, and so
\[  \frac{1}{\phi(d)} \min\big(N, \Vert \theta - \frac{a}{d} \Vert^{-1}\big)  \gtrapprox \delta^3 N.\]

This implies both $d \lessapprox \delta^{-3}$ and $\Vert \theta d \Vert \lessapprox \delta^{-3} N^{-1}$, that is to say the two statements claimed.
This completes the proof.
\end{proof}

Combining Lemma \ref{minor-lam-sharp} with standard minor arc estimates for the von Mangoldt function confirms Proposition \ref{approx-fourier} unless $\theta$ is close to a rational with small denominator (that is, in the minor arc case), as the following corollary shows.

\begin{corollary}\label{minor-arc-approx}
Suppose that $\sigmax < \frac{1}{5}$ and that $\max(Q, \frac{1}{\delta})\leq N^{\frac{1}{8}}$. Suppose that, for some $\theta \in \R/\Z$, 
\[ \big|\sum_{n \leq N} (\Lambda(n) - \Lambda_{\sharp, Q,\sigmax}(n)) e(n \theta)\big|  \geq \delta N.\]
Then there is some integer $r$, $0 < |r| \lessapprox \delta^{-3}$, such that $\Vert \theta r \Vert \lessapprox \delta^{-3} N^{-1}$.
\end{corollary}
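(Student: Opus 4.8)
\textbf{Proof plan for Corollary \ref{minor-arc-approx}.}

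The plan is to reduce the statement about the difference $\Lambda - \Lambda_{\sharp, Q,\sigmax}$ to the already-proven Lemma \ref{minor-lam-sharp} about $\Lambda_{\sharp, Q,\sigmax}$ alone, by invoking a standard minor arc bound for the von Mangoldt function $\Lambda$ itself. Concretely, suppose $|\sum_{n \leq N} (\Lambda(n) - \Lambda_{\sharp, Q,\sigmax}(n)) e(n\theta)| \geq \delta N$. By the triangle inequality, either $|\sum_{n \leq N} \Lambda(n) e(n\theta)| \geq \frac{\delta}{2} N$ or $|\sum_{n \leq N} \Lambda_{\sharp, Q,\sigmax}(n) e(n\theta)| \geq \frac{\delta}{2} N$. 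In the second case we are immediately done by Lemma \ref{minor-lam-sharp} (applied with $\delta/2$ in place of $\delta$, which is harmless since $\delta^{-3}$ only changes by a constant factor, absorbed into $\lessapprox$, and the hypothesis $\max(Q, \frac{1}{\delta}) \leq N^{1/8}$ still gives $\max(Q, \frac{2}{\delta}) \leq 2N^{1/8} \leq N^{1/8 + o(1)}$, or one simply slightly shrinks the exponent). So the real content is the first case.

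For the first case, I would invoke Vinogradov's classical minor arc estimate for exponential sums over primes in the quantitative (contrapositive) form: if $|\sum_{n \leq N} \Lambda(n) e(n\theta)| \geq \eta N$, then there exist $r \neq 0$ with $|r| \ll \eta^{-2} (\log N)^{O(1)}$ and $\Vert \theta r \Vert \ll \eta^{-2} N^{-1} (\log N)^{O(1)}$ — that is, $|r| \lessapprox \eta^{-2}$ and $\Vert \theta r \Vert \lessapprox \eta^{-2} N^{-1}$. This is a standard consequence of the Vinogradov--Vaughan method; see for instance the treatment in Montgomery's \emph{Ten Lectures} or in Iwaniec--Kowalski. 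Applying this with $\eta = \delta/2$ yields $|r| \lessapprox \delta^{-2} \lessapprox \delta^{-3}$ and $\Vert \theta r \Vert \lessapprox \delta^{-2} N^{-1} \lessapprox \delta^{-3} N^{-1}$, exactly the claimed conclusion (the stronger exponent $2$ is weakened to $3$ just to match Lemma \ref{minor-lam-sharp} so that a single uniform statement holds in both cases). One should double-check that the range hypothesis $\frac{1}{\delta} \leq N^{1/8}$ comfortably covers the technical requirement in the Vinogradov bound that $\eta$ not be too small relative to $N$ (which it does, since $\eta^{-2} \leq 4 N^{1/4} = o(N^{1-\eps})$).

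The main obstacle, such as it is, is purely bookkeeping: making sure the two $\lessapprox$-estimates from the two cases are genuinely compatible and that the parameter ranges line up, since Lemma \ref{minor-lam-sharp} needs $\sigmax \leq \frac{1}{5}$ and the corollary assumes $\sigmax < \frac{1}{5}$, while both need $\max(Q, \frac{1}{\delta}) \leq N^{1/8}$ — so there is no friction there. There is no genuine analytic difficulty; the Vinogradov minor arc bound and Lemma \ref{minor-lam-sharp} do all the work, and the corollary is just their union via the triangle inequality. The one point worth stating carefully in the writeup is that both conclusions are of the identical shape ``$\exists r \neq 0$, $|r| \lessapprox \delta^{-3}$, $\Vert \theta r\Vert \lessapprox \delta^{-3} N^{-1}$'', so the case split dissolves cleanly.
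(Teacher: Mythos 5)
Your proposal is correct and follows essentially the same route as the paper: a triangle-inequality case split, with the $\Lambda_{\sharp, Q,\sigmax}$ case handled by Lemma \ref{minor-lam-sharp} and the $\Lambda$ case by Vinogradov's minor arc bound. The only difference is cosmetic — the paper unpacks the Vinogradov step explicitly (Dirichlet's theorem plus the estimate from \cite[Chapter 25]{davenport} with a threshold $X \approx \delta^2 N \log^{-8}N$), arriving at the same $|r| \lessapprox \delta^{-2}$, $\Vert \theta r \Vert \lessapprox \delta^{-2}N^{-1}$ that you quote as a standard black box.
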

\begin{proof}
One of $|\sum_{n \leq N} \Lambda(n) e(\theta n)|$ and $|\sum_{n \leq N} \Lambda_{\sharp, Q,\sigmax}(n) e(\theta n)|$ is at least $\frac{\delta}{2} N$, by the triangle inequality. In the latter case, the conclusion follows immediately from Lemma \ref{minor-lam-sharp}. Suppose, then, that we are in the former case, so
\begin{equation}\label{lam-assump} \big|\sum_{n \leq N} \Lambda(n) e(\theta n)\big| \geq \frac{\delta}{2} N.\end{equation}
In \cite[Chapter 25]{davenport} it is shown that if $|\theta - \frac{a}{q}| \leq q^{-2}$ then
\begin{equation}\label{vino-est} \sum_{n \leq N} \Lambda(n) e(\theta n) \ll \big(\frac{N}{q^{1/2}} + N^{4/5} + N^{1/2}q^{1/2}\big)\log^4 N.\end{equation}
Let $X = c\delta^2 N \log^{-8} N$ for some $c > 0$ which we will specify shortly. By Dirichlet's theorem, there is some $q \leq X$ such that $|\theta - \frac{a}{q}| \leq \frac{1}{qX} \leq \frac{1}{q^2}$. By the assumption that $\delta \geq N^{-\frac{1}{8}}$ (which is a comfortable one, but stronger than necessary) and that $N$ is sufficiently large, the middle term in \eqref{vino-est} is $\leq \delta N/6$. By the choice of $X$ (if $c$ is small enough) the last term in \eqref{vino-est} is also $\leq \delta N/6$. It follows from these observations that if \eqref{lam-assump} holds then the first term in \eqref{vino-est} is $\geq \delta N/6$, which implies that $q \lessapprox \delta^{-2}$. 
Note also that $\Vert \theta q\Vert \leq X^{-1} \lessapprox \delta^{-2} N^{-1}$. Thus the conclusion of Corollary \ref{minor-arc-approx} holds in this case also (in fact with slightly superior exponents).
\end{proof}

\subsection{Major arc estimates}

In this section, we handle the cases of Proposition \ref{approx-fourier} not covered by Corollary \ref{minor-arc-approx}. The main result is the following.

\begin{lemma}[Major arcs] \label{major-arc-lams} Suppose that $\sigmax \leq \frac{1}{9}$ and that $Q \leq N^{\sigmax}$.
Suppose that there is some positive integer $d \leq Q^{\frac{1}{2}}$ such that $\Vert \theta d\Vert \leq Q^{\frac{1}{2}} N^{-1}$. Then 
\begin{equation}\label{lam-lam-sharp-maj} \sum_{n \leq N} (\Lambda(n) - \Lambda_{\sharp, Q,\sigmax}(n))e(\theta n)  \lessapprox N Q^{-\frac{1}{4}}.   \end{equation}
\end{lemma}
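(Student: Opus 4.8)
The plan is to evaluate \emph{both} $\sum_{n\le N}\Lambda(n)e(\theta n)$ and $\sum_{n\le N}\Lambda_{\sharp,Q,\sigmax}(n)e(\theta n)$ essentially exactly on the major arc, and to observe that the main terms cancel, as do the contributions of all the zeros $\rho$ that $\Lambda_{\sharp,Q,\sigmax}$ actually sees (those with conductor $\le Q$, height $\le Q$ and $\Re\rho\ge 1-\sigmax$); what survives is a sum over zeros $\rho$ with $\Re\rho<1-\sigmax$, which a zero-density argument controls. Throughout I would pass to a reduced fraction, so that $\theta=\frac ad+\beta$ with $(a,d)=1$, $d\le Q^{1/2}$ and $|\beta|\le Q^{1/2}(dN)^{-1}$, and discard the prime powers $n$ with $(n,d)>1$, which contribute $O(\log^2 N)$ to $\sum_n\Lambda(n)e(\theta n)$.

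First I would expand $e(an/d)=\phi(d)^{-1}\sum_{\psi\bmod d}\overline\psi(a)\tau(\psi)\overline\psi(n)$ for $(n,d)=1$, turning $\sum_n\Lambda(n)e(\theta n)$ into $\phi(d)^{-1}\sum_{\psi\bmod d}\overline\psi(a)\tau(\psi)\sum_n\Lambda(n)\overline\psi(n)e(\beta n)+O(\log^2 N)$. To each inner sum I would apply the truncated explicit formula for $\sum_{n\le x}\Lambda(n)\chi(n)$ (with $\chi$ the primitive character inducing $\overline\psi$, so $\cond(\chi)\mid d$, and with zeros cut at height $T:=Q$), then partial summation to insert $e(\beta t)$, replacing the resulting $\int_0^N t^{\rho-1}e(\beta t)\,dt$ by $\sum_{n\le N}n^{\rho-1}e(\beta n)$ at negligible cost. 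The key algebraic check — using $\tau(\psi)=\mu(d/q)\overline\chi(d/q)\tau(\overline\chi)$, the identity $\tau(\overline\chi)=\chi(-1)\overline{\tau(\chi)}$ from the proof of Lemma \ref{lem45}, and Definition \ref{c-chi-def} — is that the coefficient $\phi(d)^{-1}\overline\psi(a)\tau(\psi)$ multiplying the zeros of $L(s,\chi)$ is \emph{exactly} $c_\chi(-a,d)$. This yields
\[ \sum_{n\le N}\Lambda(n)e(\theta n)=\frac{\mu(d)}{\phi(d)}\sum_{n\le N}e(\beta n)-\sum_{\rho}c_{\chi_\rho}(-a,d)\sum_{n\le N}n^{\rho-1}e(\beta n)+\mathrm{err}_1, \]
where $\rho$ runs over all nontrivial zeros (with multiplicity) of $L$-functions of conductor dividing $d$ with $|\gamma|\le Q$, the main term coming only from $\psi=\psi_0$ (where $\tau(\psi_0)=c_d(1)=\mu(d)$), and $\mathrm{err}_1\lessapprox NQ^{-3/4}$ from the height-$Q$ truncation.

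Next I would compute the two pieces of $\Lambda_{\sharp,Q,\sigmax}$ on the same arc. For $\Lambda_Q(n)=\sum_{q\le Q}\tfrac{\mu(q)}{\phi(q)}c_q(n)$: since $\Vert\frac ad+\frac bq\Vert$ is either $0$ or $\gg[d,q]^{-1}\gg Q^{-3/2}\gg|\beta|$, and vanishes only for $q=d$, $b\equiv-a$, the diagonal contributes $\tfrac{\mu(d)}{\phi(d)}\sum_{n\le N}e(\beta n)$ and, using that the frequencies $\tfrac bq$ are $q^{-1}$-separated, the rest is $\lessapprox\sum_{q\le Q}\phi(q)^{-1}q\log q\lessapprox Q$. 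For $F_{\chi_\rho,Q}$ with $\rho\in\Xi_Q(\sigmax)$: by Proposition \ref{prop43} the term $r=d$, $b\equiv-a$ survives only when $\cond(\chi_\rho)\mid d$, contributing $c_{\chi_\rho}(-a,d)\sum_{n\le N}n^{\rho-1}e(\beta n)$, while (using $|c_{\chi_\rho}(b,r)|\le q^{1/2}\phi(r)^{-1}$, the $r^{-1}$-separation of frequencies and $|\gamma|\le Q$) the off-diagonal is $\ll N^{\Re\rho-1}Q^{5/2}$, which summed over $\rho\in\Xi_Q(\sigmax)\subset\Xi_Q(\tfrac15)$ via Lemma \ref{crude-zero-density} is $\lessapprox Q^{5/2}$. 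Since $\Lambda-\Lambda_{\sharp,Q,\sigmax}=(\Lambda-\Lambda_Q)+\sum_{\rho\in\Xi_Q(\sigmax)}n^{\rho-1}F_{\chi_\rho,Q}(n)$, the main terms cancel and the zero terms with $\cond(\chi_\rho)\mid d$, $|\gamma|\le Q$, $\Re\rho\ge1-\sigmax$ cancel against those furnished by the $\Xi_Q(\sigmax)$-sum. What is left, up to an error $\lessapprox NQ^{-3/4}+Q^{5/2}\lessapprox NQ^{-1/4}$, is
\[ \sum_{n\le N}(\Lambda(n)-\Lambda_{\sharp,Q,\sigmax}(n))e(\theta n)=-\sum_\rho c_{\chi_\rho}(-a,d)\sum_{n\le N}n^{\rho-1}e(\beta n)+\mathrm{err}, \]
with $\rho$ now running over zeros of conductor dividing $d$, $|\gamma|\le Q$, $\Re\rho<1-\sigmax$.

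The last, and only genuinely delicate, step is to bound this sum over the ``uncaptured'' zeros by $\lessapprox NQ^{-1/4}$. Here $|c_{\chi_\rho}(-a,d)|\le q^{1/2}\phi(d)^{-1}\lessapprox 1$, and by Proposition \ref{prop43} (or the trivial bound $\ll N^{\Re\rho}$ when $\Re\rho<\tfrac12$) one has $|\sum_{n\le N}n^{\rho-1}e(\beta n)|\ll N^{\Re\rho}\min(1,|\gamma_\rho|^{-1/2})$, so the sum is $\lessapprox\sum_\rho N^{\Re\rho}\min(1,|\gamma_\rho|^{-1/2})$ over zeros of conductor $\le Q^{1/2}$, height $\le Q$, $\Re\rho<1-\sigmax$. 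For $\Re\rho\ge\tfrac45$ I would invoke the log-free zero-density estimate of Proposition \ref{jut-est} (with the constant $6$ that Jutila's bound in fact gives) to obtain $\lessapprox\sup_{4/5\le\alpha<1-\sigmax}N^\alpha Q^{6(1-\alpha)}\lessapprox N^{1-\sigmax}Q^{6\sigmax}\le N^{1-\sigmax+6\sigmax^2}$, which is $\le NQ^{-1/4}$ since $Q\le N^{\sigmax}$ and $\sigmax\le\tfrac18$. For $\Re\rho<\tfrac45$ I would use only the trivial count $\#\{\rho\text{ of modulus }q:\ |\gamma|\in[Y,2Y]\}\lessapprox\phi(q)Y$, so the contribution is $\lessapprox N^{4/5}\sum_{q\le Q^{1/2}}\phi(q)Q^{1/2}\lessapprox N^{4/5}Q^{3/2}\le NQ^{-1/4}$ because $Q\le N^{1/9}\le N^{4/35}$; note that here the saving $|\gamma_\rho|^{-1/2}$ of Proposition \ref{prop43} is essential. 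This gives \eqref{lam-lam-sharp-maj}. I expect the main obstacle to be not any single estimate but the combination of (a) the bookkeeping that makes the explicit-formula/Gauss-sum coefficients coincide with $c_\chi(-a,d)$ on the nose, so the seen zeros cancel with no leftover, and (b) checking that Proposition \ref{jut-est} is exactly strong enough, given $Q\le N^{\sigmax}$ and $\sigmax\le\tfrac19$, to absorb the uncaptured zeros with $\Re\rho$ near $1-\sigmax$.
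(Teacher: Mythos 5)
Your proposal follows the same overall strategy as the paper: pass to $\theta=\pm\frac{a}{d}+\beta$ on the major arc, expand both exponential sums via the Gauss-sum/character decomposition (this is exactly Lemma \ref{dirich-add}, and your coefficient $\phi(d)^{-1}\overline{\psi}(a)\tau(\psi)$ does match $c_\chi(\mp a,d)$ of Definition \ref{c-chi-def}), apply the explicit formula truncated at height $Q$, check that all off-diagonal frequencies contribute only a fixed power of $Q$, and observe that the main terms and the zeros seen by $\Lambda_{\sharp,Q,\sigmax}$ cancel identically. The genuine difference is where the zeros with $\Re\rho<1-\sigmax$ are discarded. The paper removes them \emph{inside} the explicit formula (Lemma \ref{explicit-formula-lem}), before twisting by $e(\eta n)$: there each such zero still carries the factor $1/|\rho|$, so the classical count of $\lessapprox 2^k$ zeros per character with $|\gamma|\asymp 2^k$ bounds their total by $\lessapprox x^{1-\sigmax}$, and after the partial-summation factor $1+|\eta|N\lessapprox Q^{1/2}$ and the Gauss-sum weights $\sum_\chi|c_\chi(a,d)|\le Q^{1/4}$ this is $\lessapprox N^{1-\sigmax}Q^{3/4}\le NQ^{-1/4}$ — no log-free density estimate is needed for this part. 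You instead keep all zeros up to height $Q$, twist each one, and only then bound the uncaptured ones; after the twist the $\gamma$-decay is only $|\gamma|^{-1/2}$ (and for $\Re\rho<\frac12$ Proposition \ref{prop43} gives no $\gamma$-saving at all, though there the trivial bound $N^{1/2}$ times the classical count $\lessapprox\phi(d)Q$ suffices), so for zeros with $\Re\rho$ near $1-\sigmax$ and small $|\gamma|$ you really do need a log-free density estimate, and the numerics are tight: the dominant term is $N^{1-\sigmax}Q^{c\sigmax}$ with $c$ the density exponent, and $N^{1-\sigmax}Q^{c\sigmax}\le NQ^{-1/4}$ at $Q=N^{\sigmax}$ forces $c\sigmax\le\frac34$, i.e. $c<\frac{27}{4}$ when $\sigmax=\frac19$.

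This is the one point you must be careful about: with the exponent $8$ as Proposition \ref{jut-est} is actually stated, your final step fails for $\sigmax\in(\frac{3}{32},\frac19]$, so you cannot quote that proposition verbatim; you must appeal (as you in effect do) to the sharper form of Jutila's theorem — any exponent larger than $6$, as the remark after Proposition \ref{jut-est} records — and any exponent below $\frac{27}{4}$ then closes the argument. What the paper's ordering buys is precisely that it never needs this input, because the $x^{\rho}/\rho$ decay is exploited before the twist destroys it. Two further slips are harmless: your $\mathrm{err}_1\lessapprox NQ^{-3/4}$ should be $\lessapprox NQ^{-1/4}$ once the factor $1+|\beta|N\lessapprox Q^{1/2}$ from partial summation and the $d^{1/2}$ from the Gauss sums are included, and the off-diagonal bound for $\Lambda_Q$ is $\lessapprox Q^{3/2}$ rather than $\lessapprox Q$ because of the single near-diagonal frequency at distance $\asymp(qd)^{-1}$; both are still well within the target $NQ^{-1/4}$.
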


Before proceeding to the proof, let us see how Corollary \ref{minor-arc-approx} and Lemma \ref{major-arc-lams} together imply Proposition \ref{approx-fourier}. 

\begin{proof}[Proof of Proposition \ref{approx-fourier}] Let $\eps > 0$ be arbitrary. Let $\delta = Q^{-\frac{1}{6}} N^{\eps}$, and suppose that for some $\theta$ we have \[ \big|\sum_{n \leq N} (\Lambda(n) - \Lambda_{\sharp, Q,\sigmax}(n)) e(n \theta)\big| \geq \delta N.\]  Then, by Corollary \ref{minor-arc-approx}, there is some integer $r$, $0 < |r| \lessapprox \delta^{-3}$, such that $\Vert \theta r \Vert \lessapprox \delta^{-3} N^{-1}$. If $N$ is big enough in terms of $\eps$, $|r| \leq Q^{\frac{1}{2}}$ and $\Vert \theta r\Vert \leq Q^{\frac{1}{2}} N^{-1}$, so we are in a position to apply Lemma \ref{major-arc-lams}, and this tells us that $N Q^{-\frac{1}{4}} \gtrapprox \delta N$, which is a contradiction for $N$ sufficiently large in terms of $\eps$. Since $\eps$ was arbitrary, Proposition \ref{approx-fourier} follows.\end{proof}

To prove Lemma \ref{major-arc-lams}, we will need some preliminaries on Dirichlet characters and on the coefficients $c_{\chi}$ as defined in \eqref{c-def}. In particular, we will see why these coefficients play a key role in the paper. Here and in what follows, $\cond(\chi)$ means the conductor of the primitive Dirichlet character $\chi$.

\begin{lemma}\label{dirich-add}
Let $r$ be an integer and $b \in (\Z/r\Z)^*$. Then
\begin{equation}\label{dir-add-eq} e\big(-\frac{b n}{r}\big) = \sum_{\chi : \cond(\chi) | r} c_{\chi}(b,r) 1_{(n,r) = 1} \chi(n),\end{equation} where the sum is over all primitive Dirichlet characters $\chi$.
\end{lemma}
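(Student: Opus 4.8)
\textbf{Proof plan for Lemma \ref{dirich-add}.} The identity \eqref{dir-add-eq} is a ``Fourier inversion'' statement relating the additive character $n \mapsto e(-bn/r)$ on $(\Z/r\Z)^*$ to the multiplicative characters, and the natural route is to first establish it when the additive character is ``primitive to modulus $r$'' (i.e. essentially the case where one separates Gauss sums cleanly) and then descend. Concretely, the plan is to start from the standard expansion of a Dirichlet character in terms of additive characters via its Gauss sum: for a primitive character $\chi$ of conductor $q$ one has $\chi(n) \tau(\overline{\chi}) = \sum_{a \bmod q} \overline{\chi}(a) e(an/q)$, and more generally, for $q \mid r$ and $(n,r)=1$, one can write $\chi(n)$ as a normalised sum of $e(an/r)$ over $a \in (\Z/r\Z)^*$ with coefficients involving $\mu(r/q)\overline{\chi(r/q)}\overline{\tau(\chi)}/\phi(r)$ — which is exactly (up to conjugation/sign conventions) the quantity $c_\chi$ of Definition \ref{c-chi-def}. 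So the cleanest approach is to verify the dual statement and then invert.

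\textbf{Key steps, in order.} First I would fix $r$ and $b \in (\Z/r\Z)^*$ and treat both sides of \eqref{dir-add-eq} as functions of $n \in \Z/r\Z$ supported on $(\Z/r\Z)^*$ (the left side, $e(-bn/r)$, is of course defined for all $n$, but we only claim the identity as written; note the right side is automatically supported on $(n,r)=1$). Both sides are class functions on the group $(\Z/r\Z)^*$ in the sense that they lie in the $\phi(r)$-dimensional space of functions on that group, so it suffices to check equality after pairing against every primitive character $\psi$ (equivalently, against every character of $(\Z/r\Z)^*$, primitive or not, since an imprimitive character is induced from a primitive one of smaller conductor, and the coefficient indexing on the right runs over primitive $\chi$ with $\cond(\chi)\mid r$, which biject with all characters mod $r$). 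Second, on the right-hand side, pairing $\sum_\chi c_\chi(b,r) 1_{(n,r)=1}\chi(n)$ against $\overline{\psi(n)}$ and summing over $n \in (\Z/r\Z)^*$ picks out, by orthogonality, the single term $\chi = \psi$ with value $\phi(r) c_\psi(b,r)$. Third, on the left-hand side, I compute $\sum_{n \in (\Z/r\Z)^*} e(-bn/r)\overline{\psi(n)}$; substituting $n \mapsto b^{-1} n$ this is $\overline{\psi(b^{-1})}\sum_{n}e(-n/r)\overline{\psi}(n) = \psi(b)\sum_n \overline{\psi}(n) e(-n/r)$, and this last sum is a classical Gauss-sum-of-an-imprimitive-character evaluation: writing $\psi = \psi^* \chi_0$ with $\psi^*$ primitive of conductor $q$, one has $\sum_{n \bmod r, (n,r)=1}\overline{\psi}(n)e(-n/r) = \mu(r/q)\overline{\psi^*}(r/q)\,\overline{\tau(\psi^*)}$ (up to signs; this is the standard formula, e.g. in Montgomery--Vaughan or Davenport, for Gauss sums of nonprimitive characters, with the $e(-\cdot)$ giving the complex conjugate of the usual $e(+\cdot)$ version, hence $\overline{\tau}$). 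Fourth, comparing with Definition \ref{c-chi-def} — where $c_\chi(b,r) = \phi(r)^{-1}\chi(b)\mu(r/q)\overline{\chi(r/q)}\,\overline{\tau(\chi)}$ — shows the two pairings agree for every $\psi$, which gives \eqref{dir-add-eq}.

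\textbf{Main obstacle.} The only genuinely delicate point is getting all the conjugation and sign conventions to line up: the direction of the exponential ($e(-bn/r)$ versus $e(+bn/r)$), whether $\tau$ or $\overline{\tau}$ appears, and the factor $\chi(-1)$ that relates $\tau(\chi)$ and $\tau(\overline{\chi})$ (already used in the proof of Lemma \ref{lem45}). In particular I need to be careful that the standard Gauss-sum-of-an-imprimitive-character identity is quoted in the form matching Definition \ref{c-chi-def}: the cleanest way is to prove it by writing $\psi$ mod $r$ as induced from primitive $\psi^*$ mod $q$, factoring $e(-n/r)$ through the Chinese remainder theorem across the ``new'' part $r/q$ and the conductor part, and using multiplicativity of Gauss sums, with the $\mu(r/q)$ arising exactly when $r/q$ is squarefree and coprime to $q$ (and the sum vanishing otherwise, matching $c_\chi(b,r)=0$ when $q\nmid r$ or $(q,r/q)\ne 1$ or $r/q$ non-squarefree). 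I would also double-check the edge case $n$ not coprime to $r$: there the left side $e(-bn/r)$ need not vanish, but the claimed identity only asserts equality with the right side, which does vanish — so strictly the lemma as stated is an identity of functions that happens to also be correct for $(n,r)>1$ only if the left side is reinterpreted; rereading the statement, the safest reading is that \eqref{dir-add-eq} is asserted for all $n$ with the understanding that it is used (and true) for $(n,r)=1$, and I would state the coprimality hypothesis explicitly if needed, or note that the RHS is by construction the projection of the LHS onto functions supported on units, which is what subsequent sections actually use.
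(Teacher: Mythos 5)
Your proposal is correct and is essentially the paper's own argument: both rest on orthogonality of the characters mod $r$ together with the evaluation $\sum_{t}\psi(t)e(bt/r)=\overline{\psi(b)}\tau(\psi)$ and the formula $\tau(\psi)=\mu(r/q)\chi(r/q)\tau(\chi)$ for the Gauss sum of an induced character, your ``pair both sides against every $\psi$'' step being the same computation as the paper's direct Fourier expansion of $e(-bn/r)$ on $(\Z/r\Z)^*$. Your caveat about $(n,r)>1$ is also apt --- the identity as derived holds on the units (the paper's later application treats the non-coprime $n$ separately), so no gap there.
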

\begin{proof}
For any character $\psi$ to modulus $r$ we have
\begin{equation}\label{gauss-sum-ext}\sum_{t \in (\Z/r\Z)^*} \psi(t) e\big(\frac{bt}{r}\big) = \overline{\psi(b)} \tau(\psi).\end{equation}
By \cite[Lemma 3.1]{ik}, if $\psi$ is induced by a primitive character $\chi$ with conductor $q$ then
\[ \tau(\psi) = \mu\big(\frac{r}{q}\big) \chi\big(\frac{r}{q}\big) \tau(\chi).\]
Therefore by orthogonality of characters, and writing $\sum_{\psi \mdsub{r}}$ for the sum over all $\phi(r)$ Dirichlet characters to modulus $r$, we have
\begin{align*} e\big(-\frac{b n}{r}\big) &  = \frac{1}{\phi(r)} \sum_{\psi \mdsub{r}} \big( \overline{\sum_{t \in (\Z/r\Z)^*} \psi(t) e\big(\frac{b t}{r}\big)} \big) \psi(n)  \\ & = \frac{1}{\phi(r)}\sum_{\psi \mdsub{r}} \psi(b) \overline{\tau(\psi)} \psi(n)  \\ & = \frac{1}{\phi(r)} \sum_{\substack{\chi : \cond(\chi) | r \\ \mbox{\scriptsize $\chi$ primitive}}}  \chi(b) \mu\big(\frac{r}{q}\big) \overline{\chi\big(\frac{r}{q}\big)} \overline{\tau(\chi)} 1_{(n,r)=1} \chi(n). \end{align*}
To get the last line, for each $\psi$ we consider the primitive character $\chi$ which induces it, noting that the induced character $\psi$ to modulus $r$ is then $1_{(n,r) = 1} \chi(n)$. The result follows immediately by the definition of $c$, namely \eqref{c-def}.\end{proof}

We will also need a version of the explicit formula. If $\chi$ is a primitive Dirichlet character, denote by $Z(\chi)$ the multiset of nontrivial zeros of $L(s,\chi)$ in the strip $0 < \Re s < 1$, together with the pole at $1$ if $\chi$ is principal. Note that $\Xi^*_Q(\sigmax)$ is then the disjoint union of the (multi-)sets $Z(\chi) \cap \Xi^*_Q(\sigmax)$, as $\chi$ ranges over all primitive $\chi$ with $\cond(\chi) \leq Q$; this follows from the definition of $\Xi_Q(\sigmax)$, Definition \ref{zero-ht-def}.

\begin{lemma}\label{explicit-formula-lem}
Suppose $\sigma_0 \leq \frac{1}{2}$. Let $\psi$ be a Dirichlet character \textup{(}not necessarily primitive\textup{)} to some modulus $r$, $r \leq Q \leq x$, and let $\chi$ be the primitive character inducing $\psi$. Then 
\begin{equation}\label{explicit-formula} \sum_{n \leq x} \Lambda(n) \psi(n) = \sum_{\rho \in Z(\chi) \cap \Xi^*_Q(\sigmax)} \eps_{\rho} \frac{x^{\rho}}{\rho} + \tilde{O}\big(x^{1 - \sigmax}+\frac{x}{Q}\big).\end{equation}
Here, $\eps_1= 1$ and $\eps_{\rho} = -1$ when $\rho$ is a zero.
\end{lemma}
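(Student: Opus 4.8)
The plan is to apply the classical truncated explicit formula to the character sum $\psi(x,\chi):=\sum_{n\le x}\Lambda(n)\chi(n)$, where $\chi$ is the primitive character inducing $\psi$, and then to \emph{discard} the zeros that do not lie in $\Xi^*_Q(\sigmax)$, absorbing their contribution into the error term. First I would reduce from $\psi$ to $\chi$: since $\psi(n)=\chi(n)$ when $(n,r)=1$ and $\psi(n)=0$ otherwise, the sums $\sum_{n\le x}\Lambda(n)\psi(n)$ and $\psi(x,\chi)$ differ only over prime powers $p^k\le x$ with $p\mid r$, of which there are $O(\omega(r)\log x)=O(\log^2 x)$ (using $r\le Q\le x$). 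As $\sigmax\le\tfrac12$ we have $1-\sigmax\ge\tfrac12$, so this discrepancy is $O(\log^2 x)=\tilde{O}(x^{1-\sigmax})$, and it is enough to prove the formula with $\psi$ replaced by $\chi$.

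For $\psi(x,\chi)$ I would use Perron's formula and move the contour to the left past all the zeros, picking up the pole at $s=1$ when $\chi$ is principal and a contribution of $\tilde{O}(1)$ from the point $s=0$ and the trivial zeros; with truncation height $T=Q$ this is the standard truncated explicit formula
\[ \psi(x,\chi)=1_{\chi\ \text{principal}}\,x-\!\!\!\sum_{\substack{\rho:\,L(\rho,\chi)=0\\ 0<\Re\rho<1,\ |\Im\rho|\le Q}}\!\!\!\frac{x^{\rho}}{\rho}+\tilde{O}\!\Big(\frac{x}{Q}\Big)+\tilde{O}\big(x^{1-\sigmax}\big), \]
see e.g. the treatment of the prime number theorem for arithmetic progressions in \cite[Ch.~17--20]{davenport} or \cite{bombieri}; here the $N^{o(1)}$ factors are powers of $\log x$ and we have used $q:=\cond(\chi)\le Q\le x$ to tidy up the logarithmic factors. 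By Definition \ref{zero-ht-def} and $q\le Q$, the zeros appearing in this sum with $\Re\rho\ge 1-\sigmax$ are precisely the elements of $Z(\chi)\cap\Xi_Q(\sigmax)$, while the main term $1_{\chi\ \text{principal}}\,x=x^1/1$ accounts for the pole (present exactly when $1\in Z(\chi)\cap\Xi^*_Q(\sigmax)$); since $\eps_1=1$ and $\eps_\rho=-1$ for zeros, grouping these terms produces exactly $\sum_{\rho\in Z(\chi)\cap\Xi^*_Q(\sigmax)}\eps_\rho x^{\rho}/\rho$. It remains to show that the \emph{discarded} zeros — those with $|\Im\rho|\le Q$ and $\Re\rho<1-\sigmax$ — contribute $\tilde{O}(x^{1-\sigmax})$.

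This last estimate is the only place requiring genuine care, and I would regard it as the main obstacle. By the classical zero-free region (Page, Landau), $L(s,\chi)$ has no zeros with $\Re s>1-c/\log(q(|\Im s|+2))$ save possibly one real zero; combining this with the functional equation, every zero $\rho$ with $|\Im\rho|\le Q$ — with at most one exception $\rho_0$ — satisfies $\Re\rho\gg 1/\log(qQ)$, hence $|\rho|\gg 1/\log(qQ)$ or $|\rho|\ge|\Im\rho|$. With the standard counting bound $\#\{\rho:|\Im\rho-t|\le1\}\ll\log(q(|t|+2))$ this gives $\sum_{|\Im\rho|\le Q,\ \rho\ne\rho_0}|\rho|^{-1}\ll\log^2(qQ)\lessapprox1$, so those zeros contribute at most $x^{1-\sigmax}\sum|\rho|^{-1}\lessapprox x^{1-\sigmax}$. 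The exception $\rho_0$ occurs only when $L(s,\chi)$ ($\chi$ then real) has a real zero $\beta_0>1-\sigmax$, and is then the zero $\rho_0=1-\beta_0$, with $\Re\rho_0<\sigmax$ and $|\rho_0|^{-1}=(1-\beta_0)^{-1}$ possibly as large as a power of $q$; I would bound its contribution $x^{1-\beta_0}/(1-\beta_0)$ by the classical lower bound $1-\beta_0\gg q^{-1/2}\log^{-2}q$ together with $q\le Q\le x$, splitting on the size of $(1-\beta_0)\log x$: if $(1-\beta_0)\log x\le1$ then $x^{1-\beta_0}\ll1$, so the contribution is $\ll(1-\beta_0)^{-1}\ll q^{1/2}\log^2 q\le Q^{1/2}\log^2 Q\lessapprox x^{1-\sigmax}$ (using $x\ge Q$ and $1-\sigmax\ge\tfrac12$); otherwise $(1-\beta_0)^{-1}<\log x$, so the contribution is $\ll x^{1-\beta_0}\log x\le x^{\sigmax}\log x\lessapprox x^{1-\sigmax}$ (using $1-\beta_0<\sigmax\le\tfrac12$). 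Assembling the $\psi\to\chi$ discrepancy, the explicit-formula error and this bound on the discarded zeros yields the claimed estimate.
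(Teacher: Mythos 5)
Your proposal is correct and follows essentially the same route as the paper: reduce to the primitive character inducing $\psi$ (the discrepancy being $O(\log^2 x)$), quote the classical truncated explicit formula from Davenport with truncation height $Q$, and then discard the zeros with $\Re \rho < 1 - \sigmax$ using the standard zero-counting bound $\ll \log(q(|t|+2))$ per unit interval, exactly as in the paper's $\Omega_k$/$\Omega_*$ decomposition. The one difference is cosmetic: the paper cites Davenport's form of the formula in which the reflected exceptional zero $1-\beta_1$ is already omitted (the asterisked sum), so no appeal to Page's bound is needed there, whereas you retain that zero among the discarded ones and control its term $x^{1-\beta_1}/(1-\beta_1)$ via Page's bound (Lemma \ref{page-theorem}) and a case split on $(1-\beta_1)\log x$ — a correct extra step that the paper's choice of citation renders unnecessary.
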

Recall that $\tilde O(Y)$ means a quantity that is bounded in magnitude by $N^{o(1)} Y$. 
\begin{proof} 
This is a standard result (for instance, it appears as \cite[Exercise 45 (iv)]{tao-blog}), but it is not straightforward to find a completely quotable reference. First, we observe that one may quickly reduce to the primitive case $\psi = \chi$. Indeed, the sums over $\rho$ are the same since $L(s,\chi)$ and $L(s,\psi)$ have the same nontrivial zeros (as they differ only by a finite product of terms $1 - p^{-s}$), whilst on the other hand $\sum_{n \leq x} \Lambda(n) \psi(n)$ and $\sum_{n \leq x} \Lambda(n) \chi(n)$ differ by $\ll \log^2 x$, as explained in the displayed equation following \cite[Chapter 19, equation (12)]{davenport}.

Suppose, then, that $\psi = \chi$ is primitive. Suppose first that $\chi$ is not the principal character $1$. Then \cite[Chapter 19, equations (13) to (15)]{davenport} state that
\begin{equation}\label{ten-quote} \sum_{n \leq x} \Lambda(n) \chi(n) = - \sideset{}{^*}\sum_{|\Im \rho| \leq Q} \frac{x^{\rho}}{\rho} + O\big(\frac{x (\log rx)^2}{Q} + x^{1/4}\log x\big).\end{equation} Here the sum is over all zeros of $L(s,\chi)$ in the critical strip $0 < \Re s < 1$, and the asterisk denotes that if $\chi$ is real has a simple real zero $\beta_1$ with $\beta_1 \geq 1 - \frac{c}{\log Q}$, then $\rho = 1 - \beta_1$ is omitted from the sum. To derive \eqref{explicit-formula}, we must remove the contribution of the zeros $\rho$ with $0 < \Re \rho < 1 - \sigmax$, which do not lie in $\Xi_Q$. To estimate this, divide the set of such $\rho$ into sets $\Omega_k$, $k = 0,1,\dots$, where $2^k \leq |\Im \rho| \leq 2^{k+1}$, and the set $\Omega_*$ where $|\Im \rho| \leq 1$. By standard estimates for the number of zeros (see, for example, \cite[Chapter 16]{davenport}) we have $|\Omega_k| \lessapprox 2^k$, and $|\Omega_*| \lessapprox 1$. If $\rho \in \Omega_k$, $|\frac{x^{\rho}}{\rho}| \ll 2^{-k}x^{1 - \sigmax}$, so $\sum_{\rho \in \Omega_k} |\frac{x^{\rho}}{\rho}| \lessapprox x^{1 - \sigmax}$. If $\rho \in \Omega_*$, we have $|\frac{x^{\rho}}{\rho}| \ll x^{1 - \sigmax} \log Q$ (since $\frac{1}{|\rho|} \ll \log Q$ for all the zeros under consideration). Summing the contributions from $\Omega_*$ and all the $\Omega_k$ with $k \leq \log_2 X$, we obtain the stated result. 

Suppose now that $\chi$ is the principal character. Here, \eqref{ten-quote} still holds provided we remember to include a main term of $x$ coming from the simple pole of $\zeta$, as follows from \cite[Chapter 18, equations (9) and (10)]{davenport}. 
\end{proof}

We turn now to the proof of Lemma \ref{major-arc-lams} itself.

\begin{proof}[Proof of Lemma \ref{major-arc-lams}]
Recall the definition of $\Lambda_{\sharp, Q,\sigmax}$ in the form \eqref{lam-sharp-conv}, viz
\begin{equation}\label{lem58-star} \Lambda_{\sharp, Q,\sigmax}(n) = \sum_{\rho \in \Xi_Q^*(\sigmax)} \eps_{\rho} n^{\rho - 1} F_{\chi_{\rho}, Q}(n),\end{equation} where (Definition \ref{def43})
\[ F_{\chi, Q}(n) := \sum_{\substack{\cond(\chi) | r \\ r \leq Q\cond(\chi)} } \sum_{b \in (\Z/r\Z)^*} c_{\chi}(b,r) e\big(\frac{bn}{r}\big),\] 
with $c_{\chi}$ defined in \eqref{c-def}, and $\cond(\chi)$ denotes the conductor of $\chi$.

Write $\theta = -\frac{a}{d} + \eta$ with $d$ as in the statement of Lemma \ref{major-arc-lams}. Replacing $d$ by a divisor of itself if necessary, we may assume that $\frac{a}{d}$ is in lowest terms. By the assumptions of Lemma \ref{major-arc-lams}, 
\begin{equation}\label{eta-upper} |\eta| \leq  \frac{Q^{\frac{1}{2}}}{dN} < \frac{1}{2Q^3}.\end{equation}
Multiplying \eqref{lem58-star} through by $e(n\theta)$, summing over $N$ and interchanging the order of summation, we obtain 

\begin{align}\nonumber  \sum_{n \leq N} & \Lambda_{\sharp, Q,\sigmax}(n) e( \theta n) \\ & = \sum_{r \leq Q^2} \sum_{b \in (\Z/r\Z)^*} \sum_{\substack{\rho \in \Xi_Q^*(\sigmax) \\  q_{\rho} | r \\ r/q_{\rho} \leq Q}} \eps_{\rho}  c_{\chi_{\rho}}(b,r)  \sum_{n \leq N} n^{\rho - 1} e\big((\frac{b}{r} - \frac{a}{d} + \eta) n\big).\label{lam-maj-exp} \end{align}
Here, we have written $q_{\rho} := \cond(\chi_{\rho})$ for brevity.

We claim that the contribution from $\frac{b}{r} \neq \frac{a}{d}$ is $\lessapprox Q^5$, which can be subsumed into the right hand side of the desired bound \eqref{lam-lam-sharp-maj} because of the assumption that $Q \leq N^{\frac{1}{9}}$. To prove this claim, we use the second bound in \eqref{three-ests}, the fact that $|\Im \rho| \leq Q$, and the trivial bound $|c_{\chi_{\rho}}(b,r)| \leq 1$ to see that this contribution is bounded by
\[ Q \sum_{\substack{r \leq Q^2 \\ b \in (\Z/r\Z)^*\\ \frac{b}{r} \neq -\frac{a}{d}}} \Vert \frac{b}{r} - \frac{a}{d} + \eta\Vert^{-1} \sum_{\rho \in \Xi_Q^*(\sigmax)} N^{\Re \rho - 1} \ll Q \sum_{\substack{r \leq Q^2 \\ b \in (\Z/r\Z)^*\\ \frac{b}{r} \neq -\frac{a}{d}}}  \Vert \frac{b}{r} - \frac{a}{d} + \eta\Vert^{-1}, \] where the inequality follows from Lemma \ref{crude-zero-density}. Now if $\frac{b}{r} \neq -\frac{a}{d}$ then $\Vert \frac{b}{r} - \frac{a}{d} + \eta \Vert \geq \frac{1}{rd} - |\eta| > \frac{1}{2}Q^{-3}$, since $r \leq Q^2$, $d < Q$ and $|\eta| \leq (2Q^3)^{-1}$ by \eqref{eta-upper}. Since the different $\frac{b}{r}$ are $Q^{-4}$-separated, 
\[ \sum_{\substack{r \leq Q^2 \\ b \in (\Z/r\Z)^*\\ \frac{b}{r} \neq -\frac{a}{d}}}  \Vert \frac{b}{r} - \frac{a}{d} + \eta\Vert^{-1} \ll Q^4\log Q.\]
Thus the contribution of $\frac{b}{r} \neq -\frac{a}{d}$  to \eqref{lam-maj-exp} is $\ll Q^5 \log Q \lessapprox Q^5$, as claimed. It therefore follows from \eqref{lam-maj-exp} that 
\begin{equation}\label{compare-1}  \sum_{n \leq N} \Lambda_{\sharp, Q,\sigmax}(n) e(\theta n) = \sum_{\rho \in \Xi_Q^*(\sigmax)} \eps_{\rho} c_{\chi_{\rho}}(a,d)\sum_{n \leq N} n^{\rho - 1} e(\eta n) + \tilde{O}(Q^5).\end{equation}
(Note that the conditions $q_{\rho} \mid d$ and $d/q_{\rho} \leq Q$ are redundant, since $c_{\chi_{\rho}}(a,d) = 0$ unless the first is satisfied, and $d \leq Q^{\frac{1}{2}}$ implies the second is automatically satisfied.)

Now we look at the Fourier transform of the von Mangoldt function $\Lambda$ itself. The key input is the explicit formula, Lemma \ref{explicit-formula-lem}. 
We want to twist this by $e(\eta n)$ for small $\eta$, which can be done by partial summation. Specifically, observe that
\[ e(\eta n) = e (\eta N) - 2\pi i \eta \int^N_n e(\eta t ) dt.\] Multiplying by $\Lambda(n) \psi(n)$, summing over $n \leq N$, interchanging the order of integration and summation and substituting $t = Nu$ gives
\begin{align*} \sum_{n \leq N} \Lambda(n) \psi(n) e(\eta n)  = e(\eta N) & \sum_{n \leq N}  \Lambda(n) \psi(n) - \\ & - 2 \pi i \eta N \int^{1}_0 e(\eta Nu) \big( \sum_{n \leq Nu} \Lambda(n) \psi(n)\big) du.\end{align*} Applying the explicit formula \eqref{explicit-formula} and using the identity
\[ \int^1_0 x^{\rho - 1} e(\eta N x) dx = \frac{1}{\rho} e(\eta N) - 2\pi i \eta N \int^1_0 \frac{x^{\rho}}{\rho} e(\eta N x) dx\] (which may be established by integration by parts)
we obtain
\begin{align}\nonumber& \sum_{n \leq N} \Lambda(n) \psi(n) e(\eta n) \\ & = \sum_{\rho \in Z(\psi) \cap \Xi^*_Q(\sigmax)} \eps_{\rho}N^{\rho} \int^1_0 x^{\rho - 1} e(\eta N x) dx + \tilde{O}\big((1 + |\eta N|) (N^{1 - \sigmax} + \frac{N}{Q})\big) \nonumber \\ & = \sum_{\rho \in Z(\psi) \cap \Xi^*_Q(\sigmax)} \eps_{\rho}N^{\rho} \int^1_0 x^{\rho - 1} e(\eta N x) dx + \tilde{O}(NQ^{-\frac{1}{2}})\label{twist-1}\end{align} by the assumption that $Q \leq N^{\sigmax}$, and since $|\eta| \leq Q^{\frac{1}{2}} N^{-1}$.

We want to approximate the integral here by a sum, to which end we use the crude approximation
\[ \int^b_a w(x) dx = \frac{1}{N} \sum_{aN \leq n < bN} w\big(\frac{n}{N}\big) + O\big(\frac{1}{N}(\Vert w \Vert_{L^{\infty}(a,b)} + \Vert w' \Vert_{L^{\infty}(a,b)})\big),\]
valid for any interval $[a,b]$ of length at most $1$ and for any differentiable function $w : (a,b) \rightarrow \C$ with bounded derivative. We take $w(x) := x^{\rho - 1} e(\eta N x)$ and $[a,b] = [N^{-\frac{1}{2}}, 1]$. Then $\Vert w \Vert_{L^{\infty}(a,b)} \leq N^{\frac{1}{2}-\frac{1}{2}\beta}$ (where $\rho = \beta + i \gamma$) and $\Vert w' \Vert_{L^{\infty}(a,b)} \ll N^{1 - \frac{1}{2}\beta} (1 + |\gamma|) + |\eta| N \cdot N^{\frac{1}{2} - \frac{1}{2}\beta} \ll N^{1-\frac{1}{2}\beta}Q + Q^{\frac{1}{2}} N^{\frac{1}{2} - \frac{1}{2} \beta} \ll N^{1 - \frac{1}{2}\beta}Q$, so we get
\[ \int^1_{N^{-\frac{1}{2}}} x^{\rho - 1} e(\eta N x) dx  = N^{-\rho} \sum_{N^{\frac{1}{2}} \leq n < N} n^{\rho - 1} e(\eta n) + O(N^{-\frac{1}{2}\beta} Q).\]
Since also
\[ \big| \int^{N^{-\frac{1}{2}}}_0 x^{\rho - 1}e(\eta N x) dx\big| \leq \int^{N^{-\frac{1}{2}}}_0 |x|^{\beta - 1} dx \ll N^{-\frac{1}{2}\beta}\]
and
\[ \big|N^{-\rho} \sum_{1 \leq n < N^{\frac{1}{2}}} n^{\rho - 1} e(\eta n)\big| \leq N^{-\beta} \sum_{1 \leq n < N^{\frac{1}{2}}} |n|^{\beta - 1} \ll N^{-\frac{1}{2}\beta},\]
we have 
\[ \int^1_0 x^{\rho-1} e(\eta N x) dx = N^{-\rho} \sum_{n = 1}^N n^{\rho - 1} e(\eta n) + O(N^{-\frac{1}{2}\beta} Q).\]
Now \eqref{twist-1} implies that
\begin{align}\nonumber \sum_{n \leq N} \Lambda(n) & \psi(n) e(\eta n)  = \sum_{\rho \in Z(\psi) \cap \Xi^*_Q(\sigmax)} \eps_{\rho} \sum_{n = 1}^N n^{\rho - 1} e(\eta N) + \\ & + O\big( Q \sum_{\rho \in Z(\psi) \cap \Xi^*_Q(\sigmax)} N^{\frac{1}{2}\beta} \big) + \tilde{O}(N Q^{-\frac{1}{2}}).\label{twist-2}  \end{align}
To estimate the first error term, we use the zero-density bound $|Z(\psi) \cap \Xi^*_Q(\sigmax)| \leq 1 + |\Xi_Q(\sigmax)| \ll Q$ (which follows from Proposition \ref{jut-est}, since $\sigmax \leq \frac{1}{8}$). This shows that the total error term in \eqref{twist-2} is $\tilde{O}( N^{\frac{1}{2}} Q^2 + NQ^{-\frac{1}{2}}) = \tilde{O} (N Q^{-\frac{1}{2}})$ since $Q < N^{\frac{1}{5}}$.

Now \eqref{dir-add-eq} gives
\[ e\big(-\frac{a n}{d}\big) = \sum_{\chi : \cond(\chi) | d} c_{\chi}(a,d) 1_{(n,d) = 1} \chi(n),\] where the sum is over primitive $\chi$. Recalling that $\theta = -\frac{a}{d} + \eta$, it follows that
\begin{align*} \sum_{n \leq N} \Lambda(n) e(n\theta) & = \sum_{\chi: \cond(\chi) | d} c_{\chi}(a,d) \sum_{\substack{n \leq N \\ (n,d) = 1}} \Lambda(n) \chi(n) e(\eta n) \\ & = \sum_{\chi: \cond(\chi) | d} c_{\chi}(a,d) \sum_{n \leq N} \Lambda(n) \chi(n) e(\eta n) + \tilde{O}(Q^{\frac{1}{4}}), \end{align*} where for the second bound we have used $\sum_{n \leq N : (n,d) \neq 1} \Lambda(n) \lessapprox 1$ and the bound
\begin{equation}\label{use-twice} \sum_{\chi : \cond(\chi) | d} |c_{\chi}(a,d)| \leq \phi(d) \frac{d^{\frac{1}{2}}}{\phi(d)} \leq Q^{\frac{1}{4}},\end{equation} which follows from \eqref{cj-pointwise} and the assumption that $d \leq Q^{\frac{1}{2}}$. Substituting into \eqref{twist-2} (with $\psi = \chi$, and using the fact that the total error term in \eqref{twist-2} is $\tilde{O}(N Q^{-\frac{1}{2}})$ as explained after \eqref{twist-2}) gives

\begin{align*} \sum_{n \leq N} \Lambda(n) e(\theta n)  = \sum_{\substack{\rho \in \Xi_Q^*(\sigma_0) \\ \cond(\chi_{\rho}) | d}} \eps_{\rho}& c_{\chi_{\rho}}(a,d)  \sum_{n = 1}^N n^{\rho - 1} e(\eta n) \\ & + \tilde{O}\big(Q^{\frac{1}{4}} + NQ^{-\frac{1}{2}}\sum_{\chi : \cond(\chi) | d} |c_{\chi}(a,d)|\big).\end{align*}

Using \eqref{use-twice} again, we see that the error term is $\tilde{O}(N Q^{-\frac{1}{4}})$. Note also that, in the sum over $\rho$, the condition that $\cond(\chi_{\rho}) \mid d$ can be omitted since $c_{\chi}(a,d) = 0$ when $\cond(\chi) \nmid d$.

Comparing with \eqref{compare-1}, and since $Q < N^{\frac{1}{6}}$, the proof of Lemma \ref{major-arc-lams} is complete.
\end{proof}

\section{Completed approximants}\label{sec6}

Recall (Definition \ref{def43}) the definition 
\begin{equation}\label{f-chi-t-def-rpt} F_{\chi, Q}(n) = \sum_{r \leq qQ} \sum_{b \in (\Z/r\Z)^*} c_{\chi}(b,r) e\big(\frac{bn}{r}\big),\end{equation} where here $q := \cond(\chi)$.

If one relaxes the constraint on $r$ here, say to $r \mid R!$ for some $R > Q$, we get an object which we call a ``completed'' approximant $\tilde F_{\chi, R}$. These completed approximants are much easier to understand combinatorially, and in fact provide the motivation for studying the truncated objects such as $F_{\chi, Q}$. The interplay between completed and truncated approximants is a key theme in this paper, and this section is devoted to the basic properties of completed approximants. 

\begin{definition}\label{f-chi-definition}
Suppose that $\chi$ is a primitive Dirichlet character to modulus $q$. Let $R$ be some integer parameter. Then we define
\begin{equation}\label{f-chi-def} \tilde F_{\chi, R}(n) := \sum_{r | R! } \sum_{b \in (\Z/r\Z)^*} c_{\chi}(b,r) e\big(\frac{bn}{r}\big).\end{equation}
\end{definition}

 At this point we introduce the functions $\vp $ defined, for prime $p$, by 
 \begin{equation}\label{vp-definition} \vp (n) := \left\{ \begin{array}{ll} 0 & \mbox{if $p | n$} \\ (1 - \frac{1}{p})^{-1} & \mbox{if $(p,n) = 1$} \end{array}\right. \end{equation}
These functions will feature quite heavily in the paper. The notation $\Lambda_{\Z/p\Z}$ is (somewhat) standard, and reflects the fact that $\Lambda_{\Z/p\Z}$ is the most natural $\md{p}$ analogue of the von Mangoldt function. 
Observe that, for $p$ prime, 
\begin{equation}\label{ram-vp} 1 - \frac{1}{p-1} c_p(n)=  1 + \frac{\mu(p)}{\phi(p)} c_p(n) = \vp (n),\end{equation} where here (recall) the definition of the Ramanujan sum $c_r(n) := \sum_{b \in (\Z/r\Z)^*} e(\frac{bn}{r})$.

\begin{lemma}\label{completed-forms}
Let $Q, R \geq 1$ be parameters. Let $\chi$ be a primitive Dirichlet character with conductor $q$. Then 
\begin{equation}\label{f-chi-comb} \tilde F_{\chi, R}(n) = \big(\prod_{p \leq R}(1 - \frac{1}{p})^{-1}\big) \overline{\chi}(n) 1_{(n, R!) = 1} = \frac{q}{\phi(q)} \overline{\chi}(n) \prod_{\substack{p \nmid q \\ p \leq R}} \vp (n)\end{equation} and 
\begin{equation}\label{f-chi-t-comb}  F_{\chi, Q}(n) = \frac{q}{\phi(q)} \overline{\chi}(n) \sum_{\substack{r \leq Q \\ (r, q) = 1}} \frac{\mu(r)}{\phi(r)} c_r(n) .\end{equation}
\end{lemma}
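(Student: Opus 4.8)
\textbf{Proof plan for Lemma \ref{completed-forms}.}

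The plan is to establish \eqref{f-chi-comb} first and then deduce \eqref{f-chi-t-comb}, since the completed object $\tilde F_{\chi,R}$ has the cleaner multiplicative structure. The starting point is the additive-duality identity of Lemma \ref{dirich-add}, which in the form $e(-\frac{bn}{r}) = \sum_{\chi' : \cond(\chi') \mid r} c_{\chi'}(b,r) 1_{(n,r)=1}\chi'(n)$ expresses a single additive character as a weighted sum over primitive characters. The key observation is that Definition \ref{f-chi-definition} is, up to the conjugation $b \mapsto -b$ in the argument, exactly the ``diagonal'' extraction of the $\chi$-coefficient from this expansion summed over all $r \mid R!$. More precisely, I would first note from \eqref{c-def} that $c_\chi(b,r) = c_\chi(1,r)\chi(b)$ whenever $q \mid r$, so that $\sum_{b \in (\Z/r\Z)^*} c_\chi(b,r) e(\frac{bn}{r}) = c_\chi(1,r) \sum_{b \in (\Z/r\Z)^*}\chi(b) e(\frac{bn}{r})$; by \eqref{gauss-sum-ext} the inner sum equals $\overline{\chi}(n)\tau(\chi)$ when $(n,r)=1$ — wait, more carefully, $\sum_{b}\chi(b)e(\frac{bn}{r}) = \overline{\chi(n)}\tau(\chi)$ holds for the primitive character $\chi$ to modulus $q$ only after reducing $r$ to $q$; for general $r$ with $q \mid r$ one uses the Gauss-sum-of-imprimitive-character formula (as in \cite[Lemma 3.1]{ik}, already quoted) to get the factor $\mu(\frac r q)\chi(\frac r q)\tau(\chi)$ together with the coprimality condition $(n, r/q) = 1$. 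Combining with the explicit value of $c_\chi(1,r)$ from \eqref{c-def} and the identity $|\tau(\chi)|^2 = q$ from \eqref{gauss-square}, the Gauss sums cancel and one is left with
\[ \sum_{b \in (\Z/r\Z)^*} c_\chi(b,r) e(\tfrac{bn}{r}) = \overline{\chi}(n)\,\frac{\mu(r/q)^2}{\phi(r)}\,c_{r/q}(n)\,1_{(n,q)=1} \]
for $q \mid r$ and $(q, r/q)=1$, and $0$ otherwise (the support condition $(q,r/q)=1$ coming from the requirement that $r/q$ be coprime to $q$ for the imprimitive Gauss sum not to vanish, plus squarefreeness from $\mu(r/q)$).

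With this pointwise identity in hand, summing over $r \mid R!$ amounts to writing $r = qs$ with $s \mid R!$, $(s,q)=1$, $s$ squarefree, which (since $R!$ contains every prime up to $R$ to a high power) is the same as $s$ ranging over squarefree numbers composed of primes $\leq R$ not dividing $q$. Thus $\tilde F_{\chi,R}(n) = \overline{\chi}(n)\sum_{s} \frac{\mu(s)}{\phi(s)} c_s(n)$ over such $s$; using multiplicativity of $s \mapsto \frac{\mu(s)}{\phi(s)}c_s(n)$ and \eqref{ram-vp}, the sum factors as $\prod_{p \le R,\, p \nmid q}\big(1 + \frac{\mu(p)}{\phi(p)}c_p(n)\big) = \prod_{p \le R,\, p\nmid q}\vp(n)$, which is $\prod_{p \le R, p \nmid q}(1-\frac1p)^{-1}$ if $n$ is coprime to all primes $\le R$ not dividing $q$, and $0$ otherwise. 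Folding in the already-present condition $(n,q)=1$ gives $1_{(n,R!)=1}$, and rewriting $\prod_{p \le R, p \nmid q}(1-\frac1p)^{-1} = \frac{q}{\phi(q)}\prod_{p \le R}(1-\frac1p)^{-1} \cdot \prod_{p \mid q, p \le R}(1-\frac1p)$ — more simply, $\prod_{p \le R}(1-\frac1p)^{-1} = \frac{q}{\phi(q)}\prod_{p\le R, p\nmid q}(1-\frac1p)^{-1}$ when $q$ is $R$-smooth, which it is since $q \le Q$ and we may take $R \ge Q$; the general bookkeeping is routine — yields both displayed forms in \eqref{f-chi-comb}.

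Finally, \eqref{f-chi-t-comb} follows by exactly the same pointwise computation, except that now $r$ ranges over $q \mid r$ with $r/q \le Q$ rather than $r \mid R!$. Setting $r = qs$, the constraint becomes $s \le Q$, $(s,q)=1$ (squarefreeness is automatic from the $\mu$), and no factorization over primes is possible, so one simply gets $F_{\chi,Q}(n) = \frac{q}{\phi(q)}\overline{\chi}(n)\sum_{s \le Q,\,(s,q)=1}\frac{\mu(s)}{\phi(s)}c_s(n)$ as stated — here the $\frac{q}{\phi(q)}$ arises because $c_\chi(1,q) = \frac{1}{\phi(q)}\overline{\tau(\chi)}$ and $|\tau(\chi)|^2 = q$ combine to give $\frac{q}{\phi(q)}$ after the Gauss sums cancel against $\sum_b \chi(b)e(\frac{bn}{q}) = \overline{\chi}(n)\tau(\chi)$. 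The main obstacle, such as it is, is bookkeeping the Gauss-sum identities for imprimitive characters and tracking the nest of support conditions ($q \mid r$, $(q,r/q)=1$, $r/q$ squarefree, $(n, r/q) = 1$) so that they assemble cleanly into $1_{(n,R!)=1}$; there is no analytic difficulty, only the need to be careful that the multiplicative factorization in the $\tilde F$ case genuinely uses that $R! $ is divisible by all small primes to high powers so that ``$s \mid R!$, $s$ squarefree'' is the same as ``$s$ squarefree and $R$-smooth''.
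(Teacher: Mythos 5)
Your overall strategy---evaluate the inner sum $\sum_{b \in (\Z/r\Z)^*} c_\chi(b,r) e(\frac{bn}{r})$ for each fixed $r$ with $q \mid r$, then sum over $r$, and in the completed case factor over primes via \eqref{ram-vp}---is exactly the paper's route. But the key identity you display is wrong, and the route you propose for it would fail. The correct evaluation (which the paper proves, and which your later steps silently use) is
\[ \sum_{b \in (\Z/r\Z)^*} c_\chi(b,r) e(\tfrac{bn}{r}) \;=\; \frac{q}{\phi(q)}\,\overline{\chi}(n)\,\frac{\mu(r')}{\phi(r')}\,c_{r'}(n), \qquad r = qr',\ (r',q)=1,\]
whereas you wrote $\overline{\chi}(n)\,\frac{\mu(r/q)^2}{\phi(r)}\,c_{r/q}(n)\,1_{(n,q)=1}$: the factor $q$ coming from $\tau(\chi)\overline{\tau(\chi)} = q$ has been dropped (already visible at $r=q$, where the sum equals $\frac{q}{\phi(q)}\overline{\chi}(n)$, not $\frac{1}{\phi(q)}\overline{\chi}(n)$), and the M\"obius factor must appear to the first power, not squared. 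These are not cosmetic slips: summing your version over $r' \mid R!$ would give $\frac{1}{\phi(q)}\overline{\chi}(n)\prod_{p \le R,\, p \nmid q}\bigl(1 + \frac{c_p(n)}{p-1}\bigr)$, and $1 + \frac{c_p(n)}{p-1}$ equals $2$ at $p \mid n$ rather than $0$, so it is not $\vp(n)$; both the indicator $1_{(n,R!)=1}$ in \eqref{f-chi-comb} and the prefactor $\frac{q}{\phi(q)}$ in \eqref{f-chi-t-comb} would be lost. In other words, the conclusions you then assert do not follow from the identity you displayed, which indicates the key step was not actually carried through.

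The source of the error is your treatment of $\sum_{b \in (\Z/r\Z)^*}\chi(b)e(\frac{bn}{r})$ for general $n$. The quoted fact $\tau(\psi) = \mu(\frac{r}{q})\chi(\frac{r}{q})\tau(\chi)$ evaluates the Gauss sum at twist $n$ with $(n,r)=1$ only; it does not yield ``the factor $\mu(\frac rq)\chi(\frac rq)\tau(\chi)$ together with the coprimality condition $(n,r/q)=1$''. When $(n, r/q) > 1$ the sum does \emph{not} vanish---that is exactly where the Ramanujan factor $c_{r/q}(n)$ (equal to $p-1$ at $p \mid n$, not $0$) comes from, and those terms are essential to the final formulas. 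The clean way to handle all $n$ at once is the paper's CRT splitting: write $\frac{b}{r} = \frac{a}{q} + \frac{b'}{r'}$, note $\chi(b) = \chi(a)\chi(r')$, so the sum factors as $\chi(r')\bigl(\sum_a \chi(a) e(\frac{an}{q})\bigr)\bigl(\sum_{b'} e(\frac{b'n}{r'})\bigr) = \chi(r')\overline{\chi}(n)\tau(\chi)\,c_{r'}(n)$, the primitive Gauss-sum identity being valid for \emph{all} $n$ (both sides vanish when $(n,q)>1$, so no extra indicator is needed). Multiplying by $c_\chi(1,r)$, the factors $\chi(r')\overline{\chi}(r')$ and $\tau(\chi)\overline{\tau(\chi)} = q$ collapse to give the correct identity above, after which your summation over $r' \le Q$ for \eqref{f-chi-t-comb} and the multiplicative factorization over $r' \mid R!$ for \eqref{f-chi-comb} go through as you describe. (Your side remark that the first equality in \eqref{f-chi-comb} implicitly needs the primes of $q$ to be at most $R$ is fair; the paper glosses this as well, and it holds in all applications.)
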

\begin{proof}
Recall the definition of $c_{\chi}$, given in Definition \ref{c-chi-def}. In particular $c_{\chi}(b,r)$ is supported where $q \mid r$ and where $r/q$ is squarefree and coprime to $q$. Consider the definition \eqref{f-chi-def}. To begin with, fix $r$ (with $q \mid r$ and $r/q$ squarefree) and look at the inner sum over $b$. We have $r = q r'$, with $r'$ squarefree and coprime to $q$. We may write (uniquely) $\frac{b}{r} = \frac{a}{q} + \frac{b'}{r'}$, where $a \in (\Z/q\Z)^*$ and $b' \in (\Z/r'\Z)^*$. Note that $b \equiv a r' \md{q}$ and therefore $\chi(b) = \chi(a) \chi(r')$, and so
\[ c_{\chi}(b,r) = \frac{1}{\phi(q)} \chi(a)  \overline{\tau(\chi)} \frac{\mu(r')}{\phi(r')}.\]
Therefore we may factor \[ \sum_{b \in (\Z/r\Z)^*} c_{\chi}(b,r)e\big(\frac{bn}{r}\big) = E_1 E_2\]  where \begin{equation}\label{factors}  E_1 := \frac{\overline{\tau(\chi)} }{\phi(q)}\sum_{a \in (\Z/q \Z)^*} \chi(a) e\big(\frac{a n}{q}\big) , \quad E_2 :=   \frac{\mu(r')}{\phi(r')} \sum_{b' \in (\Z/r'\Z)^*} e\big(\frac{b' n}{r'}\big) .\end{equation}
Now it is easy to see (and we have already used in this in \eqref{gauss-sum-ext}) that $\sum_{a \in (\Z/q \Z)^*} \chi(a) e(\frac{a n}{q})  = \overline{\chi(n)} \tau(\chi)$ if $(n,q) = 1$. If $\chi$ is \emph{primitive}, as here, the condition $(n,q) = 1$ can be dropped since both sides of the expression vanish if $(n,q) > 1$, as explained in \cite[Section 3.4]{ik}. Therefore, using \eqref{gauss-square}, $E_1 = \frac{q}{\phi(q)} \overline{\chi(n)}$. We may write $E_2$ more succinctly as $\frac{\mu(r')}{\phi(r')} c_{r'}(n)$.
Summarising, we have
\begin{equation} \label{c-chi-b} \sum_{b \in (\Z/r\Z)^*} c_{\chi}(b,r)e\big(\frac{bn}{r}\big) = \frac{q}{\phi(q)} \overline{\chi(n)} \frac{\mu(r')}{\phi(r')} c_{r'}(n)\end{equation} (where $r = q r'$ with $(r', q) = 1$).

The expression \eqref{f-chi-t-comb} follows immediately from the definition \eqref{f-chi-t-def-rpt} of $F_{\chi, Q}$, summing \eqref{c-chi-b} over $r' \leq Q$ (and coprime to $q$).

To obtain \eqref{f-chi-comb}, we recall the definition \eqref{f-chi-def}  of $\tilde F_{\chi, R}$, and instead sum \eqref{c-chi-b} over all $r \mid R!$. We obtain

\begin{equation}\label{caq-half} \tilde F_{\chi, R}(n) = \frac{q}{\phi(q)}\overline{\chi(n)}\sum_{\substack{r' | R! \\ (r', q) = 1}}  \frac{\mu(r')}{\phi(r')} c_{r'}(n).\end{equation}
(We will use this equation in its own right in Section \ref{sec10}.)

Using the multiplicativity property $c_{ab}(n) = c_a(n) c_b(n)$ when $(a,b) = 1$, as well as \eqref{ram-vp}, we see that sum over $r'$ factors as
\[ \prod_{\substack{p \leq R \\ p \nmid q }} \big(1 + \frac{\mu(p)}{\phi(p)}c_p(n)\big) = \prod_{\substack{p \leq R \\ p \nmid q}} \vp (n),\] from which \eqref{f-chi-comb} follows.
\end{proof}

To conclude this section, let us record explicitly what happens in the case $\chi = \chi_0$, the principal character. Recall from \eqref{hb-link} that in this case, $F_{\chi_0, Q}$ is in fact equal to $\Lambda_Q$, the classical approximant defined in \eqref{lambda-q-def}. It is therefore natural to write $\tilde\Lambda_R$ instead of $\tilde F_{\chi_0, R}$ for the corresponding completed approximant, thus

\begin{equation}\label{tilde-lam-def}  \tilde\Lambda_R(n) := \sum_{q | R!} \frac{\mu(q)}{\phi(q)} c_q(n).\end{equation} The special case $\chi = \chi_0$ of Lemma \ref{completed-forms} is then the statement that
\begin{equation}\label{tilde-lam-def-2} \tilde\Lambda_R(n) = \prod_{p \leq R} \vp (n),\end{equation} which is a normalised characteristic function of the almost primes at level $R$ (that is, numbers all of whose prime factors are $> R$). This statement also follows rather directly from \eqref{ram-vp} and multiplicativity.

\subsection{Pointwise bounds}

An application of the above expressions are pointwise bounds for the $F_{\chi, Q}$. We first prove a general fact about what we will later call Ramanujan series.

\begin{lemma}\label{lem63}
Suppose that 
\begin{equation}\label{ram-series} f(n) = \sum_{r \leq Q} v(r) c_r(n).\end{equation} Then
\begin{equation}\label{star63}  f(n) = \sum_{\substack{d | n \\ d \leq Q}} \lambda_d \quad \mbox{where} \quad \frac{\lambda_d}{d} = \sum_{\substack{d | r \\ r \leq Q}} \mu\big(\frac{r}{d}\big) v(r).\end{equation}
\end{lemma}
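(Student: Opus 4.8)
\textbf{Proof proposal for Lemma \ref{lem63}.}

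The plan is to expand the Ramanujan sum $c_r(n)$ in terms of its standard divisor-sum identity and then interchange the order of summation. Recall that for any positive integer $r$ one has $c_r(n) = \sum_{d \mid (n,r)} d\,\mu(r/d)$; this is the classical formula obtained from $c_r(n) = \sum_{e \mid r} e\,\mu(r/e) 1_{e \mid n}$ after noting that the only terms surviving are those with $e \mid (n,r)$. Substituting this into \eqref{ram-series} gives
\[ f(n) = \sum_{r \leq Q} v(r) \sum_{\substack{d \mid r \\ d \mid n}} d\, \mu\big(\tfrac{r}{d}\big). \]

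First I would swap the two sums, summing over $d$ first: since $d$ ranges over divisors of $r$ and $r$ ranges over integers $\leq Q$, the pair $(d,r)$ with $d \mid r$, $r \leq Q$ is the same as choosing $d$ (automatically $\leq Q$ and, because of the factor $1_{d\mid n}$, a divisor of $n$) and then $r$ a multiple of $d$ with $r \leq Q$. This yields
\[ f(n) = \sum_{\substack{d \mid n \\ d \leq Q}} d \sum_{\substack{d \mid r \\ r \leq Q}} \mu\big(\tfrac{r}{d}\big) v(r). \]
Defining $\lambda_d$ by $\lambda_d / d := \sum_{d \mid r,\, r \leq Q} \mu(r/d) v(r)$, which is exactly the prescription in \eqref{star63}, the display above becomes $f(n) = \sum_{d \mid n,\, d \leq Q} \lambda_d$, as claimed.

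There is essentially no obstacle here; the only point requiring a moment's care is the bookkeeping in the interchange of summation — making sure the constraint $d \leq Q$ in the final expression is correct (it follows automatically from $d \mid r$ and $r \leq Q$) and that the condition $d \mid n$ is precisely what the indicator $1_{d \mid n}$ contributes. One should also note that the coefficients $\lambda_d$ are well-defined by the stated formula regardless of whether $d \mid n$, so the right-hand side of \eqref{star63} makes sense; it is only in the sum $\sum_{d \mid n, d \leq Q} \lambda_d$ that the divisibility condition is imposed. I would write the proof in three or four lines along exactly these steps.
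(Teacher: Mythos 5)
Your proposal is correct and follows essentially the same route as the paper: both arguments rest on the identity $c_r(n) = \sum_{d \mid (n,r)} d\,\mu(r/d)$ together with an interchange of the sums over $d$ and $r$; the paper merely runs the computation in the opposite direction (starting from $\sum_{d \mid n,\, d \leq Q} \lambda_d$ and recovering $f(n)$, deriving the Ramanujan-sum identity on the way via M\"obius inversion from $1_{d \mid n} = \frac{1}{d}\sum_{r \mid d} c_r(n)$), while you quote that identity and expand forward from $f(n)$. The bookkeeping in your interchange, including the observation that $d \leq Q$ is automatic from $d \mid r$ and $r \leq Q$, is accurate.
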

\begin{proof}
With $\lambda_d$ as defined in \eqref{star63} we have
\begin{equation}\label{63-min} \sum_{\substack{d | n \\ d \leq Q}} \lambda_d = \sum_{\substack{d | n \\ d \leq Q}} d \sum_{\substack{d | r \\ r \leq Q}} \mu\big(\frac{r}{d}\big) v(r) = \sum_{r \leq Q} v(r) \sum_{d | (r,n)} d \mu\big(\frac{r}{d}\big).\end{equation}
On the other hand, 
\[ 1_{d | n} = \frac{1}{d} \sum_{a \in \Z/d\Z} e\big(\frac{an}{d}\big) = \frac{1}{d} \sum_{r | d} c_r(n),\] and so by M\"obius inversion we have
\[ \sum_{d |(r,n)} d \mu\big(\frac{r}{d}\big) = c_r(n).\]
Substituting into \eqref{63-min} gives the claimed result.
\end{proof}

\emph{Remark.} The sum $\sum_{d | n : d \leq Q} \lambda_d$ is a kind of sieve weight. Conversely, any such weight can be expressed in the form \eqref{ram-series}, with $v(r) = \sum_{d \leq Q : r | d} \frac{\lambda_d}{d}$, a straightforward exercise we leave to the reader.

\begin{corollary}\label{cor6.4}
We have $|F_{\chi, Q}(n)| \ll \tau(n) \log^3 Q$ pointwise. Consequently,
\begin{equation}\label{lam-sharp-ptwise} |\Lambda_{\sharp, Q,\sigmax}(n)| \ll \tau(n) \log^3 Q \big(\sum_{\rho \in \Xi_Q^*(\sigmax)} n^{\Re \rho - 1}\big) .\end{equation}
\end{corollary}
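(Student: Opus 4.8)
The goal is to bound $|F_{\chi,Q}(n)|$ pointwise by $\tau(n)\log^3 Q$, and then to deduce the bound on $|\Lambda_{\sharp,Q,\sigmax}(n)|$ by the triangle inequality applied to the representation \eqref{lam-sharp-conv}.

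For the first part, I would start from the combinatorial formula \eqref{f-chi-t-comb}, namely
\[ F_{\chi, Q}(n) = \frac{q}{\phi(q)} \overline{\chi}(n) \sum_{\substack{r \leq Q \\ (r,q) = 1}} \frac{\mu(r)}{\phi(r)} c_r(n). \]
Since $|\overline{\chi}(n)| \leq 1$ and $\frac{q}{\phi(q)} \ll \log\log q \ll \log Q$, it suffices to bound the Ramanujan series $g(n) := \sum_{r \leq Q,\ (r,q)=1} \frac{\mu(r)}{\phi(r)} c_r(n)$ by $\tau(n)\log^2 Q$. This is exactly the kind of series handled by Lemma \ref{lem63}: writing $v(r) = \frac{\mu(r)}{\phi(r)} 1_{(r,q)=1}$, we get $g(n) = \sum_{d | n,\ d \leq Q} \lambda_d$ with $\frac{\lambda_d}{d} = \sum_{d | r,\ r \leq Q} \mu(\frac{r}{d}) v(r)$. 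So the plan is to bound each $|\lambda_d|$ individually by something like $d \cdot (\text{small})$, or more precisely to get $|\lambda_d| \ll \log^2 Q$ uniformly in $d$, and then sum over the $\leq \tau(n)$ divisors $d$ of $n$ with $d \leq Q$.

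The key estimate is therefore $|\lambda_d| \ll \log^2 Q$. Here $\frac{\lambda_d}{d} = \sum_{d | r \leq Q} \mu(\frac{r}{d}) \frac{\mu(r)}{\phi(r)} 1_{(r,q)=1}$; only squarefree $r$ contribute (because of $\mu(r)$), and since $\mu$ is multiplicative and $r$ is squarefree, $\mu(\frac{r}{d})\mu(r) = \mu(d)^2$ whenever $d | r$, so actually the sign is fixed and we can drop the M\"obius factors up to the sign $\mu(d)^2$. If $d$ is not squarefree, $\lambda_d = 0$; if $d$ is squarefree and coprime to $q$, then writing $r = dk$ with $(k,d)=1$, $k$ squarefree, $k \leq Q/d$, $(k,q)=1$, and using $\phi(dk) = \phi(d)\phi(k)$, we get $\frac{\lambda_d}{d} = \frac{\mu(d)^2}{\phi(d)} \sum_{k \leq Q/d,\ (k,dq)=1,\ \mu^2(k)=1} \frac{1}{\phi(k)}$. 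The standard estimate $\sum_{k \leq x} \frac{1}{\phi(k)} \ll \log x$ (or $\ll \log Q$) bounds the inner sum, giving $|\lambda_d| \ll \frac{d}{\phi(d)} \log Q \ll (\log Q)(\log\log d) \ll \log^2 Q$. (One is slightly wasteful here, but it is harmless since we are allowed an extra $\log Q$.) Summing over the at most $\tau(n)$ divisors $d$ of $n$ then gives $|g(n)| \ll \tau(n)\log^2 Q$, hence $|F_{\chi,Q}(n)| \ll \tau(n)\log^3 Q$.

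For the consequence \eqref{lam-sharp-ptwise}, I would simply take absolute values in the representation $\Lambda_{\sharp,Q,\sigmax}(n) = \sum_{\rho \in \Xi_Q^*(\sigmax)} \eps_\rho n^{\rho-1} F_{\chi_\rho, Q}(n)$ from \eqref{lam-sharp-conv}, use $|n^{\rho-1}| = n^{\Re\rho - 1}$ and the just-proved bound $|F_{\chi_\rho, Q}(n)| \ll \tau(n)\log^3 Q$ (uniformly over all primitive $\chi$ of conductor $\leq Q$, which is what the first part gives — the implied constant does not depend on $\chi$), and pull these factors out of the sum. I do not anticipate a genuine obstacle; the only point requiring a little care is the uniformity of the bound on $\lambda_d$ in both $d$ and the conductor $q$, and making sure that the coprimality condition $(r,q)=1$ is only used to restrict the sum (it can only make it smaller), so that the crude estimate $\sum_{k \leq Q} 1/\phi(k) \ll \log Q$ applies regardless of $q$.
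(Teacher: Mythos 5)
Your argument is correct and is essentially the paper's own proof: it starts from \eqref{f-chi-t-comb}, applies Lemma \ref{lem63} with $v(r)=\frac{\mu(r)}{\phi(r)}1_{(r,q)=1}$, bounds $|\lambda_d|\ll\log^2 Q$ via $\frac{d}{\phi(d)}\sum_{k\leq Q}\frac{1}{\phi(k)}$, sums over the $\leq\tau(n)$ divisors of $n$, and then deduces \eqref{lam-sharp-ptwise} by the triangle inequality in \eqref{lam-sharp-conv}. The only slip is cosmetic: for squarefree $r=dk$ one has $\mu(\frac{r}{d})\mu(r)=\mu(d)$, not $\mu(d)^2$, but since you take absolute values this is immaterial.
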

\begin{proof}
Set $f(n) := \sum_{r \leq Q : (r,q) = 1} \frac{\mu(r)}{\phi(r)} c_r(n)$. Applying Lemma \ref{lem63} with $v(r) = \frac{\mu(r)}{\phi(r)} 1_{(r,q) = 1}$, we obtain $f(n) = \sum_{d | n : d \leq Q} \lambda_d$ with 
\[ \frac{\lambda_d}{d} = \sum_{\substack{d | r \\ r \leq Q \\ (r,q) = 1}} \frac{\mu(\frac{r}{d}) \mu(r)}{\phi(r)}  = \frac{\mu(d)}{\phi(d)} \sum_{\substack{k \leq Q/d \\ (dk, q) = 1 \\ (d,k) = 1}} \frac{\mu^2(k)}{\phi(k)} .\] (Here, of course, we wrote $r = dk$, noting further that the sum is supported where $r$ is squarefree, which means that $d$ and $k$ are coprime.)
It follows that 
\[ |\lambda_d| \leq \frac{d}{\phi(d)} \sum_{k \leq Q} \frac{1}{\phi(k)} \ll  \log^2 Q.\] Thus $|f(n)| \leq \tau(n) \log^2 Q$, from which the stated bound follows immediately using \eqref{f-chi-t-comb}. (We could, of course, easily save almost two powers of log by using less crude bounds on $\frac{m}{\phi(m)}$ if we wanted to.)

The bound \eqref{lam-sharp-ptwise} is immediate from this and the definition (Definition \ref{sharp-approx-def}) of $\Lambda_{\sharp, Q,\sigmax}$.
\end{proof}

\part{Preliminaries to the main construction}\label{partiii}

\section{zeros and density estimates}\label{sec7}

In this section we assemble some more sophisticated zero-density estimates needed for the main construction. Recall the definition (Definition \ref{zero-ht-def}) of the multiset $\Xi_Q(\sigmax)$. One main aim of this section will be to show the existence of a ``good scale'' $T$ at which any exceptional zero in $\Xi_T(\sigmax)$ (that is, a real zero of a real $L(s,\chi)$ very close to $s = 1$) has fairly small modulus. The key result of this section is Proposition \ref{sec4-takeaway}.

The manipulations in this section are rather similar to those in the standard proof of Linnik's theorem as presented, for example, in \cite[Section 18.4]{ik}. The main difference here is that we need to consider characters to all moduli $q \leq Q$, rather than just a single $q$, which means we must appeal to more general zero-density estimates.

Let us recall some facts about zeros of $L$-functions. In this section $\lambda_1,\lambda_2,\lambda_3,\lambda_4$ are fixed absolute positive constants. Once again we adopt the standard convention in analytic number theory of writing, for a nontrivial zero $\rho$ of some $L$-function, $\rho = \beta + i \gamma$. Somewhat less standardly, we always write $\sigma := 1 - \beta$, thus $\Re \rho = 1 - \sigma$.

The first fact we recall is Page's theorem, in the form stated at the beginning of \cite[Section 6]{bombieri}. See \cite[Corollary 11.10]{mv} for another reference.

\begin{proposition} \label{prop42} There is an absolute, computable constant $\lambda_1 \in (0,1]$ such that the following is true for any parameter $Q \geq 10$: with possibly one exception, all $\rho \in \Xi_Q(\frac{1}{2})$ satisfy $\sigma \geq \frac{\lambda_1}{\log Q}$. The exception, if it exists, is a simple real zero of $L(s,\chi_*)$ for some real Dirichlet character $\chi_*$ of conductor $q_* \leq Q$. This zero, if it exists, is called the \emph{exceptional zero} at scale $Q$.
\end{proposition}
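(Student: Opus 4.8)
\textbf{Proof plan for Proposition \ref{prop42} (Landau--Page theorem).}

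The plan is to deduce this from the classical zero-free region for a \emph{single} $L$-function together with a Deuring--Heilbronn style argument to rule out two close zeros from distinct real characters. First I would recall the standard fact: there is an absolute constant $c_0 > 0$ such that for any primitive $\chi$ of conductor $q$, the function $L(s,\chi)$ has no zero in the region $\sigma < c_0/\log(q(|\gamma|+2))$ \emph{except} possibly for a single simple real zero when $\chi$ is real (a Siegel zero). This is proved, as in \cite[Chapter 14]{davenport}, by the $3+4\cos\theta+\cos 2\theta$ inequality applied to $-\Re\frac{L'}{L}$; the only obstruction to a genuine zero-free strip is the term $\frac{4}{\sigma-\beta_1}$ coming from a putative real zero $\beta_1$ of a real character, which cannot be absorbed. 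Since all $\rho \in \Xi_Q(\frac12)$ have $|\gamma| \leq Q$ and conductor $\leq Q$, this already shows that every such $\rho$ with $\chi_\rho$ \emph{complex}, and every such $\rho$ that is non-real, satisfies $\sigma \gg 1/\log Q$, and likewise all but possibly one real zero per real character $\chi$.

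The remaining work is to promote ``one exception per real character'' to ``at most one exception in total'' across all real characters of conductor $\leq Q$. Here I would run the standard argument: suppose $\beta_1$ is a real zero of $L(s,\chi_1)$ and $\beta_2$ a real zero of $L(s,\chi_2)$ with $\chi_1 \neq \chi_2$ real primitive of conductors $q_1, q_2 \leq Q$, both with $1-\beta_i < \lambda_1/\log Q$. Then $\chi_1\chi_2$ is a non-principal character (of conductor dividing $q_1 q_2 \leq Q^2$), and one applies the positivity inequality
\[
0 \leq \sum_{\psi \in \{1,\chi_1,\chi_2,\chi_1\chi_2\}} \Big(-\frac{L'}{L}(s,\psi)\Big)
\]
at a real point $s = 1+\delta$ with $\delta \asymp 1/\log Q$. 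The pole of $\zeta$ contributes $+\frac{1}{\delta}$, while \emph{each} of the two real zeros $\beta_1,\beta_2$ contributes a term $-\frac{1}{1+\delta-\beta_i}$, which is roughly $-\frac{1}{\delta}$ if $1-\beta_i$ is much smaller than $\delta$. Two such terms overwhelm the single pole, forcing $1 - \beta_i \gg 1/\log Q$ for at least one $i$ — contradiction if $\lambda_1$ is chosen small enough. (One must be slightly careful because $\chi_1\chi_2$ may be imprimitive, but passing to the inducing primitive character only changes $L$ by finitely many Euler factors, harmless at $s = 1+\delta$.) That the exceptional zero, when it exists, is simple and real follows from the same single-character zero-free region analysis: a double real zero, or a complex zero that close to $1$, would again produce an unabsorbable negative contribution.

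The main obstacle here is essentially bookkeeping rather than a deep point: one must make the constants $\lambda_1$ and the comparison point $\delta$ consistent across the single-character zero-free region and the two-character repulsion argument, and handle the reduction from characters to modulus $q$ to primitive characters (which is where $\Xi_Q(\frac12)$ being defined via primitive characters of conductor $\leq Q$ is used). Since the statement is explicitly quoted from \cite[Section 6]{bombieri}, in the write-up I would most likely simply cite Bombieri for the full argument and only indicate the structure above, rather than reproduce the $3+4\cos\theta+\cos 2\theta$ computation in detail.
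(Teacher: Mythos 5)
Your proposal is correct: it is the standard Landau–Page argument (classical zero-free region per character plus Landau's repulsion between real zeros of two distinct real characters, with the conductor and height bounded by $Q$ so that $\log(q(|\gamma|+2)) \ll \log Q$), and the paper itself offers no proof, simply quoting the result from Bombieri's book — exactly the route you say you would take in the write-up.
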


The second is a result of Landau. For the proof, see for instance \cite[Theorem 11.7]{mv}.
\begin{proposition}
Suppose that $\chi_1, \chi_2$ are real characters modulo $q_1, q_2$, with $\chi_1\chi_2$ not principal and with real zeros at $1 - \sigma_1, 1 - \sigma_2$ respectively. Then 
\[ \max(\sigma_1, \sigma_2) \geq \frac{\lambda_1}{\log q_1 q_2}.\]
\end{proposition}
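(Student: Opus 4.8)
The plan is to run the classical Landau argument using the product $L$-function $\zeta(s)L(s,\chi_1)L(s,\chi_2)L(s,\chi_1\chi_2)$ attached to the two distinct real characters. The key structural fact is that this product has non-negative Dirichlet coefficients: writing it as an Euler product, the local factor at each prime $p$ is $\big(1-p^{-s}\big)^{-1}\big(1-\chi_1(p)p^{-s}\big)^{-1}\big(1-\chi_2(p)p^{-s}\big)^{-1}\big(1-\chi_1\chi_2(p)p^{-s}\big)^{-1}$, and since $\chi_1,\chi_2$ are real and $\chi_1\chi_2$ is real (here one uses that $\chi_1\ne\chi_2$, so $\chi_1\chi_2$ is a nonprincipal real character and in particular is \emph{not} identically $-1$ on the primes), the resulting coefficient of $p^{-ks}$ is non-negative — indeed the full Dirichlet series dominates the constant series $1$, so $\sum_n a_n n^{-s}\ge 1$ with $a_n\ge 0$ for $\Re s>1$.

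First I would set $F(s):=\zeta(s)L(s,\chi_1)L(s,\chi_2)L(s,\chi_1\chi_2)$ and observe that $F$ is regular at $s=1-\sigma_1$ and at $s=1-\sigma_2$ apart from the single pole of $\zeta$ at $s=1$; moreover $L(s,\chi_1)$ vanishes at $1-\sigma_1$ and $L(s,\chi_2)$ vanishes at $1-\sigma_2$, so $F$ itself vanishes at both of these real points (they are distinct from $s=1$). The standard device is then a quantitative positivity/mean-value estimate: for a suitable positive kernel — e.g. consider $\frac{1}{2\pi i}\int_{(2)} F(s+w)\,\Gamma(w)\,X^w\,dw$, which by non-negativity of the coefficients is $\ge a_1 = 1 > 0$ for any $X\ge 1$ and any real $s$ with $\Re s$ a little less than $1$ — while on the other hand shifting the contour past $w=0$ picks up the residue from the pole of $\zeta$ at $s+w=1$ (contributing roughly $X^{1-s}\Gamma(1-s)$) and leaves a remainder controlled by convexity/standard bounds for $L$-functions, with denominators $\log(q_1q_2)$ from the usual growth estimates. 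Evaluating this identity at $s = 1-\sigma_1$ and at $s=1-\sigma_2$, and using that $F$ vanishes there (so that the ``main'' term on the analytic side is small unless $1-s$ is bounded away from $0$), forces $\max(\sigma_1,\sigma_2)\gg 1/\log(q_1q_2)$; the constant can be taken to be the same $\lambda_1$ as in the Landau–Page Proposition \ref{prop42} after harmonising normalisations. A cleaner variant, which I would probably actually write: apply the classical zero-free-region estimate $\frac{L'}{L}(s,\chi)\ll \log(q(|t|+2))$ together with $-\Re\frac{\zeta'}{\zeta}(\sigma+it)\le \frac{1}{\sigma-1}+O(1)$ to $-\Re\frac{F'}{F}(\sigma)\ge 0$ (positivity of coefficients of $-F'/F$ at a real point $\sigma>1$), getting
\[
0 \le -\Re\frac{F'}{F}(\sigma) \le \frac{1}{\sigma-1} - \frac{1}{\sigma-(1-\sigma_1)} - \frac{1}{\sigma-(1-\sigma_2)} + O\big(\log(q_1q_2)\big),
\]
and then optimising the free parameter $\sigma = 1 + \lambda/\log(q_1q_2)$ to conclude.

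The main obstacle — and the only place real care is needed — is the hypothesis $\chi_1\ne\chi_2$: one must ensure $\chi_1\chi_2$ is a genuine nonprincipal character so that $L(s,\chi_1\chi_2)$ is entire (no spurious pole to spoil the contour shift) and so that the Euler factors really do yield non-negative coefficients; this is exactly why $\chi_1,\chi_2$ distinct is assumed, and it is why the statement fails for $\chi_1=\chi_2$ (then $\chi_1\chi_2=\chi_0$ and $F$ has a double pole, and the argument collapses). A secondary bookkeeping point is that $1-\sigma_1$ and $1-\sigma_2$ are each simple real zeros lying in $(1-\tfrac12,1)$ by hypothesis (real characters, and $\sigma_i \le \tfrac12$ is the relevant regime — if some $\sigma_i>\tfrac12$ the inequality is trivial), so both contribute a full $-\frac{1}{\sigma-(1-\sigma_i)}$ term with no multiplicity ambiguity; no density estimate is needed here, only the classical log-derivative bounds already implicit in the references cited (\cite{bombieri}, \cite{ik}).
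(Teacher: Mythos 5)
The paper itself offers no proof of this proposition: it is quoted as a classical theorem of Landau, in the same way that Proposition \ref{prop42} is quoted from Bombieri, so there is nothing in-paper to compare against. Your ``cleaner variant'' is the standard textbook argument and is essentially correct: with $F(s)=\zeta(s)L(s,\chi_1)L(s,\chi_2)L(s,\chi_1\chi_2)$ one has $-\frac{F'}{F}(s)=\sum_n \Lambda(n)(1+\chi_1(n))(1+\chi_2(n))n^{-s}$, which has non-negative coefficients, hence $-\frac{F'}{F}(\sigma)\ge 0$ for real $\sigma>1$; combining this with $-\frac{\zeta'}{\zeta}(\sigma)\le \frac{1}{\sigma-1}+O(1)$, the Hadamard-type bounds $-\frac{L'}{L}(\sigma,\chi_j)\le -\frac{1}{\sigma-1+\sigma_j}+O(\log q_j)$ (keeping only the real zero, all other zero terms being of favourable sign) and $-\frac{L'}{L}(\sigma,\chi_1\chi_2)\le O(\log q_1q_2)$, and then choosing $\sigma=1+\lambda/\log(q_1q_2)$ with $\lambda$ a small absolute constant, one does obtain $\max(\sigma_1,\sigma_2)\gg 1/\log (q_1q_2)$, and the constant can be harmonised with $\lambda_1$.

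Two caveats. First, your parenthetical assertion that $\chi_1\neq\chi_2$ alone makes $\chi_1\chi_2$ nonprincipal is not literally true for imprimitive characters: a primitive character and an imprimitive character it induces are distinct, yet their product is principal, and in that degenerate case the statement as literally worded fails (the two $L$-functions share their zeros). The proposition, and your proof, require the two characters to be primitive, or at least not induced by a common primitive character; this is exactly the situation in which the paper applies the result (in the proof of Proposition \ref{two-scales-prop} the two exceptional characters have different conductors, hence are genuinely distinct primitive characters). Second, your first sketch via $\frac{1}{2\pi i}\int F(s+w)\Gamma(w)X^w\,dw$ is much vaguer: as written, shifting the contour at $s=1-\sigma_1$ naturally produces a lower bound of Siegel type for $\Gamma(\sigma_1)X^{\sigma_1}L(1,\chi_1)L(1,\chi_2)L(1,\chi_1\chi_2)$, and recovering Landau's inequality from it requires additional upper bounds for the $L(1,\chi_i)$ which you do not supply. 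Since you explicitly commit to the log-derivative variant as the argument you would write, the proposal as a whole is sound.
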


Using these facts together, we can locate scales $Q$ at which (essentially) any exceptional zero has relatively small modulus. 

\begin{proposition}\label{two-scales-prop}
Let $\kappa \in (0,1)$ be given. Then for any $X \geq 10^{1/\kappa}$, and for either $Q = X$ or $Q = X^{\kappa}$, at least one of the following holds:
\begin{enumerate}
\item \textup{(first case)}  all zeros in $\Xi_Q(\frac{1}{2})$ satisfy $\sigma \geq \frac{\lambda_1\kappa }{2 \log Q}$;
\item \textup{(second case)} there is an exceptional zero at scale $Q$, but its modulus $q_*$ satisfies $q_* < Q^{\kappa}$.
\end{enumerate}
\end{proposition}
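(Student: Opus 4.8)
The plan is to run the classical Landau--Page argument at the two scales $X$ and $X^{\kappa}$ and play them off against one another via zero repulsion. It is convenient to write $\mu := \frac{\lambda_1}{2\log X}$, so that the bound demanded in the first case is ``$\sigma \geq \kappa\mu$'' when $Q = X$ and ``$\sigma \geq \mu$'' when $Q = X^{\kappa}$, using $\frac{\lambda_1\kappa}{2\log X^{\kappa}} = \mu$. Note that $X \geq 10^{1/\kappa}$ guarantees $X^{\kappa} \geq 10$, so Proposition \ref{prop42} applies at both scales.

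First I would apply Proposition \ref{prop42} at scale $X$. If there is no exceptional zero at scale $X$, then every $\rho \in \Xi_X(\frac{1}{2})$ has $\sigma \geq \frac{\lambda_1}{\log X} = 2\mu \geq \kappa\mu$ (since $\kappa \leq 1$), so the first case holds with $Q = X$ and we are done. Otherwise there is an exceptional zero $\rho_* = 1 - \sigma_*$, a simple real zero of $L(s,\chi_*)$ with $\chi_*$ real of conductor $q_* \leq X$. If $q_* \leq X^{\kappa}$, the second case holds with $Q = X$ and we are done; so I split the conductor dichotomy as $q_* \leq X^{\kappa}$ versus $q_* > X^{\kappa}$. (The borderline $q_* = X^{\kappa}$ forces $X^{\kappa} \in \Z$ and is a harmless degenerate case, which is also the one place the strict inequality ``$q_* < Q^\kappa$'' in the statement should be read as ``$\leq$''.) It remains to treat $q_* > X^{\kappa}$.

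In that case I claim the first case holds at $Q = X^{\kappa}$; that is, every $\rho = 1-\sigma \in \Xi_{X^{\kappa}}(\frac{1}{2})$ satisfies $\sigma \geq \mu$. By Proposition \ref{prop42} at scale $X^{\kappa}$, either $\rho$ is not the exceptional zero there, whence $\sigma \geq \frac{\lambda_1}{\log X^{\kappa}} = \frac{2\mu}{\kappa} \geq \mu$; or $\rho$ is the exceptional zero at scale $X^{\kappa}$, hence a real zero of $L(s,\chi')$ for a real character $\chi'$ of conductor $q' \leq X^{\kappa} < q_*$, and in particular $\chi' \neq \chi_*$. Now combine two facts. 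By the result of Landau recalled above (distinct real characters of moduli $q_1,q_2$ cannot both have real zeros closer to $1$ than $1 - \frac{\lambda_1}{\log q_1 q_2}$), applied to $\chi_*$ and $\chi'$, we get $\max(\sigma_*,\sigma) \geq \frac{\lambda_1}{\log(q_* q')} > \frac{\lambda_1}{2\log X} = \mu$, since $q_* q' < X^2$. On the other hand $q' \leq X^{\kappa} \leq X$ and $\Im\rho = 0 \leq X$, so $\rho \in \Xi_X(\frac{1}{2})$; thus if $\sigma < \frac{\lambda_1}{\log X}$ then $\rho$ is an exceptional zero at scale $X$, and Proposition \ref{prop42} permits only one such, namely $\rho_*$. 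Putting these together: if $\sigma < \mu$, the first fact forces $\sigma_* > \mu$, so $\rho_* \neq \rho$ (distinct real parts), and then $\rho$ is a second exceptional zero at scale $X$ (as $\sigma < \mu < \frac{\lambda_1}{\log X}$), contradicting the second fact. Hence $\sigma \geq \mu$, as required.

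I expect the crux to be exactly this final dichotomy: a putative exceptional zero at the smaller scale $X^{\kappa}$ must be kept from sinking below $\mu$, and this cannot be achieved with Landau--Page at scale $X^{\kappa}$ alone, nor with zero repulsion alone — it is the combination, together with the numerology (the factor $2$ built into $\mu$ and the hypothesis $\kappa < 1$ giving just enough slack in $\frac{\lambda_1}{\log(q_* q')}$ and in $\frac{2\mu}{\kappa}$), that closes the argument. Everything else is routine once the two scales and the normalisation $\mu = \frac{\lambda_1}{2\log X}$ are set up.
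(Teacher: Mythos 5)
Your proof is correct and follows essentially the same route as the paper: Landau--Page (Proposition \ref{prop42}) at both scales $X$ and $X^{\kappa}$ combined with Landau's repulsion result applied to the two relevant real zeros, whose conductors are distinct because $q' \leq X^{\kappa} < q_*$. The only (harmless) difference is in how the final contradiction is closed: the paper assumes failure at both scales and contradicts Landau using the smallness of both $\sigma_*$ and $\sigma'_*$, whereas you use Landau to force $\sigma_* > \frac{\lambda_1}{2\log X}$ and then invoke the uniqueness of the exceptional zero at scale $X$; your remark about the borderline $q_* = X^{\kappa}$ matches a slippage already present in the paper's own proof ($\leq$ versus $<$) and is immaterial for the later application.
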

\emph{Remarks.} To avoid confusion, for the time being we call these the first and second cases rather than the unexceptional and exceptional cases since the first case does not strictly preclude an exceptional zero at scale $Q$ in the sense of Proposition \ref{prop42}; there could be a zero with $\frac{\lambda_1 \kappa}{2\log Q} \leq \sigma < \frac{\lambda_1}{\log Q}$. 

Proposition \ref{two-scales-prop} is very closely related to, for example, \cite[Corollary 11.9]{mv}.

\begin{proof}[Proof of Proposition \ref{two-scales-prop}]
Consider first of all the scale $Q = X$. If there is no exceptional zero at this scale (in the sense of Proposition \ref{prop42}) then we are in case (1) and indeed we have the stronger bound $\sigma \geq \frac{\lambda_1}{\log Q}$. Suppose then that there is an exceptional zero $\rho_* = 1 - \sigma_*$ at this scale. If $\sigma_* \geq \frac{\lambda_1 \kappa}{2 \log Q}$ then we are again in case (1). If $q_* \leq Q^{\kappa}$ then we are in case (2). Suppose, then, that $X^{\kappa} < q_* \leq X$ and $\sigma_* < \frac{\lambda_1\kappa }{2\log X}$.

Now consider the scale $Q = X^{\kappa}$. By analogous reasoning, we are done unless there is an exceptional zero $\rho'_*$ at this scale, with $X^{\kappa^2} < q'_* \leq X^{\kappa}$ and $\sigma'_* < \frac{\lambda_1\kappa}{2 \log X^{\kappa}}$.
 
Since $q'_* \leq X^{\kappa} < q_*$, the zeros $\rho_*, \rho'_*$ correspond to distinct real characters. By Landau's result we obtain
\[ \frac{\lambda_1\kappa}{2\log X^{\kappa}}\geq \max(\sigma_*, \sigma'_*) \geq \frac{\lambda_1}{\log q_* q'_*} > \frac{\lambda_1}{\log X^2},\] a contradiction.
\end{proof}

We quote the following two results from \cite{bombieri}. Here, $\lambda_2, \lambda_3$ are computable absolute constants. 

\begin{proposition}\label{prop74}
Let $Q \geq 10$. We have, uniformly in $\alpha \leq 1$,
\begin{equation}\label{unexceptional} \# \{ \rho \in \Xi_Q(\textstyle\frac{1}{2}\displaystyle) : \Re \rho \geq \alpha\} \leq \lambda_2Q^{\lambda_3(1 - \alpha)}.\end{equation} \end{proposition}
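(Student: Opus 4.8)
The plan is to deduce Proposition~\ref{prop74} by quotation, since it is a ``log-free'' zero-density estimate of Linnik type in which both the conductor and the height are bounded by the single parameter $Q$. Such estimates are the main output of the large-sieve method in Bombieri's \emph{Le Grand Crible}, and the cleanest route is to cite the relevant density theorem there for primitive Dirichlet characters of conductor at most $Q$ (recalling that a zero of $L(s,\psi)$ for imprimitive $\psi$ is a zero of $L(s,\chi)$ for the inducing primitive character $\chi$, so that restricting to primitive characters loses nothing, exactly as in Definition~\ref{zero-ht-def}), specialising the height variable there to $T = Q$. Two bookkeeping points must be checked: that the exponent is unaffected by this specialisation, and that the count respects the multiset convention --- zeros counted with multiplicity, and a $\rho$ occurring as a zero of several distinct primitive $L$-functions counted with the sum of the corresponding multiplicities. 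As noted in the remark after Proposition~\ref{jut-est}, the standard proofs of such density theorems do give this stronger multiset bound, so neither point causes difficulty.

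Alternatively, one can bootstrap \eqref{unexceptional} from Proposition~\ref{jut-est}, which has already been quoted; here it suffices to bound the number of $\rho \in \Xi_Q(\frac{1}{2})$ with $\Re\rho \geq \alpha$. Split on the size of $\alpha$. If $\alpha \geq \frac{4}{5}$, then any such $\rho$ has $\sigma = 1 - \Re\rho \leq \frac{1}{5}$, so it lies in $\Xi_Q(\frac{1}{5})$ and Proposition~\ref{jut-est} gives a count $\ll Q^{8(1-\alpha)}$, which is of the required shape once $\lambda_3 \geq 8$. If instead $\frac{1}{2} \leq \alpha < \frac{4}{5}$, then the target $\lambda_2 Q^{\lambda_3(1-\alpha)}$ is at least $\lambda_2 Q^{\lambda_3/5}$, while the total number of nontrivial zeros, counted with multiplicity, of all $L(s,\chi)$ with $\cond(\chi) \leq Q$ and $|\Im\rho| \leq Q$ is $\ll Q^{3}\log Q$ by summing the Riemann--von Mangoldt formula over the $\ll Q^{2}$ relevant primitive characters; hence the bound holds here too for any sufficiently large absolute constant $\lambda_3$ and a correspondingly chosen $\lambda_2$.

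The genuine analytic content --- the large-sieve-based reflection argument underpinning any log-free density theorem --- is black-boxed throughout, whether via Bombieri or via Jutila's theorem as quoted in Proposition~\ref{jut-est}, so there is no real obstacle in the above; the only point requiring care is the multiplicity convention built into $\Xi_Q(\frac{1}{2})$, which is disposed of by appealing to a reference whose proof tracks multiplicities (as is typical).
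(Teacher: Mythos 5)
Your primary route is exactly the paper's proof: Proposition \ref{prop74} is simply quoted as the first statement of Bombieri's Th\'eor\`eme 14 in \cite{bombieri}, with the multiplicity and primitivity bookkeeping handled just as you describe. Your alternative bootstrap is also sound and worth noting: for $\alpha \geq \frac{4}{5}$ it follows from Proposition \ref{jut-est}, and for $\frac{1}{2} \leq \alpha < \frac{4}{5}$ the trivial count $\ll Q^{3}\log Q$ (Riemann--von Mangoldt summed over the $\ll Q^{2}$ primitive characters of conductor at most $Q$) is dominated by $\lambda_2 Q^{\lambda_3(1-\alpha)}$ once $\lambda_3$ is taken large and $\lambda_2$ absorbs the implied constants, so the statement becomes self-contained given what the paper has already quoted; the only cost is that the constants are then those implicit in Jutila rather than the explicit ones noted by Bombieri and Thorner--Zaman, which matters only for the remark about making the final exponent explicit. (One small point: the displayed condition in \eqref{unexceptional} should be read as $\Re \rho \geq \alpha$, which is how your argument, and the paper's later use of the bound, treats it.)
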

\begin{proof} This is the first statement in \cite[Th\'eor\`eme 14]{bombieri}, where it is noted that the argument leads to explicit constants if desired.  \end{proof}

\begin{proposition}\label{prop75}
Let $Q \geq 10$. If there is an exceptional zero \textup{(}$\rho_*$\textup{)} at scale $Q$ then we have the stronger bound 
\begin{equation}\label{exceptional}  \# \{ \rho \in \Xi_Q(\textstyle\frac{1}{2}\displaystyle) : \rho \neq \rho_* , \Re \rho \geq \alpha\} \leq \lambda_2 (\sigma_* \log Q ) Q^{\lambda_3(1 - \alpha)}. \end{equation}
\end{proposition}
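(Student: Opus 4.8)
The plan is to mimic the proof of the unexceptional zero-density bound (Proposition \ref{prop74}) but keep track of the extra factor that the presence of an exceptional zero provides. The classical mechanism here is exceptional zero repulsion (the Deuring--Heilbronn phenomenon): if $\rho_* = 1-\sigma_*$ is a real zero of $L(s,\chi_*)$ very close to $1$, then every other zero $\rho$ of every $L(s,\chi)$ with $\cond(\chi) \leq Q$ is pushed a bit further from the line $\Re s = 1$, and the size of the repulsion scales with $\sigma_* \log Q$ (which is small precisely when $\rho_*$ is close to $1$). Rather than proving this from scratch, I would simply cite the relevant result in Bombieri's book: the statement \eqref{exceptional} should be, up to notation, the ``exceptional'' companion to Th\'eor\`eme 14 in \cite{bombieri} (these log-free density theorems are typically proved in tandem with their repulsion-enhanced versions). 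So the first and main step is to locate the precise statement in \cite{bombieri} and check that the hypotheses match: we need the bound uniformly over all characters of conductor $\leq Q$ and all heights $|\gamma| \leq Q$, counted with multiplicity, which is exactly the regime in which $\Xi_Q(\tfrac12)$ is defined (Definition \ref{zero-ht-def}).

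If a direct quotation is not available in the exact form needed, the fallback is to run the standard detection-polynomial / mollifier argument. One takes the Dirichlet polynomial $M(s,\chi) = \sum_{m \leq y} \mu(m)\chi(m) m^{-s}$ mollifying $L(s,\chi)$, forms $\sum_{\chi}\sum_{\rho}$ over zeros with $\Re\rho \leq \alpha$ of a suitable power-weighted count, and estimates the resulting mean value of Dirichlet polynomials by the large sieve (which is what forces the ``log-free'' and ``all moduli $\leq Q$'' features and produces the shape $Q^{\lambda_3(1-\alpha)}$). The extra factor $\sigma_*\log Q$ then enters through the following device: one multiplies the whole sum by $L(s,\chi_*)/L(s,\chi_*)$ evaluated near each zero, or equivalently inserts the factor coming from the functional-equation/Hadamard-product expansion of $(L'/L)(s,\chi\chi_*)$, and uses that $L(s,\chi_*)$ has a zero at $s = 1-\sigma_*$; the contribution of a zero $\rho$ with $\beta$ close to $1$ gets multiplied by something like $|1 - \sigma_* - \rho|$ relative to $|1-\rho|$, and carrying this through the large sieve estimate yields the gain $\sigma_* \log Q$. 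This is all entirely standard, and I would present it only in outline, referring to \cite[Section 18.4]{ik} or \cite{bombieri} for the mechanics, exactly as the paper does for Proposition \ref{prop74}.

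The main obstacle, as with Proposition \ref{jut-est}, is bibliographic rather than mathematical: these repulsion-type density estimates are often stated for a fixed modulus $q$ (as in the usual proof of Linnik's theorem) rather than uniformly over $q \leq Q$, and the dependence on multiplicity of zeros is rarely made explicit. So the real work is to confirm that Bombieri's argument genuinely delivers the stated uniformity and multiplicity-counting, and to phrase the citation accordingly — just as the authors remark for Proposition \ref{jut-est} that ``the proofs do give this.'' Since any log-free repulsion estimate of this general type would suffice downstream (it is only used, via Lemma \ref{crude-zero-density}-style summations, to control sums $\sum_\rho N^{\Re\rho - 1}$ with an exceptional-zero-sensitive weight), I would not optimize constants and would simply take $\lambda_2,\lambda_3$ to be whatever Bombieri's proof produces.
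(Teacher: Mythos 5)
Your proposal matches the paper: the proof there is simply the observation that \eqref{exceptional} is the second statement of Th\'eor\`eme 14 in \cite{bombieri}, which is exactly your primary route of citing the repulsion-enhanced companion to Proposition \ref{prop74}. The mollifier/large-sieve fallback sketch is reasonable background but unnecessary, since the quotable statement does exist in Bombieri's book in the required uniform, all-moduli form.
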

\begin{proof}
This is the second statement in \cite[Th\'eor\`eme 14]{bombieri}. 
\end{proof}

\emph{Remark.} Note that Proposition \ref{prop74} is a grand log-free zero-density estimate and as such is the same as Proposition \ref{jut-est}, except that in the latter we gave an explicit exponent of $8$ in place of $\lambda_3$. In this section, the implied constant $\lambda_2$ is also important, and moreover we must pair Proposition \ref{prop74} with Proposition \ref{prop75}, which is usually called \emph{exceptional zero repulsion}. 

Explicit (large) values for $\lambda_2, \lambda_3$ are obtained in the work of Thorner and Zaman \cite{thorner-zaman}, and using these they were able to show that the exponent $c$ in Theorem \ref{main-sarkozy} can be taken to be $10^{-18}$.

Let $M$ be the explicit absolute constant specified in \eqref{m-def}, that is to say $M = 7 \cdot 3^6 \cdot 2^{22993}$.

\begin{proposition}\label{convergent-zeros}
There is an absolute constant $B$ such that the following holds for any parameter $Q \geq 10$.
\begin{itemize}
\item If there is no exceptional zero at scale $Q$, 
\begin{equation}\label{prop47.1} 4M \sum_{\rho \in \Xi_Q(\frac{1}{2})} Q^{-B\sigma} \leq 1;\end{equation}
\item If there is an exceptional zero $\rho_*$ at scale $Q$,
\begin{equation}\label{prop47.2} Q^{-B\sigma_*} + 2M \sum_{\rho \in \Xi_Q(\frac{1}{2}) \setminus \{ \rho_*\}} Q^{-B\sigma}  \leq 1.\end{equation}
\end{itemize}
\end{proposition}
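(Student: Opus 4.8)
The plan is to deduce both inequalities from the log-free zero-density estimates of Propositions \ref{prop74} and \ref{prop75} by a dyadic decomposition of the zeros according to the size of $\sigma$, exactly as in the proof of Lemma \ref{crude-zero-density}, but now keeping careful track of the absolute constant $\lambda_2$ and choosing $B$ large enough (depending on $\lambda_2, \lambda_3$) to absorb the factor $4M$ (resp.\ $2M$). Throughout, we only ever sum over $\rho \in \Xi_Q(\frac12)$, so $\sigma \leq \frac12$; and by Proposition \ref{prop42}, in the unexceptional case every such $\rho$ satisfies $\sigma \geq \frac{\lambda_1}{\log Q}$, while in the exceptional case every $\rho \neq \rho_*$ satisfies the same lower bound (since the Landau--Page exception is unique). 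This uniform lower bound on $\sigma$ is what makes the geometric series converge with room to spare.

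Concretely, for the unexceptional case, write $\Xi^{(k)}$ for the multiset of $\rho \in \Xi_Q(\frac12)$ with $k - 1 < \sigma \log Q \leq k$; this is empty unless $k \geq \lambda_1$ (by Proposition \ref{prop42}) and unless $k \lessapprox \log Q$ (since $\sigma \leq \frac12$), and by Proposition \ref{prop74} applied with $\alpha = 1 - \frac{k}{\log Q}$ we have $|\Xi^{(k)}| \leq \lambda_2 Q^{\lambda_3 k/\log Q} = \lambda_2 e^{\lambda_3 k}$. Hence
\[
4M \sum_{\rho \in \Xi_Q(\frac12)} Q^{-B\sigma} \leq 4M \lambda_2 \sum_{k \geq \lambda_1} e^{\lambda_3 k} e^{-B(k-1)} = 4M \lambda_2 e^{B}\sum_{k \geq \lambda_1} e^{-(B - \lambda_3)k},
\]
and choosing $B$ a sufficiently large absolute constant (larger than $\lambda_3 + 1$, and then large enough that the resulting geometric series times $4M\lambda_2 e^B e^{-(B-\lambda_3)\lambda_1}$ is at most $1$; note $e^B e^{-(B-\lambda_3)\lambda_1} = e^{B(1-\lambda_1)}e^{\lambda_3\lambda_1}$ still grows in $B$, so one instead bounds the tail starting at $k = \lambda_1$ by $\frac{e^{-(B-\lambda_3)\lambda_1}}{1 - e^{-(B-\lambda_3)}}$ and then picks $B$ so that $4M\lambda_2 e^{B} \cdot \frac{e^{-(B-\lambda_3)\lambda_1}}{1-e^{-(B-\lambda_3)}} \leq 1$, which is possible since for $\lambda_1$ bounded below and $B \to \infty$ the quantity $e^{B}e^{-(B-\lambda_3)\lambda_1} = e^{\lambda_3\lambda_1}e^{B(1-\lambda_1)}$ — wait, this requires $\lambda_1 > 1$) gives \eqref{prop47.1}. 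Since a priori $\lambda_1 \leq 1$, the clean fix is to replace the crude lower bound $\sigma \geq \lambda_1/\log Q$ by the observation that we may as well assume $Q$ is large: for $Q$ bounded the sum is a finite quantity and one absorbs it into $B$ via $Q^{-B\sigma} \leq Q^{-B\lambda_1/\log Q} = e^{-B\lambda_1}$, so actually every term is at most $e^{-B\lambda_1}$ and there are $\lessapprox \lambda_2 Q^{\lambda_3/2}$ of them; this is where one must be slightly more careful, summing $Q^{-B\sigma}$ against $|\Xi^{(k)}| \leq \lambda_2 e^{\lambda_3 k}$ and noting the $k$-sum runs over $\lambda_1 \leq k \leq \frac12\log Q$, so the total is $\leq \lambda_2 \sum_k e^{\lambda_3 k - B(k-1)} \leq \lambda_2 e^B \sum_{k \geq \lambda_1} e^{-(B-\lambda_3)k}$, and for $B \geq 2\lambda_3$ this is $\leq \lambda_2 e^{B} \cdot \frac{2 e^{-(B-\lambda_3)\lambda_1}}{B-\lambda_3}$; since $\lambda_1$ may be small this does not obviously tend to $0$, so the genuinely correct approach is to observe that we need the bound only for $Q \geq Q_0$ for a suitable absolute $Q_0$, and for $10 \leq Q \leq Q_0$ the set $\Xi_Q(\frac12)$ has $O(1)$ elements each contributing $Q^{-B\sigma} \leq 1$, so one simply takes $B$ large enough and notes the claim as stated allows an absolute constant $B$ — I will sum this up below.

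To finish cleanly: for the unexceptional case, partition by $k$ as above; the contribution of each $k$ is $\lambda_2 e^{\lambda_3 k} \cdot Q^{-B\sigma} \leq \lambda_2 e^{\lambda_3 k} e^{-B(k-1)}$, so
\[
4M\sum_{\rho} Q^{-B\sigma} \leq 4M\lambda_2 e^{\lambda_3}\sum_{k \geq 1} e^{-(B - \lambda_3 - \varepsilon_0)k}
\]
where we harmlessly enlarge the range to all $k \geq 1$; taking $B$ large enough in terms of the absolute constants $\lambda_2, \lambda_3, M$ makes this $\leq 1$. For the exceptional case \eqref{prop47.2} we isolate $\rho_*$ (contributing $Q^{-B\sigma_*} \leq 1$, which we keep) and apply Proposition \ref{prop75} to the rest, gaining an extra factor $\sigma_* \log Q \leq \lambda_1 < 1$ (or at worst $O(1)$) in the count $|\Xi^{(k)}| \leq \lambda_2 \sigma_* \log Q \, e^{\lambda_3 k}$; this only helps, so the same choice of $B$ (chosen to make $2M \lambda_2 e^{\lambda_3}\sum_{k\geq 1} e^{-(B-\lambda_3)k} \leq 1$, hence a fortiori with the extra small factor) gives \eqref{prop47.2}. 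The main point requiring care — and the only real obstacle — is verifying that a \emph{single} absolute $B$ works simultaneously for all $Q \geq 10$: one handles $Q$ in a bounded range trivially (finitely many zeros, each term $\leq 1$, so enlarge $B$), and for $Q$ large uses that the geometric series has ratio $e^{-(B-\lambda_3)} < \frac12$ once $B > \lambda_3 + 1$, so the whole sum is $\leq \lambda_2 e^{\lambda_3} \cdot \frac{e^{-(B-\lambda_3)}}{1 - e^{-(B-\lambda_3)}} \to 0$ as $B \to \infty$, beating the fixed constant $4M$.
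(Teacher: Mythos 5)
The exceptional case is where your argument genuinely breaks down. You bound $Q^{-B\sigma_*}$ by $1$ ``and keep it'', and then choose $B$ so that $2M\sum_{\rho\neq\rho_*}Q^{-B\sigma}\leq 1$, dismissing the repulsion factor $\sigma_*\log Q$ from Proposition \ref{prop75} as something that ``only helps''. But that only shows the left-hand side of \eqref{prop47.2} is at most $2$. Since $\sigma_*\log Q$ can be arbitrarily small, $Q^{-B\sigma_*}=e^{-B\sigma_*\log Q}$ can be arbitrarily close to $1$ for every fixed $B$, so the sum over the remaining zeros must be bounded not by a fixed constant but by $1-Q^{-B\sigma_*}$, a quantity which itself shrinks like $\sigma_*\log Q$. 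Thus the factor $\sigma_*\log Q$ is not a bonus; it is the entire point. One must prove a bound of the shape $2M\sum_{\rho\neq\rho_*}Q^{-B\sigma}\leq c(B)\,\lambda_2\,\sigma_*\log Q$ with $c(B)$ small for $B$ large, and then compare against $1-Q^{-B\sigma_*}\geq 1-e^{-x}$ with $x=\sigma_*\log Q\leq\lambda_1$, using that $(1-e^{-x})/x$ is bounded below on $(0,\lambda_1]$. This is exactly what the paper does, by first fixing a constant $\lambda_4$ with $e^{-x}+\lambda_4\lambda_2 x<1$ on $(0,\lambda_1)$ and then arranging $\sum_{\rho\neq\rho_*}Q^{-B\sigma}\leq\frac{\lambda_2\lambda_4}{4M}\,\sigma_*\log Q$. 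Without this step your choice of $B$ cannot yield \eqref{prop47.2}.

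In the unexceptional case your approach is essentially the paper's (a decomposition in $\sigma\log Q$, a log-free density bound per bucket, and a large absolute $B$), but the final display is false as written: for the bucket $k=1$ the estimate $Q^{-B\sigma}\leq e^{-B(k-1)}$ gives only $1$, so that bucket contributes up to $\lambda_2e^{\lambda_3}$, which does not decay in $B$ and certainly exceeds $\frac{1}{4M}$; correspondingly the claimed inequality $\lambda_2e^{\lambda_3 k}e^{-B(k-1)}\leq\lambda_2e^{\lambda_3}e^{-(B-\lambda_3-\varepsilon_0)k}$ fails at $k=1$. The repair is the one you half-state and then abandon: by the Landau--Page theorem (Proposition \ref{prop42}) every zero counted here satisfies $\sigma\log Q\geq\lambda_1$ with $\lambda_1>0$ absolute, so every single term obeys $Q^{-B\sigma}\leq e^{-B\lambda_1}$; using this for the first bucket and geometric decay for $k\geq2$ makes the whole sum tend to $0$ as $B\to\infty$ uniformly in $Q\geq10$ (the same lower bound applies to all $\rho\neq\rho_*$ in the exceptional case), so no separate treatment of bounded $Q$ is needed. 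This is exactly the role of the paper's condition $B\lambda_1>2^{k_0}$. So the unexceptional half is fixable with a sentence, but the exceptional half is missing the key idea.
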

\begin{proof}
Let $\lambda_4 \in (0,1)$ be a constant such that \begin{equation}\label{c4-choice} e^{-x} + \lambda_4 \lambda_2 x < 1\end{equation} for all $x$ with $0 < x < \lambda_1$ (a choice which works is $\lambda_4 := \min(\frac{1}{2\lambda_2}, \frac{1}{2\lambda_1\lambda_2})$).

\emph{Unexceptional case.}
Choose $k_0$ such that 
\[ \lambda_2 \sum_{k \geq k_0} e^{-2^{k-1}} \leq \frac{\lambda_4}{4M}.\]
Set 
\[ S_k := \{ \rho\in \Xi_Q(\textstyle\frac{1}{2}\displaystyle) : 2^k \leq B \sigma \log Q < 2^{k+1} \}.\] For all $\rho$, the fact that we are in the unexceptional case means that $\sigma \log Q \geq \lambda_1$, so with $B$ chosen suitably (that is $B \lambda_1 > 2^{k_0}$), $S_k$ is only nonempty for $k \geq k_0$. From \eqref{unexceptional} we have
\[ |S_k| \leq \lambda_2 Q^{\frac{\lambda_3 2^{k+1}}{B \log Q}} = \lambda_2 e^{\frac{\lambda_3 2^{k+1}}{B}}.\] Therefore
\begin{align*} \sum_{\rho \in \Xi_Q(\frac{1}{2})} Q^{-B \sigma} &  = \sum_{k \geq k_0} \sum_{\rho \in S_k} Q^{-B\sigma}   \leq \sum_{k \geq k_0} |S_k| e^{-2^k} \\ &  \leq \lambda_2 \sum_{k \geq k_0} e^{\frac{\lambda_3}{B}2^{k+1} - 2^k}  \leq \lambda_2 \sum_{k \geq k_0} e^{-\frac{2^k}{2}},\end{align*} which is at most $\frac{1}{4M}$ by the choice of $k_0$. Here, in the penultimate step, we also assumed that $B \geq 4\lambda_3$.

\emph{Exceptional case.} We proceed essentially as before, but now defining \[ S_k := \{\rho \neq \rho_* : 2^k \leq B\sigma \log Q < 2^{k+1}\}.\]
 Once again, if $B$ is chosen suitably, $S_k$ is only nonempty for $k \geq k_0$. From \eqref{exceptional}, we have
\[ |S_k| \leq \lambda_2 (\sigma_*  \log Q) Q^{\frac{\lambda_3 2^{k+1}}{B\log Q}} = \lambda_2 (\sigma_* \log Q) e^{\frac{\lambda_3}{B}  2^{k+1}}.\]  Much as before, 
\[ \sum_{\substack{ \rho \in \Xi_Q(\frac{1}{2}) \\ \rho \neq \rho_*}} Q^{-B\sigma}  \leq \lambda_2 (\sigma_* \log Q)  \sum_{k \geq k_0} e^{-\frac{2^k}{2}} \leq \frac{1}{4M}\lambda_2 \lambda_4 (\sigma_* \log Q).\] 
However, by the choice of $\lambda_4$, and with $x := \sigma_* \log Q \leq \lambda_1$ in \eqref{c4-choice},
\[ \lambda_2 \lambda_4 (\sigma_* \log Q)  < 1 - Q^{-\sigma_*} .\]
This confirms the desired inequality.\end{proof}

Let us finish by stating the consequence of all this that we will need in what follows. In the following, the exponents $\frac{1}{120}$ and $10^{-6}$ are what turn out to be convenient later; it would be possible to specify any positive values for them, at the expense of making $c$ smaller.

\begin{proposition}\label{sec4-takeaway} Let $M := 7 \cdot 3^6 \cdot 2^{22993}$. There is an absolute constant $c > 0$ with the following property. Suppose that $\sigmax \leq \frac{1}{8}$. Let $N$ be a large integer. Then there is $T$, $N^{c} \leq T \leq N^{\frac{1}{120}}$, such that if we list the zeros of $\Xi_T(\sigmax)$ \textup{(}with multiplicity\textup{)} in order of decreasing real part as $\rho_1,\dots, \rho_J$, \textup{(}at least\textup{)} one of the following holds, where $\sigma_j := 1 - \Re \rho_j$:
\begin{enumerate}
\item We have \begin{equation}\label{unexc-n-bd}  \sum_{j = 1}^J N^{-\frac{1}{16} \sigma_j} \leq \frac{1}{2M};\end{equation}
\item $\rho_1$ is a simple, real zero of a real character $\chi_1$ of conductor $q_1 \leq T^{10^{-6}}$, and
\begin{equation}\label{exc-n-bd} N^{-\frac{1}{16}\sigma_1} + 2M \sum_{j =2}^J N^{-\frac{1}{16}\sigma_j} \leq 1.\end{equation}
\end{enumerate}
Moreover, 
\begin{equation}\label{crude-J} J \ll T.\end{equation}
\end{proposition}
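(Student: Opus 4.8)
The plan is to combine the scale-selection result Proposition~\ref{two-scales-prop} with the convergence estimates in Proposition~\ref{convergent-zeros}, but with two adjustments: first, one must transfer from summing $Q^{-B\sigma}$ (as in Proposition~\ref{convergent-zeros}) to summing $N^{-\frac{1}{16}\sigma_j}$ (as the statement requires); second, one must deal with the fact that $\Xi_Q(\sigmax)$ counts zeros with $\sigma \leq \sigmax$ whereas Propositions~\ref{prop74}, \ref{prop75}, \ref{convergent-zeros} are stated for $\Xi_Q(\tfrac12)$ (but since $\sigmax \leq \tfrac18 < \tfrac12$, we have $\Xi_Q(\sigmax) \subseteq \Xi_Q(\tfrac12)$, so any bound on the larger multiset is automatically a bound on the smaller one, and we may simply work with $\Xi_Q(\sigmax)$ throughout). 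The key observation enabling the transfer is that if $Q \geq N^{c_0}$ for a suitable fixed $c_0$, then $Q^{-B\sigma} \geq N^{-B c_0^{-1} \cdot (\text{something})}$... more precisely, choosing $c_0$ so that $\frac{1}{16} \geq B c_0^{-1}$ would be backwards; instead, I want $N^{-\frac{1}{16}\sigma} \leq Q^{-B\sigma}$, which holds as soon as $\frac{1}{16}\log N \geq B \log Q$, i.e.\ $Q \leq N^{1/(16B)}$. So the scales $T$ produced by Proposition~\ref{two-scales-prop} must be required to satisfy $T \leq N^{1/(16B)}$, and the upper bound $N^{1/120}$ in the statement is a (more than) safe choice provided $B$ is at least, say, $120/16$, which we may assume (enlarging $B$ only weakens Proposition~\ref{convergent-zeros}'s hypotheses in our favour, since it is an ``$\leq 1$'' conclusion — actually one must recheck, but enlarging $B$ makes each term $Q^{-B\sigma}$ smaller, so the inequalities in Proposition~\ref{convergent-zeros} still hold).

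\textbf{Main steps.} First I would fix $\kappa := 10^{-6}$ and apply Proposition~\ref{two-scales-prop} with $X := N^{1/120}$ (legitimate for $N$ large, since then $X \geq 10^{1/\kappa}$), obtaining a scale $T$ equal to either $X = N^{1/120}$ or $X^{\kappa} = N^{10^{-6}/120}$; in either case $N^{c} \leq T \leq N^{1/120}$ for a suitable absolute $c > 0$ (take $c := 10^{-6}/120$, or smaller). Proposition~\ref{two-scales-prop} tells us that at scale $T$ we are in one of two situations: (i) all zeros in $\Xi_T(\tfrac12)$ — hence all of $\Xi_T(\sigmax)$ — satisfy $\sigma \geq \frac{\lambda_1\kappa}{2\log T}$, and in particular there is no exceptional zero at scale $T$ "in a useful sense" — here one must be slightly careful because ``first case'' of Proposition~\ref{two-scales-prop} does not literally exclude an exceptional zero, so I would split further: if there genuinely is no exceptional zero at scale $T$ in the sense of Proposition~\ref{prop42}, invoke the unexceptional case \eqref{prop47.1} of Proposition~\ref{convergent-zeros}; if there is an exceptional zero $\rho_*$ but $\sigma_* \geq \frac{\lambda_1\kappa}{2\log T}$, then $\rho_*$ is ``not really exceptional'' and I would fold it into the unexceptional sum by noting $T^{-B\sigma_*}$ is small and still bounded by the same geometric-series argument (alternatively just apply \eqref{prop47.2} and note $\rho_*$ contributes a genuinely tiny amount); or (ii) there is an exceptional zero at scale $T$ with conductor $q_* < T^{\kappa} = T^{10^{-6}}$, and then I invoke the exceptional case \eqref{prop47.2}. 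In case (i) [unexceptional] one gets $4M \sum_{\rho \in \Xi_T(\sigmax)} T^{-B\sigma} \leq 1$, hence $\sum_j N^{-\frac{1}{16}\sigma_j} \leq \sum_j T^{-B\sigma_j} \leq \frac{1}{4M} \leq \frac{1}{2M}$ (using $T \leq N^{1/(16B)}$), giving \eqref{unexc-n-bd}. In case (ii) [exceptional], \eqref{prop47.2} reads $T^{-B\sigma_1} + 2M\sum_{j\geq 2} T^{-B\sigma_j} \leq 1$; since $N^{-\frac{1}{16}\sigma_j} \leq T^{-B\sigma_j}$ for every $j$ (including $j=1$), we deduce $N^{-\frac{1}{16}\sigma_1} + 2M\sum_{j\geq 2} N^{-\frac{1}{16}\sigma_j} \leq 1$, which is \eqref{exc-n-bd}; and $\rho_1 = \rho_*$ is a simple real zero of a real character of conductor $q_1 = q_* < T^{10^{-6}}$ by construction, and it has the largest real part among all zeros in $\Xi_T(\sigmax)$ because the exceptional zero is by definition the closest to $s=1$ (this uses Proposition~\ref{prop42}: every other zero has $\sigma \geq \lambda_1/\log T$, while $\sigma_* < \lambda_1/\log T$, so $\rho_*$ indeed heads the list in order of decreasing real part — one should double check that in case (ii) we indeed know $\sigma_* < \lambda_1/\log T$, which holds since it is genuinely an exceptional zero at scale $T$). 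Finally, \eqref{crude-J}, namely $J \ll T$, is immediate from Proposition~\ref{prop74} (or Proposition~\ref{jut-est}) applied at $\alpha = 1 - \sigmax \geq \tfrac78$: $J = |\Xi_T(\sigmax)| \leq \#\{\rho \in \Xi_T(\tfrac12) : \Re\rho \geq 1-\sigmax\} \leq \lambda_2 T^{\lambda_3 \sigmax} \ll T$ provided $\lambda_3 \sigmax \leq 1$, i.e.\ $\sigmax \leq 1/\lambda_3$; since $\sigmax \leq \tfrac18$ and we may assume $\lambda_3 \geq 8$ (enlarging it if necessary, consistent with our earlier freedom to enlarge $B$), this holds — alternatively, just cite Proposition~\ref{jut-est} with its explicit exponent $8$, giving $J \ll T^{8\sigmax} \leq T$.

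\textbf{Main obstacle.} The only genuinely delicate point is the bookkeeping around the word ``exceptional'': Proposition~\ref{two-scales-prop}'s ``first case'' is slightly weaker than ``no exceptional zero at scale $T$'', as the remark after that proposition warns, so one cannot directly feed it into the dichotomy of Proposition~\ref{convergent-zeros}. The resolution is the three-way split described above — (a) no exceptional zero at all, use \eqref{prop47.1}; (b) an exceptional zero $\rho_*$ with $\sigma_* \geq \frac{\lambda_1\kappa}{2\log T}$, in which case one either uses \eqref{prop47.2} directly (the conclusion \eqref{exc-n-bd} then holds a fortiori, and one is free to declare oneself in ``case (2)'' of the Proposition provided $q_* < T^{10^{-6}}$; if instead $q_* \geq T^{10^{-6}}$ then one is in the ``$X^{\kappa} < q_* \leq X$'' branch of Proposition~\ref{two-scales-prop}'s proof, which only occurs at the scale $X$ and forces a drop to scale $X^{\kappa}$ where case (1) holds cleanly — so this subcase effectively cannot arise at the selected scale $T$); (c) the genuine exceptional case with $q_* < T^{10^{-6}}$. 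Tracking which scale ($X$ or $X^{\kappa}$) one ends up at, and checking that the conductor bound and the ``heads the list'' claim survive, is the part requiring care; everything else is a routine geometric-series comparison. The numerical exponents $\frac{1}{16}$, $\frac{1}{120}$, $10^{-6}$ are slack enough that no optimization is needed — one just fixes $B$ and $\lambda_3$ large enough at the outset (which is permissible, as both Proposition~\ref{convergent-zeros} and the zero-density bound only improve, or at worst stay valid, under such enlargement).
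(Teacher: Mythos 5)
Your overall skeleton (select a scale with Proposition \ref{two-scales-prop}, feed it into Proposition \ref{convergent-zeros}, transfer from $T^{-B\sigma}$ to $N^{-\frac{1}{16}\sigma}$, and get $J \ll T$ from the log-free density estimate) is the same as the paper's, but there is a genuine gap in your choice of scale. You fix $X := N^{1/120}$, yet the transfer inequality $N^{-\frac{1}{16}\sigma} \leq T^{-B\sigma}$ requires $T \leq N^{1/(16B)}$, where $B$ is the absolute constant produced by Proposition \ref{convergent-zeros}; that constant is forced to be large (it must exceed $4\lambda_3$ and satisfy $B\lambda_1 > 2^{k_0}$ in the proof of Proposition \ref{convergent-zeros}), and it cannot be assumed to be at most $120/16$. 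Your remark that one may ``enlarge $B$'' goes in the wrong direction: enlarging $B$ keeps Proposition \ref{convergent-zeros} true but makes the constraint $T \leq N^{1/(16B)}$ \emph{more} stringent, and is incompatible with the fixed scale $N^{1/120}$. The correct move, which is what the paper does, is to apply Proposition \ref{two-scales-prop} at $X := N^{b}$ with $b$ a small constant depending on $B$ (namely $b \leq \frac{1}{16B}$); this is precisely why the statement of Proposition \ref{sec4-takeaway} only guarantees $T \geq N^{c}$ for an unspecified small $c$, rather than $T$ being comparable to $N^{1/120}$.

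The second, related problem is your treatment of the subcase where Proposition \ref{two-scales-prop} puts you in its ``first case'' but there is still an exceptional zero $\rho_*$ in the sense of Proposition \ref{prop42}, with $\sigma_* \geq \frac{\lambda_1\kappa}{2\log T}$. You claim that when additionally $q_* \geq T^{10^{-6}}$ this subcase ``cannot arise at the selected scale'', but it can: if $T = X$ was selected because $\sigma_* \geq \frac{\lambda_1\kappa}{2\log X}$, nothing constrains $q_*$, which may be as large as $X$. In that subcase you cannot place yourself in conclusion (2) (the conductor is too big), so you must verify \eqref{unexc-n-bd}, which forces you to bound $N^{-\frac{1}{16}\sigma_*}$ by roughly $\frac{1}{4M}$ using only the lower bound $\sigma_* \geq \frac{\lambda_1\kappa}{2\log T}$. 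With your scale $T = N^{1/120}$ this term is about $e^{-\frac{120\lambda_1\kappa}{32}}$, essentially $1$ since $\kappa = 10^{-6}$, and nowhere near $\frac{1}{4M}$ with $M = 7\cdot 3^6\cdot 2^{22993}$. The paper resolves this by building the size of $M$ into the scale choice as well, taking $b := \min\bigl(\frac{1}{16B}, \frac{\lambda_1\kappa}{32\log(4M)}\bigr)$, so that $N^{-\frac{1}{16}\sigma_*} \leq e^{-\frac{\lambda_1\kappa}{32 b}} \leq \frac{1}{4M}$, and then applies \eqref{prop47.2} to the remaining zeros and adds the two bounds to recover \eqref{unexc-n-bd}. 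The rest of your argument (the genuine exceptional case via \eqref{prop47.2}, the observation that $\rho_*$ heads the list by the Landau--Page bound, and $J \ll T$ from Proposition \ref{jut-est} with $\sigmax \leq \frac18$) is sound, so the proof is repaired simply by making the scale a sufficiently small, $M$- and $B$-dependent power of $N$.
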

\begin{proof}
Set $b := \min(\frac{1}{16B}, \frac{\lambda_1 \kappa}{32 \log (4M)})$. Apply Proposition \ref{two-scales-prop} with $X := N^{b}$ and $\kappa := 10^{-6}$ and let $T = Q = X$ or $X^{\kappa}$ as in the conclusion of that proposition. Note that
\begin{equation}\label{t-bds} T \leq X = N^b \leq N^{1/16B}.\end{equation}

Then either (first case) we have 
\begin{equation}\label{first-case-lower-sig} \sigma_1 \geq \frac{\lambda_1 \kappa}{2 \log T} \geq \frac{\lambda_1 \kappa}{2 \log X}\end{equation} or (second case) there is an exceptional zero $\rho_1$ at scale $T$, with modulus $q_1 \leq T^{\kappa}$.

Now apply Proposition \ref{convergent-zeros} with $Q = T$. Suppose we are in the first case. If there is no exceptional zero at scale $T$ we apply \eqref{prop47.1}, which gives (using \eqref{t-bds}) 
\[ \sum_{j = 1}^J N^{-\frac{1}{16} \sigma_j} \leq \sum_{j = 1}^J T^{-B \sigma_j}  \leq \frac{1}{4M}.\]  If there is an exceptional zero at scale $T$ (which must be $\rho_1$) then we must instead apply \eqref{prop47.2}. Ignoring the term with the exceptional zero, as above this gives\ \begin{equation}\label{sum-j-2} \sum_{j = 2}^J N^{-\frac{1}{16}\sigma_j} \leq \frac{1}{4M}.\end{equation}
Moreover, by the lower bound \eqref{first-case-lower-sig} on $\sigma_1$ we have
\[ N^{-\frac{1}{16} \sigma_1} \leq N^{-\frac{\lambda_1 \kappa}{32 \log X}} = e^{-\frac{\lambda_1 \kappa }{32 b}} \leq \frac{1}{4M}\] by the choice of $b$.  Adding this to \eqref{sum-j-2} gives \eqref{unexc-n-bd} in this case as well.

Now suppose we are in the second case (of Proposition \ref{two-scales-prop}, thus $\rho_1$ is exceptional with modulus $q_1 \leq T^{\kappa}$). Applying \eqref{prop47.2} gives, using \eqref{t-bds} again,
\[ N^{-\frac{1}{16}\sigma_j} + 2M \sum_{j =2}^J N^{-\frac{1}{16}\sigma_j} \leq T^{-B\sigma_1} + 2M \sum_{j = 2}^J T^{-B\sigma_j}  \leq 1.\] This is \eqref{exc-n-bd}, so we have indeed shown that one of (1), (2) holds.

Finally, \eqref{crude-J} follows immediately from \eqref{unexceptional-jutila} and the assumption that $\sigmax \leq \frac{1}{8}$. 
\end{proof}
Finally, we will require the following well-known bound.
\begin{lemma}[Page's bound] \label{page-theorem} We have $\sigma_1 \gg q_1^{-1/2} (\log q_1)^{-2} \gg q_1^{-1}$.
\end{lemma}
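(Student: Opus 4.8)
\textbf{Plan of proof for Lemma \ref{page-theorem} (Page's bound).}

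The statement to prove is the classical lower bound $\sigma_1 \gg q_1^{-1/2}(\log q_1)^{-2}$ for the distance of an exceptional real zero from $s=1$, which trivially implies the weaker $\sigma_1 \gg q_1^{-1}$. Here $\chi_1$ is a real (quadratic) primitive character of conductor $q_1$, and $1-\sigma_1$ is the exceptional real zero of $L(s,\chi_1)$. The standard route is to exploit non-negativity of the arithmetic function $f = 1 \ast \chi_1$, i.e. $f(n) = \sum_{d\mid n}\chi_1(d) \geq 0$, with $f(n^2) \geq 1$, so that the Dirichlet series $\sum_n f(n) n^{-s} = \zeta(s) L(s,\chi_1)$ has a large, controllable partial sum. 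The plan is as follows.

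First I would write $G(s) := \zeta(s) L(s,\chi_1) = \sum_{n\geq 1} f(n) n^{-s}$, valid for $\Re s > 1$. Since $f \geq 0$ and $f(m^2)\geq 1$, for any $x \geq 2$ one has $\sum_{n\leq x} f(n) n^{-s} \geq \sum_{m \leq \sqrt{x}} m^{-2s} \gg_s 1$ for real $s \in (1/2,1)$, giving a genuine lower bound on truncations of $G$. Next I would obtain, by standard contour shifting (Perron's formula with the pole of $\zeta$ at $s=1$, the zero of $L(s,\chi_1)$ at $s = 1-\sigma_1$, and convexity/growth bounds for $\zeta(s)L(s,\chi_1)$ in the critical strip), an estimate of the shape
\[
\sum_{n\leq x} f(n) = x \cdot L(1,\chi_1) + O\!\left(x^{1-\sigma_1} \log q_1 + x^{1/2} q_1^{1/2+\epsilon}\right)
\]
or, more simply, a version of the approximate functional equation / truncated Perron estimate that shows $\sum_{n \leq x} f(n) n^{-(1-\sigma_1)}$ cannot be too large. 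The key point: the residue of $G$ at $s=1$ is $L(1,\chi_1)$, and Siegel/Dirichlet-type bounds give $L(1,\chi_1) \ll \log q_1$ and — crucially for the lower bound — the class number formula or a direct argument gives $L(1,\chi_1) \gg q_1^{-1/2}$ (for quadratic $\chi_1$; in fact $L(1,\chi_1) \asymp h(\cdot)/\sqrt{q_1}$, so $L(1,\chi_1) \geq c/\sqrt{q_1}$).

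The decisive step is to play the upper and lower bounds for a partial sum against each other near the zero. Because $L(1-\sigma_1,\chi_1)=0$, the mean value $\frac{1}{x}\sum_{n \leq x} f(n)$ essentially measures $\sigma_1 \log x$ times $L(1,\chi_1)$-type quantities when $x$ is a suitable power of $q_1$ (this is the usual ``zero forces cancellation'' mechanism: writing $L(s,\chi_1) = (s-(1-\sigma_1)) \cdot (\text{analytic})$ near the zero, the main term $x L(1,\chi_1)$ gets replaced by something of size $\ll \sigma_1 x (\log q_1)(\log x)$). Choosing $x \asymp q_1^{A}$ for a suitable absolute constant $A$ makes the error terms $x^{1/2} q_1^{1/2+\epsilon}$ and $x^{1-\sigma_1}\log q_1$ negligible against the genuine lower bound $\sum_{n\leq x} f(n) \gg \sqrt{x}$ coming from the squares. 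Comparing the two sides yields $\sigma_1 \cdot q_1^{-1/2} \cdot (\log q_1)(\log x) \gg 1/\sqrt{x} \cdot x = \sqrt x$... — more carefully, the clean comparison gives $L(1,\chi_1) \ll \sigma_1 \log^2 q_1$, and combined with $L(1,\chi_1) \gg q_1^{-1/2}$ this produces exactly $\sigma_1 \gg q_1^{-1/2}(\log q_1)^{-2}$. The weaker bound $\sigma_1 \gg q_1^{-1}$ is then immediate.

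\textbf{Main obstacle.} The technical heart — and the step I would expect to be most delicate — is obtaining the lower bound $L(1,\chi_1) \gg q_1^{-1/2}$ with a clean, reference-able argument, and coupling it correctly with the contour estimate so that the exceptional zero genuinely ``uses up'' the main term. The $L(1,\chi_1) \gg q_1^{-1/2}$ bound is elementary for quadratic characters (it follows from the class number being a positive integer, or from a direct pigeonhole/Polya–Vinogradov argument), but one must be careful that $\chi_1$ here is exactly a real primitive character so that these tools apply. The rest — Perron truncation, convexity bounds for $\zeta$ and $L(s,\chi_1)$, and the choice of $x$ — is entirely routine analytic number theory, and in practice one would simply cite a standard reference (e.g. Davenport, \emph{Multiplicative Number Theory}, or \cite[Ch.~14]{mv}) rather than reproduce it.
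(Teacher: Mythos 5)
The paper does not actually prove this lemma: it simply quotes it from \cite[Corollary 11.12]{mv} (and notes the constant could be made explicit). Your pointer to Chapter 14 is right for Davenport but the wrong chapter of \cite{mv}. That said, the skeleton of your plan — combine the upper bound $L(1,\chi_1)\ll\sigma_1\log^2 q_1$, forced by the real zero at $1-\sigma_1$, with the lower bound $L(1,\chi_1)\gg q_1^{-1/2}$ valid for real primitive characters — is exactly the classical argument behind that citation, so as a reconstruction of the standard proof it is sound in outline.

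Two cautions about your proposed implementation. First, the Perron/hyperbola machinery in the middle is the shakiest part, as your own garbled comparison display suggests. Playing the lower bound $\sum_{n\le x}f(n)\gg\sqrt{x}$ (with $f:=1\ast\chi_1$) against a main term $xL(1,\chi_1)$ produces an inequality in the wrong direction, and in any case the P\'olya--Vinogradov error terms are of size $\sqrt{xq_1}\log q_1$, which can never be made smaller than the $\sqrt{x}$ coming from the squares, for any choice of $x$. The clean route to $L(1,\chi_1)\ll\sigma_1\log^2 q_1$ is not via partial sums at all but via the mean value theorem on the real segment: $L(1,\chi_1)=L(1,\chi_1)-L(1-\sigma_1,\chi_1)=\sigma_1 L'(\xi,\chi_1)$ for some $\xi\in(1-\sigma_1,1)$, together with the standard estimate $|L'(\sigma,\chi_1)|\ll\log^2 q_1$ for $\sigma\ge 1-c/\log q_1$, after the harmless reduction to the case $\sigma_1\le c/\log q_1$ (otherwise the lemma is trivially true). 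Second, the bound $L(1,\chi_1)\gg q_1^{-1/2}$ does \emph{not} follow from a ``direct pigeonhole/P\'olya--Vinogradov argument'': elementary partial-sum arguments of that kind lose roughly a factor $q_1^{1/2}\log q_1$ and yield only far weaker lower bounds. The genuine input is Dirichlet's class number formula ($L(1,\chi_1)=2\pi h/(w\sqrt{|d|})$ in the imaginary case, $2h\log\varepsilon/\sqrt{d}$ in the real case) together with $h\ge 1$ and $\log\varepsilon\gg 1$ — the route you do mention first, and the one to use or cite if you were to write this out rather than, as the paper does, simply invoking \cite[Corollary 11.12]{mv}.
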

\begin{proof} See \cite[Corollary 11.12]{mv}. Note that the implied constant here can be made explicit if desired; one has Siegel's much stronger bound $\sigma_1 \gg_{\eps} q_1^{-\eps}$ for any $\eps > 0$ (see \cite[Theorem 11.14]{mv}), but here the implied constant is not explicit when $\eps < \frac{1}{2}$.\end{proof}

\section{Periodic functions and Ramanujan series}\label{rat-fourier-sec}

In this section we set out some basic nomenclature for periodic functions $f : \Z \rightarrow \C$ and for certain special periodic functions which we shall call Ramanujan series (following, for example, \cite{murty}, and inspired by \cite{ramanujan}), and we will prove some basic lemmas that will be needed later.

\subsection{Basic nomenclature and definitions} 

\emph{Periodic functions. Rational Fourier series.} The first key observation is that any periodic function $f : \Z \rightarrow \C$ has a (unique) \emph{rational Fourier series}, that is to say we may write
\begin{equation}\label{rat-fourier-def} f(n) = \sum_{\lambda \in \Q/\Z} f^{\wedge}(\lambda) e(\lambda n)\end{equation} for certain $f^{\wedge}(\lambda) \in \C$, which we call the rational Fourier coefficients of $f$, and with this sum being supported on just finitely many $\lambda$. To see the existence of such a series, simply observe that
\begin{equation}\label{char-progression} 1_{n \equiv b \mdsub{r}} = \frac{1}{r} \sum_{x \in \Z/r\Z} e\big(-\frac{xb}{r}\big) e\big(\frac{xn}{r}\big).\end{equation}  Any periodic function may be written as a finite linear combination of basic functions like this.\vspace*{8pt}

\emph{Norms, support and averages.} If $f$ is a periodic function with rational Fourier coefficients $f^{\wedge}(\lambda)$ then we write
\[ \Vert f \Vert^{\wedge}_{\infty} := \max_{\lambda} |f^{\wedge}(\lambda)| \quad \mbox{and} \quad \Vert f \Vert^{\wedge}_1 := \sum_{\lambda} |f^{\wedge}(\lambda)|.\]
We also define the support $\support(f)$ to be the largest denominator of any $\lambda$ for which $f^{\wedge}(\lambda) \neq 0$ (when $\lambda$ is written in lowest terms).

If $f :\Z \rightarrow \C$ is a periodic function then we write $\Av f$ for the average of $f$ (which is well-defined, either as the average of $f$ over any period, or as $\lim_{X \rightarrow \infty} X^{-1} \sum_{x = 1}^X f(x)$).\vspace*{8pt}

We have the orthogonality relation that $\Av e(\lambda n) = 1_{\lambda = 0}$, and so from \eqref{rat-fourier-def} we see that $f^{\wedge}(\lambda) = \Av f(n) e(-\lambda n)$. In particular the rational Fourier expansion of a function is unique, and also
\begin{equation}\label{fourier-avg} \Vert f \Vert^{\wedge}_{\infty} \leq \Av |f|.\end{equation}

\emph{Positivity.} We will say that a periodic function $f$ is \emph{Fourier-positive} if $f^{\wedge}$ is real and non-negative. If $f,g: \Z \rightarrow \C$ are two periodic functions then we write $g \prec f$ to mean that $|g^{\wedge}(\lambda)| \leq f^{\wedge}(\lambda)$ for all $\lambda \in \Q/\Z$. In particular, $f$ must be Fourier-positive for this notation to apply. We will use this notation fairly extensively throughout the paper.

A simple example of a Fourier-positive function is, for any fixed $r$, the function $f(n) := 1_{r | n}$, which is Fourier-positive by \eqref{char-progression} (with $b = 0$). Note that for this function we have
\begin{equation}\label{simple-ell1-div}\Vert 1_{r| n}  \Vert^{\wedge}_{\infty} = \frac{1}{r}, \quad  \Vert 1_{r | n}  \Vert^{\wedge}_1 = 1, \quad \support(1_{r | n}) = r.\end{equation}

\emph{Ramanujan series.}  We say that a periodic function $f$ in the form \eqref{rat-fourier-def} is a \emph{Ramanujan series} if $f^{\wedge}(\lambda)$ is supported where $\denom(\lambda)$ is squarefree, and if it depends only on $\denom(\lambda)$. Equivalently, we have an expansion
\begin{equation}\label{ram-definition} f(n) = \sum_{q = 1}^{\infty} \alpha(q) \sum_{a \in (\Z/q\Z)^*} e\big(\frac{an}{q}\big) = \sum_{q = 1}^{\infty} \alpha(q) c_q(n),\end{equation} supported on finitely many squarefree $q$. 

\subsection{Simple properties}

Here are some simple properties of positivity which we will use several times. 

\begin{proposition}\label{simple-pos-properties}
We have the following statements concerning periodic functions from $\Z$ to $\C$.
\begin{enumerate}
\item If $F, G$ are Fourier-positive then so are $F + G$, $FG$ and $t F$ for $t \in \R_{\geq 0}$;
\item If $(F_i)_{i \in I}$ is a finite collection of Fourier-positive functions then any non-negative linear combination $\sum_{i \in I} t_i F_i$, $t_i \in \R_{\geq 0}$, is Fourier-positive;
\item If $F \prec G$ then $\overline{F} \prec \overline{G}$ and $tF \prec t G$ for $t \in \R_{\geq 0}$;
\item If $F'_1 \prec F_1$ and $F'_2 \prec F_2$ then $F'_1 + F'_2 \prec F_1 + F_2$ and $F'_1F'_2 \prec F_1 F_2$;
\item If $F' \prec F$ and if $G$ is Fourier-positive then $F' G \prec FG$. In particular, this holds if $G(n) = e(\lambda n)$.
\item If $F$ has some integer $m$ as a period, $F^{\wedge}$ is supported on rationals $\lambda$ with $\denom(\lambda) | m$;
\item If $F', F$ have $m$ as a period, then $F' \prec F$ if and only if
\[ \bigg|\sum_{n \in \Z/m\Z} F'(n) e\big(-\frac{a n}{m}\big)\bigg| \leq \sum_{n \in \Z/m\Z} F(n) e\big(-\frac{a n}{m}\big)\] for all $a \in \Z/m\Z$.
\end{enumerate}
\end{proposition}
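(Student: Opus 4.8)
The plan is to reduce both sides of the claimed equivalence to one and the same statement about the rational Fourier coefficients $(F')^{\wedge}$ and $F^{\wedge}$, evaluated at the finitely many rationals whose denominator divides $M$, and then to observe that the two conditions are literally the same after scaling.

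First I would invoke two facts recorded just before the proposition: the rational Fourier expansion is unique, so that $f^{\wedge}(\lambda) = \Av f(n) e(-\lambda n)$, and by part (6) a function with period $M$ has $f^{\wedge}$ supported on those $\lambda \in \Q/\Z$ with $\denom(\lambda) \mid M$. Every such $\lambda$ can be written uniquely as $a/M$ with $a \in \Z/M\Z$, the correspondence $a \mapsto a/M$ being a bijection onto that set. Since $F(n) e(-\frac{a n}{M})$ depends only on $n$ modulo $M$, averaging over $\Z$ coincides with averaging over any complete residue system modulo $M$, so
\[ F^{\wedge}(a/M) = \Av F(n) e(-\tfrac{a n}{M}) = \frac{1}{M} \sum_{n \in \Z/M\Z} F(n) e(-\tfrac{a n}{M}), \]
and the same identity holds with $F'$ in place of $F$.

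With this dictionary in hand, the definition of $F' \prec F$ — that $|(F')^{\wedge}(\lambda)| \leq F^{\wedge}(\lambda)$ for all $\lambda \in \Q/\Z$ — reduces to requiring $|(F')^{\wedge}(a/M)| \leq F^{\wedge}(a/M)$ for every $a \in \Z/M\Z$, because by part (6) both transforms vanish off the set of $\lambda$ with $\denom(\lambda) \mid M$, where the inequality $0 \leq 0$ is automatic. Multiplying through by the positive constant $M$ and substituting the displayed formula turns this, for each $a$, into exactly $|\sum_{n \in \Z/M\Z} F'(n) e(-\frac{a n}{M})| \leq \sum_{n \in \Z/M\Z} F(n) e(-\frac{a n}{M})$; reading the chain of equivalences in either direction then gives the proposition. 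I would also note in passing that the stated inequality, carrying a modulus on its left-hand side, already forces $\sum_{n \in \Z/M\Z} F(n) e(-\frac{a n}{M})$ to be a non-negative real for each $a$, consistent with the Fourier-positivity of $F$ that the notation $F' \prec F$ presupposes.

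I do not expect a genuine obstacle: the argument is an unwinding of definitions. The two points that warrant a moment's care are that part (6) is precisely what guarantees it suffices to test the finitely many frequencies $a/M$ (no Fourier mass escapes to larger denominators), and that the normalising factor $M$ is the same and strictly positive on both sides, so multiplying or dividing by it preserves the inequality.
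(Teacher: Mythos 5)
Your argument for the final item is correct and is essentially the paper's own: one identifies $\sum_{n \in \Z/M\Z} F(n) e(-\frac{an}{M})$ with $M F^{\wedge}(\frac{a}{M})$, notes via item (6) that no Fourier mass lives at denominators not dividing $M$, and reads the two conditions as the same inequality up to the positive factor $M$. That part needs no repair.

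The genuine gap is that item (7) is all you have proved. The statement comprises seven assertions, and your proposal is silent on (1)--(5) and treats (6) as a known fact to be ``invoked'' even though it is itself part of what is to be shown (it follows quickly from the expansion $1_{n \equiv b \mdsublem{M}} = \frac{1}{M}\sum_{x} e(-\frac{xb}{M})e(\frac{xn}{M})$ together with uniqueness of the rational Fourier series, but that step must be recorded). More importantly, the items carrying real content are (4) and (5), the stability of $\prec$ under sums and products: these are used repeatedly later in the paper and their proof is not pure bookkeeping. For the product statement one needs the convolution identity
\[ (b_1 b_2)^{\wedge}(\lambda) = \sum_{\substack{\lambda_1, \lambda_2 \in \Q/\Z \\ \lambda_1 + \lambda_2 = \lambda \mdsublem{1}}} b_1^{\wedge}(\lambda_1) b_2^{\wedge}(\lambda_2), \]
after which the triangle inequality and the hypotheses $|F_i^{\prime\wedge}| \leq F_i^{\wedge}$ give $|(F_1'F_2')^{\wedge}(\lambda)| \leq (F_1F_2)^{\wedge}(\lambda)$; item (5) then follows by decomposing a Fourier-positive $G$ into non-negative multiples of characters $e(\lambda n)$ and checking the single-character case directly. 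Item (3) also needs the small observation $\overline{F}^{\wedge}(\lambda) = \overline{F^{\wedge}(-\lambda)}$ together with the fact that $G^{\wedge}$ is real and non-negative. Until these items are supplied, the proposal does not prove the proposition as stated.
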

\begin{proof}
(1) and (2) are straightforward. 
For the first statement in (3), note that $\overline{F}^{\wedge}(\lambda) = \overline{F^{\wedge}(-\lambda)}$, so
\[ |\overline{F}^{\wedge}(\lambda)| = | F^{\wedge}(-\lambda)| \leq G^{\wedge}(-\lambda) = \overline{G^{\wedge}(-\lambda)} = \overline{G}^{\wedge}(\lambda),\] where the last step holds since $G^{\wedge}$ is real-valued (since $F \prec G$). The second statement in (3) is clear. For (4), we use the formula \begin{equation}\label{prod-bs}(b_1b_2)^{\wedge}(\lambda) = \sum_{\substack{\lambda_1, \lambda_2 \in \Q/\Z \\ \lambda_1 + \lambda_2 = \lambda \mdsub{1}}} b_1^{\wedge}(\lambda_1)b_2^{\wedge}(\lambda_2),\end{equation} valid for any pair of periodic functions $b_1, b_2 : \Z \rightarrow \C$. Applying this first with $b_i = F'_i$ and then with $b_i = F_i$, we obtain
\begin{align*}
|(F'_1 F'_2)^{\wedge}(\lambda) | & \leq \sum_{\substack{\lambda_1, \lambda_2 \in \Q/\Z \\ \lambda_1 + \lambda_2 = \lambda \mdsub{1}}} |F_1^{\prime \wedge}(\lambda_1)||F_2^{\prime \wedge}(\lambda_2)| \\ & \leq \sum_{\substack{\lambda_1, \lambda_2 \in \Q/\Z \\ \lambda_1 + \lambda_2 = \lambda \mdsub{1}}} F_1^{\wedge}(\lambda_1)F_2^{\wedge}(\lambda_2) = (F_1F_2)^{\wedge}(\lambda),
\end{align*}
which is the multiplicative part of (4). The additive part of (4) is straightforward. For (5), it suffices in view of (3) and (4) and the fact that any Fourier-positive $G$ is a non-negative linear combination of functions of the form $e(\lambda n)$ to check the case $G(n) = e(\lambda n)$. To see this, observe that $|(F'G)^{\wedge}(\xi)| = |(F')^{\wedge}(\xi - \lambda)| \leq F(\xi - \lambda) = (FG)^{\wedge}(\xi)$, for all $\xi \in \Q/\Z$. Item (6) follows from \eqref{char-progression} and the uniqueness of rational Fourier series. Finally, for (7) we note that $\sum_{n \in \Z/m\Z} F(n) e(-\frac{an}{m}) = m F^{\wedge} (\frac{a}{m})$, so under the stated assumption we have $|(F')^{\wedge}(\frac{a}{m})| \leq F^{\wedge}(\frac{a}{m})$ for all $a \in \Z/m\Z$. By item (6), $|(F')^{\wedge}(\lambda)| \leq F^{\wedge}(\lambda)$ for all $\lambda \in \Q/\Z$. 
\end{proof}

Next, some simple properties concerning norms and the support.

\begin{lemma}\label{norm-shift}
Suppose that $f : \Z \rightarrow \C$ is a periodic function. Then the norms $\Vert f \Vert^{\wedge}_1$, $\Vert f \Vert^{\wedge}_{\infty}$ are invariant under shifts: in particular $\Vert f^+ \Vert^{\wedge}_{\bullet} = \Vert f \Vert^{\wedge}_{\bullet} = \Vert f^{-} \Vert^{\wedge}_{\bullet}$ for both norms, where $f^+(n) = f(n+1)$ and $f^{-}(n) = f(n - 1)$.
\end{lemma}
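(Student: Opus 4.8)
The plan is to compute the rational Fourier coefficients of the shifted functions $f^+$ and $f^-$ directly in terms of those of $f$, and observe that they differ only by a unimodular phase. Starting from the rational Fourier expansion $f(n) = \sum_{\lambda \in \Q/\Z} f^{\wedge}(\lambda) e(\lambda n)$ from \eqref{rat-fourier-def}, I would substitute $n+1$ and $n-1$ for $n$ to obtain
\[ f^+(n) = \sum_{\lambda \in \Q/\Z} f^{\wedge}(\lambda) e(\lambda) e(\lambda n), \qquad f^-(n) = \sum_{\lambda \in \Q/\Z} f^{\wedge}(\lambda) e(-\lambda) e(\lambda n). \]
Both are again finitely supported rational Fourier expansions, so by uniqueness of such expansions (which follows from the orthogonality relation $\Av e(\lambda n) = 1_{\lambda = 0}$, as recorded after \eqref{rat-fourier-def}) we may read off $(f^+)^{\wedge}(\lambda) = e(\lambda) f^{\wedge}(\lambda)$ and $(f^-)^{\wedge}(\lambda) = e(-\lambda) f^{\wedge}(\lambda)$ for every $\lambda \in \Q/\Z$.

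Since $|e(\pm \lambda)| = 1$ for all $\lambda$, this gives $|(f^{\pm})^{\wedge}(\lambda)| = |f^{\wedge}(\lambda)|$ pointwise in $\lambda$. Taking the supremum over $\lambda$ yields $\Vert f^{\pm}\Vert^{\wedge}_{\infty} = \Vert f \Vert^{\wedge}_{\infty}$, and summing over the (finitely many) $\lambda$ with $f^{\wedge}(\lambda) \neq 0$ yields $\Vert f^{\pm}\Vert^{\wedge}_1 = \Vert f \Vert^{\wedge}_1$. This is exactly the assertion of the lemma for both norms.

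There is essentially no obstacle: the only step needing a word of justification is the uniqueness of the rational Fourier expansion, which is already established in the text. If desired, the same computation shows more generally that $(f(\cdot + k))^{\wedge}(\lambda) = e(k\lambda) f^{\wedge}(\lambda)$ for any $k \in \Z$, so both norms are invariant under arbitrary integer translations; the statement for $f^+$ and $f^-$ is the case $k = \pm 1$.
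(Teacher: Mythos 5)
Your proposal is correct and follows exactly the paper's route: one computes $(f^{\pm})^{\wedge}(\lambda) = e(\pm\lambda) f^{\wedge}(\lambda)$ from the rational Fourier expansion and notes that the unimodular factor leaves both norms unchanged. You merely spell out the uniqueness step and the final norm comparison, which the paper leaves implicit.
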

\begin{proof}
This follows immediately from $(f^\pm)^{\wedge}(\lambda) = e(\pm\lambda) f^{\wedge}(\lambda)$, which follows from the definition \eqref{rat-fourier-def}.
\end{proof}

\begin{lemma}\label{period-fourier}
Suppose that $b_1, b_2 : \Z \rightarrow \C$ are periodic functions. Then 
\begin{equation}\label{b1b2-ell1} \Vert b_1 b_2 \Vert^{\wedge}_{1} \leq \Vert b_1 \Vert^{\wedge}_1 \Vert b_2 \Vert^{\wedge}_1.\end{equation}
\begin{equation}\label{b1b2-noncoprime} \Vert b_1 b_2 \Vert^{\wedge}_{\infty} \leq \Vert b_1 \Vert^{\wedge}_{\infty} \Vert b_2 \Vert^{\wedge}_{1}\end{equation} If $b_1, b_2$ have coprime periods then we have the equality
\begin{equation}\label{b1b2-coprime} \Vert b_1 b_2 \Vert^{\wedge}_{\infty} = \Vert b_1 \Vert^{\wedge}_{\infty} \Vert b_2 \Vert^{\wedge}_{\infty}.\end{equation}
Finally, we have 
\begin{equation}\label{support-prod} \support (b_1 b_2) \leq \support(b_1) \support(b_2).\end{equation}
\end{lemma}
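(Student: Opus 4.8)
The plan is to derive all four statements directly from the convolution identity \eqref{prod-bs}, which says $(b_1 b_2)^{\wedge}(\lambda) = \sum_{\lambda_1 + \lambda_2 = \lambda} b_1^{\wedge}(\lambda_1) b_2^{\wedge}(\lambda_2)$, the sum being over all $\lambda_1, \lambda_2 \in \Q/\Z$ with $\lambda_1 + \lambda_2 \equiv \lambda \md 1$ (only finitely many terms are nonzero). For \eqref{b1b2-ell1}, one applies the triangle inequality inside this identity and then sums over all $\lambda \in \Q/\Z$; interchanging the order of summation, the resulting double sum factors as $\big(\sum_{\lambda_1} |b_1^{\wedge}(\lambda_1)|\big)\big(\sum_{\lambda_2} |b_2^{\wedge}(\lambda_2)|\big) = \Vert b_1 \Vert^{\wedge}_1 \Vert b_2 \Vert^{\wedge}_1$. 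For \eqref{b1b2-noncoprime}, fix $\lambda$, bound $|b_1^{\wedge}(\lambda_1)| \leq \Vert b_1 \Vert^{\wedge}_{\infty}$ in every term of the convolution, and note that the remaining sum $\sum_{\lambda_1} |b_2^{\wedge}(\lambda - \lambda_1)|$ is exactly $\Vert b_2 \Vert^{\wedge}_1$; taking the maximum over $\lambda$ gives the claim.

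For the support bound \eqref{support-prod}, suppose $(b_1 b_2)^{\wedge}(\lambda) \neq 0$. Then there exist $\lambda_1, \lambda_2$ with $\lambda_1 + \lambda_2 = \lambda$ and $b_1^{\wedge}(\lambda_1), b_2^{\wedge}(\lambda_2)$ both nonzero, so $\denom(\lambda_1) \leq \support(b_1)$ and $\denom(\lambda_2) \leq \support(b_2)$. Since $[\denom(\lambda_1),\denom(\lambda_2)]\,(\lambda_1 + \lambda_2) \in \Z$, we have $\denom(\lambda) \mid [\denom(\lambda_1), \denom(\lambda_2)]$, which is at most $\denom(\lambda_1)\denom(\lambda_2) \leq \support(b_1)\support(b_2)$; as this holds for every $\lambda$ with $(b_1 b_2)^{\wedge}(\lambda) \neq 0$, the bound follows.

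The step requiring a little more care — the closest thing to an obstacle here — is the coprime case \eqref{b1b2-coprime}. Here I would invoke item (6) of Proposition \ref{simple-pos-properties}: if $b_1$ has period $M_1$ and $b_2$ has period $M_2$ with $(M_1, M_2) = 1$, then $b_1^{\wedge}$ is supported on $\lambda_1$ with $\denom(\lambda_1) \mid M_1$ and $b_2^{\wedge}$ on $\lambda_2$ with $\denom(\lambda_2) \mid M_2$. By the Chinese Remainder Theorem, the addition map $(\lambda_1, \lambda_2) \mapsto \lambda_1 + \lambda_2$ is a bijection from $\{\lambda_1 : \denom(\lambda_1) \mid M_1\} \times \{\lambda_2 : \denom(\lambda_2) \mid M_2\}$ onto $\{\lambda : \denom(\lambda) \mid M_1 M_2\}$. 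Consequently, in the convolution \eqref{prod-bs} at most one term survives, namely $(b_1 b_2)^{\wedge}(\lambda) = b_1^{\wedge}(\lambda_1) b_2^{\wedge}(\lambda_2)$ for the unique admissible pair $(\lambda_1, \lambda_2)$ summing to $\lambda$ (and this holds for all $\lambda$, since $(b_1 b_2)^{\wedge}$ is supported where $\denom(\lambda) \mid M_1 M_2$, by item (6) applied to $b_1 b_2$). Hence $|(b_1 b_2)^{\wedge}(\lambda)| = |b_1^{\wedge}(\lambda_1)|\,|b_2^{\wedge}(\lambda_2)|$, and maximising over $\lambda$ — equivalently, over the two coordinates independently — yields $\Vert b_1 b_2 \Vert^{\wedge}_{\infty} = \Vert b_1 \Vert^{\wedge}_{\infty}\Vert b_2 \Vert^{\wedge}_{\infty}$; finiteness of the supports guarantees the maxima are attained.
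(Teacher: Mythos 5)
Your proof is correct and takes essentially the same route as the paper: the bounds \eqref{b1b2-ell1}, \eqref{b1b2-noncoprime} and \eqref{support-prod} are read off directly from the convolution identity \eqref{prod-bs}, and the coprime-period equality \eqref{b1b2-coprime} is obtained, exactly as in the paper, from Proposition \ref{simple-pos-properties} (6) together with the observation (your CRT bijection, the paper's clearing of denominators) that the sums $\lambda_1 + \lambda_2$ with $\denom(\lambda_i) \mid M_i$ are pairwise distinct, so at most one term survives in the convolution at each frequency. Your additional remarks on why the maximum is attained and why equality (not just $\leq$) follows are consistent with, and slightly more explicit than, the paper's brief treatment.
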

\begin{proof}
The bounds \eqref{b1b2-ell1}, \eqref{b1b2-noncoprime} are immediate from \eqref{prod-bs}. 
For \eqref{b1b2-coprime}, suppose that the periods of $b_1, b_2$ are $m_1, m_2$ respectively, and that $m_1, m_2$ are coprime. By Proposition \ref{simple-pos-properties} (6), $b_i^{\wedge}(\lambda_i)$ is supported where $\denom(\lambda_i)$ divides $m_i$. Now if $\frac{a_1}{m_1} + \frac{a_2}{m_2} \equiv \frac{a'_1}{m_1} + \frac{a'_2}{m_2} \md{1}$ then, clearing denominators and using the coprimality of $m_1, m_2$, we see that $a_1 \equiv a'_1 \md{m_1}$ and $a_2 \equiv a'_2 \md{m_2}$, so $\frac{a_1}{m_1} \equiv \frac{a'_1}{m_1} \md{1}$ and $\frac{a_2}{m_2} \equiv \frac{a'_2}{m_2} \md{1}$. It follows that, in \eqref{prod-bs}, all the sums $\lambda_1 + \lambda_2$ with $\lambda_i \in \Supp(b_i^{\wedge})$ are distinct, and so for each $\lambda$ there is at most one choice of $\lambda_1, \lambda_2$ with $b_1^{\wedge}(\lambda_1), b_2^{\wedge}(\lambda_2) \neq 0$, and so \eqref{b1b2-coprime} follows. Finally, \eqref{support-prod} is clear from \eqref{prod-bs}. \end{proof}

We remark on the case of Dirichlet characters, which will arise several times in the paper.

\begin{lemma}\label{gauss-periodic}
Suppose that $\chi$ is a primitive Dirichlet character to modulus $q$. Then $\Vert \chi \Vert^{\wedge}_{\infty} \leq q^{-\frac{1}{2}}$ and $\Vert \chi \Vert^{\wedge}_1 \leq q^{\frac{1}{2}}$.
\end{lemma}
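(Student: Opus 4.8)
The plan is to write down the rational Fourier expansion of $\chi$ (viewed as a $q$-periodic function $\Z \to \C$, extended by zero off $(\Z/q\Z)^*$) in closed form, and then simply read off $\Vert \chi \Vert^{\wedge}_\infty$ and $\Vert \chi \Vert^{\wedge}_1$ from the coefficients. The only input needed is the classical Gauss-sum identity for primitive characters: if $\chi$ is primitive to modulus $q$ then $\sum_{a \in (\Z/q\Z)^*} \chi(a) e(\frac{an}{q}) = \overline{\chi}(n)\tau(\chi)$ for \emph{all} $n \in \Z$, where the coprimality restriction on $n$ is unnecessary precisely because $\chi$ is primitive (this is the same fact already invoked in the proof of Lemma \ref{completed-forms}, via \cite[Section 3.4]{ik}; both sides vanish when $(n,q) > 1$).

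Applying this identity with $\overline{\chi}$ in place of $\chi$ (which is again primitive to modulus $q$, with $\overline{\overline{\chi}} = \chi$) and dividing through by $\tau(\overline{\chi})$ — legitimate since $|\tau(\overline{\chi})|^2 = q$ by \eqref{gauss-square}, so in particular $\tau(\overline{\chi}) \neq 0$ — yields
\[ \chi(n) = \sum_{a \in (\Z/q\Z)^*} \frac{\overline{\chi}(a)}{\tau(\overline{\chi})}\, e\Big(\frac{an}{q}\Big). \]
By uniqueness of the rational Fourier expansion, this says that $\chi^{\wedge}(\lambda) = 0$ unless $\lambda = \frac{a}{q}$ with $a \in (\Z/q\Z)^*$, in which case $\chi^{\wedge}(\frac{a}{q}) = \overline{\chi}(a)/\tau(\overline{\chi})$, and hence $|\chi^{\wedge}(\frac{a}{q})| = q^{-1/2}$.

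From this the two bounds are immediate: $\Vert \chi \Vert^{\wedge}_\infty = q^{-1/2}$, and $\Vert \chi \Vert^{\wedge}_1 = \sum_{a \in (\Z/q\Z)^*} q^{-1/2} = \phi(q) q^{-1/2} \leq q^{1/2}$, as claimed. There is essentially no obstacle here; the one point deserving care is the justification for dropping the condition $(n,q) = 1$ in the Gauss-sum identity, which is exactly where primitivity of $\chi$ enters (and without which the displayed closed form, and hence both estimates, would fail).
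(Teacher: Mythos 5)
Your proof is correct and follows essentially the same route as the paper: both rest on the Gauss-sum identity $\sum_{a \in (\Z/q\Z)^*}\chi(a)e(\frac{an}{q}) = \overline{\chi}(n)\tau(\chi)$ valid for all $n$ by primitivity, together with $|\tau(\chi)| = q^{1/2}$, to see that the rational Fourier coefficients of $\chi$ are supported on fractions $\frac{a}{q}$ with $(a,q)=1$ and have modulus exactly $q^{-1/2}$. The paper packages this as Fourier inversion on $\Z/q\Z$ followed by evaluating the coefficients via the Gauss sum, while you divide the completed identity by $\tau(\overline{\chi})$; the computations are the same.
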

\begin{proof} The second statement follows from the first, because $\chi$ is $q$-periodic and so its rational Fourier series is supported on rationals with denominator $q$. By Fourier analysis on $\Z/q\Z$ we have $\chi(n) = \sum_{a \in \Z/q\Z} \gamma(a) e(an/q)$, where $\gamma(a) := \sum_{x \in \Z/q\Z} \chi(x) e(-ax/q)$. By \cite[equation (3.12)]{ik} and the remarks following it, $\gamma(a) = \overline{\chi}(a) \tau(\chi)$, where $\tau(\chi)$ is the Gauss sum, which has modulus $\sqrt{q}$ by \cite[Lemma 3.1]{ik}. The first statement of the lemma follows.
\end{proof}

Let us also record basic bounds on the Fourier norms of the functions $F_{\chi, Q}$ and $\Lambda_Q$, which are crucial in the paper.

\begin{lemma}\label{f-chi-ell1-lem}
Let $\chi$ be a primitive character to modulus $q$. We have
\begin{equation}\label{f-chi-ell1} \Vert F_{\chi, Q} \Vert^{\wedge}_1 \leq Q q^{\frac{1}{2}}.\end{equation}
In particular, 
\begin{equation}\label{lam-q-ell1} \Vert \Lambda_Q \Vert^{\wedge}_1 \leq Q.\end{equation}
\end{lemma}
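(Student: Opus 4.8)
The plan is to bound $\Vert F_{\chi,Q}\Vert^{\wedge}_1$ directly from the definition \eqref{fourier-trunc} of $F_{\chi,Q}$, together with the pointwise bound \eqref{cj-pointwise} on the coefficients $c_\chi(b,r)$. Recall that
\[ F_{\chi, Q}(n) = \sum_{\substack{q \mid r \\ r/q \leq Q}} \sum_{b \in (\Z/r\Z)^*} c_{\chi}(b,r) e(\tfrac{bn}{r}), \]
and, crucially, $c_\chi(b,r)$ is supported where $r/q$ is squarefree and coprime to $q$. Since these summands are pure exponentials $e(bn/r)$ with $b \in (\Z/r\Z)^*$ — i.e. $b/r$ in lowest terms has denominator exactly $r$ — the different $(b,r)$ pairs give distinct frequencies in $\Q/\Z$, so no cancellation or combination occurs in forming the rational Fourier series, and $\Vert F_{\chi,Q}\Vert^{\wedge}_1$ is exactly $\sum_{q\mid r,\, r/q\leq Q} \sum_{b\in(\Z/r\Z)^*} |c_\chi(b,r)|$.

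The second step is the arithmetic estimate. Using \eqref{cj-pointwise}, namely $|c_\chi(b,r)| \leq q^{1/2}/\phi(r)$, and noting there are $\phi(r)$ values of $b$, the inner sum over $b$ contributes at most $q^{1/2}$ for each admissible $r$. Writing $r = qk$ with $k \leq Q$ squarefree and coprime to $q$, there are at most $Q$ such values of $k$, so $\Vert F_{\chi,Q}\Vert^{\wedge}_1 \leq q^{1/2} \cdot Q$, which is \eqref{f-chi-ell1}. (One could even restrict to $k$ squarefree coprime to $q$ for a slightly better constant, but the stated bound needs nothing of the sort.) Then \eqref{lam-q-ell1} follows by taking $\chi = \chi_0$ the principal character, for which $q = \cond(\chi_0) = 1$, so $\Vert \Lambda_Q \Vert^{\wedge}_1 = \Vert F_{\chi_0,Q}\Vert^{\wedge}_1 \leq Q$; this uses the identification \eqref{hb-link}.

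There is essentially no obstacle here — the only point requiring a moment's care is the observation that the frequencies $b/r$ appearing in \eqref{fourier-trunc} are genuinely distinct elements of $\Q/\Z$ (so that $\Vert \cdot\Vert^{\wedge}_1$ is literally the sum of $|c_\chi(b,r)|$ rather than something requiring the triangle inequality over overlapping frequencies). This holds because $b \in (\Z/r\Z)^*$ forces $\denom(b/r) = r$, so a frequency $b/r$ determines $r$ and then $b$; hence the bound is in fact an equality before we apply \eqref{cj-pointwise}. Even if one did not wish to verify distinctness, the triangle inequality gives the same upper bound, so the proof is robust. I would therefore write it in two lines: apply the triangle inequality (or the distinctness observation) to reduce to $\sum |c_\chi(b,r)|$, then plug in \eqref{cj-pointwise} and count.

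\begin{proof}
Recall from \eqref{fourier-trunc} that
\[ F_{\chi, Q}(n) = \sum_{\substack{ q \mid r \\ r/q \leq Q}} \sum_{b \in (\Z/r\Z)^*} c_{\chi}(b,r) e(\tfrac{bn}{r}), \]
and that $c_\chi(b,r)$ vanishes unless $q \mid r$ and $r/q$ is squarefree and coprime to $q$. Each summand is $c_\chi(b,r)$ times the exponential $e(bn/r)$, whose frequency $b/r \in \Q/\Z$ has denominator exactly $r$ (as $b \in (\Z/r\Z)^*$); distinct pairs $(b,r)$ therefore give distinct frequencies, so by uniqueness of the rational Fourier expansion
\[ \Vert F_{\chi, Q} \Vert^{\wedge}_1 = \sum_{\substack{ q \mid r \\ r/q \leq Q}} \sum_{b \in (\Z/r\Z)^*} |c_{\chi}(b,r)|. \]
(In any case, the triangle inequality gives ``$\leq$'' here, which is all we need.) By \eqref{cj-pointwise} we have $|c_\chi(b,r)| \leq q^{1/2}/\phi(r)$, and the inner sum has $\phi(r)$ terms, so each admissible $r$ contributes at most $q^{1/2}$. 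Writing $r = qk$, the values of $r$ that occur satisfy $k \leq Q$, so there are at most $Q$ of them. Hence $\Vert F_{\chi,Q}\Vert^{\wedge}_1 \leq Q q^{1/2}$, which is \eqref{f-chi-ell1}.

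For \eqref{lam-q-ell1}, take $\chi = \chi_0$ the primitive principal character, of conductor $q = 1$. By \eqref{hb-link} we have $F_{\chi_0, Q} = \Lambda_Q$, so \eqref{f-chi-ell1} gives $\Vert \Lambda_Q \Vert^{\wedge}_1 = \Vert F_{\chi_0,Q}\Vert^{\wedge}_1 \leq Q$.
\end{proof}
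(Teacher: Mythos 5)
Your proof is correct and follows essentially the same route as the paper: bound $|c_\chi(b,r)|$ by $q^{1/2}/\phi(r)$ via the Gauss sum evaluation, sum over the $\phi(r)$ values of $b$ and the at most $Q$ admissible $r$, and then specialise to the principal character using \eqref{hb-link}. The extra observation about distinctness of frequencies is fine but unnecessary, since the triangle inequality already gives the required upper bound.
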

\begin{proof}
From the definition \eqref{c-def} and the evaluation \eqref{gauss-square} for Gauss sums, we have the bound $|c_{\chi}(b,r)| \leq q^{\frac{1}{2}} \phi(r)^{-1}$. Recall the definition \eqref{fourier-trunc} of $F_{\chi, Q}$. Summing over the $\phi(r)$ values of $b \in (\Z/r\Z)^*$, and then summing over all $r$ such that $q \mid r$ and $\frac{r}{q} \leq Q$, the result follows.

The second statement \eqref{lam-q-ell1} is immediate upon taking $\chi = \chi_0$, the prinicipal character, and recalling \eqref{hb-link}.
\end{proof}

\subsection{$\vp , \wp$ and their properties} The following two types of periodic function will play an important role in the paper.
For $p$ a prime, write (as in \eqref{vp-definition})
 \begin{equation}\label{vw-def} \vp (n) := \left\{ \begin{array}{ll} 0 & \mbox{if $p \mid n$} \\ \frac{p}{p-1} & \mbox{if $(p,n) = 1$} \end{array}\right. \end{equation}
 and
 \begin{equation}\label{tau-sq-def} \wp(n) := \left\{ \begin{array}{ll} \frac{4p}{p+3} & \mbox{if $p \mid n$} \\ \frac{p}{p+3} & \mbox{if $(p,n) = 1$}. \end{array}\right.  \end{equation}
 The definitions are chosen so $\Av \vp  = \Av \wp = 1$. 
 
The function $\Lambda_{\Z/p\Z}$, which is somewhat standard, was already briefly discussed in Section \ref{sec6}. The notation $\tau^2_{\Z/p\Z}$ is not standard; this is supposed to remind the reader that this function behaves somewhat like a $\md{p}$ analogue of the squared divisor function $\tau^2$, as discussed in subsection \ref{subsec3.1}, albeit normalised to have average value $1$.

It is easily checked that the rational Fourier series of $\vp , \wp $ are 
\begin{equation}\label{v-Fourier} \vp (n) = 1 -  \frac{1}{p-1} \sum_{r \in (\Z/p\Z)^*} e\big(\frac{rn}{p}\big) = 1 - \frac{1}{p-1} c_p(n)\end{equation}
and
\begin{equation}\label{w-Fourier} \wp (n) = 1 + \frac{3}{p+3} \sum_{r \in (\Z/p\Z)^*} e\big(\frac{rn}{p}\big) = 1 + \frac{3}{p+3} c_p(n),\end{equation} so in particular both are Ramanujan series. Moreover, $\wp $ is Fourier-positive, though $\vp $ is not. 

Let us also note the following for future use.

\begin{lemma}\label{fourier-vw} We have 
\begin{equation}\label{vp-four} \Vert \vp  \Vert^{\wedge}_{\infty} = 1,\qquad \Vert \vp  \Vert^{\wedge}_1 = 2\end{equation}
and
\begin{equation}\label{wp-four} \Vert \wp  \Vert^{\wedge}_{\infty} = 1, \qquad \Vert \wp  \Vert^{\wedge}_1 \leq 4.\end{equation}
\end{lemma}

The following simple lemma plays a critical role in our paper. Here, recall, we use the notation $f^+(n) = f(n+1)$, $f^-(n) = f(n - 1)$.

\begin{lemma}\label{lem43}
For every prime $p$, $\vp ^+ \vp^- \wp $ is Fourier-positive. Also, we have the bound
\begin{equation}\label{vpvpwp-bd} \Vert \vp ^+ \vp ^- \wp  \Vert^{\wedge}_{\infty} \leq 1 + O(p^{-2}).\end{equation}
\end{lemma}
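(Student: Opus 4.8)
The plan is to compute the product $\vp^+\vp^-\wp$ explicitly by reducing to the residue $n \md p$, since all three factors are periodic with period $p$. There are (for $p \geq 5$) exactly three relevant cases: $n \equiv 0 \md p$, $n \equiv \pm 1 \md p$, and $n \not\equiv 0, \pm 1 \md p$. When $n \equiv 1 \md p$ we have $\vp^-(n) = \vp(n-1) = 0$, and similarly $\vp^+(n) = 0$ when $n \equiv -1 \md p$, so the product vanishes on $n \equiv \pm 1 \md p$. When $n \equiv 0 \md p$, both $\vp^+(n)$ and $\vp^-(n)$ equal $\frac{p}{p-1}$ and $\wp(n) = \frac{4p}{p+3}$, giving the value $\big(\frac{p}{p-1}\big)^2 \frac{4p}{p+3}$. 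On the remaining residues, $\vp^+ = \vp^- = \frac{p}{p-1}$ and $\wp = \frac{p}{p+3}$, giving $\big(\frac{p}{p-1}\big)^2 \frac{p}{p+3}$. Thus, writing $\nu(0) = 4$, $\nu(\pm 1) = 0$, $\nu(a) = 1$ otherwise (exactly the weight $\nu$ described heuristically in subsection \ref{subsec3.1}), we have
\[ \vp^+(n)\vp^-(n)\wp(n) = \Big(\tfrac{p}{p-1}\Big)^2 \tfrac{p}{p+3}\, \nu(n \bmod p). \]

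The key point is then that the function $n \mapsto \nu(n \bmod p)$ is Fourier-positive. To see this, note that the indicator $1_{n \equiv 0 \md p}$ has non-negative rational Fourier coefficients (all equal to $\frac{1}{p}$) by \eqref{char-progression}, and $\nu(n \bmod p) = 1 + 3 \cdot 1_{n \equiv 0 \md p} - \big(1_{n \equiv 1 \md p} + 1_{n \equiv -1 \md p}\big)$. The combination $1_{n \equiv 1 \md p} + 1_{n \equiv -1 \md p}$ has rational Fourier coefficient at $\frac{a}{p}$ equal to $\frac{1}{p}\big(e(-\frac{a}{p}) + e(\frac{a}{p})\big) = \frac{2}{p}\cos(2\pi a/p)$, which is real but can be negative; however, combining everything, the coefficient of $\nu(n \bmod p)$ at $\frac{a}{p}$ (for $a \not\equiv 0$) is $\frac{1}{p}\big(3 - 2\cos(2\pi a/p)\big) \geq \frac{1}{p} > 0$, and the coefficient at $0$ is $1 + \frac{3}{p} - \frac{2}{p} = 1 + \frac{1}{p} > 0$. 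Hence $\nu(n \bmod p)$ is Fourier-positive, and so is $\vp^+\vp^-\wp$ by Proposition \ref{simple-pos-properties}(1) (multiplication by the positive scalar $\big(\frac{p}{p-1}\big)^2\frac{p}{p+3}$). For $p = 2, 3$ one checks the (even simpler) cases directly, noting that $\pm 1$ coincide modulo $2$ and modulo $3$; the conclusion still holds.

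For the bound \eqref{vpvpwp-bd}, observe that $\Vert \vp^+\vp^-\wp \Vert^{\wedge}_\infty$ is attained at $\lambda = 0$, since the function is Fourier-positive (its largest Fourier coefficient is its average, by \eqref{fourier-avg} — or simply because a Fourier-positive function attains its $\ell^\infty$-Fourier-norm at $0$). So $\Vert \vp^+\vp^-\wp \Vert^{\wedge}_\infty = \Av(\vp^+\vp^-\wp) = \big(\frac{p}{p-1}\big)^2\frac{p}{p+3} \cdot \frac{1}{p}\big(4 + 0 + 0 + (p-3)\big) = \big(\frac{p}{p-1}\big)^2\frac{p}{p+3}\cdot\frac{p+1}{p}$. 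Expanding, $\big(\frac{p}{p-1}\big)^2 = 1 + O(p^{-1})$... more precisely one checks $\big(\frac{p}{p-1}\big)^2\frac{p(p+1)}{p(p+3)} = \frac{p^2(p+1)}{(p-1)^2(p+3)}$, and a direct expansion shows this equals $1 + O(p^{-2})$ (the $O(p^{-1})$ terms cancel: the numerator is $p^3 + p^2$ and the denominator is $p^3 + p^2 - 5p + 3$, differing by $5p - 3 = O(p)$ relative to a cube, giving a ratio $1 + O(p^{-2})$). This gives \eqref{vpvpwp-bd}. I do not expect any real obstacle here; the only mild care needed is the small-prime cases $p \in \{2,3\}$ and keeping the arithmetic in the final expansion honest enough to see the $O(p^{-1})$ term genuinely vanishes.
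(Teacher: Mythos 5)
Your proposal is correct and follows essentially the same route as the paper: reduce modulo $p$ to see that $\vp^+\vp^-\wp$ is a positive multiple of the weight $\nu$ taking values $4,0,0,1,\dots$, compute the discrete Fourier coefficients $\frac{1}{p}(3-2\cos\frac{2\pi a}{p})$ and $1+\frac{1}{p}$, and deduce both positivity and the bound $1+O(p^{-2})$ (the paper bounds the norm by $\max(1+\frac1p,\frac5p)$ times the scaling factor rather than evaluating it exactly, but this is the same argument). Two harmless slips to tidy: the parenthetical ``a Fourier-positive function attains its $\ell^\infty$-Fourier-norm at $0$'' is false in general (consider $e(\lambda n)$) and should instead be justified by the pointwise non-negativity of $\vp^+\vp^-\wp$ together with \eqref{fourier-avg}, and $\pm 1$ do \emph{not} coincide modulo $3$ (though your main three-case computation in fact covers $p=3$ verbatim, so only $p=2$ genuinely needs the separate check).
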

\begin{proof} Set $u_p := (1 - \frac{1}{p})^2 (1 + \frac{3}{p}) \vp ^+ \vp ^- \wp $. Then one may check that 
 \[ u_p(n) = \left\{ \begin{array}{ll} 0 & \mbox{if $n \equiv \pm 1 \md{p}$} \\ 4 & \mbox{if $n \equiv 0 \md{p}$} \\ 1& \mbox{otherwise} .\end{array}\right.  \]
 If $p \neq 2$ we may compute the discrete Fourier transform $\widehat{u}_p(k) := p^{-1} \sum_{n \in \Z/p\Z} u_p(n)e(-kn/p)$ as follows:
 \begin{equation}\label{u-fourier} \widehat{u}_p(k) = \left\{ \begin{array}{ll} \frac{1}{p}(3 - 2 \cos \frac{2\pi k}{p}) & k \neq 0 \\ 1 + \frac{1}{p}  & k = 0.\end{array}  \right.\end{equation}
Note that $\widehat{u}_p(k)$ is real and positive, indeed $\widehat{u}_p(0) =1 + \frac{1}{p}$ and $\widehat{u}_p(k) \geq \frac{1}{p}$.

By Fourier inversion in $\Z/p\Z$, 
 \[ u_p(n) = 1 + \frac{1}{p} + \frac{1}{p}\sum_{k \neq 0} \big(3 - 2 \cos \frac{2\pi k}{p} \big) e\big(\frac{kn}{p}\big),\] and so $u_p$ is Fourier-positive.
 When $p=2$ one may check that $\widehat{u}_2(0) = \widehat{u}_2(1) = 2$ and so these inequalities also hold. 
 
 Multiplying through by $(1 - \frac{1}{p})^{-2}(1 + \frac{3}{p})^{-1}$, we see that 
 \[ \Vert  \vp ^+ \vp ^- \wp  \Vert^{\wedge}_{\infty}  \leq \big(1 - \frac{1}{p}\big)^{-2}\big(1 + \frac{3}{p}\big)^{-1}\max\big(1 + \frac{1}{p}, \frac{5}{p}\big) = 1 + O(p^{-2}),\]
 which is \eqref{vpvpwp-bd}. 
 \end{proof}

Products of the $\vp $ and of the $\wp $ will be important in the paper. In fact, we have already seen $\prod_{p \leq R} \vp (n)$; it is the same thing as $\tilde\Lambda_R(n)$, the normalised characteristic function of the $R$-almost primes, as defined in \eqref{tilde-lam-def}. Let us record this observation here explicitly:

\begin{equation}\label{eq818} \tilde\Lambda_R(n) = \prod_{p \leq R} \vp (n).\end{equation}

The product of the $\wp $ is not something we have seen yet, but it and its truncation are equally important in the paper. 

\begin{definition} \label{h-tilde-def} Let $R$ be a parameter. Then we define
\[ \tilde H_R(n) := \prod_{p \leq R} \wp (n),\] where $\wp $ is defined as in \eqref{tau-sq-def}.
\end{definition}

By \eqref{w-Fourier} and the multiplicativity of Ramanujan sums, $\tilde H_R(n)$ is a Ramanujan series

\[   \tilde H_R(n) = \sum_{q | R!} \eta(q) \sum_{a \in (\Z/q\Z)^*} e\big(\frac{an}{q}\big).\]
Here, for $q \mid R!$ we have 

\[ \eta(q) := \left\{ \begin{array}{ll} \prod_{p | q} \frac{3}{p+3}  & \mbox{if $q$ is squarefree} \\ 0 & \mbox{otherwise}.\end{array} \right. \]

We also introduce the truncated version of $\tilde H$.

\begin{definition}\label{hq-definition}  Let $Q$ be a parameter. Then we define
\[ H_Q(n) :=  \sum_{q \leq Q} \eta(q) \sum_{a \in (\Z/q\Z)^*} e\big(\frac{an}{q}\big).\]
\end{definition}
This is an important function in the paper, as described in Subsection \ref{subsec3.1}. We have $\eta(q) \leq \frac{3^{\omega(q)}}{q} \leq q^{-1 + o(1)}$, where $\omega(q)$ is the number of distinct prime factors of $q$. Therefore we have the bound
\begin{equation}\label{hq-ell1-crude} \Vert H^{\wedge}_Q \Vert_1 \leq Q^{1 + o(1)}.\end{equation}

\subsection{A class of periodic functions}

It turns out that the functions $\Lambda_Q$ and $H_Q$ belong to a certain class of periodic functions on $\Z$ which has nice closure properties under shifts and pointwise multiplication. This fact will be important later in the paper.

\begin{definition}\label{cbx-def}
Let $X \geq 1$ be an integer and $B \geq 0$ a real parameter. Then we denote by $\mathscr{C}_B(X)$ the class of periodic $f : \Z \rightarrow \C$ such that $f^{\wedge}(\lambda)$ is supported on $\lambda$ with $\denom(\lambda)$ squarefree and at most $X$, and such that if $q = \denom(\lambda)$ then $|f^{\wedge}(\lambda)|  \leq \tau(q)^B/q$.
\end{definition}

\begin{lemma}\label{h-lam-nice-class}
We have $\Lambda_Q, H_Q \in \mathscr{C}_2(Q)$.
\end{lemma}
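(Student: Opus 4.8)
The claim is that $\Lambda_Q$ and $H_Q$ both lie in $\mathscr{C}_2(Q)$, i.e.\ each is a Ramanujan series supported on squarefree $q\le Q$ whose $q$-th Fourier coefficient is bounded by $\tau(q)^2/q$. The plan is simply to unpack the definitions of the two functions and verify the coefficient bound directly; the ``squarefree and $\le Q$'' part is immediate in both cases.

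\emph{The function $H_Q$.} By Definition \ref{hq-definition}, $H_Q(n) = \sum_{q\le Q}\eta(q)c_q(n)$, so its rational Fourier coefficient at $\lambda = a/q$ (with $(a,q)=1$) equals $\eta(q)$. By definition $\eta(q)$ is supported on squarefree $q$, and there $\eta(q) = \prod_{p\mid q}\frac{3}{p+3} \le \prod_{p\mid q}\frac{3}{p} = \frac{3^{\omega(q)}}{q}$. For squarefree $q$ we have $\tau(q) = 2^{\omega(q)}$, hence $\tau(q)^2 = 4^{\omega(q)} \ge 3^{\omega(q)}$, so $\eta(q) \le \tau(q)^2/q$. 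Thus $H_Q\in\mathscr{C}_2(Q)$.

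\emph{The function $\Lambda_Q$.} By \eqref{lambda-q-def}, $\Lambda_Q(n) = \sum_{q\le Q}\frac{\mu(q)}{\phi(q)}c_q(n)$, so its Fourier coefficient at $a/q$ with $(a,q)=1$ is $\frac{\mu(q)}{\phi(q)}$, which is supported on squarefree $q\le Q$. There I need $\bigl|\frac{\mu(q)}{\phi(q)}\bigr| = \frac{1}{\phi(q)} \le \frac{\tau(q)^2}{q}$, i.e.\ $\frac{q}{\phi(q)} \le \tau(q)^2$. For squarefree $q$, $\frac{q}{\phi(q)} = \prod_{p\mid q}\frac{p}{p-1} \le \prod_{p\mid q}2 = 2^{\omega(q)} = \tau(q) \le \tau(q)^2$ (using $p/(p-1)\le 2$ for all primes $p$). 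Hence $\Lambda_Q\in\mathscr{C}_2(Q)$ as well.

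\emph{Anticipated difficulty.} There is essentially no obstacle here: the only thing to watch is matching the Fourier coefficient of a Ramanujan series $\sum_q \alpha(q)c_q(n)$ to the coefficient at a reduced fraction $a/q$ — namely it is exactly $\alpha(q)$, independent of $a$ — and then performing the two elementary estimates $\frac{q}{\phi(q)}\le 2^{\omega(q)}$ and $3^{\omega(q)}\le 4^{\omega(q)}$ for squarefree $q$. I would present this as a short direct computation for each of the two functions.
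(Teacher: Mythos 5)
Your proof is correct and follows essentially the same route as the paper: read off the Ramanujan-series coefficients $\mu(q)/\phi(q)$ and $\eta(q)$ (supported on squarefree $q\le Q$) and verify the bound $\le \tau(q)^2/q$ by the elementary estimates $\frac{1}{\phi(q)}\le \prod_{p\mid q}\frac{2}{p}=\frac{\tau(q)}{q}$ and $3^{\omega(q)}\le 4^{\omega(q)}=\tau(q)^2$ for squarefree $q$. No issues.
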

\begin{proof}
For $\Lambda_Q$ (defined in \eqref{lambda-q-def}), this is just a case of checking that $|\frac{\mu(q)}{\phi(q)}| \leq \frac{\tau(q)^2}{q}$, which is true by some distance since $\frac{1}{\phi(q)} = \prod_{p | q} \frac{1}{p-1} \leq \prod_{p | q} \frac{2}{p} = \frac{\tau(q)}{q}$.

For $H_Q$, we must show that $\eta(q) \leq \tau(q)^2/q$. Noting that when $q$ is squarefree we have $\prod_{p | q} 3 = \tau(q)^{\frac{\log 3}{\log 2}} < \tau(q)^2$ gives this bound immediately.
\end{proof}

It is clear from the definition that $\mathscr{C}_B(X)$ is closed under shifts, that is to say if $f(n) \in \mathscr{C}_B(X)$ then $f(n + h) \in \mathscr{C}_B(X)$. Slightly less obvious is the fact that these classes are (weakly) closed under taking products, which is the content of Lemma \ref{sum-closure} below. Here, for $\lambda \in \R_{> 0}$ we write $\lambda \mathscr{C}_B(X)$ for those series of the form $\lambda f$, $f \in \mathscr{C}_B(X)$.

\begin{lemma}\label{sum-closure}
Suppose that $f_1 \in \mathscr{C}_{B_1}(X_1)$ and $f_2 \in \mathscr{C}_{B_2}(X_2)$. Then $f_1 f_2 \in (\log^{O_{B_1, B_2}(1)} X_1)\mathscr{C}_{B_1 + 2B_2 + 3}(X_1X_2)$.
\end{lemma}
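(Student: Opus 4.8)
The plan is to work directly with the rational Fourier expansions of $f_1$ and $f_2$ and control the coefficients of the product via the convolution formula \eqref{prod-bs}. First I would fix $\lambda \in \Q/\Z$ with $\denom(\lambda) = q$; I need to show that $q$ is squarefree and $\leq X_1 X_2$, and that $|(f_1 f_2)^{\wedge}(\lambda)|$ is at most $(\log^{O(1)} X_1) \tau(q)^{B_1 + 2 B_2 + 3}/q$. By \eqref{prod-bs}, $(f_1 f_2)^{\wedge}(\lambda) = \sum_{\lambda_1 + \lambda_2 \equiv \lambda} f_1^{\wedge}(\lambda_1) f_2^{\wedge}(\lambda_2)$, and the only surviving terms have $\denom(\lambda_1) = q_1$ squarefree, $\leq X_1$, and $\denom(\lambda_2) = q_2$ squarefree, $\leq X_2$. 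A standard fact is that if $\lambda_1 + \lambda_2 = \lambda$ then $\denom(\lambda) \mid [q_1, q_2]$ and moreover $[q_1, q_2] \mid \denom(\lambda) \cdot (q_1, q_2)$; since $q_1, q_2$ are squarefree, $[q_1, q_2]$ is squarefree, hence $q = \denom(\lambda)$ is squarefree (it divides a squarefree number), and $q \leq [q_1, q_2] \leq q_1 q_2 \leq X_1 X_2$. That handles the support conditions.

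For the size bound, I would parametrize the pairs $(\lambda_1, \lambda_2)$ contributing to a fixed $\lambda$. Write $d = (q_1, q_2)$; then $q_1 = d a$, $q_2 = d b$ with $(a,b) = 1$ and $[q_1, q_2] = d a b$. The key combinatorial point is that for fixed $q_1, q_2$ there are at most $d = (q_1,q_2)$ pairs $(\lambda_1, \lambda_2)$ with those denominators summing to a given $\lambda$ (clearing denominators, the indeterminacy is exactly a choice modulo $(q_1,q_2)$). Using $|f_1^{\wedge}(\lambda_1)| \leq \tau(q_1)^{B_1}/q_1$ and $|f_2^{\wedge}(\lambda_2)| \leq \tau(q_2)^{B_2}/q_2$, each term is at most $\tau(q_1)^{B_1} \tau(q_2)^{B_2}/(q_1 q_2) = \tau(da)^{B_1}\tau(db)^{B_2}/(d^2 ab)$. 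Summing over the at most $d$ pairs gives a bound $\tau(da)^{B_1}\tau(db)^{B_2}/(d\, ab)$ for the contribution of each $(q_1, q_2)$, i.e. of each triple $(d, a, b)$ with $dab \mid \ell$ where $\ell = [q_1,q_2]$; and one must sum this over all valid $(d,a,b)$. Note $dab$ is a squarefree multiple of $q$, and writing $dab = q m$ we have $m$ squarefree and coprime-ish to... well, $m \mid \ell/q$ which divides $(q_1,q_2)$-type data; the cleanest route is to just sum over all squarefree $\ell$ with $q \mid \ell$, $\ell \leq X_1 X_2$, and over factorizations $\ell = dab$.

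The main estimate then reduces to bounding $\sum_{\ell} \frac{1}{\ell/q} \cdot (\text{divisor-power factors}) \cdot \frac{1}{q}$ where $\ell$ ranges over squarefree multiples of $q$. Writing $\ell = qm$ with $m$ squarefree and $(m, ?)$ — actually $m$ need not be coprime to $q$ a priori, but $\ell$ squarefree forces $(m,q)=1$ — we get $\tau(da)^{B_1}\tau(db)^{B_2} \leq \tau(q_1 q_2)^{B_1 + B_2} \leq \tau(q)^{B_1+B_2}\tau(m)^{2(B_1+B_2)}\cdot(\text{const})$ using $q_1 q_2 = qm \cdot (q_1,q_2)$ and submultiplicativity/the crude bound $\tau(q_1 q_2) \le \tau(q_1)\tau(q_2)$; together with the number of factorizations $\ell = dab$ being $\ll \tau(\ell)^{O(1)} \leq \tau(q)^{O(1)}\tau(m)^{O(1)}$, everything of the form $\tau(m)^{O_{B_1,B_2}(1)}/m$ sums over squarefree $m \leq X_1 X_2$ to $\ll \log^{O_{B_1,B_2}(1)}(X_1 X_2)$; and since $X_2$ may be absorbed (one can assume $X_2 \leq X_1^{O(1)}$ or simply note $\log(X_1X_2) \ll_{?} \log X_1$ fails in general — so actually I should be careful: the statement only gives $\log^{O(1)} X_1$, so I need the sum over $m$ to really be $\ll \log^{O(1)} X_1$, which requires a dyadic decomposition argument or the observation that $m \mid \ell/q$ and $\ell \le X_1 X_2$ but the $\tau(m)^{O(1)}/m$ sum is genuinely $\ll \log^{O(1)}(X_1X_2)$; the resolution, as in the paper's phrasing, is that the $X_2$ factor contributes only through $\tau(q_2)^{B_2} \le \tau(X_2)^{B_2}$-type terms absorbed into the $\tau(q)$ power after noting $q_2 \mid \ell$, so the free sum is over $m$ with $qm \le X_1 X_2$ but weighted so that only $\log X_1$ powers survive — I would handle this by splitting according to $q_1 \le X_1$ and summing the $q_2 \le X_2$ part into the exponent $B_1 + 2B_2 + 3$ rather than into a logarithm). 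Carefully bookkeeping the exponents — each factor of $q$ in $q_1 q_2$, the factorization count, and the convolution multiplicity all contribute $\tau(q)$-powers that must total $B_1 + 2B_2 + 3$ — is the one genuinely fiddly point; I expect the main obstacle to be exactly this exponent accounting, making sure the $f_2$-side data is absorbed into the divisor-power factor (costing $2B_2$, plus a $+3$ of slack for the multiplicity and factorization counts) while only the $f_1$-side free variable produces the $\log^{O_{B_1,B_2}(1)} X_1$.
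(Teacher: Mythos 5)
Your support verification and your multiplicity claim are both fine: for fixed squarefree $q_1,q_2$ and fixed $\lambda$, writing $g=(q_1,q_2)$, $q_1=gu$, $q_2=gv$ with $(u,v)=1$, the relation $\frac{a_1}{q_1}+\frac{a_2}{q_2}\equiv\lambda$ forces $a_1$ into a single residue class modulo $u$ and then determines $a_2$ uniquely, so there are at most $g$ pairs. This is true, but it needs that little argument (it plays exactly the role that Lemma \ref{denom-lem} plays in my proof), not just the phrase ``clearing denominators''. Note also that admissibility of the pair $(q_1,q_2)$ forces $uv\mid q\mid guv$, which is the structural fact you need in the next step.

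The genuine gap is the final summation, which you flagged yourself but did not resolve. As you set things up, the free sum runs over squarefree $\ell=[q_1,q_2]\le X_1X_2$ with $q\mid\ell$, which would only give $\log^{O(1)}(X_1X_2)$; since nothing relates $X_2$ to $X_1$, this is not $\ll\log^{O(1)}X_1$, and your proposed patches (``assume $X_2\le X_1^{O(1)}$'', ``$\tau(q_2)^{B_2}\le\tau(X_2)^{B_2}$'') are not legitimate: no such hypothesis on $X_2$ is available, and the latter inequality is meaningless. The missing observation is that $m:=\ell/q=g/g'$ (where $g'=q/uv$) divides $g$ and hence divides $q_1$, so $m\le X_1$; moreover $u,v,g'$ are divisors of $q$, giving at most $\tau(q)^2$ choices of that data. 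Bounding $\tau(q_1)^{B_1}\le\tau(q)^{B_1}\tau(m)^{B_1}$ and $\tau(q_2)^{B_2}\le\tau(q)^{B_2}\tau(m)^{B_2}$ separately (your chain through $\tau(q_1q_2)^{B_1+B_2}\le\tau(q)^{B_1+B_2}\tau(m)^{2(B_1+B_2)}$ is false as stated --- test $q_1=q_2=q$ squarefree with many prime factors --- and its corrected form costs $\tau(q)^{2(B_1+B_2)}$, overshooting the stated exponent once $B_1>3$), and using $guv=qm$ to produce the $1/(qm)$, the total becomes at most $\tau(q)^{B_1+B_2+2}q^{-1}\sum_{m\le X_1}\tau(m)^{B_1+B_2}/m\ll\tau(q)^{B_1+B_2+2}q^{-1}\log^{O_{B_1,B_2}(1)}X_1$ (the $m$-sum by Lemma \ref{to-rankin} with $h=1$), which is comfortably within the stated class $\mathscr{C}_{B_1+2B_2+3}(X_1X_2)$. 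For comparison, my proof never sums over $q_2$ at all: fixing $\lambda_1=a_1/q_1$ determines $\lambda_2$, whose denominator $d$ divides $q_1q$; Lemma \ref{denom-lem} bounds the number of $a_1$ producing a given $d$ by $d(q_1,q)/q$, and the single unbounded sum over $q_1\le X_1$ is handled by Rankin's trick (Lemma \ref{to-rankin}) --- which is why only powers of $\log X_1$ appear there automatically.
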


To prove this result we will need a counting result, Lemma \ref{denom-lem} below, about denominators of sums of fractions (when written in lowest terms). 

Suppose that $q, r$ are squarefree and that $(a,q) = (b, r) = 1$, and set
\[ d := \denom\big(\frac{a}{q} + \frac{b}{r}\big) = \frac{qr}{(ar + bq, qr)}.\]
Then $\frac{qr}{(q,r)^2} \mid d$ and $d  \mid [q,r]$ (and in particular $d$ is squarefree).

\begin{lemma}\label{denom-lem}
Suppose that $q, r$ are squarefree and that $b \in (\Z/r\Z)^*$. Then
\[ \# \{ a \in (\Z/q\Z)^* : \denom\big(\frac{a}{q} + \frac{b}{r}\big) = d\} \leq \frac{d(q,r)}{r}.\]
\end{lemma}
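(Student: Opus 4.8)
The plan is to compute the denominator $d=\denom(\frac{a}{q}+\frac{b}{r})$ explicitly in terms of $(a,q)=(b,r)=1$ and then reduce the counting problem to a congruence condition on $a$. Writing $g:=(q,r)$, $q=gq'$, $r=gr'$ with $(q',r')=1$, and using the formula $d=\frac{qr}{(ar+bq,qr)}$ quoted just before the lemma, the task is to count $a\in(\Z/q\Z)^*$ for which $(ar+bq,qr)=\frac{qr}{d}=:m$. Since $d$ is squarefree and $\frac{qr}{g^2}\mid d\mid [q,r]$, the quantity $m=\frac{qr}{d}$ is a squarefree divisor of $g^2/\!\!\gcd$-type data; in any case $m\mid qr$. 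I would split $m$ according to its interaction with the prime factorizations of $q$ and $r$: for each prime $p\mid m$ one has $p\mid ar+bq$, and since $ar+bq$ is determined mod $p$ by $a$ (using $(a,q)=(b,r)=1$ to control which of $r,q$ is invertible mod $p$), this pins $a$ down to a single residue class mod $p$ for each such $p$ — unless both $p\mid q$ and $p\mid r$, in which case $p\mid ar+bq$ is automatic for $p\mid g$ only when additionally $p\nmid\frac{qr}{g^2}\cdot$(stuff), and one has to be slightly careful.

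More concretely, I expect the clean way is: the primes dividing $m$ that divide exactly one of $q,r$ each impose one linear congruence on $a$ (with invertible leading coefficient, hence a single class); primes dividing both $q$ and $r$ automatically divide $ar+bq$ since they divide both $q$ and $r$... wait, $p\mid q$ and $p\mid r$ gives $p\mid ar$ and $p\mid bq$, so $p\mid ar+bq$ always, contributing no constraint. So the constraint on $a$ is really that $\frac{m}{\gcd(m,g)}$ — the part of $m$ coprime to $g$, call it $m_0$, which must divide $q'$ or $r'$ appropriately — forces $a$ into a prescribed residue class modulo $m_0$ (with $m_0\mid q'$, since $m_0\mid m\mid qr$ and $(m_0,g)=1$, $m_0\mid q$ forces $m_0\mid q'$). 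By the Chinese Remainder Theorem the number of $a\in(\Z/q\Z)^*$ lying in a fixed class mod $m_0$ (with $m_0\mid q$) is at most $\frac{q}{m_0}$. Thus the count is $\le \frac{q}{m_0}$, and it remains to show $\frac{q}{m_0}\le\frac{d g}{r}$, i.e. $m_0 r\ge \frac{qr}{d}\cdot\frac{1}{g}\cdot g = m$... let me instead verify $\frac{q}{m_0}\le\frac{dg}{r}$ directly: since $d=\frac{qr}{m}$ this says $m_0\ge \frac{q}{g}\cdot\frac{m}{qr}\cdot r = \frac{m}{g}$, i.e. $m_0 g\ge m$, which holds because $m=m_0\cdot(m\cap g\text{-part})$ and the $g$-part of $m$ divides $g$.

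The main obstacle I anticipate is bookkeeping the prime-by-prime analysis of $m=\frac{qr}{d}$ against the coprime factorization $q=gq'$, $r=gr'$, in particular correctly identifying which primes of $m$ actually constrain $a$ (those dividing exactly one of $q,r$) versus which are "free" (those dividing $g$), and then checking the resulting arithmetic inequality $m_0 g\ge m$. Everything is elementary once the decomposition is set up, but it is easy to mis-handle the case analysis, so I would state at the outset the structural fact $m\mid qr$, $m$ squarefree, write $m=m_0 m_1$ with $m_1=(m,g^\infty)$ the $g$-part and $m_0$ coprime to $g$, observe $m_0\mid q'$ (after possibly swapping the roles of $q,r$ by symmetry, though the asymmetric statement forces $m_0\mid q$), deduce the single-residue-class constraint mod $m_0$, and conclude via CRT and $m_1\mid g$. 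I would also double-check the edge case where the count is zero (no valid $a$), which only makes the inequality easier.
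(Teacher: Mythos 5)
There is a genuine gap, and it sits exactly where the lemma has content. Your plan replaces the exact condition $(ar+bq,qr)=m$ (with $m:=qr/d$) by the first-power conditions $p\mid ar+bq$ for $p\mid m$, and you then discard the primes dividing both $q$ and $r$ as ``no constraint'', so that only the part $m_0$ of $m$ coprime to $g:=(q,r)$ constrains $a$. But whenever the count is nonzero, \emph{every} prime of $m$ divides $g$: if $p\mid q$, $p\nmid r$, then $p\mid ar+bq$ would force $p\mid a$, impossible for $a\in(\Z/q\Z)^*$, and symmetrically $p\mid r$, $p\nmid q$ would force $p\mid b$. So in the nonempty cases $m_0=1$ and your constraints are vacuous. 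The real constraints are the \emph{second}-power conditions at primes of $g$: if $p^2\mid m$ one needs $p^2\mid ar+bq$, which (writing $q=pq_1$, $r=pr_1$) is the single congruence $a\equiv -bq_1r_1^{-1}\md{p}$. Correspondingly, your structural claims that $m$ is squarefree and that the $g$-part of $m$ divides $g$ are false — in the nonempty cases one has $g\mid m\mid g^2$ — and your concluding inequality $m_0g\geq m$ fails precisely then: for $q=r=p$, $d=1$ one has $m=p^2$, $m_0=1$, $g=p$, your bound is $q/m_0=p$, while the lemma asserts (correctly) a bound of $1$.

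The paper avoids this by not throwing away the square part of the modulus: it relaxes $(ar+bq,qr)=qr/d$ only to $ar+bq\equiv 0\md{qr/d}$, divides the congruence by $(q,r)$ to get $ar'+bq'\equiv 0\md{t}$ with $t:=[q,r]/d$, and checks that $(t,r')=1$ and $t\mid q$; hence $a$ lies in a single residue class mod $t$, giving at most $q/t=d(q,r)/r$ solutions. Your prime-by-prime scheme can be repaired by keeping the conditions $p^2\mid ar+bq$ at the primes with $p^2\mid m$ (with $f:=m/g$ one gets at most $q/f=d(q,r)/r$ solutions), but as written the proposal does not prove the lemma.
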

\begin{proof} If $d \nmid [q,r]$ then the left-hand side is empty and the result is trivial. Suppose, then, that $d \mid [q,r]$. The denominator condition is equivalent to $(ar + bq, qr) = qr/d$, and so is contained in the condition $ar + bq \equiv 0 \md {qr/d}$. Dividing through by $(q,r)$ and setting $r' := r/(q,r)$, $q' := q/(q,r)$ (thus $q', r'$ are coprime) and $t := qr/d(q,r) = [q,r]/d$, this condition is equivalent to \begin{equation}\label{equiv-mod} ar' + bq' \equiv 0 \md{t}.\end{equation} If this condition is satisfied, $t$ must be coprime to $r'$. Indeed if $p \mid (t, r')$ then $p \mid bq' = (ar' + bq') - ar'$. But this is impossible since both $b$ and $q'$ are coprime to $r'$. Moreover, since $t \mid [q,r] = qr'$, it follows that $t \mid q$. 

Returning to \eqref{equiv-mod}, as $a$ ranges over $\Z/q\Z$ then so does $ar'$, and so we simply need an upper bound for the number of solutions $\md{q}$ to a fixed congruence $x \equiv c \md{t}$, where $t \mid q$. Such an upper bound is $q/t$, and the lemma follows.
\end{proof}

\begin{proof}[Proof of Lemma \ref{sum-closure}]
Suppose that $\lambda = \frac{a}{q} \in \Q/\Z$ is in lowest terms. Then 
\begin{align*} & (f_1 f_2)^{\wedge}\big(\frac{a}{q}\big) = \sum_{\substack{q_1 \leq X_1 \\ q_2 \leq X_2 }} \sum_{\substack{a_1 \in (\Z/q_1 \Z)^* \\ a_2 \in (\Z/q_2\Z)^* \\ \frac{a_1}{q_1} + \frac{a_2}{q_2} \equiv \frac{a}{q} \mdsub{1}}}f_1^{\wedge}\big(\frac{a_1}{q_1}\big) f_2^{\wedge}\big(\frac{a_2}{q_2}\big) \\ & \leq \sum_{\substack{q_1 \leq X_1 \\ \mu^2(q_1) = 1} } \frac{\tau(q_1)^{B_1}}{q_1} \sum_{d | q_1 q} \frac{\tau(d)^{B_2}}{d} \# \{ a_1 \in (\Z/q_1\Z)^* : \denom\big(\frac{a_1}{q_1} - \frac{a}{q}\big) = d\}.\end{align*}
By Lemma \ref{denom-lem},  this is bounded above by
\[ \sum_{\substack{q_1 \leq X_1 \\ \mu^2(q_1) = 1} }  \frac{\tau(q_1)^{B_1}}{q_1} \sum_{d | q_1q} \tau(d)^{B_2} \frac{(q_1, q)}{q} \leq \frac{\tau(q)^{B_2+1}}{q}\!\!\!\sum_{\substack{q_1 \leq X_1 \\ \mu^2(q_1) = 1} }  \!\!\frac{\tau(q_1)^{B_1 + B_2 +1}(q_1, q)}{q_1} .\] Here, we used the rather crude bound
\[ \sum_{d | q_1 q} \tau(d)^{B_2} \leq \tau(q_1 q) \max_{d | q_1 q} \tau(d)^{B_2} \leq \tau(q_1 q)^{B_2 + 1} \leq \tau(q_1)^{B_2 + 1} \tau(q)^{B_2 + 1}.\]
The result now follows from Lemma \ref{to-rankin}.
\end{proof}

\subsection{The functions $\Phi_{r,b}$}\label{subsec85}

We now turn to a different example of a Fourier-positive function, which will feature heavily later on since they are important in constructing the damping function $D$.

Let $r$ be any positive integer, and let $b \in \Z/r^3 \Z$. Then we define
\begin{equation}\label{phi-a-q-def} \Phi_{r,b}(x) := r^{\frac{5}{6}} 1_{r | x} e\big(\frac{bx}{r^3}\big).\end{equation}
\emph{Remark.} Here, the constants $\frac{5}{6}$ and $3$ are convenient choices which work for our later arguments. Any $0 < c < 1 < C$ with $c$ sufficiently close to $1$ and $C$ sufficiently large would work, though the exponent in the main theorem would be affected.

The functions $\Phi_{r,b}$ are of course periodic. They are also Fourier-positive, since both $1_{r | x}$ and $e(\frac{bx}{r^3})$ are. Indeed, 

\begin{equation}\label{phi-r-b-exp} \Phi_{r,b}(n) = r^{-\frac{1}{6}} \sum_{t =0}^{r-1} e\big(\big(\frac{t}{r} + \frac{b}{r^3}\big) n\big).\end{equation} 
Note in particular that 
\begin{equation}\label{phi-ell1-bd} \Vert \Phi_{r,b} \Vert^{\wedge}_1 = r^{\frac{5}{6}}\end{equation}
and
\begin{equation}\label{phi-support} \support(\Phi_{r,b}) \leq r^3.\end{equation}

\section{Triple correlations of Ramanujan series}\label{comparison-sec}

We turn now to a key technical result concerning the relationship between certain products of shifted Ramanujan series (as defined in \eqref{ram-definition}) and their truncations. Here is the main result.

\begin{proposition}\label{ram-truncate} Let $h_1, h_2, h_3$ be distinct integers with $\max_i |h_i| \leq H$. 
Consider three Ramanujan series  \[ f_i(n) = \sum_{q} \alpha_i(q) c_q(n),\] $i = 1,2,3$, such that $\alpha_i(q)$ is supported on finitely many squarefrees and satisfies the bound $|\alpha_i(q)| \leq \tau^B(q)/q$, where $\tau$ is the divisor function and $B$ is some parameter.  Consider also their truncations
\[ f_{i, X_i}(n) := \sum_{q \leq X_i} \alpha_i(q) c_q(n),\] where $X_1, X_2, X_3$ are some thresholds. Set  $X := \min(X_1, X_2, X_3)$. 
Consider the shifted products
\[ F(n) := f_1(n + h_1) f_2(n+h_2) f_3(n + h_3) \] and the truncated variants \[ F_{X_1, X_2, X_3}(n) := f_{1,X_1}(n+h_1) f_{2, X_2}(n+h_2) f_{3, X_3}(n+h_3).\] 
Then
\begin{equation}\label{ram-bd-key} \Vert F - F_{X_1, X_2, X_3}\Vert^{\wedge}_{\infty} \ll_{ B, \eps } H^3 X^{\eps - 1},\end{equation} for every $\eps > 0$.
\end{proposition}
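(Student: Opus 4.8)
The plan is to expand each of the three factors $f_i(n+h_i)$ into its Ramanujan-series form, so that $F(n)$ becomes a triple sum over moduli $q_1, q_2, q_3$ of a product $\alpha_1(q_1)\alpha_2(q_2)\alpha_3(q_3)$ times $c_{q_1}(n+h_1)c_{q_2}(n+h_2)c_{q_3}(n+h_3)$, and similarly for $F_{X_1,X_2,X_3}$ with the moduli restricted to $q_i \le X_i$. The difference $F - F_{X_1,X_2,X_3}$ is then supported on triples $(q_1,q_2,q_3)$ with $q_i > X_i$ for at least one $i$; by symmetry it suffices to bound, up to a factor of $3$, the contribution of those triples with $q_1 > X_1 \ge X$. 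The task is to estimate $\|\cdot\|^\wedge_\infty$ of this remainder, which by \eqref{fourier-avg} is at most $\Av|\cdot|$, i.e. we need an $\ell^1$-type bound on the Fourier side summed against the triple Ramanujan product.

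The key computational step is to understand the Fourier coefficients of the shifted product $c_{q_1}(n+h_1)c_{q_2}(n+h_2)c_{q_3}(n+h_3)$. Writing $c_q(n) = \sum_{a\in(\Z/q\Z)^*} e(an/q)$, the product is a sum of $\prod_i \phi(q_i)$ exponentials $e\big((\tfrac{a_1}{q_1}+\tfrac{a_2}{q_2}+\tfrac{a_3}{q_3})n\big) \cdot e\big(\tfrac{a_1 h_1}{q_1}+\tfrac{a_2 h_2}{q_2}+\tfrac{a_3 h_3}{q_3}\big)$. So for a fixed target frequency $\lambda = a/q \in \Q/\Z$ in lowest terms, $(F - F_{X_1,X_2,X_3})^\wedge(\lambda)$ is a sum over $(q_1,q_2,q_3)$ (with $\max q_i > X$) of $\prod_i \alpha_i(q_i)$ times a sum over those triples $(a_1,a_2,a_3) \in \prod_i (\Z/q_i\Z)^*$ with $\tfrac{a_1}{q_1}+\tfrac{a_2}{q_2}+\tfrac{a_3}{q_3} \equiv \lambda \pmod 1$, each weighted by a unimodular phase. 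Using $|\alpha_i(q_i)| \le \tau^B(q_i)/q_i$ and bounding the phases trivially by $1$, we are reduced to a purely multiplicative counting problem: for each admissible $(q_1,q_2,q_3)$, count the number of $(a_1,a_2,a_3)$ solving the fractional congruence. This count is handled by iterating Lemma \ref{denom-lem} (first fix $a_3$, bounding the number of $a_1$ with $\denom(\tfrac{a_1}{q_1}+\tfrac{a_2}{q_2}+\tfrac{a_3}{q_3})$ equal to the prescribed $q$, then sum over $a_2$), which gives a bound of shape $\ll q \cdot (q_1,q_2)(q_2,q_3)(q_1,q_3)/(q_2 q_3)$ or similar — a gcd-heavy expression that, crucially, is at most $q$ times something summable over $q_2, q_3$ and gaining a power of $q_1$ in the denominator.

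After inserting this count, the bound on $|(F - F_{X_1,X_2,X_3})^\wedge(\lambda)|$ becomes a sum $\sum_{q_1 > X_1} \tfrac{\tau^B(q_1)}{q_1} \cdot (\text{gcd factors}) \cdot \sum_{q_2, q_3} \tfrac{\tau^B(q_2)\tau^B(q_3)}{q_2 q_3}(\text{gcd factors})$. The inner double sum converges (indeed is $O_{B,\eps}(q_1^\eps)$ after absorbing the gcd with $q_1$, by a standard Rankin-type / divisor-sum estimate as in Lemma \ref{to-rankin}), while the outer tail $\sum_{q_1 > X_1} \tau^B(q_1) q_1^{-2+\eps}$ is $\ll_{B,\eps} X_1^{\eps - 1} \le X^{\eps-1}$. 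Tracking the dependence on $H$: the shifts $h_i$ enter only through the phases, which we bounded by $1$, so strictly one gets no $H$ at all; the stated $H^3$ presumably arises from a more careful accounting where the congruence $\tfrac{a_1 h_1}{q_1} + \cdots$ is genuinely used (or is simply a safe overestimate), and in any case $H^3 X^{\eps-1}$ follows a fortiori. The main obstacle is the iterated gcd bookkeeping in the triple congruence count: one must check that applying Lemma \ref{denom-lem} twice genuinely yields an expression that is summable in $q_2, q_3$ and decays in $q_1$, rather than producing divergent gcd sums; this is routine but fiddly, and is where the hypothesis that all the $\alpha_i$ are supported on squarefrees (so that every $\denom$ appearing is squarefree and Lemma \ref{denom-lem} applies cleanly) is essential.
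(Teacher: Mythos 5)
There is a genuine gap, and it is at the central step: you bound the phases $e\big(\tfrac{h_1a_1}{q_1}+\tfrac{h_2a_2}{q_2}+\tfrac{h_3a_3}{q_3}\big)$ trivially by $1$ and reduce to counting solutions of the fractional congruence. That count is genuinely of size about $\phi(q_1)\phi(q_2)$ in the near-diagonal range: for example with $q_1=q_2=q_3=q$ and $\lambda=0$, every pair $(a_1,a_2)$ with $a_1+a_2$ coprime to $q$ gives a solution, so there are $\gg q^2$ of them (your claimed gcd-type bound of shape $q\cdot(q_1,q_2)(q_2,q_3)(q_1,q_3)/(q_2q_3)$ is false here; iterating Lemma \ref{denom-lem} also only gives $\phi(q)\cdot q$). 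Multiplying by $|\alpha(q)|^3\leq \tau(q)^{3B}/q^3$, the diagonal terms alone contribute $\sum_{q>X}\tau(q)^{3B}/q$, which is not $O_{B,\eps}(X^{\eps-1})$ — it grows without bound as the (finite but arbitrary) support of the $\alpha_i$ grows. So the trivial-phase route cannot prove the proposition. Your own observation that "strictly one gets no $H$ at all" and that the distinctness of $h_1,h_2,h_3$ is never used should have been the warning sign: the statement fails for equal shifts (already $\Av f_1f_2$ with $h_1=h_2$ picks up divergent diagonal terms $\sum_q \alpha_1(q)\alpha_2(q)\phi(q)$), so any correct proof must use both the phases and the distinctness.

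This is exactly what the paper does. After eliminating $a_3$ (it is determined by the denominator condition $\denom(\tfrac{a_1}{q_1}+\tfrac{a_2}{q_2}-\lambda)=q_3$), the inner sum over $(a_1,a_2)$ is an oscillating sum with phases $e\big(\tfrac{(h_1-h_3)a_1}{q_1}+\tfrac{(h_2-h_3)a_2}{q_2}\big)$, and Lemma \ref{key-lemma-r1} together with Corollary \ref{cor11} (a Ramanujan-sum cancellation argument, using that $m_1=h_1-h_3$ and $m_2=h_2-h_3$ are nonzero and distinct) bounds it by $\ll |m_1m_2(m_1-m_2)|\tau(r[q_1,q_2])^3\ll H^3\tau(\cdot)^3$, i.e.\ essentially bounded rather than of size $\phi(q_1)\phi(q_2)$. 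That is where the $H^3$ comes from; it is not a safe overestimate of a trivial bound. Only after this cancellation does the remaining sum over $(q_1,q_2,q_3)$ with divisibility constraints (Lemma \ref{lem12}) give the tail bound $X^{\eps-1}$. A secondary omission: this argument yields $H^3\denom(\lambda)^{\eps}X^{\eps-1}$, so the paper needs an extra step (via Lemma \ref{sum-closure}, where Lemma \ref{denom-lem} actually enters) to handle frequencies with $\denom(\lambda)>X$; your sketch does not address this either.
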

\emph{Remark.} Most likely, this proposition could be extended to products of an arbitrary number of $f_i$ (rather than just 3) but we do not need any such extension in this paper.\vspace*{8pt}

We need the following lemma in the proof.

\begin{lemma}\label{key-lemma-r1}
Let $m_1, m_2$ be distinct, nonzero integers. Let $r$ be a positive integer and suppose that $b \in (\Z/r\Z)^*$. Let $q_1, q_2$ be squarefree, and suppose that $P$ is an integer with $P \mid [q_1, q_2]r$. Then
\[ \sum_{\substack{a_1 \in (\Z/q_1\Z)^* \\ a_2 \in (\Z/q_2\Z)^* \\ a_1 q_2r + a_2 q_1r \equiv bq_1q_2 \mdsublem{P (q_1, q_2)}}} e\big(\frac{m_1a_1}{q_1} + \frac{m_2 a_2}{q_2}\big) \leq |m_1 m_2 (m_1 - m_2)| \tau(P)^2.\]
\end{lemma}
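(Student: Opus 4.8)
The plan is to regard the displayed sum as a character-sum over a congruence condition, split it by the Chinese Remainder Theorem into local factors at each prime, bound each local factor by an elementary count or a Gauss-type bound, and multiply up. First I would use the standard identity $1_{x \equiv 0\, \mdsublem{P(q_1,q_2)}} = \frac{1}{P(q_1,q_2)} \sum_{c\, \mdsublem{P(q_1,q_2)}} e\!\big(\tfrac{c x}{P(q_1,q_2)}\big)$ applied with $x = a_1 q_2 r + a_2 q_1 r - b q_1 q_2$, which turns the restricted sum into a single average of complete multiplicative exponential sums in $a_1$ over $(\Z/q_1\Z)^*$ and in $a_2$ over $(\Z/q_2\Z)^*$. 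Each of these inner sums is a Ramanujan-type sum: $\sum_{a_1 \in (\Z/q_1\Z)^*} e\!\big(a_1(\tfrac{m_1}{q_1} + \tfrac{c q_2 r}{P(q_1,q_2)})\big)$, which is a Ramanujan sum $c_{q_1'}(\cdot)$ for the appropriate sub-modulus $q_1'$ of $q_1$ and is in particular bounded in absolute value by $(q_1, \text{numerator})$; similarly for $a_2$.

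The slightly delicate bookkeeping is that $P(q_1,q_2)$ need not be coprime to $q_1$ or $q_2$, so I would instead factor everything prime-by-prime. Writing $S := P(q_1,q_2)$ and noting $S \mid [q_1,q_2] r (q_1,q_2)$, for each prime $p$ one works in $\Z/p^{v_p(S)}\Z$ (with $q_1, q_2$ squarefree so $v_p(q_i) \in \{0,1\}$, only $r$ contributing possibly higher powers), and the restricted sum factors as a product over $p$ of local sums. For a prime $p \nmid m_1 m_2(m_1 - m_2)$ one checks the local congruence $a_1 q_2 r + a_2 q_1 r \equiv b q_1 q_2 \,\mdsublem{p^{v_p(S)}}$ is ``nondegenerate'' and the local sum is exactly the number of solutions, which (after using $(a_i, q_i) = 1$, $(b,r) = 1$) one bounds by a power of $\tau(p^{v_p(S)})$ — indeed by at most $\tau(p^{v_p(P)})^2$ once the contributions of $p \mid (q_1,q_2)$ and $p \mid r$ are accounted for, since $P \mid [q_1,q_2]r$ absorbs those. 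For the finitely many bad primes $p \mid m_1 m_2 (m_1 - m_2)$ one simply bounds the local sum trivially by its cardinality, contributing the factor $|m_1 m_2 (m_1 - m_2)|$ (each such $p$ is counted with multiplicity at most $v_p(m_1 m_2(m_1 - m_2))$, so the product over bad primes is $\le |m_1 m_2 (m_1 - m_2)|$).

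A cleaner route to the same end — and probably the one I would actually write — avoids CRT gymnastics: substitute $a_2$ to diagonalise, reducing to a single sum over $a_1 \in (\Z/q_1\Z)^*$ of $e(m_1 a_1/q_1)$ times the count of $a_2 \in (\Z/q_2\Z)^*$ with $a_2 q_1 r \equiv b q_1 q_2 - a_1 q_2 r \,\mdsublem{S}$; for fixed $a_1$ this count is $0$ or exactly $\phi(q_2)/(\text{something})$ or more precisely $(q_2 r, S)/S \cdot q_2 \le (q_1,q_2)$ times a gcd factor, by the standard solvability criterion for linear congruences. Then the resulting sum over $a_1$ is again a Ramanujan sum and $\le (q_1, q_2) \cdot |m_1| \cdot \tau(\cdots)$, and symmetrising in $m_1, m_2$ and tracking the gcd conditions in $r$ produces the $(m_1 - m_2)$ factor and the $\tau(P)^2$. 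The main obstacle I anticipate is purely clerical: keeping the various divisibility and coprimality conditions ($q_i$ squarefree, $(a_i, q_i) = 1$, $(b,r) = 1$, $P \mid [q_1,q_2]r$, and the interaction of $r$ with $(q_1,q_2)$) consistent through the prime-by-prime decomposition so that the ``good'' primes contribute $\le \tau(P)^2$ and the ``bad'' primes contribute exactly $\le |m_1 m_2 (m_1 - m_2)|$, with nothing double-counted; there is no genuine analytic difficulty, only careful accounting of local contributions.
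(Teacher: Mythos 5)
Your toolkit (additive-character detection of the congruence, Ramanujan-sum/gcd bounds, prime-by-prime factorisation) is the right one, but the step you dismiss as ``purely clerical'' is where the lemma actually lives, and at that point both of your routes assert something false. In route 1, the claim that for $p \nmid m_1m_2(m_1-m_2)$ the local sum ``is exactly the number of solutions'' and is at most $\tau(p^{v_p(P)})^2$ is wrong: at a prime $p \mid (q_1,q_2)$ the congruence typically ties $a_2$ to a unit multiple of $a_1$ modulo $p$, so the number of solutions is about $p$, and the smallness of the local sum comes from cancellation in the surviving Ramanujan sum, whose frequency is $m_1u_1 - m_2u_2$ for units $u_1,u_2$ built from the unit parts of $q_2r$, $q_1r$ and the CRT multipliers $\overline{q_1/p}$, $\overline{q_2/p}$. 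The entire content of the lemma is that these units conspire so that the only possible resonances are unit multiples of $m_1$, $m_2$ or $m_1-m_2$; you never verify this, and without it ``good prime'' does not imply a small local sum. Route 2 is worse: replacing the inner $a_2$-sum by the count of solutions while keeping the phase $e(m_1a_1/q_1)$ is neither an identity nor an upper bound. Concretely, take $q_1=q_2=p$, $r=1$, $P=p$: the congruence mod $p^2$ forces $a_2\equiv -a_1 \pmod p$ and the sum equals $c_p(m_1-m_2)$, of size $p-1$ when $p\mid m_1-m_2$, whereas your route-2 expression is $c_p(m_1)$, of modulus $1$ when $p\nmid m_1$. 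No a posteriori ``symmetrising in $m_1,m_2$'' can recover the $(m_1-m_2)$ factor once the $a_2$-phase has been discarded, because that factor arises precisely from the interaction of the two phases through the congruence.

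For comparison, the paper's proof is exactly the missing verification, done globally rather than locally: it reduces the modulus to $d=P/(P,r)$ (using $(b,r)=1$ and invertibility of $r/(P,r)$ mod $d$), shows $d\mid[q_1,q_2]$, detects the congruence with characters mod $d$, and then uses the identities $q_2'/d=t/q_1$, $q_1'/d=t/q_2$ with $t=[q_1,q_2]/d$ to write the double sum as $\frac{1}{d}\sum_{\lambda}$ of a product of two Ramanujan sums whose frequencies are $m_1+\lambda t$ and $m_2+\lambda t$ with the \emph{same} shift $\lambda t$. That alignment is what makes $m_1-m_2$ the only possible joint resonance; the bounds $(q_i,m_i+\lambda t)\le |m_i|(d,m_i+\lambda t)$, the coprimality of $t$ and $d$, and the shifted-gcd estimate $\sum_{s \bmod d}(d,s)(d,s+h)\le |h|\,d\,\tau(d)^2$ with $h=m_2-m_1$ then deliver the factors $|m_1m_2(m_1-m_2)|$ and $\tau(P)^2$. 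To salvage your plan you would have to reproduce this alignment at every prime, which amounts to redoing that computation; as written, your proposal has a genuine gap at its central step.
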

\begin{proof}
Set $q'_1 = q_1/(q_1, q_2)$, $q'_2 = q_2/(q_1, q_2)$, $d := P/(P,r)$ and $r' := r/(P,r)$. The congruence $a_1 q_2r + a_2 q_1r \equiv bq_1q_2 \md{P (q_1, q_2)}$ is equivalent to $a_1 q'_2 r + a_2 q'_1 r \equiv b[q_1, q_2] \md{P}$. Moreover, since $(P,r)$ divides both $P$ and $r$, if the congruence has a solution at all then $b[q_1,q_2] = v(P,r)$ for some $v$, and so the congruence is equivalent to $a_1 q'_2 r' + a_2 q'_1 r' \equiv v \md{d}$. Note that $r'$ is coprime to $d$, so the congruence is equivalent to $a_1 q'_2 + a_2 q'_1 \equiv u \md{d}$, where $u := v(r')^{-1} \md{d}$. Thus our task is to show that
\begin{equation} \label{new-task}  S := \!\!\!\sum_{\substack{a_1 \in (\Z/q_1\Z)^* \\ a_2 \in (\Z/q_2\Z)^* \\ a_1 q'_2 + a_2 q'_1 \equiv u \mdsublem{d}}} e\big(\frac{m_1a_1}{q_1} + \frac{m_2 a_2}{q_2}\big) \leq |m_1 m_2 (m_1 - m_2)| \tau(P)^2.\end{equation}
We claim that $d \mid [q_1, q_2]$. To see this, recall that $P \mid [q_1, q_2] r$. Therefore $d = P/(P, r)$ divides $[q_1, q_2] r/(P, r) = [q_1, q_2] r'$. However, $d$ and $r'$ are coprime, so indeed $d \mid [q_1, q_2]$. 

Detecting the congruence $a_1 q'_2 + a_2 q'_1 \equiv u \md{d}$ using additive characters mod $d$, the sum to be estimated is then
\begin{equation}\label{s-sum} S = \frac{1}{d} \!\sum_{\lambda \mdsub{d}} \! e\big(-\!\frac{\lambda u}{d}\big) \! \! \!  \sum_{\substack{a_1 \in (\Z/q_1\Z)^* \\ a_2 \in (\Z/q_2\Z)^* }} e\big(\frac{m_1 a_1}{q_1} + \frac{m_2 a_2}{q_2} + \frac{\lambda(a_1 q'_2 + a_2 q'_1)}{d} \big).\end{equation}
Set $t := [q_1,q_2]/d$ (so $t$ is an integer, by the above discussion) and observe that 
\[ \frac{q'_2}{d} = \frac{q_1 q'_2}{d q_1} = \frac{[q_1,q_2]/d}{q_1} = \frac{t}{q_1},\] and similarly $q'_1/d = t/q_2$. Using these expressions, the sum \eqref{s-sum} can be rewritten as
\begin{equation}\label{s-sum-2} S = \frac{1}{d}\! \sum_{\lambda \mdsub{d}} \! e\big(-\!\frac{\lambda u}{d}\big) \! \! \! \sum_{a_1 \in (\Z/q_1\Z)^*} \! \! \! \! \! e\big( \frac{a_1 (m_1 + \lambda t)}{q_1}\big) \! \! \! \! \! \sum_{a_2 \in (\Z/q_2\Z)^*} \! \! \! \! \! e\big(\frac{a_2 (m_2 + \lambda t)}{q_2}\big).\end{equation}
Now we use the inequality
\[ \big|\sum_{a \in (\Z/q\Z)^*} e\big(\frac{an}{q}\big) \big| \leq (q,n),\] which is a standard bound for Ramanujan sums (see \cite[equation (3.5)]{ik} or \cite[Section 16.6]{hardy-wright}). Substituting into \eqref{s-sum-2} gives
\begin{equation}\label{s-sum-3} |S| \leq  \frac{1}{d} \sum_{\lambda \mdsub{d}} (q_1, m_1 + \lambda t)(q_2, m_2 + \lambda t).\end{equation}
Suppose that $p$ is prime and that $p \mid (q_1, m_1 + \lambda t)$. Then certainly $p \mid q_1$. If $p \mid t$, then we must have $p \mid m_1$. It follows that either $p \mid \frac{q_1}{(q_1, t)}$, or else $p \mid m_1$, and so (since $q_1$ is squarefree)
\begin{equation}\label{temp5} (q_1, m_1 + \lambda t) \leq |m_1| \big(\frac{q_1}{(q_1, t)}, m_1 + \lambda t\big).\end{equation}
Note moreover  that since $t \mid [q_1, q_2] = q'_2 q_1$ we have $t \mid q'_2 (q_1, t)$, and hence $\frac{q_1}{(q_1, t)} = \frac{[q_1, q_2]}{q'_2(q_1, t)}$ divides $\frac{[q_1, q_2]}{t} = d$. Therefore \eqref{temp5} implies
\[ (q_1, m_1 + \lambda t) \leq |m_1| (d, m_1 + \lambda t).\]
Similarly, 
\[ (q_2, m_2 + \lambda t) \leq |m_2| (d, m_2 + \lambda t).\]
Substituting into \eqref{s-sum-3} gives 
\begin{equation}\label{s-sum-4} |S| \leq \frac{|m_1 m_2|}{d} \sum_{\lambda \mdsub{d}} (d, m_1 + \lambda t)(d, m_2 + \lambda t).\end{equation}
Now since $t = [q_1,q_2]/d$ and $[q_1, q_2]$ is squarefree, $t$ is coprime to $d$. Therefore it follows from \eqref{s-sum-4} that
\begin{equation}\label{s-sum-5} |S| \leq \frac{|m_1 m_2|}{d}  \sum_{s \mdsub{d}} (d, s)(d, s + h),\end{equation} where $h := m_2 - m_1 \neq 0$. Without loss of generality, $h > 0$.

Now for any $s$, $(d,s)$ and $(d, s+h)$ are both divisors of $d$, and any common factor of them must divide $h$. If $(d,s) = d_1$ and $(d,s + h) = d_2$ then $s \equiv 0 \md{d_1}$ and $s \equiv -h \md{d_2}$, which is a congruence condition (possibly not satisfiable) mod $[d_1,d_2]$. Note that $[d_1, d_2]  = d_1 d_2/(d_1, d_2) \geq d_1 d_2/h$. Therefore, for a fixed choice of $d_1, d_2$, the number of such $s$ is at most $d/[d_1, d_2] \leq hd/d_1 d_2$. It follows that 
\begin{align*} \\   \sum_{s \mdsub{d}} (d,s)(d,  & s+h) =  \\ & = \sum_{d_1, d_2 \mid d} d_1 d_2 \# \{ s \in \Z/d\Z : (d,s) = d_1, (d, s+ h) = d_2\} \\ & \leq \sum_{d_1, d_2 \mid d} d_1 d_2 \frac{hd}{d_1 d_2} = h d \tau(d)^2.\end{align*}
Substituting into \eqref{s-sum-5} gives $|S| \leq h |m_1 m_2| \tau(d)^2$, which immediately implies the lemma.\end{proof}

\begin{corollary}\label{cor11}
 Let $m_1, m_2$ be distinct nonzero integers. Let $r$ be a positive integer and suppose that $b \in (\Z/r\Z)^*$. Suppose that $q_1, q_2$ are squarefree. Then, uniformly in $q_3$,
\[ \sum_{\substack{a_1 \in (\Z/q_1\Z)^* \\ a_2 \in (\Z/q_2\Z)^* \\ \denom(\frac{a_1}{q_1} + \frac{a_2}{q_2} - \frac{b}{r}) = q_3}} e\big(\frac{m_1a_1}{q_1} + \frac{m_2 a_2}{q_2}\big) \leq |m_1 m_2 (m_1 - m_2)| \tau(r[ q_1, q_2])^3 .\]
\end{corollary}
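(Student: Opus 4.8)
The plan is to derive Corollary \ref{cor11} from Lemma \ref{key-lemma-r1} by a change of variables that converts the denominator constraint $\denom(\frac{a_1}{q_1} + \frac{a_2}{q_2} - \frac{b}{r}) = q_3$ into a congruence of the shape appearing in the lemma. First I would clear denominators: write $\lambda := \frac{a_1}{q_1} + \frac{a_2}{q_2} - \frac{b}{r}$, so that $\denom(\lambda) = q_3$ means precisely that, setting $L := [q_1,q_2]r$, we have $L\lambda \equiv 0 \md{L/q_3}$ but $L\lambda$ represents a residue of exact order $q_3$. Writing $L\lambda = a_1 \frac{L}{q_1} + a_2 \frac{L}{q_2} - b\frac{L}{r}$, the condition $q_3 \mid \denom(\lambda)$, i.e.\ $\frac{L}{q_3} \mid L\lambda$, becomes a linear congruence $a_1 (Lq_1^{-1}) + a_2 (Lq_2^{-1}) \equiv b(Lr^{-1}) \md{L/q_3}$. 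This is not literally the congruence in Lemma \ref{key-lemma-r1}, but after rescaling by the gcd factor $(q_1,q_2)$ and identifying $P$ appropriately it has exactly that form: namely $a_1 q_2 r + a_2 q_1 r \equiv b q_1 q_2 \md{P(q_1,q_2)}$ with $P := L/(q_3 (q_1,q_2))$, which divides $[q_1,q_2]r$ as required by the lemma's hypothesis (one needs $q_3 \mid L$ for $P$ to be an integer; if $q_3 \nmid L$ the sum is empty).

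Second, I would handle the distinction between the exact-denominator condition $\denom(\lambda) = q_3$ and the divisibility condition $q_3 \mid \denom(\lambda)$. The set we must bound is contained in $\{q_3 \mid \denom(\lambda)\}$ — since $\denom(\lambda) = q_3$ certainly implies $q_3 \mid \denom(\lambda)$ — so it suffices to bound the exponential sum over the larger set, which is exactly the congruence-constrained sum $S$ of Lemma \ref{key-lemma-r1}. (Strictly, Lemma \ref{key-lemma-r1} requires $b \in (\Z/r\Z)^*$, which is part of our hypothesis.) Thus the inner sum is at most $|m_1 m_2 (m_1 - m_2)| \tau(P)^2$ by the lemma. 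Finally, since $P$ divides $[q_1,q_2]r$, we have $\tau(P) \leq \tau([q_1,q_2]r)$, so $\tau(P)^2 \leq \tau(r[q_1,q_2])^2 \leq \tau(r[q_1,q_2])^3$, and the claimed bound $|m_1 m_2 (m_1-m_2)| \tau(r[q_1,q_2])^3$ follows. The extra power of $\tau$ (a $3$ rather than a $2$) is harmless slack, presumably inserted so the statement composes cleanly with later applications.

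The main obstacle — really the only nontrivial point — is the bookkeeping in the change of variables: verifying that the fractional condition $\denom(\frac{a_1}{q_1} + \frac{a_2}{q_2} - \frac{b}{r}) = q_3$ translates, after clearing denominators and dividing by $(q_1,q_2)$, into precisely the congruence $a_1 q_2 r + a_2 q_1 r \equiv b q_1 q_2 \md{P(q_1,q_2)}$ with $P = [q_1,q_2]r/(q_3(q_1,q_2))$, and that this $P$ is a positive integer dividing $[q_1,q_2]r$ exactly when the constraint is nonempty. This requires keeping careful track of the squarefreeness of $q_1, q_2$ (so that $\denom$ of a sum behaves multiplicatively prime-by-prime, as recorded in the displayed remark before Lemma \ref{denom-lem}) and of which gcd's are coprime to which. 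Once that identification is made, the rest is a direct quotation of Lemma \ref{key-lemma-r1} together with monotonicity of $\tau$ under divisibility, and the uniformity in $q_3$ is automatic since the bound does not depend on $q_3$ at all.
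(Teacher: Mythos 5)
There is a genuine gap at the step where you pass from the exact condition $\denom(\frac{a_1}{q_1}+\frac{a_2}{q_2}-\frac{b}{r})=q_3$ to the congruence (divisibility) condition by saying ``the set we must bound is contained in the larger set, so it suffices to bound the exponential sum over the larger set.'' That inference is only valid for sums of non-negative terms; here the summands $e(\frac{m_1a_1}{q_1}+\frac{m_2a_2}{q_2})$ are unimodular complex numbers, so the sum over the larger (congruence-defined) set can be small by cancellation while the sub-sum over the exact-denominator set is large. The congruence set is the disjoint union, over divisors $q_3'$ of $q_3$, of the exact sets $\{\denom(\cdot)=q_3'\}$, and Lemma \ref{key-lemma-r1} controls only the sum over the union, not over each piece. (There are also two smaller slips: the congruence $\frac{L}{q_3}\mid L\lambda$ with $L=[q_1,q_2]r$ corresponds to $\denom(\lambda)\mid q_3$, not $q_3\mid\denom(\lambda)$; and matching moduli gives $P=r[q_1,q_2]/q_3$, not $L/(q_3(q_1,q_2))$.)

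The paper closes exactly this gap by detecting the exact condition, rewritten as the gcd equality $(a_1q_2r+a_2q_1r-bq_1q_2,\,q_1q_2r)=\frac{rq_1q_2}{q_3}$, via M\"obius inversion (Lemma \ref{enhanced-mobius}): this expresses the indicator of the exact condition as a signed sum of congruence indicators $1_{a_1q_2r+a_2q_1r\equiv bq_1q_2 \mdsublem{P(q_1,q_2)}}$ over the at most $\tau(r[q_1,q_2])$ divisors $P$ with $\frac{r[q_1,q_2]}{q_3}\mid P\mid r[q_1,q_2]$, and then applies Lemma \ref{key-lemma-r1} to each. This also explains the exponent: the bound is $\tau(r[q_1,q_2])\cdot|m_1m_2(m_1-m_2)|\tau(P)^2$, i.e.\ the third power of $\tau$ is forced by the number of terms in the inversion, not ``harmless slack.'' Your argument as written would only prove the corollary with the exact-denominator constraint replaced by $\denom(\cdot)\mid q_3$; to recover the stated result you need the inversion step (or an equivalent inclusion--exclusion over $q_3'\mid q_3$), after which the rest of your outline goes through.
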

\begin{proof}
The condition $\denom(\frac{a_1}{q_1} + \frac{a_2}{q_2} - \frac{b}{r}) = q_3$ is equivalent to $(a_1 q_2r + a_2 q_1r - bq_1q_2, q_1 q_2 r) = \frac{rq_1 q_2}{q_3}$ (note it can only be satisfied if $q_3 \mid r q_1 q_2$, which implies $q_3 \mid r [q_1, q_2]$ since $q_3$ is squarefree).  

We may detect this condition by M\"obius inversion, specifically the version in Lemma \ref{enhanced-mobius}. Taking $m = a_1 q_2r + a_2 q_1 r - b q_1 q_2$, $n = rq_1 q_2$ and $\Delta = rq_1 q_2/q_3$ in that lemma gives
\begin{align*}  & 1_{\denom(\frac{a_1}{q_1} + \frac{a_2}{q_2} - \frac{b}{r}) = q_3  }  = 1_{(a_1 q_2 r+ a_2 q_1r - bq_1 q_2, r q_1, q_2) = \frac{rq_1q_2}{q_3} } \\ &  \qquad = \sum_{\frac{rq_1q_2}{q_3} \mid d \mid rq_1q_2} \mu\big(\frac{d}{rq_1q_2/q_3}\big) 1_{a_1 q_2 r + a_2 q_1r \equiv b q_1 q_2 \mdsub{ d}} \\ &  \qquad = \sum_{\frac{r[q_1, q_2]}{q_3} \mid P \mid r[q_1, q_2]} \mu\big(\frac{P}{r[q_1, q_2]/q_3}\big) 1_{a_1 q_2 r + a_2 q_1r \equiv b q_1 q_2 \mdsub{ P(q_1, q_2)}},\end{align*} where $P := d/(q_1, q_2)$ (which is an integer).
Multiplying this by $e(\frac{m_1a_1}{q_1} + \frac{m_2 a_2}{q_2})$ and applying Lemma \ref{key-lemma-r1} for each value of $P$ (of which there are at most $\tau([q_1, q_2] r)$) gives the result. 
\end{proof}

\begin{lemma} \label{lem12} Let $B \geq 0$ and $X \geq 1$ be parameters and let $r \geq 1$ be an integer. Then
\begin{equation}\label{star-posts} \sum_{\substack{\max(q_1, q_2, q_3) \geq X \\ \mu^2(q_1) = \mu^2(q_2) = \mu^2(q_3) = 1 \\ q_1 \mid rq_2q_3 \\ q_2 \mid rq_1 q_3 \\ q_3 \mid rq_1 q_2}} \frac{\tau(q_1)^B}{q_1} \frac{\tau(q_2)^B}{q_2}\frac{\tau(q_3)^B}{q_3} \ll_{B, \eps}   r^{\eps} X^{\eps - 1}.\end{equation}
\end{lemma}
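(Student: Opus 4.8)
The plan is to use the three divisibility conditions to rewrite the sum as an Euler product, and then apply Rankin's trick with the extra power loaded onto the \emph{largest} of $q_1,q_2,q_3$ rather than spread among all three; this last point is the only subtlety, and it is what produces the exponent $\eps-1$ rather than $\eps-\tfrac12$.

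\emph{Step 1 (reduction).} The summand and the constraint set in \eqref{star-posts} are symmetric under permutations of $q_1,q_2,q_3$, and $\max(q_1,q_2,q_3)\ge X$ forces $q_i\ge X$ for at least one $i$. So by a union bound the left side of \eqref{star-posts} is at most $3$ times the same sum with the condition $\max\ge X$ replaced by $q_1\ge X$. For a parameter $s\in(0,1)$ to be chosen, insert the Rankin factor $1\le (q_1/X)^s$ (valid on $q_1\ge X$) and drop the condition $q_1\ge X$; this bounds the quantity above by
\[ 3\,X^{-s}\sum_{\substack{\mu^2(q_1)=\mu^2(q_2)=\mu^2(q_3)=1\\ q_1\mid rq_2q_3,\ q_2\mid rq_1q_3,\ q_3\mid rq_1q_2}}\frac{\tau(q_1)^B\tau(q_2)^B\tau(q_3)^B}{q_1^{1-s}\,q_2\,q_3}. \]

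\emph{Step 2 (the Euler product).} The three divisibility conditions are local: for a prime $p\nmid r$ they say exactly that $p$ must not divide precisely one of $q_1,q_2,q_3$, while for $p\mid r$ they impose nothing. Since $\tau(q_i)^B=\prod_{p\mid q_i}2^B$ for squarefree $q_i$, the sum in Step~1 factors as $\prod_p L_p$, where for $p\nmid r$
\[ L_p = 1 + \frac{4^B}{p^{2}} + \frac{2\cdot 4^B}{p^{2-s}} + \frac{8^B}{p^{3-s}} = 1+O_B\!\big(p^{-(2-s)}\big) \]
(the $\emptyset$ state, the three states where $p$ divides two of the $q_i$, and the state where $p$ divides all three), and for $p\mid r$
\[ L_p = \Big(1+\tfrac{2^B}{p^{1-s}}\Big)\Big(1+\tfrac{2^B}{p}\Big)^{2}\le(1+2^B)^{3}. \]
Because only $q_1$ was weighted, the leading local term is $p^{-(2-s)}$ with $2-s>1$, so $\prod_{p\nmid r}L_p\le\exp\big(O_B(\sum_p p^{-(2-s)})\big)\ll_{B,s}1$; had we weighted all three variables symmetrically the leading term would have been $p^{-2(1-s)}$, forcing $s<\tfrac12$ and only giving $X^{-1/2+\eps}$. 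Meanwhile $\prod_{p\mid r}L_p\le (1+2^B)^{3\omega(r)}=\tau(r)^{O_B(1)}\ll_{B,\eps}r^{\eps}$.

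\emph{Step 3 (conclusion).} Combining, the left side of \eqref{star-posts} is $\ll_{B,\eps}X^{-s}r^{\eps}$ for every $s\in(0,1)$; taking $s=1-\tfrac{\eps}{2}$ (and disposing of the case $\eps\ge1$ via the $s=0$ instance of the Step~2 bound, since then $X^{\eps-1}\ge1$) yields $\ll_{B,\eps}r^{\eps}X^{\eps-1}$, as claimed. I do not expect a genuine obstacle: the only things needing care are checking that the constraints really decouple prime-by-prime (so the Euler factorisation is legitimate), the bookkeeping of the five admissible local states at $p\nmid r$, and the crude bound $\prod_{p\mid r}L_p\ll_{B,\eps}r^\eps$ which is the harmless source of the $r^\eps$. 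The conceptual crux, and the thing I would flag, is that the Rankin weight must be placed on the single largest variable.
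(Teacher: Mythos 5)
Your proof is correct, but it takes a genuinely different route from the one in the paper. The paper's argument is combinatorial: after using symmetry to assume $q_1$ is largest, it exploits $q_1 \mid r q_2 q_3$ to force $q_3 \geq q_1/(q_1, rq_2)$ and $q_3 \mid rq_1q_2$ to limit $q_3$ to at most $\tau(rq_1q_2)$ values, thereby collapsing \eqref{star-posts} to a double sum over $q_1, q_2$ which is then estimated by the auxiliary Rankin-type bounds of Lemmas \ref{to-rankin} and \ref{to-rankin-2} (there the factor $r^{\eps}$ enters through the gcd $(q_1,r)$ and the factor $X^{\eps-1}$ through the tail sum over $q_1 \geq X$). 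You instead observe that, because the $q_i$ are squarefree and the weights $\tau(q)^B/q$ are multiplicative, both the summand and the three divisibility constraints decouple prime by prime, so after a union bound and a single Rankin weight $(q_1/X)^s$ the whole constrained triple sum factors as an Euler product; your local computations (the five admissible states at $p \nmid r$, the unconstrained factor at $p \mid r$ giving $(1+2^B)^{3\omega(r)} \ll_{B,\eps} r^{\eps}$) are accurate, and the asymmetric placement of the weight is exactly what keeps the local exponents $2-s, 2 > 1$ and hence yields $X^{\eps-1}$. Your version is more self-contained (it bypasses Lemmas \ref{to-rankin} and \ref{to-rankin-2} entirely) and makes the source of the exponent transparent; the paper's version avoids setting up the Euler-product bookkeeping and reuses lemmas that are needed elsewhere anyway, and its divisor-counting step is less tied to the weights being exactly multiplicative. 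Minor points, none of which are gaps: the Euler factorisation is legitimate since all terms are non-negative and the constraints are genuinely local for squarefree moduli, and your handling of the range of $s$ (with the constant allowed to depend on $B,\eps$, and the trivial disposal of $\eps \geq 1$) is fine.
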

\begin{proof}
By symmetry it suffices to handle the sum over those $q_1, q_2, q_3$ for which $q_1$ is biggest (and hence $q_1 \geq X$). The condition $q_1 \mid rq_2 q_3$ implies that $\frac{q_1}{(q_1, rq_2)}  \mid q_3$, and so in particular $q_3 \geq \frac{q_1}{(q_1, rq_2)}$.

Given $q_1, q_2$, since $q_3 \mid rq_1 q_2$ it follows that there are at most $\tau(rq_1q_2) \leq \tau(r) \tau(q_1) \tau(q_2)$ choices for $q_3$. It also follows that, for each permissible $q_3$, $\tau(q_3) \leq \tau(r) \tau(q_1) \tau(q_2)$. It follows that the LHS of \eqref{star-posts} is bounded above by 
\begin{align*}  \tau(r)^{B+1}\sum_{\substack{q_1 \geq X \\ q_2 \leq q_1 \\ \mu^2(q_1) = \mu^2(q_2) = 1}} & \frac{\tau(q_1)^{2B+1}\tau(q_2)^{2B+1}}{q_1 q_2 \frac{q_1}{(q_1, rq_2)}} \\ & = \sum_{\substack{q_1 \geq X \\ \mu^2(q_1) = 1}} \frac{\tau(q_1)^{2B+1}}{q_1^2} \sum_{\substack{q_2 \leq q_1 \\ \mu^2(q_2) = 1}} \frac{\tau(q_2)^{2B+1} (q_1, rq_2)}{q_2}.\end{align*}
By Lemma \ref{to-rankin} (noting that $(q_1, rq_2) = (q_1, r)(\frac{q_1}{(q_1, r)}, q_2)$, and taking $h = \frac{q_1}{(q_1, r)}$ in that lemma), the inner sum is $\ll  (q_1, r)\tau(q_1)^{2B+2} \log^{O_B(1)} q_1$.
Therefore the LHS of \eqref{star-posts} is bounded above by
\[ \tau(r)^{B+1} \sum_{\substack{q_1 \geq X \\ \mu^2(q_1) = 1}} \frac{(q_1, r)\tau(q_1)^{4B+3} \log^{O_B(1)} q_1 }{q_1^2} \] 
By Lemma \ref{to-rankin-2} (and a further application of the divisor bound $\tau(r) \ll_{\eps} r^{\eps}$) this is indeed bounded above by $\ll_{B,\eps} r^{\eps} X^{\eps - 1}$, which is what we wanted to prove.
\end{proof}

\begin{proof}[Proof of Proposition \ref{ram-truncate}] From the definitions of $F$ and $F_{X_1, X_2, X_3}$ in the statement of the proposition one may check that 
\begin{align}\nonumber (F  - F_{X_1, X_2, X_3})^{\wedge}& (\lambda) =  \sum_{\max(\frac{q_1}{X_1}, \frac{q_2}{X_2}, \frac{q_3}{X_3}) > 1} \alpha(q_1) \alpha(q_2) \alpha(q_3) \times \\ & \times \!\!\!\!\!\! \sum_{\substack{a_i \in (\Z/q_i\Z)^* \\ \frac{a_1}{q_1} + \frac{a_2}{q_2} + \frac{a_3}{q_3} \equiv \lambda \mdsub{1}}}  e\big(\frac{h_1 a_1}{q_1}  + \frac{h_2 a_2}{q_2} + \frac{h_3 a_3}{q_3}\big) .\label{f-minus-trunc} \end{align}
Using this, we will show first that 
\begin{equation}\label{bound-claimed-rb}  |(F  - F_{X_1, X_2, X_3})^{\wedge}(\lambda)| \ll_{B,\eps} H^3 \denom(\lambda)^{\eps} X^{\eps - 1}.\end{equation} This statement is substantial progress towards Proposition \ref{ram-truncate}. To handle the case where $\denom(\lambda)$ is large, however, we need an additional argument using \eqref{bound-claimed-rb} and Lemma \ref{sum-closure}.

We turn to the proof of \eqref{bound-claimed-rb}. Suppose that $\lambda = \frac{b}{r}$ with $b, r$ coprime. If $\frac{a_1}{q_1} + \frac{a_2}{q_2} + \frac{a_3}{q_3} \equiv \lambda \md{1}$ then $q_1 \mid r q_2 q_3$, $q_2 \mid r q_1 q_3$ and $q_3 \mid r q_1 q_2$.  Therefore the RHS of \eqref{f-minus-trunc} may be rewritten as $e(h_3 b/r)$ times
\[ \sum_{\substack {\max(\frac{q_1}{X_1}, \frac{q_2}{X_2}, \frac{q_3}{X_3}) > 1\\ q_1 | rq_2q_3 \\ q_2 | rq_1 q_3 \\ q_3 | rq_1 q_2}}\!\!\!\alpha(q_1) \alpha(q_2) \alpha(q_3) \!\!\!\!\!\!\!\!\!\! \sum_{\substack{a_1 \in (\Z/q_1\Z)^* \\ a_2 \in (\Z/q_2\Z)^*  \\ \denom(\frac{a_1}{q_1} + \frac{a_2}{q_2} - \frac{b}{r}) = q_3}}
\!\!\!\!\!\!\!\! e\big(\frac{(h_1- h_3) a_1}{q_1} + \frac{(h_2 - h_3) a_2}{q_2} \big).\]
Note that the summation over $a_3$ has disappeared since $a_3$ is uniquely determined by $a_1, q_1, a_2, q_2$, and the condition $\denom(\frac{a_1}{q_1} + \frac{a_2}{q_2} - \frac{b}{r}) = q_3$ guarantees that this $a_3$ is coprime to $q_3$.
Since $h_1, h_2, h_3$ are distinct, the inner sum is exactly of the type considered in Corollary \ref{cor11}, and by that result it is bounded by $O(H^3 \tau(r [q_1, q_2])^3)$. It follows that 
\[ |(F  - F_{X_1, X_2, X_3})^{\wedge}(\lambda)| \ll H^3\tau(r)^3 \!\!\!\!\!\!\!\!\!\!\sum_{\substack {\max(\frac{q_1}{X_1}, \frac{q_2}{X_2}, \frac{q_3}{X_3}) > 1\\ q_1 | rq_2q_3 \\ q_2 | rq_1 q_3 \\ q_3 | rq_1 q_2}} \!\!\! |\alpha(q_1) \alpha(q_2) \alpha(q_3)| \tau([q_1, q_2])^3.\]
By assumption, $|\alpha(q)| \leq \tau(q)^B/q$ for all $q$, and $\alpha$ is supported on squarefrees, and so this is bounded above by
\[  O(H^3\tau(r)^3)\sum_{\substack {\max(q_1, q_2, q_3) \geq X \\ \\ \mu^2(q_1) = \mu^2(q_2) = \mu^2(q_3) = 1 \\ q_1 | rq_2q_3 \\ q_2 | rq_1 q_3 \\ q_3 | rq_1 q_2}} \frac{\tau(q_1)^{B+3}}{q_1} \frac{\tau(q_2)^{B+3}}{q_2} \frac{\tau(q_3)^{B+3}}{q_3} .\] Applying Lemma \ref{lem12} and the divisor bound gives the claimed bound \eqref{bound-claimed-rb}.

Now we supply the further arguments necessary to handle the case where $\denom(\lambda)$ is large. Fix some $\lambda$ and write $r := \denom(\lambda)$.  Then \eqref{bound-claimed-rb} gives
\begin{equation}\label{911-a} |(F - F_{r,r,r})^{\wedge}(\lambda)| \ll_{B, \eps} H^3 r^{\eps - 1}.\end{equation}
Now $F_{r,r,r}$ is the product of three functions in the class $\mathscr{C}_B(r)$, as defined in Definition \ref{cbx-def}. By (two applications of) Lemma \ref{sum-closure}, $F_{r,r,r} \in (\log^{O_B(1)} r) \mathscr{C}_{5B+ 6} (r^3)$, which means that 
\[ |F_{r,r,r}^{\wedge}(\lambda)| \ll_{B,\eps} r^{\eps - 1}.\] Together with \eqref{911-a}, this implies that
\[  |F^{\wedge}(\lambda)| \ll_{B,\eps} H^3 r^{\eps - 1}.\]
The same bound also applies to $F_{X_1, X_2, X_3}^{\wedge}(\lambda)$, simply by replacing the $\alpha_i$ in the definition of $F$ by $\alpha'_i(n) := \alpha_i(n) 1_{n \leq X_i}$. By the triangle inequality (and recalling that $r = \denom(\lambda)$) it follows that
\begin{equation}\label{911-c} |(F  - F_{X_1, X_2, X_3})^{\wedge}(\lambda)| \ll_{B,\eps} H^3\denom(\lambda)^{\eps - 1} .\end{equation}
We now have the desired bound $|(F  - F_{X_1, X_2, X_3})^{\wedge}(\lambda)| \ll_{B,\eps} H^3 X^{\eps - 1}$  in all cases: if $\denom(\lambda) \leq X$, apply \eqref{bound-claimed-rb}, whilst if $\denom(\lambda) > X$, apply \eqref{911-c}.

This completes the proof of Proposition \ref{ram-truncate}.\end{proof}

\section{Triple correlations with characters}\label{sec10}

In this section we build upon the previous section by introducing characters. The main result of the section (and the only one we will need elsewhere) is the following.

\begin{proposition}\label{character-correlation} Let $Q, R$ be parameters and suppose that $1 \leq Q \leq Q_1, Q_2, Q_3 \leq R$. Let $\chi$ be a primitive Dirichlet character of conductor $q \leq R$. Then 
we have
\[ \Vert F^+_{\chi, Q_1} \Lambda_{Q_2}^- H_{Q_3} - \tilde F^+_{\chi, R}\tilde\Lambda^-_{R} \tilde H_R \Vert^{\wedge}_{\infty} \ll R^{o(1)}Q^{-\frac{1}{4}}.\]
\end{proposition}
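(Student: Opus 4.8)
The plan is to peel the Dirichlet character off both triple products, reduce the remaining ``characterless'' piece to Proposition \ref{ram-truncate}, and then put the character back in.

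\emph{Reduction via the completed forms.} By Lemma \ref{completed-forms} we have $F_{\chi, Q}(n) = \frac{q}{\phi(q)}\overline{\chi}(n) g_Q(n)$ and $\tilde F_{\chi, R}(n) = \frac{q}{\phi(q)}\overline{\chi}(n)\tilde g_R(n)$, where $g_Q(n) = \sum_{r \leq Q,\,(r,q)=1}\frac{\mu(r)}{\phi(r)}c_r(n)$ and $\tilde g_R(n) = \sum_{r \mid R!,\,(r,q)=1}\frac{\mu(r)}{\phi(r)}c_r(n)$. Every squarefree $r \leq Q_1 \leq R$ coprime to $q$ divides $R!$, so $g_{Q_1}$ is exactly the level-$Q_1$ truncation of the Ramanujan series $\tilde g_R$ in the sense of Proposition \ref{ram-truncate}; likewise $\Lambda_{Q_2}$ and $H_{Q_3}$ are the level-$Q_2$, level-$Q_3$ truncations of $\tilde\Lambda_R = \prod_{p \leq R}\vp$ and $\tilde H_R = \prod_{p \leq R}\wp$. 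Writing $\Delta := \tilde g_R^{+}\tilde\Lambda_R^{-}\tilde H_R - g_{Q_1}^{+}\Lambda_{Q_2}^{-}H_{Q_3}$, and using that shifting and multiplication by $\overline{\chi}^{\,+}$ commute with the truncation, we get
\[ F^{+}_{\chi, Q_1}\Lambda^{-}_{Q_2}H_{Q_3} - \tilde F^{+}_{\chi, R}\tilde\Lambda^{-}_R\tilde H_R \;=\; -\,\tfrac{q}{\phi(q)}\,\overline{\chi}^{\,+}\cdot\Delta . \]
I would then apply Proposition \ref{ram-truncate} with $f_1 = \tilde g_R$, $f_2 = \tilde\Lambda_R$, $f_3 = \tilde H_R$, shifts $(h_1,h_2,h_3) = (1,-1,0)$ (distinct, so $H = 1$), thresholds $Q_1, Q_2, Q_3$, and $B = 2$ — the required coefficient bounds $\frac{1}{\phi(r)},\ \frac{\mu^2(r)}{\phi(r)} \leq \frac{\tau(r)}{r}$ and $\eta(r) \leq \frac{3^{\omega(r)}}{r} = \frac{\tau(r)^{\log 3/\log 2}}{r} \leq \frac{\tau(r)^2}{r}$ being immediate. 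Since $\min(Q_1,Q_2,Q_3) \geq Q$, this yields $\Vert\Delta\Vert^{\wedge}_{\infty} \ll_{\eps} Q^{\eps-1}$ for every $\eps>0$; moreover equation \eqref{911-c} from the proof of that proposition gives the stronger scale-sensitive bound $|\Delta^{\wedge}(\lambda)| \ll_{\eps} \denom(\lambda)^{\eps-1}$ for all $\lambda$.

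\emph{Restoring the character.} The factor $\frac{q}{\phi(q)} \ll \log q$ is harmless, so it remains to bound $\Vert\overline{\chi}^{\,+}\Delta\Vert^{\wedge}_{\infty}$. By Lemma \ref{period-fourier}, Lemma \ref{norm-shift} and Lemma \ref{gauss-periodic} one has the crude estimate $\Vert\overline{\chi}^{\,+}\Delta\Vert^{\wedge}_{\infty} \leq \Vert\Delta\Vert^{\wedge}_{\infty}\Vert\overline{\chi}\Vert^{\wedge}_1 \ll_{\eps} q^{1/2}Q^{\eps-1}$, which is already $\ll_{\eps} Q^{\eps - 1/2}$ as soon as $q \leq Q$ — and this is the regime occurring in the applications, where $\chi = \chi_\rho$ has conductor $\leq Q$. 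The main obstacle is to recover the bound when $q$ exceeds $Q$, since then $\Vert\overline{\chi}\Vert^{\wedge}_1 = q^{1/2}$ can be as large as $R^{1/2}$, far outstripping the available gain. The route I would take here is to exploit that $\chi$ is primitive: $(\overline{\chi}^{\,+})^{\wedge}$ is supported precisely on the $\phi(q)$ fractions $a/q$ with $(a,q)=1$ and has constant modulus $q^{-1/2}$ there, so
\[ (\overline{\chi}^{\,+}\Delta)^{\wedge}(\xi) \;=\; q^{-1/2}\sum_{a\in(\Z/q\Z)^{*}} e(\pm a/q)\,\Delta^{\wedge}\!\big(\xi - \tfrac{a}{q}\big), \]
and then play the two bounds $|\Delta^{\wedge}(\lambda)| \ll_{\eps} \min\!\big(Q^{\eps-1},\,\denom(\lambda)^{\eps-1}\big)$ off against a divisor count (of the type of Lemma \ref{denom-lem}, adapted to the possibly non-squarefree modulus $q$) for the number of $a$ giving a prescribed value of $\denom(\xi - \tfrac{a}{q})$: since $q$ is large, all but few of these denominators exceed $Q$, where the decaying bound is in force, so the sum over $a$ is controlled by $q^{o(1)}$ and one gains the $q^{-1/2}$. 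Carrying this through — or, more robustly, re-running the triple-correlation argument of Section \ref{comparison-sec} directly with the Gauss-sum weights $c_{\chi}(b,r)$ in place of Ramanujan weights, where the square-root cancellation in the character sums over $b$ offsets the $q$-fold enlargement of the moduli $r$ — should deliver $\Vert\overline{\chi}^{\,+}\Delta\Vert^{\wedge}_{\infty} \ll_{\eps} Q^{-1/4}$ in the remaining range, and combining this with the easy case completes the proof of the proposition, with the stated bound $\ll R^{o(1)}Q^{-1/4}$.
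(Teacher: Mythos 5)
Your reduction is sound as far as it goes: factoring out $\frac{q}{\phi(q)}\overline{\chi}^{\,+}$, recognising $g_{Q_1},\Lambda_{Q_2},H_{Q_3}$ as the level-$Q_i$ truncations of $\tilde g_R,\tilde\Lambda_R,\tilde H_R$, and invoking Proposition \ref{ram-truncate} with shifts $(1,-1,0)$ correctly gives $\Vert\Delta\Vert^{\wedge}_{\infty}\ll_{\eps}Q^{\eps-1}$, and the crude step $\Vert\overline{\chi}^{\,+}\Delta\Vert^{\wedge}_{\infty}\leq\Vert\Delta\Vert^{\wedge}_{\infty}\Vert\overline{\chi}\Vert^{\wedge}_1\leq q^{1/2}\Vert\Delta\Vert^{\wedge}_{\infty}$ disposes of conductors up to about $Q^{3/2}$. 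The genuine gap is the large-conductor range, and your remark that the easy regime ``is the regime occurring in the applications'' is wrong: in Proposition \ref{final-correlation}, as used in Lemma \ref{basic-beta}(5), one takes $Q_1=R=T$ but $Q_2=Q_3=T^{c_1}$, so $Q=T^{c_1}$ while $\cond(\chi_{\rho})$ can be as large as $T\gg Q$ — the hard case is exactly the one the paper needs. For that range you offer only two unexecuted sketches (``carrying this through \dots should deliver''). The first rests on unproved ingredients: a version of Lemma \ref{denom-lem} for non-squarefree $q$ and arbitrary $s=\denom(\xi)$, together with control of which denominators $\denom(\xi-\frac{a}{q})$ actually occur. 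This last point is not cosmetic: $s$ can be as large as the period of $\Delta$, which is super-polynomial in $R$, so a naive ``sum over divisors of $[q,s]$ times a divisor bound'' does not give $R^{o(1)}$; one must argue $p$-adically that the occurring denominators are confined to $\ll\tau(q)\ll R^{o(1)}$ values and prove the counting lemma in that generality before the two bounds $\min(Q^{\eps-1},\denom(\lambda)^{\eps-1})$ can be played off against the $q^{-1/2}$ from primitivity. The second suggestion (re-running Section \ref{comparison-sec} with the weights $c_{\chi}(b,r)$) is a pointer, not an argument. So the proposition is not proved in the essential regime.

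It is worth seeing how the paper sidesteps all of this. It splits at $q\geq Q^{1/2}$ versus $q\leq Q^{1/2}$. For $q\geq Q^{1/2}$ it does not compare truncated and completed products at all: Lemmas \ref{lem127} and \ref{lem127-trunc} bound each of $\Vert\tilde F^{+}_{\chi,R}\tilde\Lambda_R^{-}\tilde H_R\Vert^{\wedge}_{\infty}$ and $\Vert F^{+}_{\chi,Q_1}\Lambda_{Q_2}^{-}H_{Q_3}\Vert^{\wedge}_{\infty}$ separately by $\tau(q)^{O(1)}\log^{O(1)}R\cdot q^{-1/2}\leq R^{o(1)}Q^{-1/4}$; the key is that the character's Fourier mass lives on denominators built from primes dividing $q$, so via the coprime-period identity \eqref{b1b2-coprime} (after splitting off the $q$-parts of $\Lambda_{Q_2}$ and $H_{Q_3}$ as in \eqref{FFH}) the character contributes a \emph{gain} $q^{-1/2}$ to the sup-norm, rather than the loss $q^{1/2}$ you incur through $\Vert\overline{\chi}\Vert^{\wedge}_1$. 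For $q\leq Q^{1/2}$ it writes $\Lambda_{Q_2}=\sum_{d\mid q}\frac{\mu(d)}{\phi(d)}c_d f_{Q_2/d}$ and $H_{Q_3}=\sum_{e\mid q}\eta(e)c_e h_{Q_3/e}$, applies Proposition \ref{ram-truncate} only to the parts coprime to $q$ (at thresholds $\geq Q^{1/2}$), and recombines via \eqref{mu-eta} and \eqref{b1b2-coprime}. If you want to rescue your route you would in effect have to reprove these coprimality and counting facts; adopting the paper's decomposition is the shorter path.
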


The reader may want to take the opportunity to recall the definitions of the various objects appearing here, which may be located as follows:

\begin{itemize}
\item $F_{\chi, Q}$ is a key object of the paper whose definition is given in Definition \ref{def43}. It has an alternative expression given in \eqref{f-chi-t-comb};
\item $\tilde F_{\chi, R}$ is a ``completed'' version of $F$, defined in Definition \ref{f-chi-definition} and with an alternative expression in \eqref{f-chi-comb};
\item $\Lambda_Q$ is a classical approximant to the von Mangoldt function, defined in \eqref{lambda-q-def};
\item $\tilde \Lambda_R$ is the completed version of this, defined in \eqref{tilde-lam-def} and with an alternative expression in \eqref{eq818}, which exhibits it as a normalised characteristic function of the $R$-rough numbers;
\item $H_Q$ is another key object in the paper, defined in Definition \ref{hq-definition}, and to be thought of as a kind of proxy for the square of the divisor function;
\item $\tilde H_R$ is a completed version of this, defined in Definition \ref{h-tilde-def}.
\end{itemize}

Finally, recall that $f^+(n) := f(n+1)$ and $f^-(n) := f(n-1)$, and that the basic properties of periodic functions and their Fourier series (and in particular the notation $\Vert \cdot \Vert_{\infty}^{\wedge}$) are laid out in Section \ref{rat-fourier-sec}.

For the rest of the section we regard $\chi$ and its conductor $q$ as fixed, with $q \leq R$.  Write (see Appendix \ref{char-app}) $\chi = \prod_{p | q} \chi_p$, where $\chi_p$ is a Dirichlet character of conductor $p^{\beta_p}$, where $q = \prod_{p | q} p^{\beta_p}$.

We divide the proof of Proposition \ref{character-correlation} into two parts, according to whether or not $q \geq Q^{\frac{1}{2}}$.

\subsection{The case of large conductor}

In this section we give some estimates which imply Proposition \ref{character-correlation} when $q \geq Q^{\frac{1}{2}}$. Recall the definition \eqref{vw-def} of $\vp $. It is also convenient to introduce the local factors $\chi_{\Z/p\Z}$ defined by $\vxp  = \frac{p}{p-1} \chi_p$ if $p \mid q$, and $\vxp  = \vp $ if $p \nmid q$. Then \eqref{f-chi-comb}, \eqref{tilde-lam-def} can be written in the form
\begin{equation}\label{f-chi-comb-abbrev} \tilde F_{\chi, R}  = \prod_{p \leq R} \vxpbar , \qquad  \tilde\Lambda_R = \prod_{p \leq R} \vp .\end{equation}

Our first estimate deals with the completed terms in Proposition \ref{character-correlation}.

\begin{lemma}\label{lem127} We have $\Vert \tilde F^+_{\chi, R}\tilde \Lambda_R^- \tilde H_R  \Vert^{\wedge}_{\infty} \ll \tau(q)^{16} q^{-\frac{1}{2}}$.
\end{lemma}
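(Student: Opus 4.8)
The plan is to factor everything over primes and exploit the multiplicativity of $\Vert\cdot\Vert^{\wedge}_{\infty}$ across coprime periods established in Lemma \ref{period-fourier}. By \eqref{f-chi-comb-abbrev} and Definition \ref{h-tilde-def}, and since shifting commutes with products, one has $\tilde F^+_{\chi, R}\tilde \Lambda_R^- \tilde H_R = \prod_{p \leq R} g_p$ where $g_p := \vxpbar^{+}\,\vp^{-}\,\wp$. For $p \nmid q$ we have $\vxpbar = \vp$, so $g_p = \vp^{+}\vp^{-}\wp$, whereas for $p \mid q$ we have $\vxpbar = \tfrac{p}{p-1}\overline{\chi_p}$ with $\chi_p$ primitive of conductor $p^{\beta_p}$, so $g_p = \tfrac{p}{p-1}\,\overline{\chi_p}^{\,+}\,\vp^{-}\,\wp$. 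Each $g_p$ is periodic with period a power of $p$ (namely $p$ if $p\nmid q$, and $p^{\beta_p}$ if $p\mid q$, since $f^{\pm}$ has the same period as $f$), so the periods over distinct $p$ are pairwise coprime, and iterating \eqref{b1b2-coprime} reduces the task to bounding $\prod_{p\leq R}\Vert g_p\Vert^{\wedge}_{\infty}$.

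For the primes $p\nmid q$ I would invoke Lemma \ref{lem43}, which gives $\Vert g_p\Vert^{\wedge}_{\infty}=\Vert \vp^{+}\vp^{-}\wp\Vert^{\wedge}_{\infty}\leq 1+O(p^{-2})$ via \eqref{vpvpwp-bd}; since $\sum_p p^{-2}$ converges, $\prod_{p\leq R,\,p\nmid q}\Vert g_p\Vert^{\wedge}_{\infty}\ll 1$. For the primes $p\mid q$ I would apply \eqref{b1b2-noncoprime} and \eqref{b1b2-ell1} to peel off $\overline{\chi_p}^{\,+}$ in the sup-norm and the remaining two factors in the $\ell^1$-norm, use shift-invariance (Lemma \ref{norm-shift}), the Gauss-sum bound $\Vert \overline{\chi_p}\Vert^{\wedge}_{\infty}\leq p^{-\beta_p/2}$ (Lemma \ref{gauss-periodic}), and $\Vert \vp\Vert^{\wedge}_1=2$, $\Vert \wp\Vert^{\wedge}_1\leq 4$ (Lemma \ref{fourier-vw}), together with $\tfrac{p}{p-1}\leq 2$, to obtain
\[ \Vert g_p\Vert^{\wedge}_{\infty}\leq \tfrac{p}{p-1}\,\Vert \overline{\chi_p}\Vert^{\wedge}_{\infty}\,\Vert \vp\Vert^{\wedge}_1\,\Vert \wp\Vert^{\wedge}_1\leq 16\,p^{-\beta_p/2}. \]
Multiplying over $p\mid q$ gives $\prod_{p\mid q}\Vert g_p\Vert^{\wedge}_{\infty}\leq 16^{\omega(q)}q^{-1/2}\leq \tau(q)^{4}q^{-1/2}$, using $\tau(q)=\prod_{p\mid q}(\beta_p+1)\geq 2^{\omega(q)}$. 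Combining the two ranges of $p$ yields $\Vert \tilde F^+_{\chi, R}\tilde \Lambda_R^- \tilde H_R\Vert^{\wedge}_{\infty}\ll \tau(q)^{4}q^{-1/2}$, comfortably stronger than the claimed $\tau(q)^{16}q^{-1/2}$.

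There is no serious obstacle here; the argument is essentially bookkeeping. The only point requiring a little care is checking that the shifts $f^{\pm}$ do not spoil the coprimality of periods used when iterating \eqref{b1b2-coprime} — immediate, since $f^{\pm}$ has the same period as $f$ — and correctly separating the local factors that are untouched (those with $p\nmid q$, handled by Lemma \ref{lem43}) from those carrying a character (those with $p\mid q$, where one loses only the Gauss-sum factor $p^{-\beta_p/2}$ and absorbable constants). The generous exponent $16$ in the statement leaves ample slack for the crude $\ell^1$ bounds on $\vp^{-}$ and $\wp$.
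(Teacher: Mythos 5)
Your proposal is correct and follows essentially the same route as the paper: factor the triple product into local factors $\vxpbar^{+}\vp^{-}\wp$ over primes $p \leq R$ using \eqref{f-chi-comb-abbrev} and \eqref{b1b2-coprime}, bound the $p \nmid q$ factors by $1+O(p^{-2})$ via Lemma \ref{lem43}, and bound the $p \mid q$ factors by $16\,p^{-\beta_p/2}$ via \eqref{b1b2-noncoprime}, \eqref{b1b2-ell1}, Lemma \ref{gauss-periodic} and Lemma \ref{fourier-vw}. The only difference is cosmetic: you record the slightly sharper constant $\tau(q)^4$ in place of the paper's generous $\tau(q)^{16}$.
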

\begin{proof}
By \eqref{f-chi-comb-abbrev} and \eqref{b1b2-coprime}, we have
\begin{equation}\label{first-prod} \Vert \tilde F^+_{\chi, R}\tilde \Lambda_R^- \tilde H_R  \Vert^{\wedge}_{\infty} \leq \prod_{p \leq R} \Vert f_p \Vert^{\wedge}_{\infty}.\end{equation} where $f_p := \vxpbar ^+ \vp ^- \wp $. When $p \nmid q$ we have $\vxp  = \vp $, and so for these $p$ we have $\Vert f_p \Vert^{\wedge}_{\infty} \leq 1 + O(p^{-2})$ by \eqref{vpvpwp-bd}. The product over $p$ of all these contributions is $O(1)$.

When $p \mid q$, $f_p = \frac{p}{p-1} \overline{\chi}_p^+ \vp ^- \wp $, and so by \eqref{b1b2-noncoprime} we have
\begin{equation}\label{fps} \Vert f_p\Vert^{\wedge}_{\infty} \leq \frac{p}{p-1} \Vert \chi_p\Vert^{\wedge}_{\infty} \Vert \vp ^- \wp   \Vert^{\wedge}_1.\end{equation}
By Lemma \ref{gauss-periodic} we have
\begin{equation}\label{back-to} \Vert \chi_p \Vert^{\wedge}_{\infty} \leq p^{-\beta_p/2}.\end{equation}
By \eqref{b1b2-ell1} and \eqref{vp-four}, \eqref{wp-four} we have $\Vert \vp ^- \wp  \Vert^{\wedge}_1 \leq 8$.
Substituting this and \eqref{back-to} into \eqref{fps}, and using the crude bound $\frac{p}{p-1} \leq 2$, we obtain $\Vert f_p\Vert^{\wedge}_{\infty} \leq 16 p^{-\beta_p/2}$.

Taking products over $p$, we see that the right-hand side of \eqref{first-prod} is bounded by $\ll 16^{\omega(q)} q^{-1/2}$, where $\omega(q)$ denotes the number of distinct prime factors of $q$, which implies the claimed bound.\end{proof}

The second result of this section, which is a little harder, deals with the uncompleted terms in Proposition \ref{character-correlation}.

\begin{lemma}\label{lem127-trunc} Suppose that $1 \leq Q_1, Q_2, Q_3 \leq R$. Then \begin{equation}\label{char-cor-2} \Vert F^+_{\chi, Q_1}\Lambda_{Q_2} ^-H_{Q_3}  \Vert^{\wedge}_{\infty} \ll \tau(q)^{O(1)} (\log^{O(1)} R)  q^{-\frac{1}{2} }.\end{equation}
\end{lemma}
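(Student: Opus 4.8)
The plan is to peel the character off $F_{\chi, Q_1}$ and reduce to estimating a character-twisted sum of translates of a function lying in one of the classes $\mathscr{C}_B$ of Definition~\ref{cbx-def}.

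\emph{Step 1 (setup).} By the alternative expression \eqref{f-chi-t-comb}, $F_{\chi, Q_1} = \tfrac{q}{\phi(q)}\,\overline{\chi}\cdot g$, where $g(n) := \sum_{r\le Q_1,\,(r,q)=1}\tfrac{\mu(r)}{\phi(r)}c_r(n)$. Since $r/\phi(r) = \prod_{p\mid r}\tfrac{p}{p-1}\le\tau(r)$ for squarefree $r$, one has $g\in\mathscr{C}_1(Q_1)\subseteq\mathscr{C}_1(R)$; together with Lemma~\ref{h-lam-nice-class} ($\Lambda_{Q_2},H_{Q_3}\in\mathscr{C}_2(R)$), the shift-invariance of the $\mathscr{C}_B$, and two applications of Lemma~\ref{sum-closure}, the function $G := g^+\Lambda_{Q_2}^-H_{Q_3}$ lies in $(\log^{O(1)}R)\,\mathscr{C}_{B_0}(R^3)$ for an absolute constant $B_0$. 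In particular $G^\wedge$ is supported on $\lambda$ with $\denom(\lambda)$ squarefree and $\le R^3$, and $|G^\wedge(\lambda)|\ll(\log^{O(1)}R)\,\tau(\denom\lambda)^{B_0}/\denom\lambda$. Writing $F^+_{\chi, Q_1}\Lambda_{Q_2}^-H_{Q_3}=\tfrac{q}{\phi(q)}\,\overline{\chi}^+G$ and using $q/\phi(q)\le\tau(q)$, it suffices to prove $\Vert\overline{\chi}^+G\Vert^\wedge_\infty\ll\tau(q)^{O(1)}(\log^{O(1)}R)\,q^{-1/2}$.

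\emph{Step 2 (the character convolution).} As $\overline{\chi}$ is primitive of conductor $q$, the rational Fourier series of $\overline{\chi}^+$ is supported on $\{v/q:v\in(\Z/q\Z)^*\}$ with all coefficients of modulus $q^{-1/2}$ (Lemmas~\ref{gauss-periodic} and~\ref{norm-shift}). Hence \eqref{prod-bs} gives, for every $\mu\in\Q/\Z$,
\[\bigl|(\overline{\chi}^+G)^\wedge(\mu)\bigr|\le q^{-1/2}\sum_{v\in(\Z/q\Z)^*}\bigl|G^\wedge(\mu-v/q)\bigr|,\]
so everything reduces to the uniform bound $\displaystyle\sum_{v\in(\Z/q\Z)^*}\bigl|G^\wedge(\mu-v/q)\bigr|\ll\tau(q)^{O(1)}\log^{O(1)}R$.

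\emph{Step 3 (proving the reduced bound — the main obstacle).} Fix $\mu$ and put $D:=\denom(\mu)$. A translate $\mu-v/q$ contributes only when $e:=\denom(\mu-v/q)$ is squarefree; write $e=e'e''$ with $e''$ coprime to $q$ and $e'$ supported on primes dividing $q$. For $p\nmid q$ one has $v_p(e)=v_p(D)$ (subtracting $v/q$, whose denominator divides $q$, leaves the $p$-part of the denominator unchanged), so $e''$ is a \emph{fixed} squarefree divisor of $D$, independent of $v$; being squarefree, $\tau(e'')^{B_0}/e''=\prod_{p\mid e''}(2^{B_0}/p)\ll_{B_0}1$. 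The delicate point is the behaviour at primes $p\mid q$: a valuation analysis (of the same flavour as the completed-versus-truncated comparison, and as Lemma~\ref{denom-lem}) shows that $v_p(e)$ is forced to equal $v_p(q)$ — making $e$ non-squarefree and killing the term — unless $p\nmid D$ or $v_p(D)=v_p(q)$ (a ``resonance''), and that $v_p(e)=0$ at a resonant $p$ pins $v$ down modulo $p$. Grouping the surviving $v$ by the set $S$ of primes $p\mid q$ with $v_p(e)=0$ — so that $e'=q/\!\prod_{p\in S}p$ up to a bounded factor, and, by CRT, at most $\phi(q)/\phi(\prod_{p\in S}p)$ values of $v$ share a given $S$ — and bounding $|G^\wedge|$ on each class by $(\log^{O(1)}R)\tau(e)^{B_0}/e$, one obtains
\[\sum_{v\in(\Z/q\Z)^*}\bigl|G^\wedge(\mu-v/q)\bigr|\ll(\log^{O(1)}R)\,\frac{\tau(e'')^{B_0}}{e''}\sum_{S}\tau\!\Bigl(\textstyle\prod_{p\in S}p\Bigr)\,\tau\!\Bigl(q/\textstyle\prod_{p\in S}p\Bigr)^{B_0}\ll\tau(q)^{O(1)}\log^{O(1)}R,\]
the sum over subsets $S$ being a finite Euler product over the primes of $q$ with each local factor $O(1)$. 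Combining with Step~2 and $q/\phi(q)\le\tau(q)$ finishes the proof.

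I expect Step~3 — the $p$-adic bookkeeping — to be the only genuinely hard part: one must track how the squarefreeness constraint on $\denom(G^\wedge)$ forces strong congruence conditions on $v$ at the higher prime-power parts of the conductor, and observe that the small-denominator translates are precisely the ones governed by resonant primes (which both limits their number and forces the complementary divisor $q/\prod_{p\in S}p$ to be large), so that the resulting divisor sum collapses to $\tau(q)^{O(1)}$ rather than to a genuine power of $R$.
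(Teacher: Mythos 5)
Your Steps 1 and 2 are sound, and the overall route — peel off $\overline{\chi}^+$ and bound the convolution $\sum_{v\in(\Z/q\Z)^*}|G^{\wedge}(\mu-v/q)|$ using only the $\mathscr{C}_B$-bounds on $G$ — is genuinely different from the paper's and could in principle be completed. But Step 3, which you rightly identify as the crux, does not work as written, and it fails exactly for non-squarefree conductors, which are the central case in this paper. Since $G^{\wedge}$ is supported on squarefree denominators, the $q$-part $e'$ of $e=\denom(\mu-v/q)$ divides the radical $\prod_{p\mid q}p$; your claim that $e'=q/\prod_{p\in S}p$ ``up to a bounded factor'' is therefore false whenever $q$ has a square factor, and it is precisely this overestimate of $e'$ (hence underestimate of $\tau(e')^{B_0}/e'$) that makes your final display collapse to $\tau(q)^{O(1)}$. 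If one redoes your bookkeeping with the correct $e'\mid\prod_{p\mid q}p$ and with the only saving you actually establish (pinning of $v$ modulo $p$ at the primes where $v_p(e)=0$), the sum over $v$ is only bounded by roughly $\big(q/\prod_{p\mid q}p\big)\,\tau(q)^{O(1)}\log^{O(1)}R$; after multiplying by the $q^{-1/2}$ from the Gauss sum this still grows like a positive power of $q$ (for $q=p^t$ with $p$ fixed it is about $q^{1/2}/p$), nowhere near the required $q^{-1/2}$.

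What is missing is a stronger $p$-adic constraint at every prime with $p^2\mid q$: writing $\beta_p=v_p(q)$, one checks that $G^{\wedge}(\mu-v/q)\neq 0$ forces $v_p(\denom\mu)=\beta_p$ and confines $v$ to a single residue class $\md{p^{\beta_p-1}}$ (a single class $\md{p^{\beta_p}}$ if moreover $v_p(e)=0$); it is this confinement at \emph{all} surviving $v$, not just the pinning when $v_p(e)=0$, that cuts the count down by the factor $q/\prod_{p\mid q}p$ you need. With that extra local lemma your approach does close, giving a proof along different lines. The paper avoids this bookkeeping altogether: it first splits $\Lambda_{Q_2}=\sum_{d\mid q}\frac{\mu(d)}{\phi(d)}c_d f_{Q_2/d}$ and $H_{Q_3}=\sum_{e\mid q}\eta(e)c_e h_{Q_3/e}$, so that each summand is a product of a $q$-periodic factor (bounded by $\tau(q)^{O(1)}q^{-1/2}$ prime-by-prime via Gauss sums) and a factor whose period is coprime to $q$ (bounded by $\log^{O(1)}R$ via the classes $\mathscr{C}_B$ and Lemma~\ref{sum-closure}), and then the coprime-period identity \eqref{b1b2-coprime} multiplies the two $\Vert\cdot\Vert^{\wedge}_{\infty}$ bounds with no loss — exactly the cross-term interaction between $\overline{\chi}^+$ and the $q$-parts of $\Lambda_{Q_2}$, $H_{Q_3}$ that your Step 3 has to fight by hand.
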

\begin{proof} Introduce
\begin{equation}\label{fd-def} f_{Q}(n) = \sum_{\substack{ r \leq Q  \\ (r, q) = 1}} \frac{\mu(r)}{\phi(r)} c_r(n). \end{equation}
(Of course, this definition depends on $q$, but we are regarding $q$ as fixed throughout the section and so suppress the dependence to keep the notation manageable.) By \eqref{f-chi-t-comb} we have
\begin{equation}\label{f-chi-q-fy} F_{\chi, Q} = \frac{q}{\phi(q)} \overline{\chi} f_Q.\end{equation}
Consider the definition of $\Lambda_Q$, viz
\begin{equation}\label{f-chi-t-comb-rpt}  \Lambda_Q(n) = \sum_{r \leq Q} \frac{\mu(r)}{\phi(r)} c_r(n).\end{equation}
Since the sum is supported where $r$ is squarefree, each $r$ in the sum may be written uniquely as $r = d r'$ where $d \mid q$  and $(r' , q) = 1$. By the multiplicative property $c_r(n) = c_{d}(n) c_{r'}(n)$ the contribution from a particular value of $d$ (to the sum over $r$ in \eqref{f-chi-t-comb-rpt}) is then 
\[ \frac{\mu(d)}{\phi(d)} c_d(n)  \sum_{\substack{ r' \leq Q/d \\ (r', q) = 1}} \frac{\mu(r')}{\phi(r')} c_{r'}(n)   .\] It follows that 
\begin{equation}\label{lam-q-sum}  \Lambda_Q =  \sum_{d | q} \frac{\mu(d)}{\phi(d)} c_d f_{Q/d}.\end{equation}
(Note, here and in what follows, $c_d$ is the Ramanujan sum considered as a function.)

Now we expand $F_{\chi, Q_1}^+ \Lambda_{Q_2}^-$ using \eqref{f-chi-q-fy} and \eqref{lam-q-sum}, obtaining
\begin{equation}\label{ff} F^+_{\chi, Q_1} \Lambda^-_{Q_2} = \frac{q}{\phi(q)}  \sum_{d | q} \frac{\mu(d)}{\phi(d)}  \overline{\chi}^+   c_d^- f_{Q_1}^+ f_{Q_2/d}^- .\end{equation}
 Recall from Definition \ref{hq-definition} that $H_{Q_3} = \sum_{r \leq Q_3} \eta(r) c_r$ where $\eta(r) = \prod_{p | r} \frac{3}{p+3}$ and $r$ is restricted to squarefrees.
Summing according to $e := (r, q)$, we see that $H_Q = \sum_{e | q} \eta(e) c_{e} h_{Q_3/e}$, where
\begin{equation}\label{he-def} h_{Y} := \sum_{\substack{r' \leq Y \\ (r',q) = 1}} \eta(r') c_{r'}.\end{equation}
Thus from \eqref{ff} we obtain

\begin{equation}\label{FFH} F^+_{\chi, Q_1} \Lambda^-_{Q_2} H_{Q_3} = \frac{q}{\phi(q)}  \sum_{d, e | q} \frac{\mu(d)}{\phi(d)} \eta(e) \overline{\chi}^+   c_d^- c_e f_{Q_1}^+ f_{Q_2/d}^- h_{Q_3/e}.\end{equation}

Consider first the term $f^+_{Q_1} f^-_{Q_2/d} h_{Q_3/e}$. The three functions $f_{Q_1}$, $f_{Q_2/d}$ and $h_{Q_3/e}$ all lie in the class $\mathscr{C}_2(R)$, as defined in Definition \ref{cbx-def}, by a trivial modification of the proof of Lemma \ref{h-lam-nice-class}. Since this class is invariant under shifts, it follows by two applications of Lemma \ref{sum-closure} that $f^+_{Q_1} f^-_{Q_2/d} h_{Q_3/e} \in (\log^{O(1)} R)\mathscr{C}_{16}(R)$, and so in particular
\begin{equation}\label{trip-prod} \Vert f^+_{Q_1} f^-_{Q_2/d} h_{Q_3/e} \Vert^{\wedge}_{\infty} \ll \log^{O(1)} R.\end{equation}
 Moreover, the rational Fourier expansion of this term has only frequencies which are coprime to $q$.

Now let us examine the other terms in \eqref{FFH}. For fixed $d, e$ (squarefree, else they are not relevant in \eqref{FFH}) we have the factorisation over prime powers
\begin{equation}\label{bp-prod}  \frac{\mu(d)}{\phi(d)}\eta(e) \overline{\chi}^+ c_{d}^- c_{e} = \prod_{p | q} b^{(d,e)}_p\end{equation}
where, for each $p$ dividing $q$, $b^{(d,e)}_p$ is a product of the following periodic functions:
\begin{itemize}
\item $\overline{\chi}^+_p$;
\item $-\frac{1}{p-1}c_p^-$ (if $p \mid d$) or $1$ (otherwise);
\item $\frac{3}{p+3}c_p$ (if $p \mid e$) or $1$ (otherwise).
\end{itemize}
Recall that $c_p(n) = -1$ if $p \nmid n$, and $c_p(n) = p-1$ if $p \mid n$.

Now if $z$ is any of the four possible products of the periodic functions ($-\frac{1}{p-1}c^-_p$ or $1$) and ($\frac{3}{p+3}c_p$  or $1$) other than the trivial product $z = 1 \cdot 1$ then we have $\Av |z| \ll \frac{1}{p}$ and so (by \eqref{fourier-avg}) $\Vert z \Vert^{\wedge}_{\infty} \ll \frac{1}{p}$. Since these functions $z$ all have $p$ as a period, their rational Fourier transforms are supported on rationals $\md{1}$ with denominator $p$ and so $\Vert z \Vert^{\wedge}_1 \ll p \Vert z \Vert^{\wedge}_{\infty} \ll 1$. This bound also holds when $z = 1$, when in fact $\Vert z \Vert^{\wedge}_1 = 1$.

Since, if $p \mid q$, $b^{(d,e)}_p = \overline{\chi}_p^+  z$ for one of the four possible choices of $z$, it follows from this, the bound $\Vert \overline{\chi}_p \Vert^{\wedge}_{\infty} \leq p^{-\beta_p/2}$ (cf. \eqref{back-to}) and \eqref{b1b2-noncoprime} that 
\[\Vert b_p^{(d,e)} \Vert^{\wedge}_{\infty} \leq \Vert \overline{\chi}_p^+ \Vert^{\wedge}_{\infty}  \Vert z \Vert^{\wedge}_1 \leq Cp^{-\beta_p/2}.\]
Taking products over $p | q$ and using \eqref{b1b2-coprime} gives, by \eqref{bp-prod}, 
\begin{equation}\label{mu-eta} \big\Vert \frac{\mu(d)}{\phi(d)} \eta(e) \overline{\chi}^+ c_d^- c_e  \big\Vert^{\wedge}_{\infty} \leq C^{\omega(q)} q^{-1/2} \ll \tau(q)^{O(1)} q^{-1/2}.\end{equation}

By \eqref{b1b2-coprime} (noting that the expression in \eqref{trip-prod} has period coprime to $q$, whereas the expression in \eqref{mu-eta} has period $q$), we obtain
\[ \big\Vert \frac{\mu(d)}{\phi(d)} \eta(e) \overline{\chi}^+   c_d^- c_e f_{Q_1}^+ f_{Q_2/d}^- h_{Q_3/e}\big\Vert^{\wedge}_{\infty} \ll \tau(q)^{O(1)} (\log^{O(1)} R) q^{-1/2},\] uniformly in $d, e | q$. Finally, summing over the $\tau(q)^2$ choices of $d, e$ and using \eqref{FFH} and the (very) crude bound $\frac{q}{\phi(q)} \leq \tau(q)$, we obtain \eqref{char-cor-2}.
\end{proof}

To conclude this subsection, we observe that Lemmas \ref{lem127} and \ref{lem127-trunc} and the triangle inequality immediately imply Proposition \ref{character-correlation} in the large conductor case $q \geq Q^{\frac{1}{2}}$.

\subsection{The case of small conductor}

We turn now to the small conductor case $q \leq Q^{\frac{1}{2}}$.  Let us look again at the expression \eqref{FFH} for the product $F_{\chi, Q_1}^+ \Lambda_{Q_2}^- H_{Q_3}$. 

We will compare this with a corresponding expression for $\tilde F_{\chi, R}^+ \tilde\Lambda^-_R \tilde H_R$. To obtain such an expression, we first introduce
\begin{equation}\label{tilde-f-def} \tilde f_R(n) := \sum_{\substack{r | R! \\ (r, q) = 1}} \frac{\mu(r)}{\phi(r)} c_r(n) = \prod_{\substack{p \leq R \\ p \nmid q}} \vp ,\end{equation} where the equivalence of the second and third expressions follows using multiplicativity and is explained after \eqref{caq-half}. By \eqref{caq-half}, we have 
\begin{equation}\label{f-tilde-little-f}  \tilde F_{\chi, R} = \frac{q}{\phi(q)} \overline{\chi} \tilde f_R.\end{equation} 
Write 
\begin{equation}\label{s-def} s := \prod_{p | q} \vp .\end{equation} Then, by \eqref{eq818}, 
\begin{equation}\label{lam-s-f} \tilde\Lambda_R = s \tilde f_R.\end{equation}
Next, introduce
\begin{equation}\label{tilde-h-def} \tilde h(n) = \sum_{\substack{r | R! \\ (r,q) = 1}} \eta(r)  c_r(n) = \prod_{\substack{p \leq R \\ p \nmid q}} \wp .  \end{equation}
The equivalence of the second and third terms here follows from \eqref{w-Fourier} and the multiplicativity of Ramanujan sums.  Finally, set
\begin{equation}\label{t-def} t := \prod_{p | q} \wp .\end{equation}
Then, by Definition \ref{h-tilde-def}, 
\begin{equation}\label{h-tilde-prod} \tilde H_R = t \tilde h.\end{equation}
Taking products of (shifted versions of) \eqref{f-tilde-little-f}, \eqref{lam-s-f} and \eqref{h-tilde-prod} gives
\begin{equation}\label{FFH-untruncated} \tilde F_{\chi, R}^+ \tilde\Lambda^-_R \tilde H_R =  \frac{q}{\phi(q)} \overline{\chi}^+   s^- t \tilde f_R^+ \tilde f_R^- \tilde h.\end{equation}

Now let $d, e$ be divisors of $q$ (as in \eqref{FFH}). Observe that (for any $Q_1, Q_2, Q_3, d, e$) the functions $f_{Q_1}$, $f_{Q_2/d}$ and $h_{Q_3/e}$ are Fourier-truncated versions, in the sense of Proposition \ref{ram-truncate}, of $\tilde f_R$, $\tilde f_R$ and $\tilde h$ respectively. This follows from simply inspecting the relevant definitions \eqref{fd-def}, \eqref{he-def}, \eqref{tilde-f-def} and \eqref{tilde-h-def}. Thus we may apply Proposition \ref{ram-truncate} with $X_1 = Q_1$, $X_2 = Q_2/d$, $X_3 = Q_3/e$, $B = 2$ and $(h_1, h_2, h_3) = (1, 0, -1)$ to get 
\begin{equation}\label{key-correlation-import} \big\Vert f_{Q_1}^+ f_{\frac{Q_2}{d}}^- h_{\frac{Q_3}{e}} - \tilde f_R^+ \tilde f_R^- \tilde h\big\Vert^{\wedge}_{\infty} \ll X^{-1 + o(1)} \ll Q^{-\frac{1}{2} + o(1)}.\end{equation} Here $X := \min(Q_1, \frac{Q_2}{d}, \frac{Q_3}{e})$, and $X \geq Q^{\frac{1}{2}}$ since $d, e \leq q \leq Q^{\frac{1}{2}}$.

Now (if $d, e | q$) the Fourier support of $\frac{\mu(d)}{\phi(d)} \eta(e)  \overline{\chi}^+ c_{d}^{-} c_e$ is on rationals whose denominators are composed of primes dividing $q$, whereas the Fourier support of $f_{Q_1}^+ f_{Q_2/d}^- h_{Q_3/e} - \tilde f_R^+ \tilde f_R^- \tilde h$ is on rationals whose denominators are coprime to $q$, as follows from the definitions \eqref{fd-def}, \eqref{he-def}, \eqref{tilde-f-def}, \eqref{tilde-h-def}. Therefore by \eqref{b1b2-coprime},  \eqref{mu-eta}  and \eqref{key-correlation-import} we have
\begin{align}\nonumber & \big\Vert  \frac{\mu(d)}{\phi(d)} \eta(e)  \overline{\chi}^+ c_{d}^{-} c_e (f_{Q_1}^+ f_{\frac{Q_2}{d}}^- h_{\frac{Q_3}{e}} - \tilde f_R^+ \tilde f_R^- \tilde h) \big\Vert^{\wedge}_{\infty} \\ & \leq \big\Vert \frac{\mu(d)}{\phi(d)}   \eta(e)  \overline{\chi}^+ c_{d}^{-} c_e  \big\Vert^{\wedge}_{\infty} \big\Vert f_{Q_1}^+ f_{\frac{Q_2}{d}}^- h_{\frac{Q_3}{e}} - \tilde f_R^+ \tilde f_R^- \tilde h\big\Vert^{\wedge}_{\infty} \ll Q^{-\frac{1}{2} + o(1)}. \label{1032}\end{align}
Now from the facts that $\vp  = 1 + \frac{\mu(p)}{\phi(p)} c_p$ (see \eqref{v-Fourier}) and $\wp  = 1 + \eta(p) c_p$ (see \eqref{w-Fourier}), the definitions \eqref{s-def}, \eqref{t-def} and the multiplicativity of $\mu, \phi, \eta$ and of $c_d(n)$ as a function of $d$, we have
\[ \sum_{d | q}  \frac{\mu(d)}{\phi(d)} c_d = s \quad \mbox{and} \quad \sum_{e | q} \eta(e) c_e = t.\]
Shifting the first equation by $-1$ and then summing \eqref{1032} over all divisors $d, e$ of $q$, we obtain (by the triangle inequality)
\[ \big\Vert \sum_{d, e | q} \frac{\mu(d)}{\phi(d)} \eta(e) \overline{\chi}^+c_d^- c_e f_{Q_1}^+ f_{\frac{Q_2}{d}}^- h_{\frac{Q_3}{e}} - \overline{\chi}^+ s^- t \tilde f_R^+ \tilde f_R^- \tilde h \big\Vert^{\wedge}_{\infty}  \ll \tau(q)^2 Q^{-\frac{1}{2} + o(1)}.\]
Multiplying through by $\frac{q}{\phi(q)}$ and comparing with \eqref{FFH}, \eqref{FFH-untruncated} we obtain
\[ \Vert F^+_{\chi, Q_1} \Lambda^-_{Q_2} H_{Q_3}  - \tilde F_{\chi, R}^+ \tilde\Lambda^-_R \tilde H_R  \Vert^{\wedge}_{\infty} \ll \tau(q)^3 Q^{-\frac{1}{2} + o(1)},\] which verifies Proposition \ref{character-correlation} in the small conductor case.

\section{Damping characters}\label{sec11}

\subsection{Introduction}

Before starting this section, the reader may wish to revisit the introductory discussion of Subsection \ref{subsec3.1}.

The main result of this section is Proposition \ref{key-proposition} below. It is the key input in constructing our damping function $D$. We begin by trying to motivate this result, which is not an easy task.

One of the key issues we face in this paper is that a Dirichlet character $\chi$ is not a Fourier-positive function. Indeed, if $\chi$ is primitive of conductor $q$, its rational Fourier coefficients will have magnitude $q^{-\frac{1}{2}}$ (by Lemma \ref{gauss-periodic}) but will not always be positive reals. We do, however, have $\chi(n)  \prec q^{\frac{1}{2}} 1_{q | n}$, because the function on the right \emph{is} Fourier-positive and its rational Fourier coefficients are $q^{-\frac{1}{2}}$ at at rationals $\frac{r}{q}$. The function $q^{\frac{1}{2}} 1_{q | n}$, whilst somewhat singular, is fairly small (of size $q^{-\frac{1}{2}}$) on average. 

Now in our paper we are faced with a function, namely $\Lambda_{\sharp, T,\sigmax}^+ \Lambda_Q^- H_Q$, which is a sum of objects involving shifted characters $\chi^+(n) = \chi(n+1)$. It is complicated, so for the purposes of this motivating discussion suppose we have a function $f(n) = 1 - \sum_{j \geq 2} j^{-2} \chi^+_j(n)$, which is a convergent sum of Dirichlet characters of the same general flavour. Suppose $\cond(\chi_j) = q_j$. Now as a first step we could consider $f(n) (1 + q_1^{\frac{1}{2}} 1_{q_1 | n})$, where the multiplying factor here is designed to dominate the Fourier oscillations from $\chi^+_1$. Unfortunately, though it achieves this aim, it introduces new cross-terms such as $-\frac{1}{4} \chi^+_2(n) (q_1^{\frac{1}{2}} 1_{q_1 | n})$, which still have large (but not necessarily positive) Fourier coefficients. 

To deal with this issue, we need a family of Fourier-positive functions like the $q^{\frac{1}{2}} 1_{q | n}$, but with the property that $\chi^+$ times any such function is Fourier-dominated by something like another function of the same type (where here $\cond(\chi)$ and $q$ might be different). Then, we can define $D$ as a suitable sum of such functions. 

A family of functions with something like this property is the set of convex combinations of the functions $\Phi_{q,a}$, which we introduced in Subsection \ref{subsec85}. Recall that $\Phi_{q,a}(n) := q^{\frac{5}{6}} 1_{q | n} e(\frac{an}{q^3})$. The reader will note that this is somewhat akin to the function $q^{\frac{1}{2}} 1_{q | n}$ appearing in the heuristic discussion above, only it is even more singular (but still with a small $\ell^1$ norm, of size $q^{-\frac{1}{6}}$) and now with an additional linear phase term. The need for the linear phase term arises from a resonance phenomenon which occurs when (for example), in the above heuristic discussion we have something like $q_1 = p$ and $\cond(\chi_2) = p^2$. Restricted to $n$ for which $p \mid n$, $\chi_2^+$ behaves like a linear phase; this sort of phenomenon ultimately comes from the Postnikov character formula (see Appendix \ref{appA}). 

Before turning to the precise statement of our main proposition, let us give some definitions. 

With $N$ the main global parameter in the paper, set
\begin{equation}\label{pi-def} \Pi := \prod_{p \leq N} p^N.\end{equation} 

\begin{definition}\label{def11.1}
Denote by $\mathscr{F}$ the class of all convex combinations $\sum_i \alpha_i \Phi_{q_i, a_i}$, where $q_i | \Pi$, $0 \leq \alpha_i \leq 1$, $\sum_i \alpha_i \leq 1$ and $a_i \in \Z/q_i^3 \Z$. Now let $\mathcal{P}$ be some finite set of primes. Write $\mathscr{F}_{\mathcal{P}}$ for the subclass spanned (as convex combinations) by those $\Phi_{q_i, a_i}$ where each $q_i$ is divisible by all the primes in $\mathcal{P}$. 
\end{definition}
\emph{Remarks.} Every element of $\mathscr{F}$ is Fourier-positive, since all the $\Phi_{q_i, a_i}$ are (by \eqref{phi-r-b-exp}) and by Proposition \ref{simple-pos-properties} (1). The role of $\Pi$ (which could be replaced by any fixed sufficiently composite large number) is to ensure that there are only finitely many functions $\Phi_{q,a}$ under consideration, and so the convex combinations may be assumed to be finite ones, and so every function in $\mathscr{F}$ is periodic. This is a very helpful technical convenience later on, specifically in Section \ref{sec13}, where we want to make sense of a convergent infinite sum of such functions.\vspace*{6pt}

Define $\nu(p)$, $p$ a prime, by
\begin{equation}\label{omega-p-def}\nu(p) := \left \{ \begin{array}{ll} 7 & \mbox{$p = 2$}; \\ 3 & \mbox{$3 \leq p \leq 17$}; \\ 2 & \mbox{$19 \leq p \leq 2^{18}$}; \\ 1 & \mbox{$p > 2^{18}$}. \end{array} \right.  \end{equation} (The exact values are unimportant but ones which work are given for definiteness; the key feature is that $\nu(p) = 1$ for sufficiently large $p$.)
Set
\begin{equation}\label{m-def} M := \prod_p \nu(p).\end{equation}

Here is the main result of the section.

\begin{proposition}\label{key-proposition}
Let $\chi$ be a primitive Dirichlet character with conductor $q$, and suppose that $q \mid \Pi$, where $\Pi$ is defined in \eqref{pi-def}. Let $\mathcal{P} = \mathcal{P}(\chi)$ be the set of primes dividing $q$. Let $F \in \mathscr{F}$. Then there is some $\tilde F = \tilde F(F, \chi) \in \mathscr{F}_{\mathcal{P}}$ such that $\chi^+  1_{(n-1,q) = 1} F \prec M \tilde F$, where $M$ is the absolute constant defined in \eqref{m-def}.
\end{proposition}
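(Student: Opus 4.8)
\textbf{Proof proposal for Proposition \ref{key-proposition}.}

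The plan is to reduce the statement to a purely local computation, one prime at a time, using the factorisation of $\chi$ as $\prod_{p\in\mathcal P}\chi_p$ (with $\chi_p$ of conductor $p^{\beta_p}$) together with the multiplicativity of the Ramanujan/divisibility structure. First I would write $F=\sum_i\alpha_i\Phi_{q_i,a_i}$ as a convex combination; since $\prec$ and $\mathscr{F}_{\mathcal P}$ are both preserved by non-negative linear combinations (Proposition \ref{simple-pos-properties}(2),(4) and the convexity built into Definition \ref{def11.1}), it suffices to treat a single $F=\Phi_{q_0,a_0}$ with $q_0\mid\Pi$. Next, replace $q_0$ by $\mathrm{lcm}(q_0,q)$: one checks that $\Phi_{q_0,a_0}\cdot 1_{(n-1,q)=1}$ is Fourier-dominated by (a convex combination of) functions $\Phi_{q_0',a_0'}$ with $q\mid q_0'\mid\Pi$, essentially because multiplying $1_{q_0\mid n}$ by $1_{(n-1,q)=1}$ and by $1$ can only refine the support and the phase $e(a_0 n/q_0^3)$ can be rewritten against the larger modulus. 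The upshot is that we may assume from the start that $q\mid q_0$, i.e. all the primes of $\mathcal P$ already divide $q_0$, and we want to produce $\tilde F\in\mathscr{F}_{\mathcal P}$ with $\chi^+\,\Phi_{q_0,a_0}\prec M\tilde F$.

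The heart of the argument is the local analysis at each $p\in\mathcal P$. Restricting to $n$ with $q_0\mid n$, I want to understand how $\chi_p^+(n)=\chi_p(n+1)$ behaves. This is exactly where the Postnikov character formula (Appendix \ref{appA}) enters: on the subgroup of residues $\equiv 0$ modulo a suitable power of $p$, $\chi_p$ agrees with an additive character of $p$-power modulus, up to a bounded-order correction. Concretely, writing $v_p(q_0)=:k_p\ge 1$, the function $n\mapsto\chi_p(n+1)$, restricted to $p^{k_p}\mid n$, should decompose as a short sum — of length $O(\omega(p))$ — of pure phases $e(b\,n/p^{m})$ for exponents $m\le 3k_p$ (this is what forces the choice of the exponent $3$ in the definition $\Phi_{q,a}(n)=q^{5/6}1_{q\mid n}e(an/q^3)$, and the weight $q^{5/6}$ is chosen so that $|\chi_p|\le 1$ is comfortably dominated by the normalising factor $p^{5k_p/6}$). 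For the small primes $p\le 2^{18}$ the decomposition may be cruder and require $\omega(p)\in\{2,3,7\}$ terms; the point is that $\omega(p)=1$ for all large $p$, so the product $\prod_{p\in\mathcal P}\omega(p)\le\prod_p\omega(p)=M$ converges and gives the loss $M$ in the statement. Assembling these local pieces by the multiplicativity of periodic functions with coprime periods (Lemma \ref{period-fourier}, especially \eqref{b1b2-coprime}) and the product rule for $\prec$ (Proposition \ref{simple-pos-properties}(4),(5)), one obtains $\chi^+\Phi_{q_0,a_0}\prec M\cdot(\text{convex combination of }\Phi_{q',b'}\text{ with }q\mid q'\mid\Pi)$, and the right-hand side is $M\tilde F$ with $\tilde F\in\mathscr{F}_{\mathcal P}$ as required. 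Tracking the $\ell^1$-normalisations carefully: each $\Phi_{q_0,a_0}$ has $\Vert\Phi_{q_0,a_0}\Vert^\wedge_1=q_0^{5/6}$ by \eqref{phi-ell1-bd}, and after multiplying by $\chi^+$ and refining to modulus dividing $q_0 q^{O(1)}\mid\Pi$ one must verify the new $\ell^1$ mass is absorbed into the $p^{5k/6}$ factors — this is the bookkeeping that pins down why $\tfrac56<1$ is essential.

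\textbf{Main obstacle.} The technical crux is the local decomposition of $\chi_p(n+1)$ on $p$-divisible residues into a bounded number of additive phases with controlled modulus and amplitude: one must handle the ramified behaviour of Dirichlet characters to prime-power modulus (where Postnikov's formula applies cleanly only above a certain level, roughly $p^{\lceil\beta_p/2\rceil}$ or $p^{\lceil\beta_p/3\rceil}$ depending on $p=2$ or $p$ odd), and cover the ``bottom'' levels $n$ with only a moderate power of $p$ dividing them by brute force, which is precisely why $\omega(2)$ has to be as large as $7$ and the odd small primes need $\omega(p)=2$ or $3$. Getting these constants right — and in particular checking that the shifted argument $n+1$ (via the $F^+$ notation) interacts correctly with the $1_{(n-1,q)=1}$ factor and the $e(bn/q'^3)$ twists without blowing up the modulus past $3k_p$ — will be the delicate part; everything else is an exercise in the positivity calculus of Section \ref{rat-fourier-sec}.
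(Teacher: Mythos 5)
Your overall architecture is the same as the paper's (reduce by convexity to a single $\Phi_{r,b}$, work one prime at a time, use Postnikov in the deeply ramified range, multiply the local losses $\omega(p)$ into $M$, and reassemble by coprimality as in Lemma \ref{crt}), but your reduction ``we may assume $q\mid q_0$'' is false, and it is the step on which you hang both the indicator $1_{(n-1,q)=1}$ and the requirement $\tilde F\in\mathscr{F}_{\mathcal P}$. The claim that $\Phi_{q_0,a_0}\cdot 1_{(n-1,q)=1}$ is Fourier-dominated by a bounded multiple of a convex combination of $\Phi_{q_0',a_0'}$ with $q\mid q_0'$ already fails for $q_0=1$ and $q=p$ a large prime: the left-hand side is $1_{n\not\equiv 1 \pmod p}$, whose constant rational Fourier coefficient is $1-\frac1p$, whereas every $\Phi_{q_0',a_0'}$ with $p\mid q_0'$ has all rational Fourier coefficients of modulus at most $(q_0')^{-1/6}\leq p^{-1/6}$; so no convex combination times the fixed constant $M$ can dominate it once $p>(2M)^6$. (Note also that $1_{(n-1,q)=1}$ does not ``refine the support'' to multiples of $q$ -- it forces $n\not\equiv 1$, not $n\equiv 0$, modulo each $p\mid q$ -- and support refinement would not yield $\prec$ in any case.) Indeed, if this reduction were available the proposition would be essentially trivial, since on multiples of $q$ one has $\chi(n+1)=1$; the whole point is that the modulus of the dominating functions can only be raised by spending the cancellation supplied by $\chi$ itself. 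In the paper the indicator is carried inside the local prime-power statement (Proposition \ref{main-pp-dirich}, where it costs a factor $2$ in a Gauss-sum bound and is vacuous in the deep range), and divisibility of the moduli of $\tilde F$ by every $p\in\mathcal P$ comes out automatically because the local dominants are taken in $\mathscr{F}_p^{\max(\beta,1)}\subseteq\mathscr{F}_p^1$.

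The local mechanism is also misstated. On $p^{k_p}\mid n$ with $k_p\leq t_p/3$, the function $\chi_p(n+1)$ has essentially full Fourier support modulo $p^{t_p}$ with coefficients of size about $p^{-(t_p-k_p)/2}$; it is neither equal to, nor dominated by, $O(\omega(p))$ pure phases of modulus at most $p^{3k_p}$. What actually works (and what the paper does) is to dominate it by $\omega(p)\,\Phi_{p^{t_p},0}$ -- one function, but at the much larger modulus $p^{t_p}$ -- which is possible only because of square-root cancellation in the Gauss sum together with the numerology $\tfrac12 t+\tfrac56\beta\leq\tfrac56 t-\tfrac{t}{18}$ for $\beta\leq t/3$; ``brute force'' at the bottom levels loses unbounded powers of $p$. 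In the deep range $\beta>t/3$, Postnikov produces a single \emph{quadratic} phase in $k$, and the quadratic term is handled not by a short phase decomposition but by Fourier domination against the geometric series $\sum_{u\geq 0}p^{-u/3}F_u$ over deeper levels $p^{\beta+u}$, via the quadratic exponential sum bound of Lemma \ref{quadratic-bd}; the constant $\omega(p)$ bounds the total weight of that series (together with the extra $\sqrt2$ at $p=2$ and the factor $2$ from the shallow case), rather than counting terms of a decomposition -- so the provenance of $\omega(2)=7$ and of the threshold $2^{18}$ is different from what you describe. Without repairing the lcm reduction and replacing the exact-decomposition claim by these domination estimates, the argument does not go through.
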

\emph{Remark.} The $1_{(n-1,q) = 1}$ term, which does not influence the analysis very much, is necessary later on. It did not feature in the heuristic discussion above.

The first main idea in the proof of Proposition \ref{key-proposition} is to work one prime at a time, and then take a product over primes at the end. Most of the section is devoted to the case $q = p^t$ a prime power.

\subsection{Prime power moduli -- setup}

\begin{definition}
For a given prime $p$ and exponent $\beta \geq 0$, denote by $\mathscr{F}^{\beta}_p$ the class of all finite convex combinations $\sum_i \alpha_i \Phi_{p^{\beta_i}, a_i}$, where $0 \leq \alpha_i \leq 1$, $\sum_i \alpha_i \leq 1$, the $\beta_i$ are positive integers with $\beta \leq \beta_i \leq N$, and $a_i \in \Z/p^{3\beta_i} \Z$. 
\end{definition}
\emph{Remark.} The upper bound $\beta_i \leq N$ ensures that $p^{\beta_i} \mid \Pi$, where $\Pi$ is defined in \eqref{pi-def}, and so $\mathscr{F}^{\beta}_p \subseteq \mathscr{F}$; once again, the role of this upper bound is to ensure that the spaces under consideration are finite-dimensional.

Note that each $\mathscr{F}_p^{\beta}$ is closed under convex combinations, and that we have the nesting \begin{equation}\label{nesting} \mathscr{F}_p^0 \supseteq \mathscr{F}_p^1 \supseteq \cdots.\end{equation} Every function in $\mathscr{F}_p^0$ is periodic and Fourier-positive (by Proposition \ref{simple-pos-properties} (1) and \eqref{phi-r-b-exp}). Here is the main result for prime-power moduli.

\begin{proposition}\label{main-pp-dirich} 
Let $p$ be a prime, and let $\chi$ be a primitive Dirichlet character to modulus $p^t$, $1 \leq t \leq N$. Let $F \in \mathscr{F}^{\beta}_p$. Then there is some $\tilde F = \tilde F(F, \chi) \in \mathscr{F}^{\max(\beta, 1)}_p$ such that $\chi^+ 1_{(p, n - 1) = 1} F \prec \nu(p)\tilde F$, where $\nu(p)$ is as defined in \eqref{omega-p-def}.\end{proposition}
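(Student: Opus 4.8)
\textbf{Proof proposal for Proposition \ref{main-pp-dirich}.}

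The plan is to analyse the behaviour of $\chi^+$ on the progressions $p^k \mid n$ using the Postnikov character formula, and to absorb the oscillation into a single function $\Phi_{p^m, a}$ for a suitably chosen $m$ and $a$. First I would dispose of the easy range: if the conductor $p^t$ is small, say $p^t \le p^{2\beta}$ or so, then $\Vert \chi^+ \Vert^{\wedge}_{\infty} \le p^{-t/2}$ by Lemma \ref{gauss-periodic}, and one can simply dominate $\chi^+ F$ by a constant multiple of $F$ itself (or of $\Phi_{p^{\max(\beta,1)},0}$), using that $\chi(n) \prec p^{t/2} 1_{p^t \mid n}$ together with the closure properties of $\prec$ in Proposition \ref{simple-pos-properties} and the $\ell^1$-bounds on the $\Phi$'s; the point is that the ``loss'' $p^{t/2}$ is beaten by the gain $p^{5t/6}$ built into $\Phi_{p^t, \ast}$. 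The $1_{(p,n-1)=1}$ factor is harmless here since it is Fourier-dominated by the constant function $\Lambda_{\Z/p\Z}^-$-type object of bounded $\ell^1$ norm, and multiplying by it only costs an $O(1)$ factor which is absorbed into $\omega(p)$.

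The substantive case is a primitive $\chi$ to modulus $p^t$ with $t$ large. Here I would split $F = \sum_i \alpha_i \Phi_{p^{\beta_i}, a_i}$ and treat each $\Phi_{p^{\beta_i},a_i}$ separately; by Proposition \ref{simple-pos-properties}(2),(4) it suffices to handle a single term, i.e.\ to show $\chi^+ 1_{(p,n-1)=1} \Phi_{p^\ell, a} \prec \omega(p) \tilde F$ for some $\tilde F \in \mathscr{F}_p^{\max(\beta,1)}$ with the $\ell$-normalisation tracked carefully (so that the resulting convex combination over $i$ still has total mass $\le 1$). Since $\Phi_{p^\ell,a}$ is supported on $p^\ell \mid n$, on this support write $n = p^\ell n'$ (or rather $n+1 = p^\ell m + (\text{something})$ — one must be careful with the shift by $1$), and invoke the Postnikov formula from Appendix \ref{appA}: for $\chi$ primitive mod $p^t$ with $t$ sufficiently large relative to $\ell$, the restriction of $\chi$ to $1 + p^s \Z$ (with $s$ around $t/2$ or $2t/3$) agrees with an additive character $n \mapsto e(c \cdot \log_p(n)/p^{t-s})$ or more precisely with a polynomial phase of controlled degree in the relevant variable. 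Combined with the indicator $p^\ell \mid n$, the upshot is that $\chi^+(n) 1_{p^\ell \mid n}$ equals (a character of bounded conductor in a ``low'' part of the variable) times (a pure linear phase $e(b n / p^{3\ell'})$ for appropriate $\ell' \ge \ell$ and $b$), which is exactly of the shape $\Phi_{p^{\ell'}, b}$ up to the normalising power of $p$. The exponents $\tfrac{5}{6}$ and $3$ in the definition of $\Phi$ are precisely chosen so that the bookkeeping of $p$-powers closes: the Gauss-sum loss $p^{t/2}$ from the genuinely-oscillating low part, against the gain $p^{5\ell'/6}$ and the freedom $\ell' \le N$, leaves room to spare, which is why $\omega(p) = 1$ for $p > 2^{18}$ and only finitely many small primes need the larger constants in \eqref{omega-p-def}.

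Concretely the key steps, in order, are: (1) reduce to a single $\Phi_{p^\ell,a}$ by linearity of $\prec$; (2) handle the small-conductor / small-$t$ case by the crude Gauss-sum bound; (3) in the large-$t$ case, use the Postnikov formula to write $\chi^+$ on the progression $p^\ell \mid n$ as $\psi(\text{low digits of } n) \cdot e(\text{linear phase in } n)$ with $\cond(\psi) \le p^{O(1)}$; (4) express the product $1_{p^\ell \mid n} \cdot e(\text{linear phase})$ as a constant multiple of some $\Phi_{p^{\ell'}, b}$, check $\ell' \in [\max(\beta,1), N]$, and bound $\psi \prec (\cond\psi)^{1/2} 1_{\cond\psi \mid n}$ to fold the low part in as well; (5) verify the normalisation so that summing over $i$ and over the decomposition keeps the total convex mass $\le 1$, picking up only the constant $\omega(p)$; (6) restore the $1_{(p,n-1)=1}$ factor at a cost of $O(1)$. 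The main obstacle I anticipate is step (3)–(4): getting the Postnikov formula to interact cleanly with the shift $n \mapsto n+1$ and with the divisibility $p^\ell \mid n$ (rather than $p^\ell \mid n-1$), and choosing the threshold $s$ in the Postnikov splitting so that the ``linear phase'' really has denominator a power of $p$ dividing $p^{3\ell'}$ with $\ell'$ not too large — this is where the constants $\tfrac56$ and $3$, and the case distinctions in $\omega(p)$ for small $p$ (where $t$ may be comparable to, not much larger than, the relevant exponents and the Postnikov expansion has more terms), have to be managed with care. I expect the genuinely delicate arithmetic to live in a lemma extracting, from Postnikov, the statement ``$\chi$ restricted to $p^\ell \mid n$ is dominated by a bounded multiple of a single $\Phi$'', and the rest of the proof to be assembly via Proposition \ref{simple-pos-properties}.
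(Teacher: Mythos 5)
Your overall architecture (reduce to a single $\Phi_{p^{\beta},a}$, use Gauss sums in one regime and Postnikov in the other, let the exponents $\tfrac56$ and $3$ absorb the losses) points in the right direction, and your reductions (1), (5), (6) do match what the paper does. But the core of the proposal has two genuine problems. First, your case split is backwards. The crude Gauss-sum domination works when the conductor is \emph{large} relative to the divisibility exponent, namely $\beta \leq t/3$, where everything is $p^t$-periodic and one dominates by $\omega(p)\Phi_{p^t,0}$ (whose Fourier support is all of $\tfrac{1}{p^t}\Z/\Z$); the inequality that closes is $\tfrac12 t + \tfrac56\beta \leq \tfrac56 t - \tfrac{1}{18}t$, which is exactly the condition $\beta \leq t/3$. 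In your claimed easy regime $p^t \leq p^{2\beta}$ (so $\beta \geq t/2$), domination by a constant multiple of $F$ itself or of $\Phi_{p^{\max(\beta,1)},0}$ is false except in the trivial sub-case $\beta \geq t$: by the Postnikov formula, on the support $p^{\beta}\mid n$ one has $\chi(n+1) = e((a_1k + a_2k^2)/p^{t-\beta})$ with $n = p^{\beta}k$, so $\chi^+ 1_{(p,n-1)=1}F$ carries rational Fourier mass at frequencies shifted by $a_1/p^t$, which lie outside the Fourier support of $F$ (a single coset of $\tfrac{1}{p^{\beta}}\Z$ shifted by $a/p^{3\beta}$) and outside that of $\Phi_{p^{\max(\beta,1)},0}$; since $\prec$ requires the dominating function to be at least as large at \emph{every} rational frequency, no constant multiple of either can work.

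Second, the factorization claimed in your steps (3)--(4) is not available. You cannot choose the Postnikov threshold $s$ freely: the function you must control is $\chi(1+p^{\ell}k)$ for \emph{all} $k$, so the relevant subgroup is $1+p^{\ell}\Z$ and Postnikov produces a polynomial phase of degree roughly $t/\ell$; for $t/3 < \ell < t$ it is quadratic, and when $\ell \leq t/3$ the degree exceeds $2$, so there is no decomposition into (bounded-conductor character) $\times$ (linear phase). In the genuinely quadratic case (with $a_2$ a unit) the Fourier transform of $\chi^+ 1_{p^{\ell}\mid n}$ is spread over essentially all frequencies mod $p^t$, large near the central frequency $a_1/p^t$ and of square-root size far from it; no single $\Phi_{p^{\ell'},b}$, which is flat on one coset, can Fourier-dominate such a multi-scale profile, even after multiplying by a bounded-conductor factor. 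The idea missing from your proposal — and the heart of the paper's argument — is to dominate by a geometric convex combination $\sum_{u\geq 0}\alpha_u(p) F_u$ with $F_u(n) = p^{\frac56(u+\beta)}1_{p^{u+\beta}\mid n}e(a_1 n/p^t)$ and $\alpha_u(p)\approx p^{-u/3}$, using the quadratic exponential sum bound (Lemma \ref{quadratic-bd}) to show that the coefficient of $\chi^+1_{(p,n-1)=1}F$ at $\lambda$ decays like $p^{-\frac12 u(\lambda)}$ as $\lambda$ approaches $a_1/p^t$, exactly fast enough that the weights sum to at most $\omega(p)$. Without that multi-scale damping (and with the Gauss-sum case assigned to $\beta\leq t/3$ rather than to small conductor), the proof as proposed does not go through.
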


We make some initial reductions. First, it suffices to handle the case $F = \Phi_{p^{\beta}, a}$ (for arbitrary $\beta$, not necessarily the same $\beta$). Indeed, suppose we have this case and write $F = \sum_i \alpha_i \Phi_{p^{\beta_i}, a_i}$. Suppose that $\chi^+ 1_{(p, n-1) = 1} \Phi_{p^{\beta_i}, a_i} \prec \nu(p) \tilde F_i$ where $\tilde F_i \in \mathscr{F}^{\max(\beta_i, 1)}_p$. By the nesting \eqref{nesting}, and the fact that $\beta_i \geq \beta$, we have $\tilde F_i \in \mathscr{F}_p^{\max(\beta, 1)}$. Then (using Proposition \ref{simple-pos-properties} (3) and (4)) we have $\chi^+ 1_{(p, n-1) = 1} F \prec \nu(p) \sum_i \alpha_i \tilde F_i $. Taking $\tilde F = \sum_i \alpha_i \tilde F_i$ completes the proof, noting that $\tilde F \in \mathscr{F}_p^{\max(\beta, 1)}$ because of the closure of this class under convex combinations.

We may further reduce to the case $a = 0$, that is to say $F(n) = p^{5\beta/6} 1_{p^{\beta} | n}$. Indeed, if for this $F$, $\chi^+ 1_{(n-1, p) = 1} F \prec \nu(p) \tilde F$ for some $\tilde{F} \in \mathscr{F}_p^{\max(\beta, 1)}$ then (by Proposition \ref{simple-pos-properties} (5))
\[ \chi^+(n) 1_{(n-1, p) = 1} F(n) e\big(\frac{an}{p^{3\beta}}\big) \prec \nu(p)\tilde F(n) e\big(\frac{an}{p^{3\beta}}\big) .\] The function $\tilde F(n) e(\frac{an}{p^{3 \beta}})$ can be written as a convex combination of functions of the form $\Phi_{p^{\tilde \beta}, \tilde a}(n) e(\frac{an}{p^{3\beta}}) = \Phi_{p^{\tilde \beta}, b}(n)$ where $b := \tilde a + p^{3(\tilde \beta - \beta)} a$, where $\tilde\beta \geq \max(\beta, 1)$.

These reductions reduce the task of proving Proposition \ref{main-pp-dirich} to the following.

\begin{proposition}\label{main-pp-dirich-reduce}
Let $\chi$ be a primitive Dirichlet character modulo $p^t$, $t \geq 1$. Let $F(n) = p^{5 \beta/6} 1_{p^{\beta} | n}$, where $\beta \geq 0$. Then there is some $\tilde F = \tilde F(F, \chi) \in \mathscr{F}_p^{\max(\beta, 1)}$ such that $\chi^+ 1_{(n-1, p) = 1} F \prec \nu(p) \tilde F$.
\end{proposition}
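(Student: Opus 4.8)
The plan is to split into cases according to the size of $\beta$ relative to $t$, using the Postnikov character formula (Appendix \ref{appA}) to understand the behaviour of $\chi^+$ on the arithmetic progression $n \equiv 1 \pmod{p^\beta}$ — which, after the shift, is exactly the set $\{n : p^\beta \mid n\}$ on which $F$ is supported (note $\chi^+(n) = \chi(n+1)$, and the condition $p^\beta\mid n$ means $n+1 \equiv 1 \pmod{p^\beta}$). First I would dispose of the trivial range $\beta \geq t$: there, on $p^\beta \mid n$ we have $n+1 \equiv 1 \pmod{p^t}$, so $\chi^+(n) = \chi(1) = 1$, and $1_{(n-1,p)=1}$ only shrinks things, so $\chi^+ 1_{(n-1,p)=1}F \prec F$, and since $F = \Phi_{p^\beta, 0} \in \mathscr{F}_p^{\beta} \subseteq \mathscr{F}_p^{\max(\beta,1)}$ (taking $\tilde F = F$) we are done with $\omega(p)=1$ more than sufficient. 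Similarly the case $\beta = 0$, $t=1$ (or more generally $\beta$ small, $t$ small) will need to be handled essentially by brute force: here the relevant modulus $p^{3\max(\beta,1)} = p^3$ is small, one can write $\chi^+(n)1_{(n-1,p)=1}$ out on residues mod $p^t$ and check Fourier-domination by an explicit convex combination of $\Phi_{p,a}$'s directly; this is where the numerically-determined constants $\omega(2)=7$, $\omega(p)=3$ for small $p$ come from. The tool throughout for verifying $g \prec f$ is Proposition \ref{simple-pos-properties}(7): it suffices to compute $\sum_{n \bmod M} g(n) e(-an/M)$ and $\sum_{n \bmod M} f(n) e(-an/M)$ for a common period $M$ (here a power of $p$) and check the modulus inequality.

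The heart of the matter is the main range, say $1 \leq \beta < t$ (and $\beta$ not too small relative to $t$). On the set $p^\beta \mid n$, write $n+1 = 1 + p^\beta m$; the Postnikov formula expresses $\chi(1 + p^\beta m)$, for $\beta$ large enough compared to $t$ (roughly $2\beta \geq t$, or $\beta$ at least around $t/2$), as $e(\psi(m)/p^{t-\beta})$ or more precisely as an additive character $e(c n / p^{t-\beta})$ in $n$ for a suitable $c$ — i.e. $\chi^+$ restricted to this progression literally \emph{is} a linear exponential phase of denominator a power of $p$ at most $p^{t-\beta} \leq p^{3\beta}$ (using $t \leq N$ and $\beta \geq 1$, with room to spare). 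Then $\chi^+(n) 1_{p^\beta\mid n} = e(cn/p^{t-\beta}) 1_{p^\beta\mid n}$ up to the harmless factor $1_{(n-1,p)=1}$, and this is, after absorbing the phase, proportional to some $\Phi_{p^\beta, a}$: precisely $p^{5\beta/6}1_{p^\beta\mid n} e(cn/p^{t-\beta}) = \Phi_{p^\beta,a}$ with $a = c p^{3\beta - (t-\beta)} = cp^{4\beta - t}$ (an integer since $4\beta - t \geq 0$ when $\beta \geq t/4$, which we can arrange — or more carefully one checks $4\beta \geq t$ in the regime where Postnikov applies, $2\beta \geq t$ already gives $4\beta \geq 2t \geq t$). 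So in this regime $\chi^+ 1_{(n-1,p)=1} F \prec \Phi_{p^\beta, a} = \tilde F \in \mathscr{F}_p^{\max(\beta,1)}$ exactly, again with constant $1$.

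The genuinely delicate range is the intermediate one where Postnikov's formula does \emph{not} give a clean single linear phase — roughly $1 \leq \beta < t/2$, where $\chi(1 + p^\beta m)$ is governed by a polynomial of degree $\geq 2$ in $m$ (the Postnikov polynomial). The strategy here is a \emph{descent/smoothing}: one does not understand $\chi^+$ pointwise on $p^\beta \mid n$, but one can dominate $\chi^+ 1_{p^\beta \mid n}$ by passing to a finer progression. Concretely, $\chi^+(n) 1_{p^\beta\mid n} = \frac{1}{p^{\beta'-\beta}}\sum_{n \equiv j \bmod{p^{\beta'}}, p^\beta \mid j}\chi(1+j)(\text{additive detection})$, i.e. one splits the progression $p^\beta\mid n$ into the $p^{\beta'-\beta}$ sub-progressions mod $p^{\beta'}$ for a suitable larger $\beta' \leq N$; choosing $\beta'$ so that $2\beta' \geq t$ puts each sub-piece into the clean Postnikov regime above, and on each $\chi^+$ becomes a linear phase, so $\chi^+ 1_{p^\beta\mid n}$ is Fourier-dominated (via the triangle inequality in Fourier space, Proposition \ref{simple-pos-properties}) by a convex combination of $\Phi_{p^{\beta'}, a}$'s — after renormalising the $p^{5\beta/6}$ versus $p^{5\beta'/6}$ prefactors, which is precisely where one loses a bounded constant: the ratio is $p^{5(\beta-\beta')/6}$ against a gain of $p^{\beta'-\beta}$ copies being averaged with weight $p^{-(\beta'-\beta)}$ each, but the domination is of $\ell^\infty$ Fourier norms so one compares $p^{5\beta/6}\cdot p^{-(\beta'-\beta)}\cdot(\text{number of pieces})= p^{5\beta/6}$ against needing $p^{5\beta'/6}$, costing a factor $p^{(5/6)(\beta'-\beta)}$ which must be absorbed into $\omega(p)$ — finite only because $\beta' - \beta$ is bounded (at most $\sim t/2$) only when $p$ is large; for small $p$ the bound $t \leq N$ doesn't help and one reverts to the brute-force tables, explaining the large value $\omega(2) = 7$ and the exact shape of \eqref{omega-p-def}. \textbf{The main obstacle} is precisely bookkeeping this descent: making the Postnikov formula (as proved in Appendix \ref{appA}) say exactly the right thing about the denominator of the resulting phase, checking that the $\Phi_{p^{\beta'},a}$ produced genuinely lie in $\mathscr{F}_p^{\max(\beta,1)}$ (one needs $\beta' \geq \max(\beta,1)$, automatic, and $\beta' \leq N$, which forces the regime $t \leq N$ to be used), and tracking the $p^{5/6}$-vs-$p$ normalisation loss so that it is genuinely bounded by $\omega(p)$ for every $p$ — the interplay of the exponent $5/6 < 1$ with the number of descent steps is what makes the argument work and is the one place where the specific exponent $\tfrac56$ in the definition of $\Phi_{r,b}$ is used.
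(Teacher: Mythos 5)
Your trivial case $\beta \geq t$ and your treatment of the clean regime where the character linearises ($2\beta\geq t$, so that $\chi(1+p^{\beta}m)$ is an additive character in $m$) are essentially sound, apart from a bookkeeping slip: as a function of $n$ the phase has denominator $p^{t}$, not $p^{t-\beta}$ (since $m=n/p^{\beta}$), though integrality of the resulting $a$ in $\Phi_{p^{\beta},a}$ still holds because $3\beta\geq t$ there. The genuine gap is your ``descent'' argument for the remaining range, i.e.\ $\beta$ small compared to $t$. Splitting $\{n: p^{\beta}\mid n\}$ into the $p^{\beta'-\beta}$ residue classes mod $p^{\beta'}$ (with $\beta'\approx t/2$ so that Postnikov linearises on each class) and then applying the triangle inequality in Fourier space loses an unbounded factor: each coset piece has rational Fourier coefficients of modulus $p^{\frac{5}{6}\beta-\beta'}$, so it is dominated only by $p^{-\frac{5}{6}(\beta'-\beta)}\Phi_{p^{\beta'},a_j}$, and summing over the $p^{\beta'-\beta}$ cosets gives total coefficient mass $p^{\frac{1}{6}(\beta'-\beta)}$ (your own accounting gives an even worse $p^{\frac56(\beta'-\beta)}$). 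With $\beta$ bounded and $\beta'\approx t/2$ this is roughly $p^{t/12}$, which grows with $t$ for \emph{every} fixed $p$, so it cannot be absorbed into an absolute constant $\omega(p)$. Your proposed fallback (``brute-force tables for small $p$'') does not repair this, because the problem is large $t$, not small $p$: the modulus $p^{t}$ is unbounded even for $p=2$. (Correspondingly, the values in \eqref{omega-p-def} do not come from finite tables; they come from explicit inequalities such as $2p^{-t/18}\leq\omega(p)$ and $2^{1/2}\bigl(2^{-1/3}+\sum_{i\geq 1}2^{-i/3}\bigr)<7$.) The structural reason your splitting cannot work is that the triangle inequality over cosets discards all cancellation between cosets, and for small $\beta$ that cancellation is exactly what is needed.

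The paper's proof supplies two mechanisms you are missing. For $\beta\leq t/3$ it uses the full Gauss sum bound $\bigl|\sum_{x\bmod p^{t}}\chi(x)e(-\lambda x/p^{t})\bigr|\leq p^{t/2}$ to show the whole exponential sum of $\chi^{+}1_{(n-1,p)=1}F$ is at most $2p^{t/2+\frac{5}{6}\beta}$, and dominates by the single function $\Phi_{p^{t},0}$ (modulus $p^{t}$, not $p^{\beta'}$), the numerical margin $2p^{-t/18}\leq\omega(p)$ coming precisely from $\beta\leq t/3$ and $\tfrac56<1$. For $t/3<\beta<t$ it applies the Postnikov formula with a \emph{quadratic} polynomial (no need for $2\beta\geq t$), and in the non-linear case uses the quadratic exponential sum estimate of Lemma \ref{quadratic-bd}, whose saving depends on $v_p(a_1-\lambda)$; this is matched against a convex combination $\sum_{u\geq 0}\alpha_u(p)F_u$ of functions $F_u=p^{\frac56(\beta+u)}1_{p^{\beta+u}\mid n}e(a_1n/p^{t})$ across the scales $p^{\beta+u}$, with geometric weights $\alpha_u(p)\asymp p^{-u/3}$ whose summability (this is where $\tfrac56<1$ really enters, via $\tfrac12-\tfrac16=\tfrac13$) gives $\sum_u\alpha_u(p)\leq\omega(p)$. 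So the delicate range is $t/3<\beta<t/2$ handled by quadratic Gauss sums and a multi-scale domination, not the range you identified, and your argument as written does not close it.
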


The proof of this falls into two somewhat distinct parts, according to the size of $\beta$. When $\beta \leq t/3$ we can use Gauss sum estimates: see subsection \ref{small-beta-case-p}. When $\beta > t/3$, if $p$ is odd we use the fact that the values $\chi(1 \pm p^{\beta} k)$ are polynomial phases in $k$ of degree at most $2$ by a special case of a well-known result of Postnikov. The case when $t/3 < \beta < t/2$, where the aforementioned phases are quadratic, is the most delicate part of the argument. For the details, see subsection \ref{large-beta-case-p}.

\subsection{The case of small $\beta$}\label{small-beta-case-p}

The case $\beta \leq t/3$ of Proposition \ref{main-pp-dirich-reduce} can be handled with Gauss sum estimates. 

\begin{proof}[Proof of Proposition \ref{main-pp-dirich-reduce}, $\beta \leq t/3$]
We use the identities
\[  1_{p^{\beta} | n} = p^{-\beta} \sum_{a \in \Z/p^{\beta} \Z} e\big(\frac{an}{p^{\beta}}\big) \quad \mbox{and} \quad 1_{(p, n-1) = 1} = 1 - \frac{1}{p} \sum_{a \in \Z/p\Z} e\big(\frac{a(n-1)}{p}\big)\] and the Gauss sum estimate
\begin{equation}\label{gauss-sum} \bigg| \sum_{x \in \Z/p^t\Z} \chi(x) e\big(-\frac{\lambda x}{p^t}\big) \bigg| \leq p^{t/2},\end{equation} which holds for all $\lambda \in \Z/p^t \Z$, even those divisible by $p$ (when the left-hand side is zero) by \cite[equation (3.12)]{ik} and the remarks following it. It follows that for all $\lambda \in \Z/p^t \Z$ we have

\begin{equation}\label{chi-f-lam} \bigg|\sum_{n \in \Z/p^t \Z} \chi(n+1) 1_{(p, n-1) = 1} F(n) e\big(-\frac{\lambda n}{p^t}\big)\bigg| \leq 2 p^{\frac{1}{2}t + \frac{5}{6}\beta},\end{equation}
where here $F(n) = p^{\frac{5\beta}{6}}1_{p^{\beta} | n}$ as in the statement of Proposition \ref{main-pp-dirich-reduce}. Since 
\begin{equation}\label{phi-t-56} \sum_{n \in \Z/p^t \Z} \Phi_{p^t, 0}(n) e\big(-\frac{\lambda n}{p^t}\big) = p^{\frac{5}{6}t},\end{equation} it follows from Proposition \ref{simple-pos-properties} (7) that 
\begin{equation} \label{chi-p-om} \chi(n+1) 1_{(p, n-1) = 1} F(n) \prec \nu(p)\Phi_{p^t,0}.\end{equation} Here we made use of the facts that $\frac{1}{2}t  + \frac{5}{6}\beta \leq (\frac{5}{6} - \frac{1}{18})t$ (which follows from the assumption that $\beta \leq t/3$) and $2p^{-t/18} \leq \nu(p)$ (which is a consequence of the definition \eqref{omega-p-def} of $\nu(p)$). Thus we may take $\tilde F = \Phi_{p^t, 0}$, noting that this lies in $\mathscr{F}_p^t$, which is contained in $\mathscr{F}_p^{\max(\beta, 1)}$ since $t \geq \max(3\beta, 1)$.
\end{proof}

It is convenient to note that the same analysis also handles the case $p = 2$ and $t \in \{1,2\}$ and any $\beta \leq t$, since $\nu(2) > 4$. Indeed, from \eqref{chi-f-lam}, \eqref{phi-t-56} and the fact that $2\cdot 2^{\frac{1}{2}t + \frac{5}{6}\beta} \leq 4 \cdot 2^{\frac{5}{6} t}$ when $\beta \leq t \leq 2$, we still have \eqref{chi-p-om}. This is convenient, as these small cases will be mildly exceptional in the analysis of the next subsection.

\subsection{The case of large $\beta$}\label{large-beta-case-p}

We turn now to the case $\beta > t/3$ of Proposition \ref{main-pp-dirich-reduce}. We assume throughout the section that $p^t \neq \{2,4\}$, because this case can be handled as described at the end of the last subsection. Note that since $t \geq 1$, in this case we automatically have $\beta \geq 1$ and so $\max(\beta, 1) = \beta$.
Let us first of all note that the case $\beta \geq t$ is trivial, because then if $p^{\beta} \mid n$ then $\chi(n+1) 1_{(p, n-1) = 1} = 1$ and so $\chi(n+1) 1_{(p, n-1) = 1} F(n) = F(n)$ for all $n$, so we may simply take $\tilde F = F$. Suppose, then, for the remainder of this section that $t/3 < \beta < t$, and write, for $\lambda \in \Z/p^t\Z$,
\begin{align*}  S(\lambda) & := \sum_{n \in \Z/p^t \Z} \chi(n+1) 1_{(p, n-1) = 1} 1_{p^{\beta} | n} e\big(-\frac{\lambda n}{p^t}\big) \\ & = \sum_{k \in \Z/p^{t - \beta} \Z} \chi(1 + p^{\beta} k)  e\big(-\frac{\lambda k}{p^{t - \beta}}\big).
\end{align*}
We now need a consequence of a result which is usually known as the Postnikov character formula. For a self-contained treatment of this topic, see Appendix \ref{appA}. 

It follows from Proposition \ref{post-formula} and the comments immediately following it that, since $p^t \notin \{2, 4\}$, there are $a_1, a_2 \in \Z$ such that 
\begin{equation}\label{s-lam-poly} S(\lambda) =  \sum_{k \in \Z/p^{t - \beta} \Z} e\bigg(\frac{(a_1 - \lambda) k + a_2 k^2 }{p^{t - \beta}}\bigg).\end{equation}

We now divide into two cases (it is important to note that the division into cases depends only on $\chi, \beta$, and \emph{not} on $\lambda$). 

\emph{Case 1 \textup{(}Purely linear case\textup{)}:}  we have $a_2 \equiv 0 \md{p^{t - \beta}}$. In this case we have simply

\[  S(\lambda)  = \sum_{k \in \Z/p^{t - \beta} \Z} e\bigg(\frac{(a_1 - \lambda) k}{p^{t - \beta}}\bigg) = \sum_{n \in \Z/p^t \Z} 1_{p^{\beta}| n} e\big(\frac{a_1 n}{p^t}\big) e\big(-\frac{\lambda n}{p^t}\big).\]
Therefore if $F(n) = p^{\frac{5}{6}\beta} 1_{p^{\beta} | n}$ then, by Proposition \ref{simple-pos-properties} (7) we have $\chi^+ 1_{(p, n-1) = 1} F \prec \tilde F$, where $\tilde F(n) := p^{\frac{5}{6}\beta} 1_{p^{\beta} | n} e(\frac{a_1 n}{p^t})$. Note that $\tilde F(n)$ is of the form $\Phi_{p^{\beta}, b}$, since we may write $\frac{a_1}{p^t} = \frac{b}{p^{3\beta}}$ since $\beta > t/3$. This confirms Proposition \ref{main-pp-dirich-reduce} in Case 1, the purely linear case.

\emph{Case 2 \textup{(}Not purely linear case\textup{)}:} $a_2 \not\equiv 0 \md{p^{t - \beta}}$. Suppose first that $p$ is odd; minor modifications, which we will indicate at the end, are required when $p = 2$. 

Given $\lambda$, define $u(\lambda)$ by
\begin{equation}u(\lambda) := \left\{ \begin{array}{ll} 0 & \mbox{if $p^{t - \beta} | a_1 - \lambda$} \\ t - \beta - v_p(a_1 - \lambda)  & \mbox{otherwise}.\end{array}  \right. \end{equation}
Thus $0 \leq u(\lambda) \leq t - \beta$; note that the definition has been made so that it is well-defined for $\lambda \in \Z/p^t\Z$. Lemma \ref{quadratic-bd} (and \eqref{s-lam-poly}) tells us that 
\begin{equation}\label{linear-est} |S(\lambda)| \leq p^{t - \beta - \frac{1}{2}u(\lambda)}.\end{equation}
When $u(\lambda) = 0$ we can do better: in this case we have $v_p(a_2)  \leq t - \beta - 1$ by the assumption that we are in the not purely linear case, and so Lemma \ref{quadratic-bd} gives
\begin{equation}\label{nonlinear-est} |S(\lambda)| \leq p^{t - \beta - \frac{1}{2}}.\end{equation}
Let $0 \leq v \leq t - \beta$. Define
\[ F_v(n) := p^{\frac{5}{6}(v + \beta)}1_{p^{v + \beta} | n} e\big(\frac{a_1 n}{p^t}\big).\]We note that $F_v$ is of the form $\Phi_{p^{\tilde\beta}, \tilde a}$ with $\tilde \beta \geq \beta$, so in particular lies in $\mathscr{F}_p^{\beta}$. Indeed, we can take $\tilde\beta = \beta + v$ and then write $\frac{a_1}{p^t} = \frac{\tilde a}{p^{3\tilde \beta}}$ since $3 \tilde \beta = 3(\beta + v) \geq 3 \beta > t$. Now (since $v \leq t - \beta$) $F_v(n)$ is a function on $\Z/p^t\Z$. We may compute its $\md{p^t}$ Fourier coefficient at $\lambda$ as follows:

\begin{align}\nonumber \sum_{n \in \Z/p^t \Z} F_v(n) e\big(-\frac{\lambda n}{p^t}\big) & =  p^{\frac{5}{6}(v + \beta)}  \sum_{k  \in \Z/p^{t - v - \beta} \Z} e\bigg(\frac{(a_1 - \lambda) k}{p^{t - v - \beta}}\bigg) \\ & = \left\{  \begin{array}{ll} 0 & \mbox{if $v < u(\lambda)$} \\ p^{\frac{5}{6}(v + \beta) + t - v - \beta} & \mbox{if $v \geq u(\lambda)$} .\end{array} \right.\label{fu-fourier} \end{align}

In particular, comparing with \eqref{linear-est} one sees that (writing $u = u(\lambda)$ for brevity)

\begin{align*}
\bigg| \sum_{n \in \Z/p^t \Z} & \chi(n+1) 1_{(n-1, p) = 1} F(n) e\big(-\frac{\lambda n}{p^t}\big)\bigg|  = p^{\frac{5}{6}\beta}|S(\lambda) |  \leq p^{\frac{5}{6}\beta + t - \beta - \frac{1}{2}u} \\  & = p^{-\frac{1}{3} u} p^{\frac{5}{6}(u + \beta) + t - u - \beta}  = p^{-\frac{1}{3} u} \sum_{n \in \Z/p^t \Z} F_u(n) e\big(-\frac{\lambda n}{p^t}\big) .
\end{align*}
When $u(\lambda) = 0$, we may use \eqref{nonlinear-est} instead of \eqref{linear-est}, obtaining the bound
\[ \bigg|\sum_{n \in \Z/p^t \Z} \chi(n+1) 1_{(n-1, p) = 1} F(n) e\big(-\frac{\lambda n}{p^t}\big) \bigg| \leq p^{-\frac{1}{3}} \sum_{n \in \Z/p^t \Z} F_0(n) e\big(-\frac{\lambda n}{p^t}\big) .\]

Since the Fourier coefficients of the $F_v$ are always non-negative (by \eqref{fu-fourier}) it follows that for \emph{all} $\lambda$ (no matter the value of $u(\lambda)$) we have

\[ \bigg|\sum_{n \in \Z/p^t \Z} \!\!\chi(n+1) 1_{(n-1, p) = 1}F(n) e\big(-\frac{\lambda n}{p^t}\big)\bigg| \leq  \sum_{u = 0}^{t - \beta} \alpha_u(p) \!\!\! \sum_{n \in \Z/p^t \Z} \!\! F_u(n) e\big(-\frac{\lambda n}{p^t}\big) ,\]

where $\alpha_0(p) = p^{-\frac{1}{3}}$ and $\alpha_u(p) = p^{-\frac{1}{3}u}$ for $u = 1,2,\dots$.

Thus, by another application of Proposition \eqref{simple-pos-properties} (7) we have  \[ \chi(n+1) 1_{(n-1, p) = 1} F(n) \prec \sum_{u = 0}^{t - \beta} \alpha_u(p) F_u(n).\]
Recalling that each $F_u$ lies in $\mathscr{F}_p^{\beta}$, we obtain the desired conclusion of Proposition \ref{main-pp-dirich-reduce} provided that 
\begin{equation}\label{om-check} \sum_{u = 0}^{t - \beta} \alpha_u(p) \leq \nu(p).\end{equation}
However, we have \[ \sum_{u = 0}^{t - \beta} \alpha_u(p) \leq p^{-\frac{1}{3}} + \sum_{i = 1}^{\infty} p^{-\frac{1}{3} i},\] and a calculation shows that this is at most $1$ for $p \geq 19$, and  less than $3$ for all odd $p$. Therefore \eqref{om-check} does indeed hold.

When $p = 2$, the bounds on $|S(\lambda)|$ coming from Lemma \ref{quadratic-bd} are weaker by a factor of $2^{\frac{1}{2}}$. This means we must take the $\alpha_u$ larger by this factor, and now the required numerical check is that $ 2^{\frac{1}{2}} (2^{-\frac{1}{3}} + \sum_{i = 1}^{\infty} 2^{-\frac{1}{3} i}) < 7 = \nu(2)$.

This completes the proof of Proposition \ref{main-pp-dirich-reduce} in Case 2, and hence of the whole proposition.

\subsection{General moduli}

We turn now to the proof of the main result of the section, Proposition \ref{key-proposition}. This essentially amounts to taking products of Proposition \ref{main-pp-dirich} over different primes $p$, but one must be slightly careful.

\begin{lemma}\label{crt}
Suppose that $q = q_1 q_2$ with $(q_1, q_2) = 1$. Then every $\Phi_{q_1, a_1} \Phi_{q_2, a_2}$ is of the form $\Phi_{q,a}$, and vice versa. 
\end{lemma}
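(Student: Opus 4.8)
The plan is to unwind the definition~\eqref{phi-a-q-def} of $\Phi_{r,b}$ and use the Chinese Remainder Theorem on the two coprime moduli. Recall that $\Phi_{r,b}(n) = r^{5/6} 1_{r | n} e(bn/r^3)$. So the product in question is
\[ \Phi_{q_1, a_1}(n) \Phi_{q_2, a_2}(n) = (q_1 q_2)^{5/6} 1_{q_1 | n} 1_{q_2 | n} \, e\Big( \frac{a_1 n}{q_1^3} + \frac{a_2 n}{q_2^3} \Big). \]
Since $(q_1, q_2) = 1$ we have $1_{q_1|n} 1_{q_2|n} = 1_{q|n}$ where $q = q_1 q_2$, and $(q_1 q_2)^{5/6} = q^{5/6}$. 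It remains to rewrite the phase: because $q_1^3$ and $q_2^3$ are coprime and their product is $q^3$, the fraction $\frac{a_1}{q_1^3} + \frac{a_2}{q_2^3}$ can be written as $\frac{a}{q^3}$ for some $a \in \Z/q^3\Z$ (explicitly $a \equiv a_1 q_2^3 + a_2 q_1^3 \pmod{q^3}$, though one need not record this). Hence $\Phi_{q_1,a_1}\Phi_{q_2,a_2} = \Phi_{q,a}$.

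For the converse, given $\Phi_{q,a}$ with $q = q_1 q_2$, $(q_1,q_2)=1$, one uses CRT in the opposite direction: since $q^3 = q_1^3 q_2^3$ with $q_1^3, q_2^3$ coprime, the map $\Z/q^3\Z \to \Z/q_1^3\Z \times \Z/q_2^3\Z$ is an isomorphism, so there exist $a_1 \in \Z/q_1^3\Z$ and $a_2 \in \Z/q_2^3\Z$ with $\frac{a}{q^3} \equiv \frac{a_1}{q_1^3} + \frac{a_2}{q_2^3} \pmod 1$; combined with $1_{q|n} = 1_{q_1|n}1_{q_2|n}$ and $q^{5/6} = q_1^{5/6} q_2^{5/6}$ this gives $\Phi_{q,a} = \Phi_{q_1,a_1}\Phi_{q_2,a_2}$.

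There is essentially no obstacle here — this is a bookkeeping lemma whose only content is the elementary fact that partial-fraction/CRT decomposition of $\tfrac{a}{q^3}$ across coprime factors works, together with multiplicativity of $1_{q|n}$ and of the normalising power $q^{5/6}$. The one small point worth being careful about is that $q | \Pi$ implies $q_1 | \Pi$ and $q_2 | \Pi$, so both factors genuinely lie among the allowed functions $\Phi_{\cdot,\cdot}$, and conversely $q_1, q_2 | \Pi$ with $(q_1,q_2)=1$ gives $q = q_1 q_2 | \Pi$; I would mention this in one sentence so the lemma dovetails cleanly with the membership conditions of Definition~\ref{def11.1}. I would present the computation for the forward direction in a single display and remark that the converse is the same identity read backwards.
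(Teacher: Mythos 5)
Your proof is correct and is exactly the paper's argument — the paper simply states that the lemma "follows fairly immediately from the definition of the $\Phi_{q,a}$ and the Chinese remainder theorem," and your write-up fills in those same CRT/partial-fraction details (including the correct explicit $a \equiv a_1 q_2^3 + a_2 q_1^3 \pmod{q^3}$). Nothing further is needed.
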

\begin{proof}
This follows fairly immediately from the definition \eqref{phi-a-q-def} of the $\Phi_{q,a}$ and the Chinese remainder theorem.
\end{proof}

Finally, we are ready for the proof of the main result of the section.

\begin{proof}[Proof of Proposition \ref{key-proposition}]
It suffices to consider the case $F = \Phi_{r,b}$, from which the general case follows by taking appropriate convex combinations (using Proposition \ref{simple-pos-properties}). Suppose that $r = r' q$ where $q = \prod_{p \in \mathcal{P}} p^{\beta_p}$, and $r'$ is not divisible by any prime in $\mathcal{P}$, so that $(r', q) = 1$. We have $F = \Phi_{r,b} = \Phi_{r', b'}\Phi_{q,a}$ where 
$\Phi_{q,a} = \prod_{p \in \mathcal{P}} \Phi_{p^{\beta_p}, a_p}$, and here $a \in \Z/q^3 \Z$,  $a_p \in \Z/p^3 \Z$ and $b' \in \Z/(r')^3 \Z$. We also (see Appendix \ref{char-app}) have a factorisation $\chi = \prod_{p \in \mathcal{P}} \chi_p$, where each $\chi_p$ is a primitive Dirichlet character to modulus $p^{t_p}$, where $\prod_p p^{t_p} = q$ (and so, in particular, $t_p \geq 1$ for every $p \in \mathcal{P}$). 

By Proposition \ref{main-pp-dirich} (taking $\beta = 0$), for every $p$ there is some $\tilde F_p \in \mathscr{F}_p^1$ such that 
\[ \chi_p^+ 1_{(p, n-1) = 1} \Phi_{p^{\beta_p}, a_p} \prec \nu(p) \tilde F_p.\]
Taking products over $p \in \mathcal{P}$ (and using Proposition \ref{simple-pos-properties} (4)) gives
\[ \chi^+ 1_{(q, n-1) = 1} \Phi_{q, a} \prec M \prod_p \tilde F_p,\] where recall that $M := \prod_p \nu(p)$ with $\omega$ as defined in \eqref{omega-p-def}.
Finally, we may multiply both sides by $\Phi_{r', b'}$, which is Fourier-positive, so by Proposition \ref{simple-pos-properties} (3) we have
 \[ \chi^+ 1_{(q, n-1) = 1} F \prec M \tilde F,\] where
 $\tilde F := \Phi_{r', b'}\prod_{p \in \mathcal{P}} \tilde F_p$. Finally, we observe using Lemma \ref{crt} that $\tilde F$ is a convex sum over functions $\Phi_{q', a'}$, where $r' \prod_{p \in \mathcal{P}} p$ divides $q'$, the key here being that $r'$ is coprime to all the $p$. In particular, $\tilde F$ lies in $\mathscr{F}_{\mathcal{P}}$, which is what we wanted to prove.
\end{proof}

\section{An archimedean construction}\label{sec12}

\subsection{Introduction and statement of results}

In this section we describe the function $w$ in the definition \eqref{psi-prod} of our key function $\Psi$. We need $w$ to have a fairly long list of properties, which are detailed in Proposition \ref{prop15.2}, but the most important is that the exponential sum of $w(n)$ twisted by $n^{\rho - 1}$ can be controlled in terms of the exponential sum of $w$ alone (see items (4) and (5) of Proposition \ref{prop15.2}). The requirement for this property means that the most standard kind of cutoff functions such as the Fej\'er kernel are not sufficient for our purposes, and we instead make a construction modelled on the function $W(x) = x^{-\frac{1}{2}} e^{-x}$. Some further remarks on the particular choice of $W$ may be found at the end of Lemma \ref{gamma-t}.

\begin{proposition}\label{prop15.2} There is $w : \N \rightarrow [0, \infty)$ such that we have the following, for all $\rho = 1 - \sigma + i \gamma$ with $\sigma \leq \frac{1}{48}$ and $|\gamma| \leq N^{\frac{1}{8}}$:
\begin{enumerate}
\item $\sum_{n =1}^N n^{-\sigma} w(n)  \ll N^{1 - \sigma}$;
\item $\sum_{n = 1}^N w(n) \gg \frac{N}{\log N}$;\item uniformly for $\eta \in \R/\Z$ with $\Vert \eta\Vert \geq N^{-\frac{1}{2}}$ we have
\[ \sum_{n=1}^N n^{\rho - 1} w(n) e(\eta n) \ll N^{1 - \frac{1}{12}}.\]
\item uniformly for $\eta \in \R/\Z$ 
\begin{equation}\label{prop15.2.3} |\sum_{n = 1}^N n^{\rho - 1} w(n)  e(\eta n) | \leq 11 N^{-\frac{1}{4}\sigma} \sum_{n =1}^N w(n) \cos(2\pi \eta n) + O(N^{1 - \frac{1}{12}}).\end{equation}
\item If $\rho = 1 -\sigma$ is real we have, uniformly for $\eta \in \R/\Z$,
\[ \sum_{n = 1}^N n^{\rho - 1} w(n)  \cos(2\pi \eta n) \leq N^{-\frac{1}{4}\sigma} \sum_{n = 1}^N w(n) \cos(2\pi \eta n) + O(N^{1 - \frac{1}{12}}).\]
\item For any $X$, $1 \leq X \leq N$, we may write $w = w_0 + w_1$ where 
\begin{equation}\label{w0-bds} \sum_{n} w_0(n) \ll NX^{-1}\end{equation}
and $w_1(n) = \int_{\R} a(\xi) e( \frac{\xi n}{Y}) d\xi$ where $Y := \frac{N}{2 \log N}$ and
\begin{equation}\label{a-ell-1} \int_{\R} |a(\xi)| d\xi \ll N^{o(1)} X.\end{equation}
\end{enumerate}
\end{proposition}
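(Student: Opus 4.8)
\textbf{Proof proposal for Proposition \ref{prop15.2}.}

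The plan is to build $w$ explicitly from the model function $W(x) = x^{-1/2} e^{-x}$, rescaled so that the exponential sum $\sum_{n \le N} w(n) e(\eta n)$ essentially becomes a Riemann sum for the Fourier transform $\widehat{W}$. Concretely, I would set $Y := \frac{N}{2\log N}$ and define $w(n) := W(n/Y) = (n/Y)^{-1/2} e^{-n/Y}$ for $1 \le n \le N$ (and $0$ otherwise); the factor $2\log N$ in $Y$ ensures that $e^{-n/Y}$ is genuinely decaying across the range $n \le N$, giving the upper bound in item (1) and, since $e^{-n/Y} \ge e^{-2}$ for $n \le Y$ say, the lower bound $\sum_{n=1}^N w(n) \gg Y \gg N/\log N$ of item (2). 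The key analytic input, to be isolated as a lemma (the ``$\Gamma$-lemma'' alluded to in the excerpt as Lemma \ref{gamma-t}), is an estimate for the oscillatory integral $\int_0^\infty x^{s-1} e^{-x} e(\beta x)\, dx = \Gamma(s)(1 - 2\pi i \beta)^{-s}$ and its truncations: one needs that $\widehat{W}$ and the twisted versions $\widehat{x^{\rho-1} W}$ have controlled polynomial decay (like $|\xi|^{-1/2}$) and, crucially, \emph{no zeros}, so that the twisted Fourier transform is pointwise dominated by the untwisted one up to a factor $N^{-\sigma/4}$-ish. This is exactly the property the tent function fails, and it is why $W(x) = x^{-1/2} e^{-x}$ (whose transform is a power of $(1 - 2\pi i \xi)$, manifestly zero-free) is forced on us.

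The main steps, in order: (a) Prove the $\Gamma$-lemma: bound $\int_0^\infty x^{\rho-1} W(x) e(\xi x)\,dx$ and relate it to $\int_0^\infty W(x) e(\xi x)\, dx$, getting the factor $N^{-\sigma/4}$ from the shift $x^{-\sigma}$ together with the slow decay of $W$ (here $\sigma \le \frac{1}{48}$ and the exponent $\frac{1}{4}$ are tuned so everything is comfortably consistent). (b) Pass from the integral to the sum $\sum_{n \le N} n^{\rho-1} w(n) e(\eta n)$ by Euler–Maclaurin / Poisson summation: write $\eta = a/q + \text{small}$ or simply use that $w = W(\cdot/Y)$ is smooth away from $0$ with derivative bounds $\|w^{(j)}\|_\infty \ll Y^{-j}$, so the sum equals $Y \widehat{W_\eta}(\cdot)$ up to error terms; the endpoint $n$ near $0$ contributes $O(\sqrt Y)$ because of the integrable singularity $x^{-1/2}$, which is acceptable. (c) Deduce item (3): when $\|\eta\| \ge N^{-1/2}$, the relevant frequency in the Fourier transform is $\gg Y/N \cdot \|\eta\| N = \cdots$ large, so $\widehat{W}$ at that scale is $\ll (\|\eta\| Y)^{-1/2}$ and multiplying by $Y$ and using $\|\eta\| \ge N^{-1/2}$, $Y \asymp N/\log N$ gives $\ll N^{1-1/12+o(1)}$ — one checks the exponents close (this is where $Y$'s precise size matters). (d) Deduce items (4) and (5): split the sum at $\|\eta\| = N^{-1/2}$; for $\|\eta\| \ge N^{-1/2}$ use item (3) to absorb into the $O(N^{1-1/12})$ error, and for $\|\eta\| < N^{-1/2}$ use the $\Gamma$-lemma's domination statement (the zero-freeness of $\widehat W$) to bound $|\sum n^{\rho-1} w(n) e(\eta n)|$ by $11\, N^{-\sigma/4} \sum w(n)\cos(2\pi\eta n) $ plus error; for real $\rho$ in item (5), $\widehat{x^{-\sigma} W}(\eta)$ is a positive real multiple of $\widehat W$ so the constant improves from $11$ to $1$. (e) Prove item (6) by literally cutting $W$ at a scale: set $w_0(n) := w(n) 1_{n \le Y/X \text{ or } n > Y \log X}$-type truncation so that $\sum w_0 \ll N/X$ (both the $x^{-1/2}$ head near $0$, of total mass $\ll \sqrt{Y \cdot Y/X}$... — actually more simply, write $w_1(n) = \int a(\xi) e(\xi n / Y)\, d\xi$ as the inverse transform of a smoothed, frequency-truncated version $a$ of $Y\widehat W$, with $\int|a| \ll N^{o(1)} X$ obtained from the $|\xi|^{-1/2}$ decay of $\widehat W$ cut at frequency $\asymp X$), and bound the tail $w_0 = w - w_1$ in $\ell^1$ by the transform tail.

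The hard part will be step (b) and the bookkeeping in (d): making the passage from the continuous Fourier transform of $W$ to the discrete exponential sum of $w(n)$ \emph{with error term genuinely smaller than $N^{1-1/12}$ uniformly in $\rho$ with $|\gamma| \le N^{1/8}$}. The twist $n^{i\gamma}$ contributes a factor that, after partial summation (as in the proof of Proposition \ref{prop43}), behaves like a shift of the frequency by $\gamma \log n /(2\pi n)$, roughly $\gamma/n$; over the range $n \asymp Y$ this is $\asymp \gamma/Y \le N^{1/8}/Y$ which is $\ll 1$, so it is harmless, but one must track it carefully through all ranges of $n$ including $n$ small, where $\gamma/n$ can be large — this is exactly where the integrable singularity $x^{-1/2}$ saves the day, since the contribution of $n \le \sqrt N$, say, is trivially $\ll \sqrt Y \ll N^{1/2} \ll N^{1-1/12}$. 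The other delicate point is verifying the explicit constant $11$ in \eqref{prop15.2.3}, which will come from a careful evaluation of $|\Gamma(1-\sigma+i\gamma)(1-2\pi i \beta)^{-(1-\sigma+i\gamma)}| / |\Gamma(1)(1 - 2\pi i \beta)^{-1}|$-type ratios together with the Riemann-sum error; I would keep a small margin (the proposition only needs \emph{some} absolute constant, so $11$ is chosen with room to spare) rather than optimizing.
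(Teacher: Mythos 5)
Your construction is the paper's: $w(n)=W(n/Y)$ with $W(x)=x^{-1/2}e^{-x}$, $Y=\frac{N}{2\log N}$, the evaluation $\widehat{T_\rho W}(\theta)=\Gamma(\frac12-\sigma+i\gamma)(1+2\pi i\theta)^{-(\frac12-\sigma+i\gamma)}$ together with domination of the twisted transform by $\Re\hat W$ (Lemma \ref{gamma-t}), a sum-to-integral comparison for small $\eta$ (Lemma \ref{sum-integral-comp}), and the smooth cut of $W$ near the origin for item (6) (Lemma \ref{w-decomp}); items (1), (2), (4), (5), (6) are then handled essentially as in the paper, and your constant $11$ arises the same way.

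The genuine gap is your step (c), i.e.\ item (3), and hence also the $\Vert\eta\Vert\ge N^{-\frac12}$ halves of (4) and (5), which you reduce to (3). For $\Vert\eta\Vert$ up to constant size you cannot replace $\sum_{n\le N}n^{\rho-1}w(n)e(\eta n)$ by $Y^{\rho}\widehat{T_\rho W}(-\eta Y)$ and then invoke the decay of $\hat W$: the Euler--Maclaurin/mean-value error in that comparison contains a term of size roughly $|\eta|Y\cdot Y^{\beta-1}\asymp\Vert\eta\Vert N$, which already exceeds $N^{1-\frac{1}{12}}$ once $\Vert\eta\Vert\gg N^{-\frac{1}{12}}$ (this is why Lemma \ref{sum-integral-comp} is only stated for $|\eta|\le N^{-\frac18}$), and Poisson summation cannot rescue it because $\hat W(\xi)\asymp|\xi|^{-\frac12}$ is not absolutely summable over integer frequency shifts. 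The correct argument (Proposition \ref{prop16.6} in the paper) is different in kind: split $W=W_0+W_1$ at scale $\eps=N^{-\frac16}$ via Lemma \ref{w-decomp}, bound the singular head trivially by $\ll Y^{\frac12}(N\eps)^{\frac12}\ll N^{1-\frac{1}{12}}$, and for the smooth part insert its Fourier integral and apply the \emph{discrete} exponential-sum bounds of Proposition \ref{prop43} (Kuzmin--Landau/van der Corput) to $\sum_{n\le N}n^{\rho-1}e((\xi/Y+\eta)n)$, using $\Vert\eta\Vert\ge N^{-\frac12}$ to keep the shifted frequency away from $0$ for $|\xi|\le N^{\frac13}$, and the $\eps^{-\frac32}|\xi|^{-2}$ decay for the tail. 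You possess these ingredients (you use the same decomposition for item (6) and you cite Proposition \ref{prop43} for the $n^{i\gamma}$ twist), but as written your route to (3) is not justified. A smaller point: in (5), $\widehat{T_{1-\sigma}W}/\hat W=\frac{\Gamma(1/2-\sigma)}{\Gamma(1/2)}(1+2\pi i\theta)^{\sigma}$ is not a positive real multiple of $1$; one must compare real parts and check the argument shift is $O(\sigma)$, as in \eqref{real-small}.
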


For the rest of the section, take $Y :=  \frac{N}{2\log N}$ (as in the statement of (6) above). The idea is to set 
\begin{equation}\label{W-def} W(x) := x^{-\frac{1}{2}} e^{-x}\end{equation} for $x > 0$ and $W = 0$ elsewhere, and then define
\begin{equation}\label{w-def} w(n) := W\big(\frac{n}{Y}\big).\end{equation}

\subsection{Basic bounds for sums}

In this section we provide some bounds for $\sum_{n = 1}^N n^{\rho - 1} w(n) e(\eta n)$. In the first lemma, we compare this sum to an integral when $\eta$ is not too large. 

Here, and throughout the rest of the section, we write $T_{\rho} f(x) := x^{\rho - 1} f(x)$ for $x > 0$.

If $f : \R \rightarrow \R$ is a (suitable nice) function, we normalise the Fourier transform by $\widehat{f}(\xi) := \int_{\R} f(x) e(-x \xi) dx$, so that the inversion formula reads $f(x) = \int_{\R} \widehat{f}(\xi) e(\xi x) d\xi$.

\begin{lemma}\label{sum-integral-comp}
Suppose that $\frac{4}{5} \leq \beta \leq 1$ and $|\gamma| \leq N^{\frac{1}{8}}$. Then, uniformly for $|\eta| \leq N^{-\frac{1}{8}}$,
\[ \sum_{n = 1}^{N}n^{\rho - 1} w(n) e(\eta n) = Y^{\rho - 1} \widehat{T_{\rho} W}(-\eta Y) + O(N^{\frac{7}{8}}).\]
\end{lemma}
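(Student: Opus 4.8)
The statement is a comparison of the exponential sum $\sum_{n=1}^N n^{\rho-1} w(n) e(\eta n)$ with the integral $\int_0^\infty x^{\rho-1} W(x/Y) e(\eta x)\, dx$ (after the substitution $x = Yu$ this becomes $Y^{\rho} \int_0^\infty u^{\rho-1} W(u) e(\eta Y u)\, du = Y^{\rho-1}\cdot Y \int \dots$; recall $w(n) = W(n/Y)$ and the definition of the Fourier transform gives $Y^{\rho-1}\widehat{T_\rho W}(-\eta Y)$ once one accounts for the factor $x^{\rho-1} = (Yu)^{\rho-1}$). The natural approach is Euler--Maclaurin (or, essentially equivalently, Poisson summation) applied to the function $g(x) := x^{\rho-1} W(x/Y) e(\eta x) = T_\rho W(x/Y)\cdot (x/Y)^{\rho - 1} \cdot \dots$ — more cleanly, write $g(x) := (x/Y)^{\rho-1} W(x/Y) e(\eta x)$ so that $g(n) = Y^{1-\rho} n^{\rho-1} w(n) e(\eta n)$, and compare $\sum_{n=1}^N g(n)$ with $\int_0^\infty g(x)\, dx$. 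First I would dispose of the tail: since $W(x) = x^{-1/2} e^{-x}$ decays exponentially, the integral $\int_N^\infty |g(x)|\, dx \ll \int_N^\infty (x/Y)^{\beta - 1} (x/Y)^{-1/2} e^{-x/Y}\, dx$ is exponentially small in $N/Y = 2\log N$, hence $\ll N^{-10}$, say, which is absorbed into the error term. So it suffices to compare $\sum_{n=1}^N g(n)$ with $\int_0^\infty g(x)\, dx$.

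The main term $\int_0^\infty g(x)\, dx = \int_0^\infty (x/Y)^{\rho-1} W(x/Y) e(\eta x)\, dx = Y\int_0^\infty u^{\rho - 1} W(u) e(\eta Y u)\, du$; recalling $T_\rho W(u) = u^{\rho-1} W(u)$ and $\widehat{T_\rho W}(\xi) = \int T_\rho W(u) e(-u\xi)\, du$, this is exactly $Y \widehat{T_\rho W}(-\eta Y)$, and after reinstating the factor $Y^{1-\rho}$ we divided out (i.e. $\sum n^{\rho-1}w(n)e(\eta n) = Y^{\rho-1}\sum g(n)$), the main term becomes $Y^{\rho - 1}\cdot Y\widehat{T_\rho W}(-\eta Y) = Y^{\rho}\widehat{T_\rho W}(-\eta Y)$ — wait, one must be careful: I would just track constants directly rather than rely on this sketch. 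The core estimate is the error in Euler--Maclaurin, namely $\big|\sum_{n=1}^N g(n) - \int_0^N g(x)\, dx\big| \ll |g(1)| + |g(N)| + \int_1^N |g'(x)|\, dx$ (from the first-order Euler--Maclaurin formula with the sawtooth Bernoulli function $B_1(\{x\})$ integrated against $g'$; one gains here because $g'$ is integrable). The boundary term at $x = N$ is exponentially small as above. The boundary term at $x=1$: $|g(1)| = (1/Y)^{\beta - 1/2} e^{-1/Y} \ll Y^{1/2 - \beta} \ll Y^{1/2} \ll N^{1/2}$ uniformly in $\beta \in [4/5, 1]$ since $\beta - 1/2 \geq 3/10 > 0$; multiplied back by $|Y^{\rho-1}| = Y^{\beta-1}$ this contributes $\ll Y^{\beta-1}\cdot Y^{1/2-\beta}\cdot$ (something) — again I'll just carry the explicit powers, but the upshot is something comfortably $\ll N^{7/8}$.

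The substantive part is the integral $\int_1^N |g'(x)|\, dx$. Writing $h(u) := u^{\rho - 1} W(u)$ and $g(x) = h(x/Y) e(\eta x)$, we have $g'(x) = \frac{1}{Y} h'(x/Y) e(\eta x) + 2\pi i \eta\, h(x/Y) e(\eta x)$, so $|g'(x)| \leq \frac{1}{Y}|h'(x/Y)| + 2\pi|\eta|\,|h(x/Y)|$. For the first piece, $\int_1^N \frac{1}{Y}|h'(x/Y)|\, dx = \int_{1/Y}^{N/Y} |h'(u)|\, du \ll \int_0^\infty |h'(u)|\, du$; since $h'(u) = (\rho-1) u^{\rho-2} W(u) + u^{\rho-1} W'(u)$ and $W(u) = u^{-1/2}e^{-u}$, $W'(u) = (-\tfrac12 u^{-3/2} - u^{-1/2})e^{-u}$, one gets $|h'(u)| \ll (1 + |\gamma|) u^{\beta - 5/2} e^{-u} + u^{\beta - 3/2} e^{-u}$ near $0$ the exponent $\beta - 5/2 < -1$ so the integral $\int_0^1$ diverges — \emph{this is the subtlety}: one must instead integrate by parts the Euler--Maclaurin error, or note that we truncated the sum at $n=1$ so only $\int_{1/Y}^\infty$ is relevant, and $\int_{1/Y}^\infty u^{\beta - 5/2} e^{-u}\, du \ll (1/Y)^{\beta - 3/2} = Y^{3/2 - \beta} \ll Y^{3/2 - 4/5} = Y^{7/10}$. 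Multiplying the first-piece bound $\ll (1+|\gamma|) Y^{7/10}$ by the outer factor $Y^{\beta-1} \ll 1$, and using $|\gamma| \leq N^{1/8}$, $Y \ll N$, this is $\ll N^{1/8} N^{7/10} = N^{33/40} < N^{7/8}$. For the second piece, $2\pi|\eta|\int_1^N |h(x/Y)|\, dx = 2\pi|\eta| Y\int_{1/Y}^{N/Y}|h(u)|\, du \ll |\eta| Y \int_0^\infty u^{\beta - 3/2} e^{-u}\, du$, which again converges at $0$ only if $\beta > 1/2$ (true), but near $0$ behaves like $\int_{1/Y} u^{\beta-3/2}\, du \ll Y^{1/2-\beta} \ll Y^{1/2}$ when $\beta \geq 4/5$; then $|\eta| Y \cdot Y^{1/2} \ll N^{-1/8} \cdot N \cdot N^{1/2} = N^{11/8}$ — too big! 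So the hypothesis $|\eta| \leq N^{-1/8}$ must be used more carefully: in fact the tail $\int_{1/Y}^{\Theta}$ for $\Theta$ bounded gives $\int_0^\Theta u^{\beta-3/2}e^{-u}\, du = O(1)$ uniformly (since $\beta - 3/2 > -1$), \emph{not} $Y^{1/2}$, because $\beta - 1/2 > 0$ means $u^{\beta - 1/2}$ vanishes at $0$; I conflated cutoffs. So the correct bound is $|\eta| Y \cdot O(1) \ll N^{-1/8} N = N^{7/8}$, exactly on the nose. The hard part, then, is precisely this bookkeeping near $u = 0$: one must keep track of the fact that $h(u) = u^{\rho-1}W(u) \sim u^{\beta - 3/2}$ is integrable at $0$ (as $\beta - 3/2 > -1$) but $h'(u) \sim u^{\beta - 5/2}$ is \emph{not}, so the Euler--Maclaurin error term $\int_1^N |g'|$ must be handled by splitting off the term involving $W'$ and $u^{\rho-2}$ and integrating only over $[1/Y, \infty)$, exploiting that the sum starts at $n=1$ (equivalently $u = 1/Y$), and that $1/Y = 2\log N / N$ is small but not zero, yielding the $Y^{3/2-\beta}$ factor with the $\beta \geq 4/5$ hypothesis keeping that power below $N^{3/4}$. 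Once all four error contributions — tail at $N$, boundary at $n=1$, the $h'$-integral, and the $\eta h$-integral — are each shown to be $O(N^{7/8})$ using $\beta \geq 4/5$, $|\gamma| \leq N^{1/8}$, $|\eta| \leq N^{-1/8}$ and $Y \asymp N/\log N$, the lemma follows by assembling them and identifying the main term with $Y^{\rho-1}\widehat{T_\rho W}(-\eta Y)$ via the substitution $x = Yu$ and the definition of $\widehat{\phantom{m}}$.
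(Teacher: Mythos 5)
Your proposal is correct and is essentially the paper's own argument: the paper compares the Riemann sum $\frac{1}{Y}\sum_{n\leq N}F(n/Y)$, with $F(x)=x^{\rho-1}W(x)e(\eta Yx)$, to $\int_0^\infty F$ by the mean value theorem on intervals of length $1/Y$, splitting off $[0,1/Y]$ and the tail beyond $N/Y$; your first-order Euler--Maclaurin bound via $\int|g'|$ is the same first-derivative comparison in the unscaled variable, and your split of $g'$ into the $(\rho-1)/x$-type piece (handled on $[1/Y,\infty)$, where $\beta\geq\frac45$ keeps it under control) and the $\eta$-piece (giving $|\eta|Y\leq N^{7/8}$) matches the paper's treatment exactly. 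Two small points. First, trust your constant-tracking: the comparison produces the main term $Y^{\rho}\widehat{T_{\rho}W}(-\eta Y)$, and the paper's own proof ends by ``multiplying by $Y^{\rho}$'', so the factor $Y^{\rho-1}$ in the statement as printed is off by a factor of $Y$ (check $\rho=1$, $\eta=0$: the left side has size $\asymp Y$ while $\widehat{T_1W}(0)=\Gamma(\frac12)$); this slip is harmless where the lemma is applied, since the same factor occurs on both sides of the comparisons there. Second, the Euler--Maclaurin inequality you quote really compares the sum with $\int_1^N g$, not $\int_0^N g$, so you must also bound $\int_0^1|g|\ll\int_0^1(x/Y)^{\beta-3/2}\,dx\ll Y^{3/2-\beta}$ directly (the FTC route through $\int_0^1|g'|$ diverges); this is precisely the integrability at $0$ of $u^{\beta-3/2}$ that you yourself isolated as the key point, and note in passing that $|g(1)|=Y^{3/2-\beta}e^{-1/Y}$, not $Y^{1/2-\beta}$ --- after the outer factor $Y^{\beta-1}$ both contributions are $\ll Y^{1/2}$, comfortably inside $O(N^{7/8})$.
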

\begin{proof}
Set $F(x) := T_{\rho} W(x) e(\eta Y x) = x^{\rho - 1} W(x) e(\eta Y x)$ for $x > 0$.  
Writing $\rho = 1 - \sigma + i\gamma$ and 
\[ F(x) = x^{-\sigma} \cdot W(x)  \cdot e\big(\frac{\gamma}{2\pi} \log x + \eta Y x\big)\] and applying the product rule for derivatives, we see that 
\[ |F'(x)| \ll x^{-\sigma - 1} W(x)+ x^{-\sigma} W'(x) + x^{-\sigma} W(x) \big( \frac{|\gamma|}{x} + |\eta Y| \big).\]
For the first term we use the bound $W(x) \ll x^{-\frac{1}{2}}$, and for the second $W'(x) \ll x^{-\frac{3}{2}}$ (which may be checked by looking at $x < 1$ and $x \geq 1$ separately). We split the third term in two. The part with $\frac{|\gamma|}{x}$ can be bounded as for the first term, but now with an extra factor of $|\gamma| \leq N^{\frac{1}{8}}$. We keep the remaining part as it is. Putting all this together gives
\begin{equation}\label{f-prime-ptwise} |F'(x)| \ll x^{-\sigma - \frac{3}{2}} N^{\frac{1}{8}} + x^{-\sigma - \frac{1}{2}} e^{-x} |\eta| Y.\end{equation}

We have
\begin{equation}\label{sum-w-F} \sum_{n = 1}^N n^{\rho - 1} w(n) e(\eta n)  = Y^{\rho - 1} \sum_{n = 1}^{N} F\big(\frac{n}{Y}\big)\end{equation}
and
\begin{equation} \label{int-f-fourier}  \widehat{T_{\rho} W}(-\eta Y) = \int^{\infty}_0 F(x) dx .\end{equation}

Now we have
\begin{align}\nonumber   \big|  \frac{1}{Y} \sum_{n = 1}^N  &F\big(\frac{n}{Y}\big) - \int^{\infty}_0 F \big| \\ &   \leq  \int^{\frac{1}{Y}}_0 |F| +  \big|\frac{1}{Y}\sum_{n = 1}^N F\big(\frac{n}{Y}\big) - \int^{\frac{N+1}{Y}}_{\frac{1}{Y}} F\big| + \int^{\infty}_{\frac{N+1}{Y}} |F|.\label{sum-int}\end{align}
To bound the third term, we use the bound $|F(x)| \leq |W(x)| \leq e^{-x}$, valid for $x \geq 1$, to bound this term by $e^{-N/Y} = \frac{1}{N^2}$ (which is tiny).

For the second term in \eqref{sum-int}, we can use the mean value theorem to bound this by
\[ \frac{1}{Y^2} \sum_{n = 1}^N \sup_{x \in [\frac{n}{Y}, \frac{n+1}{Y}]} |F'(x)|. \]
To bound the sum appearing here, we use the derivative estimate \eqref{f-prime-ptwise}. The contribution from the first term there is
\[ \ll \frac{1}{Y^2} \sum_{n = 1}^N \big|\frac{n}{Y}\big|^{-\frac{3}{2} - \sigma} N^{\frac{1}{8}}  \ll Y^{\sigma - \frac{1}{2}} N^{\frac{1}{8}}.\]
The contribution from the second is
\[ \ll \frac{1}{Y^2} |\eta| Y \sum_{n = 1}^N \big|\frac{n}{Y}\big|^{-\frac{1}{2} - \sigma} e^{-n/Y} \ll |\eta|,\] where here we used the bound $\sum_{n = 1}^{\infty} n^{-s} e^{-n/Y} \ll_s Y^{1 - s}$ uniformly for $s \leq s_0 < 1$. 

Finally, to bound the first term in \eqref{sum-int}, we use $F(x) \ll x^{-\frac{1}{2} - \sigma}$, obtaining that this term is $\ll Y^{\sigma - \frac{1}{2}}$.

Putting all these bounds into \eqref{sum-int} gives
\[ \bigg|\frac{1}{Y} \sum_{n = 1}^N F\big(\frac{n}{Y}\big) - \int^{\infty}_0 F(x) dx\bigg| \ll Y^{\sigma-\frac{1}{2}} N^{\frac{1}{8}} + |\eta|.\] Multiplying by $Y^{\rho}$ and combining this with \eqref{sum-w-F} and \eqref{int-f-fourier} gives the desired result. 
\end{proof}

\begin{lemma}\label{w-decomp}
Let $0 < \eps < 1$. Then we may split $W = W_0 + W_1$, where $0 \leq W_0, W_1 \leq W$ pointwise, $\Supp (W_0) \subset [0, 2\eps]$ and $\Supp(W_1) \subset [\eps, \infty)$ and we have \[W_1(x) = \int_{\R} \widehat{W}_1 (\xi) e(\xi x) d\xi,\] where 
\begin{equation}\label{w1-fourier}| \widehat{W}_1(\xi)| \ll \min \big(1, \eps^{-\frac{1}{2}} |\xi|^{-1}, \eps^{-\frac{3}{2}} |\xi|^{-2}\big).\end{equation}
In particular, \[ \Vert \widehat{W}_1 \Vert_1 \ll \eps^{-\frac{1}{2}} \log\big(\frac{1}{\eps}\big).\]
\end{lemma}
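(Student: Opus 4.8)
\textbf{Proof plan for Lemma \ref{w-decomp}.}

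The plan is to build the splitting $W = W_0 + W_1$ using a fixed smooth partition of unity on the line, rescaled to the parameter $\eps$. Concretely, I would fix once and for all a smooth function $\phi : \R \to [0,1]$ with $\phi(x) = 1$ for $x \leq 1$ and $\phi(x) = 0$ for $x \geq 2$, and set $W_0(x) := W(x)\phi(x/\eps)$ and $W_1(x) := W(x)(1 - \phi(x/\eps))$. Since $0 \leq \phi \leq 1$ and $W \geq 0$, both pieces satisfy $0 \leq W_0, W_1 \leq W$ pointwise; the support conditions $\Supp(W_0) \subset [0, 2\eps]$ and $\Supp(W_1) \subset [\eps, \infty)$ are immediate from the support of $\phi$ and $1-\phi$. (Strictly, $\Supp(W_0) \subset [0,2\eps]$ since $W$ already vanishes on $(-\infty,0]$.) The only real work is the Fourier bound \eqref{w1-fourier}; once that is in hand, the $L^1$ bound $\Vert \hat W_1\Vert_1 \ll \eps^{-1/2}\log(1/\eps)$ follows by splitting the $\xi$-integral at $|\xi| = \eps^{-1}$ and at $|\xi| = \eps^{-1/2}$ — wait, more carefully: the three bounds in the minimum cross over at $|\xi| \asymp \eps^{-1}$, so one integrates the bound $1$ over $|\xi| \leq \eps^{-1}$ getting $\eps^{-1}$, hmm that is too big. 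Let me reconsider: the crossover of $1$ and $\eps^{-1/2}|\xi|^{-1}$ is at $|\xi| = \eps^{-1/2}$, and of $\eps^{-1/2}|\xi|^{-1}$ with $\eps^{-3/2}|\xi|^{-2}$ is at $|\xi| = \eps^{-1}$. So $\int |\hat W_1| \ll \eps^{-1/2}$ (from $|\xi|\le \eps^{-1/2}$) $+ \eps^{-1/2}\log(\eps^{-1/2})$ (from $\eps^{-1/2}\le|\xi|\le\eps^{-1}$, a logarithmic range) $+ \eps^{-3/2}\cdot\eps$ (from the tail) $\ll \eps^{-1/2}\log(1/\eps)$, as claimed.

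For \eqref{w1-fourier} itself, the bound $|\hat W_1(\xi)| \ll 1$ is trivial from $\Vert W_1\Vert_{L^1} \leq \Vert W\Vert_{L^1} \ll 1$. For the decay, I would integrate by parts once and twice in $\hat W_1(\xi) = \int_0^\infty W_1(x) e(-\xi x)\,dx$, using that $W_1$ is smooth and compactly supported away from $0$ on the left (so no boundary terms): $\hat W_1(\xi) = (2\pi i\xi)^{-1}\int W_1'(x) e(-\xi x)\,dx$ and similarly with $(2\pi i\xi)^{-2}$ and $W_1''$. So it suffices to bound $\Vert W_1'\Vert_{L^1} \ll \eps^{-1/2}$ and $\Vert W_1''\Vert_{L^1} \ll \eps^{-3/2}$. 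Here $W_1' = W'(1-\phi(\cdot/\eps)) - \eps^{-1} W\phi'(\cdot/\eps)$. On the support of $1-\phi(\cdot/\eps)$ we have $x \geq \eps$, and one checks from $W(x) = x^{-1/2}e^{-x}$ that $W'(x) \ll x^{-3/2}$ for all $x>0$ (examine $x < 1$, where $W'(x) = -\tfrac12 x^{-3/2}e^{-x} - x^{-1/2}e^{-x} \ll x^{-3/2}$, and $x\ge 1$, where $W'(x) \ll e^{-x}$); so $\int_\eps^\infty |W'| \ll \int_\eps^1 x^{-3/2}\,dx + \int_1^\infty e^{-x}\,dx \ll \eps^{-1/2}$. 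The second term is supported in $[\eps, 2\eps]$, where $W(x) \ll \eps^{-1/2}$ and $|\phi'(x/\eps)| \ll 1$, contributing $\ll \eps^{-1}\cdot\eps^{-1/2}\cdot\eps = \eps^{-1/2}$. Thus $\Vert W_1'\Vert_{L^1} \ll \eps^{-1/2}$. The computation for $W_1''$ is the same with one more derivative: $W''(x) \ll x^{-5/2}$ for $x<1$ and $\ll e^{-x}$ for $x\ge 1$, giving $\int_\eps^\infty|W''| \ll \eps^{-3/2}$; the cross terms $\eps^{-1}W'\phi'(\cdot/\eps)$ and $\eps^{-2}W\phi''(\cdot/\eps)$, both supported in $[\eps,2\eps]$, contribute $\ll \eps^{-1}\cdot\eps^{-3/2}\cdot\eps = \eps^{-3/2}$ and $\ll\eps^{-2}\cdot\eps^{-1/2}\cdot\eps = \eps^{-3/2}$ respectively. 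This gives $\Vert W_1''\Vert_{L^1} \ll \eps^{-3/2}$, completing \eqref{w1-fourier}.

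The representation $W_1(x) = \int_\R \hat W_1(\xi) e(\xi x)\,d\xi$ is then just Fourier inversion, which is legitimate since $W_1$ is continuous (indeed smooth) and $\hat W_1 \in L^1$ by the bound just proved. I do not anticipate a genuine obstacle here — the lemma is a routine rescaled partition-of-unity argument — but the one point requiring a little care is the bookkeeping of where the various regimes in the minimum in \eqref{w1-fourier} dominate, and correspondingly the split of the $L^1$ integral into the three ranges $|\xi| \le \eps^{-1/2}$, $\eps^{-1/2} \le |\xi| \le \eps^{-1}$, $|\xi| \ge \eps^{-1}$ to extract exactly the logarithmic factor. One should also be slightly careful that the constant $\eps$ in the statement is arbitrary in $(0,1)$, so all the $\ll$ constants must be independent of $\eps$, which they are since $\phi$ is fixed.
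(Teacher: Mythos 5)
Your proposal is correct and is essentially the paper's own proof: a fixed smooth cutoff rescaled to scale $\eps$ (the paper uses $\Psi_\eps = \Psi(\cdot/\eps)$ with $W_1 = W\Psi_\eps$, the mirror image of your $\phi$), the bound $|\hat W_1(\xi)| \ll \Vert W_1^{(m)}\Vert_1 |\xi|^{-m}$ for $m=0,1,2$ via integration by parts with the same $L^1$ estimates $\Vert W_1'\Vert_1 \ll \eps^{-1/2}$, $\Vert W_1''\Vert_1 \ll \eps^{-3/2}$ (including the same treatment of the cross terms on $[\eps,2\eps]$), and the same three-range split of the $\xi$-integral at $\eps^{-1/2}$ and $\eps^{-1}$ to get the $\eps^{-1/2}\log(1/\eps)$ bound. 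No gaps.
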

\begin{proof} Let $\Psi : \R \rightarrow [0, \infty)$ be a smooth function with $\Psi(x) = 0$ for $x \leq 1$, $0 \leq \Psi(x) \leq 1$ for $x \in [1,2]$, $\Psi(x) = 1$ for $x \geq 2$ and $\Vert \Psi^{(m)} \Vert_{\infty} \ll_m 1$ for all $m$, where $\Psi^{(m)}$ denotes the $m$th derivative. The existence of such functions is standard.  Set $\Psi_{\eps}(x) := \Psi(x/\eps)$, and then define $W_1 := W \Psi_{\eps}$ and $W_0:= W (1 - \Psi_{\eps})$. The support properties and pointwise bounds in (1) are then clear.

For (2), we use the well-known bound (which follows by integration by parts)
\begin{equation}\label{fourier-second-der} |\widehat{W}_1(\xi)| \ll \Vert W_1^{(m)} \Vert_1 |\xi|^{-m},\end{equation} for $m = 0,1,2$.
We have
\begin{equation}\label{zero-der} \Vert W_1 \Vert_1 \leq \Vert W \Vert_1 \ll 1.\end{equation}
For the first derivative, note first that
\[ \Vert W'_1 \Vert_1 \leq \Vert W' \Psi_{\eps} \Vert_1 + \Vert W \Psi'_{\eps} \Vert_1.\]
Now $\Vert W' \Psi_{\eps} \Vert_1 \leq \int_{x > \eps} |W'(x)| dx  \ll \eps^{-\frac{1}{2}}$, whilst \[ \Vert W \Psi'_{\eps}\Vert_1 \ll \eps^{-1} \int^{2\eps}_{\eps} W(x) dx \ll \eps^{-\frac{1}{2}}.\] Therefore
\begin{equation}\label{first-der} \Vert W'_1 \Vert_1 \ll \eps^{-\frac{1}{2}}.\end{equation}
For the second derivative, we have
\[ \Vert W''_1 \Vert_1 \leq \Vert W'' \Psi_{\eps} \Vert_1 + 2 \Vert W' \Psi'_{\eps} \Vert_1 + \Vert W \Psi''_{\eps}\Vert_1.\]
Now \[ \Vert W'' \Psi_{\eps} \Vert_1 \leq \int_{x > \eps} |W''(x)| dx \ll \eps^{-\frac{3}{2}},\] \[ \Vert W' \Psi'_{\eps} \Vert_1 \ll \frac{1}{\eps} \int^{2\eps}_{\eps} |W'(x)| dx \ll \eps^{-\frac{3}{2}},\] and \[ \Vert W \Psi''_{\eps}\Vert_1 \ll \frac{1}{\eps^2} \int^{2\eps}_{\eps} W(x) dx \ll \eps^{-\frac{3}{2}}.\] 
Thus
\begin{equation}\label{second-der} \Vert W''_1 \Vert_1 \ll \eps^{-\frac{3}{2}}.\end{equation}
Combining \eqref{zero-der}, \eqref{first-der} and \eqref{second-der} with \eqref{fourier-second-der} gives the stated bound \eqref{w1-fourier}.

The bound on $\Vert \widehat{W}_1 \Vert_1$ follows by splitting into the ranges $|\xi| \leq \eps^{-\frac{1}{2}}$, $\eps^{-\frac{1}{2}} \leq |\xi| \leq \eps^{-1}$ and $|\xi| \geq \eps^{-1}$ and using \eqref{zero-der}, \eqref{first-der} and \eqref{second-der} respectively on these ranges. 

Finally, the fact that $\Vert \widehat{W}_1 \Vert_1$ is finite guarantees that $W_1$ is represented pointwise by its Fourier integral, by well-known theory. 
\end{proof}

\begin{proposition}\label{prop16.6}
Uniformly for $0 \leq \sigma \leq \frac{1}{4}$ , $|\gamma| \leq N^{\frac{1}{4}}$ and for $N^{-\frac{1}{2}} < \Vert \eta \Vert  \leq \frac{1}{2}$ we have
\[ \sum_{n = 1}^N n^{\rho-1} w(n) e(\eta n) \ll N^{1-\frac{1}{12}}.\]
\end{proposition}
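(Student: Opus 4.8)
\textbf{Proof proposal for Proposition \ref{prop16.6}.}

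The plan is to reduce the exponential sum over integers to an exponential integral by comparing $\sum_{n\le N} n^{\rho-1}w(n)e(\eta n)$ with $Y^{\rho-1}\widehat{T_\rho W_1}(-\eta Y)$ plus negligible errors, and then to estimate that Fourier transform by stationary phase / integration by parts, exploiting that $W(x)=x^{-1/2}e^{-x}$ is smooth away from $0$ with rapidly decaying tails. More precisely, first I would apply Lemma \ref{w-decomp} with a suitable $\eps$ (say $\eps = N^{-c}$ for a small $c>0$ to be optimized, so that $\eps^{-1/2}\le N^{1/12}$ comfortably) to split $W = W_0 + W_1$, hence $w = w_0 + w_1$ with $w_i(n) = W_i(n/Y)$. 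The $w_0$ piece is supported on $n \le 2\eps Y \ll N^{1-c}\log^{-1}N$ and there $|n^{\rho-1}w_0(n)| \le n^{\sigma - 1/2} W(n/Y)\cdot(\text{harmless})$; bounding trivially by $\sum_{n\le 2\eps Y}|n^{\rho-1}||w_0(n)|$ and using $W(x)\ll x^{-1/2}$ gives a contribution $\ll Y^{1-\sigma}\eps^{1-\sigma}\cdot(\text{stuff}) \ll N^{1-c'}$, which is acceptable provided $c$ is chosen so this beats $N^{1-1/12}$.

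For the main piece $w_1$, I would not use Lemma \ref{sum-integral-comp} directly (its hypothesis $|\eta|\le N^{-1/8}$ is too restrictive here, since $\Vert\eta\Vert$ can be as large as $\tfrac12$), but instead argue by hand: writing $F_1(x) := T_\rho W_1(x)e(\eta Y x) = x^{\rho-1}W_1(x)e(\eta Y x)$, we have $\sum_{n\le N} n^{\rho-1}w_1(n)e(\eta n) = Y^{\rho-1}\sum_{n\le N} F_1(n/Y)$, and by Euler--Maclaurin (or just the mean value theorem as in the proof of Lemma \ref{sum-integral-comp}) this equals $Y^{\rho}\int_0^\infty F_1(x)\,dx + O(Y^{\rho}\cdot\tfrac{1}{Y}\int_0^\infty|F_1'|)$, the tail beyond $x = N/Y$ being $O(N^{-2})$ by the exponential decay. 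The derivative term: $|F_1'(x)| \ll x^{-\sigma - 3/2}N^{1/4} + x^{-\sigma}W_1'(x) + |\eta|Y\,x^{-\sigma}W_1(x)$ (using $|\gamma|\le N^{1/4}$), and since $W_1$ is supported in $[\eps,\infty)$, $\int_\eps^\infty$ of the first two terms is $\ll \eps^{-\sigma - 1/2}N^{1/4} + \eps^{-\sigma-1/2} \ll N^{1/4}\eps^{-1/2-\sigma}$, while the third is $\ll |\eta|Y \ll Y$. After multiplying by $Y^{\rho-1}/Y\cdot Y = Y^{\sigma - 1}\cdot(\dots)$... carefully: the error is $O(Y^{1-\sigma}\cdot Y^{-1}\cdot(N^{1/4}\eps^{-1/2-\sigma} + Y)) = O(Y^{-\sigma}N^{1/4}\eps^{-1/2-\sigma} + Y^{1-\sigma})$. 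Hmm, the $Y^{1-\sigma}$ here is too large — so in fact I should \emph{keep} the $|\eta|Y$ term inside the integral and instead bound $\int_\eps^\infty x^{-\sigma}W_1(x)e(\eta Y x)\,dx$ together with $\int_0^\infty F_1$ by Fourier-analytic means, i.e. treat $\int_0^\infty F_1(x)\,dx = \widehat{T_\rho W_1}(-\eta Y)$ directly. Using the convolution $\widehat{T_\rho W_1} = \widehat{T_\rho}\ast \widehat{W_1}$ (with $\widehat{T_\rho}$ interpreted as the Fourier transform of $x^{\rho-1}$ on $(0,\infty)$, a homogeneous distribution of controlled size) or, more elementarily, integrating by parts twice in $\int_\eps^\infty x^{\rho-1}W_1(x)e(\eta Y x)\,dx$ using $|\eta Y|\gg N^{1/2}/Y \gg \log N$ (since $\Vert\eta\Vert > N^{-1/2}$ and $Y = N/(2\log N)$), each integration by parts gains a factor $\asymp (|\eta|Y)^{-1}$ at the cost of $\Vert (x^{\rho-1}W_1)^{(k)}\Vert_1 \ll \eps^{-k-1/2}N^{k/4}$; two steps give $\ll (|\eta|Y)^{-2}\eps^{-5/2}N^{1/2}$. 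Multiplying by $Y^{1-\sigma}$: the total main contribution is $\ll Y^{1-\sigma}\min\big(\eps^{-1/2}, (|\eta|Y)^{-1}\eps^{-3/2}N^{1/4}, (|\eta|Y)^{-2}\eps^{-5/2}N^{1/2}\big)$. Optimizing over the three ranges of $|\eta|Y$ and choosing $\eps = N^{-c}$ for small enough $c$, all three are $\ll N^{1-1/12}$.

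The main obstacle I anticipate is the book-keeping of the three competing bounds (from $0,1,2$ integrations by parts) against the split parameter $\eps$, and in particular checking that the worst case — $|\eta|Y$ of intermediate size, comparable to $\eps^{-1}N^{1/4}$ — still yields an exponent strictly below $1 - \tfrac1{12}$ after choosing $\eps$; this forces $c$ small but positive, and the exact value $\tfrac{1}{12}$ presumably comes from balancing $\eps^{1/2}$ against $N^{1/4}(\dots)$ in these ranges. A secondary, more technical point is making the integration-by-parts bound $\Vert(x^{\rho-1}W_1)^{(k)}\Vert_1 \ll \eps^{-k-1/2}N^{k/4}$ rigorous: one uses $W_1^{(j)} \ll \eps^{-j}$ on $[\eps,2\eps]$ and the decay of $W$ and its derivatives beyond, exactly as in the proof of Lemma \ref{w-decomp}, together with $|(x^{\rho-1})^{(j)}|\ll x^{-\sigma-j}(1+|\gamma|)^j$ and $|\gamma|\le N^{1/4}$; combining on the dyadic ranges $x\in[\eps,1]$ and $x\ge 1$ and integrating gives the claim. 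Everything else (the passage from sum to integral, the disposal of $w_0$, the exponential tail) is routine and parallels Lemma \ref{sum-integral-comp}.
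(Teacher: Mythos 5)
Your treatment of $w_0$ and your observation that Lemma \ref{sum-integral-comp} cannot be invoked (its hypothesis $|\eta|\leq N^{-1/8}$ fails here) are fine, but the main step has a genuine gap: you cannot replace the discrete sum by the integral $Y^{\rho}\widehat{T_\rho W_1}(-\eta Y)$ when $\Vert\eta\Vert$ is not small. The Euler--Maclaurin/mean-value error is governed by $Y^{\beta-1}\int|F_1'|$, and $\int|F_1'|$ genuinely contains the contribution $|\eta|Y\int x^{-\sigma}W_1 \asymp |\eta|Y$ from differentiating the phase $e(\eta Y x)$; this gives an error of size $|\eta|Y^{\beta}$, which for $\Vert\eta\Vert\gg N^{-1/12}$ (and a fortiori for $\Vert\eta\Vert\asymp 1$) exceeds the target $N^{1-1/12}$. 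You noticed this, but the proposed remedy --- ``keep the $|\eta|Y$ term inside the integral and bound $\widehat{T_\rho W_1}(-\eta Y)$ by integration by parts'' --- only improves the bound on the \emph{main term}; it does nothing to the discretization error, which arises because the integrand oscillates on the scale $(|\eta|Y)^{-1}$, comparable to the sampling spacing $1/Y$ when $\Vert\eta\Vert\asymp1$. Higher-order Euler--Maclaurin does not help either, since each further derivative costs another factor $|\eta|Y$ against only one factor $1/Y$. So as written the argument proves nothing for $\Vert\eta\Vert$ larger than about $N^{-1/12}$, which is most of the range of the proposition. (There is also a secondary tension you partly flag: the $w_0$ piece with $\sigma=0$ forces $\eps\lesssim N^{-1/6}$, and with that choice your ``two integrations by parts'' bound in the worst range $|\eta|Y\approx N^{1/2-o(1)}$ is only borderline $N^{11/12+o(1)}$; and the inequality ``$|\eta Y|\gg N^{1/2}/Y$'' should read $|\eta|Y\geq N^{-1/2}Y\asymp N^{1/2}/\log N$.)

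The way to repair this --- and what the paper actually does --- is to keep the oscillation $e(\eta n)$ attached to the \emph{discrete} sum and move only the smooth weight to the Fourier side: with $\eps=N^{-1/6}$, write $w_1(n)=\int_{\R}\hat W_1(\xi)e(\xi n/Y)\,d\xi$, so that $S_1=\int\hat W_1(\xi)\sum_{n\leq N}n^{\rho-1}e((\tfrac{\xi}{Y}+\eta)n)\,d\xi$, and then apply the discrete exponential-sum estimates of Proposition \ref{prop43} (whose Kuzmin--Landau bound is in terms of $\Vert\theta\Vert$ and so is uniform over all $\eta$): for $|\xi|\leq N^{1/3}$ one has $\Vert\tfrac{\xi}{Y}+\eta\Vert\gg N^{-1/2}$ and $\Vert\hat W_1\Vert_1\ll\eps^{-1/2+o(1)}$, giving $\ll N^{5/6+o(1)}$, while for $|\xi|>N^{1/3}$ the decay $|\hat W_1(\xi)|\ll\eps^{-3/2}\xi^{-2}$ and the trivial bound give $\ll N^{1-1/12}$. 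If you want to salvage your own route, you would have to use Poisson summation (keeping all frequencies $m-\eta$, $m\in\Z$) rather than a single sum-to-integral comparison; simply sharpening the bound on $\widehat{T_\rho W_1}(-\eta Y)$ cannot close the gap.
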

\begin{proof}   Set $\alpha := \frac{1}{6}$ and $\eps := N^{-\alpha}$. (It is helpful, to see the structure of the argument, to work with $\alpha$ rather than the explicit fraction $\frac{1}{6}$ during the proof.) We use the decomposition of $W$ from Lemma \ref{w-decomp} with this value of $\eps$, and write the sum to be bounded as $S_0 + S_1$, where
\[ S_j := \sum_{n =1}^N  n^{\rho-1} W_j\big(\frac{n}{Y}\big) e(\eta n).\]
We bound $S_0$ in a fairly trivial manner, using the pointwise bound $W_0(x) \leq W(x) \leq x^{-\frac{1}{2}}$ to conclude that 
\[ S_0 \ll \sum_{n = 1}^{2N^{1 - \alpha}} n^{-\sigma} \big(\frac{n}{Y}\big)^{-\frac{1}{2}} \ll Y^{\frac{1}{2}} (N^{1 - \alpha})^{\frac{1}{2} - \sigma} \ll N^{1 - \frac{1}{2}\alpha} = N^{1 - \frac{1}{12}}.\] 
For $S_1$, we use the Fourier expansion of $W_1$ as described in Lemma \ref{w-decomp}, obtaining
\[ S_1 = \int_{\R} \widehat{W}_1(\xi) d\xi \sum_{n = 1}^N n^{\rho-1} e\big((\frac{\xi}{Y} + \eta) n\big) = S'_1 + S''_1,\] where $S'_1$ is the integral over $|\xi| \leq N^{\frac{1}{3}}$ and $S''_1$ is the remaining integral. Using Lemma \ref{w-decomp} again, we have
\[ S'_1 \ll N^{\frac{1}{2}\alpha + o(1)} \sup_{|\xi| \leq N^{\frac{1}{3}}} \bigg|\sum_{n = 1}^N n^{\rho-1} e\big((\frac{\xi}{Y} + \eta) n\big)\bigg|.\] To estimate this, we use the middle bound of Proposition \ref{prop43}, which implies (since $|\gamma| \leq N^{\frac{1}{4}}$ and $\beta \leq 1$) that 
\[ S'_1 \ll N^{\frac{1}{2}\alpha + o(1) + \frac{1}{4}} \sup_{|\xi| \leq N^{\frac{1}{3}}} \big\Vert \frac{\xi}{Y} + \eta \big\Vert^{-1} \ll N^{\frac{1}{2}\alpha + o(1) + \frac{1}{4} + \frac{1}{2}} \ll N^{1 - \frac{1}{12}},\] where in the penultimate step we used the assumption that $\Vert \eta \Vert \geq N^{-\frac{1}{2}}$ (and recall $Y = \frac{N}{2\log N}$).

Finally, for $S''_1$ we use the bound $|\widehat{W}_1(\xi)| \ll \eps^{-\frac{3}{2}} |\xi|^{-2}$ and the first (trivial) bound in Proposition \ref{prop43}, obtaining
\[ S''_1 \ll N \int_{|\xi| \geq N^{\frac{1}{3}}} |\widehat{W}_1(\xi)| d\xi \ll N^{1 + \frac{3}{2}\alpha - \frac{1}{3}} = N^{1 - \frac{1}{12}}.\]
Putting all this together concludes the proof.\end{proof}

\subsection{Bounds for integrals}

Throughout this section, suppose that $\rho = \beta + i\gamma = 1 - \sigma + i \gamma$.

Obviously, to make any use of Lemma \ref{sum-integral-comp}, we need some understanding of the Fourier transform of $T_{\rho}W$. We have
\begin{equation}\label{t-hat-rho} \widehat{T_{\rho} W}(\theta) = \int_0^{\infty} x^{\rho -1} W(x) e^{-2\pi i x \theta} dx = \int_0^{\infty} x^{-\sigma -\frac{1}{2} + i \gamma} e^{-x(1 + 2\pi i \theta)} dx. \end{equation}
To handle this, we make use of the fact that the formula
\begin{equation}\label{gamm-int} \int^{\infty}_0 x^{s - 1} e^{-ax} dx = \frac{\Gamma(s)}{a^s} \end{equation} holds for all $\Re s, \Re a > 0$, where $a^s = e^{s \log a}$ with $\log a$ being the principal branch of the logarithm on $\Re a > 0$. This fact is ``well-known'' but it is somewhat difficult to find a satisfactory reference; it appears as \cite[equation (1.5.1)]{lebedev}, which states (quite correctly, but with no further details) that it follows from the well-known case where $a$ is real, by the identity principle for holomorphic functions. It is also possible to give a proof using contour integration.

From \eqref{t-hat-rho}, \eqref{gamm-int} we have
\begin{equation}\label{t-rho-gam} \widehat{T_{\rho} W}(\theta) =  \frac{\Gamma(\frac{1}{2} - \sigma + i \gamma)}{(1 + 2\pi i \theta)^{\frac{1}{2} - \sigma + i \gamma}}.\end{equation}
Specialising to $\rho = 1$ (i.e. $\sigma = \gamma = 0$) gives a formula for $\widehat{W}$, namely
\[ \widehat{W}(\theta) = \Gamma(\textstyle\frac{1}{2}\displaystyle) (1 + 2\pi i \theta)^{-\frac{1}{2}}.\]
The bounds we need can now be read from the existing literature on the $\Gamma$-function, and are summarised in the following lemma.

\begin{lemma}\label{gamma-t}
We have\begin{equation}\label{compare-first}
|\widehat{T_{\rho} W}(\theta)| \leq 10 (1 + \theta^2)^{\frac{1}{2}\sigma} \Re \widehat{W}(\theta),
\end{equation}
uniformly for $0 \leq \sigma \leq \frac{1}{4}$ and all $\gamma, \theta \in \R$.
 If $\rho = 1 - \sigma$ is real with $\sigma$ small then, uniformly for $0 \leq \sigma \leq \frac{1}{4}$, we have 
\begin{equation}\label{real-small} \Re \widehat{T_{\rho} W}(\theta) \leq (1 + O(\sigma)) (1 + 4\pi^2 \theta^2)^{\frac{1}{2}\sigma} \Re\widehat{W}(\theta).\end{equation}
\end{lemma}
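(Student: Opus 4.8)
The plan is to prove Lemma \ref{gamma-t} by directly exploiting the closed forms \eqref{t-rho-gam} and its specialization $\hat W(\theta) = \Gamma(\frac12)(1 + 2\pi i\theta)^{-1/2}$. The key observation is that both $\widehat{T_\rho W}(\theta)$ and $\hat W(\theta)$ factor through the same quantity $1 + 2\pi i\theta$, so the whole lemma reduces to comparing moduli of powers of complex numbers and to a lower bound on $\Re \hat W(\theta)$.

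First I would handle $\Re \hat W(\theta)$. Writing $1 + 2\pi i\theta = \lambda e^{i\varphi}$ with $\lambda = (1 + 4\pi^2\theta^2)^{1/2} \geq 1$ and $\varphi = \arctan(2\pi\theta) \in (-\pi/2, \pi/2)$, we get $\hat W(\theta) = \Gamma(\tfrac12) \lambda^{-1/2} e^{-i\varphi/2}$, so $\Re \hat W(\theta) = \Gamma(\tfrac12)\lambda^{-1/2}\cos(\varphi/2)$. Since $|\varphi| < \pi/2$ we have $\cos(\varphi/2) > \cos(\pi/4) = \tfrac{1}{\sqrt 2}$, so $\Re \hat W(\theta) \geq \tfrac{1}{\sqrt 2}\Gamma(\tfrac12)\lambda^{-1/2} > 0$; in particular $\Re\hat W(\theta)$ is comparable to $|\hat W(\theta)| = \Gamma(\tfrac12)\lambda^{-1/2}$ up to the absolute factor $\sqrt 2$.

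Next, for \eqref{compare-first}: from \eqref{t-rho-gam}, $|\widehat{T_\rho W}(\theta)| = |\Gamma(\tfrac12 - \sigma + i\gamma)| \cdot |(1+2\pi i\theta)^{-(\frac12 - \sigma + i\gamma)}|$. Using $1 + 2\pi i\theta = \lambda e^{i\varphi}$ the second factor is $\lambda^{-\frac12+\sigma} e^{\gamma\varphi}$; since $|\varphi| < \pi/2$ this is $\leq \lambda^{-\frac12+\sigma} e^{\pi|\gamma|/2}$. The $\gamma$-dependence must be absorbed using the exponential decay of $|\Gamma(\tfrac12 - \sigma + i\gamma)|$ in $|\gamma|$: Stirling gives $|\Gamma(x + i\gamma)| \ll e^{-\pi|\gamma|/2}|\gamma|^{x - 1/2}$ (with a uniform constant for $x$ in a compact interval such as $[\tfrac14, \tfrac12]$), so the product $|\Gamma(\tfrac12-\sigma+i\gamma)|e^{\pi|\gamma|/2}$ is bounded by an absolute constant times $\max(1, |\gamma|^{-\sigma}) \leq$ an absolute constant (since $\sigma \geq 0$ and for small $|\gamma|$ one uses continuity of $\Gamma$ away from its poles). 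Combining, $|\widehat{T_\rho W}(\theta)| \ll \lambda^{-\frac12+\sigma} = \lambda^{2\cdot\frac12\sigma}\cdot\lambda^{-1/2}$, and then $\lambda^{-1/2} \ll \Re\hat W(\theta)$ by the previous paragraph; tracking the absolute constants carefully (via explicit Stirling-type bounds, e.g.\ from \cite{lebedev} or a standard reference) yields the constant $10$. Here $(1+\theta^2)^{\sigma/2}$ versus $(1+4\pi^2\theta^2)^{\sigma/2} = \lambda^\sigma$ differ only by a bounded factor for $\sigma \leq \tfrac14$, which can be folded into the $10$.

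Finally, for the real case \eqref{real-small}: when $\gamma = 0$, $\widehat{T_\rho W}(\theta) = \Gamma(\tfrac12-\sigma)(1+2\pi i\theta)^{-(\frac12-\sigma)} = \Gamma(\tfrac12-\sigma)\lambda^{-\frac12+\sigma}e^{i(\frac12-\sigma)\varphi}$, so $\Re\widehat{T_\rho W}(\theta) = \Gamma(\tfrac12-\sigma)\lambda^{-\frac12+\sigma}\cos((\tfrac12-\sigma)\varphi)$ and $\Re\hat W(\theta) = \Gamma(\tfrac12)\lambda^{-1/2}\cos(\tfrac12\varphi)$. The ratio is $\frac{\Gamma(\frac12-\sigma)}{\Gamma(\frac12)}\lambda^{\sigma}\frac{\cos((\frac12-\sigma)\varphi)}{\cos(\frac12\varphi)}$. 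Now $\frac{\Gamma(\frac12-\sigma)}{\Gamma(\frac12)} = 1 + O(\sigma)$ by differentiability of $\log\Gamma$ at $\tfrac12$, and since $|\varphi| < \pi/2$ and $0 \leq \sigma \leq \tfrac14$ one has $\cos((\tfrac12-\sigma)\varphi) \leq \cos(\tfrac12\varphi)\cdot(1 + O(\sigma))$ (indeed $\cos((\tfrac12-\sigma)\varphi)/\cos(\tfrac12\varphi) = 1 + O(\sigma|\varphi|\tan(\tfrac12\varphi)\cdot{\rm something}) = 1 + O(\sigma)$ uniformly, using $\tan$ bounded on compact subsets, but care is needed near $|\varphi| \to \pi/2$ — there $\cos(\frac12\varphi) \to \cos(\pi/4) > 0$ stays bounded below, so the ratio is still $1+O(\sigma)$). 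This gives $\Re\widehat{T_\rho W}(\theta) \leq (1+O(\sigma))\lambda^\sigma\Re\hat W(\theta)$, which is \eqref{real-small}.

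The main obstacle I anticipate is making the constant $10$ in \eqref{compare-first} genuinely explicit: this requires a clean, citable version of the Stirling bound $|\Gamma(\tfrac12-\sigma+i\gamma)| \leq C e^{-\pi|\gamma|/2}$ valid uniformly for $\sigma \in [0,\tfrac14]$ and all real $\gamma$ (including small $\gamma$, where one instead uses a crude bound on $|\Gamma|$ on the compact region $|\gamma| \leq 1$, $\sigma \in [0,\tfrac14]$), together with the lower bound $\Re\hat W(\theta) \geq \tfrac{1}{\sqrt2}\Gamma(\tfrac12)\lambda^{-1/2}$ and the harmless distortion between $(1+\theta^2)$ and $(1+4\pi^2\theta^2)$. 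All of these are routine but need to be assembled with enough slack to land below $10$; none of the individual estimates is deep.
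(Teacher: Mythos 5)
Your proposal follows essentially the same route as the paper's proof: the closed form \eqref{t-rho-gam}, the lower bound $\Re \hat{W}(\theta) \geq 2^{-1/2}\,\Gamma(\tfrac12)(1+4\pi^2\theta^2)^{-1/4}$ coming from $\cos(\tfrac12\tan^{-1}(2\pi\theta)) \geq 1/\sqrt{2}$, an explicit Stirling-type bound $|\Gamma(x+i\gamma)| \leq C e^{-\pi|\gamma|/2}$ uniformly on $\tfrac14 \leq x \leq \tfrac12$ (the paper proves $C=7$ in Lemma \ref{explicit-gam}), and, for \eqref{real-small}, a $1+O(\sigma)$ comparison of the $\Gamma$- and cosine-factors with the $\sigma=0$ case using that $\cos(\tfrac12\tan^{-1}(2\pi\theta))$ is bounded below. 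The one loose end you flag---fitting the constant $10$ while folding in the distortion between $(1+\theta^2)^{\sigma/2}$ and $(1+4\pi^2\theta^2)^{\sigma/2}$---is genuine but harmless: the paper's own argument really yields $7\sqrt{2}\,(1+4\pi^2\theta^2)^{\sigma/2}\Re\hat{W}(\theta)$, and every subsequent application of \eqref{compare-first} tolerates the extra $4\pi^2$.
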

\begin{proof}
We first find a lower bound for $\Re \widehat{W}(\theta)$. We have \[  \Re (1 + 2\pi i \theta)^{-\frac{1}{2}} = (1 + 4\pi^2\theta^2)^{-\frac{1}{4}} \cos (\textstyle\frac{1}{2}\displaystyle \tan^{-1} (2\pi \theta)).\] Since $| \tan^{-1} (2\pi \theta)| \leq \frac{\pi}{2}$, we have $\cos (\frac{1}{2} \tan^{-1} (2\pi \theta)) \geq \cos (\frac{\pi}{4}) = \frac{1}{\sqrt{2}}$, and so
\begin{equation}\label{w-lower} \Re \widehat{W}(\theta) \geq 2^{-\frac{1}{2}}(1 + 4 \pi^2\theta^2)^{-\frac{1}{4}}. \end{equation}
We turn now to finding an upper bound for $|\widehat{T_{\rho} W}(\theta)|$. We will use the inequality
\begin{equation}\label{gamma-upper} |\Gamma(x + iy)| \leq 7 e^{-\pi |y|/2},\end{equation} which holds uniformly for $\frac{1}{4} \leq x \leq \frac{1}{2}$ and for all $y$. For the proof, see Lemma \ref{explicit-gam}. Also,
\[ |(1 + 2\pi i \theta)^{x + iy}| = (1 + 4\pi^2 \theta^2)^{x/2} e^{- y \tan^{-1}(2\pi  \theta)} \geq (1 + 4\pi^2\theta^2)^{x/2} e^{-\pi |y|/2}.\]
It follows immediately from this, \eqref{gamma-upper} and \eqref{t-rho-gam} that 
\begin{equation}\label{key-fourier} |\widehat{T_{\rho} W}(\theta)| \leq 7 (1 + 4\pi^2\theta^2)^{-\frac{1}{2}(\frac{1}{2} - \sigma)}.\end{equation}
Comparing \eqref{w-lower} and \eqref{key-fourier}, the desired bound \eqref{compare-first} follows immediately since $7 \sqrt{2} < 10$.

Now we turn to \eqref{real-small}.  By \eqref{t-rho-gam} we have 
\begin{align}\nonumber \Re & \widehat{T_{1 - \sigma} W}(\theta) \\ & =  \Gamma(\textstyle\frac{1}{2}\displaystyle  -  \sigma) (1 + 4\pi^2\theta^2)^{-\frac{1}{4} + \frac{1}{2}\sigma} \cos \big((\textstyle\frac{1}{2}\displaystyle -  \sigma) \tan^{-1} (2\pi \theta)\big).\label{t-real}\end{align}
We want to compare this with the same expression when $\sigma = 0$. Since $\Gamma$ is differentiable (and never zero) on $(0,1)$ we have $\Gamma(\frac{1}{2} - \sigma) = \Gamma(\frac{1}{2}) (1 + O(\sigma))$. Since $-\frac{\pi}{2} \leq \tan^{-1}(2 \pi \theta) \leq \frac{\pi}{2}$, and since the derivative of $\cos(tx)$ with respect to $x$ is uniformly bounded for $(t,x) \in [-\frac{\pi}{2}, \frac{\pi}{2}]\ \times \R$, we have \[ \cos \big((\textstyle\frac{1}{2}\displaystyle - \sigma) \tan^{-1} (2\pi \theta)\big) = (1 + O(\sigma)) \cos(\textstyle\frac{1}{2}\displaystyle \tan^{-1} (2\pi \theta))\] uniformly in $\theta$. 

Putting this information together with \eqref{t-real} leads to \eqref{real-small}.
\end{proof}

\emph{Remarks.} If we tried to run a similar analysis with $W(x) = e^{-x}$ (or any $x^{-t} e^{-x}$ with $t < \frac{1}{2}$) it would break down at the point where we asserted \eqref{gamma-upper}. Here, it is critical that we are only considering $\Gamma(z)$ for $\Re z \leq \frac{1}{2}$. Indeed, one may compute that for large positive $\gamma$, if we instead take $W(x) = e^{-x}$ then $\widehat{T_{1 + i \gamma} W}(\gamma) \sim \gamma^{-\frac{1}{2}}$ whilst $\widehat{W}(\gamma) \sim \gamma^{-1}$, so \eqref{compare-first} actually fails. What is going on here is that in $\widehat{T_{1 + i \gamma} W}(\gamma)$ we have a phase $x^{i \gamma}e(-\gamma x) = e (\gamma (\frac{1}{2\pi} \log x - x))$ which is stationary at $x = \frac{1}{2\pi}$, leading to less cancellation than one might na\"{\i}vely expect. The role of the factor $x^{-\frac{1}{2}}$ in the definition of $W$ is to reduce the relative effect of this stationary phase term by making the Fourier transform of $W$ decay more slowly, so that \eqref{compare-first} has a chance of holding.

\subsection{Proof of the main result}

We are now in a position to establish the main result of the section, Proposition \ref{prop15.2}.

\begin{proof}[Proof of Proposition \ref{prop15.2}]
(1) We have
\[ \sum_{n = 1}^N n^{-\sigma} w(n) \ll \sum_{n = 1}^N n^{-\sigma} \big(\frac{n}{Y}\big)^{-\frac{1}{2}} \ll Y^{\frac{1}{2}}N^{\frac{1}{2} - \sigma} \ll N^{1 - \sigma},\] with the implied constants being uniform for $\sigma \leq \frac{1}{4}$ (say). 

(2) We have
\[ \sum_{n = 1}^N w(n) \geq \sum_{n = 1}^Y w(n) \gg  \sum_{n = 1}^Y \big(\frac{n}{Y}\big)^{-\frac{1}{2}} \gg Y.\]

(3) This has already been established in Proposition \ref{prop16.6}.

(4) We first handle the case $|\eta| \leq N^{-\frac{1}{2}}$. By Lemma \ref{sum-integral-comp},
\begin{equation}\label{first-si} \sum_{n = 1}^{N}n^{\rho - 1} w(n) e(\eta n) = Y^{\rho - 1} \widehat{T_{\rho} W}(-\eta Y) + O(N^{\frac{7}{8}}).\end{equation}
Taking $\rho = 1$, 
\begin{equation}\label{second-si} \sum_{n = 1}^N w(n) \cos(2 \pi \eta n) = \Re \widehat{W}(-\eta Y) + O(N^{\frac{7}{8}}).\end{equation}
However, by \eqref{compare-first} we have
\begin{align*} |Y^{\rho - 1} \widehat{T_{\rho} W}(-\eta Y) | & \leq 10 Y^{-\sigma}(1 + |\eta Y|^2)^{\frac{1}{2}\sigma} \Re \widehat{W}(-\eta Y) \\ & \leq 11 N^{-\frac{1}{4}\sigma} \Re \widehat{W}(-\eta Y).\end{align*} Combining these results gives the conclusion when $|\eta| \leq N^{-\frac{1}{2}}$.

In the case $N^{-\frac{1}{2}} < | \eta |\leq \frac{1}{2}$, Proposition \ref{prop16.6} (applied to both sides of \eqref{prop15.2.3}) gives the result immediately. 

(5) The case when $N^{-\frac{1}{2}} < \Vert \eta \Vert \leq \frac{1}{2}$ may be handled exactly as in (4). For the case $|\eta| \leq N^{-\frac{1}{2}}$, we again start with \eqref{first-si} and \eqref{second-si}. In fact, we need to take real parts of \eqref{first-si} which, since $\rho = 1 - \sigma$ is real, gives
\begin{equation}\label{real-first-xi} \sum_{n = 1}^N n^{-\sigma} w(n) \cos (\eta n) = Y^{-\sigma} \Re \widehat{T_{1 - \sigma} W}(-\eta Y) + O(N^{\frac{7}{8}}).\end{equation}
By \eqref{real-small} we have 
\begin{align*} Y^{-\sigma} \Re \widehat{T_{1 - \sigma} W}(-\eta Y) & \leq (1 + O(\sigma)) Y^{-\sigma} (1 + |\eta Y|^2)^{\frac{1}{2}\sigma} \Re \widehat{W}(-\eta Y) \\ & \leq (1 + O(\sigma)) Y^{-\sigma}N^{\frac{1}{2}\sigma} \Re \widehat{W}(-\eta Y) \\ & \leq N^{-\frac{1}{4}\sigma} \Re \widehat{W}(-\eta Y).\end{align*}
Here, in the second step we used $1 + |\eta Y|^2 < 1 + (N^{-\frac{1}{2}} \frac{N}{\log N})^2 < N$, and in the last step we used $Y = \frac{N}{2\log N} > N^{\frac{7}{8}}$ and $1 + C \sigma \leq N^{\frac{1}{8}\sigma}$, which is valid uniformly for all real $\sigma \geq 0$ and for $N > e^{8C}$.

The desired result follows by combining this with \eqref{real-first-xi} and \eqref{second-si}.

(6) We use the decomposition $W = W_0 + W_1$ from Lemma \ref{w-decomp}, taking $\eps := X^{-2}$, and set $w_0(n) := W_0(\frac{n}{Y})$, $w_1(n) := W_1(\frac{n}{Y})$. From Lemma \ref{w-decomp} we have $w_1(n) = \int \widehat{W}_1(\xi) e(\frac{\xi}{Y} n) d\xi$, so we take $a(\xi) := \widehat{W}_1(\xi)$. The stated bound \eqref{a-ell-1} on $\int_{\R} |a(\xi)| d\xi$ is then immediate from Lemma \ref{w-decomp}. 

Also, 
\[ \sum_{n = 1}^N w_0(n) \leq \sum_{n \leq NX^{-2}} W\big(\frac{n}{Y}\big) \ll Y^{\frac{1}{2}} (NX^{-2})^{\frac{1}{2}} \ll NX^{-1},\] which is the claimed bound \eqref{w0-bds}.
\end{proof}

\part{The main construction} \label{p-adic-part}

We turn now to the main construction of a function $\Psi$ and to the proof of Theorem \ref{mainthm-3} (and hence, as explained shortly before the statement of that result, Theorems \ref{main-sarkozy} and \ref{vdc}). The reader may find it helpful to reread the broad overview of the construction, as given in Section \ref{subsec3.1}.

For the remainder of the paper, we fix a scale $T$ satisfying the conclusions of Proposition \ref{sec4-takeaway}, with $\sigmax = \frac{1}{48}$, this choice being with future applications of Proposition \ref{prop15.2} in mind. From now on, $\rho_1, \rho_2,\dots, \rho_J$ are the zeros in $\Xi_T := \Xi_T(\frac{1}{48})$ listed in order of decreasing real part, or equivalently in increasing order of $\sigma_j := 1 - \Re \rho_j$. For each $j$, write $\chi_j$ for the primitive character for which $\rho_j$ is a zero of $L(s, \chi_j)$, and set $q_j := \cond(\chi_j)$. 

At least one of the options (1) and (2) of Proposition \ref{sec4-takeaway} holds (indeed that is the content of the proposition). We refer to the two cases as the ``unexceptional'' and ``exceptional'' cases respectively. 

\emph{Remark.} As previously remarked, it is technically possible to be in case (1) yet still have an exceptional zero at scale $T$ in the sense of Proposition \ref{prop42}. However, there should be little danger of confusion since we will have no further need for the definitions in Proposition \ref{prop42}.

\section{Construction of the damping term}\label{sec13}

To ease notation a little, in this section only we write \[ \varphi_j := \tilde F_{\chi_j, T} \quad j = 1,2,\dots, J \quad \mbox{and} \quad \varphi_0 := \tilde\Lambda_T\] for the completed approximants as defined in Section \ref{sec6}. Thus, from \eqref{f-chi-comb}, \eqref{tilde-lam-def-2} we have
\begin{equation}\label{varphi-j-def} \varphi_j = \frac{q_j}{\phi(q_j)} \overline{\chi}_j \prod_{\substack{p \leq T \\ p \nmid q_j}} \vp , \quad j \geq 1 \quad \mbox{and}  \quad \varphi_0 = \prod_{p \leq T} \vp .\end{equation}
We are now in a position to actually construct the damping term $D$. Recall the definition of $\Phi_{r,b}$ (see \eqref{phi-a-q-def}), namely $\Phi_{r,b}(x) := r^{\frac{5}{6}} 1_{r | x} e(\frac{bx}{r^3})$.  Here are the key properties our damping term $D$ will have.

\begin{proposition}\label{key-D-prop}
There is a function $D : \Z \rightarrow \R$ with the following properties. $D(n)$ is a finite convex combination \begin{equation}\label{D-linear} \sum_{q} \sum_{a \in \Z/q^3\Z} \alpha_{q,a} \Phi_{q,a}(n),\end{equation} where
\begin{equation}\label{coeffs} 0 \leq \alpha_{q,a} \leq 1,\quad \sum_{q,a} \alpha_{q,a} = 1 \quad \mbox{and} \quad\alpha_{1,0} \geq \frac{3}{4},\end{equation}
and such that
\begin{equation}\label{9prop1-reform} N^{-\frac{1}{8}\sigma_j} \varphi_j^+ \varphi_0^- \tilde H_T D \prec \varphi_0^+ \varphi_0^- \tilde H_T D  \end{equation}
 for every $j \geq 1$ \textup{(}in the unexceptional case\textup{)} and for every $j \geq 2$ \textup{(}in the exceptional case\textup{)}.
\end{proposition}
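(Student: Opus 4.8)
The plan is to build $D$ as a convergent sum, indexed by the zeros $\rho_j$, of terms produced by Proposition \ref{key-proposition}, and then normalise. First I would initialise with $D_{(0)} := \Phi_{1,0}$ (the constant function $1$, which lies in $\mathscr{F}$) and process the zeros $\rho_1,\rho_2,\dots$ in order. At stage $j$ I want to enlarge the current damping function so that the oscillation of $\overline{\chi}_j^+$ (which is what appears in $\varphi_j^+$, via \eqref{varphi-j-def}) gets Fourier-dominated. The key algebraic observation is that $\varphi_j$ differs from $\varphi_0$ essentially only in the factors at primes dividing $q_j$: writing $\mathcal{P}_j$ for the set of primes dividing $q_j$, we have, from \eqref{varphi-j-def} and the local-factor notation of Section \ref{sec10},
\[ \varphi_j = \overline{\chi}_j \cdot \frac{q_j}{\phi(q_j)} \prod_{p \in \mathcal{P}_j} (1 - \tfrac1p)\cdot \varphi_0 \cdot \prod_{p \in \mathcal{P}_j} \vp^{-1}, \]
but more to the point $\varphi_j^+ = \overline{\chi}_j^+ 1_{(n-1,q_j)=1}\cdot (\text{bounded local correction})\cdot \varphi_0^+/\prod_{p\in\mathcal P_j}\vp^+$. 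So the ratio $\varphi_j^+/\varphi_0^+$, once one clears the $\vp$ factors at primes in $\mathcal P_j$, is a character-twist of a function supported away from those primes; and crucially $\overline{\chi}_j(n+1)1_{(n-1,q_j)=1}$ is exactly the object handled by Proposition \ref{key-proposition}.

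Concretely, the inductive step: given $D_{(j-1)}\in\mathscr{F}$, apply Proposition \ref{key-proposition} with the character $\overline{\chi}_j$ (conductor $q_j | \Pi$ — here one needs $q_j \leq T \leq N$ so that $q_j | \Pi$, which holds by the definition \eqref{pi-def} of $\Pi$) and $F = D_{(j-1)}$ to obtain $\tilde D_j\in\mathscr{F}_{\mathcal P_j}$ with $\overline{\chi}_j^+1_{(n-1,q_j)=1}D_{(j-1)}\prec M\tilde D_j$. Then I would set
\[ D_{(j)} := (1 - \beta_j)D_{(j-1)} + \beta_j \tilde D_j, \qquad \beta_j := 2M\, N^{-\frac{1}{8}\sigma_j} \]
(in the exceptional case, start the induction from $j=2$ and leave $\rho_1$ untouched). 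The point of including the whole of $D_{(j-1)}$ inside the application of Proposition \ref{key-proposition} — rather than just $\Phi_{1,0}$ — is precisely the cross-term issue flagged in the heuristic discussion of Subsection \ref{subsec3.1}: after stage $j$, the Fourier mass of $D_{(j)}$ that came from earlier characters $\chi_i$ ($i<j$) still needs to be dominated when we later multiply by $\overline\chi_j$, and feeding the full $D_{(j-1)}$ in handles this automatically. Because each $\beta_j\le 1$ provided $N^{-\frac18\sigma_j}$ is small (which, for the finitely many $j$ with $\sigma_j$ near $0$, needs $2M N^{-\frac18\sigma_j}\le 1$ — this is where one uses the quantitative bounds \eqref{unexc-n-bd}, \eqref{exc-n-bd} of Proposition \ref{sec4-takeaway}, and in particular $\sum_j N^{-\frac1{16}\sigma_j}$ or the analogous sum being $\le \frac1{2M}$), each $D_{(j)}$ is again a genuine convex combination in $\mathscr{F}$, hence periodic and Fourier-positive; and the sequence converges (in the finite-dimensional space of functions periodic mod $\Pi$) because $\sum_j \beta_j = 2M\sum_j N^{-\frac18\sigma_j}$ converges, by \eqref{crude-J} together with the zero-density bound — in fact one wants $\sum_j N^{-\frac18\sigma_j}$ finite and small, which follows from \eqref{unexc-n-bd}/\eqref{exc-n-bd} since $N^{-\frac18\sigma_j}\le N^{-\frac1{16}\sigma_j}$. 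Set $D := \lim_j D_{(j)}$ (an explicit infinite convex combination \eqref{d-series}); the coefficient of $\Phi_{1,0}$ survives with weight $\prod_j(1-\beta_j)\ge 1 - \sum_j\beta_j \ge \frac34$ by choosing the constants in Proposition \ref{sec4-takeaway} (i.e. the exponents $\frac1{120}$, $10^{-6}$) so that $2M\sum_j N^{-\frac18\sigma_j}\le\frac14$, giving \eqref{coeffs}.

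For the main inequality \eqref{9prop1-reform}, fix $j\ge1$ (resp. $\ge2$). Using $\varphi_j^+ = \overline{\chi}_j^+\cdot(\text{local correction at }\mathcal P_j)\cdot 1_{(n-1,q_j)=1}\cdot\varphi_0^+\cdot\prod_{p\in\mathcal P_j}(\vp^+)^{-1}$, and noting $\tilde H_T$ and $\varphi_0^-$ are Fourier-positive (the latter is $\tilde\Lambda_T^-=\prod\vp^-$, which is \emph{not} Fourier-positive — so I must instead argue via $\prec$ and the product lemma, Proposition \ref{simple-pos-properties}(5), treating $\varphi_0^-\tilde H_T$ as a fixed multiplier after extracting signs, or better, absorb the $\vp^\pm$ corrections into the comparison at the relevant primes using Lemma \ref{lem43}): the cleanest route is to observe that at each prime $p\in\mathcal P_j$, $\vxpbar^+ \vp^- \wp$ is handled by Lemma \ref{lem43}-type positivity bundled with the Gauss-sum/Postnikov analysis already packaged in Proposition \ref{main-pp-dirich}, while at $p\notin\mathcal P_j$ the factor is unchanged between $\varphi_j$ and $\varphi_0$. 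Applying $D_{(j-1)}\preceq$-domination from Proposition \ref{key-proposition}, multiplied by the (Fourier-positive, or $\prec$-controlled) factors $\varphi_0^-\tilde H_T \prod_{p\notin\mathcal P_j}\vp^+$, one gets
\[ N^{-\frac18\sigma_j}\varphi_j^+\varphi_0^-\tilde H_T D_{(j-1)} \prec N^{-\frac18\sigma_j} M\, \varphi_0^+\varphi_0^-\tilde H_T \tilde D_j = \tfrac12\beta_j\,\varphi_0^+\varphi_0^-\tilde H_T\tilde D_j, \]
and then, since $\beta_j\tilde D_j \prec D_{(j)} \prec D$ and $D_{(j-1)}$ contributes the remaining mass of $D$ with a matching factor, summing the geometric tail gives $N^{-\frac18\sigma_j}\varphi_j^+\varphi_0^-\tilde H_T D \prec \varphi_0^+\varphi_0^-\tilde H_T D$, which is \eqref{9prop1-reform}. \textbf{The main obstacle} I anticipate is bookkeeping the local correction factors at primes dividing $q_j$ so that the clean statement of Proposition \ref{key-proposition} (which is about $\chi^+1_{(n-1,q)=1}F$, with no $\vp$'s) can be fed the right $F$ and the leftover $\frac{q_j}{\phi(q_j)}\prod_{p\in\mathcal P_j}(1-\frac1p)$-type constants and $\vp^\pm$ factors absorbed without breaking the convex-combination structure or the constant $M$; getting the constant exactly $M$ (rather than $M$ times junk) will require the numerology of $\omega(p)$ in \eqref{omega-p-def} to have been chosen — as it was — to swallow $\Vert\vp^-\wp\Vert_1$-type losses, i.e. re-running the proof of Proposition \ref{main-pp-dirich} with the $\vp^-\wp$ weight present rather than invoking it as a black box.
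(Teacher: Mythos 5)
Your high-level idea — generate dominating functions via Proposition \ref{key-proposition} and absorb the constant $M$ using $N^{-\frac{1}{16}\sigma_j} \leq \frac{1}{M}$ from Proposition \ref{sec4-takeaway} — is the right one, but the single-pass recursion $D_{(j)} = (1-\beta_j)D_{(j-1)} + \beta_j \tilde D_j$ does not prove \eqref{9prop1-reform}, and this is a genuine gap rather than bookkeeping. The inequality must hold with the \emph{same, final} $D$ on both sides, so on the left you must control $\varphi_j^+\varphi_0^-\tilde H_T$ multiplied by \emph{every} component of $D$, in particular the pieces $\beta_i\tilde D_i$ with $i \geq j$ created at stage $j$ or later. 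Proposition \ref{key-proposition} was applied with $\chi_j$ only to $D_{(j-1)}$; it gives no information about $\chi_j^+\tilde D_j$, nor about $\chi_j^+\tilde D_i$ for $i>j$ (those were built to tame $\chi_i$, not $\chi_j$), and there is no a priori Fourier-domination for these products. The step ``since $\beta_j\tilde D_j \prec D_{(j)} \prec D$ \dots summing the geometric tail'' is where this is hidden: $\prec$ requires every rational Fourier coefficient of the whole left-hand side to be dominated, and dominating only the $D_{(j-1)}$-part of the left by part of the right leaves the $i\geq j$ terms uncontrolled. Any attempt to repair this by applying Proposition \ref{key-proposition} again to each $\tilde D_i$ produces new functions that must themselves appear in $D$, and iterating that regress is exactly the paper's construction: in \eqref{d-series}, $D$ is a convex combination over \emph{all finite words} $(j_1,\dots,j_m)$, with $\Psi_{j_1,\dots,j_m}$ obtained by successive applications of Proposition \ref{key-proposition} and weight $N^{-\frac{1}{16}(\sigma_{j_1}+\cdots+\sigma_{j_m})}$; then multiplication by $\varphi_j^+$ maps each word to the word with $j$ appended, which is again a term of $D$, and the factor $N^{-\frac{1}{8}\sigma_j} = N^{-\frac{1}{16}\sigma_j}\cdot N^{-\frac{1}{16}\sigma_j}$ supplies both the absorption of $M$ and the weight of the longer word. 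A one-pass convex combination cannot be closed under this operation for all $j$ simultaneously.

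Two further points. The issue you flag as ``the main obstacle'' (the local $\vp^\pm$, $\wp$ corrections at primes dividing $q_j$) is in fact not an obstacle and needs no re-run of Proposition \ref{main-pp-dirich}: the output of Proposition \ref{key-proposition} lies in $\mathscr{F}_{\mathcal{P}(j)}$, hence is supported on $n$ divisible by every $p \mid q_j$, and on that support $\vp^+(n) = \vp^-(n) = \frac{p}{p-1}$ is constant; so one multiplies the domination $\chi_j^+ 1_{(q_j,n-1)=1}\Psi \prec M\tilde\Psi$ by exact constants and then by the Fourier-positive functions $\tau^2_{\Z/q_j\Z}$ and $\prod_{p\leq T,\, p\nmid q_j}\vp^+\vp^-\wp$ (Lemma \ref{lem43} and Proposition \ref{simple-pos-properties}), obtaining $\varphi_j^+\varphi_0^-\tilde H_T\Psi \prec M\varphi_0^+\varphi_0^-\tilde H_T\tilde\Psi$ with no loss in $M$. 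Finally, your $D$ is not visibly real-valued (the $\Phi_{q,a}$ and the outputs of Proposition \ref{key-proposition} are complex), whereas the statement requires $D:\Z\rightarrow\R$; the paper handles this by passing to $\frac{1}{2}(D+\overline{D})$, which preserves all the properties because the family $\{\varphi_j\}$ is closed under complex conjugation.
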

\begin{proof} In this proof we write for positive integer $q$,
\[ \Lambda_{\Z/q\Z} := \prod_{p | q} \Lambda_{\Z/p\Z}, \quad \tau^2_{\Z/q\Z} := \prod_{p | q} \tau^2_{\Z/p\Z}.\] 

We first show that it is enough to construct a function $D : \Z \rightarrow \C$ with the required properties, that is to say without the condition that $D$ be real-valued. The key claim here is that, for every $j$, $\overline{\varphi}_j$ also appears as one of the functions $\varphi_{j'}$.

To prove the claim, consider the zero $\rho_j$ and the character $\chi_j$ associated to the index $j$. As explained in the proof of Lemma \ref{lem45}, $\overline{\rho}_j$ is a zero of $L(s, \overline{\chi}_j) = 0$, so for some $j'$ (possibly $j' = j$) we have $\rho_{j'} = \overline{\rho}_j$ and $\chi_{j'} = \overline{\chi}_j$. From \eqref{varphi-j-def} we see that $\varphi_{j'} = \overline{\varphi}_j$, which completes the proof of the claim.

Now suppose we are given $D : \Z \rightarrow \C$ satisfying Proposition \ref{key-D-prop}. We assert that $\Re D = \frac{1}{2}(D + \overline{D})$ has the same properties (and of course it \emph{is} real-valued). First note it is a finite convex combination of the form \eqref{D-linear} because $\overline{\Phi_{q,a}} = \Phi_{q, -a}$. 

Now, by the claim, if we have \eqref{9prop1-reform} for all $j$ then we also have 
\[ N^{-\frac{1}{8}\sigma_j} \overline{\varphi}_j^+ \varphi_0^- \tilde H_T D \prec \varphi_0^+ \varphi_0^- \tilde H_T D \] for all $j$. Taking complex conjugates and using the fact that both $\varphi_0 = \tilde\Lambda_T = \prod_{p \leq T} \vp $ and $\tilde H_T = \prod_{p \leq T} \wp $ are real-valued, and using Proposition \ref{simple-pos-properties} (3), we have
\[ N^{-\frac{1}{8}\sigma_j} \varphi_j^+ \varphi_0^- \tilde H_T \overline{D} \prec \varphi_0^+ \varphi_0^- \tilde H_T \overline{D} .\] Adding this to \eqref{9prop1-reform} and dividing by $2$ (and using Proposition \ref{simple-pos-properties} (4)) confirms our assertion about $\Re D$.

Now we turn to the construction of a $D : \Z \rightarrow \C$ satisfying the properties claimed. Recall (Definition \ref{def11.1}) the definitions of the classes $\mathscr{F}$ and $\mathscr{F}_{\mathcal{P}}$, where $\mathcal{P}$ is a set of primes. For each $j$, write $\mathcal{P}(j)$ for the set of primes dividing $q_j$. 

For the moment suppose that we are in the unexceptional case in which option (1) of Proposition \ref{sec4-takeaway} holds. For every finite (possibly empty) sequence $(j_1, j_2,j_3,\dots, j_m)$ of integers from $\{1,\dots, J\}$ we define $\Psi_{j_1,\dots, j_m} \in \mathscr{F}$ as follows. Set $\Psi_{\emptyset} := \Phi_{1,0}$, that is to say the constant function $1$. Assuming that $\Psi_{j_1,\dots, j_{m-1}}$ has already been defined, take $\Psi_{j_1,\dots, j_m} \in \mathscr{F}_{\mathcal{P}(j_m)}$ to be such that 
\begin{equation}\label{us-psi}  \chi^+_{j_m} 1_{(q_m, n - 1) = 1} \Psi_{j_1,\dots, j_{m-1}} \prec M \Psi_{j_1,\dots, j_{m}} .\end{equation}
The existence of $\Psi_{j_1,\dots, j_m}$ is guaranteed by Proposition \ref{key-proposition}.

Since $\Psi_{j_1,\dots, j_m}$ lies in $\mathscr{F}_{\mathcal{P}(j_m)}$, it is supported on $n$ for which $p | n$ for all $p \in \mathcal{P}(j_m)$. For such $n$ and for $p \in \mathcal{P}(j_m)$, we have $\vp ^+(n) = \vp ^-(n) = \frac{p}{p-1}$, and so $\Lambda_{\Z/q_m\Z}^+(n) = \Lambda_{\Z/q_m\Z}^-(n) = \frac{q_m}{\phi(q_m)}$. Therefore $\Lambda_{\Z/q_m\Z}^+ \Lambda_{\Z/q_m\Z}^- \Psi_{j_1,\dots, j_m} = (\frac{q_m}{\phi(q_m)})^2 \Psi_{j_1,\dots, j_m}$. Since also $\Lambda_{\Z/q_m \Z}^-(n) = \frac{q_m}{\phi(q_m)} 1_{(q_m, n - 1) = 1}$, it follows from \eqref{us-psi} that
\[  \frac{q_m}{\phi(q_m)} \chi^+_{j_m} \Lambda_{\Z/q_m\Z}^- \Psi_{j_1,\dots, j_{m-1}} \prec M \Lambda_{\Z/q_m\Z}^+ \Lambda_{\Z/q_m\Z}^- \Psi_{j_1,\dots, j_{m}} .\]
We now multiply both sides of this by $\tau^2_{\Z/q_m \Z} = \prod_{p | q_m} \wp$ and then by $\prod_{p \leq T : p \nmid q_m} \vp ^+ \vp ^- \wp $. By \eqref{w-Fourier} and Lemma \ref{lem43} (and Proposition \ref{simple-pos-properties} (1)) both of these are Fourier-positive, so by Proposition \ref{simple-pos-properties} (5) we have
\begin{align*}  \frac{q_m}{\phi(q_m)} & \chi^+_{j_m} \Lambda_{\Z/q_m\Z}^- \tau^2_{\Z/q_m\Z} \big( \prod_{p \leq T : p \nmid q_m} \vp ^+ \vp ^- \wp \big) \Psi_{j_1,\dots, j_{m-1}} \\ & \prec M \Lambda_{\Z/q_m\Z}^+ \Lambda_{\Z/q_m\Z}^- \tau^2_{\Z/q_m\Z} \big( \prod_{p \leq T : p \nmid q_m} \vp ^+ \vp ^- \wp \big) \Psi_{j_1,\dots, j_{m}} .\end{align*}
By \eqref{varphi-j-def} and the fact (Definition \ref{h-tilde-def}) that $\tilde H_T = \prod_{p \leq T} \wp $, this may equivalently be written as
\begin{equation}\label{without-D} \varphi_{j_m}^+ \varphi_0^- \tilde H_T \Psi_{j_1,\dots, j_{m-1}} \prec M \varphi_0^+ \varphi_0^- \tilde H_T \Psi_{j_1,\dots, j_m}.\end{equation}
Set
\[ \mu :=\sum_{m = 1}^{\infty}  \sum_{(j_1,j_2,\dots, j_m)} N^{-\frac{1}{16}(\sigma_{j_1} + \dots + \sigma_{j_m})} .\] Here, the tuples $(j_1,\dots, j_m)$ range over all $J^m$ tuples with $1 \leq j_i \leq J$ for each $i$. We claim that this is a convergent sum and that in fact $\mu \leq \frac{1}{3}$. For this, note that the $m$th term in the sum for $\mu$ is $\big(\sum_{j =1}^J N^{-\frac{1}{16}\sigma_{j}} \big)^m$, so by summing the geometric series it is enough to show that $\sum_{j = 1}^J N^{-\frac{1}{16}\sigma_{j}} \leq \frac{1}{4}$, which is (massively) true by \eqref{unexc-n-bd}.

Now we define \begin{equation}\label{d-series} D(n) := (1 + \mu)^{-1} \sum_{m = 0}^{\infty}  \sum_{(j_1,\dots, j_m)} N^{-\frac{1}{16}(\sigma_{j_1} + \dots + \sigma_{j_m})} \Psi_{j_1,\dots, j_m}(n),\end{equation} where again the tuples $(j_1,\dots, j_m)$ range over all $J^m$ tuples with $1 \leq j_i \leq J$ for each $i$. 
The functions $\Psi_{j_1,\dots, j_m}$ all lie in $\mathscr{F}$ (as defined in Definition \ref{def11.1}), and $D$ is a convex combination of them. Therefore $D$ also lies in $\mathscr{F}$; the key point here is that $\mathscr{F}$ is the convex span of a \emph{finite} set of functions $\Phi_{q,a}$, so the fact that $D$ is defined by an infinite sum rather than a finite one does not matter.

$D$ satisfies the first two conditions of \eqref{coeffs}. It also satisfies the third condition of \eqref{coeffs}: note that the term with $m = 0$ in $D(n)$ is $(1 + \mu)^{-1} \Psi_{\emptyset} = (1 + \mu)^{-1} \Phi_{1,0}$, so this follows from the fact that $\mu \leq \frac{1}{3}$.

Turning now to property \eqref{9prop1-reform}, we have the following deduction of this statement, where we will explain the steps below:
\begin{align*}
(1 + \mu) & N^{-\frac{1}{8}\sigma_j}\varphi_j^+ \varphi_{0}^- \tilde H_T D \\ & = \sum_{m=0}^{\infty} \sum_{(j_1,\dots, j_m)} N^{-\frac{1}{16}(\sigma_{j_1} + \dots + \sigma_{j_m})} N^{-\frac{1}{8}\sigma_{j}}\varphi_j^+ \varphi_0^- \tilde H_T \Psi_{j_1,\dots, j_m} \\ & \prec \sum_{m = 0}^{\infty} \sum_{(j_1,\dots, j_m)} N^{-\frac{1}{16}(\sigma_{j_1} + \dots + \sigma_{j_m})} N^{-\frac{1}{8} \sigma_{j}} M \varphi^+_0 \varphi^-_0 \tilde H_T \Psi_{j_1,\dots, j_m,j} \\ & \prec \sum_{m = 0}^{\infty} \sum_{(j_1,\dots, j_m)} N^{-\frac{1}{16}(\sigma_{j_1} + \dots + \sigma_{j_m} + \sigma_{j})}  \varphi_0^+ \varphi_0^- \tilde H_T \Psi_{j_1,\dots, j_m,j}  \\ & \prec (1 + \mu) \varphi_0^+ \varphi_0^- \tilde H_T D.
\end{align*}

Here, the second step uses \eqref{without-D}. In the third, penultimate, step we used the fact that for $j \in \{1,\dots, J\}$ we have
\begin{equation}\label{second-to-verify} N^{-\frac{1}{16}\sigma_j} \leq \frac{1}{M},\end{equation} by \eqref{unexc-n-bd}.

Finally, in the last step we added in the terms \[ N^{-\frac{1}{16}(\sigma_{j_1} + \dots + \sigma_{j_m} + \sigma_{j_{m+1}})}\varphi_0^+ \varphi_0^- \tilde H_T \Psi_{j_1,\dots, j_m,j_{m+1}}, \quad  j_{m+1} \neq j,\] and an extra copy of $\varphi_0^+ \varphi_0^- \tilde H_T$, noting that these are all Fourier-positive as we have previously seen (specifically, $\varphi_0^+ \varphi_0^- \tilde H_T$ is Fourier-positive by Lemma \ref{lem43} and Proposition  \ref{simple-pos-properties} (1), whilst $\Psi_{j_1,\dots, j_m,j_{m+1}}$ is Fourier-positive  because every function in $\mathscr{F}$ is, as remarked after Definition \ref{def11.1}.)

This completes the proof of property \eqref{9prop1-reform}, and hence of Proposition \ref{key-D-prop}, in the unexceptional case, that is to say when option (1) of Proposition \ref{sec4-takeaway} holds.

The argument in the exceptional case (option (2) of Proposition \ref{sec4-takeaway}) is essentially identical, except now the indices $j_i$ in the above construction only range over $\{2,\dots, J\}$. Instead of \eqref{second-to-verify}, we need the analogous result in which $j$ ranges over $\{2,\dots, J\}$. This is supplied by \eqref{exc-n-bd} (ignoring the contribution of $N^{-\frac{1}{16}\sigma_1}$ in that inequality). 

This completes the proof of Proposition \ref{key-D-prop}.\end{proof}

\section{Further bounds on correlations}\label{sec14}

The following section, which is a little technical, further enhances the main result of Section \ref{sec10} (Proposition \ref{character-correlation}), with an additional twist by a damping function such as the $D$ constructed in Proposition \ref{key-D-prop}. For the purposes of this section, the precise construction of $D$ is relatively unimportant and we consider general linear combinations
\[ D(n) = \sum_r \sum_{b \in (\Z/r^3\Z)^*} \alpha_{r,b} \Phi_{r,b}(n),\] where $\sum_{r,b} |\alpha_{r,b}| \leq 1$. The example we really care about, of course, is the one in Proposition \ref{key-D-prop}.
Given such a $D$, define also the truncated sum
\begin{equation}\label{d-trunc} D_X(n) := \sum_{r \leq X} \sum_{b \in (\Z/r^3\Z)^*} \alpha_{r,b} \Phi_{r,b}(n).\end{equation}

We pause to record a simple lemma which will be used a couple of times later on.

\begin{lemma}\label{d-trun-lem}
Let $D$ and its truncation $D_X$ be as above. Then 
\begin{equation}\label{d-trun-fourier} \Vert D_X \Vert^{\wedge}_1 \leq X^{\frac{5}{6}} \quad \mbox{and} \quad \support(D_X) \leq X^3.\end{equation}
\end{lemma}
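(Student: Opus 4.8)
The statement is a routine unwinding of the definitions, so the plan is to observe directly that both bounds follow from the fact that $D_X$ is a finite linear combination $\sum_{r \le X}\sum_{b} \alpha_{r,b}\Phi_{r,b}$ with $\sum_{r,b}|\alpha_{r,b}| \le 1$, together with the already-recorded facts about the individual $\Phi_{r,b}$: namely $\Vert \Phi_{r,b}\Vert^{\wedge}_1 = r^{5/6}$ from \eqref{phi-ell1-bd} and $\support(\Phi_{r,b}) \le r^3$ from \eqref{phi-support}.

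For the first inequality I would apply the triangle inequality for the $\Vert\cdot\Vert^{\wedge}_1$ norm (which is immediate from the definition of the rational Fourier coefficients, since $(f+g)^{\wedge} = f^{\wedge} + g^{\wedge}$), obtaining
\[ \Vert D_X \Vert^{\wedge}_1 \le \sum_{r \le X}\sum_{b \in (\Z/r^3\Z)^*} |\alpha_{r,b}|\, \Vert \Phi_{r,b}\Vert^{\wedge}_1 = \sum_{r \le X}\sum_{b} |\alpha_{r,b}|\, r^{5/6} \le X^{5/6}\sum_{r,b}|\alpha_{r,b}| \le X^{5/6}, \]
where in the penultimate step we used $r \le X$ so $r^{5/6} \le X^{5/6}$, and in the last step the hypothesis $\sum_{r,b}|\alpha_{r,b}| \le 1$.

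For the second inequality I would note that $\support$ of a sum is at most the lcm of the supports of the summands (indeed, if a rational $\lambda$ has $f^{\wedge}(\lambda) \ne 0$ for the sum $f$, then it must be nonzero for at least one summand), and each $\support(\Phi_{r,b}) \le r^3 \le X^3$ since $r \le X$; hence $\support(D_X)$ divides the lcm of numbers each dividing $X^3$, and in particular $\support(D_X) \le X^3$. (Alternatively, one sees directly from \eqref{phi-r-b-exp} that $\Phi_{r,b}$ has rational Fourier expansion supported on rationals $\frac{t}{r}+\frac{b}{r^3}$, all of which have denominator dividing $r^3$, so $D_X$ is supported on rationals with denominator dividing $X^3$.)

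There is no real obstacle here; the only point requiring the slightest care is that the coefficients $\alpha_{r,b}$ are now allowed to be signed (with $\ell^1$ sum at most $1$) rather than forming a convex combination, but the triangle inequality handles this without change.
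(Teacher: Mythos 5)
Your argument is correct and is exactly the paper's (one-line) proof, spelled out: the bounds follow from $\Vert \Phi_{r,b}\Vert^{\wedge}_1 = r^{5/6}$, $\support(\Phi_{r,b}) \leq r^3$, the triangle inequality, and $\sum_{r,b}|\alpha_{r,b}| \leq 1$. The only slight imprecision is in your parenthetical remark that the frequencies have denominator "dividing $X^3$" (they divide $r^3$, which need not divide $X^3$), but your main argument correctly only uses $\support(\Phi_{r,b}) \leq r^3 \leq X^3$, which suffices.
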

\begin{proof}
These follow from \eqref{phi-ell1-bd}, \eqref{phi-support} and the definition of $D_X$.
\end{proof} 

Now we turn to the main result of the section. For the notation, see the comments after the statement of Proposition \ref{character-correlation}.

\begin{proposition}\label{final-correlation}
Let $1 \leq Q_1, Q_2, Q_3, Q_4 \leq R$ be parameters with $Q := \min(Q_1, Q_2, Q_3)$. Then, for any $q$, 
\[ \big\Vert F_{\chi, Q_1}^+ \Lambda_{Q_2}^- H_{Q_3} D_{Q_4}1_{q | n}  - \tilde F_{\chi, R}^+ \tilde \Lambda_R^- \tilde H_R D 1_{q | n} \big\Vert^{\wedge}_{\infty} \ll R^{o(1)} \big( Q_4^{-\frac{1}{6}} + Q_4^{\frac{5}{6}} Q^{-\frac{1}{4}} \big).
\]
\end{proposition}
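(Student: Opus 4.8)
The strategy is to reduce to Proposition~\ref{character-correlation} by a telescoping (hybrid) argument in which we replace the four truncated factors one at a time by their completed counterparts, and then handle the extra factor $D_{Q_4}$ versus $D$ by a separate argument. Write $G := F_{\chi,Q_1}^+ \Lambda_{Q_2}^- H_{Q_3}$ and $\tilde G := \tilde F_{\chi,R}^+ \tilde\Lambda_R^- \tilde H_R$, so that Proposition~\ref{character-correlation} gives $\Vert G - \tilde G\Vert^{\wedge}_\infty \ll R^{o(1)} Q^{-1/4}$. The quantity to be bounded is
\[ \Vert G D_{Q_4} 1_{q|n} - \tilde G D 1_{q|n} \Vert^{\wedge}_\infty.\]
I would split this as
\[ \Vert (G - \tilde G) D_{Q_4} 1_{q|n}\Vert^{\wedge}_\infty + \Vert \tilde G (D_{Q_4} - D) 1_{q|n}\Vert^{\wedge}_\infty. \]
For the first term, use \eqref{b1b2-noncoprime} (applied twice, or once with $1_{q|n} D_{Q_4}$ grouped) together with Lemma~\ref{d-trun-lem} ($\Vert D_{Q_4}\Vert^{\wedge}_1 \le Q_4^{5/6}$) and $\Vert 1_{q|n}\Vert^{\wedge}_1 = 1$ (from \eqref{simple-ell1-div}) to get a bound $\ll \Vert G - \tilde G\Vert^{\wedge}_\infty Q_4^{5/6} \ll R^{o(1)} Q_4^{5/6} Q^{-1/4}$, which is one of the two terms in the claimed bound.

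The second term, $\Vert \tilde G (D_{Q_4} - D) 1_{q|n}\Vert^{\wedge}_\infty$, is the more delicate one and I expect it to be the main obstacle. Here one cannot simply use $\Vert D - D_{Q_4}\Vert^{\wedge}_1$, since $D - D_{Q_4} = \sum_{r > Q_4}\sum_b \alpha_{r,b}\Phi_{r,b}$ has $\ell^1$ norm controlled only by $\sum_{r > Q_4, b}|\alpha_{r,b}| r^{5/6}$, and without further decay in the coefficients $\alpha_{r,b}$ this sum need not be small. The point is that each individual $\Phi_{r,b}$ has small $\ell^\infty$ norm: from \eqref{phi-r-b-exp} one sees $\Vert \Phi_{r,b}\Vert^{\wedge}_\infty = r^{-1/6}$, and $\Vert \tilde G\Vert^{\wedge}_\infty \ll \tau(q)^{O(1)}$ is bounded (it equals $\prod_{p\le R}\Vert\cdot\Vert^{\wedge}_\infty$ of the relevant local factors, which are $O(1)$ away from primes dividing $q$ and contribute $\tau(q)^{O(1)}$ from $p\mid q$ — essentially Lemma~\ref{lem127}'s bound, crudely, is $\ll\tau(q)^{16}q^{-1/2}$, certainly $\ll R^{o(1)}$). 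For a fixed frequency $\lambda$, the Fourier coefficient $(\tilde G \Phi_{r,b} 1_{q|n})^{\wedge}(\lambda)$ is a convolution and can be bounded by $\Vert \tilde G 1_{q|n}\Vert^{\wedge}_\infty \cdot \Vert \Phi_{r,b}\Vert^{\wedge}_1 = \Vert\tilde G 1_{q|n}\Vert^{\wedge}_\infty r^{5/6}$ — but that reintroduces the bad $r^{5/6}$. The correct estimate is instead to bound $(\tilde G 1_{q|n}(D - D_{Q_4}))^{\wedge}(\lambda)$ by $\Vert D - D_{Q_4}\Vert^{\wedge}_\infty \cdot \Vert \tilde G 1_{q|n}\Vert^{\wedge}_1$; but $\Vert\tilde G\Vert^{\wedge}_1$ could be as large as $R^{O(1)}q^{1/2}$. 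The resolution: we must get cancellation from summing over $b \in (\Z/r^3\Z)^*$, or rather exploit that $D - D_{Q_4}$ is a \emph{convex} combination so $\Vert D - D_{Q_4}\Vert^{\wedge}_\infty \le \max_{r > Q_4, b} \Vert \Phi_{r,b}\Vert^{\wedge}_\infty = \max_{r > Q_4} r^{-1/6} \le Q_4^{-1/6}$, using that $\sum_{r,b}\alpha_{r,b} \le 1$ with $\alpha_{r,b}\ge 0$ and the fact that a convex combination of functions all having $\ell^\infty$-Fourier norm $\le \kappa$ also has $\ell^\infty$-Fourier norm $\le \kappa$. Then $\Vert\tilde G(D - D_{Q_4})1_{q|n}\Vert^{\wedge}_\infty \le \Vert D - D_{Q_4}\Vert^{\wedge}_\infty \Vert\tilde G 1_{q|n}\Vert^{\wedge}_1 \ll Q_4^{-1/6} \cdot R^{o(1)}q^{1/2}$, which is \emph{not} good enough when $q$ is large.

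To fix the large-$q$ issue I would, as in Section~\ref{sec10}, split into the cases $q \ge Q^{1/2}$ and $q < Q^{1/2}$. When $q < Q^{1/2}$ the bound $\Vert\tilde G 1_{q|n}\Vert^{\wedge}_1 \ll R^{o(1)} q^{1/2} \ll R^{o(1)} Q^{1/4}$ gives second-term contribution $\ll R^{o(1)} Q_4^{-1/6} Q^{1/4}$, but $Q^{1/4} \le Q_4^{?}$... this is still wrong-signed. The genuinely correct approach is: bound the second term by $\Vert \tilde G 1_{q|n}\Vert^{\wedge}_\infty \sum_{r > Q_4}\sum_b |\alpha_{r,b}| \Vert\Phi_{r,b}\Vert^{\wedge}_1$ is bad, so instead bound it as $\le \Vert\tilde G 1_{q|n}\Vert^{\wedge}_\infty \cdot \Vert D - D_{Q_4}\Vert^{\wedge}_1$ when $q$ is large, but use $\Vert D - D_{Q_4}\Vert^{\wedge}_\infty \le Q_4^{-1/6}$ composed with $\Vert\tilde G 1_{q|n}\Vert^{\wedge}_1$ and the fact that $\Vert\tilde G\Vert^{\wedge}_\infty \le \tau(q)^{O(1)}q^{-1/2}$ is \emph{small}; then $\Vert\tilde G 1_{q|n}\Vert^{\wedge}_1 \ll \tau(q)^{O(1)}q^{-1/2}\cdot\support(\tilde G 1_{q|n}) \ll R^{o(1)}$ since the support is $\le qR!$ hmm. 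The cleanest path, which I would pursue, is to absorb $1_{q|n}D_{Q_4}$ into a single Fourier-positive function, observe that $\tilde G (D - D_{Q_4}) 1_{q|n} = \sum_{r > Q_4, b}\alpha_{r,b}\, \tilde G \Phi_{r,b}1_{q|n}$ with $\alpha_{r,b} \ge 0$, $\sum \alpha_{r,b} \le 1$, so by the triangle inequality $\Vert\cdot\Vert^{\wedge}_\infty \le \sup_{r > Q_4,b}\Vert\tilde G\Phi_{r,b}1_{q|n}\Vert^{\wedge}_\infty$; and then bound $\Vert\tilde G\Phi_{r,b}1_{q|n}\Vert^{\wedge}_\infty \le \Vert\tilde G\Vert^{\wedge}_\infty \Vert\Phi_{r,b}\Vert^{\wedge}_1 \Vert 1_{q|n}\Vert^{\wedge}_1 \le \tau(q)^{O(1)}q^{-1/2}\cdot r^{5/6}\cdot 1$ — wrong again — \emph{or} $\le \Vert\Phi_{r,b}\Vert^{\wedge}_\infty \Vert\tilde G\Vert^{\wedge}_1 \Vert 1_{q|n}\Vert^{\wedge}_1 = r^{-1/6}\cdot\Vert\tilde G\Vert^{\wedge}_1$, and now in the \emph{small} conductor case $\Vert\tilde G\Vert^{\wedge}_1 \ll R^{o(1)}q^{1/2} \le R^{o(1)}Q^{1/4}$... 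I will present this with the two cases $q \ge Q^{1/2}$ (where Lemma~\ref{lem127}-type bounds make $\Vert\tilde G\Vert^{\wedge}_\infty$ tiny, so use the $\Vert\Phi\Vert^{\wedge}_1$ factorisation and win via $q^{-1/2}$) and $q < Q^{1/2}$ (where $\Vert\tilde G\Vert^{\wedge}_1 \ll R^{o(1)}Q^{1/4}$ is small enough when paired with $r^{-1/6} \le Q_4^{-1/6}$, noting $Q_4 \le R$ so $Q_4^{-1/6}Q^{1/4}$... needs $Q \le Q_4^{3/2}$ roughly), and if the book-keeping does not close I would instead use the honest bound $\Vert D - D_{Q_4}\Vert^{\wedge}_1 \le \sum_{r > Q_4}\sum_b \alpha_{r,b}r^{5/6} \le Q_4^{5/6}\sum\alpha_{r,b} \le Q_4^{5/6}$ paired with $\Vert\tilde G\Vert^{\wedge}_\infty$: giving $\ll \tau(q)^{O(1)}q^{-1/2}Q_4^{5/6} \ll R^{o(1)}Q_4^{5/6}$ in the large-$q$ case, and the $r^{-1/6}$ bound in the small-$q$ case. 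I expect the final statement to follow from combining: (i) first term $\ll R^{o(1)}Q_4^{5/6}Q^{-1/4}$; (ii) second term $\ll R^{o(1)}(Q_4^{-1/6} + Q_4^{5/6}Q^{-1/4})$ after the two-case analysis. The main obstacle is precisely this book-keeping of which factorisation of $\Vert\cdot\Vert^{\wedge}$ to use in which conductor range so that the $r^{5/6}$ loss from $\Phi_{r,b}$ is never paired with something worse than $Q^{-1/4}$, and the $r^{-1/6}$ gain is available whenever $\Vert\tilde G\Vert^{\wedge}_1$ appears.
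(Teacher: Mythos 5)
Your splitting $G D_{Q_4} - \tilde G D = (G-\tilde G)D_{Q_4} + \tilde G(D_{Q_4}-D)$ is exactly the paper's, and your treatment of the first term (Proposition \ref{character-correlation} times $\Vert D_{Q_4}\Vert^{\wedge}_1 \le Q_4^{5/6}$ and $\Vert 1_{q|n}\Vert^{\wedge}_1=1$) is correct and identical to the paper's. The genuine gap is the second term, $\Vert \tilde G(D-D_{Q_4})1_{q|n}\Vert^{\wedge}_{\infty}$, where none of the pairings you try can close. Every one of them needs either $\Vert \tilde G\Vert^{\wedge}_1$ or $\Vert D-D_{Q_4}\Vert^{\wedge}_1$, and both are uncontrolled: $\Vert\tilde G\Vert^{\wedge}_1 \geq \Vert\tilde\Lambda_R\Vert^{\wedge}_1\cdot(\dots)$ is of size roughly $C^{\pi(R)}$ (a product over all $p\le R$ of local $\ell^1$ norms each at least $2$), not $R^{o(1)}q^{1/2}$ as you assert in the small-conductor case; and $\Vert D-D_{Q_4}\Vert^{\wedge}_1 \le \sum_{r>Q_4,b}\alpha_{r,b}r^{5/6}$ cannot be bounded by $Q_4^{5/6}$ --- your inequality $\sum_{r>Q_4}\alpha_{r,b}r^{5/6}\le Q_4^{5/6}\sum\alpha_{r,b}$ goes the wrong way, since $r>Q_4$, and the tail moduli $r$ can be arbitrarily large divisors of $\Pi$. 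Your one correct observation here, $\Vert D-D_{Q_4}\Vert^{\wedge}_{\infty}\le Q_4^{-1/6}$, is sound but useless when its partner must be an $\ell^1$ Fourier norm of the completed product.

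The missing idea is to abandon Fourier space for this term and work with a physical-space average: by \eqref{fourier-avg}, $\Vert \tilde G(D-D_{Q_4})\Vert^{\wedge}_{\infty} \le \Av|\tilde G(D-D_{Q_4})| \le \Vert\tilde F_{\chi,R}\Vert_{\infty}\Vert\tilde\Lambda_R\Vert_{\infty}\,\Av|\tilde H_R(D-D_{Q_4})|$, where the sup norms here are pointwise ($\ll\log R$ by \eqref{f-chi-comb-abbrev}). The gain $Q_4^{-1/6}$ then comes from Lemma \ref{lem142}: since $\Phi_{r,b}$ has amplitude $r^{5/6}$ but is supported on multiples of $r$ (density $1/r$), and $\tilde H_R(n)\le 4^{\omega_R(n)}$ costs only $r^{o(1)}\log^{O(1)}R$ on that progression by a standard divisor-sum estimate, one gets $\Av|\tilde H_R\Phi_{r,b}|\ll r^{-1/6+o(1)}\log^3 R$, and convexity of the tail coefficients then gives $\Av|\tilde H_R(D-D_{Q_4})|\ll Q_4^{-1/6}R^{o(1)}$ with no case split on the conductor $q$ at all. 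This is exactly the step your proposal circles around without finding, and without it the stated bound does not follow.
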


We isolate a lemma from the proof.

\begin{lemma}\label{lem142}
We have $\Av | \tilde H_R \Phi_{r,b} | \ll r^{-\frac{1}{6} + o(1)}\log^{3} R$.
\end{lemma}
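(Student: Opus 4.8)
\textbf{Proof proposal for Lemma \ref{lem142}.}

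The plan is to compute the average $\Av|\tilde H_R \Phi_{r,b}|$ directly, exploiting the fact that $|\Phi_{r,b}(n)| = r^{5/6} 1_{r|n}$ (the linear phase $e(bn/r^3)$ has modulus $1$ and plays no role inside an absolute value) and that $\tilde H_R(n) = \prod_{p \le R} \wp(n) \ge 0$ pointwise. Thus the quantity to bound is simply $r^{5/6} \Av\big( \tilde H_R(n) 1_{r|n} \big)$. The key structural input is the product formula $\tilde H_R = \prod_{p \le R} \wp$ together with the values of $\wp$ from \eqref{tau-sq-def}: $\wp(n) = \frac{4p}{p+3}$ when $p \mid n$ and $\wp(n) = \frac{p}{p+3}$ when $p \nmid n$. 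First I would reduce to a product over primes: since $\tilde H_R$ has period $R!$ and $1_{r|n}$ has period $r$, and both are built multiplicatively, the average $\Av(\tilde H_R 1_{r|n})$ factors over primes as $\prod_{p \le R} \Av_p$, where for $p \mid r$ the local average is $\Av_{n \bmod p}\big( \wp(n) 1_{p|n} \big) = \frac{1}{p} \cdot \frac{4p}{p+3} = \frac{4}{p+3}$, and for $p \nmid r$, $p \le R$ the local average is just $\Av \wp = 1$ (by the normalisation noted after \eqref{tau-sq-def}). Here one should be slightly careful that $r$ may have prime factors exceeding $R$: if $p \mid r$ but $p > R$, then $\wp$ does not appear in $\tilde H_R$ and the local factor is simply $\Av(1_{p|n}) = 1/p$.

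Carrying this out, $\Av(\tilde H_R 1_{r|n}) = \prod_{\substack{p \mid r \\ p \le R}} \frac{4}{p+3} \cdot \prod_{\substack{p \mid r \\ p > R}} \frac{1}{p} \cdot \prod_{p^k \| r, k \ge 2} (\cdots)$, where one must also handle prime powers $p^k \| r$ with $k \ge 2$: then $1_{p^k|n}$ forces $p \mid n$ so the local average is $\frac{1}{p^k} \cdot \frac{4p}{p+3}$ if $p \le R$ (giving $\frac{4}{p^{k-1}(p+3)}$), and $\frac{1}{p^k}$ if $p > R$. In all cases each local factor at a prime dividing $r$ is at most $\frac{4}{p}$ (using $\frac{4}{p+3} \le \frac{4}{p}$ and $\frac{1}{p} \le \frac{4}{p}$, and similarly for higher powers), so $\Av(\tilde H_R 1_{r|n}) \le \prod_{p \mid r} \frac{4}{p} \le \frac{4^{\omega(r)}}{r / \operatorname{rad}(r)^{?}}$ — more cleanly, $\Av(\tilde H_R 1_{r|n}) \le \frac{4^{\omega(r)}}{r}$ after accounting for the full power of $p$ in the denominator. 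Multiplying by $r^{5/6}$ gives $\Av|\tilde H_R \Phi_{r,b}| \le 4^{\omega(r)} r^{-1/6} \ll r^{-1/6 + o(1)}$, using the standard bound $4^{\omega(r)} \ll_\eps r^\eps$. The $\log^3 R$ factor in the statement is then more than enough slack; in fact the cleanest route is to note $4^{\omega(r)} \le \tau(r)^2 \ll r^{o(1)}$ and absorb everything into $r^{o(1)}$, but since the lemma is stated with an explicit $\log^3 R$ one can alternatively bound $4^{\omega(r)}$ crudely by $(\log R)^{O(1)}$ when $\omega(r) \ll \log R / \log\log R$ — however $r$ need not be $R$-smooth, so I would just use the divisor-type bound $4^{\omega(r)} \ll r^{o(1)}$ directly and note this is subsumed in the stated bound.

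The main obstacle — really a bookkeeping subtlety rather than a genuine difficulty — is the careful treatment of the prime-power structure of $r$ and of prime factors of $r$ exceeding $R$, since the factorisation of $\tilde H_R$ only ranges over $p \le R$. Once one organises the local computation cleanly (local factor $\le 4/p^{v_p(r)}$ in every case, at every prime $p \mid r$, whether or not $p \le R$ and whether or not $v_p(r) \ge 2$), the conclusion $\Av|\tilde H_R \Phi_{r,b}| \le 4^{\omega(r)} r^{-1/6} \ll r^{-1/6+o(1)} \le r^{-1/6+o(1)}\log^3 R$ is immediate. I do not anticipate needing anything beyond the definition \eqref{tau-sq-def}, the product formula from Definition \ref{h-tilde-def}, multiplicativity of Ramanujan sums (already used to establish that product formula), and the elementary divisor bound.
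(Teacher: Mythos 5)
Your proposal is correct, and it takes a genuinely different (in fact sharper) route than the paper's. You compute $\Av(\tilde H_R \, 1_{r \mid n})$ essentially exactly, by factoring the average over primes: the local factors ($\wp$ for $p \le R$, $1_{p^{v_p(r)} \mid n}$ for $p \mid r$) have pairwise coprime prime-power periods, so the mean of the product is the product of the local means (this is the $\lambda = 0$ case of \eqref{b1b2-coprime}, or the Chinese remainder theorem directly), and each local factor at $p \mid r$ is at most $4p^{-v_p(r)}$ while the factors at $p \le R$, $p \nmid r$ average to exactly $1$. This yields $\Av|\tilde H_R \Phi_{r,b}| \le 4^{\omega(r)} r^{-1/6} \ll r^{-1/6 + o(1)}$, with no $\log^3 R$ loss at all, which of course implies the lemma as stated. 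The paper argues differently: it uses the pointwise bound $\tilde H_R(n) \le 4^{\omega_R(n)}$, the subadditivity $\omega_R(rk) \le \omega_R(r) + \omega_R(k)$, and the first-moment estimate $X^{-1}\sum_{k \le X} 4^{\omega_R(k)} \ll \log^3 R$ of Lemma \ref{power-res-div}; the $\log^3 R$ in the statement is precisely the loss incurred by replacing the mean-one factors $\wp$ at $p \nmid r$ by their pointwise bound. Your route buys a cleaner and slightly stronger estimate and avoids the auxiliary moment lemma, at the cost of spelling out the factorisation of the average carefully (including, as you rightly note, prime powers $p^k \,\Vert\, r$ and primes $p \mid r$ with $p > R$, which the paper's radical-free argument never has to separate). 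The only blemish is the garbled intermediate display before your clean restatement ``local factor $\le 4/p^{v_p(r)}$ at every $p \mid r$''; that final formulation is the one to keep, and with it the argument is complete.
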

\begin{proof}
Recall from the definition, Definition \ref{h-tilde-def}, that $\tilde H_R = \prod_{p \leq R} \wp $. From the definition of $\wp $ (see \eqref{tau-sq-def}) we have the pointwise bound 
\begin{equation}\label{hr-tils}  \tilde H_R(n) \leq 4^{\omega_R(n)},\end{equation} where $\omega_R(n)$ denotes the number of distinct prime factors $p$ of $n$ with $p \leq R$. This upper bound can occasionally be rather large, so we must be careful. So that we can quote existing number-theoretic estimates, it is convenient to compute the average of $\tilde H_R(n) \Phi_{r,b}(n)$ by averaging over $n \leq X$ then letting $X$ tend to infinity.

By \eqref{hr-tils} This average is bounded above by
\[ r^{\frac{5}{6}} X^{-1}\sum_{\substack{n \leq X \\ r | n}} 4^{\omega_R(n)} \leq r^{\frac{5}{6}} X^{-1}4^{\omega_R(r)} \sum_{k \leq \frac{X}{r}} 4^{\omega_R(k)},\] where in the second step we used the inequality $\omega_R(rk) \leq \omega_R(k) + \omega_R(r)$. We have $4^{\omega_R(r)} \leq \tau(r)^2 \ll r^{o(1)}$ by the divisor bound. Applying Lemma \ref{power-res-div} (and letting $X \rightarrow \infty$) completes the proof.
\end{proof}

\begin{proof}[Proof of Proposition \ref{final-correlation}]
First we claim that 
\begin{equation}\label{first-baw-baw-claim} \big\Vert \tilde F_{\chi, R}^+ \tilde \Lambda_R^- \tilde H_R (D - D_{Q_4}) \big\Vert^{\wedge}_{\infty} \ll Q_4^{-\frac{1}{6}} R^{o(1)}.\end{equation}
We have
\[ D(n) - D_{Q_4}(n) = \sum_{r > Q_4} \sum_{b \mdsub{r^3}} \alpha_{b,r} \Phi_{r,b}(n),\] where $\sum |\alpha_{b,r}| \leq 1$, and therefore by Lemma \ref{lem142} 
\begin{equation}\label{eq1444}\Av |\tilde H_R (D - D_{Q_4})| \leq \max_{\substack{r > Q_4 \\ b \in \Z/r^3 \Z}} \Av | \tilde H_R \Phi_{r,b}| \ll Q_4^{-\frac{1}{6}} R^{o(1)}.\end{equation}
 Therefore by \eqref{fourier-avg} 
\begin{align*} \Vert \tilde F_{\chi, R}^+ & \tilde \Lambda_R^- \tilde H_R (D - D_{Q_4})  \Vert^{\wedge}_{\infty}  \leq \Av |\tilde F_{\chi, R}^+ \tilde \Lambda_R^- \tilde H_R (D - D_{Q_4}) | \\ & \leq \Vert \tilde F_{\chi, R} \Vert_{\infty} \Vert \tilde \Lambda_R \Vert_{\infty}  \Av | \tilde H_R (D - D_{Q_4})| \ll Q_4^{-\frac{1}{6}} R^{o(1)}, \end{align*}
where in the final step here we used \eqref{eq1444} and the pointwise bounds $\Vert \tilde F_{\chi_R} \Vert_{\infty}, \Vert \tilde \Lambda_R \Vert_{\infty} \ll \log R$, which follow from \eqref{f-chi-comb-abbrev} and the fact that $\prod_{p \leq R} \frac{p}{p-1} \ll \log R$. This confirms \eqref{first-baw-baw-claim}.

Now by Lemma \ref{d-trun-lem} and \eqref{b1b2-noncoprime} we have
\begin{equation}\label{per-f-dx} \Vert f D_{Q_4} \Vert^{\wedge}_{\infty} \leq Q^{\frac{5}{6}}_4 \Vert f \Vert^{\wedge}_{\infty}.\end{equation}
for any periodic function $f$.

Now we are ready to prove Proposition \ref{final-correlation} itself. From Proposition \ref{character-correlation} and \eqref{per-f-dx} we have
\[ \big\Vert F^+_{\chi, Q_1} \Lambda_{Q_2}^- H_{Q_3}D_{Q_4} - \tilde F^+_{\chi, R}\tilde \Lambda_R^- \tilde H_R D_{Q_4}  \big\Vert^{\wedge}_{\infty} \ll R^{o(1)} Q_4^{\frac{5}{6}}\min(Q_1, Q_2, Q_3)^{-\frac{1}{4}} .\]
From this, \eqref{first-baw-baw-claim} and the triangle inequality, we obtain Proposition \ref{final-correlation}, but without the $1_{q | n}$ terms. 

The same bound holds \emph{with} the $1_{q | n}$ terms included, using \eqref{b1b2-noncoprime} and the fact (see \eqref{simple-ell1-div}) that $\Vert (1_{q | n})^{\wedge} \Vert_1 = 1$.
\end{proof}

\section{The main construction and completion of the proof}

\subsection{Introduction}\label{subsec151}

We turn now to the proof of Theorem \ref{mainthm-3} itself. A brief overview of the construction of a suitable function $\Psi$ was given in Section \ref{section3} of the paper, and the reader might find it helpful to review that section.

In this section, we write $c_i := (\frac{1}{100})^i$. Here, $\frac{1}{100}$ is a convenient (but by no means optimised) small exponent which works for our argument. Any smaller exponent would also work but would give even weaker quantitative results. As elsewhere, we will use the $\lessapprox$, $\gtrapprox$ and $\tilde O$ notations to mean up to a factor of $N^{o(1)}$.

Let us now give an actual definition of the function $\Psi$. As it has been throughout the paper, let $N$ be some large integer parameter. Let $T$ be a scale satisfying the conclusions of Proposition \ref{sec4-takeaway}. 

As in Proposition \ref{sec4-takeaway}, we write $\rho_1,\rho_2,\dots, \rho_J$ for the zeros in $\Xi_T = \Xi_T(\frac{1}{48})$ in decreasing order of their real parts $\sigma_j$. Recall from  \eqref{crude-J} that we have the crude bound $J \ll T$.

\begin{definition}\label{psi-definition}
With parameters as above, define
\begin{equation}\label{psi-def} \Psi(n) := \Lambda'(n+1) \Lambda_{T^{c_1}}(n-1) H_{T^{c_1}}(n) D_{T^{c_2}}(n) E(n) w(n) 1_{n \leq N},\end{equation}
for $n \geq 0$, where
\begin{itemize}
\item $\Lambda'$ is the von Mangoldt function restricted to primes, that is to say $\Lambda'(p) = \log p$ for $p$ a prime;
\item $\Lambda_{Q}$ is the classical approximant to the von Mangoldt function, defined in \eqref{lambda-q-def};
\item $H_{Q}$ is a certain weight function which behaves like a kind of truncated version of $\tau^2$, and whose definition is given in Definition \ref{hq-definition};
\item $D_{Q}$ is the truncated version (as described in \eqref{d-trunc}) of $D$, the damping term whose existence was shown in Proposition \ref{key-D-prop};
\item $E$ is the constant function $1$ in the unexceptional case, and is $1_{q_1 | n}$ in the exceptional case, where $q_1$ is the conductor of the exceptional character $q_1$;
\item $w$ is the function in Proposition \ref{prop15.2}.
\end{itemize}
\end{definition}
Recall that the terms ``unexceptional'' and ``exceptional'' here refer to options (1) and (2) respectively in the conclusion of Proposition \ref{sec4-takeaway}.

Henceforth, it is convenient to adopt the convention that $q_1 := 1$ in the unexceptional case, thus $E(n) = 1_{q_1 | n}$ in all cases. This avoids a certain amount of rather tedious division into cases.

Our claim is that this function $\Psi$ satisfies the conclusions of Theorem \ref{mainthm-3}, with $\kappa_1 = \frac{1}{6} c_2\tau_0$ and $\kappa_2 = 2c_3\tau_0$, where $T = N^{\tau_0}$. Recall that, by Proposition \ref{sec4-takeaway}, $c \leq \tau_0 \leq \frac{1}{120}$ (where the constant $c$ comes from Proposition \ref{sec4-takeaway}). Let us first note that $\Psi$ is a real-valued function because all seven of the functions making up the product in \eqref{psi-def} are. It is also clear that $\Psi$ satisfies the support property (1) of Theorem \ref{mainthm-3}, because $\Lambda'$ does. It therefore remains to confirm items (2) and (3) of Theorem \ref{mainthm-3} with the values we have just stated, that is to say the estimates
\begin{equation}\label{psi-to-prove-1} \sum_{n = 1}^N \Psi(n) \cos(2 \pi \theta n) \gtrapprox -N T^{-\frac{1}{6} c_2} \end{equation} for all $\theta \in \R/\Z$ and
\begin{equation}\label{psi-to-prove-2} \sum_{n = 1}^N \Psi(n) \gtrapprox  N T^{- 2c_3}.\end{equation}

\subsection{Passing to a sharp approximant for $\Lambda$}\label{sec152}

The first step in establishing \eqref{psi-to-prove-1}, \eqref{psi-to-prove-2} is to reduce matters to the corresponding statements where $\Psi$ is replaced by an auxilliary function $\Psi'$ in which $\Lambda'$ is replaced by $\Lambda_{\sharp}$, where
\begin{equation}\label{lam-sharp-def-14} \Lambda_{\sharp} := \Lambda_{\sharp, T,\frac{1}{48}},\end{equation} the approximant to the primes constructed in Part \ref{partii}. Thus, we define
\begin{equation}\label{psi-dash} \Psi'(n) := \Lambda_{\sharp}(n+1) \Lambda_{T^{c_1}}(n-1) H_{T^{c_1}}(n) D_{T^{c_2}}(n) E(n) w(n) 1_{n \leq N}.\end{equation} for $n \geq 0$. Note that $\Psi'$ is also real-valued since, in addition to the functions mentioned above, $\Lambda_{\sharp}$ is real-valued by Lemma \ref{lem45}.

\begin{lemma}\label{psi-psidash} We have
\[ \sup_{\theta} \big|\sum_{n=1}^N (\Psi - \Psi')(n) \cos(2\pi \theta n)\big| \ll N T^{-\frac{1}{20}}.\]
\end{lemma}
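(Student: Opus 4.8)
\textbf{Proposal for the proof of Lemma \ref{psi-psidash}.}

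The plan is to bound the difference $\Psi - \Psi'$ in Fourier space and then convert that into the stated uniform estimate on the cosine sums. Write $G(n) := \Lambda_{T^{c_1}}(n-1) H_{T^{c_1}}(n) D_{T^{c_2}}(n) E(n)$, so that $\Psi(n) = \Lambda'(n+1)\, G(n)\, w(n)\, 1_{n \le N}$ and $\Psi'(n) = \Lambda_{\sharp}(n+1)\, G(n)\, w(n)\, 1_{n \le N}$, whence
\[ (\Psi - \Psi')(n) = \big(\Lambda'(n+1) - \Lambda_{\sharp}(n+1)\big)\, G(n)\, w(n)\, 1_{n \le N}. \]
First I would observe that $G$ is a periodic function, being a product of the periodic functions $\Lambda_{T^{c_1}}$, $H_{T^{c_1}}$, $D_{T^{c_2}}$ and $E = 1_{q_1 | n}$ (with appropriate shifts), and I would record a bound on its Fourier $\ell^1$ norm: by \eqref{lam-q-ell1}, \eqref{hq-ell1-crude}, Lemma \ref{d-trun-lem} (giving $\Vert D_{T^{c_2}} \Vert^{\wedge}_1 \le T^{\frac56 c_2}$) and $\Vert 1_{q_1 | n} \Vert^{\wedge}_1 = 1$, together with \eqref{b1b2-ell1} and shift-invariance (Lemma \ref{norm-shift}), one gets $\Vert G \Vert^{\wedge}_1 \le T^{c_1 + (1 + o(1))c_1 + \frac56 c_2} \lessapprox T^{3c_1}$, say — a quantity which is $\lessapprox N^{o(1)} T^{3c_1}$ and hence harmless compared with the saving we will extract from the $\Lambda' - \Lambda_{\sharp}$ factor (recall $c_1 = \frac{1}{100}$, $c_2 = \frac{1}{10000}$ and $\tau_0 \le \frac{1}{120}$, so $3 c_1 \tau_0 < \frac{1}{30}$, leaving room below $T^{-\frac16}$-type savings).

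Next I would handle the analytic factor. Expand $G$ in its rational Fourier series $G(n) = \sum_{\lambda} G^{\wedge}(\lambda) e(\lambda n)$ (a finite sum), so that
\[ \sum_{n=1}^N (\Psi - \Psi')(n) e(\beta n) = \sum_{\lambda} G^{\wedge}(\lambda) \sum_{n=1}^N \big(\Lambda'(n+1) - \Lambda_{\sharp}(n+1)\big)\, w(n)\, e((\beta + \lambda) n). \]
For each fixed frequency $\psi := \beta + \lambda$ I want a bound of the shape $\sum_{n \le N} (\Lambda'(n+1) - \Lambda_{\sharp}(n+1))\, w(n)\, e(\psi n) \ll N^{1-c'}$ uniform in $\psi$, for some absolute $c' > 0$. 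This is exactly where Proposition \ref{approx-fourier} enters: it gives $|\sum_{m \le N+1}(\Lambda'(m) - \Lambda_{\sharp, Q, \sigmax}(m)) e(m\phi)| \lessapprox (N+1) Q^{-1/6}$ uniformly in $\phi$, upon taking $Q = T$ and $\sigmax = \frac{1}{48} \le \frac19$ (note $T \le N^{1/120} \le N^{\sigmax}$, so the hypothesis $Q \le N^{\sigmax}$ is met), after a harmless shift $m = n+1$. Since $w$ is not quite a nice smooth cutoff, I would not apply that bound to the plain exponential sum but rather to the smoothed one: use property (6) of Proposition \ref{prop15.2} to write $w = w_0 + w_1$ with $\sum_n w_0(n) \ll NX^{-1}$ and $w_1(n) = \int_{\R} a(\xi) e(\xi n / Y)\, d\xi$, $\int |a(\xi)|\, d\xi \lessapprox X$. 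The $w_0$ contribution is bounded trivially by $NX^{-1} \max_n |\Lambda'(n+1) - \Lambda_{\sharp}(n+1)| \lessapprox NX^{-1} T^{O(1)}$ (using the pointwise bound \eqref{lam-sharp-ptwise} and $\Lambda'(m) \ll \log m$, plus $J \ll T$ and Lemma \ref{crude-zero-density}). The $w_1$ contribution is $\int_{\R} a(\xi) \big(\sum_{n \le N} (\Lambda'(n+1) - \Lambda_{\sharp}(n+1)) e((\psi + \tfrac{\xi}{Y}) n)\big) d\xi$, which by Proposition \ref{approx-fourier} (applied for each $\xi$ with the shifted frequency) and partial summation in $n$ to remove the factor $e(\xi n/Y)$ — or, more cleanly, by absorbing $e(\xi n/Y)$ into the frequency and doing a one-step summation by parts to handle the $1_{n \le N}$ truncation since $w_1$ itself is not compactly supported — is $\lessapprox \|a\|_1 \cdot N T^{-1/6} \lessapprox X N T^{-1/6}$. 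Optimizing by taking $X := T^{1/12}$ balances the two error terms and yields, for every $\psi$,
\[ \Big| \sum_{n=1}^N \big(\Lambda'(n+1) - \Lambda_{\sharp}(n+1)\big) w(n) e(\psi n) \Big| \lessapprox N T^{-1/12 + O(c_1 + c_2)} \ll N T^{-1/15}, \]
say (the $O(\cdot)$ accounting for the crude power of $T$ coming from $\|\Lambda_{\sharp}\|_\infty$ in the $w_0$ term, which is swamped).

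Finally, summing over $\lambda \in \operatorname{Supp}(G^{\wedge})$ and using $\sum_\lambda |G^{\wedge}(\lambda)| = \Vert G \Vert^{\wedge}_1 \lessapprox T^{3c_1}$, I obtain
\[ \Big| \sum_{n=1}^N (\Psi - \Psi')(n) e(\beta n) \Big| \lessapprox T^{3c_1} \cdot N T^{-1/15} \ll N T^{-1/20} \]
for every $\beta$, since $3 c_1 \tau_0 - \tfrac{1}{15}\tau_0 < -\tfrac{1}{20}\tau_0$ comfortably and the $N^{o(1)}$ factor is absorbed for $N$ large. Taking $\beta = \theta$ and $\beta = -\theta$ and averaging gives the claimed bound on $|\sum_{n=1}^N (\Psi - \Psi')(n) \cos(2\pi\theta n)|$, uniformly in $\theta$. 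The main obstacle I anticipate is the bookkeeping around the non-compactly-supported smooth piece $w_1$: one must be a little careful combining the Fourier-integral representation of $w_1$ with Proposition \ref{approx-fourier} and the sharp cutoff $1_{n \le N}$, either via a summation by parts in $n$ or by first noting that $w_1(n)$ is negligible for $n > N$ since $W(x) = x^{-1/2}e^{-x}$ decays exponentially and $Y \sim N/\log N$, so that the truncation costs essentially nothing. Everything else is a routine combination of the $\ell^1$-norm bookkeeping for the periodic factor $G$ and the power-saving estimate of Part \ref{partii}.
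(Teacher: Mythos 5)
Your overall strategy is the same as the paper's: write $\Psi-\Psi' = (\Lambda'-\Lambda_{\sharp})^+\cdot F \cdot w\cdot 1_{n\le N}$ with $F$ the periodic factor, control $\Vert F\Vert^{\wedge}_1 \lessapprox T^{O(c_1)}$, split $w=w_0+w_1$ via Proposition \ref{prop15.2}(6) with $X=T^{1/12}$, and feed the smoothed piece into Proposition \ref{approx-fourier} with $Q=T$. However, your treatment of the $w_0$ piece has a genuine gap. You bound it by $\big(\sum_n w_0(n)\big)\cdot \max_n|\Lambda'(n+1)-\Lambda_{\sharp}(n+1)| \lessapprox NX^{-1}T^{O(1)}$ and then assert the $T^{O(1)}$ is ``swamped''. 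It is not: by \eqref{lam-sharp-ptwise} the pointwise bound on $\Lambda_{\sharp}(n)$ is $\lessapprox \sum_{j\le J} n^{-\sigma_j}$, which for bounded $n$ is of size $J\lessapprox T$ (Lemma \ref{crude-zero-density} only controls $\sum_j n^{-\sigma_j}$ when $n$ is a sufficiently large power of $N$, not for small $n$). So your $w_0$ bound is $N T^{1-\frac{1}{12}}$, worse than the trivial bound $N$, and nothing in your argument absorbs it. The fix, which is what the paper does, is to split the $w_0$-sum at $n=N^{1/2}$: for $n\le N^{1/2}$ use $w_0(n)\le w(n)\ll (n/Y)^{-1/2}$, so that even with the crude bound $|\Lambda_{\sharp}(n)|\lessapprox T$ the contribution is $\lessapprox TN^{3/4}$, which is negligible; for $n>N^{1/2}$ one has $\sum_j n^{-\sigma_j}\ll\sum_j N^{-\sigma_j/2}\ll 1$ by \eqref{unexc-n-bd}/\eqref{exc-n-bd}, hence $|\Lambda_{\sharp}(n)|\lessapprox 1$ and \eqref{w0-bds} gives $\lessapprox NT^{-1/12}$.

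There is also a numerical slip at the end which compounds this. Having (unnecessarily) weakened the per-frequency estimate to $NT^{-1/15}$, you multiply by $\Vert G\Vert^{\wedge}_1\lessapprox T^{3c_1}$ and claim $3c_1-\tfrac{1}{15}<-\tfrac{1}{20}$; but $3c_1-\tfrac{1}{15}=-\tfrac{11}{300}$ while $-\tfrac{1}{20}=-\tfrac{15}{300}$, so the asserted inequality is false and the target bound does not follow. With the corrected $w_0$ treatment the per-frequency bound is $\lessapprox NT^{-1/12}$, and then $3c_1-\tfrac{1}{12}<-\tfrac{1}{20}$ (indeed the paper's sharper bookkeeping gives $2c_1+c_2-\tfrac{1}{12}<-\tfrac{1}{20}$), which closes the argument. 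Your handling of the $w_1$ piece is fine: since the sum is over $n\le N$ by definition of $\Psi,\Psi'$, the phase $e(\xi n/Y)$ is simply absorbed into the frequency and Proposition \ref{approx-fourier} applies uniformly, with no need for partial summation or worries about the support of $w_1$.
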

\begin{proof}
Consider the function $F := \Lambda_{T^{c_1}}^-H_{T^{c_1}} D_{T^{c_2}} E$. By \eqref{b1b2-ell1} we have 
\begin{equation} \label{beta-sum-2} \Vert F \Vert^{\wedge}_1 \leq \Vert  \Lambda_{T^{c_1}} \Vert^{\wedge}_{1} \Vert H_{T^{c_1}} \Vert^{\wedge}_{1} \Vert D_{T^{c_2}} \Vert^{\wedge}_{1}  \Vert E \Vert^{\wedge}_1 \lessapprox T^{2c_1 + c_2},\end{equation} where here we have used \eqref{lam-q-ell1}, \eqref{hq-ell1-crude}, Lemma \ref{d-trun-lem} and \eqref{simple-ell1-div} respectively for the four terms in the product.

Let us write $w = w_0 + w_1$ as in Proposition \ref{prop15.2} (6), with $X := T^{\frac{1}{12}}$. Then
\begin{align} \nonumber \sum_{n = 1}^N (\Psi - \Psi')(n) \cos (2\pi \theta n) & = \sum_{i = 0, 1} \Re \sum_{n = 1}^N (\Lambda' (n) - \Lambda_{\sharp}(n)) F(n)w_i(n) e(n\theta) \\ & = E_0 + E_1,\label{i01}\end{align} say.
For the term $E_0$ we use the crude pointwise bound $|F(n)| \lessapprox T^{2c_1+ c_2}$, which follows from \eqref{beta-sum-2} and the rational Fourier expansion $F(n) = \sum_{\lambda \in \Q/\Z} F^{\wedge}(\lambda) e(\lambda n)$, obtaining a bound of 
\[ \lessapprox T^{2c_1 + c_2} \big(  \sum_{n = 1}^N \Lambda'(n) w_0(n) + \sum_{n = 1}^N |\Lambda_{\sharp}(n)| w_0(n) \big).\]
The first sum, over $\Lambda'$, is $\ll (\log N)\sum_{n = 1}^N w_0(n) \lessapprox N T^{-\frac{1}{12}}$ by \eqref{w0-bds} and the choice of $X$.

The second sum, over $\Lambda_{\sharp}$ is a little more delicate since this function is somewhat large for small $n$. To handle this we split the sum over $n$ into the ranges $1 \leq n \leq N^{\frac{1}{2}}$ and $N^{\frac{1}{2}} < n \leq N$. On both ranges we use the pointwise estimate \eqref{lam-sharp-ptwise} which, in the notation introduced at the start of the section, implies that 
\begin{equation}\label{lam-s-pt-rpt} |\Lambda_{\sharp}(n)| \lessapprox \sum_{j = 1}^{J} n^{-\sigma_j}.\end{equation}
For $n \leq N^{\frac{1}{2}}$ we just bound this crudely by $\lessapprox J \lessapprox T$ (this being \eqref{crude-J}).

Therefore 
\[ \sum_{n \leq N^{\frac{1}{2}}} |\Lambda_{\sharp}(n)| w_0(n)   \lessapprox  T \sum_{n \leq N^{\frac{1}{2}}} w(n)  \lessapprox T \sum_{n \leq N^{\frac{1}{2}}} \big(\frac{n}{Y}\big)^{-\frac{1}{2}} \lessapprox N^{\frac{3}{4}} T.\] (Recall here that $Y := \frac{N}{2\log N}$, this quantity being involved in the definition \eqref{w-def} of $w$.)
When $n > N^{\frac{1}{2}}$, \eqref{lam-s-pt-rpt} gives the stronger bound $|\Lambda_{\sharp}(n)| \lessapprox 1$, since we have the estimate $\sum_{j=1}^J N^{-\frac{1}{2}\sigma_j} \ll 1$ by \eqref{unexc-n-bd} (in the unexceptional case) or \eqref{exc-n-bd} (in the exceptional case). Therefore, by another application of \eqref{w0-bds}, 
\[ \sum_{N^{\frac{1}{2}} < n \leq N} |\Lambda_{\sharp}(n)| w_0(n) \lessapprox \sum_{n \leq N} w_0(n) \lessapprox N T^{-\frac{1}{12}}.\]
Putting all this together, we obtain
\begin{equation}\label{e0-bd} E_0 \lessapprox N T^{2c_1+ c_2 -\frac{1}{12}}.\end{equation} 

Turning now to the term $E_1$ in \eqref{i01} we use the expansion $w_1(n) = \int_{\R} a(\xi) e(\frac{\xi n}{Y}) d \xi$ from Proposition \ref{prop15.2} (6) and write the rational Fourier expansion of $F$ as $F(n) = \sum_{\lambda \in \Q/\Z} F^{\wedge}(\lambda) e(\lambda n)$, obtaining 
\begin{align*} & E_1  = \Re \sum_{\lambda \in \Q/\Z} F^{\wedge}(\lambda) \int_{\R} a(\xi) \sum_{n = 1}^N (\Lambda'(n) - \Lambda_{\sharp}(n)) e\big((\lambda + \theta + \frac{\xi}{Y}) n\big)d\xi \\ & \leq \sum_{\lambda \in \Q/\Z} |F^{\wedge}(\lambda)| \int_{\R} |a(\xi)| \sup_{\lambda,\xi} \bigg|\sum_{n = 1}^N (\Lambda'(n) - \Lambda_{\sharp}(n)) e\big((\lambda + \theta + \frac{\xi}{Y}) n\big)\bigg| d \xi.\end{align*}
By Proposition \ref{approx-fourier} with $Q = T$, \eqref{a-ell-1} and \eqref{beta-sum-2}, 
\[ E_1 \lessapprox N T^{-\frac{1}{6}} \cdot T^{2c_1+ c_2} \cdot  T^{\frac{1}{12}} .\]   Adding this to \eqref{e0-bd} concludes the proof of Lemma \ref{psi-psidash}, using the fact that $2c_1 + c_2 - \frac{1}{12} < -\frac{1}{20}$.
\end{proof}

Let us now formally state the remaining task, which is to establish the analogues of \eqref{psi-to-prove-1}, \eqref{psi-to-prove-2} with $\Psi'$ in place of $\Psi$.

\begin{proposition}\label{main-task-psidash}
The function $\Psi' : \Z \rightarrow \R$ satisfies
\begin{equation}\label{first-psi-dash}  \sum_{n = 1}^N \Psi'(n) \cos(2 \pi \theta n) \gtrapprox - N T^{-\frac{1}{6} c_2}\end{equation} and \begin{equation}\label{second-psi-dash}    \sum_{n = 1}^N \Psi'(n) \gtrapprox N T^{-2c_3}.\end{equation}
\end{proposition}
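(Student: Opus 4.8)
\textbf{Proof proposal for Proposition \ref{main-task-psidash}.}

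The plan is to exploit the multiplicative factorisation of $\Psi'$ in the Fourier (rational) variable. Writing $\Psi'(n) = \Lambda_{\sharp}^+(n)\, G(n)\, w(n)\, 1_{n\le N}$, where $G := \Lambda_{T^{c_1}}^- H_{T^{c_1}} D_{T^{c_2}} E$, the first idea is to expand $\Lambda_{\sharp}$ via \eqref{lam-sharp-conv} as $\sum_{\rho\in\Xi_T^*} \eps_\rho\, n^{\rho-1} F_{\chi_\rho,T}(n)$, so that $\Lambda_{\sharp}^+$ becomes a sum over $\rho$ of shifted terms $(n+1)^{\rho-1} F_{\chi_\rho,T}^+(n)$. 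The key replacement is to swap the \emph{truncated} correlation $F_{\chi_\rho,T}^+\,\Lambda_{T^{c_1}}^-\,H_{T^{c_1}}\,D_{T^{c_2}}\,E$ for the \emph{completed} one $\tilde F_{\chi_\rho,R}^+\,\tilde\Lambda_R^-\,\tilde H_R\,D\,E$ (with $R$ taken to be, say, $N$ or something comfortably larger than all the truncation levels), using Proposition \ref{final-correlation} together with Lemma \ref{d-trun-lem}; the error is $\lessapprox Q_4^{-1/6} + Q_4^{5/6}Q^{-1/4}$ with $Q_4 = T^{c_2}$, $Q = T^{c_1}$, which, after multiplying by the $\ell^1$-mass of $w$ (bounded by $\sum_n w(n) \ll N$, by Proposition \ref{prop15.2}(1) with $\sigma=0$) and summing over $\rho$ using Lemma \ref{crude-zero-density}, is $\lessapprox N T^{-\kappa}$ for some $\kappa$ strictly larger than $\frac{1}{6}c_2$ by the choice $c_i = 100^{-i}$. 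So up to an acceptable error we have reduced both \eqref{first-psi-dash} and \eqref{second-psi-dash} to the same statements for the ``completed'' function $\Psi''(n) := \big(\sum_{\rho} \eps_\rho (n+1)^{\rho-1}\tilde F_{\chi_\rho,R}^+(n)\big)\,\tilde\Lambda_R^-(n)\,\tilde H_R(n)\,D(n)\,E(n)\,w(n)\,1_{n\le N}$.

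The heart of the argument is then the positivity/domination structure built in Sections \ref{sec13}--\ref{sec14}. Using the notation $\varphi_j = \tilde F_{\chi_j,T}$ (I would take $R$ and $T$ aligned here, or re-run Proposition \ref{key-D-prop} with the relevant $R$), Proposition \ref{key-D-prop} gives, for $j\ge 1$ (resp.\ $j\ge 2$ in the exceptional case), $N^{-\sigma_j/8}\varphi_j^+\varphi_0^-\tilde H_T D \prec \varphi_0^+\varphi_0^-\tilde H_T D$. The point of this is: for a zero $\rho_j$ with $\beta_j = 1-\sigma_j$, the pointwise factor $(n+1)^{\rho_j-1}$ is bounded by $n^{-\sigma_j}\cdot n^{O(1/\log N)}$ and in fact one wants to combine it with $w$ via Proposition \ref{prop15.2}(4): for each fixed rational frequency $\lambda$ in the Fourier expansion of $\varphi_j^+\varphi_0^-\tilde H_T D E$, the archimedean sum $\sum_n (n+1)^{\rho_j-1} w(n) e((\lambda+\theta)n)$ is dominated (up to $O(N^{11/12})$) by $11 N^{-\sigma_j/4}\sum_n w(n)\cos(2\pi(\lambda+\theta)n)$. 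Assembling this over all $\lambda$ with the sign information that $\varphi_0^+\varphi_0^-\tilde H_T D E$ is Fourier-positive (Lemma \ref{lem43}, Proposition \ref{simple-pos-properties}(1), plus $E = 1_{q_1|n}$ Fourier-positive) and then summing the $j\ge 2$ contributions against the $\prec$-relation from Proposition \ref{key-D-prop} (which absorbs the $N^{-\sigma_j/8}$ weights and more), the non-$\rho_1$ part of $\sum_n \Psi''(n)\cos(2\pi\theta n)$ is bounded below by $-O(N^{1-1/12}\cdot\#\{\text{frequencies}\})$, i.e. $\gtrapprox -NT^{-c}$, which is acceptable. The $\rho_1 = 1$ term (the ``pole'', $\eps_1 = +1$) contributes $+\sum_n w(n) \tilde\Lambda_R^+\tilde\Lambda_R^-\tilde H_R D E\, \cos(2\pi\theta n)$, which by the same Fourier-positivity is $\ge -O(N^{1-1/12})$ as well; in the exceptional case the extra term $\rho_1$ (a real zero, $\eps = -1$) is controlled by Proposition \ref{prop15.2}(5), which gives a \emph{one-sided} bound $\sum_n n^{\rho_1-1}w(n)\cos(2\pi\eta n) \le N^{-\sigma_1/4}\sum_n w(n)\cos(2\pi\eta n) + O(N^{11/12})$ in precisely the right direction to be absorbed into the positive $\rho_1=1$ term — this is why option (2) of Proposition \ref{sec4-takeaway} keeps $q_1$ small (so $E = 1_{q_1|n}$ does not destroy the mass) and why Proposition \ref{key-D-prop} only needs $j\ge 2$ there. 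This establishes \eqref{first-psi-dash}.

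For \eqref{second-psi-dash} one sets $\theta = 0$. The main term is again the $\rho_1 = 1$ contribution $\sum_{n\le N} w(n)\, \tilde\Lambda_R^+(n)\tilde\Lambda_R^-(n)\tilde H_R(n) D(n) E(n)$; everything in sight is non-negative, so this is a genuine lower-bound problem. One evaluates it by noting $\tilde\Lambda_R^\pm = \prod_{p\le R}\vp(n\mp 1)$ and $\tilde H_R = \prod_{p\le R}\wp(n)$ are, on average, of constant order, and $D$ has $\alpha_{1,0}\ge 3/4$ by \eqref{coeffs} so $D \ge \frac34\cdot 1$ contributes a factor bounded below by a constant; $E = 1_{q_1|n}$ costs only $1/q_1 \ge T^{-10^{-6}}$, and $w$ contributes $\sum_n w(n) \gg N/\log N$ by Proposition \ref{prop15.2}(2). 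The remaining subtlety is to lower-bound the average of $\prod_{p\le R}\vp(n+1)\vp(n-1)$ — this is essentially a sieve/twin-almost-prime count and is handled by a Mertens-type product estimate (the local densities at each $p$ multiply to something $\gg 1/\log^2 R = 1/\log^{2}N^{O(1)}$, absorbed into $\lessapprox$); the contributions of $D$ and $\tilde H_R$ only improve matters. Thus the main term is $\gtrapprox N T^{-10^{-6}} \gg NT^{-2c_3}$, and since the error terms from the reduction above and from the $j\ge 2$ zeros are $\lessapprox NT^{-c'}$ with $c' > 2c_3$ (again by $c_i = 100^{-i}$), \eqref{second-psi-dash} follows.

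\textbf{Main obstacle.} The delicate point is the bookkeeping in the lower bound \eqref{first-psi-dash}: one must check that after expanding $\Lambda_{\sharp}$ into $O(T^{1+o(1)})$ pieces indexed by $\rho$, each expanded further over the (finitely many, but numerous) rational frequencies of $\varphi_j^+\varphi_0^-\tilde H_T D E$ up to denominator $\lessapprox T^{O(1)}$, the accumulated $O(N^{11/12})$-type error terms from Proposition \ref{prop15.2}(4),(5) still sum to $\lessapprox N T^{-\kappa_1}$ with $\kappa_1 = \frac16 c_2\tau_0 > 0$ — this forces a careful tracking of support sizes (via Lemma \ref{d-trun-lem}, \eqref{simple-ell1-div}, \eqref{support-prod}) and is where the smallness of the $c_i$ is really used. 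Keeping the $\prec$-relations and the Fourier-positivity of $\varphi_0^+\varphi_0^-\tilde H_T D E$ correctly coordinated with the archimedean domination $|{\widehat{T_\rho W}}| \le 11 N^{-\sigma/4}\Re\hat W$ — so that the sign comes out right frequency-by-frequency — is the crux.
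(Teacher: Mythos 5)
Your overall toolkit is the right one (Proposition \ref{prop15.2}(3)--(5), the damping relation from Proposition \ref{key-D-prop}, the truncated/completed comparison of Proposition \ref{final-correlation}, the role of $E$ and the smallness of $q_1$), but the order in which you deploy it creates a genuine gap. You propose to pass to the completed correlation at the outset and then expand $\tilde F^+_{\chi_j,R}\tilde\Lambda_R^-\tilde H_R D E$ over its rational frequencies, applying Proposition \ref{prop15.2}(4) ``frequency by frequency'' and summing. Two things break. First, the swap itself: Proposition \ref{final-correlation} bounds only $\Vert\cdot\Vert^{\wedge}_{\infty}$ of the difference, and an $\ell^{\infty}$ bound on rational Fourier coefficients does not control $\sum_n(\cdot)(n)\,n^{\rho-1}w(n)e(n\theta)$ by that bound times $\sum_n w(n)$; you would need either an $\ell^1$ bound on the coefficients of the difference or control of the archimedean sums over every frequency in the support of the completed functions. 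Second, and fatally for the ``assemble over all $\lambda$'' step: the completed objects have exponentially large Fourier mass --- e.g. $\Vert\tilde\Lambda_T\Vert^{\wedge}_1=2^{\pi(T)}$ and $\sum_{\lambda}\beta_0(\lambda)=(\tilde\Lambda_T^+\tilde\Lambda_T^-\tilde H_T D E)(0)$ is of size roughly $4^{\pi(T)}$ --- so the number of frequencies is not $T^{O(1)}$ and the additive $O(N^{1-\frac{1}{12}})$ errors from Proposition \ref{prop15.2}(4), summed over them, dwarf $N$; no choice of the $c_i$ rescues this. The paper's proof is built to avoid exactly this: it keeps the \emph{truncated} product (frequencies of denominator $\le T^3$, coefficient mass $\ll T^2$), uses the minor-arc estimate Proposition \ref{prop15.2}(3) to discard every frequency except the single $\lambda_0$ nearest $-\theta$ (cost $\lessapprox JT^{8}N^{1-\frac{1}{12}}\ll N^{1-\frac{1}{24}}$), and only at that one frequency replaces $\beta_j^{\trunc}(\lambda_0)$ by $\beta_j(\lambda_0)$ and invokes $|\beta_j(\lambda_0)|\le N^{\frac{1}{8}\sigma_j}\beta_0(\lambda_0)$ together with Proposition \ref{prop15.2}(4),(5), so the $O(N^{1-\frac{1}{12}})$ errors occur only $O(J)$ times.

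For \eqref{second-psi-dash} your claim that ``everything in sight is non-negative'' and that $D\ge\frac34$ pointwise is not correct: $D$ is only Fourier-positive, and pointwise it is a combination of $\Phi_{q,a}(n)=q^{5/6}1_{q|n}e(an/q^3)$ with oscillating phases and spikes of size $q^{5/6}$, so the proposed physical-space sieve lower bound does not apply. The lower bound has to be extracted on the Fourier side: $\alpha_{1,0}\ge\frac34$ and Fourier positivity give $\beta_0(0)\gg q_1^{-1}$ (Lemma \ref{basic-beta}(8)), and one reads off the zero frequency at $\theta=0$. Moreover, in the exceptional case the pole term and the $\rho_1$ term nearly cancel: using $\beta_1=\beta_0$ (which rests on the identity $\tilde F^+_{\chi_1,T}E=\tilde\Lambda_T^+E$, i.e. $\chi_1(n+1)=1$ on $q_1\mid n$) and Proposition \ref{prop15.2}(5), the main term acquires the factor $1-N^{-\frac{1}{4}\sigma_1}\asymp\sigma_1$, and one needs Page's bound $\sigma_1\gg q_1^{-1}$ (Lemma \ref{page-theorem}) to conclude $\gtrapprox q_1^{-2}N/\log N\gtrapprox NT^{-2c_3}$. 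Your accounting (main term $\gtrapprox NT^{-10^{-6}}$ from the factor $1/q_1$ alone) omits this extra factor of $\sigma_1$, which is precisely where the exponent $2c_3$ in \eqref{second-psi-dash} comes from.
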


Note that \eqref{psi-to-prove-1} and \eqref{psi-to-prove-2} follow from these two statements by Lemma \ref{psi-psidash} (with $\theta = 0$ in the case of \eqref{psi-to-prove-2}) and the fact that $\frac{1}{6} c_2 , 2c_3 < \frac{1}{20}$. As explained at the end of Subsection \ref{subsec151}, this completes the proof of Theorem \ref{mainthm-3} and hence of all our main results, conditional upon Proposition \ref{main-task-psidash}.

\subsection{Fourier coefficients $\beta_j, \beta^{\trunc}_j$ and their properties}

For the remainder of the section, for each $j \in \{0,\dots, J\}$ write 
\[  \beta_j := (\tilde F_{\chi_j, T}^+\tilde\Lambda_T^- \tilde H_{T} D E)^{\wedge} \] and \[ \beta_j^{\trunc} := (F_{\chi_j, T}^+ \Lambda_{T^{c_1}}^- H_{T^{c_1}} D_{T^{c_2}} E)^{\wedge}.\]
To be clear, the $\wedge$ here indicates rational Fourier coefficients in the sense of Section \ref{rat-fourier-sec}, thus both $\beta_j$ and $\beta_j^{\trunc}$ are defined on $\Q/\Z$.

Let us collect some basic observations about these quantities.

\begin{lemma}\label{basic-beta}
We have the following:
\begin{enumerate}
\item $\Vert \beta_0\Vert_{\infty} \lessapprox 1$;
\item $\beta_0$ is real, non-negative and symmetric, i.e. $\beta_0(\lambda) = \beta_0(-\lambda)$;
\item $\beta_j^{\trunc}(\lambda)$ is supported where $\denom(\lambda) \leq T^3$;
\item $\sum_{\lambda \in \Q/\Z} | \beta^{\trunc}_j(\lambda)|  \ll T^2$;
\item $\Vert \beta^{\trunc}_{j} - \beta_{j}\Vert_{\infty} \lessapprox T^{-\frac{1}{6} c_2}$ uniformly for all $j$;
\item $|\beta_{j}(\lambda)| \leq N^{\frac{1}{8}\sigma_j} \beta_{0}(\lambda)$ for all $j \geq 1$ \textup{(}in the unexceptional case\textup{)} and all $j \geq 2$ \textup{(}in the exceptional case\textup{)};
\item in the exceptional case, $\beta_0 = \beta_1$;
\item we have $\beta_0(0) \gg q_1^{-1}$.
\end{enumerate} 
\end{lemma}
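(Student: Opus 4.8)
\textbf{Proposal for the proof of Lemma \ref{basic-beta}, focusing on item (8).}

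The plan is to unwind the definitions. Recall that $\beta_0 = (\tilde F_{\chi_0, T}^+ \tilde\Lambda_T^- \tilde H_T D E)^{\wedge}$, where $\chi_0$ is the principal character, so $\tilde F_{\chi_0, T} = \tilde\Lambda_T = \prod_{p \leq T} \vp$; also $E = 1_{q_1 | n}$ (with the convention $q_1 = 1$ in the unexceptional case). Evaluating the rational Fourier coefficient at $\lambda = 0$ is just the mean: $\beta_0(0) = \Av\big( \tilde\Lambda_T^+ \tilde\Lambda_T^- \tilde H_T D\, 1_{q_1 | n}\big)$. First I would note that all factors here are non-negative pointwise: $\tilde\Lambda_T = \prod_{p\leq T} \vp$ is a non-negative function since each $\vp \geq 0$, similarly $\tilde H_T = \prod_{p \leq T}\wp \geq 0$, the shifts preserve non-negativity, $D \geq 0$ by Proposition \ref{key-D-prop} (it is a convex combination of the $\Phi_{q,a}$, hmm, actually $\Phi_{q,a}$ is complex-valued), and $1_{q_1|n} \geq 0$. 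So the average is an average of a non-negative quantity, and it suffices to produce a single residue class $\md{m}$ for a suitable modulus $m$ on which the integrand is bounded below by $\gg q_1^{-1}$ times the appropriate density.

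The concrete approach: restrict attention to $n$ lying in a fixed residue class chosen so that $n \equiv 0 \md{q_1}$ (to make $E(n) = 1$), so that $n \not\equiv 0, \pm 1 \md p$ for every prime $p \leq T$ with $p \nmid q_1$ (to make $\tilde\Lambda_T^+(n) \tilde\Lambda_T^-(n)$ as large as possible and nonzero, contributing $\prod_{p \nmid q_1, p \leq T}(1-1/p)^{-2}$), and on which the $m=0$ Fourier term of $D$ dominates — that is, pick $n$ in the support of $\Phi_{1,0}$ trivially (all $n$), and use $\alpha_{1,0} \geq \tfrac34$ from \eqref{coeffs} to bound $D(n) \geq \tfrac34 - \sum_{(q,a)\neq(1,0)}\alpha_{q,a}|\Phi_{q,a}(n)|$; by choosing $n$ to avoid divisibility by every $q > 1$ appearing with $\alpha_{q,a} > 0$ (finitely many, all dividing $\Pi$) we kill every other $\Phi_{q,a}(n)$ and get $D(n) \geq \tfrac34$. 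Then on such $n$, the integrand is $\gg \prod_{p\leq T, p\nmid q_1}(1-1/p)^{-2}\cdot \prod_{p \leq T}\wp(n) \cdot 1$, which is $\gg 1$ by Mertens-type estimates (the factor $\prod_{p | q_1}$ that is \emph{missing} from the first product is what costs us: $\prod_{p | q_1}(1-1/p)^{-2} \leq (q_1/\phi(q_1))^2 \ll \tau(q_1)^{O(1)}$, but we are allowed to lose that). The density of such $n$ is $\gg q_1^{-1}\prod_{p \leq T, p\nmid q_1}(1 - 3/p)$ or similar — but the key point is that the excluded classes mod each $p$ are few, so the density is $\gg q_1^{-1}\prod_{p\leq T}(1-3/p) \gg q_1^{-1}(\log T)^{-3}$, and the integrand on these $n$ is correspondingly $\gg (\log T)^{3}$-ish large (since $\prod(1-1/p)^{-2}$ grows), so the product of density and size is $\gg q_1^{-1}$. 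Since everything is $\lessapprox$-type (i.e. up to $N^{o(1)} = T^{O(1)\cdot o(1)}$), the cleanest route is: average over a period $m := q_1 \prod_{p \leq T, p \nmid q_1} p \cdot (\text{the }\Pi\text{-part})$, lower bound the sum over $n$ in the "good" classes by an explicit product, and simplify using $\prod_{p\leq T}(1 \pm O(1/p)) = (\log T)^{O(1)} = T^{o(1)}$.

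The main obstacle I anticipate is bookkeeping the various local factors correctly — specifically making sure that the $q_1^{-1}$ really is the only genuine loss, and that the competing contributions $\prod_{p | q_1}$ (from $\tilde\Lambda_T$ being a product only over $p \nmid q_1$ for the nonprincipal characters, which does not arise for $\beta_0$ since $\chi_0$ is principal and $\tilde\Lambda_T$ really is the full product) and from the density all combine into $q_1^{-1} T^{o(1)}$ rather than something worse. A second, more subtle point: $D$ being complex-valued means $D(n)$ need not be non-negative at a generic $n$, so I cannot simply say "the average of a non-negative function"; instead I must exhibit the good residue classes and bound $|D(n)|$ from below there via the triangle inequality using $\alpha_{1,0} \geq 3/4$, which is exactly why killing all $\Phi_{q,a}$ with $q > 1$ by a divisibility restriction on $n$ is essential. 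Once $n$ is chosen so that $\Phi_{q,a}(n) = 0$ for all $(q,a) \neq (1,0)$ in the (finite) support of $D$, we get $D(n) = \alpha_{1,0} \geq 3/4$ exactly, and then the whole integrand is genuinely non-negative and bounded below on that class, so restricting the average to it gives the desired bound.
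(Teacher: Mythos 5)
You address only item (8); the other seven parts are untouched, so I confine my comments to (8) as you do. Your starting identity $\beta_0(0)=\Av\big(\tilde\Lambda_T^+\tilde\Lambda_T^-\tilde H_T\, D\, 1_{q_1\mid n}\big)$ is correct, but the decisive step of your plan is not. You restrict $n$ to residue classes on which $\Phi_{q,a}(n)=0$ for every $(q,a)\neq(1,0)$ in the support of $D$, observe that there $D(n)=\alpha_{1,0}\geq\frac34$ and the integrand is non-negative, and then assert that ``restricting the average to it gives the desired bound''. That inference requires the integrand to be non-negative (or at least negligibly negative) on the \emph{complementary} classes, and it is not: $D$ is real-valued but by no means pointwise non-negative. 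At an $n$ with $q\mid n$ the term $\alpha_{q,a}\Phi_{q,a}(n)$ contributes $\alpha_{q,a}q^{5/6}\cos(2\pi a n/q^{3})$, which can be as negative as $-\alpha_{q,a}q^{5/6}$, far larger in magnitude than $\alpha_{1,0}\leq 1$; multiplied by the non-negative factor $\tilde\Lambda_T^+\tilde\Lambda_T^-\tilde H_T E$ this produces genuinely negative contributions to the mean which your argument never controls, so the average over your good classes does not bound the full average from below. (A secondary problem: in the exceptional case the ``good'' class may be empty --- if some $\Phi_{q,a}$ with $1<q\mid q_1$ occurs in $D$, the constraints $q_1\mid n$ and $q\nmid n$ are incompatible, so that term cannot be killed by a divisibility condition on $n$.)

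The cancellation you need lives in Fourier space, not in physical space, and that is how the paper argues. Each $\Phi_{q,a}$ is Fourier-positive by \eqref{phi-r-b-exp}, as are $E$ and $\tilde\Lambda_T^+\tilde\Lambda_T^-\tilde H_T$ (Lemma \ref{lem43}); hence $\frac34\Phi_{1,0}\prec D$ by \eqref{coeffs}, $\frac{1}{q_1}\prec E$, so $\frac{3}{4q_1}\prec DE$, and multiplying by the Fourier-positive function $\tilde\Lambda_T^+\tilde\Lambda_T^-\tilde H_T$ (Proposition \ref{simple-pos-properties}) gives $\beta_0(0)\geq \frac{3}{4q_1}\,\big(\tilde\Lambda_T^+\tilde\Lambda_T^-\tilde H_T\big)^{\wedge}(0)$, the last factor being $\gg 1$ by the local computation from the proof of Lemma \ref{lem43}. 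Translated into your language: for every $(q,a)$ the mean $\Av\big(\tilde\Lambda_T^+\tilde\Lambda_T^-\tilde H_T\,E\,\Phi_{q,a}\big)$ is the constant Fourier coefficient of a product of Fourier-positive functions and is therefore non-negative, so one may simply discard every term other than $(q,a)=(1,0)$; no residue-class selection is needed and the $q^{5/6}$-sized pointwise oscillations of $D$ never have to be confronted. If you wish to salvage your pointwise strategy you would in any case have to prove this non-negativity of the individual means, at which point you have reproduced the paper's argument.
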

\begin{proof}
Recall throughout that $\tilde F_{\chi_0, T} = \tilde \Lambda_T$ and $F_{\chi_0, T} = \Lambda_T$. In particular, $\beta_0 = (\tilde\Lambda_T^+\tilde\Lambda_T^- \tilde H_{T} D E)^{\wedge}$.

(1)  By \eqref{fourier-avg} we have
\begin{align}\nonumber \Vert \beta_0 \Vert_{\infty}  = \Vert \tilde \Lambda_T^+ \tilde \Lambda_T^- \tilde H_T D E \Vert^{\wedge}_{\infty} & \leq \Av |\tilde \Lambda_T^+ \tilde \Lambda_T^- \tilde H_T D E| \\ & \leq \Vert \tilde \Lambda_T\Vert_{\infty}^2 \Vert E \Vert_{\infty} \Av | \tilde H_T D|.\label{bet0-max} \end{align}
By Lemma \ref{lem142} we have $\Av |\tilde H_T \Phi_{r,b} | \lessapprox 1$ uniformly for all $r, b$ and so, since $D$ is a convex combination of functions $\Phi_{r,b}$, $\Av |\tilde H_T D| \lessapprox 1$. We also have $\Vert E \Vert_{\infty} = 1$ (clear from the definition of $E$) and $\Vert \tilde \Lambda_T \Vert_{\infty} = \prod_{p \leq T} (1 - \frac{1}{p})^{-1} \ll \log T$. Substituting these bounds into \eqref{bet0-max} gives the result.

(2)  This is already implicit in the statement and proof of Proposition \ref{key-D-prop}, but to avoid potential confusion we give a self-contained explanation. The statement is equivalent to the assertion that $\tilde \Lambda_T^+ \tilde\Lambda_T^- \tilde H_T D E$ is Fourier-positive as defined in Section \ref{rat-fourier-sec}. By Proposition \ref{simple-pos-properties} (1), it is enough to show that each of the three functions $\tilde \Lambda_T^+ \tilde\Lambda_T^- \tilde H_T$, $D$ and $E$ are Fourier-positive. For $E$, this follows from the Fourier expansion $1_{q_1 | n} = \frac{1}{q_1} \sum_{a \in (\Z/q_1\Z)^*} e(\frac{an}{q_1})$. For $D$, this follows from the fact that $D$ is a convex combination of functions $\Phi_{r,b}$, all of which are Fourier-positive by \eqref{phi-r-b-exp}. Finally, we have (see \eqref{tilde-lam-def-2}) $\tilde \Lambda_T = \prod_{p \leq T} \vp $ and $\tilde H_T = \prod_{p \leq T} \wp $ (Definition \ref{h-tilde-def}), so the Fourier positivity of $\tilde \Lambda_T^+ \tilde\Lambda_T^- \tilde H_T$ is a consequence of Lemma \ref{lem43}, which asserts that each $\vp ^+ \vp ^- \wp $ is Fourier-positive.

Finally, the symmetry property $\beta_0(\lambda) = \beta_0(-\lambda)$ is a consequence of the fact that $\tilde \Lambda_T^+ \tilde \Lambda_T^- \tilde H_T D E$ is real-valued and the uniqueness of rational Fourier expansion. 

(3) By \eqref{support-prod} we have
\begin{equation}\label{crude-supports} \support(F_{\chi_j, T}^+ \Lambda_{T^{c_1}}^- H_{T^{c_1}} D_{T^{c_2}} E) \leq \support(F_{\chi_j, T})\support( \Lambda_{T^{c_1}})\support( H_{T^{c_1}}) \support(D_{T^{c_2}}) \support(E).\end{equation}
Now from the definition \eqref{fourier-trunc} we see that $\support(F_{\chi_j, T}) \leq T \cond(\chi_j) \leq T^2$. From the definition \eqref{lambda-q-def} we have $\support(\Lambda_{T^{c_1}}) \leq T^{c_1}$, and from Definition \ref{hq-definition} we have $\support(H_{T^{c_1}}) \leq T^{c_1}$. From \eqref{d-trun-fourier} we have $\support(D_{T^{c_2}}) \leq T^{3c_2}$, and finally from \eqref{simple-ell1-div} we have $\support(E) \leq q_1 \leq T^{c_3}$ (where the bound $q_1 \leq T^{c_3}$ comes from Proposition \ref{sec4-takeaway} (2)). Substituting all this into \eqref{crude-supports} (and using the crude bound $2c_1 + 3c_2 + c_3 < 1$), we obtain the required statement.

(4) By \eqref{b1b2-ell1} and the definition of $\beta_j^{\trunc}$, we have
\[ \sum_{\lambda \in \Q/\Z} |\beta_j^{\trunc} (\lambda) | \leq \Vert  F_{\chi_j, T}^{\wedge} \Vert_1 \Vert \Lambda^{\wedge}_{T^{c_1}} \Vert_1 \Vert H^{\wedge}_{T^{c_1}} \Vert_1  \Vert D^{\wedge}_{T^{c_2}}\Vert_1 \Vert  E^{\wedge} \Vert_1.\]
We have the bounds $\Vert F_{\chi_j, T}^{\wedge} \Vert_1 \leq T^{\frac{3}{2}}$ \eqref{f-chi-ell1},  $\Vert\Lambda_{T^{c_1}} \Vert^{\wedge}_1 \ll T^{c_1}$ \eqref{lam-q-ell1}, $\Vert H_{T^{c_1}}\Vert^{\wedge}_1 \ll T^{(1 + o(1))c_1}$ \eqref{hq-ell1-crude}, $\Vert D_{T^{c_2}} \Vert^{\wedge}_1 \ll T^{\frac{5}{6}c_2}$ \eqref{d-trun-fourier}, and $\Vert E \Vert^{\wedge}_1 = 1$ \eqref{simple-ell1-div}, and so the result follows, again using a crude bound $2c_1 + \frac{5}{6}c_2 < \frac{1}{2}$.

(5) This is little more than a restatement of an instance of Proposition \ref{final-correlation} in the language of this section. In that proposition, we have taken $Q_1 = R = T$, $Q_2 = Q_3 = T^{c_1}$, $Q_4  = T^{c_2}$ and $q = q_1$. Since $c_2$ is tiny compared to $\frac{1}{4}c_1$, the term $Q_4^{-\frac{1}{6}}$ in Proposition \ref{final-correlation} is dominant.

(6) This is \eqref{9prop1-reform} in which both sides have been multiplied by $E$ (which only makes a difference in the exceptional case). This preserves the $\prec$ relation, by Proposition \ref{simple-pos-properties} (5).

(7) This is a consequence of the fact that $\tilde F^+_{\chi_1, T} E = \tilde \Lambda^+_T E$, which is clear from the expressions (see \eqref{varphi-j-def})
\[ \tilde \Lambda_T = \prod_{p \leq T} \vp , \quad \tilde F_{\chi_1, T} = \frac{q_1}{\phi(q_1)} \overline{\chi}_1 \prod_{\substack{p \leq T \\ p \nmid q_1}} \vp ,\] since $\chi_1(n+1) = 1$ when $q_1 \mid n$. Here, $\chi_1$ is the (real) exceptional character to modulus $q_1$.

(8) From Proposition \ref{key-D-prop} and the fact that all the $\Phi_{q,a}$ are Fourier-positive, we have $\frac{3}{4} \Phi_{1,0} \prec D$. Note that $\Phi_{1,0}$ is just the constant function $1$. Since $E$ is Fourier-positive, we have $\frac{3}{4} E \prec DE$. Since $E(n) = \frac{1}{q_1} \sum_{a \in \Z/q_1 \Z} e(\frac{a n}{q_1})$, we also have $\frac{1}{q_1} \prec E$, and therefore $\frac{3}{4q_1} \prec DE$.

Using this and the fact that $\tilde \Lambda_T^+ \tilde\Lambda_T^- \tilde H_T$ is Fourier-positive (as remarked in the proof of (2) above) it follows from Proposition \ref{simple-pos-properties} that 
\begin{equation}\label{D-remove} \tilde \Lambda_T^+ \tilde\Lambda_T^- \tilde H_T DE \succ \frac{3}{4q_1} \tilde \Lambda_T^+ \tilde\Lambda_T^- \tilde H_T.\end{equation}
Now it follows from the proof of Lemma \ref{lem43} that the constant term in the rational Fourier expansion of $\vp ^+ \vp ^- \wp $ is $(1 + \frac{1}{p})(1 - \frac{1}{p})^{-2}(1 + \frac{3}{p})^{-1}$ for $p \geq 3$, and $\frac{16}{5}$ for $p = 2$. Multiplying over all $p \leq T$ (and using the fact that all frequencies of $\vp ^+ \vp ^- \wp $ have denominator $p$) it follows that the constant term in the rational Fourier expansion of $\tilde \Lambda_T^+ \tilde\Lambda_T^- \tilde H_T$ is 
\[  \frac{16}{5} \prod_{3 \leq p \leq T} \big(1 - \frac{1}{p}\big)^{-2} \big(1 + \frac{3}{p}\big)^{-1} \big(1 + \frac{1}{p}\big) \gg 1. \] Combining this with \eqref{D-remove} gives the required statement.
\end{proof}

\subsection{Preliminary analysis}

We now begin the proof of Proposition \ref{main-task-psidash}. The initial steps will be the same in the unexceptional and exceptional cases. 

Recall the definitions of $\Psi'$ (see \eqref{psi-dash}) and of $\Lambda_{\sharp} = \Lambda_{\sharp, T,\frac{1}{48}}$ (Definition \ref{sharp-approx-def}, \eqref{lam-sharp-conv}, \eqref{lam-sharp-def-14}). Substituting the latter definition into the former and expanding, and writing (just for the next few lines) $\varphi_j := F_{\chi_j, T}^+ \Lambda_{T^{c_1}}^- H_{T^{c_1}} D_{T^{c_2}} E$ for brevity, we have
\begin{equation}\label{first-expand}
 \sum_{n = 1}^N \Psi'(n) e(n \theta)  = \sum_{j = 0}^J \eps_j  \sum_{n= 1}^N \varphi_j(n) (n+1)^{\rho_j - 1} w(n) e(n \theta) ,\end{equation} where $\eps_0 = 1$ and $\eps_j = -1$ for $j \geq 1$.

The first, rather minor, task is to replace $(n+1)^{\rho_j - 1}$ here with $n^{\rho_j - 1}$, so that we can access Proposition \ref{prop15.2}.  First of all, we claim that if $\rho = \beta + i \gamma$ with $0 \leq \beta \leq 1$ and $|\gamma| \leq T$ then
\[ \big| (n+1)^{\rho - 1} - n^{\rho - 1}\big| = |n^{\rho - 1}| \big| (1 + \frac{1}{n})^{\rho - 1} - 1\big]| \ll \frac{T}{n},\] uniformly for positive integers $n$. 
To prove this claim, we first use the trivial bound $|n^{\rho - 1}| \leq 1$. Then, to estimate $| (1 + \frac{1}{n})^{\rho - 1} - 1|$, we split into the cases $n \geq T$ and $n < T$. In the case $n \geq T$, we use the inequality $|e^z - 1| \ll |z|$, valid for $|z| \leq 10$ (say), with $z = (\rho - 1) \log(1 + \frac{1}{n})$, which has $|z| \ll (1 + |\gamma|) \frac{1}{n} \ll \frac{T}{n}$. In the case $n < T$, we use the bound $|(1 + \frac{1}{n})^{\rho - 1}| = (1 + \frac{1}{n})^{\beta - 1} \ll 1$.

Using this claim, we see that the error in replacing $(n+1)^{\rho_j - 1}$ by $n^{\rho_j - 1}$ in \eqref{first-expand} is bounded above by
\begin{equation} \ll J T \sup_j \sum_{n = 1}^N |\varphi_j(n)| \frac{w(n)}{n}  \ll 
 T^{2 + 2c_1 + c_2}N^{o(1)}  \sum_{n = 1}^N  \frac{w(n)}{n}. \label{eq477}
\end{equation}
Here, the second bound follows from the first by \eqref{crude-J} and crude pointwise estimates for the constituent functions of $\varphi_j$: for $F_{\chi_j, T}$ and $\Lambda_{T^{c_1}}$ we have the pointwise bound of $N^{o(1)}$, by Corollary \ref{cor6.4}; for $H_{T^{c_1}}$ we have the (very) crude pointwise bound $|H_{T^{c_1}}(n)| \ll T^{2c_1}$, which follows from Definition \ref{hq-definition} and the trivial bound $|\eta(q)| \leq 1$; for $D_{T^{c_2}}$ we have the pointwise bound $|D_{T^{c_2}}(n)| \ll T^{\frac{5}{6}c_2} < T^{c_2}$, which is immediate from \eqref{D-linear} and the definition \eqref{phi-a-q-def} of $\Phi_{q,a}$; and finally for $E$ we have the trivial bound $E(n) \leq 1$.

Now by the definition \eqref{W-def}, \eqref{w-def} of $w(n)$ we have
\[ \sum_{n = 1}^N \frac{w(n)}{n} \leq \sum_{n = 1}^N \frac{1}{n} \big(\frac{n}{Y}\big)^{-\frac{1}{2}} \ll Y^{\frac{1}{2}} \ll N^{\frac{1}{2}}.\] Substituting into \eqref{eq477} shows that the error in replacing $(n+1)^{\rho_j - 1}$ by $n^{\rho_j - 1}$ in \eqref{first-expand} is $O(N^{\frac{2}{3}})$, that is to say
\[
 \sum_{n = 1}^N \Psi'(n) e(n \theta) = \sum_{j = 0}^J \eps_j  \sum_{n= 1}^N \varphi_j(n)  n^{\rho_j - 1} w(n) e(n \theta) + O(N^{\frac{2}{3}}).
\]

Now we expand $\varphi_j = F_{\chi_j, T}^+ \Lambda_{T^{c_1}}^- H_{T^{c_1}} D_{T^{c_2}}E$ as its rational Fourier series, with (by definition) coefficients $\beta^{\trunc}_j(\lambda)$. This gives
\begin{equation}\label{third-expand} 
 \sum_{n = 1}^N \Psi'(n) e(n \theta)  = \sum_{j = 0}^J \eps_j \!\!\! \sum_{\lambda \in \Q/\Z}
  \!\!\! \beta_j^{\trunc}(\lambda) \sum_{n =1}^N n^{\rho_j - 1} w(n)  e\big((\lambda+ \theta) n\big) + O(N^{\frac{2}{3}}).
\end{equation}

From now on we think of $\theta \in \R/\Z$ as fixed; some choices will depend on it, but all bounds will be uniform in $\theta$. Note that, by Lemma \ref{basic-beta} (3), all the $\lambda$ which are relevant in the above sum have $\denom(\lambda) \leq T^3$. Write $\lambda_0$ for the rational closest to $-\theta$ (mod 1) with $\denom(\lambda_0) \leq T^3$. We claim that the contribution to \eqref{third-expand} from all $\lambda \neq \lambda_0$ is small. To see this, note that for such $\lambda$ we have both $\Vert \lambda + \theta\Vert \geq \Vert \lambda_0+ \theta\Vert$ (by the assumption that $\lambda_0$ is closest to $-\theta$) and $\Vert \lambda - \lambda_0\Vert \geq T^{-6}$ (since $\denom(\lambda), \denom(\lambda_0) \leq T^3$). By the triangle inequality, we see that $\Vert \lambda + \theta \Vert \geq \frac{1}{2} T^{-6} > N^{-\frac{1}{2}}$, recalling here that we chose $T < N^{\frac{1}{120}}$ in Proposition \ref{sec4-takeaway}. Therefore the contribution to the sum in \eqref{third-expand} from $\lambda \neq  \lambda_0$ is bounded above by
\begin{align*} & \ll J T^6 \sup_{j} \Vert \beta^{\trunc}_j\Vert_{\infty} \sup_{\substack{ \Vert \eta \Vert \geq \frac{1}{2} N^{-\frac{1}{2}} \\ \rho \in \Xi^*_T(\frac{1}{48})}}  \big|\sum_{n = 1}^N n^{\rho - 1} w(n) e(\eta n)\big| \\ & \lessapprox J T^6 \cdot T^2 \cdot N^{1 - \frac{1}{12}} \ll N^{1 - \frac{1}{24}}.
\end{align*}
In the last step, we used $J \ll T$ (by \eqref{crude-J}), Lemma \ref{basic-beta} (4), Proposition \ref{prop15.2} (3) and the fact that $T < N^{\frac{1}{120}}$ (Proposition \ref{sec4-takeaway}). 

It follows from this analysis and \eqref{third-expand} that $\sum_{n = 1}^N \Psi'(n) e(n \theta)$ is
\begin{equation}\label{fourth-expand} 
  \sum_{j = 0}^J \eps_j \beta_j^{\trunc}(\lambda_0) \sum_{n =1}^N w(n) n^{\rho_j - 1} e\big((\lambda_0 + \theta) n\big) + O(N^{1 - \frac{1}{24}}).
\end{equation}

Next, we wish to replace the $\beta^{\trunc}$ terms by their untruncated variants $\beta$. The main tool here is Lemma \ref{basic-beta} (5) (which, recall, is a substantial result, requiring all of Sections \ref{comparison-sec}, \ref{sec10} and \ref{sec14} for its proof). By that result, the error introduced by replacing $\beta_j^{\trunc}$ by $\beta_j$ in \eqref{fourth-expand} is bounded above by
\begin{align*}
 \sum_{j = 0}^J |\beta^{\trunc}_j(\lambda_0) - \beta_j(\lambda_0)| & \big|\sum_{n = 1}^N n^{\rho_j - 1} w(n) e\big((\lambda_0 + \theta) n\big) \big| \\
& \lessapprox T^{-\frac{1}{6} c_2} \sum_{j = 0}^J \big|\sum_{n = 1}^N n^{\rho_j - 1} w(n) e(\eta n) \big| .
\end{align*}
Here, we have written $\eta := \lambda_0 + \theta$. By Proposition \ref{prop15.2} (4), this in turn is bounded by
\begin{equation}\label{to-bound-moment} \lessapprox T^{-\frac{1}{6} c_2}\big(\sum_{n = 1}^N w(n) \cos (2\pi \eta n) \big) \sum_{j = 0}^J N^{-\frac{1}{4}\sigma_j}  + O(J N^{1 - \frac{1}{12}}).
\end{equation}
By \eqref{unexc-n-bd} or \eqref{exc-n-bd} (depending on whether we are in the unexceptional case or the exceptional case) and the bound $\sum_{n = 1}^N w(n) \ll N$ (which is Proposition \ref{prop15.2} in the case $\sigma = 0$), \eqref{to-bound-moment} is $\lessapprox N T^{-\frac{1}{6} c_2}$. Note that the error term of $O(J N^{1 - \frac{1}{12}})$ is negligible since $J \ll T \leq N^{\frac{1}{120}}$ by Proposition \ref{sec4-takeaway}.

It follows from this analysis and \eqref{fourth-expand} that
\[
 \sum_{n = 1}^N \Psi'(n) e(n \theta)  = \sum_{j = 0}^J \eps_j \beta_j(\lambda_0) \sum_{n =1}^N w(n) n^{\rho_j - 1} e(\eta n) + \tilde O(N T^{-\frac{1}{6} c_2}).
\]
Recall here that $\eta := \lambda_0 + \theta$; observe also that the error term of $O(N^{1 - \frac{1}{24}})$ in \eqref{fourth-expand} is much smaller than the error term here, so was absorbed into it.

We may run an essentially identical analysis with $-\theta$ in place of $\theta$. Since $-\lambda_0$ is the closest rational to $-\theta$ (mod 1) with $\denom(\lambda_0) \leq T^3$, we obtain
\[ \sum_{n = 1}^N \Psi'(n) e(-n\theta) = \sum_{j = 0}^J \eps_j \beta_j(-\lambda_0) \sum_{n =1}^N w(n) n^{\rho_j - 1} e(-\eta n) + \tilde{O}(N  T^{-\frac{1}{6} c_2}).\]
Adding these together, and using the symmetry of $\beta_0$ (Lemma \ref{basic-beta} (2)) we have
\begin{align}\nonumber 2\sum_{n = 1}^N&\Psi'(n) \cos(2 \pi \theta n)  = 2\beta_0(\lambda_0) \sum_{n = 1}^N w(n) \cos(2 \pi \eta n) \\  & - \sum_{\pm} \sum_{j \geq 1}\beta_j(\pm \lambda_0)  \sum_{n =1}^N w(n) n^{\rho_j - 1}  e(\pm\eta n) + \tilde{O}(N T^{-\frac{1}{6} c_2}).\label{to-quote-exc} \end{align} (The $\sum_{\pm}$ notation means that we take the sum of two terms, one for each choice of sign.) 

From this point on, the treatments in the unexceptional and exceptional cases are slightly different.

\subsection{The unexceptional case}

We start with \eqref{to-quote-exc}. Taking absolute values of the terms with $j \geq 1$ and using Lemma \ref{basic-beta} (6) gives
\begin{align}\nonumber  \sum_{n = 1}^N & \Psi'(n) \cos(2 \pi \theta n )  \geq \beta_0(\lambda_0) \bigg( 2\sum_n w(n) \cos(2 \pi \eta n) -  \\ & - \sum_{\pm}\sum_{j =1}^J N^{\frac{1}{8}\sigma_j}\big|\sum_{n = 1}^N n^{\rho_j - 1}  w(n)  e(\pm \eta n)\big| \bigg)+ \tilde O(N T^{-\frac{1}{6} c_2}). \label{shortly}\end{align} 

Proposition \ref{prop15.2} (4) gives that
\begin{equation}\label{1214}  \big|\sum_{n = 1}^N n^{\rho_j - 1} w(n) e(\pm\eta n)\big| \leq 11 N^{-\frac{1}{4} \sigma_j} \sum_{n = 1}^N w(n) \cos(2 \pi \eta n) + O(N^{1 - \frac{1}{12}}).\end{equation}

When substituted into \eqref{shortly}, the contribution of the error terms $O(N^{1 - \frac{1}{12}})$ is $\ll |\beta_0(\lambda_0)| J \sup_j N^{\frac{1}{8} \sigma_j} N^{1 - \frac{1}{12}} \lessapprox N T^{-\frac{1}{6} c_2}$. Here, we used the fact that $\sigma_j \leq \frac{1}{48}$ for all $j$; the bound $|\beta_0(\lambda_0)| \lessapprox 1$ (Lemma \ref{basic-beta} (1)); the bound $J \ll T$ \eqref{crude-J}; and finally the fact that $T \leq N^{\frac{1}{120}}$ (Proposition \ref{sec4-takeaway}).

Therefore, substituting \eqref{1214} into \eqref{shortly} gives
\begin{align}\nonumber
&  \sum_{n = 1}^N \Psi'(n) \cos(2 \pi \theta n)  \\ & \geq 2\beta_0(\lambda_0) \big(\sum_n  w(n) \cos(2 \pi \eta n)\big) \big(1 - 11\sum_{j =1}^J N^{-\frac{1}{8} \sigma_j}\big)  - \tilde O(N T^{-\frac{1}{6} c_2}).\label{almost-unexc}\end{align}
 Now by \eqref{unexc-n-bd} (and since $M$ is vastly bigger than $11$) we have $1 - 11\sum_{j = 1}^J N^{-\frac{1}{8} \sigma_j} \geq \frac{1}{2}$. Also, taking $\rho = 1$ in Proposition \ref{prop15.2} (4) gives
 \begin{equation}\label{w-cos-lower} \sum_{n = 1}^N w(n) \cos(2 \pi \eta n) \geq - O(N^{1 - \frac{1}{12}}).\end{equation} Since $|\beta_0(\lambda_0)| \lessapprox 1$ (Lemma \ref{basic-beta} (1)) it follows from these observations and \eqref{almost-unexc} that 
 \[  \sum_{n = 1}^N \Psi'(n) \cos(2 \pi \theta n)  \geq - \tilde O(N T^{-\frac{1}{6} c_2}).\]
 This completes the proof of \eqref{first-psi-dash} (in the unexceptional case).

We turn now to \eqref{second-psi-dash}. We set $\theta = 0$ in the above analysis, thus $\lambda_0 = \eta = 0$ and \eqref{almost-unexc} gives
\[  \sum_{n = 1}^N \Psi'(n) \geq 2 \beta_0(0) \big(\sum_{n = 1}^N w(n)\big)\big(1 - 11 \sum_{j = 1}^J N^{-\frac{1}{8}\sigma_j}\big) - \tilde O(N T^{-\frac{1}{6} c_2}).\]
By \eqref{unexc-n-bd}, Proposition \ref{prop15.2} (2) and Lemma \ref{basic-beta} (8), it follows that 

\[ \sum_{n = 1}^N \Psi'(n) \geq c \frac{N}{\log N} - \tilde O(N T^{-\frac{1}{6} c_2}).\] for some $c > 0$. This is considerably stronger than \eqref{second-psi-dash}, and so this completes the proof of Proposition \ref{main-task-psidash} in the unexceptional case.

\subsection{The exceptional case} We now look at what happens in the exceptional case. Thus $E(n) = 1_{q_1 | n}$, where $q_1 = \cond(\chi_1)$ is the conductor of the exceptional character. 

As in the unexceptional case, our point of departure is \eqref{to-quote-exc}. Now, however, we separate off the term $j = 1$ and use Lemma \ref{basic-beta} (7), obtaining (recalling here that $\rho_1 = 1 - \sigma_1$ is real)

\begin{align*} \sum_{n = 1}^N  \Psi'(n) & \cos(2 \pi \theta n) = 2\beta_0(\lambda_0) \sum_{n =1}^N w(n) (1 - n^{-\sigma_1})\cos(2 \pi \eta n) - \\ & - \sum_{\pm}\sum_{j \geq 2} \beta_j(\pm \lambda_0) \sum_{n = 1}^N w(n) n^{\rho_j - 1}  e(\pm \eta n) - \tilde O(N T^{-\frac{1}{6} c_2}) . \end{align*}
Taking absolute values and using Lemma \ref{basic-beta} (6) gives
\begin{align*} \sum_{n = 1}^N  \Psi'(n) &\cos(2 \pi \theta n)  \geq \beta_0(\lambda_0)\bigg( 2 \sum_{n =1}^N w(n) (1 - n^{-\sigma_1})\cos(2 \pi \eta n) -  \\ & - \sum_{\pm}\sum_{j \geq 2} N^{\frac{1}{8}\sigma_j} \big|\sum_{n = 1}^N w(n) n^{\rho_j - 1}  e(\pm\eta n)\big|\bigg)  - \tilde O(N T^{-\frac{1}{6} c_2}) . \end{align*}

Applying Proposition \ref{prop15.2} (4) and handling the error term exactly as in the unexceptional case, we obtain 
\begin{align}\nonumber
\sum_{n = 1}^N & \Psi'(n) \cos(2 \pi \theta n) \geq \beta_0(\lambda_0) \bigg( 2 \sum_{n =1}^N w(n) (1 - n^{-\sigma_1})\cos(2 \pi \eta n) - \\ & - 2M\big(\sum_{n = 1}^N w(n) \cos (2 \pi \eta n)\big) \sum_{j \geq 1} N^{-\frac{1}{8}\sigma_j} \bigg)- \tilde O(N T^{-\frac{1}{6} c_2}).\label{1529}
\end{align}

Now we additionally apply Proposition \ref{prop15.2} (5), which tells us that  
 \[ \sum_{n = 1}^N n^{-\sigma_1} w(n)  \cos(2 \pi \eta n) \leq N^{-\frac{1}{4}\sigma_1} \sum_{n = 1}^N w(n) \cos(2 \pi \eta n) + O(N^{1 - \frac{1}{12}}).\]
 Substituting into \eqref{1529} and using $|\beta_0(\lambda_0)| \lessapprox 1$ (that is, Lemma \ref{basic-beta} (1)) to control the error term, we obtain
 \begin{equation}\label{9821} \sum_{n = 1}^N  \Psi'(n) \cos(2 \pi \theta n)  \geq  2\tau \beta_0(\lambda_0) \sum_{n =1}^N w(n)  \cos(2 \pi \eta n)  - \tilde O(N T^{-\frac{1}{6} c_2}). \end{equation}
 where
\[ \tau :=  1 - N^{-\frac{1}{4}\sigma_1} - M\sum_{j \geq 2} N^{-\frac{1}{8}\sigma_j} .\]
Now \eqref{exc-n-bd} implies that $\tau \geq 0$; in fact, it implies
\begin{equation}\label{tau-lower} \tau \geq \frac{1}{2}(1 - N^{-\frac{1}{8} \sigma_1}),\end{equation} a fact we shall need shortly.
 We again have \eqref{w-cos-lower} and $|\beta_0(\lambda_0)| \lessapprox 1$, and therefore \eqref{9821} implies that
\[ \sum_{n = 1}^N \Psi'(n) \cos (2 \pi \theta n) \geq - O(N T^{-\frac{1}{6} c_2}).\]
This confirms \eqref{first-psi-dash} in the exceptional case.

Finally, we turn to the proof of \eqref{second-psi-dash} in the exceptional case. Setting $\theta = 0$ in \eqref{9821}, we obtain
\begin{equation}\label{to-use-exc} \sum_{n = 1}^N \Psi'(n) \geq 2 \tau \beta_0(0) \sum_{n = 1}^N w(n) - \tilde O(N T^{-\frac{1}{6} c_2}).
\end{equation}
To estimate this, we use the inequalities $\beta_0(0) \gg q_1^{-1}$ (Lemma \ref{basic-beta} (8)), $\sum_{n = 1}^N w(n) \gg \frac{N}{\log N}$ (Proposition \ref{prop15.2} (2)) and $\tau \geq \frac{1}{2}(1 - N^{-\frac{1}{8}\sigma_1})$ (which is \eqref{tau-lower}).
Substituting into \eqref{to-use-exc} gives 
\begin{equation}\label{paged}\sum_{n = 1}^N \Psi'(n) \gg q_1^{-1} \frac{N}{\log N} \big( 1 - N^{-\frac{1}{8}\sigma_1} ) - \tilde O(N T^{-\frac{1}{6} c_2}).\end{equation} Now we have $1 - N^{-\frac{1}{8} \sigma_1} \geq 1 - e^{-\frac{1}{8} \sigma_1} \gg \sigma_1$, and by Page's theorem (Lemma \ref{page-theorem}) $\sigma_1 \gg q_1^{-1}$. Therefore \eqref{paged} implies that
\[\sum_{n = 1}^N \Psi'(n) \gg q_1^{-2} \frac{N}{\log N}  - \tilde O(N T^{-\frac{1}{6} c_2}).\]

Finally, recall from Proposition \ref{sec4-takeaway} that $q_1 \leq T^{c_3}$. Therefore
\[ \sum_{n = 1}^N \Psi'(n) \gtrapprox N T^{-2 c_3} - \tilde O(N T^{-\frac{1}{6} c_2}) \gtrapprox  N T^{- 2c_3}\] since $c_3 < \frac{1}{12} c_2$.

This concludes the proof of \eqref{second-psi-dash} in the exceptional case, and hence the proof of Proposition \ref{main-task-psidash} itself. \vspace*{8pt}

As remarked at the end of Subsection \ref{sec152}, this completes the proof of all the main results in the paper.

\part{Appendices}

\appendix

\section{Postnikov character formula}\label{appA}

In this section we assemble material that is well-known to experts but for which I could not locate a convenient reference covering all that is needed for this paper in a quotable form. In preparing this we used \cite{gallagher-postnikov} ,  \cite[Section 12.6]{ik} and \cite[Lemma 13]{milicevic}.

Define $L_N \in \Q[x]$ by
\begin{equation}\label{LN-def} L_N(x) := - \sum_{i = 1}^N \frac{(-x)^i}{i} = x - \frac{x^2}{2} + \frac{x^3}{3} + \dots \pm \frac{x^N}{N}.\end{equation}

\begin{lemma}\label{gal-lem}
\[ L_N(x+ y + xy) = L_N(x) + L_N(y) + \sum_{N < i + j \leq 2N} c_{ij} x^i y^j,\] with the $(c_{ij})_{i, j \geq 0}$ being rationals with denominator $\leq N$. 
\end{lemma}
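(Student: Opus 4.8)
The plan is to exploit the algebraic identity $1+(x+y+xy)=(1+x)(1+y)$ together with the fact that $L_N$ is a truncation of the formal logarithm. Working in $\Q[[x,y]]$, put $L(t):=\sum_{i\geq 1}\tfrac{(-1)^{i-1}}{i}t^i=\log(1+t)$; then $\exp(L(x)+L(y))=(1+x)(1+y)=\exp(L(x+y+xy))$, so $L(x+y+xy)=L(x)+L(y)$. Write $L=L_N+R_N$, where $R_N(t):=\sum_{i>N}\tfrac{(-1)^{i-1}}{i}t^i$ has all monomials of degree $>N$. Substituting the series $g=x+y+xy$, which has zero constant term, into $R_N$ yields $R_N(g)=\sum_{i>N}\tfrac{(-1)^{i-1}}{i}g^i$, and each $g^i$ has all monomials of total degree $\geq i\geq N+1$; likewise $R_N(x)$ and $R_N(y)$. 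Hence
\[
L_N(x+y+xy)-L_N(x)-L_N(y)=-\bigl(R_N(x+y+xy)-R_N(x)-R_N(y)\bigr)
\]
has all monomials of total degree $\geq N+1$, while being a polynomial of total degree $\leq 2N$ (since $L_N$ has degree $N$ and $x+y+xy$ degree $2$). So it equals $\sum_{N<i+j\leq 2N}c_{ij}x^iy^j$ for suitable $c_{ij}\in\Q$. This establishes the shape of the formula; the remaining point is the denominator bound.

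For the denominators I would differentiate. From $L_N'(t)=\sum_{k=0}^{N-1}(-t)^k$ one gets $(1+t)L_N'(t)=1-(-t)^N$, hence
\[
\frac{\partial}{\partial x}L_N(x+y+xy)=(1+y)\,L_N'(x+y+xy)=\frac{(1+y)\bigl(1-(1-u)^N\bigr)}{u}=\frac{1-(1-u)^N}{1+x},
\]
where $u:=(1+x)(1+y)$ (using $-(x+y+xy)=1-u$ and $1+(x+y+xy)=u$). Now $1-(1-u)^N=\sum_{k=1}^N\binom{N}{k}(-1)^{k-1}u^k$, and each $u^k=(1+x)^k(1+y)^k$ is divisible by $1+x$, so
\[
\frac{\partial}{\partial x}L_N(x+y+xy)=\sum_{k=1}^N\binom{N}{k}(-1)^{k-1}(1+x)^{k-1}(1+y)^k\in\Z[x,y].
\]
By the symmetry of $x+y+xy$ in $x$ and $y$, the same computation gives $\tfrac{\partial}{\partial y}L_N(x+y+xy)\in\Z[x,y]$.

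Finally I would read off the bound. If $x^iy^j$ appears in $L_N(x+y+xy)$ with coefficient $c$ and $i\geq 1$, then $i\,c=[x^{i-1}y^j]\,\tfrac{\partial}{\partial x}L_N(x+y+xy)\in\Z$, so $c\in\tfrac1i\Z$; symmetrically $c\in\tfrac1j\Z$ when $j\geq 1$. Setting $y=0$ in $L_N(x+y+xy)=L_N(x)+L_N(y)+\sum_{N<i+j\leq 2N}c_{ij}x^iy^j$ gives $\sum_{N<i\leq 2N}c_{i0}x^i=0$, so $c_{i0}=0$, and similarly $c_{0j}=0$; thus every $c_{ij}$ occurring in the sum has $i\geq 1$ and $j\geq 1$. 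Its denominator then divides both $i$ and $j$, hence is at most $\min(i,j)$, and $\min(i,j)\leq N$ because $i+j\leq 2N$. This is exactly the claimed bound. The only step that is not pure bookkeeping is the observation that both first partials of $L_N(x+y+xy)$ lie in $\Z[x,y]$; that is the crux, and everything else follows from it and the truncation argument.
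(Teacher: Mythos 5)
Your proof is correct. Note that the paper itself gives no argument here: its ``proof'' is a one-line citation to Gallagher's Lemma 1, so any self-contained derivation is already a departure. The first half of your argument (the formal identity $\log\bigl((1+x)(1+y)\bigr)=\log(1+x)+\log(1+y)$ plus the observation that the truncation error only involves monomials of total degree in $(N,2N]$) is the expected route and is surely what underlies Gallagher's statement. The genuinely different, and rather elegant, ingredient is the derivative computation: from $(1+t)L_N'(t)=1-(-t)^N$ you get
\[
\frac{\partial}{\partial x}L_N(x+y+xy)=\sum_{k=1}^N\binom{N}{k}(-1)^{k-1}(1+x)^{k-1}(1+y)^k\in\Z[x,y],
\]
and hence (after killing the pure-power terms by setting $y=0$ resp.\ $x=0$) that the reduced denominator of $c_{ij}$ divides $\gcd(i,j)\leq\min(i,j)\leq N$. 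This is strictly stronger than the stated bound, and it buys something real: the naive approach of expanding the surviving terms $\pm(x+y+xy)^m/m$, $m\leq N$, only shows that each $c_{ij}$ is a sum of rationals whose individual denominators divide some $m\leq N$, so without extra work one only gets control of $v_p(c_{ij})$ by $-\lfloor\log_p N\rfloor$ rather than ``denominator $\leq N$'' literally. Your bound of course also gives $v_p(c_{ij})\geq-\lfloor\log_p N\rfloor$, which is all that the application in Appendix A (and the paper only needs $N=2$ there) actually uses. All the individual steps check out: the cancellation of $(1+x)$ is legitimate since every term of $1-(1-u)^N$ carries a factor $(1+x)$ and $\Q[x,y]$ is a domain, and the coefficient identity $i\,c=[x^{i-1}y^j]\,\partial_x L_N(x+y+xy)$ is standard.
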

\begin{proof} This is \cite[Lemma 1]{gallagher-postnikov}.\end{proof} 
\emph{Remarks.} For the purposes of this paper, we only need the case $N = 2$, which is an easy direct check, but this sits most naturally within the more general theory. Of course, $L_N$ is a truncated version of the series for $\log(1 + x)$, and then Lemma \ref{gal-lem} becomes a truncated version of $\log \big((1 + x)(1 + y)\big) = \log (1 + x) + \log(1 + y)$.

Denote by $\Z_{(p)} \subseteq \Q$ the ring of rationals whose denominator is coprime to $p$ (which, abstractly, is the localisation of $\Z$ at $p$). Since $v_p(i) \leq i$ for all $i, p$, one sees that 
\begin{equation}\label{lnpx} L_N(px) = \sum_{i=1}^N \frac{(-px)^i}{i} \in \Z_{(p)}[x].\end{equation}
Let $\pi_n : \Z_{(p)} \rightarrow \Z/p^n \Z$ be the natural ring homomorphism. Let $\eta : \Z/p^n \Z \rightarrow \C^{\times}$ be the additive character induced by the character $e(\frac{x}{p^n})$ on $\Z$, and let $\psi_n := \eta \circ \pi_n$, thus $\psi_n$ is an additive character on $\Z_{(p)}$. Note that $\ker \psi_n = \{ x \in \Z_{(p)} : v_p(x) \geq n\}$.

\begin{lemma}
Let $p$ be a prime. Let $m,n$ be positive integers with $m \leq n$, and suppose that $p^m \neq 2$. Let $N$ be any integer such that
\begin{equation}\label{somewhat-annoying} m(N+1) - \lfloor \log_p N\rfloor \geq n.\end{equation}
For each $a \in \Z$, consider the map $\phi_a : 1 + p^m \Z \rightarrow \C^{\times}$ defined by
\begin{equation}\label{phi-psi}  \phi_a(1 + p^m t) := \psi_n(aL_N(p^m t)).\end{equation} Then 
\begin{equation}\label{phi-hom} \phi_a((1 + p^m t)(1 + p^m t')) = \phi_a(1 + p^m t) \phi_a(1 + p^m t')\end{equation}
and
\begin{equation}\label{phi-m-n} \phi_a(1 + p^m t) = \phi_a(1 + p^m t') \quad \mbox{if} \quad t \equiv t' \md{p^{n - m}}.\end{equation}
Finally, the maps $\phi_a$ and $\phi_{a'}$ coincide if and only if $a \equiv a' \md{p^{n - m}}$.
\end{lemma}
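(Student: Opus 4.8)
The plan is to verify the three displayed identities \eqref{phi-hom}, \eqref{phi-m-n} and the final characterisation in turn, reducing everything to the additive functional equation for $L_N$ supplied by Lemma \ref{gal-lem} together with $p$-adic bookkeeping via the valuation $v_p$. First I would observe that \eqref{lnpx} guarantees $L_N(p^m t) \in \Z_{(p)}$ for $t \in \Z_{(p)}$, so that the expressions $\psi_n(a L_N(p^m t))$ make sense; this is the only point where the restriction $p^m \neq 2$ is not yet used, and I would flag that the hypothesis $p^m\neq 2$ enters only to guarantee that the map $\phi_a$ genuinely captures all characters of $1 + p^m\Z_{(p)}/(1+p^n\Z_{(p)})$ (this is the classical obstruction at $p=2$, $m=1$), but for the three stated identities the argument is uniform.

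For \eqref{phi-hom}: writing $(1+p^m t)(1+p^m t') = 1 + p^m(t + t' + p^m t t')$, apply Lemma \ref{gal-lem} with $x = p^m t$, $y = p^m t'$ to get
\[ L_N(p^m t + p^m t' + p^{2m} t t') = L_N(p^m t) + L_N(p^m t') + \sum_{N < i+j \le 2N} c_{ij} p^{m(i+j)} t^i t'^j, \]
where each $c_{ij}$ has denominator $\le N$, hence $v_p(c_{ij}) \ge -\lfloor \log_p N\rfloor$. Therefore each error term $c_{ij} p^{m(i+j)} t^i t'^j$ has $p$-valuation at least $m(i+j) - \lfloor\log_p N\rfloor \ge m(N+1) - \lfloor\log_p N\rfloor \ge n$ by \eqref{somewhat-annoying}, so it lies in $\ker\psi_n$. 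Multiplying by $a$ preserves this (as $a \in \Z \subset \Z_{(p)}$), and since $\psi_n$ is an additive character, $\psi_n(a L_N(\cdots)) = \psi_n(a L_N(p^m t))\psi_n(a L_N(p^m t'))$, which is \eqref{phi-hom}. For \eqref{phi-m-n}: if $t \equiv t' \pmod{p^{n-m}}$ then $p^m t \equiv p^m t' \pmod{p^n}$, and since $L_N$ has coefficients in $\Z_{(p)}$ after the substitution $x \mapsto p^m x$ (by \eqref{lnpx}) and $\pi_n$ is a ring homomorphism, $L_N(p^m t) \equiv L_N(p^m t') \pmod{p^n\Z_{(p)}}$; applying $\psi_n\circ(a\cdot)$ gives equality.

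For the final statement, the harder direction is that $\phi_a = \phi_{a'}$ forces $a \equiv a' \pmod{p^{n-m}}$ — the converse being immediate from \eqref{phi-m-n} applied with the roles of $a,t$ interchanged (more precisely, $\psi_n(a L_N(p^m t))$ depends on $a$ only mod $p^{n - v_p(L_N(p^m t))}$, and one checks $v_p(L_N(p^m t)) = m$ when $v_p(t) = 0$ since $L_N(p^m t) = p^m t - \tfrac12 p^{2m}t^2 + \cdots$ has leading term of valuation exactly $m$, using $p^m \neq 2$ to rule out cancellation in the $p=2$, $m=1$ quadratic term). The main obstacle is this valuation computation: I would take $t$ with $v_p(t)=0$ (e.g. $t=1$), expand $L_N(p^m) = p^m(1 - \tfrac12 p^m + \tfrac13 p^{2m} - \cdots)$, and argue the bracket is a unit in $\Z_{(p)}$ — this needs $v_p(\tfrac12 p^m) \ge 1$, i.e. $m \ge 1 + v_p(2)$, which holds precisely because $p^m \neq 2$ (when $p=2$ we need $m\ge 2$; when $p$ odd, $m\ge 1$ suffices). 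Then $\phi_a(1+p^m) = \psi_n(a p^m u)$ for a unit $u$, and $\psi_n(a p^m u) = \psi_n(a' p^m u)$ says $p^m u(a-a') \in \ker\psi_n = p^n\Z_{(p)}$, i.e. $a \equiv a' \pmod{p^{n-m}}$ since $u$ is a unit. Conversely that congruence gives $\phi_a=\phi_{a'}$ on the generator $1+p^m$, and by \eqref{phi-hom} the $\phi_a$ are homomorphisms on $1+p^m\Z$ which is topologically generated (mod $1+p^n\Z$) by $1+p^m$ — here I would cite the standard fact that $1+p^m\Z_{(p)}$ modulo $1+p^n\Z_{(p)}$ is cyclic of order $p^{n-m}$ generated by $1+p^m$ when $p^m\neq 2$ — so agreement on the generator yields $\phi_a = \phi_{a'}$ everywhere.
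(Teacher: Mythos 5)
Your overall route is the same as the paper's: \eqref{phi-hom} is obtained from Lemma \ref{gal-lem} together with the valuation bound $v_p(c_{ij}(p^mt)^i(p^mt')^j)\geq m(N+1)-\lfloor\log_p N\rfloor\geq n$, and the final (injectivity) statement is obtained by evaluating at $1+p^m$ and showing $v_p(L_N(p^m))=m$; your ``$p^m$ times a unit'' phrasing is equivalent to the paper's termwise check, and you correctly locate where $p^m\neq 2$ is needed there (the $i=2$ term when $p=2$). Those two parts are fine, and your converse direction, though routed more elaborately through cyclicity of $(1+p^m\Z)/(1+p^n\Z)$, also works granted \eqref{phi-hom} and \eqref{phi-m-n}.

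However, there is a genuine gap in your justification of \eqref{phi-m-n}. You argue: $p^mt\equiv p^mt'\ (\mathrm{mod}\ p^n)$, the polynomial $Q(x):=L_N(p^mx)$ has coefficients in $\Z_{(p)}$ by \eqref{lnpx}, and $\pi_n$ is a ring homomorphism, hence $L_N(p^mt)\equiv L_N(p^mt')\ (\mathrm{mod}\ p^n\Z_{(p)})$. This does not follow. From $Q\in\Z_{(p)}[x]$ and $t\equiv t'\ (\mathrm{mod}\ p^{n-m})$ you only get $Q(t)\equiv Q(t')\ (\mathrm{mod}\ p^{n-m}\Z_{(p)})$, which is short of what is needed by a factor $p^m$; and you cannot instead feed the congruence $p^mt\equiv p^mt'\ (\mathrm{mod}\ p^n)$ into $L_N$ itself, because $L_N$ does not have $\Z_{(p)}$ coefficients in general (the coefficient of $x^i$ with $p\mid i$ has negative valuation as soon as $N\geq p$). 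The missing ingredient is the stronger fact that every coefficient $(-1)^{i+1}p^{im}/i$ of $Q$ has valuation at least $m$, equivalently $im\geq m+v_p(i)$, which follows from $v_p(i)\leq i-1$ and $(i-1)(m-1)\geq 0$; combined with $v_p(t^i-(t')^i)\geq v_p(t-t')\geq n-m$ this gives $v_p(L_N(p^mt)-L_N(p^mt'))\geq n$ termwise, which is exactly the paper's computation. (The same inequality $v_p(L_N(p^mt))\geq m$ for all $t$ also gives the easy direction of the final statement directly, with no appeal to a generator.) A minor further inaccuracy: your opening claim that $p^m\neq 2$ enters only to capture all characters (i.e.\ only in Corollary \ref{cora3}) is wrong — it is needed inside this lemma, precisely for $v_p(L_N(p^m))=m$, as your own later computation shows; for instance $L_2(2)=2-\tfrac{4}{2}=0$ when $p^m=2$, $N=2$.
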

\begin{proof}
We start with \eqref{phi-hom}. Substituting in the definitions and using the homomorphism property of $\psi_n$, we must show that 
\[ \psi_n \big( a ( L_N (p^m t + p^m t' + p^{2m} tt') - L_N(p^m t) - L_N(p^m t') ) \big) = 1,\] to which end it is enough (and in fact, if we want a result valid for all $a$, necessary and sufficient) to show that 
\[ v_p (L_N (p^m t + p^m t' + p^{2m} tt') - L_N(p^m t) - L_N(p^m t')) \geq n.\]
By Lemma \ref{gal-lem} with $x = p^m t$, $y = p^m t'$, it suffices to show that $v_p(c_{ij} (p^m t)^i (p^m t')^j) \geq n$ for every pair $i, j$ featuring in that lemma. Note that $i + j \geq N + 1$ for all these pairs, and also that $v_p(c_{ij}) \geq - \lfloor \log_p N\rfloor$ since $c_{ij}$ has denominator at most $N$. The required statement therefore follows from the assumption \eqref{somewhat-annoying}.

We turn now to \eqref{phi-m-n}. Here, it is enough to show that 
\[ v_p\big(L_N(p^m t) - L_N(p^m t')\big) \geq n\] when $t \equiv t' \md{p^{n - m}}$.
Recalling the definition \eqref{LN-def} of $L_N$, it is enough to proceed termwise and show that 
\begin{equation}\label{enough-minute} v_p \bigg( \frac{(p^m t)^i}{i} - \frac{(p^m t')^i}{i} \bigg) \geq n\end{equation} for $i = 1,2,\dots$.
By factoring $t^i - (t')^i = (t - t') (t^{i - 1} + \dots)$ we have $v_p(t^i - (t')^i) \geq v_p(t - t') \geq n - m$, and so to establish \eqref{enough-minute} it is sufficient to show $im + n - m - v_p(i) \geq n$, that is to say $im \geq m + v_p(i)$. This is a true statement: it follows from $v_p(i) \leq i-1$ and $(i - 1)(m-1) \geq 0$. This completes the proof of \eqref{phi-m-n}.

Finally, we turn to the last statement. Suppose that $\phi_a$ and $\phi_{a'}$ are the same; then $\phi_{a - a'}$ is trivial. In particular, $\phi_{a - a'} (1 + p^m) = 1$, which means that $v_p((a - a') L_N(p^m) ) \geq n$. It is therefore enough to show that $v_p(L_N(p^m)) = m$. From the definition \eqref{LN-def} we have $L_N(p^m) = p^m - \sum_{i = 2}^N(-1)^i \frac{p^{im}}{i}$. Since the $p$-adic valuation of the first term is $m$, it is enough to show that all subsequent terms have strictly greater valuation, that is to say 
\begin{equation}\label{to-ver} im - v_p(i) \geq m + 1 \end{equation} for all $i \geq 2$. If $p$ is odd, we have $v_p(i) \leq i - 2$ and the inequality \eqref{to-ver} follows from $(i - 1)(m - 1) \geq 0$, which is of course true for all $i \geq 2$ and $m \geq 1$. If $p = 2$, the same applies unless $i = 2$. In that case, it may be checked directly that \eqref{to-ver} holds unless $m = 1$. 
\end{proof}

\begin{corollary}\label{cora3}
Let $p$ be a prime. Let $m,n$ be positive integers with $m \leq n$, and suppose that $p^m \neq 2$. Let $N$ be any integer such that \eqref{somewhat-annoying} is satisfied. Let $\chi$ be a Dirichlet character $\mdlem{p^n}$. Then there is some $a$ such that $\chi = \phi_a$ on $1 + p^m \Z$. 
\end{corollary}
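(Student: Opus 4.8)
The plan is to deduce Corollary \ref{cora3} from the preceding lemma by a counting argument. The previous lemma produces, for each $a \in \Z$, a homomorphism $\phi_a : 1 + p^m\Z \to \C^\times$ which by \eqref{phi-m-n} factors through the quotient $(1 + p^m\Z)/(1 + p^n\Z) \cong (1 + p^m \Z/p^n\Z)$, a cyclic group (for $p^m \neq 2$) of order $p^{n-m}$. The last statement of the lemma says $a \mapsto \phi_a$ is injective modulo $p^{n-m}$, so we obtain exactly $p^{n-m}$ distinct characters $\phi_a$ of this quotient group. But $(1 + p^m\Z)/(1+p^n\Z)$ has exactly $p^{n-m}$ characters in total, so the $\phi_a$ exhaust \emph{all} characters of $1 + p^m\Z$ that are trivial on $1 + p^n\Z$.

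Next I would observe that any Dirichlet character $\chi \pmod{p^n}$, restricted to $1 + p^m\Z$, is a character of $1 + p^m\Z$ trivial on $1 + p^n\Z$ (since $\chi$ has period $p^n$ and $\chi(1) = 1$, and $1 + p^n\Z$ maps into $1$ in $(\Z/p^n\Z)^*$). Hence $\chi|_{1 + p^m\Z}$ must coincide with $\phi_a$ for some $a$, which is exactly the claim. One small point to be careful about: one should note that $(1 + p^m\Z)/(1+p^n\Z)$ is genuinely cyclic of order $p^{n-m}$ when $p$ is odd or when $p = 2$ and $m \geq 2$ (the hypothesis $p^m \neq 2$, i.e. $(p,m) \neq (2,1)$, ensures we are not in the excluded case $p = 2$, $m = 1$ where the group would be $\Z/2 \times \Z/2^{n-2}$ — here $m \geq 2$ for $p = 2$, which is fine). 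Even in a hypothetical non-cyclic situation the counting argument still works as long as the character group has order $p^{n-m}$, which it does regardless; so the structure of the group is not essential, only its order.

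I do not expect any real obstacle here: the lemma has already done all the work (constructing the $\phi_a$, verifying they are homomorphisms, verifying the periodicity, and pinning down exactly when two of them agree), and the corollary is a one-line pigeonhole/counting deduction. The only thing to state cleanly is the identification of $|\{\phi_a\}| = p^{n-m} = |\widehat{(1+p^m\Z)/(1+p^n\Z)}|$ and then invoke that a group and its dual have the same order. I would write this up in a short paragraph:

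\begin{proof}[Proof of Corollary \ref{cora3}]
By \eqref{phi-m-n}, each $\phi_a$ descends to a character of the finite abelian group $G := (1 + p^m\Z)/(1 + p^n\Z)$, which has order $p^{n - m}$. By the final statement of the previous lemma, the assignment $a \mapsto \phi_a$ is injective on $a \md{p^{n-m}}$, so the $\phi_a$ comprise $p^{n-m}$ distinct characters of $G$. Since $G$ has exactly $|G| = p^{n-m}$ characters, every character of $G$ — equivalently, every character of $1 + p^m\Z$ which is trivial on $1 + p^n\Z$ — is of the form $\phi_a$. Now let $\chi$ be a Dirichlet character $\md{p^n}$. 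Its restriction to $1 + p^m\Z$ is a character of $1 + p^m\Z$ which is trivial on $1 + p^n\Z$ (as $\chi$ has period $p^n$ and $\chi(1) = 1$). Hence $\chi = \phi_a$ on $1 + p^m\Z$ for some $a$, as claimed.
\end{proof}
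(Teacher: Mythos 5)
Your argument is correct and is essentially the same as the paper's: both proofs observe that each $\phi_a$ descends (via \eqref{phi-hom} and \eqref{phi-m-n}) to a character of the order-$p^{n-m}$ group $(1+p^m\Z)/(1+p^n\Z)$, that the $\phi_a$ give exactly $p^{n-m}$ distinct such characters by the injectivity statement of the lemma, and hence by counting they exhaust all characters of that quotient, so the restriction of $\chi$ must be one of them. Your aside about cyclicity is unnecessary (as you yourself note, only the order of the group matters), but it does no harm.
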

\begin{proof}
$\chi$ restricts to give a character on $\Gamma_{m,n}(p) := (1 + p^m \Z )/p^n \Z$, which is a group of size $p^{n - m}$. By \eqref{phi-hom}, \eqref{phi-m-n}, every $\phi_a$ also restricts to such a character. Since the number of distinct $\phi_a$ is $p^{n - m}$, every character on $\Gamma_{m,n}(p)$ arises in this way. The result follows.
\end{proof}

Now consider \eqref{lnpx} again. Since $v_p(i) \leq i -1$ for all primes $p$ and all $i \geq 1$, we have $L_N(px) = \sum_{i = 1}^N a_i x^i$ with $v_p(a_i) \geq 1$, and so $L_N(p^m x) = \sum_{i = 1}^N a'_i x^i$ with $v_p(a'_i) \geq m$. From the definition \eqref{phi-psi} and the fact that $\psi_n$ is an additive character, we see that $\phi_a(1 + p^m t) =  \psi_n (\sum_{i = 1}^N a a'_i t^i)$. Let $b_i$, $i = 1,\dots, N$, be integers such that $\pi_n(aa'_i) = \pi_n(b_i p^m)$. Then we see that $\phi_a(1 + p^m t) = e(\frac{f_a(t)}{p^{n - m}})$, where $f_a(t) := \sum_{i = 1}^n b_i t^i$. 

Combining these observations with Corollary \ref{cora3}, which is the result we call the \emph{Postnikov character formula} in this paper.

\begin{proposition}\label{post-formula}
Let $p$ be a prime. Let $m, n$ be positive integers with $m \leq n$, and suppose that $p^m \neq 2$. Let $\chi$ be a Dirichlet character with conductor $p^n$. Then there is a polynomial $f \in \Z[x]$ such that $\chi(1 + p^m x) = e(\frac{f(x)}{p^{n - m}})$, and such that $f$ has no constant term and has degree at most $N$, where $N$ is the smallest positive integer satisfying $m (N+1) - \lfloor \log_p N \rfloor \geq n$.
\end{proposition}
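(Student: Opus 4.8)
The plan is to assemble the pieces already developed above; essentially nothing new is required. First I would check that a smallest positive integer $N$ satisfying $m(N+1) - \lfloor \log_p N\rfloor \geq n$ exists at all: the left-hand side grows linearly in $N$ while $\lfloor \log_p N\rfloor$ grows only logarithmically, so it tends to $+\infty$ and in particular exceeds $n$ for some $N$. Fix this minimal $N$; then condition \eqref{somewhat-annoying} holds for it (with the given $m,n$).

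Since $\chi$ has conductor $p^n$, it is in particular a Dirichlet character modulo $p^n$, and $1 + p^m t$ is a unit modulo $p^n$ for every $t \in \Z$. Hence Corollary \ref{cora3} applies and furnishes an integer $a$ with
\[ \chi(1 + p^m t) = \phi_a(1 + p^m t) = \psi_n\big(a L_N(p^m t)\big) \qquad \mbox{for all } t \in \Z. \]

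It then remains only to rewrite $\phi_a$ in the stated form, which is exactly the computation sketched just before the statement. Expand $L_N(p^m x) = \sum_{i=1}^{N} a'_i x^i$; as noted around \eqref{lnpx}, the inequality $v_p(i) \leq i-1$ forces $a'_i \in \Z_{(p)}$ with $v_p(a'_i) \geq m$. Thus $\pi_n(a a'_i) \in p^m\,\Z/p^n\Z$, and since the reduction $\Z \to \Z/p^n\Z$ is surjective we may pick integers $b_i$, $1 \leq i \leq N$, with $\pi_n(a a'_i) = \pi_n(b_i p^m)$. Because $\pi_n$ is a ring homomorphism and $t^i \in \Z \subseteq \Z_{(p)}$, we get $\pi_n(a a'_i t^i) = \pi_n(b_i p^m t^i)$ for all $t \in \Z$, and therefore, using that $\psi_n = \eta\circ\pi_n$ is additive with $\eta$ induced by $e(x/p^n)$,
\[ \chi(1 + p^m t) = \psi_n\Big(\sum_{i=1}^{N} a a'_i t^i\Big) = \prod_{i=1}^{N} e\Big(\frac{b_i p^m t^i}{p^n}\Big) = e\Big(\frac{f(t)}{p^{n-m}}\Big), \qquad f(x) := \sum_{i=1}^{N} b_i x^i . \]
By construction $f \in \Z[x]$, it has no constant term, and its degree is at most $N$, which is the claimed polynomial.

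I do not expect a genuine obstacle here: the substance is carried by the homomorphism and periodicity properties of $\phi_a$ and by Corollary \ref{cora3}, all of which are already proved. The one point deserving care is the bookkeeping in passing between $\Z_{(p)}$, $\Z/p^n\Z$ and the additive character $e(\,\cdot\,/p^{n-m})$ — in particular, verifying that the factor $p^m$ coming from $v_p(a'_i) \geq m$ is precisely what lets the denominator drop from $p^n$ to $p^{n-m}$, and that truncating the logarithm at degree $N$ yields the stated degree bound on $f$.
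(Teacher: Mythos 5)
Your proposal is correct and follows essentially the same route as the paper: invoke Corollary \ref{cora3} to write $\chi = \phi_a$ on $1 + p^m\Z$, expand $L_N(p^m x)$ with coefficients of $p$-adic valuation at least $m$, and choose integers $b_i$ with $\pi_n(a a'_i) = \pi_n(b_i p^m)$ so that the additive character $\psi_n$ produces $e(f(t)/p^{n-m})$ with $f(x) = \sum_{i=1}^N b_i x^i$. The only additions beyond the paper's argument are the (correct, routine) checks that the minimal $N$ exists and that $1+p^m t$ is a unit mod $p^n$.
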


Finally, we remark on the particular regime of interest in the main body of the paper, specifically in Section \ref{sec11}, which is where $m > \frac{1}{3}n$. Thus $3m \geq n + 1$, and we see that $N$ can be taken to be $2$. We must also ensure that $p^m \neq 2$, which will always be the case unless $\cond(\chi) = p^n \in \{ 2,4\}$.

\section{Various estimates for arithmetic functions}\label{appB}

In this appendix we collect various estimates, mainly for sums involving divisor functions, which are of fairly standard type. Throughout, $B \geq 0$ is some real parameter.

\begin{lemma}\label{a6-standard}
We have 
\[ \sum_{n \leq X} \tau(n)^B  \ll_B X (\log X)^{O_B(1)}.\]\end{lemma}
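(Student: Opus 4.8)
The plan is to bound the sum $\sum_{n \le X} \tau(n)^B$ by comparing it to a suitable Dirichlet series and extracting the pole structure, or more elementarily, by induction on (an integer upper bound for) $B$ together with the hyperbola/convolution identity for $\tau$. I would take the elementary route, since it keeps the implied constants transparent and avoids Perron-type machinery.

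First I would reduce to the case where $B$ is a positive integer: since $\tau(n)^B \le \tau(n)^{\lceil B \rceil}$ pointwise, it suffices to prove the bound for $B = k \in \N$. Next, the key identity is that $\tau(n)^k$ is a multiplicative function which at a prime power $p^a$ takes the value $(a+1)^k$, and hence $\tau^k = 1 * g_k$ for a multiplicative function $g_k$ with $g_k(p^a) = (a+1)^k - a^k \ge 0$ (indeed $g_k(p) = 2^k - 1$ and $g_k(p^a) \ll_k a^{k-1}$). Thus
\[ \sum_{n \le X} \tau(n)^k = \sum_{d \le X} g_k(d) \Big\lfloor \frac{X}{d} \Big\rfloor \le X \sum_{d \le X} \frac{g_k(d)}{d}. \]
It then remains to show $\sum_{d \le X} g_k(d)/d \ll_k (\log X)^{O_k(1)}$. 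Since $g_k$ is non-negative and multiplicative with $g_k(p)/p = (2^k-1)/p$, one has by Mertens-type estimates (Rankin's trick, or the standard bound for sums of non-negative multiplicative functions)
\[ \sum_{d \le X} \frac{g_k(d)}{d} \le \prod_{p \le X} \Big( 1 + \frac{g_k(p)}{p} + \frac{g_k(p^2)}{p^2} + \cdots \Big) \ll \prod_{p \le X} \Big( 1 + \frac{2^k - 1}{p} \Big)^{1 + o(1)} \ll (\log X)^{2^k - 1}, \]
the convergence of the tail $\sum_{a \ge 2} g_k(p^a)/p^a$ being uniform in $p$ because $g_k(p^a) \ll_k a^{k-1}$. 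Combining gives $\sum_{n \le X} \tau(n)^k \ll_k X (\log X)^{2^k - 1}$, which is of the claimed shape with $O_B(1) = 2^{\lceil B \rceil} - 1$.

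I do not anticipate a genuine obstacle here — this is a textbook estimate (it appears, e.g., as a standard consequence of \cite[Part III, Theorem 3.2]{tenenbaum} or the method of \cite[Theorem 1.1]{hardy-wright}-style manipulations). The only mildly delicate point is controlling the local factors at prime powers $p^a$ with $a \ge 2$ uniformly in $p$, so that the Euler product telescopes cleanly to a power of $\log X$; this is handled by the bound $(a+1)^k - a^k \le k(a+1)^{k-1}$ and the convergence of $\sum_a (a+1)^{k-1} p^{-a}$. An alternative, equally acceptable, is simply to cite Shiu's theorem or the standard reference directly, but the self-contained argument above is short enough to include.
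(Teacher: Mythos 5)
Your proposal is correct. The paper offers no argument for this lemma at all (its proof is the single word ``standard''), so there is nothing to compare against; your elementary route via $\tau^k = 1 * g_k$ with $g_k(p^a) = (a+1)^k - a^k \ge 0$, the bound $\sum_{n\le X}\tau(n)^k \le X\sum_{d\le X}g_k(d)/d$, and the Euler-product/Mertens estimate is a complete and valid proof, and it even recovers the sharp exponent $2^k-1$. The only cosmetic point is the step where you write the local factor as $(1+(2^k-1)/p)^{1+o(1)}$: it is cleaner to split it as $(1+(2^k-1)/p)\bigl(1+O_k(p^{-2})\bigr)$, so that the second factors form a convergent product and Mertens applies directly to the first; you have already justified the needed uniform tail bound $\sum_{a\ge 2} g_k(p^a)p^{-a} \ll_k p^{-2}$, so this is a rephrasing rather than a gap.
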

\begin{proof} This is standard; see for instance \cite[equation (1.80)]{ik}.\end{proof}

\begin{lemma}\label{to-rankin}
Let $h \in \N$. We have
\[ \sum_{\substack{n \leq X \\ \mu^2(n) = 1}} \frac{\tau(n)^B (n,h)}{n} \ll_B \tau(h)^{B+1} \log ^{2^B} X\]
\end{lemma}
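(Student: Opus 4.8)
The key is to factor the coprimality--to--$h$ structure and split off the part of $n$ that shares factors with $h$. Write $n = de$ where $d := (n, h^\infty)$ is the largest divisor of $n$ composed only of primes dividing $h$, and $e := n/d$ satisfies $(e, h) = 1$. Since $n$ is squarefree, $d$ is a squarefree divisor of $h$ (more precisely of $\operatorname{rad}(h)$) and $d$ and $e$ are coprime, so $\tau(n)^B = \tau(d)^B \tau(e)^B$ and $(n, h) = (d, h) = d$ (as $d \mid h$). Therefore
\[
\sum_{\substack{n \leq X \\ \mu^2(n) = 1}} \frac{\tau(n)^B (n,h)}{n}
\;\leq\; \sum_{\substack{d \mid h \\ \mu^2(d) = 1}} \frac{\tau(d)^B d}{d} \sum_{\substack{e \leq X/d \\ \mu^2(e)=1}} \frac{\tau(e)^B}{e}
\;\leq\; \Big( \sum_{d \mid h} \tau(d)^B \Big) \Big( \sum_{e \leq X} \frac{\tau(e)^B}{e} \Big).
\]

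First I would bound the inner (divisor) sum: by partial summation from Lemma \ref{a6-standard} (or directly by the standard estimate $\sum_{e \le X} \tau(e)^B/e \ll_B (\log X)^{2^B}$, which follows since $\sum_p \tau(p)^B/p = 2^B \sum_p 1/p$ contributes a $(\log X)^{2^B}$ main term via Mertens/Rankin), one gets $\sum_{e \leq X} \tau(e)^B/e \ll_B \log^{2^B} X$. The precise exponent $2^B$ here is exactly what the lemma claims, so it is worth getting this clean: one can write the Dirichlet series $\sum_e \tau(e)^B e^{-s} = \prod_p (1 + 2^B p^{-s} + \cdots)$, compare with $\zeta(s)^{2^B}$, and conclude by a Rankin-type argument or by Shiu's theorem; alternatively just quote the form $\sum_{e \le X} \tau(e)^B/e \asymp_B (\log X)^{2^B}$ as standard.

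Next I would bound the sum over divisors of $h$: $\sum_{d \mid h} \tau(d)^B \leq \tau(h) \cdot \max_{d \mid h} \tau(d)^B \leq \tau(h) \cdot \tau(h)^B = \tau(h)^{B+1}$, using that any divisor of $h$ has at most $\tau(h)$ divisors. Combining the two bounds gives $\sum_{n \le X,\, \mu^2(n)=1} \tau(n)^B (n,h)/n \ll_B \tau(h)^{B+1} \log^{2^B} X$, as required.

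\textbf{Main obstacle.} There is no serious obstacle; the only point requiring care is pinning down the exponent $2^B$ in the power of $\log X$ rather than some larger $O_B(1)$ power — this forces one to use a genuine Rankin/Selberg--Delange-type estimate for $\sum_{e \le X} \tau(e)^B/e$ rather than the crude Lemma \ref{a6-standard} bound. Everything else (the squarefree factorization $n = de$ with $(e,h)=1$, the multiplicativity of $\tau$ on coprime arguments, and the divisor--count bound $\sum_{d \mid h}\tau(d)^B \le \tau(h)^{B+1}$) is routine. The case $h = 0$ is excluded in spirit (one reads $(n,0) = n$, which would make the sum $\sum \tau(n)^B$ and diverge after dividing by $n$); in the paper $h$ is always a fixed nonzero integer, so I would simply assume $h \neq 0$, or equivalently interpret $\tau(0)$ and $(n,0)$ as not arising.
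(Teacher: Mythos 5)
Your argument is correct, and it takes a somewhat different route from the paper. The paper applies Rankin's trick directly to the whole sum, replacing $n^{-1}$ by $n^{-1-1/\log X}$, factoring the resulting Dirichlet series as an Euler product, bounding the factors at primes $p \mid h$ by $2^{B+1}$ (giving $\tau(h)^{B+1}$) and comparing the remaining product with $\zeta(1+1/\log X)^{2^B} \sim (\log X)^{2^B}$ via the elementary inequality $(1+2^Bx)(1-x)^{2^B} \le 1+Cx^2$. You instead split each squarefree $n$ as $n=de$ with $d \mid \operatorname{rad}(h)$ and $(e,h)=1$, so that $(n,h)=d$ cancels against the denominator, and then bound $\sum_{d\mid h}\tau(d)^B \le \tau(h)^{B+1}$ and the $e$-sum separately; morally this is the same separation of the primes dividing $h$, carried out on the level of integers rather than Euler factors. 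One remark that removes your only hedge: you do not need Selberg--Delange, Shiu, or even Rankin for the inner sum, because the squarefree condition is inherited by $e$, so
\[ \sum_{\substack{e \le X \\ \mu^2(e)=1}} \frac{\tau(e)^B}{e} \le \prod_{p \le X}\Bigl(1+\frac{2^B}{p}\Bigr) \le \exp\Bigl(2^B\sum_{p\le X}\frac{1}{p}\Bigr) \ll_B (\log X)^{2^B} \]
directly by Mertens' theorem, which gives exactly the exponent $2^B$ and makes your version arguably more elementary than the paper's (no $\zeta(1+1/\log X)$ comparison needed). Your caveat about $h\neq 0$ is consistent with the paper, where the lemma is only invoked for nonzero (indeed positive) $h$.
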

\begin{proof}
We use ``Rankin's trick'', noting that
\begin{align*}  \sum_{\substack{n \leq X \\ \mu^2(n) = 1}} \frac{\tau(n)^B (n,h)}{n} & \ll \sum_n \mu^2(n)\tau(n)^B (n,h) n^{-1 - 1/\log X} \\ & = \prod_{p | h} \big(1 +2^B p^{-1/\log X}\big) \prod_{p \nmid h} \big(1 + 2^B p^{-1 - 1/\log X}\big) \\ & \ll \tau(h)^{B+1} \prod_p \big(1 + 2^B p^{-1 - 1/\log X}\big).  \end{align*}
Here, for the first term we used the crude inequality $1 + 2^B p^{-1/\log X} < 2^{B+1}$. By Taylor expansion we have, for some $C = C(B)$, $(1 + 2^B x) (1 - x)^{2^B} \leq 1 + Cx^2$ for all $x \leq \frac{1}{2}$. Applying this to the product over primes above, and noting that 
\[ \prod_{p} (1 - p^{-1 -1/\log X})^{-1}  = \zeta(1+ \frac{1}{\log X}) = (1 + o(1)) \log X\] and $\prod_p (1 + Cp^{-2}) = O_B(1)$, the result follows.
\end{proof}

\begin{lemma}\label{to-rankin-2}
Let $h \in \N$. We have
\[ \sum_{\substack{n \geq X \\ \mu^2(n) = 1}} \frac{(n,h) \tau(n)^B \log^B n}{n^2} \ll_{\eps, B}  h^{\eps} X^{\eps - 1}.\]
\end{lemma}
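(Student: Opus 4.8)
The plan is to reduce this to the previous lemma, Lemma~\ref{to-rankin}, by a dyadic decomposition together with ``Rankin's trick''. The point is that Lemma~\ref{to-rankin} controls sums of $\frac{(n,h)\tau(n)^B}{n}$ over $n \leq X$, but here we have an essentially identical summand divided by an extra factor of $n$ and summed over the complementary range $n \geq X$; the extra $n^{-1}$ supplies the needed decay, and the extra $\log^B n$ is harmless.

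First I would split the range $n \geq X$ into dyadic blocks $2^k X \leq n < 2^{k+1} X$ for $k = 0, 1, 2, \dots$. On such a block, $\log^B n \ll_B (\log(2^{k+1}X))^B \ll_B (k+1)^B \log^B(2X)$ and $n^{-2} \leq (2^k X)^{-1} \cdot (2^k X)^{-1} \leq (2^k X)^{-1} n^{-1}$, so the contribution of the $k$-th block is
\[ \ll_B (k+1)^B \frac{\log^B(2X)}{2^k X} \sum_{\substack{n < 2^{k+1}X \\ \mu^2(n) = 1}} \frac{(n,h)\tau(n)^B}{n}. \]
Applying Lemma~\ref{to-rankin} with $X$ replaced by $2^{k+1}X$, the inner sum is $\ll_B \tau(h)^{B+1} (\log(2^{k+1}X))^{2^B} \ll_B \tau(h)^{B+1} (k+1)^{2^B} \log^{2^B}(2X)$. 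Hence the $k$-th block contributes
\[ \ll_B \tau(h)^{B+1} (k+1)^{B + 2^B} \frac{\log^{B + 2^B}(2X)}{2^k X}, \]
and summing the convergent series $\sum_k (k+1)^{B+2^B} 2^{-k} = O_B(1)$ gives a total bound of $\ll_B \tau(h)^{B+1} X^{-1} \log^{B + 2^B}(2X)$.

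Finally I would absorb the logarithmic factors and $\tau(h)^{B+1}$ into $h^\eps X^\eps$: by the divisor bound $\tau(h)^{B+1} \ll_{\eps, B} h^{\eps/2}$, and $\log^{B+2^B}(2X) \ll_{\eps, B} X^{\eps}$ for $X$ large (and the bound is trivially adjusted for bounded $X$ by enlarging the implied constant), so the whole sum is $\ll_{\eps, B} h^{\eps} X^{\eps - 1}$ as claimed. There is no real obstacle here; the only mild subtlety is making sure the $\log$ powers and the $(k+1)$ factors are tracked correctly through the dyadic sum, but the geometric decay $2^{-k}$ dominates every polynomial-in-$k$ factor, so convergence is never in doubt.
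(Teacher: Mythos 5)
Your proof is correct and follows essentially the same route as the paper: a dyadic decomposition of the range $n \geq X$, an application of Lemma \ref{to-rankin} on each block, and absorption of the $\tau(h)^{B+1}$ and logarithmic factors into $h^{\eps}X^{\eps}$ via the divisor bound. The paper states this more tersely, but the argument is the same, and your bookkeeping of the $(k+1)$-powers and log-powers is accurate.
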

\begin{proof}
By Lemma \ref{to-rankin} we have
\[ \sum_{\substack{X \leq n < 2X \\ \mu^2(n) = 1}} \frac{(n,h) \tau(n)^B \log^B n}{n^2} \ll_{B,\eps} \tau(h)^{B+1} X^{\eps - 1}.\] Summing dyadically over $X$ and applying the divisor bound to $\tau(h)$ gives the result.
\end{proof}

\begin{lemma}\label{power-res-div}
Write $\omega_R(n)$ for the number of distinct primes $p \leq R$ for which $p | n$. Then 
\[ X^{-1}\sum_{n \leq X} 4^{\omega_R(n)} \ll \log^{3} R.\]
\end{lemma}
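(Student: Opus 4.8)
The statement is the bound $X^{-1}\sum_{n\le X}4^{\omega_R(n)}\ll\log^3 R$, where $\omega_R(n)$ counts distinct primes $p\le R$ dividing $n$. The natural approach is to bound $4^{\omega_R(n)}$ above by a multiplicative function whose Dirichlet series is easy to handle, and then to apply a standard mean-value estimate. Write $g(n):=4^{\omega_R(n)}$; this is multiplicative, with $g(p^a)=4$ if $p\le R$ and $g(p^a)=1$ if $p>R$ (for $a\ge 1$). We have $g(n)\le h(n)$ where $h$ is the completely multiplicative function with $h(p)=4$ for $p\le R$ and $h(p)=1$ for $p>R$; alternatively, and more cleanly, $g(n)=\sum_{d\mid n,\ d\mid P_R^{\infty},\ d\text{ squarefree}}3^{\omega(d)}$, where $P_R=\prod_{p\le R}p$, since at each prime $p\le R$ dividing $n$ we pick up a factor $1+3=4$ and at primes $p>R$ a factor $1$.

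First I would use the divisor-type identity $4^{\omega_R(n)}=\sum_{d\mid n}3^{\omega(d)}\mu^2(d)1_{p\mid d\Rightarrow p\le R}$. Summing over $n\le X$ and interchanging the order of summation gives
\[
\sum_{n\le X}4^{\omega_R(n)}=\sum_{\substack{d\le X,\ \mu^2(d)=1\\ p\mid d\Rightarrow p\le R}}3^{\omega(d)}\Big\lfloor\frac{X}{d}\Big\rfloor\le X\sum_{\substack{d\ge 1,\ \mu^2(d)=1\\ p\mid d\Rightarrow p\le R}}\frac{3^{\omega(d)}}{d}.
\]
The inner sum factors as an Euler product over $p\le R$:
\[
\sum_{\substack{d,\ \mu^2(d)=1\\ p\mid d\Rightarrow p\le R}}\frac{3^{\omega(d)}}{d}=\prod_{p\le R}\Big(1+\frac{3}{p}\Big).
\]
So it remains to show $\prod_{p\le R}(1+\tfrac3p)\ll\log^3 R$. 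Taking logarithms, $\sum_{p\le R}\log(1+\tfrac3p)\le 3\sum_{p\le R}\tfrac1p=3\log\log R+O(1)$ by Mertens' theorem, hence the product is $\ll(\log R)^3$, as required.

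The argument is entirely routine; there is no real obstacle. The only point needing a little care is the Euler product step — one must note that $3^{\omega(d)}$ restricted to squarefree $d$ is multiplicative with local factor $1+3/p$, and that the product converges precisely to the stated value because $d$ is restricted to be supported on primes $\le R$ (so the product is finite). I would present it in roughly three lines: the divisor identity, the interchange of summation bounding the sum by $X\prod_{p\le R}(1+3/p)$, and Mertens' bound $\prod_{p\le R}(1+3/p)\ll(\log R)^3$. One could equally cite Lemma~\ref{a6-standard}-style reasoning, but the direct computation above is shorter and self-contained.
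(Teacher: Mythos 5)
Your proof is correct. It follows the same general template as the paper's (a divisor-sum representation of $4^{\omega_R(n)}$, interchange of summation, and an Euler product over $p\le R$), but the decomposition differs: the paper writes $2^{\omega_R(n)}=\sum_{d\mid n,\,d\mid P_R}1$, squares, and is then led to bound $\sum_{d,e\mid P_R}1/[d,e]$, which it handles by writing $d=ud'$, $e=ue'$ with $u=(d,e)$, arriving at $X\bigl(\sum_{d\mid P_R}1/d\bigr)^3=X\prod_{p\le R}(1+\tfrac1p)^3\ll X\log^3 R$; you instead use the exact identity $4^{\omega_R(n)}=\sum_{d\mid n,\ \mu^2(d)=1,\ p\mid d\Rightarrow p\le R}3^{\omega(d)}$, which after the interchange gives $X\prod_{p\le R}(1+\tfrac3p)$ directly, and you finish with Mertens via $\log(1+\tfrac3p)\le\tfrac3p$. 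Your route avoids the squaring and the lcm/gcd manipulation, so it is slightly shorter and arguably cleaner; the paper's version has the minor advantage of only needing the harmonic-type sum $\sum_{d\mid P_R}1/d$ and the elementary bound $\prod_{p\le R}(1+\tfrac1p)\ll\log R$ rather than an appeal to Mertens' theorem, but both yield the same $\log^3 R$ and either argument would serve.
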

\begin{proof}
We have
\[ 2^{\omega_R(n)} = \sum_{\substack{d | n \\ d | P_R}} 1,\] where $P_R$ is the product of the primes less than or equal to $R$.  Squaring, summing over $X$ and interchanging the order of summation gives
\[ \sum_{n \leq X} 4^{\omega_R(n)} \leq \sum_{n \leq X} \sum_{\substack{d, e | n \\ d, e | P_R}} 1 = \sum_{d, e | P_R} \sum_{\substack{n \leq X \\ d, e | n}} 1 \leq X \sum_{d, e | P_R} \frac{1}{[d,e]} \leq X\!\!\!\sum_{d', e', u | P_R} \frac{1}{d' e' u},\] where in the last step we wrote $d = ud'$, $e = ue'$ with $u := (d,e)$.
The expression on the right is 
\[ X \big(\sum_{d | P_R} \frac{1}{d}\big)^3 = X \prod_{p \leq R} \big(1 + \frac{1}{p}\big)^3 \ll X \log^3 R.\]
This completes the proof.
\end{proof}

\begin{lemma}\label{enhanced-mobius}
We have
\[ 1_{(m,n) = \Delta} = \sum_{d : \Delta | d | n} \mu\big(\frac{d}{\Delta}\big) 1_{m \equiv 0 \mdsublem{d}}.\]
\end{lemma}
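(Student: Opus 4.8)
\textbf{Proof proposal for Lemma \ref{enhanced-mobius}.}

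The statement to be proved is the identity $1_{(m,n) = \Delta} = \sum_{d : \Delta | d | n} \mu(d/\Delta) 1_{m \equiv 0 \,(\mathrm{mod}\, d)}$. The plan is to reduce this to the standard Möbius identity $\sum_{e | k} \mu(e) = 1_{k = 1}$ by a change of variables. First I would observe that the left-hand side is nonzero only if $\Delta \mid n$, and in that case $1_{(m,n) = \Delta}$ can be rewritten: $(m,n) = \Delta$ holds precisely when $\Delta \mid m$ and $(m/\Delta, n/\Delta) = 1$. Meanwhile, on the right-hand side, every $d$ in the sum satisfies $\Delta \mid d$, so writing $d = \Delta e$ with $e \mid n/\Delta$, the condition $m \equiv 0 \,(\mathrm{mod}\, d)$ becomes $\Delta e \mid m$; this already forces $\Delta \mid m$, so if $\Delta \nmid m$ both sides vanish and there is nothing to prove.

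Assuming then that $\Delta \mid n$ and $\Delta \mid m$, set $m' := m/\Delta$ and $n' := n/\Delta$. The right-hand side becomes $\sum_{e \mid n'} \mu(e) 1_{e \mid m'} = \sum_{e \mid (m', n')} \mu(e)$, which by the fundamental property of the Möbius function equals $1_{(m',n') = 1}$. By the reduction in the previous paragraph, this is exactly $1_{(m,n) = \Delta}$, completing the proof.

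There is essentially no obstacle here: the only thing to be slightly careful about is handling the two degenerate cases ($\Delta \nmid n$ and $\Delta \nmid m$) up front so that the change of variables $d = \Delta e$ is legitimate, after which the identity is immediate from the classical Möbius relation. I would write this up in a few lines.
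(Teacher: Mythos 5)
Your proof is correct and follows essentially the same route as the paper: dispose of the degenerate cases where $\Delta \nmid m$ or $\Delta \nmid n$ (equivalently, where the sum on the right is empty), then substitute $d = \Delta e$ and invoke the classical identity $\sum_{e \mid (m',n')} \mu(e) = 1_{(m',n')=1}$. Nothing further is needed.
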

\begin{proof}
Suppose first that the sum on the right is empty, that is to say there is no $d$ with $\Delta \mid d \mid n$ and $d \mid m$. Then we cannot have $(m,n) = \Delta$, since in that case $\Delta$ itself is an example of such a $d$. This means that both the LHS and the RHS are zero.

Now suppose that the sum on the right is non-empty, that is to say there is some $d$ with $\Delta \mid d \mid n$ and $d \mid m$. In particular, $\Delta \mid (m,n)$. Write $m = \Delta m'$, $n = \Delta n'$, and substitute $d = \Delta d'$ in the sum. Then the RHS is
\[ \sum_{d' | n'} \mu(d') 1_{d' | m'} = \sum_{d' | (m', n')} \mu(d') = 1_{(m', n') = 1} = 1_{(m,n) = \Delta},\] by the usual version of the M\"obius inversion formula. 
\end{proof}

\section{Dirichlet characters}\label{char-app}

We need the following result about decomposing Dirichlet characters as products of local characters, which is well-known but not always covered in analytic number theory texts. 

\begin{lemma}
Suppose that $\chi$ is a Dirichlet character to modulus $q = p_1^{e_1} \cdots p_m^{e_m}$.  Then there are unique Dirichlet characters $\chi_{p_i}$ to moduli $p_i^{e_i}$ such that $\chi = \prod_{i = 1}^m \chi_{p_i}$. Moreover $\chi$ is primitive if and only if all of the $\chi_{p_i}$ are, that is to say if and only if none of the $\chi_{p_i}$ is the principal character $1_{(n, p_i) = 1}$.
\end{lemma}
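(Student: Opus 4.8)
The plan is to use the Chinese Remainder Theorem at the level of the unit groups. Since $q = p_1^{e_1}\cdots p_m^{e_m}$ with the $p_i$ distinct, the natural ring isomorphism $\Z/q\Z \cong \prod_i \Z/p_i^{e_i}\Z$ restricts to a group isomorphism $(\Z/q\Z)^* \cong \prod_i (\Z/p_i^{e_i}\Z)^*$. Dually, any homomorphism $\chi : (\Z/q\Z)^* \to \C^\times$ factors uniquely as a product $\prod_i \chi_{p_i}$, where $\chi_{p_i}$ is the restriction of $\chi$ to the factor $(\Z/p_i^{e_i}\Z)^*$ (embedded in $(\Z/q\Z)^*$ as the subgroup of elements congruent to $1$ modulo every $p_j^{e_j}$ with $j \neq i$). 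Concretely, for $n$ coprime to $q$ one defines $\chi_{p_i}(n)$ by choosing, via CRT, an integer $n_i$ congruent to $n$ mod $p_i^{e_i}$ and to $1$ mod the complementary factor, and setting $\chi_{p_i}(n) := \chi(n_i)$; multiplicativity of $\chi$ together with $n \equiv \prod_i n_i \md q$ gives $\chi(n) = \prod_i \chi_{p_i}(n)$. Uniqueness follows because the $\chi_{p_i}$ are forced to be the restrictions just described: evaluating a putative factorization on an element supported at a single prime power pins down each factor.

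For the primitivity statement, I would argue via conductors. Recall that a character $\psi$ to modulus $r$ is imprimitive precisely when it is periodic modulo some proper divisor of $r$, equivalently when its conductor is a proper divisor of $r$. The conductor of $\chi$ is multiplicative in the prime-power decomposition: $\cond(\chi) = \prod_i \cond(\chi_{p_i})$, since a period of $\chi$ modulo $\prod_i f_i$ with $f_i \mid p_i^{e_i}$ exists iff each $\chi_{p_i}$ has period $f_i$ (again by CRT, as the congruence conditions at distinct primes are independent). Therefore $\cond(\chi) = q$ if and only if $\cond(\chi_{p_i}) = p_i^{e_i}$ for every $i$, which is exactly the assertion that all the $\chi_{p_i}$ are primitive; and a character to modulus $p_i^{e_i}$ fails to be primitive iff it already has period a proper divisor, the extreme case being that it equals the principal character $1_{(n,p_i)=1}$ only when $e_i$ can be lowered all the way — so more accurately, $\chi_{p_i}$ is imprimitive iff $\cond(\chi_{p_i}) < p_i^{e_i}$. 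I would phrase the final clause carefully: $\chi$ is primitive iff no $\chi_{p_i}$ is induced by a character of strictly smaller prime-power modulus.

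The only genuinely delicate point is bookkeeping around what ``primitive'' means for the local factors versus the assertion in the lemma statement that none of the $\chi_{p_i}$ is the \emph{principal} character; this is literally correct only when every $e_i = 1$, so in writing up I would state the equivalence as ``$\chi$ primitive $\iff$ each $\chi_{p_i}$ primitive (as a character mod $p_i^{e_i}$)'' and note that in the squarefree case this reduces to the stated condition. Everything else is a routine CRT argument, so I expect no real obstacle beyond making the conductor-multiplicativity claim precise. This material is standard and can be quoted from, e.g., \cite[Section 3.1]{ik} if a full argument is judged unnecessary.
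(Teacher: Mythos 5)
Your proposal is correct, and it reaches the same factorisation via the Chinese Remainder Theorem, but the mechanism differs from the paper's in two places. For existence and uniqueness, you construct each local factor explicitly by evaluating $\chi$ at CRT idempotent-type lifts ($n_i \equiv n \md{p_i^{e_i}}$, $n_i \equiv 1$ modulo the complementary factor), which gives uniqueness by direct evaluation; the paper instead shows the induced map $\pi_*$ on character groups is injective and concludes it is an isomorphism by comparing cardinalities. For primitivity, you prove multiplicativity of the conductor, $\cond(\chi) = \prod_i \cond(\chi_{p_i})$, by checking that periodicity modulo $d \mid q$ is equivalent to periodicity of each local factor modulo $(d, p_i^{e_i})$, and read off the equivalence in both directions; the paper proves only one containment directly (an imprimitive local factor forces $\chi$ to be induced from a proper divisor of $q$) and gets the reverse by counting, using that $\phi^* = \mu \star \phi$ is multiplicative. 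Your route is more self-contained and arguably more transparent; the paper's is shorter given the quotable formula for $\phi^*$. You are also right to flag the final clause: for a prime-power modulus $p_i^{e_i}$ with $e_i \geq 2$, ``not principal'' is strictly weaker than ``primitive'', so the lemma's ``that is to say'' (and the parenthetical ``i.e.\ it is principal'' in the paper's own proof) is loose; the statement actually used elsewhere in the paper is exactly the one you prove, namely $\chi$ primitive if and only if every $\chi_{p_i}$ is primitive, so your more careful phrasing loses nothing.
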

\begin{proof}
Consider the isomorphism $\pi : (\Z/q\Z)^{\times} \rightarrow \prod_{i = 1}^m (\Z/p_i^{e_i} \Z)^{\times}$ given by the Chinese remainder theorem. This induces a natural homomorphism $\pi_* : \prod_{i = 1}^m \widehat{(\Z/p_i^{e_i} \Z)^{\times} } \rightarrow \widehat{(\Z/q\Z)^{\times}}$ defined by 
\[ ( \pi_*(\chi_1,\dots, \chi_m))(x) := \chi_1((\pi(x))_1) \cdots \chi_m((\pi(x)_m),\] where $(\pi(x))_i$ denotes the $i$th coordinate of $\pi(x)$ or, more colloquially, $(\pi(x))_i = x \md{p_i^{e_i}}$. It is easy to see that $\pi_*$ is injective. Therefore, since the cardinality of the domain and range of $\pi_*$ are the same, $\pi_*$ is an isomorphism. Lifting characters on $(\Z/q\Z)^{\times}$ and on the $(\Z/p_i^{e_i}\Z)^{\times}$ to Dirichlet characters on $\Z$, the first statement of the lemma follows. 

Turning to the statement about primitivity, denote by $\widehat{(\Z/r\Z)^{\times}_{\prim}}$ the set of primitive characters mod $r$. Now observe that if for some $j$ we have $\chi_j \notin \widehat{(\Z/p_j^{e_j}\Z)^{\times}_{\prim}}$ (i.e. it is principal) then $\pi_*(\chi_1,\dots, \chi_m) \notin \widehat{(\Z/q\Z)^{\times}_{\prim}}$, since the latter is induced from a character modulo $\prod_{i \neq j} p_i^{e_i}$. It follows that $\pi_* \big( \prod_{i = 1}^m  \widehat{(\Z/p_j^{e_j}\Z)^{\times}_{\prim}} \big) \supseteq  \widehat{(\Z/q\Z)^{\times}_{\prim}}$.
To show that the containment is in fact an equality, it is enough to show that $\phi^*(r) := |\widehat{(\Z/r\Z)^{\times}_{\prim}}|$ is a multiplicative function of $r$. This follows from the fact that $\phi = 1 \star \phi^*$ and hence $\phi^* = \mu \star \phi$, as explained just after \cite[equation (3.7)]{ik} (where a formula for $\phi^*(r)$ is given).
\end{proof}

\section{An exponential sum estimate}

The following lemma is of a well-known type and (apart from constants) is special case of a classical and much more general result of Hua. For references and a detailed bibliography of results of this type, see \cite{cochrane-zheng}.

\begin{lemma}\label{quadratic-bd}
Suppose that $p$ is a prime and that $\min_{i = 1, 2} v_p(a_i) = n - r$, $r \geq 1$. Then if $p$ is odd we have 
\[\bigg| \sum_{x \in \Z/p^n \Z} e\big(\frac{a_1 x + a_2 x^2}{p^n}\big)\bigg| \leq p^{n - r/2}.\]
A similar bound holds when $p = 2$, but with an extra factor of $2^{1/2}$.
\end{lemma}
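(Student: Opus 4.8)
\textbf{Proof proposal for Lemma \ref{quadratic-bd}.}

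The plan is to reduce to a complete Gauss sum by a change of variables (completing the square) and then invoke the classical evaluation of quadratic Gauss sums to prime power moduli. First I would factor out the greatest common power of $p$ from the coefficients. Write $a_1 = p^{n-r-s_1} a_1'$ and $a_2 = p^{n-r-s_2} a_2'$ with $(a_i', p) = 1$ and $s_i \geq 0$; the hypothesis $\min_i v_p(a_i) = n - r$ means $\min(s_1, s_2) = 0$. The key observation is that the quadratic term dominates: if $s_2 > 0$ (so the linear coefficient is the one with valuation exactly $n-r$), one can still complete the square provided $v_p(a_2) < n$. I would handle separately the easy cases — if $a_2 \equiv 0 \md{p^n}$ the sum is a pure linear Gauss sum, equal to $p^n$ if $p^n \mid a_1$ and $0$ otherwise, and in the latter case $v_p(a_1) = n - r$ forces the sum to vanish, which is certainly $\leq p^{n-r/2}$; and if $v_p(a_2) = n - r$ with $r \geq 1$, complete the square.

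For the main case, set $k := v_p(a_2)$, so $0 \leq k \leq n - 1$ and $k \geq n - r$. When $p$ is odd, $2$ is invertible mod $p^{n-k}$, and writing $a_2 = p^k a_2'$ one substitutes $x = y + c$ where $c$ is chosen (mod $p^{n-k}$) so that the linear term in $y$ is killed, i.e. $c \equiv -a_1 \overline{2 a_2'} p^{-k} \md{p^{n-k}}$ — this requires that $p^k \mid a_1$, and when it does not, a short argument shows the inner sum splits off a vanishing linear Gauss sum over a subgroup, giving total $0$. After the substitution the sum becomes $e(\ast/p^n)$ times $\sum_y e\big( a_2 y^2 / p^n \big) = \sum_y e\big( a_2' y^2 / p^{n-k}\big)$, which (after pulling out the trivial $p^k$-fold multiplicity from $y$ ranging mod $p^n$) is $p^k$ times a primitive quadratic Gauss sum mod $p^{n-k}$ of absolute value $p^{(n-k)/2}$. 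Hence the whole thing has modulus $p^k \cdot p^{(n-k)/2} = p^{(n+k)/2} \leq p^{(n + (n-r))/2}\cdot p^{?}$ — wait, I must orient the inequality correctly: since $k \leq n - 1$ does not directly give what we want, I should instead use that $k \geq n-r$ is the \emph{wrong} direction, so the bound to aim for is via $p^{(n+k)/2}$ with $k$ as \emph{small} as possible. The cleaner route: bound $|S| \leq p^k \cdot p^{(n-k)/2}$ only when $a_2$ genuinely controls things; when $a_1$ has the smaller valuation ($s_2 > 0$), the relevant estimate is $p^{n - r/2}$ coming from a one-dimensional van der Corput / Weyl differencing step on the linear-plus-quadratic phase. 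So the honest plan is: square the sum, apply the standard Weyl differencing identity $|S|^2 = \sum_{h} \sum_x e\big( (2 a_2 h x + a_2 h^2 + a_1 h)/p^n \big)$, evaluate the inner sum over $x$ (a linear Gauss sum, nonzero only when $p^n \mid 2 a_2 h$, i.e. $h$ in a subgroup of index $p^{\min(n, v_p(2a_2))} = p^{\min(n, v_p(a_2))}$ for odd $p$), obtaining $|S|^2 \leq p^n \cdot p^{n - \min(n,v_p(a_2))} \leq p^n \cdot p^{n - (n-r)} = p^{n+r}$, hence $|S| \leq p^{(n+r)/2} \leq p^{n - r/2}$ using $r \leq n$ — but $r \leq n$ is not assumed; however $r \geq 1$ and $v_p(a_2) \leq n-1$ forces $r \geq 1$ and the inequality $p^{(n+r)/2} \leq p^{n-r/2}$ is equivalent to $r \leq n/1$, i.e. $n \geq r$, which does hold since $\min v_p(a_i) = n - r \geq 0$.

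The main obstacle is bookkeeping the case $p = 2$: there $2 a_2 h \equiv 0$ has one extra power of $2$ worth of solutions because $v_2(2 a_2) = v_2(a_2) + 1$, so the index of the relevant subgroup is halved, costing a factor of $2^{1/2}$ in $|S|$ — this is exactly the stated loss. I would also need to be slightly careful that the linear Gauss sum $\sum_{x \md{p^n}} e(bx/p^n)$ equals $p^n 1_{p^n \mid b}$ (an elementary fact), and that in the Weyl differencing the diagonal term $h \equiv 0$ is included and contributes $p^n$, which is consistent with the final bound. I would organize the write-up as: (1) the linear Gauss sum fact; (2) Weyl differencing to get $|S|^2 = \sum_h (\text{inner sum})$; (3) count $h$ with the inner sum nonzero, separating $p$ odd from $p = 2$; (4) conclude $|S| \leq p^{(n+r)/2}$ (times $2^{1/2}$ if $p=2$) and verify $(n+r)/2 \leq n - r/2$. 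None of these steps is deep; the only real care needed is the $2$-adic valuation of $2 a_2$ and confirming the exponents line up, so I expect step (3)–(4) in the even case to be where a careless slip is most likely.
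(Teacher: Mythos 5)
Your final plan (Weyl differencing after squaring) is in the same family as the paper's own proof, which squares the sum and substitutes $h = x-y$, $k = x+y$; but as written your argument does not close, for three concrete reasons. First, the count of admissible $h$ is inverted: for odd $p$ the set $\{h \in \Z/p^n\Z : p^n \mid 2a_2 h\}$ has \emph{order} $p^{\min(n, v_p(a_2))}$ and \emph{index} $p^{n-\min(n, v_p(a_2))}$, not the other way round. With the correct count, in the favourable case $v_p(a_2) = n-r$ you get $|S|^2 \leq p^n \cdot p^{n-r} = p^{2n-r}$, i.e. $|S| \leq p^{n-r/2}$ directly, and no further inequality is needed. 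Second, your closing check is wrong: $(n+r)/2 \leq n - r/2$ is equivalent to $2r \leq n$, not to $r \leq n$, and it fails precisely in the most important case $r = n$ (one of the $a_i$ a unit), so even granting your count the chain $|S| \leq p^{(n+r)/2} \leq p^{n-r/2}$ does not hold.

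Third, and most substantively, the case $v_p(a_1) = n-r < v_p(a_2)$ is not handled. There the number of surviving $h$ is $p^{\min(n, v_p(a_2))}$, which can be as large as $p^n$ (e.g. $a_2 \equiv 0 \md{p^n}$), so bounding the restricted sum by its number of terms gives only the trivial bound. What saves the argument is that on the subgroup of surviving $h$ the quadratic phase is trivial (one checks $v_p(a_2 h^2) \geq n$ there), so the remaining sum is $\sum_{h' \in \Z/p^v\Z} e(a_1 h'/p^v)$ with $v = \min(n, v_p(a_2)) > v_p(a_1)$, a nontrivial linear character sum, which vanishes; hence $S = 0$ in this case. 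You must make this (or something equivalent) explicit. The paper's symmetric substitution sidesteps the issue automatically: after summing over $h = x-y$ one is left with $p^{2n-r}$ times the number of $k \md{p^r}$ solving $a_1' + k a_2' \equiv 0$, and since at least one of $a_1', a_2'$ is a unit this count is $1$ (if $p \nmid a_2'$) or $0$ (if $p \mid a_2'$) --- exactly the dichotomy missing from your write-up. Your heuristic for the $p=2$ loss (the extra power of $2$ in $v_2(2a_2)$ doubling the surviving $h$ and costing $2^{1/2}$) is directionally right, but it only becomes a proof once the odd-$p$ count and the vanishing case are repaired; the paper's $p=2$ correction instead comes from the parity constraint $h \equiv k \md{2}$ in its change of variables.
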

\begin{proof}
Suppose first that $p$ is odd. The claim is trivial for $r = 0$, so suppose $r \geq 1$.  Write $\Sigma$ for the sum. Then $|\Sigma|^2 = \sum_{x,y \in \Z/p^n \Z} e(\frac{1}{p^n} (a_1 (x - y) + a_2 (x^2 - y^2)))$. Making the substitution $h := x - y$, $k := x + y$ gives that this is $\sum_{h,k \in \Z/p^n \Z} e(\frac{1}{p^r} h (a'_1 + k a'_2))$, where we have written $a_1 = p^{n - r} a'_1$, $a_2 = p^{n - r}a'_2$ with at least one of $a'_1, a'_2$ coprime to $p$. Performing the sum over $h$, we see that this is $p^{2n - r} \# \{ k \in \Z/p^r \Z : a'_1 + k a'_2 \equiv 0 \md{p^r}\}$. If $(a'_2, p) = 1$ then there is a unique value of $k$ here, so $|\Sigma|^2 = p^{2n - r}$, which concludes the proof in this case. If $p | a'_2$ then $(a'_1, p) = 1$, and there are no values of $k$, so $\Sigma = 0$.  This concludes the proof in the case $p$ odd.

Now suppose $p = 2$. The claim is trivial when $r = 0$ or $1$, so suppose $r \geq 2$. The preceding argument needs a small modification, because on making the substitution $h$ and $k$ do not range over all of $(\Z/2^n \Z)^2$ but rather over the set of pairs with $h + k \equiv 0 \md{2}$ (twice each). That is, 
\[ |\Sigma|^2 = 2 \sum_{h,k \in \Z/2^n \Z} 1_{h + k \equiv 0 \mdsub{2}} e\big(\frac{1}{2^r} h(a'_1 + ka'_2)\big),\] with the notation the same as before. Suppose first that $r \neq 1$. Expanding $2 \cdot 1_{m \equiv 0 \mdsub{2}}$ as $1 + e(\frac{m}{2})$, we obtain
\[ |\Sigma|^2 = \sum_{h,k \in \Z/2^n \Z}  e\big(\frac{1}{2^r} h(a'_1 + ka'_2)\big) + \sum_{h,k \in \Z/2^n \Z}  e\big(\frac{1}{2^r} h(a'_1 + ka'_2 + 2^{r-1}) + \frac{1}{2}k\big).\]
The first term is $\leq 2^{2n - r}$, exactly as before.  For the second term we again perform the sum over $h$, obtaining that this is 
\[ 2^{2n - r} \sum_{k \in \Z/2^r \Z} 1_{a'_1 + ka'_2 \equiv 2^{r-1} \mdsub{2^r}} e\big(\frac{k}{2}\big).\]
If $a'_2$ is odd then there is a unique value of $k$ here, giving a contribution of $2^{2n - r}$. If $a'_2$ is even then $a'_1$ is odd, and there are no values of $k$. 

Putting all this together gives an upper bound $|\Sigma|^2 \leq 2^{2n - r + 1}$, which is what we wanted to prove. \end{proof}

\section{An explicit estimate for the $\Gamma$-function}

In this section we record an explicit estimate for the size of the $\Gamma$-function in the strip $\frac{1}{4} \leq \Re z \leq \frac{1}{2}$. An estimate such as this is necessary if one wishes to compute an explicit exponent in our main theorem.

\begin{lemma}\label{explicit-gam}
Suppose that $\frac{1}{4} \leq x \leq \frac{1}{2}$. Then $|\Gamma(x + iy)| \leq 7 e^{-\pi |y|/2}$.
\end{lemma}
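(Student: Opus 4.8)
The plan is to reduce the bound on $|\Gamma(x+iy)|$ in the strip $\tfrac14 \leq x \leq \tfrac12$ to the classical estimate $|\Gamma(\tfrac12 + iy)| = \sqrt{\pi/\cosh(\pi y)}$ together with monotonicity of $|\Gamma|$ in $x$ on the relevant strip. First I would recall the reflection-free exact identity $|\Gamma(\tfrac12 + iy)|^2 = \Gamma(\tfrac12+iy)\Gamma(\tfrac12-iy) = \pi/\cos(\pi(\tfrac12+iy))\cdot(\text{sign bookkeeping}) = \pi/\cosh(\pi y)$, which follows from the functional equation $\Gamma(s)\Gamma(1-s) = \pi/\sin(\pi s)$ with $s = \tfrac12 + iy$. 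This gives $|\Gamma(\tfrac12 + iy)| = \sqrt{\pi}\,(\cosh \pi y)^{-1/2} \leq \sqrt{\pi}\cdot\sqrt{2}\,e^{-\pi|y|/2}$, since $\cosh \pi y \geq \tfrac12 e^{\pi|y|}$. Numerically $\sqrt{2\pi} < 2.51$, so the bound at $x = \tfrac12$ is comfortably below $7e^{-\pi|y|/2}$.

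Next I would handle the rest of the strip $\tfrac14 \leq x < \tfrac12$. The cleanest route is to bound $|\Gamma(x+iy)|$ in terms of the value at a boundary point using $\Gamma(x+iy) = \Gamma(x+1+iy)/(x+iy)$, shifting into the strip $\tfrac54 \leq \Re s \leq \tfrac32$ where $|\Gamma|$ is easier to control, or alternatively using the Phragmén--Lindelöf / three-lines principle on the vertical strip. Actually the simplest self-contained argument: for fixed $y$, the function $x \mapsto |\Gamma(x+iy)|$ on $[\tfrac14, \tfrac12]$ can be bounded by combining $|\Gamma(x+iy)|^2 = \Gamma(x+iy)\Gamma(x-iy)$ and the beta-function / Euler reflection type estimates, but I prefer to invoke the standard inequality $|\Gamma(x+iy)| \leq |\Gamma(x)|\,e^{-\pi|y|/2}\cdot C$ valid uniformly on compact $x$-intervals bounded away from the poles. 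Concretely, from the product formula or from Stirling one has $|\Gamma(x+iy)|/|\Gamma(x)| = \prod_{n\geq 0}\big(1 + y^2/(x+n)^2\big)^{-1/2}$, and this product is $\leq (\cosh(\pi y))^{-1/2}\cdot(\text{something})$; more usefully, $|\Gamma(x+iy)| \le \Gamma(x)\, (\text{that product})$ and one checks the product decays like $e^{-\pi|y|/2}$ as $|y|\to\infty$ with an explicit constant. Since $\Gamma(x) \leq \Gamma(\tfrac14) < 3.63$ on $[\tfrac14,\tfrac12]$ (as $\Gamma$ is decreasing on $(0,1)$ up to its minimum near $1.46$, and $\Gamma(\tfrac14) = 3.6256\ldots$), and the convergence-of-product constant can be absorbed, the total stays below $7e^{-\pi|y|/2}$.

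The main obstacle is making the decay constant in the $x$-direction fully explicit while keeping it below $7$: the naive bound $|\Gamma(x+iy)| \leq |\Gamma(x+iy_0)|$ fails since $|\Gamma|$ is not monotone in $x$ at fixed $y$. I would therefore use the exact formula $|\Gamma(x+iy)|^2 = \frac{\pi}{\sin(\pi x)\,(\cosh^2(\pi y) - \cos^2(\pi x))^{?}}$... — more precisely, for $0 < x < 1$, $|\Gamma(x+iy)|^2 = \frac{\pi}{\sin\pi x}\cdot\frac{1}{\cosh^2(\pi y/?)\ldots}$; the correct clean identity I want is $\Gamma(x+iy)\Gamma(1-x-iy) = \pi/\sin(\pi(x+iy))$, giving $|\Gamma(x+iy)|^2\,|\Gamma(1-x-iy)|^2 = \pi^2/|\sin\pi(x+iy)|^2$ and $|\sin\pi(x+iy)|^2 = \sin^2\pi x + \sinh^2\pi y \geq \tfrac14 e^{2\pi|y|}\cdot(\text{for }|y|\text{ large})$. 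Combined with the lower bound $|\Gamma(1-x-iy)| \geq$ (a constant, since $\tfrac12 \leq 1-x \leq \tfrac34$ and one can lower-bound $|\Gamma|$ there via $|\Gamma(1-x-iy)| \geq |\Gamma(1-x)|/\sqrt{\prod(1+y^2/(1-x+n)^2)}$ — but that reintroduces decay)... The honest fix is: use $|\Gamma(s)\Gamma(1-s)| = \pi/|\sin\pi s|$ to get $|\Gamma(x+iy)| = \frac{\pi}{|\sin\pi(x+iy)|\,|\Gamma(1-x-iy)|}$, note $1-x-iy$ lies in $\tfrac12\leq\Re\leq\tfrac34$, apply the \emph{already-established} bound at $\Re = \tfrac12$ plus a short interpolation to the segment $[\tfrac12,\tfrac34]$ via $\Gamma(1-x-iy) = \Gamma(2-x-iy)/(1-x-iy)$ with $|\Gamma(2-x-iy)|$ bounded below by a positive constant uniformly (using that $2-x-iy$ has real part in $[\tfrac32,\tfrac74]$, where $|\Gamma|$ has an explicit positive infimum times a polynomially-growing-in-$|y|$ factor — actually $|\Gamma(\sigma+iy)|$ with $\sigma \geq \tfrac32$ grows, so the lower bound is easy: $|\Gamma(2-x-iy)| \geq |\Gamma(\tfrac32)| \cdot c$ is false, but $|\Gamma(2-x-iy)|\geq c e^{-\pi|y|/2}$ with $c$ computable from Stirling). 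Chasing constants through this chain and verifying the final constant is at most $7$ is the one genuinely fiddly computation; everything else is bookkeeping with the reflection formula and $\cosh\pi y \geq \tfrac12 e^{\pi|y|}$.
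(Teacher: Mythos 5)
Your treatment of the boundary line $x=\tfrac12$ is fine, but the heart of the lemma — the explicit constant on the rest of the strip $\tfrac14\le x<\tfrac12$ — is never actually established. You cycle through several routes (shifting by $1$, Phragm\'en--Lindel\"of, the product formula, reflection) and abandon each mid-stream; the route you finally settle on (reflection formula plus a lower bound for $|\Gamma(1-x-iy)|$ obtained via $\Gamma(2-x-iy)$ and an unspecified ``computable from Stirling'' constant) is left with the decisive numerical verification undone — you say so yourself (``the one genuinely fiddly computation''). Since the statement being proved is precisely an explicit-constant inequality, deferring the constant-chasing is not a presentational shortcut but the missing proof. Note also that you misdiagnose the product-formula route as problematic because it ``reintroduces decay'': a lower bound of the shape $c\,e^{-\pi|y|/2}$ for $|\Gamma(1-x-iy)|$ is exactly what you want, since it cancels against $|\sin\pi(x+iy)|\ge\tfrac12 e^{\pi|y|}$ (which holds for $x\in[\tfrac14,\tfrac12]$ because $|\sin\pi(x+iy)|^2=\sin^2\pi x+\sinh^2\pi y\ge\tfrac12+\tfrac14(e^{\pi|y|}-e^{-\pi|y|})^2\ge\tfrac14 e^{2\pi|y|}$). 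Indeed, from $|\Gamma(\sigma+iy)|=\Gamma(\sigma)\prod_{n\ge0}\bigl(1+y^2/(\sigma+n)^2\bigr)^{-1/2}$, each factor increasing in $\sigma$, together with the exact evaluation at $\sigma=\tfrac12$, one gets $|\Gamma(1-x-iy)|\ge\Gamma(\tfrac34)(\cosh\pi y)^{-1/2}\ge\Gamma(\tfrac34)e^{-\pi|y|/2}$, and then the reflection formula gives $|\Gamma(x+iy)|\le 2\pi\,\Gamma(\tfrac34)^{-1}e^{-\pi|y|/2}\approx 5.13\,e^{-\pi|y|/2}<7e^{-\pi|y|/2}$. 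That is a legitimate alternative proof, but it is not in your write-up; your detour through $\Gamma(2-x-iy)$ would in any case force you back to an explicit form of Stirling's formula.

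For comparison, the paper avoids all of this by quoting an explicit Stirling formula $\log\Gamma(z)=(z-\tfrac12)\log z-z+\tfrac12\log 2\pi+\theta(z)/(6|z|)$, $|\theta(z)|\le1$, valid for $|\arg z|\le\tfrac\pi2$, taking real parts, and using the elementary inequality $\tan^{-1}t\ge\tfrac\pi2-\tfrac1t$ to extract the $e^{-\pi|y|/2}$ decay with an explicit constant in a few lines, uniformly in $\tfrac14\le x\le\tfrac12$ and all $y$ with no case analysis. If you want to keep your reflection-formula strategy, you must include the explicit lower bound for $|\Gamma(1-x-iy)|$ (e.g.\ via the product-formula monotonicity argument above) and the explicit lower bound for $|\sin\pi(x+iy)|$, and then exhibit the final constant; as submitted, the proposal has a genuine gap at exactly that point.
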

\begin{proof}
We may assume $y > 0$; the case $y < 0$ follows from this by conjugation. We use the explicit version of Stirling's formula from \cite[p.294]{special-funct}, which is valid for $|\arg z| \leq \frac{\pi}{2}$:
\begin{equation}\label{explicit-stirling} \log \Gamma(z) = \big(z - \frac{1}{2}\big) \log z - z + \frac{1}{2} \log 2\pi + \frac{\theta(z)}{6|z|},\end{equation}
where $|\theta(z)| \leq 1$. Substituting $z = x + iy$, we have
\begin{align}\nonumber
\log | \Gamma(x + iy)| & = \Re \log \Gamma(x + iy) \\ \nonumber & = \big(x - \frac{1}{2}\big) \log\sqrt{x^2 + y^2} - y \arg (x + iy)  \\ & \nonumber \qquad \qquad - x + \frac{1}{2} \log 2\pi + \Re \frac{\theta(z)}{6|z|} \\ & \leq \frac{1}{4}\log 4 - y \arg (x + iy) - x + \frac{1}{2} \log 2\pi + \frac{2}{3}.\label{to-use-gam}
\end{align}
Here, we used the fact that $|z| \geq x \geq \frac{1}{4}$ to bound the last term, and also the inequality $(x - \frac{1}{2}) \log\sqrt{x^2 + y^2} \leq \frac{1}{4} \log 4$, which follows from the fact that $\log \sqrt{x^2 + y^2} \geq -\log 4$ for $x \geq \frac{1}{4}$.

Now for $t \geq 0$ we have the inequality
\begin{equation}\label{inverse-tan} \tan^{-1} t \geq \frac{\pi}{2} - \frac{1}{t}.\end{equation} To prove this, observe that $f(t) := \tan^{-1} t - \frac{\pi}{2} + \frac{1}{t}$ tends to zero as $t \rightarrow \infty$, but $f'(t) = \frac{1}{1 + t^2} - \frac{1}{t^2} < 0$ for all $t$. 

The inequality \eqref{inverse-tan} implies that 
\[ \arg(x + iy) = \tan^{-1} \big(\frac{y}{x}\big) \geq \frac{\pi}{2} - \frac{x}{y}\] for $x, y \geq 0$. Substituting into \eqref{to-use-gam} gives
\[ \log |\Gamma(x + iy)| \leq \frac{1}{4} \log 4 -\frac{\pi}{2} y  + \frac{1}{2} \log 2\pi + \frac{2}{3} < -\frac{\pi}{2} y + \log 7,\] from which the lemma follows.
\end{proof}


\begin{thebibliography}{99}

\bibitem{bombieri} E.~Bombieri, \emph{Le Grand Crible dans la th\'eorie analytique des nombres,} Ast\'erisque \textbf{18} (1974). 

\bibitem{bourgain-recurrence} J.~Bourgain, \emph{Ruzsa's problem on sets of recurrence,} Israel J. Math. \textbf{59} (1987), no. 2, 150--166.

\bibitem{cochrane-zheng} T.~Cochrane and Z.~Zheng, \emph{On upper bounds of Chalk and Hua for exponential sums,}  Proc. AMS \textbf{129} (2001), no. 9, 2505--2516.

\bibitem{davenport} H.~Davenport,  \emph{Multiplicative number theory,} Graduate Texts in Mathematics \textbf{74}, Springer-Verlag, New York, third edition, 2000. 

\bibitem{fogels} E.~Fogels, \emph{On the zeros of $L$-functions,} Acta Arith \textbf{11} (1965), 67--96.

\bibitem{gallagher} P.~X.~Gallagher, \emph{A large sieve density estimate near $\sigma = 1$,}
Invent. Math. \textbf{11} (1970), 329--339.

\bibitem{gallagher-postnikov} P.~X.~Gallagher, \emph{Primes in progressions to prime-power modulus}, Invent. Math. \textbf{16} (1972), 191--201.

\bibitem{gk} S.~W.~Graham and G.~Kolesnik, \emph{Van der Corput's method of exponential sums}, London Mathematical Society Lecture Note Series \textbf{126} (1991).

\bibitem{green-grh} B.~J.~Green, \emph{An improved bound for S\'ark\"ozy's theorem for shifted primes, assuming GRH}, manuscript available on request.

\bibitem{hardy-wright} G.~H.~Hardy and E.~M.~Wright, \emph{An introduction to the theory of numbers}, 6th Ed,  Oxford University Press 2008.


\bibitem{heath-brown} D.~R.~Heath-Brown, \emph{The ternary Goldbach problem,} Revista Matem\'atica Iberoamericana \textbf{1} (1985), no. 1, 45--59.


\bibitem{ik} H.~Iwaniec and E.~Kowalski, \emph{Analytic Number Theory}, AMS Colloquium Publications \textbf{53}, AMS 2004.

\bibitem{jutila} M.~Jutila, \emph{On Linnik's constant,} Math. Scand. \textbf{41} (1977), no. 1, 45--62.


\bibitem{kamae-mendes-france} T.~Kamae and M.~Mendes France, \emph{van der Corput's Difference Theorem}, Israel J. Math. \textbf{31} (1978), nos 3-4, 335--342.

\bibitem{lebedev} N.~N.~Lebedev, \emph{Special functions and their applications,} revised English edition translated and edited by Richard A.~Silverman, Prentice-Hall 1965.

\bibitem{luc08} J.~Lucier. \emph{Difference sets and shifted primes,} Acta Math. Hungar., \textbf{120} (2008), nos. 1--2, 79--102.

\bibitem{ruzsa-matolcsi-1} M.~Matolcsi and I.~Z.~Ruzsa, \emph{Difference sets and positive exponential sums. I: General properties},  J. Fourier
Anal. Appl. \textbf{20} (2014), no. 1, 17--41.

\bibitem{milicevic} D.~Mili\'cevi\'c, \emph{Sub-Weyl subconvexity for Dirichlet $L$-functions to prime power moduli,} Compositio Mathematica \textbf{152} (2016), no. 4, 825--875.


\bibitem{montgomery} H.~L.~Montgomery, \emph{Ten lectures on the interface between analytic number theory and harmonic analysis,} CBMS Regional Conference Series in Mathematics \textbf{84}, AMS 1994.

\bibitem{mv} H.~L.~Montgomery and R.~C.~Vaughan, \emph{Multiplicative number theory, I: classical theory}, Cambridge Studies in Advanced Mathematics \textbf{97}, CUP 2006.

\bibitem{naslund} E.~Naslund, \emph{Paley graphs and S\'ark\"ozy's theorem in function fields,} Quart.~J.~Math. \textbf{74} (2023), no. 2, 627--637.

\bibitem{special-funct} F.~W.~J.~Olver, \emph{Asymptotics and Special Functions,} Academic Press, New York, 1974.

\bibitem{pintz1} J.~Pintz, \emph{A new explicit formula in the additive theory of primes with applications I. The explicit formula for the Goldbach and Generalized Twin Prime Problems}, arxiv preprint 1804.05561.

\bibitem{pintz2} J.~Pintz, \emph{A new explicit formula in the additive theory of primes with applications II. The exceptional set in Goldbach's problem}, arxiv preprint 1804.09084.



\bibitem{ramanujan} S.~Ramanujan, \emph{On certain trigonometric sums and their applications in the theory of numbers,} Transactions of the Cambridge Philosophical Society \textbf{22} (1918), 179--199.

\bibitem{murty} M.~Ram Murty, \emph{Ramanujan Series for Arithmetical Functions,} Hardy-Ramanujan Journal \textbf{36} (2013), 21--33.


\bibitem{ruz-connections} I.~Z.~Ruzsa, \emph{Connections between the uniform distribution of a sequence and its differences,} in \emph{Topics in classical number theory,} Colloquia Mathematica Societatis J\'anos Bolyai \textbf{34} (1981).


\bibitem{ruz82} I.~Z.~Ruzsa. \emph{Uniform distribution, positive trigonometric polynomials and difference sets,} in Seminar on Number Theory, 1981/1982, pages Exp. No. 18, 18. Univ. Bordeaux I, Talence, 1982.

\bibitem{ruz84} I.~Z.~Ruzsa, \emph{On measures on intersectivity,} Acta Math. Hungar., \textbf{43} (1984), nos 3-4, 335--340.


\bibitem{rs07} I.~Z.~Ruzsa and T.~Sanders, \emph{Difference sets and the primes,} Acta Arithmetica \textbf{131} (2008), no. 3, 281--301. 

\bibitem{sar78} A. S\'ark\"ozy. \emph{On difference sets of sequences of integers. III,} Acta Math. Acad. Sci. Hungar., \textbf{31} (1978), nos. 3--4,  355--386.

\bibitem{slijepcevic-primes} S.~Slijep\v{c}evi\'c, \emph{On van der Corput property of shifted primes}, Functiones et Approximatio \textbf{48} (2013), no. 1, 37--50.

\bibitem{tao-blog} T.~C.~Tao, blog post available at https://terrytao.wordpress.com/2014/12/09/

254a-notes-2-complex-analytic-multiplicative-number-theory/

\bibitem{tenenbaum} G.~Tenenbaum, \emph{Introduction to analytic and probabilistic number theory}, third edition, Graduate Studies in Mathematics \textbf{163}, AMS 2015.

\bibitem{joni} J.~Ter\"av\"ainen, \emph{Almost all alternating groups are invariably generated by two elements of prime order},  Int. Math. Res. Not (2023), rnac354.


\bibitem{thorner-zaman} J.~Thorner and A.~Zaman, \emph{An explicit version of Bombieri's log-free density estimate and S\'ark\"ozy's theorem for shifted primes,} arxiv preprint 2208.11123.

\bibitem{zoe} R.~Wang, \emph{On a theorem of S\'ark\"ozy for difference sets and shifted primes}, Journal of Number Theory \textbf{211} (2020), 220--234.




\end{thebibliography}
\end{document}